\documentclass{amsart}
\usepackage[utf8]{inputenc}

\title{A para-controlled approach to the stochastic Yang-Mills equation in two dimensions}

\usepackage{SYM}
\usepackage{SYM-diagrams}
\usepackage{hyperref}

\author[Bjoern Bringmann]{Bjoern Bringmann}
\address{Bjoern Bringmann, School of Mathematics, Institute for Advanced Study, Princeton, NJ 08540 \& Department of Mathematics, Princeton University, Princeton, NJ 08544}
\email{bringmann@princeton.edu}

\author[Sky Cao]{Sky Cao}
\address{Sky Cao, School of Mathematics, Institute for Advanced Study, Princeton, NJ 08540}
\email{skycao@ias.edu}

\subjclass[2020]{60H17, 60H15, 35R60, 81T13}
\keywords{Yang-Mills, para-controlled calculus, singular SPDE}



\begin{document}

\begin{abstract}
\noindent 
We consider the stochastic Yang-Mills heat equation on the two-dimensional torus. Using regularity structures, Chandra, Chevyrev, Hairer, and Shen previously proved both the local well-posedness and gauge-covariance of this model. In this article, we revisit their results using para-controlled calculus. One of the main ingredients is a new coordinate-invariant perspective on vector-valued stochastic objects. 
\end{abstract}

\maketitle

\tableofcontents

\section{Introduction}

Yang-Mills theory has played an important role in mathematics over the past several decades. The starting point is a principal $G$-bundle $P \ra M$ over a compact oriented Riemannian manifold $(M, g)$, where $G$ is a compact Lie group. The Yang-Mills action of a connection $A$ on $P$ is defined as
\begin{equation}\label{intro:eq-action}
S_{\mrm{YM}}(A) := \int_M \|F_A\|^2 d\mrm{vol}_g,
\end{equation}
where $\mrm{vol}_g$ is the volume form associated to $(M, g)$, $\|\cdot\|$ is an $\mrm{Ad}$-invariant norm on $\frkg$, and $F_A$ is the curvature two-form of $A$. It turns out that $S_{\mrm{YM}}$ is invariant under an infinite-dimensional symmetry group $\mc{G}$ which acts on the space of connections. This property is known as {\it gauge invariance} or {\it gauge symmetry} and it underpins all of Yang-Mills theory. Gauge symmetry is inherently nonlinear, which makes the study of Yang-Mills on the one hand more complicated, but on the other hand much richer. For instance, an understanding of the critical points of $S_{\mrm{YM}}$ has led to significant advances in geometry and topology, especially in four dimensions \cite{DK97, FU12}. The critical points are characterized as solutions to the Yang-Mills equations, which are given by
\[ D_A^* F_A = 0, \]
where $D_A$ is the covariant derivative associated to $A$ and $D_A^\ast$ its adjoint. A natural way to construct such critical points is to consider long-time limits of solutions to the Yang-Mills gradient flow, which is the gradient flow of the Yang-Mills action:
\begin{equation}\label{intro:eq-gradient-flow}
 \ptl_t A = -D_A^* F_A. 
\end{equation}
Constructing local and global solutions to this evolution equation is an interesting PDE problem in itself, and there have been many advances over the years which analyze the Yang-Mills gradient flow and other related equations \cite{KM95, KT99, O15, OT22, Rad92, Str94, Wal19}. 
\\

In probability and mathematical physics, the Yang-Mills problem is to construct the following probability measure on the space of connections:
\[ d\mu_{\mrm{YM}}(A) = Z^{-1} \exp(-S_{\mrm{YM}}(A)) dA. \]
To date, the measure has only been constructed in dimension two \cite{C19}, whereas the main goal is to construct the measure in dimension four. There are of course many important partial results towards the construction of the measure in dimensions three and four; we refer to \cite{Ch19, JW06} for a comprehensive list of references. \\

One of the approaches towards constructing the Yang-Mills measure is the method of  stochastic quantization \cite{PW81}, which aims to construct the measure as the unique invariant measure of the associated Langevin equation. This has been very successful for $\Phi^4$ theories, see e.g. introduction in \cite{GH21}, as well as the works \cite{AK2017, HM2018, MW2020, RZZ2017}. The Langevin equation corresponding to the Yang-Mills action \eqref{intro:eq-action} is given by
\begin{equation}\label{intro:eq-Langevin}
\partial_t A = - D_A^\ast F_A + \xi, \qquad A(0)=A_0,
\end{equation}
where $\xi$ is a $\frkg^d$-valued space-time white noise. The two most important features of the Langevin equation \eqref{intro:eq-Langevin} are its 
\begin{equation*}
 \textup{(I) gauge-covariance} \qquad \text{and} \qquad  \textup{(II) singular stochastic forcing.}
\end{equation*}

Gauge-covariance is a physical necessity and, as described above, shared by both the Yang-Mills action and Yang-Mills measure. In the context of \eqref{intro:eq-Langevin}, gauge-covariance formally requires the following: Let $\bA$ be the data-to-solution map of \eqref{intro:eq-Langevin}, which sends the initial data $A_0$ to the solution $A$. Then, for any initial data $A_0$ and any initial gauge-transformation $g_0$, there exists a time-dependent gauge-transformation $g$ such that $\bA^g(A_0)$ and $\bA(A_0^{g_0})$ have the same law. 

While gauge-covariance is dictated by physical considerations, it creates a fundamental mathematical challenge: The gradient flow \eqref{intro:eq-gradient-flow} and the Langevin equation \eqref{intro:eq-Langevin} are \emph{not} parabolic. To see this, we note that for any stationary classical solution $A \in C^2(\T^d\rightarrow \frkg)$ of \eqref{intro:eq-gradient-flow}, such as $A=0$, and any gauge-transformation $g \in C^2(\T^d\rightarrow G)$, $A^g$  is also a stationary classical solution of \eqref{intro:eq-gradient-flow}. But since  $g \in C^2(\T^d\rightarrow G)$ is arbitrary, this implies that \eqref{intro:eq-gradient-flow} has stationary classical solutions which are not smooth, and therefore cannot be parabolic.   A common approach to address lack of parabolicity in geometric evolution equations is the so-called \emph{DeTurck trick}, which was originally used for the Ricci-flow in \cite{DeT83} and first applied to the (deterministic) Yang-Mills gradient flow in \cite{D85}. The idea is to use a suitable time-dependent gauge-transformation of $A$, which can be used to transform the Langevin equation \eqref{intro:eq-Langevin} into 
\begin{equation}\label{intro:eq-SYM-coordinate-free}
\partial_t A =  - D_A^\ast F_A - D_A D_A^\ast A +  \xi, \qquad A(0)=A_0. 
\end{equation}
The evolution equation \eqref{intro:eq-SYM-coordinate-free} is called the stochastic Yang-Mills heat equation. In coordinates, it can be written as 
\begin{equation}\tag{$\mrm{SYM}$}\label{intro:eq-SYM}
\partial_t A^i = \Delta A^i + \big[ A^j , 2 \partial_j A^i - \partial_i A^j \big] + \big[ \big[ A^i, A^j \big], A_j \big] + \xi^i, \qquad A^i(0)=A_0^i, 
\end{equation}
where $1\leq i \leq d$. As is evident from \eqref{intro:eq-SYM}, the stochastic Yang-Mills heat equation is a nonlinear stochastic heat equation, and therefore parabolic. 

A separate issue in \eqref{intro:eq-Langevin} is the singular stochastic forcing $\xi$, which is not altered by the DeTurck trick and therefore still present in \eqref{intro:eq-SYM-coordinate-free}. Since $\xi$ is $\frkg^d$-valued space-time white noise, its regularity is just below $-1-\frac{d}{2}$. The stochastic Yang-Mills heat equation \eqref{intro:eq-SYM-coordinate-free} therefore belongs to the realm of singular stochastic partial differential equations, which has seen significant advances in the last decade.\\

In the seminal article \cite{CCHS22}, Chandra, Chevyrev, Hairer, and Shen studied\footnote{In \cite[(2.1)]{CCHS22}, the stochastic Yang-Mills heat equation contains an additional counter-term of the form $CA$, where $C$ is a linear map on $\frkg$. This counter-term is necessary in \cite{CCHS22} to obtain a gauge-covariant limit and will be discussed in more detail below, see e.g. the end of Subsection \ref{section:introduction-main} and Section \ref{section:gauge-covariance}.} the stochastic Yang-Mills heat equation \eqref{intro:eq-SYM-coordinate-free} in dimension $d=2$. The first main theorem \cite[Theorem 2.1]{CCHS22} is the construction of a natural state space $\Omega^1_\alpha$ for \eqref{intro:eq-SYM-coordinate-free}, which strikes a delicate balance: On the one hand, the state space $\Omega^1_\alpha$ is large enough to contain solutions of \eqref{intro:eq-SYM-coordinate-free}. On the other hand, it is small enough so that both Wilson-loop observables and the action of the gauge transformations can be defined on $\Omega^1_\alpha$. The second and third main theorem \cite[Theorem 2.4 and Theorem 2.9]{CCHS22}, which are of a more dynamical nature, are the local well-posedness of \eqref{intro:eq-SYM-coordinate-free} and the gauge-covariance in law of the solution. The proof is based on the theory of  regularity structures, which is a general approach to singular stochastic partial differential equations. It was first introduced by Hairer in \cite{H14}, and has since been further developed in \cite{BCCH21,BHZ19,CH16,HS23, BH2023}. In order to treat \eqref{intro:eq-SYM-coordinate-free}, \cite{CCHS22} further extended the theory of regularity structures by developing a new basis-free framework. The basis-free framework is necessary to make use of the geometric nature of \eqref{intro:eq-SYM-coordinate-free}, and it is used to compute the renormalization terms \cite[Section 6.2]{CCHS22} and prove gauge-covariance \cite[Section 7]{CCHS22}. Recently, Chandra, Chevyrev, Hairer, and Shen \cite{CCHS22+} also extended their results to the stochastic Yang-Mills heat equation \eqref{intro:eq-SYM-coordinate-free} in dimension $d=3$, which is even more singular. See also the survey \cite{Chevyrev2022} for an overview of the two works \cite{CCHS22, CCHS22+}. For a related construction of the state space in dimension $d=3$, we also refer the reader to  \cite{CC21a,CC21b}.\\ 

While regularity structures provide the most complete picture, there are at least three further approaches to local well-posedness of singular parabolic stochastic partial differential equations: The para-controlled calculus of \cite{BB19,GIP15}, the renormalization-group methods of \cite{D21,K16}, and the diagram-free approach of \cite{LOT21,LOTT21,OSSW18,OW19}. We focus our attention on para-controlled calculus, as it is most relevant to this article. It has previously been applied to prove local well-posedness of the KPZ equation \cite{GP17}, parabolic $\Phi^4_3$-model \cite{CC18}, and three-dimensional stochastic Navier-Stokes equation \cite{ZZ15}. One possible advantage of para-controlled calculus over regularity structures is that it is closer to classical methods for (deterministic) PDEs. Due to this, para-controlled calculus is not only more accessible to certain mathematicians, but can sometimes be combined more easily with other techniques from PDEs, such as energy estimates. The latter aspect has been useful in establishing global well-posedness of singular parabolic SPDEs, such as the parabolic $\Phi^4_3$-model \cite{MW17} and, more recently, the stochastic Navier-Stokes equation \cite{HR23}. \\

In this article, we revisit the stochastic Yang-Mills heat equation \eqref{intro:eq-SYM-coordinate-free} in two dimensions, which was studied using regularity structures in \cite{CCHS22}, from the perspective of para-controlled calculus. Our focus lies on the dynamical results \cite[Theorem 2.4 and 2.9]{CCHS22} rather than the constructions of the state space and Markov process on gauge orbits \cite[Theorem 2.1 and 2.13]{CCHS22}, since the latter do not rely on regularity structures and would therefore be similar in the para-controlled setting.  Our primary hope is that this article may serve as an entry point to the beautiful methods and results of \cite{CCHS22} and, more generally, to geometric stochastic evolution equations as a whole. We also hope that our analysis may be useful to establish a global theory of \eqref{intro:eq-SYM-coordinate-free}, and discuss this further in Subsection \ref{section:global} below.

\subsection{Main results}\label{section:introduction-main}

In order to state our first main result, we need to introduce additional notation. We first choose a regularity parameter 
\begin{equation}\label{intro:eq-kappa}
0<\kappa\ll 1,
\end{equation}
which will be fixed throughout the rest of this article. 
For any $N\in \dyadic$, we let $P_{\leq N}$ be the Littlewood-Paley operator from \eqref{prelim:eq-LWP} below. We then define a frequency-truncated stochastic Yang-Mills heat equation by 
\begin{equation}\tag{$\mrm{SYM}_N$}\label{intro:eq-AN}
\begin{aligned}
    \partial_t A^i_{\leq N} &= \Delta A^i_{\leq N} + \big[ A^j_{\leq N} , 2 \partial_j A^i_{\leq N} - \partial_i A^j_{\leq N} \big] + \big[ \big[ A^i_{\leq N}, A^j_{\leq N} \big], A_{\leq N,j} \big] + P_{\leq N}\xi^i, \\ 
    A_{\leq N}(0)&=A_0. 
\end{aligned}
\end{equation}
We emphasize that the subscript of $A_{\leq N}$ is only used as an index and that the only frequency-truncation in \eqref{intro:eq-AN} is the $P_{\leq N}$-operator acting on $\xi$. In our first theorem, we prove that the solutions of \eqref{intro:eq-AN} converge locally in time as $N\rightarrow \infty$.

\begin{theorem}[Local well-posedness]\label{intro:thm-lwp}
For all $S\geq 1$, there exists a random time $\tau=\tau_S$ which is almost surely positive and such that the following holds: 

For all $A_0 \in \Cs_x^{-\kappa}(\T^2\rightarrow \frkg^2)$ satisfying $\| A_0 \|_{\Cs_x^{-\kappa}(\T^2)}\leq S$ and all $N\in \dyadic$, the solution $A_{\leq N}$ of \eqref{intro:eq-AN} exists on $[0,\tau]\times \T^2$. Furthermore,  the solutions $A_{\leq N}$ converge to a unique limit $A$ in $ C_t^0 \Cs_x^{-\kappa}([0,\tau] \times \T^2 \rightarrow \frkg^2)$.
\end{theorem}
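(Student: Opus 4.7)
The plan is to use a paracontrolled decomposition: write $A_{\leq N}$ as the sum of the linear stochastic solution, a deterministic Duhamel integral of renormalized stochastic forcing, and a residual term living in a classical H\"older space, and then solve the residual equation by Banach fixed point on a random time interval whose length depends only on $S$ and on the almost-surely finite norms of the stochastic data.

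The first step is to build the stochastic objects. Let $\Psi_{\leq N}^i$ denote the mild solution of $(\partial_t - \Delta)\Psi_{\leq N}^i = P_{\leq N}\xi^i$ with zero initial data. Gaussian hypercontractivity shows that $\Psi_{\leq N}\to \Psi$ almost surely in $C_t^0 \Cs_x^{-\kappa}$ with uniform bounds in $N$. The nonlinearity of \eqref{intro:eq-AN} then produces two singular products: the quadratic term $[\Psi^j_{\leq N}, 2\partial_j \Psi_{\leq N}^i - \partial_i \Psi_{\leq N}^j]$ (whose naive regularity is $-1-2\kappa$) and the cubic term. I construct Wick-type renormalizations
\[
[\Psi^j_{\leq N}, 2\partial_j \Psi_{\leq N}^i - \partial_i \Psi_{\leq N}^j] - c_{\leq N}^i
\]
together with the iterated (paracontrolled) cubic objects that appear after one Duhamel step, and show by Wiener-chaos moment estimates and Kolmogorov's criterion that they converge in $C_t^0 \Cs_x^{-2\kappa}$. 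In two dimensions, the subcriticality means that only finitely many such objects are required, and the cubic term $[[\Psi,\Psi],\Psi]$ together with the Duhamel lift of the renormalized quadratic object generate the full list.

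Next I make the paracontrolled ansatz $A_{\leq N}^i = \Psi_{\leq N}^i + Z_{\leq N}^i + V_{\leq N}^i$, where $Z_{\leq N}$ is defined as the Duhamel integral of the renormalized quadratic and cubic stochastic objects. By parabolic smoothing, $Z_{\leq N}$ lives in $C_t^0 \Cs_x^{2-\gamma}$ for a small $\gamma$, uniformly in $N$. Substituting into \eqref{intro:eq-AN} yields an equation of the form
\[
(\partial_t - \Delta) V_{\leq N} \;=\; \mathcal{N}\bigl(\Psi_{\leq N},\, Z_{\leq N},\, V_{\leq N}\bigr),
\]
where $\mathcal{N}$ is polynomial in its arguments. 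Its worst terms are $[\Psi_{\leq N}, \partial V_{\leq N}]$ and $[V_{\leq N}, \partial\Psi_{\leq N}]$; Bony's decomposition plus standard resonant-product estimates show these are well-defined continuous multilinear maps once $V_{\leq N}\in C_t^0 \Cs_x^{1+2\kappa}$. A contraction argument in this space, using the singular-in-time weight $t^{(1+3\kappa)/2}$ to accommodate the rough initial data $V_{\leq N}(0) = A_0 \in \Cs_x^{-\kappa}$, produces a unique local solution on $[0, \tau_S]$, with $\tau_S$ depending only on $S$ and on the finite stochastic norms. Since the fixed point depends continuously on the stochastic data and since $Z_{\leq N}\to Z$, $\Psi_{\leq N}\to \Psi$ as $N\to\infty$, one deduces $V_{\leq N}\to V$ and hence $A_{\leq N}\to A = \Psi + Z + V$ in $C_t^0 \Cs_x^{-\kappa}$, as required.

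The principal obstacle is the construction of the stochastic objects at the level of the Lie algebra $\frkg$. Applying scalar Wick renormalization coordinate-wise produces counterterms $c_{\leq N}^i$ that are \emph{not} $\mathrm{Ad}$-invariant and therefore break gauge covariance of the limit. A coordinate-free viewpoint, in which the covariance kernel of the noise is read as a symmetric bilinear form $\frkg \otimes \frkg \to \frkg$ compatible with the bracket, is needed to identify the correct renormalization, and this is precisely the new viewpoint advertised in the abstract. Once the stochastic objects are constructed in this intrinsic way, the rest of the argument is a fairly standard paracontrolled fixed point, closely parallel to what has been done for $\Phi^4_2$ and for stochastic Navier--Stokes, with the Lie bracket playing the role of the product.
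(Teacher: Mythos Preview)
There is a genuine gap: your fixed-point space for the remainder $V_{\leq N}$ cannot close. You propose to place $V_{\leq N}$ in $C_t^0\Cs_x^{1+2\kappa}$ and argue that then the resonant product $V_{\leq N}\parasim\partial\Psi_{\leq N}$ is well-defined since $(1+2\kappa)+(-1-\kappa)=\kappa>0$. That part is correct. The problem is the \emph{low$\times$high} piece $V_{\leq N}\parall\partial\Psi_{\leq N}$: by the standard paraproduct estimate its regularity is $\min(1+2\kappa,0)+(-1-\kappa)=-1-\kappa$, so after one Duhamel integration you recover at best $\Cs_x^{1-\kappa-}$. In other words, the map $V\mapsto \Duh(\mathcal N(\Psi,Z,V))$ does not take $\Cs_x^{1+2\kappa}$ into itself, and the contraction fails. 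If you instead try to run the fixed point at the forced regularity $1-\kappa-$, the resonant product $V\parasim\partial\Psi$ is no longer defined. This is exactly the obstruction that forces a genuine para-controlled ansatz: the paper splits the remainder as $X_{\leq N}+Y_{\leq N}$, where $X_{\leq N}\in\Sc^{1-2\kappa}$ is \emph{defined} to solve $(\partial_t-\Delta+1)X^i=[B_j\parall(2\partial^j\linear^i-\partial^i\linear^j)]$, and then a double-Duhamel/commutator argument (together with the explicit stochastic object $[\Duh(\partial\linear)\parasim\partial\linear]$ in the enhanced data set) closes $Y_{\leq N}$ in $\Sc^{2-5\kappa}$. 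Your proposal is essentially a Da~Prato--Debussche scheme, which is one step short of what is needed here.

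Two smaller points follow from the same miscount. First, $Z_{\leq N}$ cannot live in $\Cs_x^{2-\gamma}$: the Duhamel integral of $[\Psi,\partial\Psi]$ (the paper's $\quadratic$) has regularity $1-2\kappa$, not $2-$; relatedly, $[\Psi,\partial\Psi]$ itself is only in $\Cs_x^{-1-}$, not $\Cs_x^{-2\kappa}$. Second, in two dimensions the equation \eqref{intro:eq-AN} actually requires \emph{no} counterterm: the logarithmic divergences coming from the cubic power-type term and from the two double-Duhamel interactions cancel exactly (this is the identity $-\tfrac{3(d-1)}{d}+\tfrac{1}{d}+(d-1)=0$ at $d=2$). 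So your concern about $c_{\leq N}^i$ breaking $\mathrm{Ad}$-invariance does not arise; the coordinate-free tensor-product viewpoint in the paper is used to identify the Killing map in the \emph{individual} divergent pieces and to see that they sum to zero, not to produce a gauge-covariant renormalization of the equation.
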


\begin{figure}
\centering
\begin{tabular}{|P{\bigcolwidth}|P{\colwidth}|P{\colwidth}|P{\colwidth}|P{\colwidth}|}
\hline 
Object &  $\linear$ &  $\quadratic$ &  $X$ & $Y$  \\ \hline
Regularity & $0-$ &  $1-$ & $1-$ & $2-$ \\ \hline
\end{tabular}
\centering
    \caption{We display the four terms from our Ansatz in \eqref{intro:eq-ansatz}  and their regularities.}
    \label{figure:intro-regularities}
\end{figure}

As mentioned above, Theorem \ref{intro:thm-lwp} was first proven using regularity structures in \cite{CCHS22}. We now briefly discuss the main ideas behind our alternative proof, which instead relies on para-controlled calculus. Our para-controlled Ansatz for \eqref{intro:eq-AN}, which is explained in more detail in Section \ref{section:para-controlled-ansatz}, is of the form 
\begin{equation}\label{intro:eq-ansatz}
A_{\leq N} = \linear[\leqN][r][] + \quadratic[\leqN][r][] + X_{\leq N} + Y_{\leq N}, 
\end{equation}
where $\linear[\leqN][r][]$ and $\scalebox{0.85}{$\quadratic[\leqN][r][]$}$ are linear and quadratic stochastic objects, $X_{\leq N}$ is a para-controlled component, and $Y_{\leq N}$ is a nonlinear remainder. The spatial regularities of the four terms in \eqref{intro:eq-ansatz} are listed in Figure \ref{figure:intro-regularities} and provide important analytical information on our Ansatz. Compared to earlier works using para-controlled calculus, the main novelty of this article lies in our treatment of the linear and quadratic stochastic objects in \eqref{intro:eq-ansatz} and their nonlinear interactions. To illustrate this, we focus on the cubic term 
\begin{equation*}
\big[ \big[ \linear[\leqN][r][i], \linear[\leqN][r][j] \big], \linear[\leqN][l][j] \big],
\end{equation*}
which appears after inserting the Ansatz \eqref{intro:eq-ansatz} into \eqref{intro:eq-AN}. While the stochastic Yang-Mills heat equation \eqref{intro:eq-AN} does not require a renormalization itself (see Remark \ref{ansatz:rem-cancellation}), several of the individual nonlinear interactions require a renormalization. The renormalized version of the cubic term is given by
\begin{equation}\label{intro:eq-renormalized-cubic}
\big[ \big[ \linear[\leqN][r][i], \linear[\leqN][r][j] \big], \linear[\leqN][l][j] \big] -  \sigma_{\leq N}^2 \Kil \big( \linear[\leqN][r][i] \big). 
\end{equation} 
In \eqref{intro:eq-renormalized-cubic}, $\sigma_{\leq N}^2$ is a logarithmically-divergent renormalization constant and $\Kil$ is the Killing-map, which is a natural geometric operator from the theory of Lie algebras (see Definition \ref{def:Kil}). In order to analyze \eqref{intro:eq-renormalized-cubic}, one possible approach is to choose a basis $(E_a)_{a=1}^{\dim \frkg}$ of the Lie algebra $\frkg$ and use the basis expansion $\linear[\leqN][r][]=\linear[\leqN][r][a] E_a$. After inserting the basis expansion into \eqref{intro:eq-renormalized-cubic}, it is then possible to use the scalar theory of \cite{GIP15}. However, this approach is coordinate-dependent, and obscures the geometric nature of the iterated Lie bracket and Killing-map in \eqref{intro:eq-renormalized-cubic}. 
Instead of using a basis expansion, we follow an approach that is coordinate-independent and partially inspired by \cite{CCHS22}. It relies on the following two-step procedure: 
\begin{enumerate}[label=(\roman*)]
    \item\label{item:tensor} First, analyze the tensor product $\linear[\leqN][r][i]\otimes \linear[\leqN][r][j] \otimes \linear[\leqN][r][j]$. This analysis relies on a product formula for vector-valued multiple stochastic integrals (Lemma \ref{lemma:multiple-stochastic-integral-tensor-product}), which allows us to decompose the tensor product into non-resonant and resonant components. The resonant components involve tensor contractions, which are coordinate-independent. Indeed, these tensor contractions are essentially given by the quadratic Casimir, which is known to be a coordinate-invariant quantity. 
    \item\label{item:insert} Second, view the iterated Lie bracket in \eqref{intro:eq-renormalized-cubic} as a function of the tensor product $\linear[\leqN][r][i]\otimes \linear[\leqN][r][j] \otimes \linear[\leqN][r][j]$. The resonant component of the tensor product, which involves tensor contractions, then naturally produces contracted iterated Lie-brackets. In turn, the contracted iterated Lie-brackets then promptly lead to the Killing-map, which is included in the renormalization in \eqref{intro:eq-renormalized-cubic}. 
\end{enumerate}
This two-step procedure is not only coordinate-independent, but also offers a major practical advantage: The same tensor product from Step \ref{item:tensor} can be used for different iterated Lie-brackets in Step \ref{item:insert}, which avoids unnecessary repetition. In our article, we will use the tensor product 
\begin{equation*}
    \linear[\leqN][r][i]\otimes \linear[\leqN][r][j]
\end{equation*} 
to simultaneously treat 
\begin{equation*}
\big[ \linear[\leqN][r][i], \linear[\leqN][r][j] \big] \qquad \text{and} \qquad
\frkg \ni E  \mapsto \big[ \big[ E , \linear[\leqN][r][i] \big], \linear[\leqN][r][j] \big] - \delta^{ij} \sigma_{\leq N}^2 \Kil \big( E \big). 
\end{equation*}

\begin{remark}
The pure Yang-Mills measure in $d=2$ has remarkable integrability properties which have been used and studied extensively \cite{Mig75,L03,S97}. Our argument, however, does not rely on integrability and can be generalized to the two-dimensional Yang-Mills-Higgs equation, which is non-integrable. 
\end{remark}

Before beginning to discuss our second main result, we make some definitions that will simplify the notation  in the ensuing discussion. These definitions involve Besov spaces which will be defined in Section \ref{section:function-spaces}.

\begin{definition}[Solution map for spatially smooth driving terms]
Let $A_0 \in \Cs_x^{-\kappa}$, let $T > 0$, and let $\eta \in \Cs_{tx}^{\beta}((0, T))$ be a space-time distribution on $(0, T) \times \T^2$, where $\beta = (\beta_t, \beta_x) \in \R$, $\beta_t > -1$, and $\beta_x \geq 0$. Let $A \in C_t^0 \Cs_x^{-\kappa}([0, \tau) \times \T^2)$ be the maximal mild solution to the Yang-Mills heat equation driven by $\eta$, with initial data $A_0$:
\[ \ptl_t A^i = \Delta A^i + \big[A^j, 2\ptl_j A^i - \ptl_i A^j\big] + \big[ \big[ A^i, A^j\big], A_j\big] + \eta, ~~ i \in [2], ~~ A(0) = A_0. \]
Then, we define $\bA(A_0, \eta) := A$.
\end{definition}

\begin{definition}[Solution map]
Let $A_0 \in \Cs_x^{-\kappa}$, let $T_0 > 0$, and let $\eta \in \Cs_{tx}^{\beta}((0, T_0))$ be a space-time distribution on $(0, T_0) \times \T^2$, where $\beta = (\beta_t, \beta_x) \in \R$ with $\beta_t > -1$. For any $T \in [0, T_0]$ such that 
\[ \lim_{\substack{N \toinf}} \bA(A_0, P_{\leq N} \eta) \text{ exists in $C_t^0 \Cs_x^{-\kappa}([0, T] \times \T^2)$}, \]
define $\bA(A_0, \eta) \in C_t^0 \Cs_x^{-\kappa}([0, T] \times \T^2)$ to be the above limit. In this case, we say that $\bA(A_0, \eta)$ exists on $[0, T]$. We say that $\bA(A_0, \eta)$ exists on $[0, T)$ if it exists on $[0, T']$ for all $T' < T$. 
\end{definition}

\begin{remark} It is clear from the definition that, whenever $\bA(A_0,\eta)$ exists on $[0,T)$ or $[0,T]$, then it is necessarily unique. 
In terms of the solution map $\bA$, our local well-posedness theorem (Theorem \ref{intro:thm-lwp}) states that, for all $S \geq 1$, there is random time $\tau_S>0$ such that the solution $\bA(A_0, \xi)$ exists on $[0, \tau_S]$ for all initial data $A_0$ satisfying $\|A_0\|_{\Cs_x^{-\kappa}(\T^2)} \leq S$. 
\end{remark}

We now begin to introduce the second main result of this paper, which is gauge-covariance of solutions to the stochastic Yang-Mills equation. First, given a connection $A : \T^2 \ra \frkg^2$ and a gauge transformation $g : \T^2 \ra G$, the gauge transformation $A^g$ is the connection given by (here $[2] := \{1, 2\}$)
\beq\label{eq:gauge-transformation-def} A^g = \Ad_g A - (dg) g^{-1},
\text{ i.e. } A^g_i = \Ad_g A_i - (\ptl_i g) g^{-1}, ~~ i \in [2].\eeq
Now for expository purposes, we first discuss gauge-covariance of the associated deterministic PDEs. Given a solution $A$ to the Yang-Mills gradient flow
\[ \ptl_t A = -D_A^* F_A, ~~ A(0) = A_0, \]
and a gauge transformation $g_0 : \T^2 \ra G$, we have that $\gauged{A} = A^{g_0}$ is again a solution to the Yang-Mills gradient flow:
\[ \ptl_t \gauged{A} = -D_{\gauged{A}}^* F_{\gauged{A}}, ~~ \gauged{A} = A_0^{g_0}. \]
In other words, the Yang-Mills gradient flow is {\it gauge-covariant}. Next, given a solution $A$ to the Yang-Mills heat flow (i.e. with DeTurck term)
\beq\label{eq:ricci-deturck} \ptl_t A = -D_A^* F_A - D_A D_A^* A, ~~ A(0) = A_0, \eeq
and a gauge transformation $g_0$ as before, there exists a time-dependent gauge transformation $g = g(t)$, with $g(0) = g_0$, such that with $\gauged{A} = A^{g}$, we have that
\[ \ptl_t \gauged{A} = -D_{\gauged{A}}^* F_{\gauged{A}} - D_{\gauged{A}} D_{\gauged{A}}^* \gauged{A}, ~~ \gauged{A} = A_0^{g_0}.\]
This is again a form of gauge-covariance. Even though the gauge transformation is now time-dependent, the important thing is that $A$ and $\gauged{A}$ are still the exact same flow on the orbit space (i.e. the space of gauge equivalence classes of connections). The time-dependent gauge transformation $g$ is specified by
\beq\label{eq:intro-gauge-transformation} (\ptl_t g)g^{-1} = - D_{A^g}^* ((dg) g^{-1}), ~~ g(0) = g_0.\eeq
This turns out to be a nonlinear parabolic\footnote{This may not be so clear in the present form but is explained in Section \ref{section:gauge-covariance}, see e.g. equation \eqref{eq:gauge-transformation}.} PDE, which is locally well-posed under mild assumptions on the initial data $g_0$. Next, we introduce a smooth driving term $\eta : \R \times \T^2 \ra \frkg^2$ and consider a solution $A$ to
\[ \ptl_t A = -D_A^* F_A - D_A D_A^* A + \eta, ~~ A(0) = A_0. \]
If we let $g$ again be defined as the solution to \eqref{eq:intro-gauge-transformation} (where now $A$ is the solution to the above equation) and again let $\gauged{A} = A^g$, then one can show\footnote{For a more detailed discussion, see the beginning of Section \ref{section:gauge-covariance}.} that $\gauged{A}$ solves
\beq\label{eq:gauged-transformed-equation-with-driving-term} \ptl_t \gauged{A} = -D_{\gauged{A}}^* F_{\gauged{A}} - D_{\gauged{A}} D_{\gauged{A}}^* \gauged{A} + \Ad_g \eta, ~~ \gauged{A} = A_0^{g_0}.\eeq
This suggests the form of gauge covariance that the stochastic Yang-Mills equation should satisfy: if $A$ is the solution \eqref{intro:eq-SYM}, then $\gauged{A} = A^g$ should be the solution to \eqref{intro:eq-SYM} with $\xi$ replaced by $\Ad_g \xi$, where $g$ satisfies \eqref{eq:intro-gauge-transformation}. More succinctly, $\bA(A_0, \xi)^g = \bA(A_0^{g_0}, \Ad_g \xi)$. This is the content of our second main result.

\begin{theorem}[Gauge-covariance]\label{thm:gauge-covariance}
For all $S \geq 1$, there exists a random time $\tau = \tau_S$ which is almost surely positive and such that the following holds for any $A_0 \in \Cs_x^{-\kappa}(\T^2 \ra \frkg^2)$ and $g_0 \in \Cs_x^{1-\kappa}(\T^2 \ra G)$ satisfying $\|A_0\|_{\Cs_x^{-\kappa}(\T^2)}, \|g_0\|_{\Cs_x^{1-\kappa}(\T^2)} \leq S$:
\begin{enumerate}[label=(\Roman*)]
    \item\label{introduction:item-gauge-1} The solutions $A = \bA(A_0, \xi)$ to \eqref{intro:eq-SYM} and $g$ to \eqref{eq:intro-gauge-transformation} exist on $[0, \tau]$,
    \item \label{introduction:item-gauge-2} $\Ad_g \xi := \lim_{N \toinf} \Ad_g P_{\leq N} \xi$ exists in $\Cs_{tx}^{(-1/2-100\kappa,-1-100\kappa)}((0, \tau))$ and $\bA(A_0^{g_0}, \Ad_g \xi)$ exists on $[0, \tau]$,
    \item \label{introduction:item-gauge-3} $(\bA(A_0, \xi))^{g} = \bA(A_0^{g_0}, \Ad_g \xi)$ on $[0, \tau]$.
\end{enumerate}
\end{theorem}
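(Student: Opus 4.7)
I would prove gauge-covariance by first establishing a deterministic gauge-covariance identity at the frequency-truncated level and then passing to the limit $N \to \infty$. For each $N$, both $A_{\leq N}$ and the associated gauge transformation $g_N$ (defined below) are smooth, so the classical computation reviewed in \eqref{eq:gauged-transformed-equation-with-driving-term} yields $(A_{\leq N})^{g_N} = \bA(A_0^{g_0}, \Ad_{g_N} P_{\leq N} \xi)$ on a common interval $[0, \tau]$. Parts \ref{introduction:item-gauge-1}, \ref{introduction:item-gauge-2}, and \ref{introduction:item-gauge-3} then follow by identifying the limits of both sides.

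\textbf{Part \ref{introduction:item-gauge-1} and the LHS.} The existence of $A$ is Theorem \ref{intro:thm-lwp}. The equation \eqref{eq:intro-gauge-transformation} for $g$ is itself singular: it involves products such as $[A^g, (dg) g^{-1}]$ in which both factors have spatial regularity $-\kappa$, so classical multiplication fails. I would solve it by imposing on $g_N$ a para-controlled Ansatz induced by the Ansatz \eqref{intro:eq-ansatz} for $A_{\leq N}$, with the leading singular component of $g_N$ determined by an explicit stochastic object built from $\linear[\leqN][r][]$ and $\quadratic[\leqN][r][]$, and the remainder smoother and controlled via a fixed-point iteration. Letting $N \to \infty$ then produces $g$ on $[0, \tau]$ together with convergence $g_N \to g$ in the appropriate para-controlled topology. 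By continuity of the map $(A, g) \mapsto A^g = \Ad_g A - (dg) g^{-1}$ in this topology, the LHS satisfies $(A_{\leq N})^{g_N} \to A^g$ in $C_t^0 \Cs_x^{-\kappa}$.

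\textbf{Parts \ref{introduction:item-gauge-2}, \ref{introduction:item-gauge-3} and the RHS.} The remaining task is to show that the RHS converges and to identify its limit. First, I would construct $\Ad_g \xi$ as the limit of $\Ad_{g_N} P_{\leq N} \xi$ in $\Cs_{tx}^{(-1/2-100\kappa, -1-100\kappa)}$. This is the principal obstacle: classically, $g \cdot \xi$ is ill-defined since the spatial regularities sum to $-2\kappa < 0$. Substituting the para-controlled decomposition of $g_N$ into $\Ad_{g_N} P_{\leq N} \xi$, the product splits into (a) new stochastic objects pairing Wiener chaoses built from $\xi$ with $\xi$ itself, whose limits are extracted via Wick renormalization and estimated by the same hypercontractivity and graphical bounds underlying Theorem \ref{intro:thm-lwp}; and (b) products of smoother remainder pieces with $\xi$, which are classical because the sum of regularities is positive. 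Next, I would show that the solution map $\bA(A_0^{g_0}, \cdot)$ is continuous at $\Ad_g \xi$ with respect to this convergence, via a variant of the para-controlled analysis of Theorem \ref{intro:thm-lwp} adapted to the rougher driving term. Combined with the convergence of the LHS, the deterministic identity at finite $N$ then yields $A^g = \bA(A_0^{g_0}, \Ad_g \xi)$. A coordinate-invariant treatment of the resonant pairings involving $g$, in the spirit of Steps \ref{item:tensor}--\ref{item:insert} from the introduction, will be essential to keep the stochastic bookkeeping tractable.
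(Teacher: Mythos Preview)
Your proposal has two genuine gaps.

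\textbf{The equation for $g$ is not singular.} You claim that \eqref{eq:intro-gauge-transformation} requires a para-controlled Ansatz because $[\Ad_g A, (dg)g^{-1}]$ multiplies two factors of spatial regularity $-\kappa$. This is only the situation at $t=0$. Parabolic smoothing puts $g$ in $C_t^0 \Cs_x^{1-\kappa} \cap \Wc^{3/2-\kappa,1/4}$ (see the paragraph after \eqref{eq:gauge-transformation}), so at positive times $(dg)g^{-1}$ has regularity $1/2-\kappa$ with an integrable time weight, and the product with $\Ad_g A \in C_t^0\Cs_x^{-\kappa}$ is classical. The paper solves for $g$ directly by standard local theory for nonlinear parabolic equations, with no para-controlled structure on $g$ whatsoever. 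Your proposed decomposition of $g_N$ is unnecessary machinery and, more importantly, leads you into the next gap.

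\textbf{The core difficulty is not ``continuity of $\bA$''.} By definition, $\bA(A_0^{g_0},\Ad_g\xi) = \lim_N \bA(A_0^{g_0}, P_{\leq N}(\Ad_g\xi))$, whereas your finite-$N$ identity yields $\bA(A_0^{g_0},\Ad_{g_N}P_{\leq N}\xi)$. The genuine work is to show that these two sequences have the same limit; see \eqref{eq:intro-limits-equal}. This cannot be phrased as continuity of $\bA$ in a Besov topology on $\eta$: the para-controlled machinery needs convergence of \emph{enhanced data sets}, not of noises. What must be controlled is the commutator $\Ad_g(P_{\leq N}\xi) - P_{\leq N}(\Ad_g\xi)$ with enough precision to push it through the entire enhanced data set (Definition \ref{objects:def-enhanced}). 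Your plan to ``substitute the para-controlled decomposition of $g_N$ into $\Ad_{g_N}P_{\leq N}\xi$ and Wick-renormalize'' does not address this: $g$ depends on the full nonlinear solution $A$, not just on $\linear$ and $\quadratic$, so you cannot reduce to pure stochastic objects.

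The paper takes a different route that sidesteps both issues. It works with the limiting $g$ throughout and introduces an auxiliary flow $\gauged{A}_{\leq N}$ driven by $\Ad_g^{-1}P_{\leq N}(\Ad_g\xi)$ plus a correction term (equation \eqref{eq:gauged-A}), chosen so that $(\gauged{A}_{\leq N})^g = \bA(A_0^{g_0},P_{\leq N}(\Ad_g\xi))$ exactly. Gauge-covariance then reduces to showing $\gauged{A}_{\leq N}-A_{\leq N}\to 0$, which is Proposition \ref{prop:main-gauge-covariance}. This in turn rests on a quantitative expansion of $\Ad_g^{-1}P_{\leq N}(\Ad_g\xi) - P_{\leq N}\xi$ (Proposition \ref{prop:linear-objects-close-gauge-covariance}), whose leading term $[h_j\parall Q_{>N}^j\linear]$ produces new resonances that cancel exactly in the combined quadratic object (Lemma \ref{gauged:lem-quadratic} and the proof of Proposition \ref{gauged:prop-enhanced}). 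The advantage over your (and CCHS's) strategy is explained in Remark \ref{remark:comparison-cchs-gauge-covariance}: by comparing with $P_{\leq N}\xi$ rather than with $P_{\leq N}(\Ad_g\xi)$, one avoids a coupled $(A,g)$ system and can reuse the local theory of Theorem \ref{intro:thm-lwp} almost verbatim.
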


\begin{remark}\label{intro:rem-gauge-regularity}
Given the space $\Cs_x^{-\kappa}(\T^2 \ra \frkg^2)$ of connections, the natural associated space of gauge transformations is $\Cs_x^{1-\kappa}(\T^2 \ra G)$, since then $(A, g) \mapsto A^g$ is a map $\Cs_x^{-\kappa}(\T^2 \ra \frkg^2) \times \Cs_x^{1-\kappa}(\T^2 \ra G) \ra \Cs_x^{-\kappa}(\T^2 \ra \frkg^2)$.
\end{remark}

\begin{remark}
As will be clear from the proof, the claims in \ref{introduction:item-gauge-2} and \ref{introduction:item-gauge-3} hold for any time $T$ less than the maximum simultaneous time of existence of $A$ and $g$.
\end{remark}

\begin{remark}
Since $g$ satisfies \eqref{eq:intro-gauge-transformation}, it is an adapted process with respect to the filtration generated by $\xi$. At least formally, this implies that $\Ad_g \xi \stackrel{d}{=} \xi$. Therefore, in principle, the gauge covariance statement of Theorem \ref{thm:gauge-covariance} should lead to a result about gauge-covariance in law of solutions to the stochastic Yang-Mills equation, similar to the gauge-covariance results of \cite{CCHS22, CCHS22+}. However, since $A$ and $g$ may blow up in finite time,  making this precise is slightly technical. Since the main focus of our article lies on PDE-techniques, and Theorem \ref{thm:gauge-covariance} already addresses PDE-aspects of gauge-covariance, we omit such a statement here.
\end{remark}

The main claim of Theorem \ref{thm:gauge-covariance} is \ref{introduction:item-gauge-3}, which is the actual statement of gauge-covariance. Indeed, in the course of proving this point, \ref{introduction:item-gauge-2} will also directly follow (and \ref{introduction:item-gauge-1} will just follow from Theorem \ref{intro:thm-lwp} and standard local well-posedness theory for nonlinear parabolic equations). The main difficulty in proving gauge-covariance is that solutions to the stochastic Yang-Mills equation are defined as limits $\bA(A_0, \xi) = \lim_{N \toinf} \bA(A_0, P_{\leq N} \xi)$ of solutions to mollified equations. When we apply the computation which gives \eqref{eq:intro-gauge-transformation} at finite $N$, we obtain\footnote{To be precise, this is not exactly true, since $g$ is defined in terms of $A = \bA(A_0, \xi)$ and not $A_{\leq N} = \bA(A_0, P_{\leq N} \xi)$. However, it is approximately true, so we ignore it here for simplicity.} that $\bA(A_0, P_{\leq N} \xi)^g = \bA\big(A_0^{g_0}, \Ad_g (P_{\leq N} \xi)\big)$. Thus in order to show that $\bA(A_0, \xi)^g = \bA(A_0^{g_0}, \Ad_g \xi)$, we need to show that
\beq\label{eq:intro-limits-equal} \lim_{N \toinf} \bA\big(A_0^{g_0}, \Ad_g (P_{\leq N} \xi)\big) = \lim_{N \toinf} \bA\big(A_0^{g_0}, P_{\leq N} (\Ad_g \xi)\big).\eeq
Ultimately, the key difficulty is to understand the behavior of $\Ad_g (P_{\leq N} \xi) - P_{\leq N}(\Ad_g \xi)$. At finite $N$, one does not expect this difference to be zero, because $\Ad_g$ and $P_{\leq N}$ do not in general commute. On the other hand, since mollification by $P_{\leq N}$ looks more and more like the identity as $N$ increases, we should expect that in the $N \toinf$ limit, $P_{\leq N}$ and $\mrm{Ad}_g$ {\it do commute}. It would perhaps not be too hard to show some qualitative form of commutativity, such as the convergence of $\Ad_g (P_{\leq N} \xi) - P_{\leq N}(\Ad_g \xi)$ to zero in the sense of space-time distributions. However, the claim \eqref{eq:intro-limits-equal} requires a more precise quantitative version of this approximate commutativity at finite values of $N$ (Proposition \ref{prop:linear-objects-close-gauge-covariance}), which is the most technical part of the article. Once this is done, we can rely on the local well-posedness theory from the proof of Theorem \ref{intro:thm-lwp} in order to obtain \eqref{eq:intro-limits-equal}. For a more detailed discussion, we refer the reader to the beginning of Section \ref{section:gauge-covariance}.

\begin{remark}[Comparison with \cite{CCHS22}]\label{remark:intro-cchs-comparision-gauge-covariance}
The claim \ref{introduction:item-gauge-3} in Theorem \ref{thm:gauge-covariance} corresponds to \cite[Theorem 2.9(i)]{CCHS22}, and in particular the statement in Theorem 2.9(i) that $(\bar{A}, [\bar{g}])$, $(B, [g])$ converge in probability to the same limit. On the other hand, our proof strategy is quite different from that of \cite{CCHS22}. At a basic level, the difference is that \cite{CCHS22} compares 
$\mrm{Ad}_g (P_{\leq N} \xi)$ with $P_{\leq N}(\mrm{Ad}_g \xi)$, whereas this article compares $\Ad_g^{-1} P_{\leq N}(\Ad_g \xi)$ with $P_{\leq N} \xi$. 
In the former approach, one is led to consider a system of equations involving the connection $A$ and the gauge transformation $g$. In our approach, we avoid having to consider such a system, which we find to be more convenient for our purposes, since it allows us to rely more on the local well-posedness theory from the proof of Theorem \ref{intro:thm-lwp}. For more details, see Remark \ref{remark:comparison-cchs-gauge-covariance}.

Another difference with \cite{CCHS22} is that \eqref{intro:eq-AN} does not contain a renormalization, whereas \cite[(1.8), (2.5), and (2.6)]{CCHS22} contain additional counterterms. The reason is that our approximation of $\xi$ only involves a mollification in the spatial variables, but does not include a mollification in the time variable, which eliminates a probabilistic resonance. For a more detailed discussion, we refer to Remark \ref{remark:no-extra-resonace} below.

Lastly, it would be natural to expect that the solution we constructed in Theorem \ref{intro:thm-lwp}, shown to be gauge-covariant in Theorem \ref{thm:gauge-covariance}, coincides with the gauge-covariant solution constructed in \cite[Theorem 2.9]{CCHS22}. The techniques of \cite[Section 8]{CS23} may possibly be relevant to this problem, but we do not pursue this further here.
\end{remark}

\subsection{Open problems} 

We now briefly discuss open problems related to the stochastic Yang-Mills heat equation \eqref{intro:eq-SYM-coordinate-free}. 

\subsubsection{Global well-posedness of the stochastic Yang-Mills heat equation}\label{section:global}

While Theorem \ref{intro:thm-lwp} gives the local existence of solutions to \eqref{intro:eq-SYM-coordinate-free}, we do not address the issue of global existence. Even in the deterministic setting, global existence of solutions to the Yang-Mills heat flow \eqref{eq:ricci-deturck} (note this includes the DeTurck term), is a difficult problem.
In fact, when the initial data is a pure gauge, i.e. is of the form $A_0 = 0^{g_0} = -(dg_{0}) g_0^{-1}$ (here $g_0 : \T^2 \ra G$ is a gauge transformation), then the Yang-Mills heat flow evolves as $- (dg(t)) g(t)^{-1}$, where $g(t)$ evolves as harmonic map heat flow with initial data $g_0$. Examples of finite-time blowup of harmonic map heat flow with domain $D^2$ (the 2D unit disk) and range $S^2$ have been constructed \cite{CDY1992}, and thus one might expect to be able to do the same for maps $\T^2 \ra G$, which would then show that the Yang-Mills heat flow can have finite-time blowup in 2D. Thus global existence of the Yang-Mills heat flow should be interpreted as on the orbit space, i.e. as global existence modulo gauge transformations (on the other hand, note that the Yang-Mills gradient flow \eqref{intro:eq-gradient-flow} exists globally in dimensions $2, 3$ \cite{Rad92} and $4$ \cite{Wal19}, and is gauge equivalent to the Yang-Mills heat flow, whenever the latter exists).

In a recent breakthrough \cite{CS23}, Chevyrev and Shen proved the invariance of the two-dimensional Yang-Mills measure under the two-dimensional stochastic Yang-Mills flow \eqref{intro:eq-SYM-coordinate-free}. Their argument relies on a lattice approximation, properties of the two-dimensional Yang-Mills measure, and Bourgain's globalization argument \cite{B94}. As a corollary \cite[Corollary 2.19]{CS23}, they also obtain the global existence of solutions to the stochastic Yang-Mills heat flow on the orbit space, i.e., modulo gauge transformations.

Despite the significant advances in \cite{CS23}, it is still an interesting problem to prove the global existence of solutions to \eqref{intro:eq-SYM-coordinate-free} using only PDE arguments (and without using invariance). This approach would be closer to the spirit of stochastic quantization \cite{PW81}, and may lead to a new PDE-based construction of the two-dimensional Yang-Mills measure. In the past, the para-controlled approach has been successfully used to prove the global existence for solutions of scalar singular stochastic partial differential equations, see e.g. \cite{MW17,HR23}. We therefore believe that the para-controlled approach may also be helpful in proving the global existence of solutions to \eqref{intro:eq-SYM-coordinate-free}, but this is of course a difficult problem.

\subsubsection{Random geometric wave equations} 

The hyperbolic Yang-Mills equation, which is the hyperbolic counterpart to the Yang-Mills gradient flow \eqref{intro:eq-gradient-flow}, is given by
\begin{equation}\label{intro:eq-hyperbolic-YM}
D_\alpha F^{\alpha \beta}=0 \qquad (t,x) \in \mathbb{R}_t\times \mathbb{T}^d_x.
\end{equation}
Here, $0\leq \alpha,\beta\leq d$ are space-time coordinates which are raised and lowered with respect to the Minkowski metric. The hyperbolic Yang-Mills equation has been studied extensively and we refer the reader to \cite{KM95,KT99,O15,OT19} and the references therein. Motivated by the aforementioned progress on stochastic geometric parabolic equations \cite{CC21a,CC21b,CCHS22,CCHS22+,CS23,S21}, it is an interesting problem to prove the well-posedness of stochastic versions of \eqref{intro:eq-hyperbolic-YM}, which may include random initial data and/or stochastic forcing. At the moment, this problem is open in all spatial dimensions. \\

In \cite{BR23}, the first author and Rodnianski considered a simpler model for a gauge-covariant wave equation in two dimensions \cite[(1.3)]{BR23}, which is perhaps the model most closely related to \eqref{intro:eq-SYM-coordinate-free} in the dispersive literature. This model is limited to the Abelian setting $G=U(1)$, but includes an additional Higgs-field $\phi$. The main result \cite[Theorem 1.3]{BR23} proves the well-posedness of this gauge-covariant wave equation, and therefore corresponds to Theorem \ref{intro:thm-lwp}. As explained in \cite[Theorem 1.7]{BR23}, there remain significant challenges in treating stochastic versions of the full hyperbolic Yang-Mills equation \eqref{intro:eq-hyperbolic-YM}, and further developments await. 
We also point the reader to \cite{KLS20}, which treats the Maxwell-Klein-Gordon equation in four dimensions with random initial data below the energy space, and \cite{BLS24,BJ25}, which consider the $(1+1)$-dimensional wave maps with Brownian initial data. 

\subsection{Overview of the article}

At the end of this introduction, we include a short overview of the article, which also serves as a reader's guide. In Section \ref{section:preliminaries}, we first introduce regularity parameters which will be used throughout the article. We then recall basic results and notation from both differential geometry and harmonic analysis. Given the breadth of this material, we encourage all readers of this article to read (or at least skim) this section. In Section \ref{section:para-controlled-ansatz}, we introduce the para-controlled Ansatz for  \eqref{intro:eq-AN}. While our treatment of the para-controlled Ansatz is self-contained, we still emphasize that \eqref{intro:eq-AN} is among the more difficult models to which para-controlled calculus has been applied. For readers not familiar with para-controlled calculus, it may therefore be helpful to consult \cite{CC18,GIP15} before reading Section \ref{section:para-controlled-ansatz}. In Section~\ref{section:multiple-stochastic-integrals}, we set-up vector-valued multiple stochastic integrals and, in particular, prove a product formula for vector-valued multiple stochastic integrals (Lemma \ref{lemma:multiple-stochastic-integral-tensor-product}). The important aspect of this product formula is that it is coordinate-independent, i.e., it does not require a choice of basis for the Lie-algebra~$\frkg$. In Section~\ref{section:objects}, we estimate the stochastic objects of~\eqref{intro:eq-AN}. The main result of this section, which collects all stochastic estimates, is contained in Proposition~\ref{objects:prop-enhanced}. We note that our coordinate-independent approach to these stochastic estimates, which was briefly discussed below Theorem~\ref{intro:thm-lwp}, is best illustrated by Lemma~\ref{objects:lem-without} and  Lemma~\ref{objects:lem-trace-identities}, as well as their proofs. The rest of Section~\ref{section:objects} may be technically more difficult than Lemma~\ref{objects:lem-without} and  Lemma~\ref{objects:lem-trace-identities}, but it is based on similar ideas and can therefore be skipped on first reading. In Section~\ref{section:nonlinear-estimates}, we prove the well-posedness of the para-controlled stochastic Yang-Mills heat equation (from Section \ref{section:para-controlled-ansatz}), which directly implies Theorem \ref{intro:thm-lwp}. The proof consists of a standard combination of para-product estimates and our estimates of the stochastic objects (from Section \ref{section:objects}). We suggest that readers familiar with para-controlled calculus only read the statement of Proposition \ref{nonlinear:prop-wellposedness-para} and skip the rest of this section. Finally, in Section \ref{section:gauge-covariance}, we prove the gauge-covariance of solutions of \eqref{intro:eq-SYM}, i.e., Theorem \ref{thm:gauge-covariance}. Since the main idea of the proof has already been explained below Theorem \ref{thm:gauge-covariance}, we now only describe how the different technical aspects of the proof are distributed over Section \ref{section:gauge-covariance}. At the beginning of Section \ref{section:gauge-covariance}, we introduce~\eqref{eq:gauged-A}, which is a variant of the stochastic Yang-Mills equation \eqref{intro:eq-AN} containing the stochastic forcing term $\mrm{Ad}_{g}^{-1} P_{\leq N}(\mrm{Ad}_{g} \xi)$. The solution of \eqref{eq:gauged-A} is a gauge-transformation of $\bA\big(A_0^{g_0}, P_{\leq N} (\Ad_g \xi)\big)$, and understanding the solution of \eqref{eq:gauged-A} is the main ingredient in the proof of Theorem \ref{thm:gauge-covariance}. We also state the main proposition of this section (Proposition \ref{prop:main-gauge-covariance}). In Subsection \ref{section:space-time-besov-space}, we introduce space-time Besov spaces and para-product operators, which are an important technical ingredient. In Subsection~\ref{section:stochastic-estimate-gauge-covariance}, we prove additional estimates for our stochastic objects (from Section \ref{section:objects}). The most important result in this subsection is Lemma~\ref{lemma:Duhamel-linear-times-noise-besov-space-bound}, which is responsible for the absence of renormalization terms in \eqref{intro:eq-AN} (recall Remark~\ref{remark:intro-cchs-comparision-gauge-covariance}). In Subsection \ref{section:glinear}, we then examine the linear stochastic object corresponding to $\mrm{Ad}_{g}^{-1} P_{\leq N}(\mrm{Ad}_{g} \xi)$, which we refer to as the modified linear stochastic object. The main result of this subsection is Proposition \ref{prop:linear-objects-close-gauge-covariance}, which contains an expansion of the modified linear stochastic object in terms of the original stochastic objects. In Subsection~\ref{section:gauged-transformed-enhanced-data-set}, we then control the modified enhanced data set. The main ingredients are Proposition~\ref{prop:linear-objects-close-gauge-covariance} and an important cancellation, see the proof of Proposition~\ref{gauged:prop-enhanced}. Finally, in  Subsection~\ref{section:contraction-mapping-gauge-covariance}, we combine our earlier estimates with a contraction-mapping argument to obtain Proposition~\ref{prop:main-gauge-covariance}. We believe that it is difficult to read the individual parts of Section \ref{section:gauge-covariance} separately, and recommend that this section is read linearly.\\

\textbf{Acknowledgements:} The authors thank Hao Shen for several helpful conversations. We also thank the anonymous referee for valuable corrections and suggestions. S.C. was supported by the Minerva Research Foundation. 
\section{Preliminaries}\label{section:preliminaries}

\subsection{Parameters}\label{section:parameters}

We first introduce the following standard notation.

\begin{notation}\label{notation:lessim}
For quantities $A, B > 0$, we say $A \lesssim B$ (resp. $A \gtrsim B$) if there exists a constant $C > 0$ such that $A \leq C B$ (resp. $A \geq C^{-1} B$). The precise value of the constant $C$ can change from line to line. We say that $A \sim B$ if $A \lesssim B$ and $A \gtrsim B$.
\end{notation}

Next, in order to measure regularity and/or decay in frequency space, we introduce two parameters $\kappa,\eta\in(0,1)$ which satisfy 
\begin{equation}\label{prelim:eq-parameters}
0< \eta \ll \kappa \ll 1.
\end{equation}
For notational convenience, we also set 
\begin{equation}
\eta^\prime = \frac{\eta}{100}, ~~\kfactors = 100 \kappa,
\end{equation}
which will primarily be used in Section \ref{section:gauge-covariance}.

\subsection{Explicit geometric formulas}

For the reader's convenience we give explicit coordinate-wise formulas for the various geometric objects that appear in this paper. In the following, given a positive integer $n$, define $[n] := \{1, \ldots, n\}$.
The curvature $F_A : \T^2 \ra \frkg^{2 \times 2}$ of a connection $A$ is a 2-form given by
\[ (F_A)_{ij} = \ptl_i A_j - \ptl_j A_i + [A_i, A_j], ~~ i, j \in [d].\]
Given a connection $A$ and a 2-form $F$, we have that $D_A^* F : \T^2 \ra \frkg^2$ is a 1-form given by
\[ (D_A^* F)_i =  \ptl^j F_{ij} + [A^j, F_{ij}], ~~ i \in [d]\]
Given connections $A, B$, we have that $D_A^* B : \T^2 \ra \frkg$ is a 0-form given by
\[ D_A^* B = -\ptl_j B^j - [A_j, B_j]. \]
Given a connection $A$ and a 0-form $f$, we have that $D_A f : \T^2 \ra \frkg^2$ is a connection given by
\[ (D_A f)_i = \ptl_i f + [A_i, f], ~~ i \in [d].\]
We have the following covariance properties for the various quantities we just defined (recall the definition of gauge transformation \eqref{eq:gauge-transformation-def}):
\beq\label{eq:covariance-formulas} \Ad_g (F_A) = F_{A^g}, ~~ \Ad_g (D_A^* F) = D_{A^g}^* (\Ad_g F), ~~ \Ad_g (D_A^* B) = D_{A^g}^* (\Ad_g B), ~~ \Ad_g (D_A f) = D_{A^g} (\Ad_g f). \eeq

\subsection{Harmonic analysis}
For notational convenience, we define the complex exponential $\e\colon \R \rightarrow \C$ by $\e(y)=\exp(iy)$ for all $y\in \R$. Furthermore, for all $n\in \Z^2$, we define $\e_{n}\colon \T^2 \rightarrow \C$ by $\e_{n}(x):=  \e(n\cdot x)$, and we let $\langle n \rangle := \sqrt{1 + |n|^2}$, where $|n|$ is the Euclidean norm of $n$.
For any distribution $\phi \colon \T^2\rightarrow \C$, we define the Fourier transform $\widehat{\phi}\colon \Z^2\rightarrow \C$ by 
\begin{equation}
\widehat{\phi}(n) = \frac{1}{(2\pi)^2} \int_{\T^2} \phi(x) \overline{\e_{n}(x)} \dx
\end{equation}
for all $n\in \Z^2$. 
Let $\rho \colon \R^2 \rightarrow [0,1]$ be a smooth even cut-off function which is radially non-increasing, and which satisfies
\begin{equation*}
\rho\big|_{[-1,1]^2}=1 \qquad \text{and} \qquad 
\rho\big|_{\R^2 \backslash [-9/8,9/8]^2}=0.
\end{equation*}
For all dyadic $N\geq 1$, we define $\rho_{\leq N}\colon \R^2 \rightarrow \C$ by 
\begin{equation*}
\rho_{\leq N} (\xi) = \rho \big( \xi / N\big).    
\end{equation*}
Furthermore, we define
\begin{equation*}
\rho_1 = \rho_{\leq 1} \qquad \text{and} \qquad 
\rho_{N} = \rho_{\leq N} - \rho_{\leq N/2} 
\quad \text{for all } N\geq 2.
\end{equation*}
Note that $\rho_N \geq 0$ since $\rho$ is radially non-increasing. Finally, we define the Littlewood-Paley operators $(P_{\leq N})_N$ and $(P_{N})_N$ by 
\begin{equation}\label{prelim:eq-LWP}
\widehat{P_{\leq N} \phi}(n) = \rho_{\leq N}(n) \widehat{\phi}(n) 
\qquad \text{and} \qquad
\widehat{P_{N} \phi}(n) = \rho_{N}(n) \widehat{\phi}(n)
\end{equation}
for all distributions $\phi \colon \T^2 \rightarrow \C$ and all $n\in \Z^2$. The Littlewood-Paley operators in \eqref{prelim:eq-LWP} can also be extended from scalar-valued to $\frkg$-valued functions using a decomposition based on any basis of $\frkg$. Next, we recall the definition of H\"{o}lder-Besov spaces using the Littlewood-Paley operators.

\begin{definition}[H\"{o}lder-Besov space]\label{prelim:def-Hoelder}
For $\alpha \in \R$, define the H\"{o}lder-Besov norm
\[ \|f\|_{\Cs^\alpha} := \sup_{N \in \dyadic} N^\alpha \|P_N f\|_{L^\infty}. \]
We then define the space $\Cs^\alpha$ to be the closure of $C^\infty(\T^2)$ under the $\Cs^\alpha$-norm. We will also often write $\Cs_x^\alpha$ instead of $\Cs^\alpha$.
\end{definition}

Equipped with the Littlewood-Paley operators, we can now define the para-product operators. To this end, we first introduce the following notation.

\begin{notation}
For frequency scales $M, N \in \dyadic$, we say that $M \ll N$ (resp. $M \gg N$) if $M \leq N/16$ (resp. $M \geq 16N$). We say that $M \sim N$ if $N/16 < M < 16N$, and otherwise we write $M \nsim N$.
\end{notation}

Since symbols like $\sim$ have been defined slightly differently for general quantities in Notation \ref{notation:lessim}, we are committing a slight abuse of notation here, but it should always be clear from context which definition is being used.

\begin{definition}[Para-product operators]
For all smooth functions $\phi,\psi \colon \T^2 \rightarrow \C$, we define
\begin{alignat}{2}
\phi \parall \psi &:= \sum_{\substack{M,N\colon \\ M \ll N}} 
P_M \phi \, P_N \psi, \hspace{10ex}
\phi \parasim \psi &:= \sum_{\substack{M,N\colon \\ M \sim N}} 
P_M \phi \, P_N \psi, \\  
\phi \paragg \psi &:= \sum_{\substack{M,N\colon \\ M \gg N}} 
P_M \phi \, P_N \psi, \hspace{10ex}
\phi \paransim \psi &:= \sum_{\substack{M,N\colon \\ M \not \sim N}} 
P_M \phi \, P_N \psi.
\end{alignat}
Similarly, for all smooth $\frkg$-valued functions $\Phi,\Psi\colon \T^2 \rightarrow \frkg$, we define
\begin{alignat}{2}
 \Phi \parall \Psi &:= \sum_{\substack{M,N\colon \\ M \ll N}} 
 P_M \Phi \otimes P_N \Psi , \hspace{10ex}
 \Phi \parasim \Psi  &:= \sum_{\substack{M,N\colon \\ M \sim N}} 
 P_M \Phi \otimes P_N \Psi , \label{prelim:eq-para-g-1} \\  
 \Phi \paragg \Psi  &:= \sum_{\substack{M,N\colon \\ M \gg N}} 
 P_M \Phi \otimes P_N \Psi , \hspace{10ex}
 \Phi \paransim \Psi  &:= \sum_{\substack{M,N\colon \\ M \not \sim N}}
 P_M \Phi \otimes P_N \Psi . \label{prelim:eq-para-g-2} 
\end{alignat}
We emphasize that \eqref{prelim:eq-para-g-1} and \eqref{prelim:eq-para-g-2} are defined using tensor products of $P_M \Phi$ and $P_N \Psi$ in $\frkg$, and therefore the para-products in \eqref{prelim:eq-para-g-1} and \eqref{prelim:eq-para-g-2} map into $\frkg\otimes \frkg$. Since the paraproducts in \eqref{prelim:eq-para-g-1} and \eqref{prelim:eq-para-g-2} will often appear in conjunction with Lie brackets, we also write 
\begin{alignat}{2}
\big[ \Phi \parall \Psi \big]&:= \sum_{\substack{M,N\colon \\ M \ll N}} 
\big[ P_M \Phi , P_N \Psi \big], \hspace{10ex}
\big[ \Phi \parasim \Psi \big] &:= \sum_{\substack{M,N\colon \\ M \sim N}} 
\big[ P_M \Phi , P_N \Psi \big], \\  
\big[ \Phi \paragg \Psi \big] &:= \sum_{\substack{M,N\colon \\ M \gg N}} 
\big[ P_M \Phi , P_N \Psi \big], \hspace{10ex}
\big[ \Phi \paransim \Psi \big] &:= \sum_{\substack{M,N\colon \\ M \not \sim N}}
\big[ P_M \Phi , P_N \Psi \big].
\end{alignat}
\end{definition}

In the next lemma, we record estimates for the product and para-product operators in H\"{o}lder spaces. 

\begin{lemma}[Para-product estimates]\label{prelim:lem-para-product}
Let $\alpha,\beta,\gamma \in \R \backslash \{0\}$ and let $\phi,\psi \colon \T^2\rightarrow \C$. Then, we have the following estimates:
\begin{enumerate}[label=(\roman*)]
    \item (Product estimate) If $\gamma \leq \min(\alpha,\beta)$ and $\alpha+\beta>0$, then 
    \begin{equation*}
    \big\| \phi  \psi \big\|_{\Cs_x^\gamma}
    \lesssim \big\| \phi \big\|_{\Cs_x^\alpha} 
    \big\| \psi \big\|_{\Cs_x^\beta}. 
    \end{equation*}
    \item (Low$\times$high-estimate) If $\gamma \leq \min(\alpha,0)+\beta$, then
    \begin{equation*}
    \big\| \phi \parall \psi \big\|_{\Cs_x^\gamma}
    \lesssim \big\| \phi \big\|_{\Cs_x^\alpha} 
    \big\| \psi \big\|_{\Cs_x^\beta}. 
    \end{equation*}
    \item (High$\times$high-estimate) If $\gamma \leq \alpha+\beta$ and $\alpha+\beta>0$, then
    \begin{equation*}
    \big\| \phi \parasim \psi \big\|_{\Cs_x^\gamma}
    \lesssim \big\| \phi \big\|_{\Cs_x^\alpha} 
    \big\| \psi \big\|_{\Cs_x^\beta}. 
    \end{equation*}
    \item (High$\times$low-estimate) If $\gamma \leq \alpha+\min(0,\beta)$, then
    \begin{equation*}
    \big\| \phi \paragg \psi \big\|_{\Cs_x^\gamma}
    \lesssim \big\| \phi \big\|_{\Cs_x^\alpha} 
    \big\| \psi \big\|_{\Cs_x^\beta}. 
    \end{equation*}
\end{enumerate}
\end{lemma}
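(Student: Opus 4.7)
The plan is to deduce estimate (i) from (ii)–(iv) via Bony's decomposition $\phi\psi = \phi\parall\psi + \phi\parasim\psi + \phi\paragg\psi$, and to prove each paraproduct estimate by frequency-space bookkeeping in the Littlewood–Paley representation. The key identifications I will exploit are the norm equivalence $\|f\|_{\Cs_x^s} \sim \sup_{K} K^{s}\|P_K f\|_{L^\infty_x}$, the fact that for $\alpha \neq 0$ one has the Bernstein-type control $\|P_M f\|_{L^\infty_x} \lesssim M^{-\min(\alpha,0)}\|f\|_{\Cs_x^\alpha}$ (with no loss because $\alpha \neq 0$ rules out the logarithmic borderline), and the standard spectral-support observations that $P_M\phi \cdot P_N\psi$ is Fourier-supported in an annulus $|\xi|\sim N$ when $M\ll N$ and in a ball $|\xi|\lesssim M$ when $M\sim N$.

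For the low-high estimate (ii), only frequencies $N\sim K$ contribute to $P_K(\phi\parall\psi)$, so
\begin{equation*}
\|P_K(\phi\parall\psi)\|_{L^\infty_x} \lesssim \sum_{M\ll K} \|P_M\phi\|_{L^\infty_x}\|P_N\psi\|_{L^\infty_x} \lesssim \Big(\sum_{M\ll K} M^{-\min(\alpha,0)}\Big) K^{-\beta} \|\phi\|_{\Cs_x^\alpha}\|\psi\|_{\Cs_x^\beta}.
\end{equation*}
Since $\alpha\neq 0$, the geometric sum in $M$ is bounded by $K^{-\min(\alpha,0)}$, and the hypothesis $\gamma \leq \min(\alpha,0)+\beta$ yields the claim after multiplying by $K^\gamma$ and taking the supremum in $K$. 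Estimate (iv) follows from the exact same argument, exchanging the roles of $\phi$ and $\psi$.

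For the high-high estimate (iii), the spectral-support observation forces only pairs with $M\sim N \gtrsim K$ to contribute to $P_K(\phi\parasim\psi)$, so
\begin{equation*}
\|P_K(\phi\parasim\psi)\|_{L^\infty_x} \lesssim \sum_{M\gtrsim K} \|P_M\phi\|_{L^\infty_x}\|P_N\psi\|_{L^\infty_x} \lesssim \Big(\sum_{M\gtrsim K} M^{-\alpha-\beta}\Big) \|\phi\|_{\Cs_x^\alpha}\|\psi\|_{\Cs_x^\beta}.
\end{equation*}
The condition $\alpha+\beta>0$ ensures that this geometric sum converges to $\sim K^{-\alpha-\beta}$, and then $\gamma \leq \alpha+\beta$ completes the bound.

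Finally, (i) follows by decomposing $\phi\psi = \phi\parall\psi + \phi\parasim\psi + \phi\paragg\psi$: under the hypothesis $\gamma \leq \min(\alpha,\beta)$ and $\alpha+\beta>0$, the exponent $\gamma$ is dominated by each of $\min(\alpha,0)+\beta$, $\alpha+\beta$, and $\alpha+\min(\beta,0)$, so applying (ii)–(iv) termwise produces the desired product estimate. The only technical subtleties are the edge cases where one of $\min(\alpha,0)$ or $\min(\beta,0)$ vanishes and the geometric sum would otherwise give a logarithmic factor; these are excluded by the assumption $\alpha,\beta,\gamma \in \R\setminus\{0\}$, so no deeper obstacle arises. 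These are the classical Bony paraproduct estimates and the proof is essentially bookkeeping once the Littlewood–Paley dictionary is fixed.
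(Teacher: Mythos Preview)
The paper states this lemma without proof, treating it as a standard result (these are the classical Bony paraproduct estimates). Your argument is the standard one and is essentially correct.

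One slip in the low$\times$high display: you bound $\|P_M\phi\|_{L^\infty_x}$ by $M^{-\min(\alpha,0)}\|\phi\|_{\Cs_x^\alpha}$ and then sum $\sum_{M\ll K} M^{-\min(\alpha,0)}$. For $\alpha>0$ this sum is $\sum_{M\ll K} 1 \sim \log K$, not $O(1)$, so the endpoint $\gamma=\beta$ would fail as written. The fix is to keep the sharp Littlewood--Paley bound $\|P_M\phi\|_{L^\infty_x}\lesssim M^{-\alpha}\|\phi\|_{\Cs_x^\alpha}$ and sum $\sum_{M\ll K} M^{-\alpha}$; this is $\lesssim K^{-\min(\alpha,0)}$ precisely when $\alpha\neq 0$, which is exactly the role of the hypothesis you identify in your closing paragraph. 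So the error is in the displayed formula rather than in your understanding, and with that correction the proof goes through.
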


\begin{lemma}[\protect{\cite[Lemma 2.4]{GIP15}}]\label{prelim:lem-lh-hh} 
Let $\alpha,\beta,\gamma \in \R$ be such that $\alpha<1$, $\alpha+\beta+\gamma>0$, and $\beta+\gamma<0$. Then, it holds for all $f,g,h\colon \T^2 \rightarrow \C$ that
\begin{equation}
\big\| \big( f \parall g \big) \parasim h - f \big( g \parasim h \big) \big\|_{\Cs^{\alpha+\beta+\gamma}}
\lesssim \big\| f \big\|_{\Cs^\alpha} \big\| g \big\|_{\Cs^\beta} \big\| h \big\|_{\Cs^\gamma}. 
\end{equation}
\end{lemma}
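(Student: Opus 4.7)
The plan is to follow the standard commutator expansion. Writing $S_{\ll M} := \sum_{M^\prime \ll M} P_{M^\prime}$ so that $f \parall g = \sum_M (S_{\ll M} f)(P_M g)$, the outer paraproduct expands as
\[
(f \parall g) \parasim h = \sum_{K \sim L} \sum_M P_K\!\big((S_{\ll M} f)(P_M g)\big) P_L h.
\]
The first key observation is a frequency-support calculation: since $S_{\ll M} f$ is Fourier-supported in a ball of radius $\ll M$ while $P_M g$ is Fourier-supported in an annulus of radius $\sim M$, the product $(S_{\ll M} f)(P_M g)$ is itself Fourier-supported in an annulus of radius $\sim M$. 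Consequently $P_K$ annihilates this product unless $K \sim M$, and for fixed $L \sim M$ the sum $\sum_{K \sim L} P_K$ reconstructs the identity on it. This collapses the display above to $\sum_{M \sim L} (S_{\ll M} f)(P_M g)(P_L h)$.

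Subtracting $f(g \parasim h) = \sum_{M \sim L} f (P_M g)(P_L h)$ and using the identity $\mathrm{Id} - S_{\ll M} = \sum_{L^\prime \gtrsim M} P_{L^\prime}$, I obtain the commutator formula
\[
(f \parall g) \parasim h - f(g \parasim h) = -\sum_{L^\prime \gtrsim M \sim L} (P_{L^\prime} f)(P_M g)(P_L h).
\]
To bound this in $\Cs^{\alpha+\beta+\gamma}$, I would estimate each Littlewood--Paley block $P_{N_0}$ of the right-hand side in $L^\infty$, splitting into two regimes. When $L^\prime \sim M \sim L$, the triple product sits at frequency $\lesssim M$, so a term-wise Bernstein bound produces $\lesssim M^{-(\alpha+\beta+\gamma)} \|f\|_{\Cs^\alpha} \|g\|_{\Cs^\beta} \|h\|_{\Cs^\gamma}$, which is the desired profile. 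When $L^\prime \gg M \sim L$, the product is at frequency $\sim L^\prime$, so fixing $L^\prime \sim N_0$ and summing over $M \ll N_0$ yields $\sum_{M \ll N_0} (L^\prime)^{-\alpha} M^{-(\beta+\gamma)}$; here the hypothesis $\beta + \gamma < 0$ ensures that the $M$-sum is dominated by its largest term $M \sim N_0$, giving a total bound of $N_0^{-(\alpha+\beta+\gamma)}$ and hence $O(1)$ control of $N_0^{\alpha+\beta+\gamma} \|P_{N_0}(\cdots)\|_\infty$.

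The main obstacle I anticipate is the careful bookkeeping of frequency supports: the schematic relations $K \sim L$, $L^\prime \gtrsim M$, and $M \sim L$ must be converted into precise inequalities involving ratios of dyadic scales and the implicit constants in the cutoff $\rho$. In particular, the claim that $P_K$ kills $(S_{\ll M} f)(P_M g)$ for $K \not\sim M$ requires a sufficient gap between the Fourier supports of $S_{\ll M} f$ and $P_M g$, which is built into the paper's convention that $\parall$ uses $M \ll N$ rather than $M \leq N$. The condition $\alpha + \beta + \gamma > 0$ fixes the target regularity and makes the estimate meaningful, while $\alpha < 1$ plays only a minor role, ensuring that the standard Besov-type bounds on $S_{\ll M} f$ and its differences behave as expected along the way.
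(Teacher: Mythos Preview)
The paper does not prove this lemma; it is simply quoted as \cite[Lemma~2.4]{GIP15}, so there is no proof in the paper to compare against.

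Your argument is the standard frequency-space proof of the paracontrolled commutator lemma and is essentially correct. Two small points worth tightening. First, the step where ``$\sum_{K \sim L} P_K$ reconstructs the identity on $(S_{\ll M}f)(P_M g)$'' requires that the relation $K \sim L$ (with $L \sim M$) is wide enough to cover the entire Fourier support of the product; this is a constant-tracking issue that depends on the precise thresholds in $\parall$ and $\parasim$, and you acknowledge as much in your final paragraph. Second, in the regime $L' \sim M \sim L$ the triple product has frequency $\lesssim M$, so after applying $P_{N_0}$ you must sum over all $M \gtrsim N_0$; it is exactly the hypothesis $\alpha+\beta+\gamma>0$ that makes this sum geometric and hence convergent. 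You allude to this but do not say it explicitly.

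Your final remark that $\alpha<1$ plays only a minor role is in fact an understatement for your approach: the frequency-space argument you sketch does not use it at all. The condition is natural if one instead proves the lemma via the physical-space increment bound $\|f(\cdot-y)-f\|_{L^\infty} \lesssim |y|^{\alpha}\|f\|_{\Cs^\alpha}$, which holds only for $\alpha\in(0,1)$.
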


\begin{lemma}\label{lemma:P-N-lo-hi-commutator}
Let $\alpha < 1$, $\beta \in \R$, $\delta > 0$. For $N \in \dyadic$, we have that
\[ \|P_N (f \parall g) - f \parall P_N g\|_{\Cs_x^{\alpha + \beta - \delta}} \lesssim N^{-\delta} \|f\|_{\Cs_x^\alpha} \|g\|_{\Cs_x^\beta}. \]
\end{lemma}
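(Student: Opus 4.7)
The plan is to reduce the commutator to a frequency-localized one and then apply a kernel-based mean-value estimate. The frequency supports in the paraproduct force only scales $K \sim N$ to contribute: since $P_M f \cdot P_K g$ has Fourier support in an annulus $\sim K$ whenever $M \ll K$, the operator $P_N$ annihilates every summand with $K \not\sim N$, and likewise $P_K P_N = 0$ unless $K \sim N$. Writing $S_{\ll K} := \sum_{M \ll K} P_M$, this yields the decomposition
\[
P_N(f \parall g) - f \parall P_N g \; = \; \sum_{K \sim N} \Big( P_N \bigl( S_{\ll K} f \cdot P_K g \bigr) - S_{\ll K} f \cdot P_N P_K g \Big) \; =: \; \sum_{K \sim N} C_K,
\]
a finite sum in which each $C_K$ is the commutator of $P_N$ with multiplication by the smooth low-frequency function $S_{\ll K} f$, applied to $P_K g$.

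The second step is a pointwise estimate on $C_K$. Writing $h = S_{\ll K} f$ and $u = P_K g$, and letting $K_N$ denote the convolution kernel of $P_N$ (with $|K_N(z)| \lesssim_m N^2 \langle N z \rangle^{-m}$ for every $m$), one has
\[
C_K(x) = \int K_N(x - y) \bigl( h(y) - h(x) \bigr) u(y) \, dy.
\]
The hypothesis $\alpha < 1$ enters through Bernstein's inequality:
\[
\| \nabla h \|_{L^\infty} \lesssim \sum_{M \ll K} M \cdot M^{-\alpha} \| f \|_{\Cs_x^\alpha} \lesssim K^{1-\alpha} \| f \|_{\Cs_x^\alpha}.
\]
Combined with the mean value theorem, $\int |K_N(z)| \, |z| \, dz \lesssim N^{-1}$, $\| P_K g \|_{L^\infty} \lesssim K^{-\beta} \| g \|_{\Cs_x^\beta}$, and $K \sim N$, this yields
\[
\| C_K \|_{L^\infty} \lesssim K^{1-\alpha} \cdot N^{-1} \cdot K^{-\beta} \| f \|_{\Cs_x^\alpha} \| g \|_{\Cs_x^\beta} \; \sim \; N^{-\alpha - \beta} \| f \|_{\Cs_x^\alpha} \| g \|_{\Cs_x^\beta}.
\]

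Finally, each $C_K$ is Fourier-localized in an annulus of size $\sim N$: the first summand by definition of $P_N$, and the second because $\widehat{S_{\ll K} f}$ is supported in a ball of radius $\ll K$ while $\widehat{P_N P_K g}$ lies in an annulus $\sim N \sim K$. A Bernstein inequality then upgrades the pointwise bound to
\[
\| C_K \|_{\Cs_x^{\alpha + \beta - \delta}} \lesssim N^{\alpha + \beta - \delta} \, \| C_K \|_{L^\infty} \lesssim N^{-\delta} \| f \|_{\Cs_x^\alpha} \| g \|_{\Cs_x^\beta},
\]
and summing over the $O(1)$ scales $K \sim N$ closes the argument (the case $N = 1$ is handled by the trivial bound on each of the two terms separately). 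The main obstacle lies in the low-regularity range $\alpha \leq 0$, where $S_{\ll K} f$ is no longer H\"older-regular and the naive estimate $|h(y) - h(x)| \lesssim \| h \|_{\Cs_x^\alpha} |y - x|^\alpha$ is unavailable; the gradient estimate circumvents this at the cost of a factor $K^{1-\alpha}$, precisely absorbed by the $N^{-1}$ kernel gain, which is the source of the restriction $\alpha < 1$.
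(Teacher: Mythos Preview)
Your proof is correct and follows essentially the same approach as the paper: reduce to the finitely many scales $K \sim N$, use the convolution-kernel representation to write the commutator as $\int \widecheck{\rho}_N(y)\,(h(\cdot-y)-h)\,\Theta_y P_K g\,dy$, bound the increment of $h = P_{\ll K} f$ via the gradient estimate $\|\nabla P_{\ll K} f\|_{L^\infty} \lesssim K^{1-\alpha}\|f\|_{\Cs_x^\alpha}$ (this is where $\alpha<1$ enters), and use $\int |\widecheck{\rho}_N(y)|\,|y|\,dy \lesssim N^{-1}$ together with the frequency localization at $\sim N$ to pass from $L^\infty$ to $\Cs_x^{\alpha+\beta-\delta}$.
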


In the following definition, we introduce the Fourier multipliers $(Q^j_{>N})$ which will play an important role in our proof of gauge-covariance (see e.g. Proposition \ref{prop:linear-objects-close-gauge-covariance}). 

\begin{definition} Let $N\in \dyadic$ and let $j\in [2]$. We then define the operator $Q^j_{>N}$ by 
\begin{equation}\label{prelim:eq-Q}
    \widehat{Q^j_{>N} f}(n) = \frac{1}{\icomplex} \big( \partial^j \rho_{>N} \big)(n) \widehat{f}(n)
\end{equation}
for all smooth $f\colon \T^2 \rightarrow \mathbb{C}$ and $n\in \Z^2$. 
\end{definition}

We now state two lemmas regarding $Q^j_{>N}$ which will both be proven in Appendix \ref{section:proofs-commutator}. The first lemma shows that $Q^j_{>N}$ appears in the commutator of the Littlewood-Paley operator $P_{\leq N}$ and a para-multiplication operator. 

\begin{lemma}\label{lemma:P-N-commutator-space}
Let $\alpha < 2$, $\beta \in \R$, $\delta \in (0, 2)$. For $N \in \dyadic$, we have that
\[ \| P_{\leq N}(f \parall g) - f \parall P_{\leq N} g - \ptl_j f \parall Q^j_{> N} g\|_{\Cs_{x}^{\alpha + \beta - \delta}} \lesssim N^{-\delta} \|f\|_{\Cs_{x}^\alpha} \|g\|_{\Cs_x^\beta}. \]
\end{lemma}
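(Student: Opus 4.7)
The plan is a second-order Fourier-space Taylor expansion of the multiplier $\rho_{\leq N}$, refining the first-order expansion that underlies Lemma \ref{lemma:P-N-lo-hi-commutator}. Fix dyadic frequencies $M \ll K$ and examine the bilinear piece $P_{\leq N}(P_M f \cdot P_K g)$, whose Fourier coefficient at $n$ is
\[ \sum_{m+k = n} \rho_{\leq N}(m + k) \, \rho_M(m) \widehat{f}(m) \, \rho_K(k) \widehat{g}(k). \]
On the support $|m| \lesssim M \ll K \sim |k|$, I Taylor-expand the cutoff around $k$:
\[ \rho_{\leq N}(m+k) = \rho_{\leq N}(k) + m_j (\ptl^j \rho_{\leq N})(k) + r_N(m, k), \]
with integral remainder $r_N(m,k) = \int_0^1 (1-s) m_i m_j (\ptl^i \ptl^j \rho_{\leq N})(k + sm) \, ds$. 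Because $\ptl^i \ptl^j \rho_{\leq N}$ has size $O(N^{-2})$ and is supported where $|\xi| \sim N$, we obtain $|r_N(m,k)| \lesssim |m|^2 N^{-2}$, and the remainder vanishes unless $|k + sm| \sim N$ for some $s \in [0,1]$; under $M \ll K$, this forces $K \sim N$.

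Summed over $M \ll K$, the zeroth-order piece reassembles into $f \parall P_{\leq N} g$. The first-order piece is handled via $m_j \widehat{f}(m) = \tfrac{1}{\icomplex}\widehat{\ptl_j f}(m)$ together with $(\ptl^j \rho_{\leq N})(k) = -(\ptl^j \rho_{>N})(k)$, coming from $\rho_{\leq N} + \rho_{>N} = 1$; comparing with the definition of $Q^j_{>N}$, it is readily identified (with the sign required by the statement) as $\ptl_j f \parall Q^j_{>N} g$. What remains after subtraction is the Taylor remainder
\[ E(x) = \sum_{\substack{M \ll K \\ K \sim N}} \sum_{m,k} \e_{m+k}(x) \, r_N(m, k) \, \rho_M(m) \widehat f(m) \, \rho_K(k) \widehat g(k), \]
whose frequency support lies in the annulus $|n| \sim N$.

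To estimate $\|E\|_{\Cs_x^{\alpha+\beta-\delta}}$, fix $K \sim N$: for each $M \ll N$,
\[ \|E_{M,N}\|_{L_x^\infty} \lesssim \frac{M^2}{N^2} \|P_M f\|_{L_x^\infty} \|P_N g\|_{L_x^\infty} \lesssim M^{2-\alpha} N^{-2-\beta} \|f\|_{\Cs_x^\alpha} \|g\|_{\Cs_x^\beta}. \]
Summing over $M \ll N$ and using $\alpha < 2$ to control $\sum_{M \lesssim N} M^{2-\alpha} \lesssim N^{2-\alpha}$, one obtains $\|E\|_{L_x^\infty} \lesssim N^{-\alpha-\beta} \|f\|_{\Cs_x^\alpha} \|g\|_{\Cs_x^\beta}$. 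Since $E$ is frequency-localized at scale $N$,
\[ \|E\|_{\Cs_x^{\alpha + \beta - \delta}} \lesssim N^{\alpha+\beta-\delta} \|E\|_{L_x^\infty} \lesssim N^{-\delta} \|f\|_{\Cs_x^\alpha} \|g\|_{\Cs_x^\beta}, \]
proving the lemma.

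The main obstacle is the careful bookkeeping of the Hessian support: one must verify that the remainder sum is genuinely frequency-localized at $N$, so that the $N^{\alpha+\beta-\delta}$ gain from the Besov norm truly pairs with the Taylor $M^2/N^2$ smallness. The hypothesis $\alpha < 2$ is sharp in this approach: at $\alpha = 2$ the sum over $M$ produces a logarithm, and for larger $\alpha$ one would need higher-order Taylor terms, introducing correction terms beyond $\ptl_j f \parall Q^j_{>N} g$.
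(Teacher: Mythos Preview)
Your approach—a second-order Taylor expansion of the multiplier $\rho_{\leq N}(m+k)$ around $k$—is the Fourier-space dual of the paper's physical-space argument (which proceeds via the auxiliary Lemma~\ref{lemma:P-N-smooth-commutator}). The identification of the zeroth- and first-order pieces, and the observation that the remainder is supported on $K\sim N$, are both correct, and the subsequent summation over $M\ll N$ using $\alpha<2$ is exactly what the paper does.

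There is, however, a genuine gap at the step
\[
\|E_{M,K}\|_{L_x^\infty}\ \lesssim\ \frac{M^2}{N^2}\,\|P_M f\|_{L_x^\infty}\,\|P_K g\|_{L_x^\infty}.
\]
You justify this only via the pointwise symbol bound $|r_N(m,k)|\lesssim M^2N^{-2}$, but a pointwise bound on a bilinear Fourier symbol does \emph{not} by itself give an $L^\infty\times L^\infty\to L^\infty$ bound on the associated operator; what is needed is an $L^1$ bound on the bilinear kernel. The paper sidesteps this by working in physical space from the outset: with $F=P_{\ll K}f$ and $G=P_K g$, the remainder after subtracting the zeroth- and first-order pieces is
\[
\int \widecheck{\rho}_{\leq N}(y)\,\bigl(\Theta_y F - F + y\cdot\nabla F\bigr)(x)\,(\Theta_y G)(x)\,dy,
\]
and now the bound is pointwise: the bracket is $O(|y|^{1+\gamma}\|F\|_{\Cs_x^{1+\gamma}})$ for any $\gamma\in(0,1)$, while $\int|\widecheck{\rho}_{\leq N}(y)|\,|y|^{1+\gamma}\,dy\lesssim N^{-(1+\gamma)}$. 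This is precisely Lemma~\ref{lemma:P-N-smooth-commutator}, and after a Bernstein inequality on $F$ it delivers your claimed bound. Your Fourier-side argument can be repaired in the same way—translate the integral remainder $r_N$ back to physical space—but as written, the crucial $L^\infty$ estimate is asserted rather than proved.
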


The second lemma shows that $Q^j_{>N}$ can be used to gain either one power of $N$ or one spatial derivative.

\begin{lemma}[Estimate of $Q^\ell_{>N}$]\label{prelim:lem-Q}
Let $N\in \dyadic$ and $\ell \in [2]$. For all $\alpha,\beta \in \R$ satisfying $\alpha\leq \beta\leq \alpha+1$, it then holds that
\begin{equation}\label{prelim:eq-Q-estimate}
\big\| Q_{>N}^\ell f \big\|_{\Cs_x^\beta} \lesssim N^{\beta-(\alpha+1)} \big\| f \big\|_{\Cs_x^\alpha}. 
\end{equation}
\end{lemma}

\begin{notation}
Given $n_1, n_2 \in \Z^2$, we write $n_{12} = n_1 + n_2$. Similarly, given $n_1, n_2, n_3 \in \Z^2$, we write $n_{123} = n_1 + n_2 + n_3$. Additionally, in what follows, given some number of dyadic scales $N_0, \ldots, N_k \in \dyadic$, we write $N_{\max}$ to denote the maximum of $N_0, \ldots, N_k$. Note that $N_{\max}$ may be differ in various displays, depending on the set of dyadic scales which appear in any given display.
\end{notation}

In the following lemma, we record elementary counting estimates.

\begin{lemma}[Elementary counting estimates]\label{prelim:lem-counting}
For all $0<\delta\ll 1$ and all $N_0,N_1,N_2\in \dyadic$, it holds that 
\begin{equation}\label{prelim:eq-counting-e1}
\sum_{\substack{n_0,n_1,n_2 \in \Z^2 \colon \\ n_0 = n_1 + n_2 }} 
\rho_{N_0}(n_0) \rho_{N_1}(n_1) \rho_{N_2}(n_2) \lesssim N_0^{\delta} N_1^2 N_2^2 N_{\textup{max}}^{-\delta},
\end{equation}
\beq\label{eq:quadratic-stochastic-object-no-derivative-time-derivative-estimate} \sum_{n_1, n_2 \in \Z^2} \rho_{N_0}(n_{12}) \rho_{N_1}(n_1) \rho_{N_2}(n_2)\frac{1}{\fnorm{n_1}^2} \lesssim N_0^{2-\delta} N_{\mrm{max}}^{\delta}, \eeq
\beq\label{eq:quadratic-stochastic-object-one-derivative-combintarial-estimate} \sum_{n_1, n_2 \in \Z^2} \frac{|n_2
|^2}{\fnorm{n_1}^2 \fnorm{n_2}^2 \fnorm{n_{12}}^2} \rho_{N_1}(n_1) \rho_{N_2}(n_2) \rho_{N_0}(n_{12}) \max(\fnorm{n_1}, \fnorm{n_2}, \fnorm{n_{12}})^{-2} \lesssim N_{\max}^{-2} \lesssim N_0^{-2 + \delta} N_{\max}^{-\delta}.\eeq 
Given an additional dyadic scale $N_3\in \dyadic$, assuming that $N_0 \lesssim \max(N_1, N_2, N_3)$, we have that 
\beq\label{eq:cubic-object-no-derivative-combinatorial-estimate}\begin{split}
\sum_{n_1, n_2, n_3 \in \Z^3} \frac{\rho_{N_1}(n_1) \rho_{N_2}(n_2) \rho_{N_3}(n_3)}{\fnorm{n_1}^2 \fnorm{n_2}^2 \fnorm{n_3}^2} \rho_{N_0}(n_{123}) \lesssim  N_0^{\delta} N_{\mrm{max}}^{-\delta}.
\end{split}
\eeq
Given an additional dyadic scale $N_{12}\in \dyadic$ such that $N_{12} \sim N_3$, we have that 
\beq\label{eq:cubic-two-derivative-combinatorial-estimate}\begin{split}
\sum_{n_1, n_2, n_3 \in \Z^2} \frac{1}{\fnorm{n_1}^2 \fnorm{n_{12}}^2} &\min\big(\fnorm{n_1}^{-2}, \fnorm{n_2}^{-2}, \fnorm{n_{12}}^{-2}\big) ~\times \\
&\rho_{N_0}(n_{123}) \rho_{N_1}(n_1) \rho_{N_2}(n_2) \rho_{N_{12}}(n_{12}) \rho_{N_3}(n_3) \lesssim N_0^{\delta} N_{\mrm{max}}^{-\delta}.
\end{split}\eeq 
\end{lemma}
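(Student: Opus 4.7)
The plan is to reduce each bound to repeated applications of three elementary facts: (i) for any $N \in \dyadic$, $\rho_N$ has frequency support on a ball or annulus of size $\sim N$, so $\sum_n \rho_N(n) \lesssim N^2$ and (using annulus support for $N \geq 2$, trivially for $N = 1$) $\sum_n \rho_N(n) \fnorm{n}^{-2} \lesssim 1$; (ii) for any fixed $m \in \Z^2$, the convolution count $\sum_n \rho_{N_a}(n) \rho_{N_b}(m-n) \lesssim \min(N_a, N_b)^2$; (iii) the triangle inequality, which forces $N_0 \lesssim \max(N_1, \dots, N_k)$ whenever $\rho_{N_0}(n_1 + \dots + n_k)$ is non-vanishing.

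For \eqref{prelim:eq-counting-e1}, I would fix $n_1, n_2$ so that $n_0 = n_1+n_2$ is determined, count directly $\sum_{n_1, n_2} \rho_{N_1}(n_1) \rho_{N_2}(n_2) \lesssim N_1^2 N_2^2$, and absorb the benign factor $N_0^\delta N_{\max}^{-\delta} \leq 1$ via (iii). For \eqref{eq:quadratic-stochastic-object-no-derivative-time-derivative-estimate}, I would sum $n_2$ first --- the constraint $\rho_{N_0}(n_{12})$ together with (ii) bounds the count by $\min(N_0, N_2)^2 \leq N_0^2$ --- and then sum $n_1$ against $\fnorm{n_1}^{-2}$ to pick up $O(1)$; the resulting $O(N_0^2)$ is at most $N_0^{2-\delta} N_{\max}^\delta$ by (iii). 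For \eqref{eq:quadratic-stochastic-object-one-derivative-combintarial-estimate}, the factors $|n_2|^2/\fnorm{n_2}^2 \leq 1$ and $\max(\fnorm{n_1}, \fnorm{n_2}, \fnorm{n_{12}})^{-2} \lesssim N_{\max}^{-2}$ are immediate; what remains is the same two-step sum, which again produces $O(1)$, giving the required $N_{\max}^{-2}$.

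For the cubic estimates \eqref{eq:cubic-object-no-derivative-combinatorial-estimate} and \eqref{eq:cubic-two-derivative-combinatorial-estimate}, the approach is a case analysis on which of $N_1, N_2, N_3$ (respectively $N_1, N_2, N_{12}, N_3$) is maximal. In the largest-frequency variable --- say $n_3$ when $N_3 = N_{\max}$ --- I would sum first while retaining the constraint $\rho_{N_0}(n_{123})$: the number of admissible $n_3$ is $\lesssim \min(N_0, N_3)^2 = N_0^2$ and the weight is $\sim N_3^{-2}$, which together contribute $N_0^2/N_3^2 \leq (N_0/N_{\max})^2 \leq (N_0/N_{\max})^\delta$. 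The sums over the smaller-frequency variables then collapse to $O(1)$ each via their $\fnorm{n_i}^{-2}$ weights. For \eqref{eq:cubic-two-derivative-combinatorial-estimate}, the hypothesis $N_{12} \sim N_3$ ties these two scales together, which cleans up the case analysis; the $\min(\fnorm{n_1}^{-2}, \fnorm{n_2}^{-2}, \fnorm{n_{12}}^{-2})$ factor is bounded pointwise by whichever of the three entries best matches the case at hand, after which the same reduction applies.

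The hardest part will be purely organizational: correctly navigating the case analysis in the cubic estimates without losing the $(N_0/N_{\max})^\delta$ gain. In each case the point is to identify an order of summation in which the convolution constraint $\rho_{N_0}(n_{123})$ can be deployed against the highest-frequency variable, producing the $\min(N_0, N_{\max})^2 = N_0^2$ counting factor that drives the gain; once that order is fixed, the remaining sums are standard.
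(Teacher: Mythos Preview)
Your strategy is correct and essentially identical to the paper's for \eqref{eq:quadratic-stochastic-object-no-derivative-time-derivative-estimate}--\eqref{eq:cubic-two-derivative-combinatorial-estimate}. There is, however, a genuine slip in your treatment of \eqref{prelim:eq-counting-e1}.

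The direct count $\sum_{n_1,n_2}\rho_{N_1}(n_1)\rho_{N_2}(n_2)\lesssim N_1^2 N_2^2$ discards the constraint $\rho_{N_0}(n_0)$ entirely, and this is \emph{not} enough: when $N_0 \ll N_{\max}$ (e.g.\ $N_1\sim N_2\gg N_0$), the target bound $N_0^\delta N_1^2 N_2^2 N_{\max}^{-\delta}$ is strictly smaller than $N_1^2 N_2^2$. Your fact (iii) gives $N_0\lesssim N_{\max}$, which confirms that $N_0^\delta N_{\max}^{-\delta}\leq 1$ --- but that is the wrong direction; you would need $N_{\max}\lesssim N_0$ to ``absorb'' the factor as you propose.

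The paper closes this gap with a symmetry observation you did not use: since $\rho$ is even and the constraint $n_0=n_1+n_2$ is permutation-symmetric (up to signs), the left-hand side of \eqref{prelim:eq-counting-e1} is symmetric in $N_0,N_1,N_2$, while the exponents on the right are such that one may assume without loss of generality that $N_0\geq N_1\geq N_2$. In that case $N_0=N_{\max}$, the factor $N_0^\delta N_{\max}^{-\delta}$ equals $1$, and your direct count is exactly what is required. An equivalent fix within your framework would be to sum over whichever two of $(n_0,n_1,n_2)$ carry the two smallest scales, with the third determined by the constraint --- this retains the convolution gain you correctly exploit in the cubic estimates.
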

\begin{proof}
\emph{Proof of \eqref{prelim:eq-counting-e1}}
We first note that it is sufficient to treat the case $N_0 \geq N_1 \geq N_2$. This is because the left-hand side of \eqref{prelim:eq-counting-e1} is symmetric in $N_0$, $N_1$, and $N_2$ and the exponents on the right-hand side of \eqref{prelim:eq-counting-e1} are non-decreasing in $N_0$, $N_1$, and $N_2$. In the case $N_0 \geq N_1 \geq N_2$,  we have that 
\begin{align*}
\sum_{\substack{n_0,n_1,n_2 \in \Z^2 \colon \\ n_0 = n_1 + n_2 }} 
\rho_{N_0}(n_0) \rho_{N_1}(n_1) \rho_{N_2}(n_2)  
\lesssim \sum_{n_1,n_2 \in \Z^2} 
 \rho_{N_1}(n_1) \rho_{N_2}(n_2) \sim N_1^2 N_2^2 \sim N_0^{\delta} N_1^2 N_2^2 N_{\textup{max}}^{-\delta}. 
\end{align*}
This proves \eqref{prelim:eq-counting-e1}. 

\emph{Proof of \eqref{eq:quadratic-stochastic-object-no-derivative-time-derivative-estimate}} To prove \eqref{eq:quadratic-stochastic-object-no-derivative-time-derivative-estimate}, we may bound
\[\begin{split}
\sum_{n_1, n_2 \in \Z^2} \rho_{N_0}(n_{12}) \rho_{N_1}(n_1) \rho_{N_2}(n_2)\frac{1}{\fnorm{n_1}^2} &\leq N_1^{-2} \sum_{n_1, n_2 \in \Z^2} \rho_{N_0}(n_{12}) \rho_{N_1}(n_1) \rho_{N_2}(n_2) \\
&\lesssim N_1^{-2} \min(N_0, N_1, N_2)^2 \mrm{min}_2(N_0, N_1, N_2)^2, 
\end{split}\]
where $\min_2$ denotes the second-smallest number. If $N_0 = N_{\mrm{max}}$, then we may bound the above by
\[ N_1^{-2} N_1^2 N_2^2 = N_2^2 N_0^{2 - \delta} N_{\max}^{\delta - 2} \leq N_0^{2-\delta} N_{\max}^{\delta}.\]
If otherwise $N_0 < N_{\max}$, then we can obtain the bound
\[ N_1^{-2} N_0^2 \min(N_1, N_2)^2 \leq N_0^2 \leq N_0^{2-\delta} N_{\max}^{\delta}. \]
This proves \eqref{eq:quadratic-stochastic-object-no-derivative-time-derivative-estimate}.

\emph{Proof of \eqref{eq:quadratic-stochastic-object-one-derivative-combintarial-estimate}.} For \eqref{eq:quadratic-stochastic-object-one-derivative-combintarial-estimate}, observe that we may bound the quantity in question by
\[ N_{\max}^{-2} \sum_{n_1, n_2 \in \Z^2} \frac{\rho_{N_1}(n_1) \rho_{N_0}(n_{12})}{\fnorm{n_1}^2 \fnorm{n_{12}}^2} \lesssim N_{\max}^{-2}, \]
as desired.

\emph{Proof of \eqref{eq:cubic-object-no-derivative-combinatorial-estimate}} Without loss of generality, it suffices to assume that $N_1 \leq N_2 \leq N_3$. The quantity in question may then be bounded by
\[ \frac{ N_1^2 N_2^2 N_0^2}{N_1^2 N_2^2 N_3^2} = \frac{N_0^2}{N_3^2} \lesssim N_0^{\delta} N_{\max}^{-\delta}, \]
where we used the assumptions $N_0 \lesssim \max(N_1, N_2, N_3)$ and $N_1 \leq N_2 \leq N_3$ in the final step.

\emph{Proof of \eqref{eq:cubic-two-derivative-combinatorial-estimate}} For \eqref{eq:cubic-two-derivative-combinatorial-estimate}, note that the term in question may be upper bounded by
\beq N_{\mrm{max}}^{-2} \sum_{n_1 \in \Z^2} \frac{\rho_{N_1}(n_1)}{\fnorm{n_1}^2} \sum_{n_2 \in \Z^2} \frac{\rho_{N_2}(n_2) \rho_{N_{12}}(n_{12})}{\fnorm{n_{12}}^2} \sum_{n_3 \in \Z^2} \rho_{N_0}(n_{123}) \rho_{N_3}(n_3), \eeq 
which itself may be further bounded (here we use that $N_{12} \sim N_3$)
\[ N_{\mrm{max}}^{-2} \min(N_0, N_3)^2 \lesssim N_{\mrm{max}}^{-2} N_0^{\delta} N_3^{2 - \delta} \lesssim N_0^{\delta} N_{\mrm{max}}^{-\delta}. \qedhere\]
\end{proof}

\subsection{Function spaces and heat propagator}\label{section:function-spaces}

\begin{definition}[Function spaces]\label{prelim:def-function-space}
For $\alpha\in \R$, $\nu\geq 0$, and $T > 0$, we define the time-weighted H\"{o}lder space
\begin{equation}
\big\| f \big\|_{\Wc^{\alpha,\nu}([0,T])} := \sup_{t\in (0,T]} \min(t, 1)^\nu \big\| f(t) \big\|_{\Cs^\alpha}. 
\end{equation}
We also define the global-in-time space
\[ \|f\|_{\Wc^{\alpha, \nu}(\R)} := \sup_{t \in \R \backslash \{0\}} \min(|t|, 1)^\nu \|f\|_{\Cs^{\alpha}}.\]
For $\zeta \in [0,1)$, $\alpha \in \R$, $\nu \geq 0$, and $T > 0$, we also define
\begin{equation}
\big\| f \big\|_{\CWc^{\zeta,\alpha,\nu}([0,T])} := \sup_{t,s \in (0,T]} \min(t,s)^\nu  \frac{\big\| f(t) - f(s) \big\|_{\Cs^\alpha}}{|t-s|^\zeta}. 
\end{equation}
\end{definition}

The $\CWc^{\zeta,\alpha,\nu}$-norms will only play a secondary role in our estimates (see Remark \ref{prelim:rem-integral-commutator}), and we therefore encourage the reader to focus on the $\Wc^{\alpha,\nu}$-norms. The global-in-time norm $\Wc^{\alpha, \nu}(\R)$ is only needed for gauge-covariance (Section \ref{section:gauge-covariance}).

\begin{definition}[Solution spaces]\label{prelim:def-solution-space}
Let 
\begin{equation*}
\Reg := \big\{ 2\kappa, 3\kappa, 1-2\kappa, 1+ 2\kappa, 2-5\kappa \big\}
\end{equation*}
be a collection of regularity parameters and let $\theta:= \kappa$. For any $\alpha \in \Reg$, we define
\begin{equation}\label{prelim:eq-Salpha}
\big\| f \big\|_{\Sc^\alpha}:=
\sum_{\substack{\alpha^\prime \in \Reg \colon \\ \alpha^\prime \leq \alpha}} 
\big\| f \big\|_{\Wc^{\alpha^\prime,\frac{\alpha^\prime}{2}+\theta}} 
+ \big\| f \big\|_{C_t^0 \Cs^{-\kappa}_x}
+  \sum_{\substack{\alpha^\prime \in \Reg \colon \\ \alpha^\prime \leq \alpha}}   \big\| f \big\|_{\CWc^{\frac{\alpha^\prime+\kappa}{2},-\kappa,\frac{\alpha^\prime}{2}+\theta}}.
\end{equation}
\end{definition}
In \eqref{prelim:eq-Salpha}, the relationships between the different parameters are dictated by parabolic scaling. For example, consider a function satisfying $\| f\|_{\Sc^\alpha}\leq 1$ and a parameter $\alpha^\prime \in \Reg$ satisfying $\alpha^\prime \leq \alpha$. Then, $\| f(t) \|_{\Cs_x^{\alpha^\prime}}$ is controlled by $t^{-\alpha^\prime/2-\theta}$.  Thus, the more spatial derivatives $\alpha^\prime$ we attempt to control, the more inverse powers of $t$ are required, and the conversion rate is dictated by parabolic scaling. 

By interpolation, the sum in \eqref{prelim:eq-Salpha} is, of course, equivalent to the sum over $\alpha^\prime\in \{ 2\kappa, \alpha \}$. However, we decided to keep the sum over elements in $\Reg$, since it shows which regularities will be used in our estimates. \\

Equipped with our norms from Definition \ref{prelim:def-function-space} and Definition \ref{prelim:def-solution-space}, we now state estimates for the heat propagator and the corresponding Duhamel integral. To this end, we define 
\begin{equation}
\Hc_t := e^{-t (1-\Delta)}. 
\end{equation}
We also define the associated Duhamel integral as 
\begin{equation*}
\Duh [f] := \int_0^t \Hts f(s) \ds. 
\end{equation*}
It will often be convenient for us to integrate over $s\in (-\infty,t]$ rather than $s\in [0,t]$, and we therefore also define
\begin{equation*}
\Duhinf [f] := \int_{-\infty}^t \Hts f(s) \ds. 
\end{equation*}

\begin{lemma}[Heat propagator in $\Sc^{\alpha}$]\label{prelim:lem-heat-Salpha}
Let $\alpha \in \Reg$, $T\in (0,1]$, and $f\in \Cs_x^{-\kappa}$. Then, it holds that
\begin{equation}\label{prelim:eq-heat-Salpha}
\big\| \Hc_t f \big\|_{\Sc^\alpha([0,T])}
\lesssim \big\| f \big\|_{\Cs_x^{-\kappa}}. 
\end{equation}
\end{lemma}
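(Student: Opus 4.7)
The statement is a routine consequence of standard parabolic smoothing estimates for the heat semigroup $\Hc_t = e^{-t(1-\Delta)}$; there is no probabilistic or nonlinear input. The $\Sc^\alpha$-norm in \eqref{prelim:eq-Salpha} splits into three families of contributions -- a collection of time-weighted H\"older norms $\Wc^{\alpha',\alpha'/2+\theta}$ with $\alpha'\in \Reg$ and $\alpha'\leq \alpha$, the uniform-in-time norm $C_t^0 \Cs_x^{-\kappa}$, and the time-increment norms $\CWc^{(\alpha'+\kappa)/2,-\kappa,\alpha'/2+\theta}$ -- and I would estimate each family separately using a single semigroup input per family.

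For the first family, the key tool is the smoothing estimate $\|\Hc_t f\|_{\Cs_x^{\alpha'}} \lesssim t^{-(\alpha'+\kappa)/2}\|f\|_{\Cs_x^{-\kappa}}$ for $t\in (0,1]$, which follows from a Littlewood--Paley decomposition together with the bound $\rho_N(n) e^{-t(1+|n|^2)} \lesssim (tN^2)^{-(\alpha'+\kappa)/2}$. Since $\theta = \kappa$, multiplying by the weight $t^{\alpha'/2+\theta}$ leaves a residual factor $t^{\kappa/2}$, which is uniformly bounded on $(0,T]\subset (0,1]$. The second family is even simpler: the heat semigroup is uniformly bounded on $\Cs_x^{-\kappa}$, again by a direct Littlewood--Paley argument, yielding $\|\Hc_t f\|_{\Cs_x^{-\kappa}}\lesssim \|f\|_{\Cs_x^{-\kappa}}$ uniformly in $t \geq 0$. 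For the third family, assuming without loss of generality that $s\leq t$, I would write
\[
\Hc_t f - \Hc_s f \;=\; (\Hc_{t-s}-1)\, \Hc_s f,
\]
and combine the time-difference bound $\|(\Hc_\tau - 1) g\|_{\Cs_x^{-\kappa}} \lesssim \tau^{(\alpha'+\kappa)/2} \|g\|_{\Cs_x^{\alpha'}}$ (which comes from $1-e^{-\tau(1+|n|^2)}\lesssim \min(1,\tau\langle n\rangle^2)^{(\alpha'+\kappa)/2}$) with the smoothing estimate applied to $\Hc_s f$. Dividing by $|t-s|^{(\alpha'+\kappa)/2}$ and multiplying by the weight $s^{\alpha'/2+\theta}$ again produces a harmless factor of $s^{\kappa/2}$.

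The only ``obstacle'' is bookkeeping: I must verify that every $\alpha'\in \Reg$ satisfies $\alpha'+\kappa > 0$, so that the power of $t$ produced by the smoothing estimate is never worse than the weight $t^{\alpha'/2+\theta}$. This is immediate from the definition of $\Reg$, since the smallest element is $2\kappa$, and from the choice $\theta=\kappa$, which is precisely calibrated so that $\theta - \kappa/2 = \kappa/2 > 0$. No interpolation or refined harmonic analysis is needed beyond what is already recorded in the lemmas of this subsection.
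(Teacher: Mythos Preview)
Your proposal is correct and is exactly the argument the paper has in mind: the paper itself omits the proof, remarking only that \eqref{prelim:eq-heat-Salpha} ``follows from $\theta \geq \kappa$ and standard estimates for the heat propagator (see Lemma~\ref{prelim:lem-heat-flow}).'' Your three-part split of the $\Sc^\alpha$-norm, the smoothing bound $\|\Hc_t f\|_{\Cs_x^{\alpha'}} \lesssim t^{-(\alpha'+\kappa)/2}\|f\|_{\Cs_x^{-\kappa}}$, and the factorization $\Hc_t f - \Hc_s f = (\Hc_{t-s}-1)\Hc_s f$ are precisely the ingredients of Lemma~\ref{prelim:lem-heat-flow}, and your observation that $\theta - \kappa/2 = \kappa/2 > 0$ is the ``$\theta \geq \kappa$'' condition the paper invokes.
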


The estimate \eqref{prelim:eq-heat-Salpha} follows from $\theta \geq \kappa$ and standard estimates for the heat propagator (see Lemma \ref{prelim:lem-heat-flow}), and we therefore omit the proof.

\begin{proposition}[Duhamel integral in $\Sc^\alpha$]\label{prelim:prop-Duhamel-Salpha}
Let $\alpha \in \Reg$, $\beta \leq -\kappa$, and $\nu \in [0,1)$. Furthermore, assume that 
\begin{equation}\label{prelim:eq-Duhamel-Salpha-a}
\alpha < \beta +2 \qquad \text{and} \qquad \delta:= \min \Big( 1 + \frac{\beta}{2}, 1 \Big) - \nu >0.
\end{equation}
Then, it holds that
\begin{equation}
\Big\| \Duh \big[ g \big] \Big\|_{\Sc^\alpha([0,T])} \lesssim T^\delta \big\| g \big\|_{\Wc^{\beta,\nu}([0,T])}. 
\end{equation}
\end{proposition}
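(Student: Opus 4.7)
The plan is to estimate separately each of the three families of norms comprising the $\Sc^\alpha$-norm: the time-weighted H\"{o}lder norms $\Wc^{\alpha',\alpha'/2+\theta}$, the $C_t^0\Cs_x^{-\kappa}$ norm, and the time-H\"{o}lder-difference norms $\CWc^{(\alpha'+\kappa)/2,-\kappa,\alpha'/2+\theta}$, each indexed by $\alpha'\in \Reg$ with $\alpha'\leq \alpha$. The two inputs I would use are the standard parabolic regularity estimates for the heat semigroup,
\[ \| \Hc_r h \|_{\Cs^{\gamma_2}} \lesssim r^{-(\gamma_2-\gamma_1)_+/2} \| h \|_{\Cs^{\gamma_1}}, \qquad \| (\Hc_r - I) h \|_{\Cs^{\gamma_1}} \lesssim r^{\mu} \| h \|_{\Cs^{\gamma_1 + 2\mu}} \quad (\mu\in[0,1)), \]
combined with the pointwise-in-time bound $\|g(s)\|_{\Cs^\beta}\lesssim s^{-\nu}\|g\|_{\Wc^{\beta,\nu}}$.

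For the $\Wc^{\alpha',\alpha'/2+\theta}$ and $C_t^0\Cs_x^{-\kappa}$ norms I would place the target spatial norm inside the Duhamel integral and apply the first semigroup estimate, reducing the problem to controlling
\[ \int_0^t (t-s)^{-(\alpha'-\beta)_+/2} s^{-\nu}\, ds. \]
A standard Beta-function calculation, valid because $\alpha'\leq \alpha<\beta+2$ (so the $(t-s)$-singularity is integrable) and $\nu<1$, bounds this by $t^{1+\beta/2-\alpha'/2 - \nu}$ in the $\Wc^{\alpha',\alpha'/2+\theta}$ case and by $t^{1+\beta/2+\kappa/2-\nu}$ in the $C_t^0\Cs_x^{-\kappa}$ case. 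Multiplying by the appropriate time weight and using $t\leq T$ then produces the factor $T^{\delta+\theta}\lesssim T^\delta$, since $\theta = \kappa > 0$.

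For the $\CWc$-norm, with $0<s<t\leq T$, I would use the standard decomposition
\[ \Duh[g](t) - \Duh[g](s) = \int_s^t \Hc_{t-\tau} g(\tau)\, d\tau \,+\, (\Hc_{t-s} - I) \Duh[g](s). \]
The second term is handled immediately by the H\"{o}lder-in-time semigroup estimate $\|(\Hc_{t-s}-I)h\|_{\Cs^{-\kappa}}\lesssim (t-s)^{(\alpha'+\kappa)/2}\|h\|_{\Cs^{\alpha'}}$ together with the $\Cs^{\alpha'}$-estimate on $\Duh[g](s)$ already established in the previous step, which produces exactly the divisor $(t-s)^{(\alpha'+\kappa)/2}$ needed in the Hölder quotient. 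For the first term, I would split into the regimes $t \leq 2s$ and $t>2s$. In the latter, $t-s\sim t$, so the denominator $(t-s)^{(\alpha'+\kappa)/2}$ can be replaced by $t^{(\alpha'+\kappa)/2}$ and the term is directly subsumed by the $C_t^0\Cs_x^{-\kappa}$ estimate of $\Duh[g](t)-\Duh[g](s)$. In the former, I would replace $\tau^{-\nu}$ by $s^{-\nu}$ on $\tau\in[s,t]$ and integrate $(t-\tau)^{(\kappa+\beta)/2}$ explicitly to gain $(t-s)^{1+(\kappa+\beta)/2}$.

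The main obstacle will be the short-time regime $t \leq 2s$ of the first term, where one must carefully balance the four exponents $\alpha'/2+\theta$, $\nu$, $(\alpha'+\kappa)/2$, and $1+(\kappa+\beta)/2$ appearing in the H\"{o}lder quotient. The key algebraic fact is that the assumption $\alpha<\beta+2$ ensures $\delta+\nu-\alpha'/2>0$, while $\delta>0$ ensures $\nu<1+\beta/2$; together these guarantee that both exponents of $s$ and of $(t-s)$ are non-negative after cancellation, yielding a bound of the form $s^{\delta+\theta}\lesssim T^\delta$, as required.
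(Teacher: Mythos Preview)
Your proposal is correct and follows essentially the same approach as the paper's proof (which is packaged through Lemma~\ref{prelim:lem-Schauder}): the same semigroup smoothing estimate inside the Duhamel integral for the $\Wc^{\alpha',\alpha'/2+\theta}$ and $C_t^0\Cs_x^{-\kappa}$ parts, and the same decomposition $\Duh[g](t)-\Duh[g](s)=\int_s^t \Hc_{t-\tau}g(\tau)\,d\tau + (\Hc_{t-s}-I)\Duh[g](s)$ for the $\CWc$ part. The only minor technical variation is in the $\int_s^t$ piece: rather than splitting into $t\lessgtr 2s$, the paper extracts $(t-s)^{\zeta}$ directly via the pointwise inequality $(t-\tau)^{-\gamma}\leq (t-s)^{\zeta}(t-\tau)^{-\gamma-\zeta}$ on $[s,t]$ and then extends the integration to $[0,t]$, which is slightly more streamlined but leads to the same numerology.
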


Proposition \ref{prelim:prop-Duhamel-Salpha} is a standard Schauder-type estimate and similar estimates have been used in \cite{CC18,GIP15}. For the sake of completeness, we present a proof in Appendix \ref{section:proofs-para}. \\

The next lemma controls a commutator-term involving the  Duhamel integral and para-products. 

\begin{lemma}[Trilinear integral commutator estimate]\label{prelim:lem-trilinear-integral-commutator}
Let $T\in [0,1]$, let $\nu\geq \theta+2\kappa$, and let $f,g,h\colon [0,T] \times \T^2 \rightarrow \C$. Then, it holds that
\begin{align}\label{prelim:rem-trilinear-integral-commutator}
\Big\| \Big( \Duh \big( f \parall g \big) - f \parall \Duh \big( g\big) \Big) \parasim h \Big\|_{\Wc^{-4\kappa,\nu}([0,T])}
\lesssim \big\| f \big\|_{\Sc^{1-2\kappa}([0,T])} \big\| g \big\|_{\Wc^{-1-\kappa,0}([0,T])} \big\| h\big\|_{\Wc^{-1-\kappa,0}([0,T])}. 
\end{align}
\end{lemma}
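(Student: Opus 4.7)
The plan is to exploit the classical paracontrolled commutator structure: the object $\Duh(f \parall g) - f \parall \Duh(g)$ enjoys spatial regularity essentially one derivative better than either $\Duh(f \parall g)$ or $f \parall \Duh(g)$ individually. Since $h \in \Wc^{-1-\kappa,0}$, the high-high paraproduct $\parasim h$ will then land safely in $\Cs^{-4\kappa}$ with room to spare. The first step is to freeze $f$ at the outer time $t$, in order to separate the temporal and spatial contributions. Writing $f(s) = f(t) - (f(t)-f(s))$ inside the Duhamel integral yields the decomposition
\[
\Duh(f \parall g)(t) - f(t) \parall \Duh(g)(t) = I_1(t) + I_2(t),
\]
where
\[
I_1(t) := -\int_0^t \Hts\bigl((f(t)-f(s)) \parall g(s)\bigr)\, ds, \qquad I_2(t) := \int_0^t\bigl(\Hts(f(t) \parall g(s)) - f(t) \parall \Hts g(s)\bigr)\, ds.
\]
The piece $I_1$ exploits the H\"older-in-time regularity of $f$ provided by the $\CWc^{(1-\kappa)/2,-\kappa,(1-2\kappa)/2+\theta}$-component of $\Sc^{1-2\kappa}$, while $I_2$ is a purely spatial commutator between the heat propagator and the paraproduct with the time-frozen $f(t)$.

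For $I_1$, the H\"older-in-time control gives $\|f(t)-f(s)\|_{\Cs_x^{-\kappa}} \lesssim |t-s|^{(1-\kappa)/2} s^{-(1-2\kappa)/2-\theta} \|f\|_{\Sc^{1-2\kappa}}$, and the low-high paraproduct estimate (Lemma \ref{prelim:lem-para-product}(ii)) bounds $(f(t)-f(s)) \parall g(s)$ in $\Cs_x^{-1-2\kappa}$ by this weight times $\|g\|_{\Wc^{-1-\kappa,0}}$. Schauder smoothing via $\Hts$ then trades regularity for an integrable $(t-s)^{-\text{exp}}$-weight, and the surplus $|t-s|^{(1-\kappa)/2}$ comfortably closes the integrability in $s$ near both $0$ and $t$, giving $I_1 \in \Cs_x^\beta$ for some $\beta$ close to $3-3\kappa$ (with appropriate $t$-weight). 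For $I_2$, dyadically decompose $f(t) = \sum_M P_M f(t)$ and $g(s) = \sum_N P_N g(s)$ with $M \ll N$; a Taylor expansion of the heat kernel (or equivalently Lemma \ref{lemma:P-N-lo-hi-commutator} adapted to $\Hts$) produces a commutator bound of size $(t-s)^{1/2} M e^{-c(t-s)N^2}$ times the trivial estimate, which after summation and integration in $s$ yields $I_2 \in \Cs_x^{2-2\kappa}$ with an acceptable $t$-weight. Pairing each of $I_1, I_2$ with $h$ via the high-high estimate of Lemma \ref{prelim:lem-para-product}(iii)—which is applicable since the spatial regularities of $I_1, I_2$ dominate $1+\kappa$ with margin—yields the claimed $\Cs_x^{-4\kappa}$ bound.

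The main obstacle is the bookkeeping of time-weights. The $\Sc^{1-2\kappa}$-norm of $f$ carries the singular weight $s^{-(1-2\kappa)/2-\theta}$ at the origin, and both $g, h \in \Wc^{-1-\kappa,0}$ lack any compensating weight; on the other hand, all the natural Schauder estimates for $\Hts$ produce further $(t-s)^{-\text{exp}}$ factors. Verifying that the resulting integrals converge near $s=0$ and $s=t$, and that the accumulated weight at time $t$ does not exceed $\min(t,1)^{-\nu}$ for any $\nu \geq \theta + 2\kappa$, reduces to checking a collection of parabolic-scaling inequalities in $\kappa$ and $\theta$; the hypothesis $\nu \geq \theta + 2\kappa$ in the lemma is tuned to absorb the worst of these terms (coming from the low-high/high-high structure of the $I_2$ piece). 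Once this is verified, the estimate follows by combining the two paraproduct-based bounds described above.
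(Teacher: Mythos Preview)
Your approach is essentially the same as the paper's: both first establish the bilinear bound
\[
\big\|\Duh(f\parall g)-f\parall\Duh(g)\big\|_{\Wc^{1+2\kappa,\nu}}\lesssim\|f\|_{\Sc^{1-2\kappa}}\|g\|_{\Wc^{-1-\kappa,0}}
\]
by splitting into a heat--paraproduct commutator term and a time-increment term, and then pair with $h$ via the high$\times$high estimate. The only organizational difference is that you freeze $f$ at the outer time $t$ (so the increment $f(t)-f(s)$ sits \emph{inside} $\Hts$), whereas the paper leaves $f$ at time $s$ in the heat-commutator and places the increment $f(s)-f(t)$ \emph{outside} $\Hts$; the resulting time-weight arithmetic is identical, yielding $\delta=\nu-\theta-\tfrac32\kappa>0$ in both cases.

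One small correction: your stated regularities for $I_1$ and $I_2$ are too optimistic. For $I_1$, starting from $(f(t)-f(s))\parall g(s)\in\Cs_x^{-1-2\kappa}$, Schauder combined with the H\"older gain $(t-s)^{(1-\kappa)/2}$ can reach at most $\Cs_x^{\beta}$ for $\beta<2-3\kappa$ (not $3-3\kappa$), and for $I_2$ the heat-commutator with $f(t)\in\Cs_x^{1-2\kappa}$, $g(s)\in\Cs_x^{-1-\kappa}$ reaches at most just below $\Cs_x^{2-3\kappa}$ (not $2-2\kappa$) while keeping the $s$-integral convergent. Neither slip affects the argument, since the target regularity $1+2\kappa$ is comfortably within range.
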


A similar estimate has been shown in \cite[Proposition 2.7]{CC18}. For the sake of completeness, we present a proof of 
of Lemma \ref{prelim:lem-trilinear-integral-commutator} in Appendix \ref{section:proofs-para}. 

\begin{remark}\label{prelim:rem-integral-commutator}
The estimate \eqref{prelim:rem-trilinear-integral-commutator} is the only estimate in our analysis which requires the $\CWc$-terms in \eqref{prelim:eq-Salpha}, i.e., the H\"{o}lder-regularity in the time-variable. Since $\Duh ( f \parall g )(t)$ depends on the whole process $(f(s))_{0\leq s \leq t}$ whereas $(f \parall \Duh ( g))(t)$ depends only on $f(t)$, this cannot be avoided. 
In all other estimates, we only require the $\Wc$-terms in  \eqref{prelim:eq-Salpha}. 
\end{remark}

In the course of the proof of gauge covariance (Theorem \ref{thm:gauge-covariance}), we will need to work with space-time Besov spaces, which we next introduce. Besides entering into the definition of the solution map $\bA$ and the statement of Theorem \ref{thm:gauge-covariance}, these space-time Besov spaces will not be needed until Section \ref{section:gauge-covariance}.

Given dyadic scales $L, N \in \dyadic$, define the space-time Littlewood-Paley operator $P_{L, N}$ by
\[ \widehat{P_{L, N} \phi}(u, n) = \rho_{L}(u) \rho_{N}(n) \hat{\phi}(u, n), ~~ \phi : \R \times \T^2 \ra \R, ~~ u \in \R, n \in \Z^2. \]

\begin{remark}
In this paper, $L$ will always be used to denote a temporal frequency, while $M, N$ will be used to denote spatial frequencies.
\end{remark}

\begin{definition}[Space-time Besov space]
For $1 \leq p, q \leq \infty$, $\alpha = (\alpha_t, \alpha_x) \in \R$, define the space $\mc{B}_{p, q}^\alpha$ as the set of distributions $\phi \in \mc{D}'(\R \times \T^2)$  
such that
\[ \|\phi\|_{\mc{B}^\alpha_{p,q}} := \bigg(\sum_{L, N \in \dyadic} \big(L^{\alpha_t} N^{\alpha_x} \|P_{L, N} \phi\|_{L^p(\R \times \T^2)}\big)^q\bigg)^{1/q} < \infty,  \]
with the usual interpretation as $\ell_\infty$ norm when $q = \infty$. When $p = q = \infty$ we denote $\mc{B}_{p, q}^\alpha = \Cs^\alpha_{tx}$.
\end{definition}

Because the symbol of $P_{L, N}$ decomposes as the product $\rho_L(u) \rho_N(n)$, essentially all the basic results for Besov spaces carry over to these space-time Besov spaces, with straightforward modifications to the proofs. For instance, the paraproduct estimates carry over in the analogous way. Note that there are now nine possible paraproducts. We shall denote $\parall_{tx}, \parasim_{tx}, \paragg_{tx}$ as the paraproducts which are the same in both space and time. To denote paraproducts which are different, we will e.g. write $\parall_t \parasim_x$ (this example is low$\times$high in time and high$\times$high in space). Examples of space-time paraproduct estimates are:
\[ \|f \parall_t \paragg_x g\|_{\Cs_{tx}^{\gamma}} \lesssim \|f\|_{\Cs_{tx}^\alpha} \|g\|_{\Cs_{tx}^\beta}, ~~ \|f \parasim_t \parall_x g\|_{\Cs_{tx}^{\eta}} \lesssim \|f\|_{\Cs_{tx}^\alpha} \|g\|_{\Cs_{tx}^\beta}, \]
where in the first example we assume $\gamma_t \leq \min(\alpha_t, 0) + \beta_t$ and $\gamma_x \leq \alpha_x + \min(\beta_x, 0)$, and in the second example we assume $\alpha_t + \beta_t > 0$, $\eta_t < \alpha_t + \beta_t$, and $\eta_x \leq \min(\alpha_x, 0) + \beta_x$.

\begin{notation}
Paraproducts $\parall, \parasim, \paragg$ without subscripts will always be understood to be spatial paraproducts. 
\end{notation}

Next, we define space-time Besov spaces on finite time intervals. For $T > 0$, let $\mc{D}'((0, T) \times \T^2)$ be the space of distributions on $(0, T) \times \T^2$, i.e. the continuous dual of $C^\infty_c((0, T) \times \T^2)$.

\begin{definition}[Space-time Besov spaces on finite time intervals]\label{def:space-time-besov-space-finite-interval}
Let $1 \leq p, q \leq \infty$, $\alpha \in \R^2$. Define $\Cs_{tx}^\alpha((0, T)) = \Cs_{tx}^{\alpha}((0, T) \times \T^2)$ to be the set of distributions $f \in \mc{D}'((0, T) \times \T^2)$ such that there exists $g \in \Cs_{tx}^\alpha$ with $f = g ~\big|_{(0, T) \times \T^2}$. We define the corresponding norm
\[ \|f\|_{\Cs_{tx}^\alpha((0, T))} = \|f\|_{\Cs_{tx}^\alpha((0, T) \times \T^2)} := \inf \{ \|g\|_{\Cs_{tx}^\alpha} : g ~\big|_{(0, T) \times \T^2} = f\}. \]
\end{definition}

\subsection{Lie groups and Lie algebras}\label{section:lie-group-lie-algebra}

Throughout this paper, fix a compact Lie group $G$. As a consequence of the Peter-Weyl theorem, we may assume that $G \sse \unitary(N)$ for some $N \geq 1$, where $\unitary(N)$ is the unitary group.
Let $\Ad$ be the adjoint representation of $G$ on $\frkg$, and $\ad$ the adjoint representation of $\frkg$ on $\frkg$. Explicitly, $\Ad_g X = g X g^{-1}$ for $g \in G$, $X \in \frkg$, and $\ad(X)(Y) = [X,Y]$ for $X, Y \in \frkg$. Fix an $\Ad$-invariant inner product $\langle \cdot, \cdot \rangle_\frkg$ on $\frkg$. Let $(E_a)_{a=1}^{\dim\frkg}$ be an orthonormal basis of the Lie algebra $\frkg$. We let $(E_a^*)_{a =1}^{\dim\frkg}$ be the associated dual basis of $\frkg^*$.  Let $\cfrkg := \frkg \otimes \C$ be the complexification of $\frkg$, and let $\langle \cdot, \cdot \rangle_{\cfrkg}$ denote the extension of $\langle \cdot, \cdot \rangle_\frkg$ to $\cfrkg$. Note that (with respect to $\langle \cdot, \cdot \rangle_{\cfrkg}$) $(E_a)_{a=1}^{\dim \frkg}$ is also an orthonormal basis of $\cfrkg$. Let $m, n \geq 0$. We identify the space of linear operators $L(\cfrkg^m, \cfrkg^n) \cong \cfrkg^{* \otimes m} \otimes \cfrkg^{\otimes n}$. We denote by $\langle \cdot, \cdot \rangle_{\cfrkg^{* \otimes m} \otimes \cfrkg^{\otimes n}}$ the induced inner product on $\cfrkg^{*\otimes m} \otimes \cfrkg^{\otimes n}$.

\begin{definition}[Killing map]\label{def:Kil}
Define $\Kil : \frkg \ra \frkg$ by
\[ \Kil(E) := \big[ \big[E, E_a\big], E^a\big] = \big[E_a, \big[E^a, E\big]\big] = \mrm{ad}(E_a) \mrm{ad}(E^a) E. \]
\end{definition}

\begin{remark}
We call $\Kil$ the Killing map because it is intimately related to the Killing form $K$ of $\frkg$, as follows. First, recall that by definition, the Killing form is a symmetric bilinear form on $\frkg$ defined as $K(E, F) := \Tr(\mrm{ad}(E) \mrm{ad}(F))$. By using the skew-symmetry of $\mrm{ad}$ (which follows from the $\mrm{Ad}$-invariance of $\langle \cdot, \cdot \rangle_\frkg$) and the identity $\mrm{ad}(E) F = - \mrm{ad}(F) E$, we have that
\[\begin{split}
K(E, F) &= \langle \mrm{ad}(E) \mrm{ad}(F) E_a, E^a \rangle_\frkg = -\langle \mrm{ad}(F) E_a, \mrm{ad}(E) E^a \rangle_\frkg \\
&= -\langle \mrm{ad}(E_a) F, \mrm{ad}(E^a) E \rangle = \langle F, \mrm{ad}(E_a) \mrm{ad}(E^a) E \rangle_\frkg = \langle F, \Kil(E) \rangle_\frkg. \end{split}\]
Thus $\Kil$ is exactly $K$, if we view $K$ not as a bilinear map (i.e. an element of $\frkg^* \otimes \frkg^*$) but rather as an element of $\frkg^* \otimes \frkg \cong L(\frkg, \frkg)$.
\end{remark}

\begin{remark}
In \cite{CCHS22, CCHS22+}, the authors define the map $\mrm{ad}_{\mrm{Cas}}$, where $\mrm{Cas} = E_a \otimes E^a$ is the Casimir element of the universal enveloping algebra of $\frkg$. Explicitly, $\mrm{ad}_{\mrm{Cas}}(E) = [E_a, [E^a, E]]$ (see e.g. \cite[Section 5.1]{CCHS22+}). Thus we see that $\mrm{ad}_{\mrm{Cas}}$ is exactly $\Kil$.
\end{remark}

\begin{definition}[Pairings]\label{def:pairing}
Let $m \geq 1$. A pairing $\mc{P}$ of $[m]$ is a set of disjoint two-element subsets of $[m]$ (this is also often referred to as a matching). 
We let $\mc{M}(m)$ be the set of pairings of $[m]$. For $m_1, m_2 \geq 1$, let $\mc{M}(m_1, m_2)$ be the set of pairings $\mc{P}$ of $[m_1 + m_2]$ such that all elements of $\mc{P}$ contain an element of $[m_1]$ and an element of $\{m_1 + 1, \ldots, m_1 + m_2\}$. We say that $\mc{P} \in \mc{M}(m_1, m_2)$ is a complete pairing if all elements of $[m_1 + m_2]$ are paired, i.e. $\mc{P}$ is a partition of $[m_1 + m_2]$ (note this requires $m_1 = m_2$).
\end{definition}

\begin{remark}
Note that complete pairings $\mc{P} \in \mc{M}(m, m)$ can be identified with permutations of $[m]$.
\end{remark}

\begin{definition}[Tensor contraction]\label{def:tensor-contraction}
Let $m \geq 2$, $n \geq 0$, and consider a tensor space of the form $(\cfrkg^*)^{\otimes m} \otimes \cfrkg^{\otimes n}$. Given distinct indices $i, j \in [m]$, define the tensor contraction $\tensorcon{\{i, j\}}  : (\cfrkg^*)^{\otimes m} \otimes \cfrkg^{\otimes n} \ra (\cfrkg^*)^{\otimes (m-2)} \otimes \cfrkg^{\otimes n}$ as the linear map which contracts the $i$th and $j$th $\cfrkg^*$ component. More formally, it is the linear map which sends (suppose without loss of generality $i < j$)
\beq E_{a_1}^* \otimes \cdots \otimes E_{a_m}^* \otimes E^{b_1} \otimes \cdots \otimes E^{b_n} \mapsto \big(E_{a_i}^*(E_{a_j})\big) E^*_{a_1} \otimes \cdots \widehat{E^*_{a_i}} \otimes \cdots \widehat{E^*_{a_j}} \otimes E^*_{a_m} \otimes E^{b_1} \otimes \cdots \otimes E^{b_n}. \eeq 
More generally, given a pairing $\mc{P}$ of $[m]$, define $\tensorcon{\mc{P}} : (\cfrkg^*)^{\otimes (m - 2|\mc{P}|)} \otimes \cfrkg^{\otimes n}$ as the composition
\beq \tensorcon{\mc{P}} = \bigcirc_{\{i, j\} \in \mc{P}} \tensorcon{\{i, j\}}.\eeq 
(Note that the order of compositions does not matter.) 
\end{definition}

\begin{convention}\label{convention:iterated-tensor}
Given $m_1, m_2 \geq 1$, $n_1, n_2 \geq 1$, and $S \in \cfrkg^{*\otimes m_1} \otimes \cfrkg^{\otimes n_1}$, $T \in \cfrkg^{*\otimes m_2} \otimes \cfrkg^{\otimes n_2}$, we will view $S \otimes T \in \cfrkg^{*\otimes (m_1 + m_2)} \otimes \cfrkg^{\otimes (n_1 + n_2)}$, as opposed to as an element of $\cfrkg^{*\otimes m_1} \otimes \cfrkg^{\otimes n_1} \otimes \cfrkg^{*\otimes m_2} \otimes \cfrkg^{\otimes n_2}$. This is done to mesh with our notation for tensor contraction. 
\end{convention}

\begin{example}\label{example:1-2-tensor-contraction}
Let $S, T \in L(\cfrkg, \cfrkg) \cong \cfrkg^* \otimes \cfrkg$. We will view $S \otimes T \in \cfrkg^* \otimes \cfrkg^* \otimes \cfrkg \otimes \cfrkg$ (recall Convention \ref{convention:iterated-tensor}). Then
\beq \tensorcon{\{1, 2\}} (S \otimes T) = \delta^{cd} (S E_c) \otimes (T E_d), \eeq 
where on the right hand side, we view $S, T$ as linear maps $\cfrkg \ra \cfrkg$. 
\end{example}

\begin{notation}\label{notation:inner-product-as-linear-map}
We will (slightly abusing notation) write $\langle \cdot \rangle_{\cfrkg^{\otimes n}}$ for the linear map $\cfrkg^{\otimes n} \otimes \cfrkg^{\otimes n} \ra \C$ induced by the inner product $\langle \cdot, \cdot \rangle_{\cfrkg^{\otimes n}}$ on $\cfrkg^{\otimes n}$.
\end{notation}

\begin{lemma}\label{lemma:contracted-tensor-norm-bound}
Let $m, n \geq 1$, and let $S, T \in \cfrkg^{* \otimes m} \otimes \cfrkg^{\otimes n}$. Let $\mc{P} \in \mc{M}(m, m)$ be a complete pairing. Then
\[ \big|\langle \tensorcon{\mc{P}}(S \otimes T) \rangle_{\cfrkg^{\otimes n}}\big| \leq |S|_{\cfrkg^{* \otimes m} \otimes \cfrkg^{\otimes n}} |T|_{\cfrkg^{* \otimes m} \otimes \cfrkg^{\otimes n}} . \]
\end{lemma}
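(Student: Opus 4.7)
The plan is to expand $S$ and $T$ in the orthonormal basis $(E_a)_{a=1}^{\dim \frkg}$ and reduce the inequality to the classical Cauchy-Schwarz inequality. Since every pair in $\mc{P}$ contains exactly one element of $[m]$ and one element of $\{m+1, \ldots, 2m\}$, I would first identify $\mc{P}$ with a permutation $\pi$ of $[m]$ via $\mc{P} = \{\{i, m + \pi(i)\} : i \in [m]\}$.

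Writing $S = \sum_{\vec{a}, \vec{b}} S_{\vec{a}, \vec{b}} \, E^*_{a_1} \otimes \cdots \otimes E^*_{a_m} \otimes E_{b_1} \otimes \cdots \otimes E_{b_n}$ and $T = \sum_{\vec{a}', \vec{b}'} T_{\vec{a}', \vec{b}'} \, E^*_{a'_1} \otimes \cdots \otimes E^*_{a'_m} \otimes E_{b'_1} \otimes \cdots \otimes E_{b'_n}$ in the orthonormal basis, the norms satisfy $|S|_{\cfrkg^{*\otimes m} \otimes \cfrkg^{\otimes n}}^2 = \sum_{\vec{a}, \vec{b}} |S_{\vec{a}, \vec{b}}|^2$, and analogously for $T$. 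Unfolding Definition \ref{def:tensor-contraction} in this basis, the contraction $\tensorcon{\mc{P}}$ imposes $\delta_{a_i, a'_{\pi(i)}}$ for each $i \in [m]$, while the pairing $\langle \cdot \rangle_{\cfrkg^{\otimes n}}$ imposes $\delta_{b_i, b'_i}$ for each $i \in [n]$. Combining these, I expect the formula
\[ \langle \tensorcon{\mc{P}}(S \otimes T) \rangle_{\cfrkg^{\otimes n}} = \sum_{\vec{a}, \vec{b}} S_{\vec{a}, \vec{b}} \, T_{\pi(\vec{a}), \vec{b}}, \]
where $\pi(\vec{a})$ denotes the tuple $\vec{a}$ reindexed by $\pi$.

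Finally, applying the Cauchy-Schwarz inequality to the sum over $(\vec{a}, \vec{b})$ gives
\[ \bigg| \sum_{\vec{a}, \vec{b}} S_{\vec{a}, \vec{b}} \, T_{\pi(\vec{a}), \vec{b}} \bigg| \leq \bigg( \sum_{\vec{a}, \vec{b}} |S_{\vec{a}, \vec{b}}|^2 \bigg)^{1/2} \bigg( \sum_{\vec{a}, \vec{b}} |T_{\pi(\vec{a}), \vec{b}}|^2 \bigg)^{1/2}, \]
and since $\pi$ is a bijection of $[m]$, the change of variables $\vec{a} \mapsto \pi(\vec{a})$ identifies the second factor with $|T|_{\cfrkg^{*\otimes m} \otimes \cfrkg^{\otimes n}}$, completing the bound. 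The only slightly delicate step is the bookkeeping used to establish the displayed formula for $\langle \tensorcon{\mc{P}}(S \otimes T) \rangle_{\cfrkg^{\otimes n}}$; at its heart, however, the lemma is Cauchy-Schwarz in an orthonormal basis with the remaining content being purely combinatorial, so I do not foresee any serious obstacle.
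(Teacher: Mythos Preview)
Your proposal is correct and follows essentially the same approach as the paper: expand $S$ and $T$ in the orthonormal basis, identify the complete pairing $\mc{P}$ with a permutation of $[m]$, compute the contracted inner product explicitly as a sum over coefficients with permuted indices, and conclude by Cauchy--Schwarz together with the bijectivity of the permutation. The paper's proof is virtually identical in structure and detail.
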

\begin{proof}
We express $S, T$ as a sum over elementary tensors:
\[ S = S^{a_1 \cdots a_m}_{b_1 \cdots b_n} E_{a_1}^* \otimes \cdots E_{a_m}^* \otimes E^{b_1} \cdots E^{b_n}, ~~ T = T^{a_1 \cdots a_m}_{b_1 \cdots b_n} E_{a_1}^* \otimes \cdots E_{a_m}^* \otimes E^{b_1} \cdots E^{b_n}.\]
Since $\mc{P}$ is a complete pairing, there is some permutation $\sigma$ of $[m]$ such that
\[ \tensorcon{\mc{P}}(S \otimes T) = \prod_{i \in [m]} \delta_{a_i c_{\sigma(i)}}  S^{a_1 \cdots a_m}_{b_1 \cdots b_n} T^{c_1 \cdots c_m}_{d_1 \cdots d_n} E^{b_1} \otimes \cdots \otimes E^{b_n} \otimes E^{d_1} \otimes \cdots \otimes E^{d_n}.\]
From this, we obtain
\[ \langle \tensorcon{\mc{P}}(S \otimes T) \rangle_{\cfrkg^{\otimes m}} = \prod_{i \in [m]} \delta_{a_i c_{\sigma(i)}}  \prod_{j \in [n]} \delta^{b_j d_j} S^{a_1 \cdots a_m}_{b_1 \cdots b_n} T^{c_1 \cdots c_m}_{d_1 \cdots d_n}. \]
By Cauchy-Schwarz, the right hand side above may be bounded
\[ \bigg(\sum_{\substack{a_i, b_j \\ i \in [n], j \in [m]}} \big|S^{a_1 \cdots a_m}_{b_1 \cdots b_n} \big|^2\bigg)^{1/2} \bigg(\sum_{\substack{a_i, b_j \\ i \in [n], j \in [m]}} \big|T^{a_1 \cdots a_m}_{b_1 \cdots b_n} \big|^2\bigg)^{1/2} = |S|_{\cfrkg^{* \otimes m} \otimes \cfrkg^{\otimes n}} |T|_{\cfrkg^{* \otimes m} \otimes \cfrkg^{\otimes n}}, \]
and the desired result follows.
\end{proof}


\section{Para-controlled Ansatz}\label{section:para-controlled-ansatz}

We recall that the frequency-truncated stochastic Yang-Mills heat flow is given by 
\begin{equation}\label{ansatz:eq-SYM-leqN}
    \partial_t A^i_{\leq N} = \Delta A^i_{\leq N} + \big[ A^j_{\leq N} , 2 \partial_j A^i_{\leq N} - \partial^i A_{\leq N,j} \big] + \big[ \big[ A^i_{\leq N}, A^j_{\leq N} \big], A_{\leq N,j} \big] + P_{\leq N}\xi^i.
\end{equation}
For technical reasons, it is convenient to subtract and add the massive term $A_{\leq N}^i$ to \eqref{ansatz:eq-SYM-leqN}. This allows us to replace the symbol $n \in \Z^2 \mapsto |n|^2$ of the negative Laplacian by $n \in \Z^2 \mapsto \langle n \rangle^2$, which is strictly positive. To be precise, we rewrite \eqref{ansatz:eq-SYM-leqN} as 
\begin{equation}\label{ansatz:eq-SYM-leqN-revised}
 \big( \partial_t - \Delta + 1 \big) A^i_{\leq N} = \big[ A^j_{\leq N} , 2 \partial_j A^i_{\leq N} - \partial^i A_{\leq N,j} \big] + \big[ \big[ A^i_{\leq N}, A^j_{\leq N} \big], A_{\leq N,j} \big] + A_{\leq N}^i +  P_{\leq N}\xi^i.
\end{equation}

\begin{figure}
\centering
\begin{tabular}{|P{\bigcolwidth}|P{\colwidth}|P{\colwidth}|P{\colwidth}|P{\colwidth}|P{\colwidth}|P{\colwidth}|}
\hline 
Object & $A$ & $\linear$ &  $\quadratic$ & $B$ & $X$ & $Y$  \\ \hline
Regularity & $-\kappa$ & $-\kappa$ & $1-2\kappa$ & $1-2\kappa$ & $1-2\kappa$ & $2-5\kappa$ \\ \hline
\end{tabular}
\centering
    \caption{In this figure, we display the terms in our para-controlled Ansatz and their regularities.}
    \label{figure:regularities}
\end{figure}

The main goal of this section is to derive the para-controlled Ansatz for \eqref{ansatz:eq-SYM-leqN}. The terms in this Ansatz, which will be defined momentarily, and the corresponding regularities are listed in Figure \ref{figure:regularities}. We first define the linear stochastic object $\linear[\leqN][r][]$, which is a $\frkg$-valued one-form, by 

\begin{equation}
\linear[\leqN][r][]=\linear[\leqN][l][i] \mathrm{d}x^i
\end{equation}
and 
\begin{equation}\label{ansatz:eq-linear}
\linear[\leqN][r][i] =  \Duhinf \Big( P_{\leq N} \xi^i \Big). 
\end{equation}
The reason for using the Duhamel integral over $(-\infty,t]$, rather than the Duhamel integral over $[0,t]$, is that \eqref{ansatz:eq-linear} yields a stationary stochastic object, which is convenient in many of our calculations. 
In Lemma \ref{lemma:linear-object}, we will show that the linear stochastic object $\linear[\leqN][r][]$ has spatial regularity $0-$. We now let $1\leq i,j,k\leq 2$. Then, we define the quadratic stochastic objects by 
\begin{equation}\label{ansatz:eq-quadratic}
 \quadratic[\leqN][d][i][j][k] = \Duhinf \Big( \Big[ \linear[\leqN][r][i], \partial_k \linear[\leqN][r][j] \Big] \Big).
\end{equation}
Furthermore, we define the combined quadratic stochastic object as 
\begin{equation}
\quadratic[\leqN][r][]= \quadratic[\leqN][l][i] \mathrm{d}x^i,
\end{equation}
where we define
\begin{equation}\label{ansatz:eq-quadratic-combined}
\quadratic[\leqN][r][i] = \quadratic[\leqN][l][i]
= 2 \quadratic[\leqN][d][j][i][j] - \quadratic[\leqN][d][j][j][i].
 \end{equation}
 On the right-hand side of \eqref{ansatz:eq-quadratic-combined}, we implicitly sum over all indices $1\leq j \leq 2$. This summation coincides with the summation in the nonlinearity of \eqref{ansatz:eq-SYM-leqN-revised}. In Lemma \ref{lemma:quadratic-stochastic-object-with-derivative} below, we will show that the quadratic object from \eqref{ansatz:eq-quadratic-combined}  has spatial regularity $1-$. We now define
 \begin{align}
 Z_{\leq N} &:= A_{\leq N} - \linear[\leqN][r][] - \quadratic[\leqN][r][], \\ 
B_{\leq N} &:= A_{\leq N} - \linear[\leqN][r][]. \label{ansatz:eq-BN}
 \end{align}
 The reason for also introducing $B_{\leq N}$, which is the sum of $\quadratic[\leqN][r][]$ and $Z_{\leq N}$, is that it can be used to simplify the right-hand side of \eqref{ansatz:eq-Z} below. The evolution equation for the nonlinear remainder $Z_{\leq N}$ is given by
 \begin{equation}\label{ansatz:eq-Z}
 \begin{aligned}
 (\partial_t - \Delta +1) Z_{\leq N}^i &= 
 \Big[ B_{\leq N}^j , 2 \partial_j \linear[\leqN][r][i] - \partial^i \linear[\leqN][l][j] \Big] 
 + \Big[ \linear[\leqN][r][j], 2 \partial_j B_{\leq N}^i - \partial^i B_{\leq N,j} \Big]
  \\ 
 &+ \Big[ B^j_{\leq N} , 2 \partial_j B^i_{\leq N} - \partial^i B_{\leq N,j} \Big] \\ 
 &+ \Big[ \Big[ \Big(\linear[\leqN][r][i] + B_{\leq N}^i\Big), 
 \Big(\linear[\leqN][r][j] + B_{\leq N}^j\Big) \Big],
 \Big(\linear[\leqN][l][j] + B_{\leq N,j}\Big) \Big] \\
 &+ \linear[\leqN][r][i] + B_{\leq N}^i. 
 \end{aligned}
 \end{equation}
The evolution equation \eqref{ansatz:eq-Z} cannot be solved using a direct contraction-mapping argument. To see this, we note that the right-hand side of \eqref{ansatz:eq-Z} contains low$\times$high-interactions of the form 
\begin{equation}\label{ansatz:eq-motivation-e1}
\Big[ B_{\leq N} \parall \partial \hspace{0.1ex} \linear[\leqN][r] \Big]. 
\end{equation}
Since $\partial \hspace{0.1ex} \linear[\leqN][r]$ has spatial regularity $(-1)-$, \eqref{ansatz:eq-motivation-e1} has  at most spatial regularity  $(-1)-$, and therefore $Z_{\leq N}$ has at most spatial regularity $1-$. However, the right-hand side of \eqref{ansatz:eq-motivation-e1} also contains high$\times$high-interactions of the form 
\begin{equation}\label{ansatz:eq-motivation-e2}
\Big[ B_{\leq N} \parasim \partial \hspace{0.1ex} \linear[\leqN][r] \Big] 
= \Big[ \quadratic[\leqN][r][] \parasim \partial \hspace{0.1ex} \linear[\leqN][r] \Big]  
+ \Big[ Z_{\leq N} \parasim \partial \hspace{0.1ex} \linear[\leqN][r] \Big]. 
\end{equation}
Since the sum of the regularities of $Z_{\leq N}$ and $\partial \hspace{0.1ex} \linear[\leqN][r]$ is negative, the regularity of $Z_{\leq N}$ alone is insufficient to control the second term in \eqref{ansatz:eq-motivation-e2}. To address this problem, we need to understand the random structure of $Z_{\leq N}$.  We therefore make the para-controlled Ansatz $Z_{\leq N}^j=X_{\leq N}^j + Y_{\leq N}^j$, where  $X_{\leq N}^j$ is the para-controlled component and $Y_{\leq N}^j$  is the smooth nonlinear remainder. They will be chosen as the solutions of the following system of equations: 
 \begin{equation}\label{ansatz:eq-X}
 (\partial_t - \Delta +1 ) X_{\leq N}^i = 
 \Big[ B_{\leq N,j} \parall \Big( 2 \partial^j \, \linear[\leqN][r][i] - \partial^i \, \linear[\leqN][r][j] \Big) \Big] 
 \end{equation}
 and 
 \begin{align}
 (\partial_t - \Delta +1 ) Y_{\leq N}^i 
 &= \Big[  B_{\leq N}^j  
 \parasim 
 \Big( 2 \partial_j \linear[\leqN][r][i] - \partial^i \linear[\leqN][l][j] \Big) \Big]
+ \Big[ \linear[\leqN][r][j] \parasim \Big( 2 \partial_j  B_{\leq N}^i - \partial^i B_{\leq N,j}  \Big) \Big] 
 \allowdisplaybreaks[4] \label{ansatz:eq-Y-1} \\ 
 &+ \Big[ B_{\leq N}^j
 \paragg 
 \Big( 2 \partial_j \linear[\leqN][r][i] - \partial_i \linear[\leqN][r][j] \Big) \Big] + \Big[ \linear[\leqN][r][j] \paransim \Big( 2 \partial_j B_{\leq N}^i - \partial^i B_{\leq N,j} \Big) \Big]  
  \allowdisplaybreaks[4]\label{ansatz:eq-Y-2} \\ 
 &+\Big[ B^j_{\leq N} , 2 \partial_j B^i_{\leq N} - \partial^i B_{\leq N,j} \Big]  \allowdisplaybreaks[4]\label{ansatz:eq-Y-3}  \\ 
 &+\Big[ \Big[ \Big(\linear[\leqN][r][i] + B_{\leq N}^i\Big), 
 \Big(\linear[\leqN][r][j] + B_{\leq N}^j\Big) \Big],
 \Big(\linear[\leqN][l][j] + B_{\leq N,j}\Big) \Big]
  \allowdisplaybreaks[4]\label{ansatz:eq-Y-4} \\
 &+ \linear[\leqN][r][i] + B_{\leq N}^i.  
  \allowdisplaybreaks[4]\label{ansatz:eq-Y-5}
 \end{align}
 
\begin{remark}
In the terms in \eqref{ansatz:eq-Y-2}-\eqref{ansatz:eq-Y-5}, the random structure of $B_{\leq N}$ is irrelevant. The only terms in which the random structure of $B_{\leq N}$ is needed are the two high$\times$high-interactions in \eqref{ansatz:eq-Y-1}. 
\end{remark}

In order to unify the treatment of the terms in \eqref{ansatz:eq-Y-1}, we utilize the product formula. More precisely, we rewrite the second summand in \eqref{ansatz:eq-Y-1} as 
\begin{align}
\Big[ \linear[\leqN][r][j] \parasim \Big( 2 \partial_j  B_{\leq N}^i - \partial^i B_{\leq N,j}  \Big) \Big]  \notag  
=& - \Big[  \Big( 2 \partial_j  B_{\leq N}^i - \partial^i B_{\leq N,j}  \Big) \parasim \linear[\leqN][r][j]  \Big]  \notag  \\
=& -2 \partial_j \Big[ B^i_{\leq N} \parasim   \linear[\leqN][r][j] \Big] + \partial^i \Big[ B_{\leq N,j} \parasim \linear[\leqN][r][j]   \Big] \label{ansatz:eq-product-formula-1} \\ 
&+2  \Big[ B^i_{\leq N} \parasim \partial^j \linear[\leqN][l][j] \Big] - \Big[ B^j_{\leq N} \parasim \partial^i \linear[\leqN][l][j] \Big]. \label{ansatz:eq-product-formula-2}
\end{align}
The two summands in \eqref{ansatz:eq-product-formula-1}, i.e., the terms containing total derivatives, are rather harmless. This is because the sum of the regularities of $B$ and $\linear$ is positive and the Duhamel integral easily compensates for the loss of one derivative. The two summands in \eqref{ansatz:eq-product-formula-2}, i.e., the terms without total derivatives, require a more detailed analysis. When combining them with the first summand from \eqref{ansatz:eq-Y-1}, we obtain 
\begin{align}
&\Big[  B_{\leq N}^j  
 \parasim 
 \Big( 2 \partial_j \linear[\leqN][r][i] - \partial^i \linear[\leqN][l][j] \Big) \Big] + 2  \Big[ B^i_{\leq N} \parasim \partial^j \linear[\leqN][l][j] \Big] - \Big[ B^j_{\leq N} \parasim \partial^i \linear[\leqN][l][j] \Big] \notag \\
 =&
 2 \Big[  B_{\leq N}^j  
 \parasim 
 \Big( \partial_j \linear[\leqN][r][i] -  \partial^i \linear[\leqN][l][j] \Big) \Big] + 2  \Big[ B^i_{\leq N} \parasim \partial^j \linear[\leqN][l][j] \Big]. 
 \label{ansatz:eq-product-formula-3}
\end{align}
By inserting our Ansatz for $B_{\leq N}$, we can further decompose
\begin{align}
&2 \Big[  B_{\leq N}^j  
 \parasim 
 \Big( \partial_j \linear[\leqN][r][i] -  \partial^i \linear[\leqN][l][j] \Big) \Big] + 2  \Big[ B^i_{\leq N} \parasim \partial^j \linear[\leqN][l][j] \Big] \notag \\
 =& 
 2 \Big[  \quadratic[\leqN][r][j] 
 \parasim 
 \Big( \partial_j \linear[\leqN][r][i] -  \partial^i \linear[\leqN][l][j] \Big) \Big] + 2  \Big[\quadratic[\leqN][r][i]  \parasim \partial^j \linear[\leqN][l][j] \Big]
 \label{ansatz:eq-hh-quadratic}  \\ 
 +&2 \Big[  X_{\leq N}^j  
 \parasim 
 \Big( \partial_j \linear[\leqN][r][i] -  \partial^i \linear[\leqN][l][j] \Big) \Big] + 2  \Big[ X^i_{\leq N} \parasim \partial^j \linear[\leqN][l][j] \Big] 
 \label{ansatz:eq-hh-X} \\
 +&2 \Big[  Y_{\leq N}^j  
 \parasim 
 \Big( \partial_j \linear[\leqN][r][i] -  \partial^i \linear[\leqN][l][j] \Big) \Big] + 2  \Big[ Y^i_{\leq N} \parasim \partial^j \linear[\leqN][l][j] \Big]
 \label{ansatz:eq-hh-Y}.
\end{align}
The terms in \eqref{ansatz:eq-hh-quadratic} 
are explicit stochastic objects, which are controlled in Section \ref{section:objects}. The terms in \eqref{ansatz:eq-hh-X} are the most complicated.
 Since $X_{\leq N}$ only has spatial regularity $1-$, they cannot be controlled using a product estimate, and we instead have to utilize the random structure of $X_{\leq N}$. More precisely, we rely on the double Duhamel trick, i.e., we insert the Duhamel integral of the right-hand side of \eqref{ansatz:eq-X} into \eqref{ansatz:eq-hh-X}. This yields 
\begin{align}
&2 \Big[  X_{\leq N}^j  
 \parasim 
 \Big( \partial_j \linear[\leqN][r][i] -  \partial^i \linear[\leqN][l][j] \Big) \Big] + 2  \Big[ X^i_{\leq N} \parasim \partial^j \linear[\leqN][l][j] \Big] 
 \notag \\ 
= &2 \bigg[ \Duh  \bigg( \Big[ B_{\leq N,k}  \parall \Big( 2 \partial^k \linear[\leqN][r][j] - \partial^j \linear[\leqN][r][k]  \Big) \Big] \bigg) \parasim \Big( \partial_j \linear[\leqN][r][i] - \partial^i \linear[\leqN][l][j] \Big) \bigg] \label{ansatz:eq-double-Duhamel-1} \\
 +&2 \bigg[ \Duh  \bigg( \Big[ B_{\leq N,k}  \parall \Big( 2 \partial^k \linear[\leqN][r][i] - \partial^i \linear[\leqN][r][k]  \Big) \Big] \bigg) \parasim  \partial^j \linear[\leqN][l][j]  \bigg]. \label{ansatz:eq-double-Duhamel-2}
\end{align}
Finally, due to the high-regularity of $Y_{\leq N}$, the terms in \eqref{ansatz:eq-hh-Y} can be controlled using paraproduct estimates. \\

In order to state the final form of the evolution equation for $Y_{\leq N}$, we have to discuss one final aspect, which concerns renormalizations. It turns out that \eqref{ansatz:eq-Y-4}, \eqref{ansatz:eq-hh-quadratic},  \eqref{ansatz:eq-double-Duhamel-1}, and \eqref{ansatz:eq-double-Duhamel-2} above all exhibit logarithmic divergences. In order to capture the logarithmic divergences, we make the following definition.

\begin{definition}[Renormalization constant]\label{ansatz:def-renormalization}
For all $N \in \dyadic$, we define
\begin{equation}
\sigma_{\leq N}^2 := \sum_{n\in \Z^2} \frac{\rho_{\leq N}^2(n)}{\langle n\rangle^2} \sim \log(N). 
\end{equation}
\end{definition}
Equipped with $\sigma_{\leq N}^2$, we now subtract divergent counterterms from \eqref{ansatz:eq-Y-4}, \eqref{ansatz:eq-hh-quadratic},  \eqref{ansatz:eq-double-Duhamel-1}, and \eqref{ansatz:eq-double-Duhamel-2}. More precisely, we consider  
\begin{align}
\Big[ \Big[ \Big(\linear[\leqN][r][i] + B_{\leq N}^i\Big), 
 \Big(\linear[\leqN][r][j] + B_{\leq N}^j\Big) \Big],
 \Big(\linear[\leqN][l][j] + B_{\leq N,j}\Big) \Big] 
 &- \sigma_{\leq N}^2 \Kil \big( \linear[\leqN][r][i] +B_{\leq N}^i\big) 
 \label{ansatz:eq-renormalization-1} \\ 
 2 \Big[  \quadratic[\leqN][r][j] 
 \parasim 
 \Big( \partial_j \linear[\leqN][r][i] -  \partial^i \linear[\leqN][l][j] \Big) \Big] + 2  \Big[\quadratic[\leqN][r][i]  \parasim \partial^j \linear[\leqN][l][j] \Big]
 &+ \sigma_{\leq N}^2 \Kil \big( \linear[\leqN][r][i] \big),
 \label{ansatz:eq-renormalization-2} \\ 
2 \bigg[ \Duh  \bigg( \Big[ B_{\leq N,k}  \parall \Big( 2 \partial^k \linear[\leqN][r][j] - \partial^j \linear[\leqN][r][k]  \Big) \Big] \bigg) \parasim \Big( \partial_j \linear[\leqN][r][i] - \partial^i \linear[\leqN][l][j] \Big) \bigg] &+ \frac{3}{2} \sigma_{\leq N}^2 \Kil\big( B_{\leq N}^i \big), 
\label{ansatz:eq-renormalization-3} \\ 
2 \bigg[ \Duh  \bigg( \Big[ B_{\leq N,k}  \parall \Big( 2 \partial^k \linear[\leqN][r][i] - \partial^i \linear[\leqN][r][k]  \Big) \Big] \bigg) \parasim  \partial^j \linear[\leqN][l][j]  \bigg] &- \frac{1}{2} \sigma_{\leq N}^2 \Kil\big( B_{\leq N}^i \big). 
\label{ansatz:eq-renormalization-4}
\end{align}
Despite the fact that the individual terms  \eqref{ansatz:eq-Y-4}, \eqref{ansatz:eq-hh-quadratic},  \eqref{ansatz:eq-double-Duhamel-1}, and \eqref{ansatz:eq-double-Duhamel-2} require a renormalization, the original equation \eqref{ansatz:eq-SYM-leqN-revised} does not need to be renormalized. The reason for this is the combinatorial identity
\begin{equation}
\sigma_{\leq N}^2 \Kil \big( \linear[\leqN][r][i] +B_{\leq N}^i\big) 
- \sigma_{\leq N}^2 \Kil \big( \linear[\leqN][r][i] \big) 
- \frac{3}{2} \sigma_{\leq N}^2 \Kil \big( B_{\leq N}^i\big) 
+ \frac{1}{2} \sigma_{\leq N}^2 \Kil \big( B_{\leq N}^i\big)=0. 
\end{equation}
In other words, the reason is that the divergent counterterms in \eqref{ansatz:eq-renormalization-1}-\eqref{ansatz:eq-renormalization-4} cancel each other. 

\begin{remark}\label{ansatz:rem-cancellation}
This cancellation was already observed in \cite{CCHS22} and only appears in the two-dimensional setting. It boils down to the condition 
\begin{equation}\label{ansatz:eq-cancellation}
- \frac{3(d-1)}{d} + \frac{1}{d} + (d-1)=0, 
\end{equation}
which is only satisfied in dimension $d=2$. For more details regarding \eqref{ansatz:eq-cancellation}, we refer to 
Remark \ref{objects:rem-with-two},
Remark \ref{objects:rem-Wick-d-two-derivatives},
Remark \ref{nonlinear:rem-wick-para}, and 
Remark \ref{nonlinear:rem-Wick-cubic}. 
\end{remark}

For expository purposes, we capture the final form of our evolution equations for $X_{\leq N}$ and $Y_{\leq N}$ in the following definition. 

\begin{definition}[Para-controlled stochastic Yang-Mills heat equation]\label{ansatz:def-paracontrolled}
Let $\tau>0$, let $N\in \dyadic$, and let $A_0\colon \T^d \rightarrow \frkg^2$. Furthermore, let $X_{\leq N},Y_{\leq N}\colon [0,\tau]\times \T^2 \rightarrow \frkg^2$ and define 
\begin{equation*}
B_{\leq N}  :=  \quadratic[\leqN][r] + X_{\leq N} + Y_{\leq N}. 
\end{equation*}
Then, we call $X_{\leq N}$ and $Y_{\leq N}$ a solution of the para-controlled stochastic Yang-Mills heat equation with initial data $A_0$ if the initial conditions
\begin{align}
X_{\leq N}(0)&=0 \qquad  \text{and} \qquad Y_{\leq N}(0)= A_0 - \linear[\leqN][r][] (0) - \quadratic[\leqN][r][](0), \label{ansatz:eq-initial} 
\end{align}
the evolution equation
\begin{align}
 (\partial_t - \Delta +1 ) X_{\leq N}^i &= 
 \Big[ B_{\leq N,j} \parall \Big( 2 \partial^j \, \linear[\leqN][r][i] - \partial^i \, \linear[\leqN][r][j] \Big) \Big], \label{ansatz:eq-new-X}
\end{align}
and the evolution equation
\begin{align}
&\hspace{2ex} (\partial_t - \Delta +1) Y_{\leq N}^i \notag \\ 
 &= 
 2 \Big[  \quadratic[\leqN][r][j] 
 \parasim 
 \Big( \partial_j \linear[\leqN][r][i] -  \partial^i \linear[\leqN][l][j] \Big) \Big] + 2  \Big[\quadratic[\leqN][r][i]  \parasim \partial^j \linear[\leqN][l][j] \Big]
 + \sigma_{\leq N}^2 \Kil \big( \linear[\leqN][r][i] \big)
\allowdisplaybreaks[4] \label{ansatz:eq-Y-new-1}  \\ 
 &+2 \bigg[ \Duh  \bigg( \Big[ B_{\leq N,k}  \parall \Big( 2 \partial^k \linear[\leqN][r][j] - \partial^j \linear[\leqN][r][k]  \Big) \Big] \bigg) \parasim \Big( \partial_j \linear[\leqN][r][i] - \partial^i \linear[\leqN][l][j] \Big) \bigg] + \frac{3}{2} \sigma_{\leq N}^2 \Kil\big( B_{\leq N}^i \big),  
 \allowdisplaybreaks[4] \label{ansatz:eq-Y-new-2} \\
 &+2 \bigg[ \Duh  \bigg( \Big[ B_{\leq N,k}  \parall \Big( 2 \partial^k \linear[\leqN][r][i] - \partial^i \linear[\leqN][r][k]  \Big) \Big] \bigg) \parasim  \partial^j \linear[\leqN][l][j]  \bigg] 
 - \frac{1}{2} \sigma_{\leq N}^2 \Kil\big( B_{\leq N}^i \big) \allowdisplaybreaks[4] \label{ansatz:eq-Y-new-3} \\ 
  &+2 \Big[  Y_{\leq N}^j  
 \parasim 
 \Big( \partial_j \linear[\leqN][r][i] -  \partial^i \linear[\leqN][l][j] \Big) \Big] + 2  \Big[ Y^i_{\leq N} \parasim \partial^j \linear[\leqN][l][j] \Big]
 \allowdisplaybreaks[4] \label{ansatz:eq-Y-new-4} \\ 
 &-2 \partial_j \Big[ B^i_{\leq N} \parasim   \linear[\leqN][r][j] \Big] + \partial^i \Big[ B_{\leq N,j} \parasim \linear[\leqN][r][j]   \Big] \allowdisplaybreaks[4] \label{ansatz:eq-Y-new-5} \\ 
 &+ \Big[ B_{\leq N}^j
 \paragg 
 \Big( 2 \partial_j \linear[\leqN][r][i] - \partial_i \linear[\leqN][r][j] \Big) \Big] + \Big[ \linear[\leqN][r][j] \paransim \Big( 2 \partial_j B_{\leq N}^i - \partial^i B_{\leq N,j} \Big) \Big] \allowdisplaybreaks[4] \label{ansatz:eq-Y-new-6} \\ 
 &+\Big[ B^j_{\leq N} , 2 \partial_j B^i_{\leq N} - \partial^i B_{\leq N,j} \Big]  
 \allowdisplaybreaks[4]\label{ansatz:eq-Y-new-7}  \\ 
 &+\Big[ \Big[ \Big(\linear[\leqN][r][i] + B_{\leq N}^i\Big), 
 \Big(\linear[\leqN][r][j] + B_{\leq N}^j\Big) \Big],
 \Big(\linear[\leqN][l][j] + B_{\leq N,j}\Big) \Big] 
 - \sigma_{\leq N}^2 \Kil \big( \linear[\leqN][r][i] +B_{\leq N}^i\big)  \label{ansatz:eq-Y-new-8} \\ 
 &+ \linear[\leqN][r][i] + B_{\leq N}^i
  \allowdisplaybreaks[4]\label{ansatz:eq-Y-new-9}
 \end{align}
 are satisfied. 
\end{definition}

As a direct consequence of the derivations in this section, we obtain the following proposition.

\begin{proposition}
Let $\tau>0$, let $N\in \dyadic$, and let $A_0\colon \T^d \rightarrow \frkg^2$. If $X_{\leq N},Y_{\leq N}\colon [0,\tau]\times \T^2\rightarrow \frkg^2$ solve the para-controlled stochastic Yang-Mills heat equation (Definition \ref{ansatz:def-paracontrolled}), then
\begin{equation*}
A_{\leq N}:= \linear[\leqN][r][] + \quadratic[\leqN][r][] + X_{\leq N} + Y_{\leq N}
\end{equation*}
solves the stochastic Yang-Mills heat equation \eqref{ansatz:eq-SYM-leqN-revised}. 
\end{proposition}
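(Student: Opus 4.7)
The plan is to verify the claim by direct substitution: we add together the evolution equations satisfied by each of the four components $\linear[\leqN][r][], \quadratic[\leqN][r][], X_{\leq N}, Y_{\leq N}$ and check that the sum coincides with the right-hand side of \eqref{ansatz:eq-SYM-leqN-revised} evaluated at $A_{\leq N}$. By construction \eqref{ansatz:eq-linear}, the linear object satisfies $(\partial_t - \Delta + 1)\linear[\leqN][r][i] = P_{\leq N} \xi^i$. By \eqref{ansatz:eq-quadratic} and \eqref{ansatz:eq-quadratic-combined}, the quadratic object satisfies $(\partial_t - \Delta + 1)\quadratic[\leqN][r][i] = [\linear[\leqN][r][j], 2\partial_j \linear[\leqN][r][i] - \partial^i \linear[\leqN][l][j]]$. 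The equations for $X_{\leq N}$ and $Y_{\leq N}$ are then \eqref{ansatz:eq-new-X} and \eqref{ansatz:eq-Y-new-1}--\eqref{ansatz:eq-Y-new-9}. Adding the four yields an expression for $(\partial_t - \Delta + 1) A_{\leq N}^i$, which we must match against $[A_{\leq N}^j, 2\partial_j A_{\leq N}^i - \partial^i A_{\leq N,j}] + [[A_{\leq N}^i, A_{\leq N}^j], A_{\leq N,j}] + A_{\leq N}^i + P_{\leq N}\xi^i$.

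The main step is the reverse-engineering of the nonlinear terms. Writing $A_{\leq N} = \linear[\leqN][r][] + B_{\leq N}$, we expand the quadratic nonlinearity bilinearly into four pieces: the purely linear one $[\linear[\leqN][r][j], 2\partial_j \linear[\leqN][r][i] - \partial^i \linear[\leqN][l][j]]$ is exactly what is produced by $(\partial_t - \Delta + 1)\quadratic[\leqN][r][i]$; the mixed piece $[B_{\leq N}^j, 2\partial_j \linear[\leqN][r][i] - \partial^i \linear[\leqN][l][j]]$ is paraproduct-decomposed into low$\times$high, high$\times$high, and high$\times$low contributions, which match \eqref{ansatz:eq-new-X} (giving the $X_{\leq N}$ piece), the first summand of \eqref{ansatz:eq-Y-new-1}, and part of \eqref{ansatz:eq-Y-new-6}; the mixed piece $[\linear[\leqN][r][j], 2\partial_j B_{\leq N}^i - \partial^i B_{\leq N,j}]$ is rearranged using the product formula derivation \eqref{ansatz:eq-product-formula-1}--\eqref{ansatz:eq-product-formula-3}, yielding \eqref{ansatz:eq-Y-new-4}, \eqref{ansatz:eq-Y-new-5}, and part of \eqref{ansatz:eq-Y-new-6} together with the second summand of \eqref{ansatz:eq-Y-new-1}; and the purely nonlinear piece $[B_{\leq N}^j, 2\partial_j B_{\leq N}^i - \partial^i B_{\leq N,j}]$ appears as \eqref{ansatz:eq-Y-new-7}. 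Similarly, the cubic and mass terms appear as \eqref{ansatz:eq-Y-new-8}--\eqref{ansatz:eq-Y-new-9}. The double-Duhamel terms \eqref{ansatz:eq-Y-new-2}--\eqref{ansatz:eq-Y-new-3} come from the first summand of \eqref{ansatz:eq-Y-new-1}: after substituting the Ansatz $B_{\leq N} = \quadratic[\leqN][r][] + X_{\leq N} + Y_{\leq N}$ and then using the Duhamel representation of $X_{\leq N}$ from \eqref{ansatz:eq-new-X}, one recovers exactly the integrand of \eqref{ansatz:eq-Y-new-2}--\eqref{ansatz:eq-Y-new-3}, plus the $\quadratic[\leqN][r][]$ and $Y_{\leq N}$ contributions already accounted for elsewhere.

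The final (and most delicate) observation is that the four renormalization counterterms in \eqref{ansatz:eq-Y-new-1}, \eqref{ansatz:eq-Y-new-2}, \eqref{ansatz:eq-Y-new-3}, \eqref{ansatz:eq-Y-new-8} sum to zero by the combinatorial identity
\[ -\sigma_{\leq N}^2 \Kil(\linear[\leqN][r][i] + B_{\leq N}^i) + \sigma_{\leq N}^2 \Kil(\linear[\leqN][r][i]) + \tfrac{3}{2} \sigma_{\leq N}^2 \Kil(B_{\leq N}^i) - \tfrac{1}{2} \sigma_{\leq N}^2 \Kil(B_{\leq N}^i) = 0, \]
which is the cancellation highlighted in Remark~\ref{ansatz:rem-cancellation}. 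Thus the sum of the right-hand sides of the four evolution equations matches \eqref{ansatz:eq-SYM-leqN-revised}. The initial condition is verified by adding \eqref{ansatz:eq-initial} to $\linear[\leqN][r][](0) + \quadratic[\leqN][r][](0)$, which gives $A_{\leq N}(0) = A_0$. The bookkeeping of matching every paraproduct splitting to the correct line of Definition~\ref{ansatz:def-paracontrolled} is the only real obstacle; all of it, however, is pure algebra and amounts to reversing the derivation carried out in this section.
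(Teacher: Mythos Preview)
Your proposal is correct and takes essentially the same approach as the paper: the paper states only that the proposition is ``a direct consequence of the derivations in this section,'' and you have spelled out that reverse-derivation in detail, including the key cancellation of the four counterterms. A minor point: your attribution of which individual mixed piece contributes to which particular line \eqref{ansatz:eq-Y-new-1}--\eqref{ansatz:eq-Y-new-4} is slightly imprecise (these four lines together reconstitute \eqref{ansatz:eq-product-formula-3}, which arises from \emph{both} mixed pieces jointly, not from one alone), but you correctly identify this bookkeeping as pure algebra reversing Section~\ref{section:para-controlled-ansatz}, so the argument stands.
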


\section{Vector-valued multiple stochastic integrals}\label{section:multiple-stochastic-integrals}

In this section, we develop a theory of vector-valued multiple stochastic integrals, which is our main technical tool for analyzing various explicit stochastic objects.
It is an extension of the well-known scalar theory of multiple stochastic integrals (see e.g. \cite{M14,Nua06}). We take $d$ to be general in this section. We represent the $\frkg^d$-valued space-time white noise $\xi$ as
\begin{equation}
\xi_j(t,x) = \sum_{n \in \Z^2} \e_n(x) \hat{\xi}_j(t, n) = \sum_{n \in \Z^2} \e_n(x) dW_j(t, n), ~~ j \in [d],
\end{equation}
where $(W_j(\cdot, n), j \in [d], n \in \Z^2)$ are i.i.d. standard $\cfrkg$-valued Brownian motions, modulo the condition (to ensure $\frkg$-valuedness)
$W_j(\cdot, n) = \ovl{W_j(\cdot, -n)}$ for all $n \in \Z^2$.
This representation can be thought of as a $\cfrkg$-valued white noise (modulo the previous antisymmetry condition) on the index set $\indexset := [d] \times \Z^d \times \R$, where $\indexset$ is endowed with the measure $\lebI$, which is obtained by taking the product of counting measure on $[d] \times \Z^d$ with Lebesgue measure on $\R$. 
Recall
that $(E_a)_{a=1}^{\dim \frkg}$ is an orthonormal basis of $\frkg$ (and thus also of $\cfrkg$), which has been fixed in Section \ref{section:lie-group-lie-algebra}.

\begin{definition}[Vector-valued multiple stochastic integral]\label{def:multiple-stochastic-integral}
Let $k,m \geq 1$. Let $f \in L^2(\indexset^m, L(\cfrkg^{\otimes m}, \cfrkg^{\otimes k}))$. 
We then define the multiple stochastic integral by expanding $W = W^a E_a$:
\[\begin{split}
\int_{\indexset^m} f dW^{\otimes m} &:= \int_{\indexset^m} f(E_{a_1} \otimes \cdots \otimes E_{a_m}) dW^{a_1} \cdots dW^{a_m} \\
&:= \big( E_{b_1} \otimes \cdots \otimes E_{b_k} \big) \int_{\indexset^n} f_{a_1 \cdots a_m}^{b_1 \cdots b_k} dW^{a_1} \cdots dW^{a_m}\in \cfrkg^{\otimes k}. 
\end{split}\]
Note that the second line above is a linear combination of elements of $\cfrkg^{\otimes k}$, where the scalars are stochastic integrals whose definition is for instance given in \cite{Nua06,M14}.
\end{definition}

We quickly check that Definition \ref{def:multiple-stochastic-integral} is independent of the choice of basis.

\begin{lemma}
Let $(F_a)_{a \in [\dim \cfrkg]}$ be another basis for $\cfrkg$ and write $W = W_E^a E_a = W_F^a F_a$. For any $k,m \geq 1$, and any $f \in L^2(\indexset^m, L(\cfrkg^{\otimes m}, \cfrkg^{\otimes k}))$,
we have that
\[ (E_{b_1} \otimes \cdots \otimes E_{b_k}) \int_{\indexset^m} f_{a_1 \cdots a_m}^{b_1 \cdots b_k} dW^{a_1}_E \cdots dW^{a_m}_E \stackrel{a.s.}{=} (F_{d_1} \otimes \cdots \otimes F_{d_k})
\int_{\indexset^m} f_{c_1 \cdots c_m}^{d_1 \cdots d_k} dW^{c_1}_F \cdots dW^{c_m}_F.\]
\end{lemma}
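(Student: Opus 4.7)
The strategy is a direct change-of-variables computation, relying on the multilinearity of the scalar multiple stochastic integral. Let $M$ denote the change-of-basis matrix with $F_c = M^a_c E_a$, and $M^{-1}$ its inverse with $E_a = (M^{-1})^c_a F_c$. First, I would derive the two transformation rules we need. On the level of Brownian motions, writing $W = W_E^a E_a = W_F^c F_c = W_F^c M^a_c E_a$ gives $W_E^a = M^a_c W_F^c$, equivalently $dW_F^c = (M^{-1})^c_a \, dW_E^a$. On the level of the tensor components of $f$, multilinearity of $f$ together with the identities $F_{c_i} = M^{a_i}_{c_i} E_{a_i}$ and $E_{b_j} = (M^{-1})^{d_j}_{b_j} F_{d_j}$ yield that the $F$-basis components $\tilde{f}^{d_1\cdots d_k}_{c_1\cdots c_m}$ of $f$ are related to the $E$-basis components by
\[
\tilde{f}_{c_1\cdots c_m}^{d_1\cdots d_k} \;=\; M^{a_1}_{c_1}\cdots M^{a_m}_{c_m}\,(M^{-1})^{d_1}_{b_1}\cdots (M^{-1})^{d_k}_{b_k}\, f_{a_1\cdots a_m}^{b_1\cdots b_k}.
\]

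Second, I would invoke the multilinearity of the scalar multiple stochastic integral in its integrators: for any deterministic constants $(c^{a_i}_{b_i})$ and scalar integrand $g$,
\[
\int g\,d(c^{a_1}_{b_1}W^{b_1})\cdots d(c^{a_m}_{b_m}W^{b_m}) \;\stackrel{a.s.}{=}\; c^{a_1}_{b_1}\cdots c^{a_m}_{b_m}\int g\,dW^{b_1}\cdots dW^{b_m},
\]
which is a standard property of multiple It\^o--Wiener integrals (see e.g.\ \cite{Nua06}). Applied with $c^{a_i}_{b_i} = (M^{-1})^{a_i}_{b_i}$, this converts the $dW_F$-differentials appearing in the $F$-basis definition into $dW_E$-differentials, at the cost of accumulating extra $(M^{-1})$-factors inside the scalar integrand.

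Finally, I would expand the right-hand side of the claimed identity using both transformation rules and observe that the products $M^{a_i}_{c_i}(M^{-1})^{c_i}_{a'_i} = \delta^{a_i}_{a'_i}$ (from pairing the $f$-components with the $dW$-factors) and $M^{e_j}_{d_j}(M^{-1})^{d_j}_{b_j} = \delta^{e_j}_{b_j}$ (from pairing the basis vectors $F_{d_j}$ outside the integral with the upper indices of $\tilde{f}$) collapse all change-of-basis factors, leaving exactly the $E$-basis integral from Definition \ref{def:multiple-stochastic-integral}. The only real obstacle is bookkeeping: one must carefully track which slots transform covariantly (the $dW$-slots and the $\cfrkg^*$-inputs of $f$) and which slots transform contravariantly (the $\cfrkg$-outputs of $f$), and then verify that all $M$ and $M^{-1}$ factors pair off correctly. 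No genuine stochastic analysis is needed beyond the scalar multilinearity property cited above.
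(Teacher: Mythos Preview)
Your proposal is correct and follows essentially the same approach as the paper: both arguments reduce to the multilinearity of the scalar multiple stochastic integral plus a change-of-basis computation. The only cosmetic difference is that the paper avoids introducing the output-index transformation explicitly by working with the vector $f(E_{a_1}\otimes\cdots\otimes E_{a_m})$ directly and using linearity of $f$ to pass from $E$-inputs to $F$-inputs, whereas you track both covariant and contravariant index transformations separately; the content is identical.
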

\begin{proof}
Let $M$ be the change of basis matrix of $E$ and $F$, i.e. for all $a \in [\dim \cfrkg]$, $E_a = M_{ab} F^b$.
This implies $W = W^a_E E_a = W^a_E M_{ab} F^b$,
which implies that $W^b_F = W^a_E M_{ab}$. We thus have that
\[ (F_{d_1} \otimes \cdots \otimes F_{d_k})
\int_{\indexset^m} f_{c_1 \cdots c_m}^{d_1 \cdots d_k} dW^{c_1}_F \cdots dW^{c_m}_F \stackrel{a.s.}{=} M_{a_1 c_1} \cdots M_{a_m c_m} (F_{d_1} \otimes \cdots \otimes F_{d_k})
\int_{\indexset^m} f_{c_1 \cdots c_m}^{d_1 \cdots d_k} dW^{a_1}_E \cdots dW^{a_m}_E. \]
Again using that $E_a = M_{ab} F^b$, we have that
\[\begin{split}
(E_{b_1} \otimes \cdots \otimes E_{b_k}) f^{b_1 \cdots b_k}_{a_1 \cdots a_m} =  f(E_{a_1} \otimes \cdots \otimes E_{a_m}) &= M_{a_1 c_1} \cdots M_{a_m c_m} f(F_{c_1} \otimes \cdots \otimes F_{c_m}) \\
&= M_{a_1 c_1} \cdots M_{a_m c_m} (F_{d_1} \otimes \cdots \otimes F_{d_k}) f_{c_1 \cdots c_m}^{d_1 \cdots d_k} .
\end{split}\]
The desired result now follows upon combining the previous two displays.
\end{proof}

We now present two examples of how stochastic objects can be represented as multiple stochastic integrals. The two examples (and similar representations) will be used heavily in Section \ref{section:objects}. 

\begin{example}[Linear objects]\label{prelim:example-linear}
For $i \in [d]$, $t \in \R$, $x \in \T^2$, we may express $\linear[N][r][i](t, x)$ as a stochastic integral of the above form as follows. Define the function $f_{t, x, N}^i : \indexset \ra L(\cfrkg, \cfrkg)$ by
\begin{equation}\label{prelim:eq-example-linear}
f_{t, x, N}^i(\dimind, n, s) = \delta^{i}_{\dimind} \ind(s \leq t) \rho_{N}(n) \e_n(x) e^{-(t-s)\fnorm{n}^2} I_{\cfrkg},
\end{equation} 
where $I_{\cfrkg} \in L(\cfrkg, \cfrkg)$ is the identity map. With this definition, we see that
\[ \int_{\indexset} f_{t, x, N}^i dW = \sum_{\dimind \in [d]} \delta^{i}_{\dimind} \sum_{n \in \Z^2} \rho_{N}(n) \e_n(x) \int_{-\infty}^t e^{-(t-s)\fnorm{n}^2} dW(\dimind, n, s) =  \linear[N][r][i](t, x). \]
For $k \in [d]$, we write $\ptl_k f^i_{t, x, N}$ to be the derivative of $f^i_{t, x, N}$ in the $x_k$-variable. Explicitly, we have that
\[ \ptl_k f^i_{t, x, N}(\dimind, n, s) = \icomplex \delta^{i}_{\dimind} \ind(s \leq t) \rho_N(n) n_k \e_n(x) e^{-(t-s)\fnorm{n}^2} I_{\cfrkg}.  \]
Similar as for $\linear[N][r][i]$, we can then write 
\[ \ptl_k \linear[N][r][i](t, x) = \int_{\indexset} \ptl_k f^i_{t, x, N} dW. \]
\end{example}

\begin{example}[Duhamel integral of the linear object]\label{prelim:example-integrated-linear}
In this example, we express the Duhamel integrals $\Duhinf\big(\linear[N][r][i]\big)$ and $\Duhinf\big(\ptl_k \linear[N][r][i]\big)$ as stochastic integrals. Recall that 
\beq\label{eq:example-integrated-linear-intermediate} \Duhinf\big(\linear[N][r][i]\big)(t, x) = \int_{-\infty}^t \big(e^{(t-s)(-1 + \Delta)} \linear[N][r][i](s)\big)(x) ds .\eeq
From Example \ref{prelim:example-linear}, we have that
\[ e^{(t-s)(-1 + \Delta)} \linear[N][r][i](s)  = e^{(t-s)(-1 + \Delta)} \int_{\indexset} f^i_{s, \cdot, N} dW = \int_{\indexset} \big( e^{(t-s)(-1+\Delta)} f^i_{s, \cdot, N}\big) dW.  \]
From \eqref{prelim:eq-example-linear} above, it follows that 
\[ \big(e^{(t-s)(-1 + \Delta)} f^i_{s, \cdot, N}\big)(\dimind, n, u) =  \delta^{i}_{\dimind} \ind(u \leq s) \rho_N(n) \e_n(\cdot) e^{-(t-u)\fnorm{n}^2} I_{\cfrkg}. \]
For brevity, let $g_{t, s,  \cdot, N}^i(\dimind, n, u)$ be defined by the right hand side above. Using \eqref{eq:example-integrated-linear-intermediate}, we have that
\begin{equation}
\label{prelim:eq-example-integrated-linear}
\Duhinf\big(\linear[N][r][i]\big)(t, x)  =  \int_{-\infty}^t \bigg(\int_{\indexset} g^i_{t, s, x, N}(\dimind, n, u) dW(\dimind, n, u) \bigg) ds = \int_{\indexset} \bigg( \int_{-\infty}^t g^i_{t, s, x, N}(\dimind, n, u) ds \bigg) dW(\dimind, n, u). 
\end{equation}
After introducing
\[ F^i_{t,x, N}(\dimind, n, u) := \int_{-\infty}^t g^i_{t, s, x, N}(\dimind, n, u) ds = \delta^{i}_{\dimind} \ind(u \leq t) \rho_N(n) \e_n(x) (t-u)e^{-(t-u)\fnorm{n}^2} I_{\cfrkg},\] 
the identity in \eqref{prelim:eq-example-integrated-linear} can be written as 
\[ \Duhinf\big(\linear[N][r][i]\big)(t, x) = \int_{\indexset} F^i_{t, x, N} dW. \]
Proceeding similarly, we may also obtain
\[ \Duhinf\big(\ptl_k \linear[N][r][i]\big)(t, x) = \int_{\indexset} \ptl_k F^i_{t, x, N} dW, \]
where explicitly
\[ (\ptl_k F^i_{t, x, N})(\dimind, n, u) =  \icomplex \delta^{i}_{\dimind} \ind(u \leq t) \rho_N(n) n_k \e_n(x) (t-u)e^{-(t-u)\fnorm{n}^2} I_{\cfrkg}. \]
\end{example}

We now proceed to state one of the main technical results, which is the (tensor) product formula for vector-valued multiple stochastic integrals. In the following, recall the notion of pairing (Definition \ref{def:pairing}) and the definition of tensor contraction (Definition \ref{def:tensor-contraction}).

\begin{notation}
For $z = (\dimind, n, s)\in \indexset$, let $\bar{z} := (\dimind, -n, s)$.
\end{notation}

\begin{definition}[Paired function]\label{def:contracted-function}
Let $k,m \geq 1$ and let $f : \indexset^m \ra \frkg^{* \otimes m} \otimes \frkg^{\otimes k}$. Let $\mc{P}$ be a pairing of $[m]$. Let $\mc{U} = \mc{U}(\mc{P}) \sse [m]$ be the elements of $[m]$ which are left unpaired by $\mc{P}$. Then, we define the paired function
\beq \paired{\mc{P}}(f)((z_u)_{u \in \mc{U}}) :=  \int_{\indexset^m} \tensorcon{\mc{P}} (f(w_1, \ldots, w_m)) \prod_{u \in \mc{U}} \delta_{w_u z_u} \prod_{\{u, v\} \in \mc{P}} \delta_{w_u \bar{w}_v}  d\lebI^m(w_1, \ldots, w_m). \eeq
\end{definition}
In other words, the above definition integrates $\tensorcon{\mc{P}} f$ over $\indexset^m$, subject to the constraints that $w_u = z_u$ for $u \in \mc{U}$ and $w_u = \bar{w}_v$ for $\{u, v\} \in \mc{P}$.
We remark that if all elements of $[m]$ are paired, i.e., if $\mc{U}$ is empty, then $\paired{\mc{P}}(f)$ is a constant tensor in $\frkg^{* \otimes m} \otimes \frkg^{\otimes k}$. 

\begin{example}
Let $1\leq i,j \leq d$, let $\varphi \colon \Z^d \times \Z^d \rightarrow \C$, let $\psi\colon \R \times \R\rightarrow \C$, and let $\mathcal{T}\in \frkg^{* \otimes 2} \otimes \frkg^{\otimes k}$. Furthermore, let $f\colon \indexset^2 \ra \frkg^{* \otimes 2} \otimes \frkg^{\otimes k}$ be defined as 
\begin{equation*}
f(\ell_1,\ell_2,n_1,n_2,s_1,s_2) 
= \delta^{i}_{\ell_1} \delta^{j}_{\ell_2} \varphi(n_1,n_2) \psi(s_1,s_2) \mathcal{T}. 
\end{equation*}
Then, $\paired{\{1,2\}}(f) $ is a constant tensor given by
\begin{equation*}
    \paired{\{1,2\}}(f) = \delta^{ij} \Big( \sum_{n\in \Z^d} \varphi(n,-n) \Big) 
    \Big( \int_{\R} \psi(s,s) ds \Big) \tensorcon{\{1, 2\}}(\mathcal{T}). 
\end{equation*}
\end{example}

\begin{lemma}[Product formula]\label{lemma:multiple-stochastic-integral-tensor-product}
Let $m_1, m_2, k_1, k_2 \geq 1$, $f \in L^2(\indexset^{m_1}, L(\cfrkg^{\otimes m_1}, \cfrkg^{\otimes k_1}))$, $g \in L^2(\indexset^{m_2}, L(\cfrkg^{\otimes m_2}, \cfrkg^{\otimes k_2}))$. Then
\[ \int_{\indexset^{m_1}} f dW^{\otimes m_1} \otimes \int_{\indexset^{m_2}} g dW^{\otimes m_2} = \sum_{\mc{P} \in \mc{M}(m_1, m_2)} \int_{\indexset^{\mc{U}}} \paired{\mc{P}} (f \otimes g) dW^{\otimes \mc{U}}. \] 
\end{lemma}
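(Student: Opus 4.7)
}
The plan is to reduce to the classical scalar product formula for multiple Wiener--It\^o integrals (as in \cite{Nua06,M14}) via the basis expansion of Definition \ref{def:multiple-stochastic-integral}, and then identify the Kronecker-delta factors that appear with the abstract tensor contraction operator $\tensorcon{\mc{P}}$. First, by multilinearity of both sides in $f$ and $g$, together with the density of elementary functions in $L^2(\indexset^m, L(\cfrkg^{\otimes m}, \cfrkg^{\otimes k}))$ and standard $L^2$-continuity of multiple stochastic integrals, it suffices to establish the identity for $f$ and $g$ of the form
\[
f = f^{b_1 \cdots b_{k_1}}_{a_1 \cdots a_{m_1}}\, (E^*_{a_1} \otimes \cdots \otimes E^*_{a_{m_1}} \otimes E_{b_1} \otimes \cdots \otimes E_{b_{k_1}}),
\qquad
g = g^{d_1 \cdots d_{k_2}}_{c_1 \cdots c_{m_2}}\, (E^*_{c_1} \otimes \cdots \otimes E^*_{c_{m_2}} \otimes E_{d_1} \otimes \cdots \otimes E_{d_{k_2}}),
\]
where the scalar kernels $f^{\vec{b}}_{\vec{a}}$ and $g^{\vec{d}}_{\vec{c}}$ lie in $L^2(\indexset^{m_1})$ and $L^2(\indexset^{m_2})$, respectively.

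Next, I would unfold Definition \ref{def:multiple-stochastic-integral} to rewrite the left-hand side as a sum (over the basis indices $\vec{a}, \vec{b}, \vec{c}, \vec{d}$) of products of two \emph{scalar} multiple stochastic integrals against the family of i.i.d.\ complex scalar white noises $(W^a_j(\cdot,n))$. On the index set $[\dim\frkg] \times \indexset$ equipped with the product of counting measure and $\lebI$, these scalar noises may be viewed as a single scalar complex white noise $\widetilde{W}$ satisfying the antisymmetry $\widetilde{W}(a,\dimind,n,s) = \overline{\widetilde{W}(a,\dimind,-n,s)}$, so that the correlation pairing is $\delta^{a c}\delta_{z\bar{z}'}$ for $z = (\dimind,n,s)$ and $z' = (\dimind',n',s')$. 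The classical product formula (which can be obtained by induction on $m_1 + m_2$ via the iterated It\^o formula, or equivalently via the Hermite polynomial expansion) then gives
\[
\Big(\int f^{\vec b}_{\vec a}\, d\widetilde{W}^{m_1}\Big)\,\Big(\int g^{\vec d}_{\vec c}\, d\widetilde{W}^{m_2}\Big)
\;=\; \sum_{\mc{P} \in \mc{M}(m_1, m_2)} \int_{\indexset^{\mc{U}}} \widetilde{f^{\vec b}_{\vec a} \otimes g^{\vec d}_{\vec c}}^{(\mc{P})} \, d\widetilde{W}^{|\mc{U}|},
\]
where only \emph{cross} pairings between $[m_1]$ and $\{m_1+1,\ldots,m_1+m_2\}$ occur because $f$ and $g$ are each already represented as (Wick-ordered) multiple integrals. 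Each pair $\{u,v\} \in \mc{P}$ contributes a Kronecker factor $\delta^{a_u c_{v-m_1}}\delta_{z_u \bar z_v}$ from the covariance structure, and the pair is integrated out against $\lebI$.

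The remaining step is the identification of these Kronecker factors with the tensor contraction operator. When one reinstalls the basis tensors $E^*_{a_i}, E^*_{c_j}, E_{b_\alpha}, E_{d_\beta}$ and carries out the sum over the contracted basis indices, the factor $\delta^{a_u c_{v-m_1}}$ produces exactly $E^*_{a_u}(E_{c_{v-m_1}})$, which is the $\tensorcon{\{u,v\}}$-contraction of $f \otimes g$ at the $u$-th and $v$-th $\cfrkg^*$-slots (with the convention of Definition \ref{def:tensor-contraction} and Convention \ref{convention:iterated-tensor}). The factor $\delta_{z_u \bar z_v}$ enforces $w_u = \bar w_v$ under the $\lebI$-integration, matching the definition of $\paired{\mc{P}}(f \otimes g)$ in Definition \ref{def:contracted-function}. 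Summing over all $\mc{P} \in \mc{M}(m_1, m_2)$ and recombining the basis tensors yields the claimed identity.

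The main obstacle is purely bookkeeping: carefully tracking the interplay between (i) the scalar complex white noise correlations $\delta_{z\bar z'}$, (ii) the Lie-algebra Kronecker factors $\delta^{ac}$ arising from independence of basis components of $W$, and (iii) the reordering of basis tensors to match the convention of Convention \ref{convention:iterated-tensor} that $f \otimes g \in \cfrkg^{*\otimes(m_1+m_2)} \otimes \cfrkg^{\otimes(k_1+k_2)}$ rather than a more naive tensor ordering. Once this correspondence is established, no further analytic input is needed beyond the scalar product formula and $L^2$-density.
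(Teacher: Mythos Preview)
Your proposal is correct and follows essentially the same approach as the paper: expand the vector-valued integrals in the basis of $\cfrkg$ via Definition \ref{def:multiple-stochastic-integral}, apply the scalar product formula (the paper cites \cite[Proposition 1.1.3]{Nua06}), and then identify the resulting Kronecker factors $\delta^{a_u c_{v-m_1}}$ and $\delta_{z_u \bar z_v}$ with $\tensorcon{\mc{P}}$ and the integration constraint in $\paired{\mc{P}}$, respectively. The only cosmetic difference is that you frame the reduction via density of elementary tensors and a single scalar noise on $[\dim\frkg]\times\indexset$, while the paper works directly with the basis expansion of general $f,g$ and the family $(W^a)$; the substance is identical.
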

\begin{proof}
We reduce to the product formula for scalar integrals. Recall that (Definition \ref{def:multiple-stochastic-integral})
\[ \begin{split}
\int_{\indexset^{m_1}} f dW^{\otimes m_1} &=   \big( E_{c_1} \otimes \cdots \otimes E_{c_{k_1}} \big)  \int_{\indexset^{m_1}}f_{a_1 \cdots a_{m_1}}^{c_1 \cdots c_{k_1}} dW^{a_1} \cdots dW^{a_{m_1}},
\\
\int_{\indexset^{m_2}} g dW^{\otimes m_2} &=  \big( E_{d_1} \otimes \cdots \otimes E_{d_{k_2}} \big) \int_{\indexset^{m_2}} g_{b_1 \cdots b_{m_2}}^{d_1 \cdots d_{k_2}}  dW^{b_1} \cdots dW^{b_{m_2}},
\end{split}\]
where $f_{a_1 \cdots a_{m_1}}^{c_1 \cdots c_{k_1}}$ and $g_{b_1 \cdots b_{m_2}}^{d_1 \cdots d_{k_2}}$ are scalar-valued. For brevity, we now write $f_a^c = f_{a_1 \cdots a_{m_1}}^{c_1 \cdots c_{k_1}}$ and $g_b^d=g_{b_1 \cdots b_{m_2}}^{d_1 \cdots d_{k_2}}$. 
By the product formula for scalar multiple stochastic integrals (see e.g. \cite[Proposition 1.1.3]{Nua06}), we have that 
\[\begin{split}
\bigg( \int_{\indexset^{m_1}} f_a^c dW^{a_1}& \cdots dW^{a_{m_1}} \bigg) \bigg( \int_{\indexset^{m_2}} g_b^d dW^{b_1} \cdots dW^{b_{m_2}} \bigg) = \sum_{\mc{P} \in \mc{M}(m_1, m_2)} \int_{\indexset^{\mc{U}}}  \big(f_a^c \star_\mc{P} g_b^d \big)((z_u)_{u \in \mc{U}}) \prod_{u \in \mc{U}} dW^{e(u)_u} ,
\end{split}\]
where $e(u) := a$ if $u \in [m_1]$, $e(u) := b$ if otherwise $u \in \{m_1 + 1, \ldots, m_1 + m_2\}$, and 
\[
\begin{split}
\big(f_a^c\star_\mc{P}& g_b^d \big)((z_u)_{u \in \mc{U}}) := \\
&\prod_{\{u, v\} \in \mc{P}} \delta^{e(u)_u e(v)_v}
\int_{\indexset^{m_1 + m_2}} f_a^c(w_1, \ldots, w_{m_1}) g_b^d(w_{m_1 + 1}, \ldots w_{m_1 + m_2}) \prod_{u \in \mc{U}} \delta_{z_u w_u} \prod_{\{u, v\} \in \mc{P}} \delta_{w_u \bar{w}_v} d\lebI^{m_1 + m_2}.
\end{split}\]
Combining the previous few displays, we obtain
\beq\label{eq:product-rule-proof-intermediate}\int_{\indexset^{m_1}} f dW^{\otimes m_1} \otimes \int_{\indexset^{m_2}} g dW^{\otimes m_2} = \sum_{\mc{P} \in \mc{M}(m_1, m_2)} \int_{\indexset^{\mc{U}}} (f_a^c \star_\mc{P} g_b^d) ((z_u)_{u \in \mc{U}}) \prod_{u \in \mc{U}} dW^{e(u)_u} \big( E_c \otimes E_d \big), \eeq
where $E_c$ is shorthand for $E_{c_1} \otimes \cdots \otimes E_{c_{k_1}}$ and $E_d$ is shorthand for $E_{d_1} \otimes \cdots \otimes E_{d_{k_2}}$. Observe that
\[\begin{split}
\prod_{\{u, v\} \in \mc{P}} \delta^{e(u)_u e(v)_v} f_a^c(w_1, \ldots, w_{m_1}) g_b^d(w_{m_1 + 1}, &\ldots, w_{m_1 + m_2}) \big( E_c \otimes E_d \big) = \\
&\tensorcon{\mc{P}}\big(f(w_1, \ldots, w_{m_1}) \otimes g(w_{m_1 + 1}, \ldots, w_{m_1 + m_2})\big).
\end{split}\]
Using the definition of $\paired{\mc{P}}(f \otimes g)$ (Definition \ref{def:contracted-function}), we thus further obtain
\[ \eqref{eq:product-rule-proof-intermediate} = \sum_{\mc{P} \in \mc{M}(m_1, m_2)} \int_{\indexset^\mc{U}} \paired{\mc{P}}(f \otimes g) ((z_u)_{u \in \mc{U}}) dW^{\otimes \mc{U}},  \]
as desired.
\end{proof}

The following corollary collects the two main applications of Lemma \ref{lemma:multiple-stochastic-integral-tensor-product}. 

\begin{corollary}[Product formula]\label{cor:multiple-stochastic-integral-tensor-product}
Let $f_1, f_2, f_3 \in L^2(\indexset, L(\cfrkg, \cfrkg))$. We have that 
\[\begin{split} \int_\indexset f_1 dW \otimes \int_\indexset f_2 dW &= \int_{\indexset^2} (f_1 \otimes f_2) dW^{\otimes 2} +\paired{\mc{P}}(f_1 \otimes f_2), \\
\int_\indexset f_1 dW \otimes \int_\indexset f_2 dW \otimes \int_\indexset f_3 dW &= \int_{\indexset^3} (f_1 \otimes f_2 \otimes f_3) ~ dW^{\otimes 3} + \sum_{\substack{\mc{P} \in \{\{1, 2\}, \{1, 3\}, \{2, 3\} \}}}\int_\indexset \paired{\mc{P}}(f_1 \otimes f_2 \otimes f_3) dW. 
\end{split}\]
\end{corollary}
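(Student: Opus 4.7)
The plan is to deduce both identities directly from Lemma \ref{lemma:multiple-stochastic-integral-tensor-product} by enumerating the relevant pairings; the first identity follows from a single application, and the second requires iterating the Lemma once while carefully handling a constant-tensor factor.

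For the first identity, I would apply Lemma \ref{lemma:multiple-stochastic-integral-tensor-product} with $m_1 = m_2 = k_1 = k_2 = 1$, and then simply enumerate $\mc{M}(1,1)$. This set consists of only two pairings: the empty pairing (with unpaired set $\mc{U} = \{1,2\}$), which contributes the iterated integral $\int_{\indexset^2}(f_1 \otimes f_2)\, dW^{\otimes 2}$, and the complete pairing $\mc{P} = \{\{1,2\}\}$ (with $\mc{U} = \emptyset$), which contributes the constant tensor $\paired{\{1,2\}}(f_1 \otimes f_2)$. Summing these two contributions yields the first identity.

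For the second identity, I would iterate as follows. First, apply the first identity to the inner product $\int f_1\, dW \otimes \int f_2\, dW$ and then tensor both sides on the right with $\int f_3\, dW$, obtaining
\[
\int f_1\, dW \otimes \int f_2\, dW \otimes \int f_3\, dW \;=\; \Bigl(\int_{\indexset^2}(f_1 \otimes f_2)\, dW^{\otimes 2}\Bigr) \otimes \int f_3\, dW \;+\; \paired{\{1,2\}}(f_1 \otimes f_2) \otimes \int f_3\, dW.
\]
To the first term on the right, apply Lemma \ref{lemma:multiple-stochastic-integral-tensor-product} with $m_1 = 2$, $m_2 = 1$. The set $\mc{M}(2,1)$ consists of exactly three pairings: the empty pairing, $\{\{1,3\}\}$, and $\{\{2,3\}\}$; these contribute $\int_{\indexset^3}(f_1 \otimes f_2 \otimes f_3)\, dW^{\otimes 3}$, $\int_\indexset \paired{\{1,3\}}(f_1 \otimes f_2 \otimes f_3)\, dW$, and $\int_\indexset \paired{\{2,3\}}(f_1 \otimes f_2 \otimes f_3)\, dW$, respectively. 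For the remaining term, since $\paired{\{1,2\}}(f_1 \otimes f_2)$ is a deterministic element of $\cfrkg^{\otimes 2}$, linearity of the stochastic integral (applied to the $f_3$-integral) combined with Convention \ref{convention:iterated-tensor} identifies $\paired{\{1,2\}}(f_1 \otimes f_2) \otimes \int f_3\, dW$ with $\int_\indexset \paired{\{1,2\}}(f_1 \otimes f_2 \otimes f_3)\, dW$, since the pairing $\{1,2\}$ only contracts the $\cfrkg^*$-factors of $f_1$ and $f_2$ and leaves the $\cfrkg^*$-factor of $f_3$ untouched. Assembling the four contributions produces the claimed identity.

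The only mildly delicate point is the bookkeeping in the last step: verifying that $\paired{\{1,2\}}(f_1 \otimes f_2) \otimes f_3(z) = \paired{\{1,2\}}(f_1 \otimes f_2 \otimes f_3)(z)$. This is where Convention \ref{convention:iterated-tensor} is essential, as it specifies that the tensor product $f_1 \otimes f_2 \otimes f_3$ lives in the iterated space $\cfrkg^{*\otimes 3} \otimes \cfrkg^{\otimes 3}$ with the ordering making the pairing $\{1,2\}$ of $[3]$ act only on the first two $\cfrkg^*$-slots. Once this identification is in place, no genuine computation remains; the proof is essentially a careful enumeration.
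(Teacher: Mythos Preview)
Your proposal is correct and is exactly the argument the paper has in mind: the corollary is stated without proof, immediately following Lemma~\ref{lemma:multiple-stochastic-integral-tensor-product}, as ``the two main applications'' of that lemma. Your enumeration of $\mc{M}(1,1)$ and $\mc{M}(2,1)$, together with the bookkeeping identity $\paired{\{1,2\}}(f_1\otimes f_2)\otimes f_3(z)=\paired{\{1,2\}}(f_1\otimes f_2\otimes f_3)(z)$ via Convention~\ref{convention:iterated-tensor}, is precisely the intended derivation.
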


The next lemma shows that the second moment of a multiple stochastic integral is controlled by the $L^2$ norm of the integrand.

\begin{lemma}\label{lemma:multiple-stochastic-integral-second-moment-bound-l2}
Let $k,m \geq 1$, and let $f \in L^2(\indexset^m, L(\cfrkg^{\otimes m}, \cfrkg^{\otimes k}))$. We have that
\[ \E\Bigg[ \bigg| \int_{\indexset^n} f dW^{\otimes m}\bigg|_{\cfrkg^{\otimes k}}^2\Bigg] \leq m! \|f\|_{L^2}^2. \]
\end{lemma}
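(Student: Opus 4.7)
The plan is to reduce the vector-valued bound to the standard second-moment bound for scalar multiple stochastic integrals by working in the orthonormal basis $(E_a)_{a=1}^{\dim\cfrkg}$. Recalling Definition \ref{def:multiple-stochastic-integral}, I would write the integral in coordinates as
\[
\int_{\indexset^m} f\, dW^{\otimes m} \;=\; \bigl(E_{b_1}\otimes\cdots\otimes E_{b_k}\bigr)\int_{\indexset^m} f^{b_1\cdots b_k}_{a_1\cdots a_m}\, dW^{a_1}\cdots dW^{a_m},
\]
where each $f^{b_1\cdots b_k}_{a_1\cdots a_m}$ is a scalar-valued function on $\indexset^m$. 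Since $(E_{b_1}\otimes\cdots\otimes E_{b_k})_{b_1,\ldots,b_k}$ is an orthonormal basis of $\cfrkg^{\otimes k}$, the squared $\cfrkg^{\otimes k}$-norm of the left-hand side splits as a sum of squares over the indices $b_1,\ldots,b_k$.

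Next, I would interpret the scalar multiple stochastic integral appearing on each coordinate as a single multiple stochastic integral against a scalar white noise on the enlarged index set $\widetilde{\indexset} := \indexset \times [\dim\cfrkg]$, equipped with the product of $\lebI$ and counting measure. Concretely, defining $\widetilde{W}(z,a) := W^a(z)$ and
\[
\widetilde{f}^{\,b_1\cdots b_k}\bigl((z_i,a_i)_{i=1}^m\bigr) \;:=\; f^{b_1\cdots b_k}_{a_1\cdots a_m}(z_1,\ldots,z_m),
\]
one has
\[
\int_{\indexset^m} f^{b_1\cdots b_k}_{a_1\cdots a_m}\, dW^{a_1}\cdots dW^{a_m} \;=\; \int_{\widetilde{\indexset}^m} \widetilde{f}^{\,b_1\cdots b_k}\, d\widetilde{W}^{\otimes m}.
\]
(The reality/complex-conjugation constraint on $W_j(\cdot,n)$ does not affect the second moment bound, as one can pass to an equivalent real decomposition or simply view the $W^a$ as independent complex Brownian motions when taking $L^2$ expectations.)

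I would then invoke the standard second-moment estimate for scalar multiple stochastic integrals (see e.g.\ \cite[Proposition 1.1.4]{Nua06}), which states that for any $h \in L^2(\widetilde{\indexset}^m)$,
\[
\E\Bigl[\bigl|\textstyle\int_{\widetilde{\indexset}^m} h\, d\widetilde{W}^{\otimes m}\bigr|^2\Bigr] \;\leq\; m!\,\|h\|_{L^2(\widetilde{\indexset}^m)}^2,
\]
with equality if $h$ is symmetric and the general bound following by symmetrization. Applying this to each $\widetilde{f}^{\,b_1\cdots b_k}$ and summing yields
\[
\E\Bigl[\bigl|\textstyle\int_{\indexset^m} f\, dW^{\otimes m}\bigr|_{\cfrkg^{\otimes k}}^2\Bigr] \;=\; \sum_{b_1,\ldots,b_k}\E\Bigl[\bigl|\textstyle\int_{\widetilde{\indexset}^m}\widetilde{f}^{\,b_1\cdots b_k}\, d\widetilde{W}^{\otimes m}\bigr|^2\Bigr] \;\leq\; m!\sum_{b_1,\ldots,b_k}\sum_{a_1,\ldots,a_m}\int_{\indexset^m}\bigl|f^{b_1\cdots b_k}_{a_1\cdots a_m}\bigr|^2 d\lebI^m,
\]
and the final double sum is precisely $\|f\|_{L^2(\indexset^m, L(\cfrkg^{\otimes m},\cfrkg^{\otimes k}))}^2$ since the Hilbert--Schmidt norm of an operator is the sum of squared matrix entries in any orthonormal basis.

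There is no genuine obstacle here: the result is a structural consequence of reducing to scalars in a good basis. The only minor care needed is the bookkeeping with the complex structure of the $W^a$ and the verification that the induced inner product on $L(\cfrkg^{\otimes m},\cfrkg^{\otimes k})$ agrees with the sum of squared coefficients in the orthonormal basis, both of which are routine.
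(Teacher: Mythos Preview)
Your proof is correct and takes a genuinely different route from the paper. You reduce to the scalar $L^2$ isometry by expanding in the orthonormal basis and viewing the $\cfrkg$-index as part of an enlarged index set $\widetilde{\indexset}=\indexset\times[\dim\cfrkg]$, then invoke the classical scalar bound $\E\bigl|\int h\,d\widetilde{W}^{\otimes m}\bigr|^2\leq m!\,\|h\|_{L^2}^2$ and sum over the output indices $b_1,\ldots,b_k$. This is entirely valid; the conjugation constraint $W_j(\cdot,n)=\overline{W_j(\cdot,-n)}$ is exactly the one built into the scalar theory on $\indexset$, so nothing is lost.

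The paper instead uses its own product formula (Lemma~\ref{lemma:multiple-stochastic-integral-tensor-product}) to expand $\bigl(\int f\,dW^{\otimes m}\bigr)\otimes\bigl(\int f\,dW^{\otimes m}\bigr)$ as a sum over pairings $\mc{P}\in\mc{M}(m,m)$, observes that only the $m!$ complete pairings survive after taking expectations, and then bounds each complete-pairing contribution $\langle\paired{\mc{P}}(f\otimes f)\rangle_{\cfrkg^{\otimes k}}$ by $\|f\|_{L^2}^2$ via the tensor-contraction estimate of Lemma~\ref{lemma:contracted-tensor-norm-bound} and Cauchy--Schwarz. Your argument is more elementary and avoids both the product formula and the tensor-contraction machinery; the paper's argument is coordinate-free at the level of this lemma and showcases the pairing formalism that is used repeatedly afterward. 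Either approach suffices here.
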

\begin{proof}
By Lemma \ref{lemma:multiple-stochastic-integral-tensor-product}, we have that
\[ \int_{\indexset^m} f dW^{\otimes m} \otimes \int_{\indexset^m} f dW^{\otimes m} =
\sum_{\mc{P} \in \mc{M}(m, m)} \int_{\indexset^{\mc{U}}} (f \otimes f)_{\mc{P}} dW^{\otimes \mc{U}} =: X + Y, \]
where $X$ collects all pairings $\mc{P}$ which leave some elements unpaired, while $Y$ collects all pairings $\mc{P}$ which have no unpaired elements, i.e., complete pairings. By linearity, we have that 
\[ \E[\langle X \rangle_{\cfrkg^{\otimes k}}] = \langle \E [X] \rangle_{\cfrkg^{\otimes k}} = \langle 0 \rangle_{\cfrkg^{\otimes k}} = 0, \]
where $\langle X \rangle_{\cfrkg^{\otimes k}}$ is as in   \ref{notation:inner-product-as-linear-map}. 
Thus to finish, it remains to show
\begin{equation}\label{prelim:eq-Y-tensor} \langle Y \rangle_{\cfrkg^{\otimes k}} \leq m! \|f\|_{L^2}^2. 
\end{equation}
To avoid confusion, we note that $Y$ is deterministic as a result of the fact that it only consists of complete pairings. 
In order to prove \eqref{prelim:eq-Y-tensor}, we note that complete pairings $\mc{P}$ may be identified with permutations of $[m]$. It thus suffices to show that for any fixed complete pairing $\mc{P}$, we have that
\[ \big|\langle \paired{\mc{P}}(f \otimes f) \rangle_{\cfrkg^{\otimes k}}\big| \leq \|f\|_{L^2}^2. \]
By definition (recall Definition \ref{def:contracted-function}), we have that
\[ \paired{\mc{P}}(f \otimes f) = \int_{\indexset^{2m}} \tensorcon{\mc{P}} (f(w_1, \ldots, w_m) \otimes f(w_{m+1}, \ldots, f_{w_{2m}})) \prod_{\{u, v\} \in \mc{P}} \delta_{w_u \bar{w}_v} d\lebI^{2m}. \]
By Lemma \ref{lemma:contracted-tensor-norm-bound}, we have that
\[ \big|\langle \tensorcon{\mc{P}} (f(w_1, \ldots, w_m) \otimes f(w_{m+1}, \ldots, w_{2m})) \rangle_{\cfrkg^{\otimes k}}\big| \leq |f(w_1, \ldots, w_m)|_{\cfrkg^{* \otimes m} \otimes \cfrkg^{\otimes k}} |f(w_{m+1}, \ldots, w_{2m})|_{\cfrkg^{* \otimes m} \otimes \cfrkg^{\otimes k}}.\]
Applying this bound, we obtain
\[\begin{split}
\big|\langle \paired{\mc{P}}(f \otimes f) \rangle_{\cfrkg^{\otimes k}}\big| &\leq \int_{\indexset^{2n}} |f(w_1, \ldots, w_m)|_{\cfrkg^{* \otimes m} \otimes \cfrkg^{\otimes k}} |f(w_{m+1}, \ldots, w_{2m})|_{\cfrkg^{* \otimes m} \otimes \cfrkg^{\otimes k}}\prod_{\{u, v\} \in \mc{P}} \delta_{w_u \bar{w}_v} d\lebI^{2m} \\
&\leq \bigg(\int_{\indexset^m} |f|^2_{\cfrkg^{* \otimes m} \otimes \cfrkg^{\otimes k}} d\lebI^m\bigg)^{1/2}\bigg(\int_{\indexset^m} |f|^2_{\cfrkg^{* \otimes m}\otimes \cfrkg^{\otimes k}} d\lebI^m\bigg)^{1/2} = \|f\|_{L^2}^2,
\end{split}\]
where we applied Cauchy-Schwarz in the second inequality. The desired result now follows.
\end{proof}

\section{Explicit stochastic objects}\label{section:objects} 
In both our para-controlled Ansatz for the solution of \eqref{ansatz:eq-SYM-leqN} as well as the resulting para-controlled system (Definition \ref{ansatz:def-paracontrolled}), we encountered multiple explicit stochastic objects. The estimates of the explicit stochastic objects are the subject of this section. For the reader's convenience, we already list the stochastic objects and the corresponding regularities in Figure \ref{figure:objects-regularities}. \\ 

From an expository perspective, it is sub-optimal to state the estimates of all stochastic objects from Figure \ref{figure:objects-regularities} separately. In order to capture all of our estimates in Proposition \ref{objects:prop-enhanced} below, it is necessary to introduce the enhanced data set, which collects all explicit stochastic objects. 

\begin{definition}[Enhanced data set]\label{objects:def-enhanced}
For all $N\in \dyadic$, we define the enhanced data set $\Xi_{\leq N}$ by 
\begin{align}
\Xi_{\leq N} := \Bigg(& 
\Big( \linear[\leqN][r][i] \Big)_{i\in [2]}, 
\quad 
\Big( \Big[ \linear[\leqN][r][i], \linear[\leqN][r][j] \Big] \Big)_{i,j \in [2]}, 
\label{objects:eq-enhanced-1} \allowdisplaybreaks[4]\\
&\Big( E \in \frkg \mapsto \Big[ \Big[ E, \linear[\leqN][r][i] \Big], 
\linear[\leqN][r][j] \Big] -  \delta^{ij} \sigma_{\leq N}^2 \Kil\big( E \big) \Big)_{i,j \in [2]}, 
\label{objects:eq-enhanced-2} \allowdisplaybreaks[4]\\ 
&\Big( \Big[ \Big[ \linear[\leqN][r][i], \linear[\leqN][r][j] \Big], \linear[\leqN][r][k] \Big] + \delta_{ik} \sigma_{\leq N}^2 \Kil \big( \linear[\leqN][r][j] \big)  -  \delta_{jk} \sigma_{\leq N}^2 \Kil \big( \linear[\leqN][r][i] \big) \Big)_{i,j,k\in [2]}, 
\label{objects:eq-enhanced-3} \allowdisplaybreaks[4]\\
&\Big( \quadratic[\leqN][r][i] \Big)_{i \in [2]}, 
\label{objects:eq-enhanced-4} \allowdisplaybreaks[4]\\
& \Big( E \in \frkg \mapsto \Big[ \Big[ E , \Duh \big( \partial_{k_1} \linear[\leqN][r][j_1] \big) \Big] \parasim \partial_{k_2} \linear[\leqN][r][j_2] \Big] - \frac{1}{4} \delta^{j_1 j_2} \delta_{k_1 k_2} \sigma_{\leq N}^2 \Kil \big( E \big) \Big)_{j_1,j_2,k_1,k_2 \in [2]}, 
\label{objects:eq-enhanced-5}\allowdisplaybreaks[4]\\
& \Big( 2 \Big[  \quadratic[\leqN][r][j] 
 \parasim 
 \Big( \partial_j \linear[\leqN][r][i] -  \partial^i \linear[\leqN][l][j] \Big) \Big] + 2  \Big[\quadratic[\leqN][r][i]  \parasim \partial^j \linear[\leqN][l][j] \Big]
 + \sigma_{\leq N}^2 \Kil \big( \linear[\leqN][r][i] \big) \Big)_{i \in [2]} 
\label{objects:eq-enhanced-6}
\Bigg). 
\end{align}
\end{definition}

\begin{figure}
\centering
\begin{tabular}{|P{\bigcolwidth}|P{\colwidth}|P{\colwidth}|P{\colwidth}|P{\colwidth}|P{\colwidth}|P{\Hugecolwidth}|P{1.5cm}|}
\hline 
Object 
& $\linear$ 
& \scalebox{0.85}{$\big[ \linear, \linear \big]$}
&  \scalebox{0.85}{$\hspace{-1ex}\big[ \big[ E, \linear \big], \linear \big]$}
& \scalebox{0.85}{$\hspace{-1ex}\big[ \big[ \linear, \linear \big], \linear \big]$}
& \scalebox{0.85}{$\quadratic$}
& \scalebox{0.8}{$\hspace{-1ex}\big[ \big[ E , \Duh (\linear) \big] \parasim \partial \, \linear\big]$}
& \scalebox{0.85}{$\hspace{-1ex}\big[ \quadratic \parasim \partial\, \linear \big]$}
\\ \hline
Regularity 
& $-\kappa$ 
& $-2\kappa$ 
& $-2\kappa$ 
& $-3\kappa$ 
& $1-2\kappa$ 
& $-2\kappa$ 
& $-3\kappa$ \\ \hline
\end{tabular}
\centering
    \caption{In this figure, we display the objects in the  enhanced data set (from Definition \ref{objects:def-enhanced}) and the corresponding regularities.}
    \label{figure:objects-regularities}
\end{figure}

 In order to state our estimates of the enhanced data set $\Xi_{\leq N}$,  we need to introduce a corresponding metric space. This metric space is the so-called data space, which is the subject of the next definition.

\begin{definition}[Data space]\label{objects:def-data-space}
For all $T>0$, we define the set 
\begin{align*}
\Dc([0,T]) :=& \, 
C_t^0 \Cs_x^{-\kappa} \big( [0,T] \times \T^2 \rightarrow \frkg\big)^2
\times C_t^0 \Cs_x^{-2\kappa} \big( [0,T] \times \T^2 \rightarrow \frkg\big)^{2 \times 2}  \\
&\times C_t^0 \Cs_x^{-2\kappa} \big( [0,T] \times \T^2 \rightarrow \End(\frkg)\big)^{2 \times 2}  \\
&\times C_t^0 \Cs_x^{-3\kappa} \big( [0,T] \times \T^2 \rightarrow \frkg\big)^{2 \times 2 \times 2}  \\
&\times \Big( C_t^0 \Cs_x^{1-2\kappa} \big( [0,T] \times \T^2 \rightarrow \frkg \big)^{2} \medcap \Sc^{1-2\kappa}([0,T] \rightarrow \frkg)^2 \Big)\\
&\times \Wc^{-2\kappa,\kappa}\big( [0,T]  \rightarrow \End(\frkg)\big)^{2 \times 2 \times 2 \times 2} \\
& \times C_t^0 \Cs_x^{-3\kappa} \big( [0,T] \times \T^2 \rightarrow \frkg\big)^{2}.
\end{align*}
We further define the collection of degrees  
$\dg \in \mathbb{N}^6$ and a collection of regularities $\reg \in \mathbb{N}^6$ by
\begin{equation}
\dg := \big( 1,2,2,3,2,2,3\big) \qquad \text{and} \qquad 
\reg:= \big( -\kappa,-2\kappa,-2\kappa,-3\kappa,1-2\kappa,-2\kappa,-3\kappa\big). 
\end{equation}
We then define a metric $\dc_T$ on  $\Dc([0,T])$ by 
\begin{equation}\label{objects:eq-metric}
\begin{aligned}
\dc_T \Big( (S_j)_{j=1}^7, (\widetilde{S}_j)_{j=1}^7\Big)
:= \sum_{j=1}^7 \big( \big\| S_j - \widetilde{S}_j \big\|_{C_t^0 \Cs_x^{\reg_j}}\big)^{1/\dg_j} + \Big\| S_5 - \widetilde{S}_5 \Big\|_{\Sc^{1-2\kappa}}^{\frac{1}{2}} . 
\end{aligned}
\end{equation}
For all $R\geq  1$, we also define 
\begin{equation}\label{eq:Dc-R}
\Dc_R([0,T]):= \Big\{ S \in \Dc([0,T])\colon \dc_T (S,0) \leq R \Big\}. 
\end{equation}
\end{definition}

\begin{remark} The purpose of the degrees in the definition of $\dc_T$  is that the condition $\Xi_{\leq N} \in \Dc_R([0,T])$ should yield bounds on the stochastic objects which exhibit the natural dependence on their degrees. 

The $\Sc^{1-2\kappa}$-norm is as in Definition \ref{prelim:def-solution-space} and only included for the quadratic object $\scalebox{0.9}{$\quadratic[\leqN][r]$}$. The reason is that we need an $\Sc^{1-2\kappa}$-bound for $B_{\leq N}$ (see e.g. Lemma \ref{nonlinear:lem-dd}), and, as is clear from \eqref{ansatz:eq-BN}, we therefore also need an $\Sc^{1-2\kappa}$-bound for $\scalebox{0.9}{$\quadratic[\leqN][r]$}$. 
\end{remark}

Equipped with both the enhanced data set (Definition \ref{objects:def-enhanced}) and the data space (Definition \ref{objects:def-data-space}), we can now state the main result of this section. 

\begin{proposition}[Control of enhanced data set]\label{objects:prop-enhanced}
Let $0<c\ll 1$ be a sufficiently small absolute constant and let $R \geq 1$. Then, there exists an event $E_R$ satisfying
\begin{equation*}
\mathbb{P} \big( E_R \big) \geq 1 - c^{-1} \exp \Big( - c R^2\Big) 
\end{equation*}
and such that, on this event,
\begin{align}\label{objects:eq-enhanced-main-estimate}
\sup_{N \in \dyadic} \dc_1 \big( \Xi_{\leq N}, 0 \big) \leq R \qquad \text{and} \qquad 
\lim_{M,N\rightarrow \infty} 
\dc_1 \big( \Xi_{\leq M}, \Xi_{\leq N} \big) =0. 
\end{align}
\end{proposition}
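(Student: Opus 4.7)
The plan is to follow the standard strategy for controlling explicit stochastic objects, organized into four steps: (a) chaos decomposition and matching of counterterms; (b) second-moment bounds on Littlewood--Paley blocks via Lemma \ref{lemma:multiple-stochastic-integral-second-moment-bound-l2}; (c) Kolmogorov-type upgrade to continuous sample paths with the claimed regularities; and (d) quantitative difference estimates to obtain Cauchy-ness in $N$ and the exponential tail bound.

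For step (a), each of the seven components of $\Xi_{\leq N}$ is written as a sum of vector-valued multiple stochastic integrals. Starting from the representations in Examples \ref{prelim:example-linear} and \ref{prelim:example-integrated-linear} and iterating the product formula (Corollary \ref{cor:multiple-stochastic-integral-tensor-product}), each tensor product of linear or Duhamel-integrated linear factors decomposes into a top-chaos component, which is a genuine multiple stochastic integral, plus lower-chaos pieces produced by pairings. The content of the renormalization is that, after feeding the tensor product through the iterated Lie brackets (the two-step procedure sketched in the introduction), each fully-paired piece contributes a constant times $\sigma_{\leq N}^2\Kil$: on the analytic side this comes from $\delta^{ij}\sum_n \rho_{\leq N}(n)^2\langle n\rangle^{-2}$, and on the algebraic side from the identity $\mathrm{ad}(E_a)\mathrm{ad}(E^a)=\Kil$ together with the Jacobi identity. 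The counterterms in \eqref{objects:eq-enhanced-2}--\eqref{objects:eq-enhanced-6} are designed exactly to cancel these contributions, so each renormalized object is a finite sum of Wiener chaoses of order between $1$ and $3$ with no deterministic part.

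For step (b), I apply Lemma \ref{lemma:multiple-stochastic-integral-second-moment-bound-l2} to each chaos component, which reduces second-moment bounds on $P_K \Xi_{\leq N}$ to $L^2$-bounds on the corresponding symbols. Each symbol is a product of heat-kernel factors $\rho_{\leq N}(n_j)\langle n_j\rangle^{-2}$ and Littlewood--Paley cutoffs $\rho_K(n_{\bullet})$, together with $O(1)$ structure constants from the Lie brackets. The resulting frequency sums match precisely the ones controlled by \eqref{prelim:eq-counting-e1}--\eqref{eq:cubic-two-derivative-combinatorial-estimate} in Lemma \ref{prelim:lem-counting}, yielding the regularities in Figure \ref{figure:objects-regularities}. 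Gaussian hypercontractivity in the Wiener chaos then upgrades these $L^2$ bounds to $L^p$ bounds for every $p<\infty$, with only polynomial dependence on $p$ since the chaos order is bounded by $3$. Continuity in both space and time (step (c)) follows from Kolmogorov's criterion once the same methodology is applied to Littlewood--Paley and time-increments. Spatial increments yield a factor $K^{-\delta}$ from the cutoff; time increments of the symbols from Examples \ref{prelim:example-linear} and \ref{prelim:example-integrated-linear} gain a factor $|t-s|^\beta$ at the cost of $\langle n\rangle^{2\beta}$, which is absorbed into the counting estimates by sacrificing a small amount of spatial regularity. For the $\Sc^{1-2\kappa}$-norm of $\quadratic[\leqN][r]$ specifically, the same method simultaneously controls the time-weighted spatial H\"older norms and the $\CWc^{\zeta,-\kappa,\nu}$ time-H\"older norms appearing in Definition \ref{prelim:def-solution-space}.

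For step (d), the same chaos/symbol analysis is applied to the differences $\Xi_{\leq M}-\Xi_{\leq N}$: the symbols now contain factors of the form $\rho_{\leq M}-\rho_{\leq N}$ (or products involving such differences), which are supported at frequencies of order at least $\min(M,N)$, yielding a polynomial gain $\min(M,N)^{-\delta'}$ in the counting estimates. Combined with hypercontractivity, Chebyshev applied with $p\sim R^2$, and a union bound over the seven components and over Littlewood--Paley scales, this gives both the exponential tail estimate $\mathbb P(E_R)\geq 1-c^{-1}\exp(-cR^2)$ and, via Borel--Cantelli along a dyadic sequence, almost-sure Cauchy convergence in the metric $\dc_1$. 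The main obstacle is executing step (a) cleanly and simultaneously for all seven objects, especially the cubic objects \eqref{objects:eq-enhanced-3} and \eqref{objects:eq-enhanced-6}, where several pairings contribute and one must verify that each produces a Killing-map contribution with exactly the coefficient demanded by the counterterm. This is precisely where the coordinate-invariant formalism of the previous section pays off: the map $\Kil$ emerges automatically from the tensor contractions of Definition \ref{def:contracted-function}, so the verification reduces to an algebraic bookkeeping exercise rather than a basis-dependent calculation.
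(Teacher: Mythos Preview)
Your proposal is correct and follows essentially the same approach as the paper: the paper organizes the argument by proving a separate lemma for each of the seven objects (Lemmas \ref{lemma:linear-object}, \ref{objects:lem-without}, \ref{lemma:cubic-stochastic-object}, \ref{lemma:quadratic-stochastic-object-with-derivative}, \ref{objects:lem-with-two}, \ref{lemma:cubic-stochastic-object-with-two-derivatives} and Corollary \ref{objects:cor-quadratic-object-time}) and then assembles them in Section \ref{section:objects-proof}, carrying out exactly the chaos decomposition, symbol $L^2$-bounds via Lemma \ref{lemma:multiple-stochastic-integral-second-moment-bound-l2} and the counting estimates of Lemma \ref{prelim:lem-counting}, and the Kolmogorov-via-hypercontractivity upgrade (Lemma \ref{lemma:standard-reduction}) that you describe. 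One minor caveat to your step (a): for the object in \eqref{objects:eq-enhanced-5} the Duhamel integral runs over $[0,t]$ rather than $(-\infty,t]$, so the resonance is non-stationary and leaves behind a time-dependent deterministic remainder $c_{N_1,N_2}(t)$ (see \eqref{objects:eq-with-two-p11}--\eqref{objects:eq-c-estimate}) that the counterterm does \emph{not} cancel, which is why that component is measured in the weighted space $\Wc^{-2\kappa,\kappa}$ rather than $C_t^0\Cs_x^{-2\kappa}$---your claim that ``each renormalized object has no deterministic part'' is not quite right there, though the remainder is easily bounded.
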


The proof of Proposition \ref{objects:prop-enhanced} occupies the rest of this section and is split over five subsections. In Subsection \ref{section:objects-linear}, we control the linear stochastic object $\linear[][r][]$. In Subsection \ref{section:objects-power}, we control power-type nonlinearities in $\linear[][r][]$, i.e., the quadratic and cubic terms in \eqref{objects:eq-enhanced-1}, \eqref{objects:eq-enhanced-2}, and \eqref{objects:eq-enhanced-3}. 
In Subsection \ref{section:objects-quadratic}, we control the quadratic stochastic object $\quadratic[][r][]$. In Subsection \ref{section:objects-derivative}, we control derivative-nonlinearities, i.e., the stochastic objects from \eqref{objects:eq-enhanced-5} and \eqref{objects:eq-enhanced-6}. Finally, in Subsection \ref{section:objects-proof}, we collect all of our estimates and prove Proposition \ref{objects:prop-enhanced}. \\

In all of this section, we will follow the following notational convention. 

\begin{notation}
We denote generic integration variables in $\indexset$ by $z = (\dimind, n, s)$. When working with more than one generic integration variable, we denote them by $z_1 = (\dimind_1, n_1, s_1)$, $z_2 = (\dimind_2, n_2, s_2)$, and so on. 

In all following lemmas, we use $\sreg>0$ as a regularity parameter and $0<\sdecay<\sreg$ as a parameter to measure decay in the highest frequency scale. In the proof of Proposition \ref{objects:prop-enhanced}, we will eventually choose $(\sreg,\sdecay)=(\kappa,\eta)$. However, in the proof of Proposition \ref{gauged:prop-enhanced}, we need the freedom to make a different choice for $(\sreg,\sdecay)$, which is our reason for not just working with $(\kappa,\eta)$. 
\end{notation}

\subsection{Linear stochastic object}\label{section:objects-linear}

In this very short subsection, we estimate the linear stochastic object $\linear$. The estimate does not require the product formula for multiple stochastic integrals (Lemma \ref{lemma:multiple-stochastic-integral-tensor-product}), and is therefore much simpler than all other estimates in this section. 

\begin{lemma}[Linear object]\label{lemma:linear-object} Let $0<\sdecay <\epsilon\ll 1$, 
let $i \in [2]$, let $N\in \dyadic$, and let $p \geq 1$. Then, we have that
\[ \E\Big[ \big\|\linear[N][r][i]\big\|_{C_t^0 \Cs_x^{-\sreg}([0,1]\times \T^2)}^p\Big]^{1/p}  \lesssim p^{1/2} N^{-\sdecay}. \]
\end{lemma}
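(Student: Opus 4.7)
The key observations are that $\linear[N][r][i]$ lies in the first Wiener chaos with respect to $\xi$, and that its spatial Fourier support is confined to frequencies $|n| \sim N$. The former provides Gaussian-type concentration; the latter ensures that only the Littlewood-Paley block $P_M$ at scale $M \sim N$ contributes to the $\Cs_x^{-\sreg}$ norm, producing the gain $N^{-\sreg} \leq N^{-\sdecay}$.

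Using the stochastic integral representation from Example~\ref{prelim:example-linear} together with the It\^o isometry (a consequence of Lemma~\ref{lemma:multiple-stochastic-integral-second-moment-bound-l2}), I would first compute, for fixed $t \in [0,1]$ and $x \in \T^2$,
\[ \E\bigl[\bigl|\linear[N][r][i](t, x)\bigr|_\cfrkg^2\bigr] = (\dim \cfrkg) \sum_{n \in \Z^2} \frac{\rho_N(n)^2}{2\fnorm{n}^2} \lesssim 1, \]
where the final bound holds uniformly in $N$ since the sum contains $\sim N^2$ terms, each of size $\sim N^{-2}$. For spacetime increments, analogous computations yield, for any small $\gamma, \theta \in (0, 1)$,
\[ \E\bigl[\bigl|\linear[N][r][i](t, x) - \linear[N][r][i](t', x')\bigr|_\cfrkg^2\bigr] \lesssim (N |x - x'|)^{2\gamma} + (N^2 |t - t'|)^{2\theta}, \]
using $|\e_n(x) - \e_n(x')| \leq \min(2, |n||x-x'|)$ for the spatial part and the elementary inequality $\bigl|e^{-(t - s)\fnorm{n}^2} - e^{-(t' - s)\fnorm{n}^2}\bigr| \lesssim e^{-(t \wedge t' - s)\fnorm{n}^2} \min(1, |t - t'|\fnorm{n}^2)$ for the temporal part.

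Next, Gaussian hypercontractivity (i.e., $L^p$--$L^2$ moment equivalence on the first Wiener chaos) promotes these second-moment bounds to $L^p$-bounds at the cost of a factor $p^{1/2}$. A standard Kolmogorov continuity argument (or equivalently a Besov embedding of $\Cs_{tx}^{(\theta, \gamma)}$ into $C_t^0 L_x^\infty$) then converts the pointwise bounds into
\[ \E\bigl[\|\linear[N][r][i]\|_{C_t^0 L_x^\infty([0,1] \times \T^2)}^p\bigr]^{1/p} \lesssim p^{1/2} N^{c(\gamma,\theta)} \]
for some exponent $c(\gamma,\theta)$ that can be made arbitrarily small by choosing $\gamma, \theta$ small.

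Finally, because $\linear[N][r][i]$ is Fourier-supported on $|n| \sim N$, only the Littlewood-Paley block $P_M \linear[N][r][i]$ with $M \sim N$ is nonzero, so
\[ \|\linear[N][r][i]\|_{\Cs_x^{-\sreg}} \lesssim N^{-\sreg} \|\linear[N][r][i]\|_{L_x^\infty}. \]
Choosing $\gamma, \theta$ small enough that $c(\gamma, \theta) < \sreg - \sdecay$ (possible since $\sdecay < \sreg$) yields the desired $p^{1/2} N^{-\sdecay}$ bound. The one mild technicality is tracking the polynomial-in-$N$ loss in the Kolmogorov step, but this is entirely routine and presents no real obstacle.
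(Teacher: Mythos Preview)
Your proposal is correct and follows essentially the same route as the paper. The paper packages your steps 2--4 (increment estimates, hypercontractivity, Kolmogorov) into a single ``standard reduction'' lemma (Lemma~\ref{lemma:standard-reduction}), and then proceeds exactly as you do: compute the pointwise second moment via the It\^o isometry to get $\lesssim 1$, and exploit the frequency localization at scale $N$ to convert the $L_x^\infty$ bound into a $\Cs_x^{-\sreg}$ bound with gain $N^{-\sreg}$.
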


\begin{proof}
Recall from Example \ref{prelim:example-linear} that
\[ \linear[N][r][i](t, x) = \int_{\indexset} f^i_{t, x, N} dW,  \]
where 
\[ f^i_{t, x, N}(z) = \delta^{i}_{\dimind} \ind(s \leq t) \rho_N(n) \e_n(x) e^{-(t-s)\fnorm{n}^2} I_{\cfrkg} .\]
Due to standard reductions (Lemma \ref{lemma:standard-reduction}), it suffices to prove that
\[  \sup_{t\in [0,1]} \sup_{x\in \T^2}\E\Big[\big|\linear[N][r][i](t, x) \big|_{\cfrkg}^2\Big] \lesssim 1. \]
Using Lemma \ref{lemma:multiple-stochastic-integral-second-moment-bound-l2},  we have that
\begin{equation*}
 \E\Big[\big|\linear[N][r][i](t, x)  \big|_{\cfrkg}^2\Big] \leq \|f^i_{t, x, N}\|^2_{L^2} \lesssim \sum_{n\in \Z^2} \rho_N(n) \int_{-\infty}^t e^{-2(t-s)\fnorm{n}^2} ds \leq \sum_n \frac{\rho_N(n)}{\fnorm{n}^2} \lesssim 1. \qedhere
 \end{equation*}
\end{proof}

\subsection{Power-type nonlinearities}
\label{section:objects-power}

In this subsection, we estimate the power-type nonlinearities in the enhanced data set, i..e, the quadratic terms in \eqref{objects:eq-enhanced-1} and \eqref{objects:eq-enhanced-2} and the cubic terms in \eqref{objects:eq-enhanced-3}. In order to state the estimates of this subsection, we need to introduce a dyadic decomposition of the divergent counterterm from Definition \ref{ansatz:def-renormalization}. 

\begin{definition}[Divergent counterterms]\label{objects:def-counterterms}
For any $N_1,N_2\in \dyadic$, we define
\begin{equation}
 \sigma_{N_1,N_2}^2 := \sum_{n\in \Z^2} \frac{\rho_{N_1}(n) \rho_{N_2}(n)}{2 \langle n \rangle^2}. 
\end{equation}
\end{definition}
As a direct consequence of Definition \ref{ansatz:def-renormalization} and Definition \ref{objects:def-counterterms}, it follows that 
\begin{equation}
\sigma_{\leq N}^2 = \sum_{\substack{N_1,N_2\colon \\ N_1,N_2\leq N}} 
\sigma_{N_1,N_2}^2. 
\end{equation}

\begin{lemma}[Quadratic power-type nonlinearities]\label{objects:lem-without} Let $0<\sdecay <\epsilon\ll 1$, 
let $1\leq j_1,j_2\leq 2$ and let  $N_1,N_2\in \dyadic$. Furthermore, let $E\in \frkg$ satisfy $\| E \|_{\frkg}\leq 1$
and let $p\geq 1$. Then, we have the following estimates:
\begin{align}
\E \bigg[ \Big\| \Big[  \linear[N_1][r][j_1] , \linear[N_2][r][j_2] \Big] \Big\|_{C_t^0 \Cs_x^{-2\sreg}([0,1]\times \T^2)}^p \bigg]^{1/p}
&\lesssim p N_{\textup{max}}^{-2\sdecay}, \label{objects:eq-without-e1} \\ 
\E \bigg[ \Big\| \Big[ \Big[ E, \linear[N_1][r][j_1] \Big],  \linear[N_2][r][j_2] \Big]
- \delta^{j_1 j_2} \sigma_{N_1,N_2}^2 \Kil(E) \Big\|_{C_t^0 \Cs_x^{-2\sreg}([0,1]\times \T^2)}^p \bigg]^{1/p}
&\lesssim p N_{\textup{max}}^{-2\sdecay}.\label{objects:eq-without-e2}
\end{align}
\end{lemma}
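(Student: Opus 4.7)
\medskip

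The plan is to reduce both estimates to an analysis of the tensor product $\linear[N_1][r][j_1] \otimes \linear[N_2][r][j_2]$ via the product formula (Corollary \ref{cor:multiple-stochastic-integral-tensor-product}), following the two-step strategy outlined in the introduction. First, using the representation from Example \ref{prelim:example-linear}, namely $\linear[N_a][r][j_a](t,x) = \int_\indexset f^{j_a}_{t,x,N_a} dW$, the product formula yields the decomposition
\begin{equation*}
\linear[N_1][r][j_1](t,x) \otimes \linear[N_2][r][j_2](t,x) = \int_{\indexset^2} \big(f^{j_1}_{t,x,N_1} \otimes f^{j_2}_{t,x,N_2}\big) dW^{\otimes 2} + \paired{\{1,2\}}\big(f^{j_1}_{t,x,N_1} \otimes f^{j_2}_{t,x,N_2}\big).
\end{equation*}
A direct computation using the definition of the paired function (Definition \ref{def:contracted-function}) together with the explicit form of $f^{j_a}_{t,x,N_a}$ gives the resonant (deterministic) piece
\begin{equation*}
\paired{\{1,2\}}\big(f^{j_1}_{t,x,N_1} \otimes f^{j_2}_{t,x,N_2}\big) = \delta^{j_1 j_2} \sum_{n \in \Z^2} \frac{\rho_{N_1}(n) \rho_{N_2}(n)}{2 \fnorm{n}^2} \, \tensorcon{\{1,2\}}(I_\cfrkg \otimes I_\cfrkg) = \delta^{j_1 j_2} \sigma_{N_1,N_2}^2 \, (E_a \otimes E^a),
\end{equation*}
where the time integral $\int_{-\infty}^t e^{-2(t-s)\fnorm{n}^2} ds = (2\fnorm{n}^2)^{-1}$ produces the propagator factor, the delta in the dimension indices gives $\delta^{j_1 j_2}$, and the contraction of the two identity maps yields the Casimir element.

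Next, I view the brackets in \eqref{objects:eq-without-e1} and \eqref{objects:eq-without-e2} as the images of the tensor product under the linear maps $F \otimes G \mapsto [F, G]$ and $F \otimes G \mapsto [[E, F], G]$, respectively. Applying these maps to the above decomposition, the resonant piece in \eqref{objects:eq-without-e1} contributes $\delta^{j_1 j_2} \sigma_{N_1,N_2}^2 [E_a, E^a] = 0$ by antisymmetry, while the resonant piece in \eqref{objects:eq-without-e2} contributes $\delta^{j_1 j_2} \sigma_{N_1,N_2}^2 [[E, E_a], E^a] = \delta^{j_1 j_2} \sigma_{N_1,N_2}^2 \Kil(E)$ by Definition \ref{def:Kil}. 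In both cases, the renormalized object equals the image of the \emph{second-chaos} stochastic integral $\int_{\indexset^2} (f^{j_1}_{t,x,N_1} \otimes f^{j_2}_{t,x,N_2}) dW^{\otimes 2}$ under the corresponding (uniformly bounded, $E$-Lipschitz) map on $\cfrkg^{\otimes 2}$.

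To conclude, I estimate the second-chaos term via Lemma \ref{lemma:multiple-stochastic-integral-second-moment-bound-l2}: the second moment at a single space-time point is bounded by the squared $L^2$-norm of the integrand, which after expansion reduces to the dyadic sum
\begin{equation*}
\sum_{n_1, n_2 \in \Z^2} \rho_{N_1}(n_1)^2 \rho_{N_2}(n_2)^2 \int_{-\infty}^t e^{-2(t-s_1)\fnorm{n_1}^2} ds_1 \int_{-\infty}^t e^{-2(t-s_2)\fnorm{n_2}^2} ds_2 \lesssim N_{\max}^{-2\sdecay} \, (\text{Littlewood-Paley dyadic factor}),
\end{equation*}
which after localizing to frequency scales $N_0$ and invoking the counting estimate \eqref{prelim:eq-counting-e1} gives the required regularity $-2\sreg$ with the prefactor $N_{\max}^{-2\sdecay}$. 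I then upgrade from the pointwise $L^2$-bound to the $L^p_\omega C_t^0 \Cs_x^{-2\sreg}$-norm with the linear-in-$p$ factor of $p$ (as opposed to $p^{1/2}$ in Lemma \ref{lemma:linear-object}, reflecting the second chaos) via Gaussian hypercontractivity combined with the standard Kolmogorov-type reduction referenced in the proof of Lemma \ref{lemma:linear-object}.

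The conceptually nontrivial step is the algebraic one: the explicit identification of the resonant tensor with the Casimir $E_a \otimes E^a$ and the observation that, after applying the bracket maps, this is precisely what is cancelled by the prescribed counterterm (or vanishes for free in \eqref{objects:eq-without-e1} by antisymmetry). The analytic step is then routine given the machinery of Section 4. The main bookkeeping obstacle is keeping track of the ordering conventions in Convention \ref{convention:iterated-tensor} so that the tensor contraction produces $\tensorcon{\{1,2\}}(I_\cfrkg \otimes I_\cfrkg) = E_a \otimes E^a$ with the correct normalization; beyond that, no new ingredients are required.
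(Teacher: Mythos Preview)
Your proposal is correct and follows essentially the same approach as the paper: decompose the tensor product via the product formula into a non-resonant second-chaos integral and a resonant Casimir piece, identify the latter as $\delta^{j_1 j_2}\sigma_{N_1,N_2}^2\,\tensorcon{\{1,2\}}(I_\cfrkg\otimes I_\cfrkg)$, observe that under the bracket maps it vanishes (for \eqref{objects:eq-without-e1}) or produces exactly $\Kil(E)$ (for \eqref{objects:eq-without-e2}), and control the non-resonant part via Lemma~\ref{lemma:multiple-stochastic-integral-second-moment-bound-l2}, the counting estimate \eqref{prelim:eq-counting-e1}, and the standard reduction (Lemma~\ref{lemma:standard-reduction}). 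The paper packages the algebraic step as a separate trace-identity lemma (Lemma~\ref{objects:lem-trace-identities}), but the content is identical to your direct computation.
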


Before we dive into the proof of Lemma \ref{objects:lem-without}, we first derive two trace identities. The trace identities will then be used to analyze the resonant products in the quadratic objects in \eqref{objects:eq-without-e1} and \eqref{objects:eq-without-e2}. 

\begin{lemma}[Trace identities]\label{objects:lem-trace-identities}
Let $E\in \cfrkg$. Then, it holds that 
\begin{align}
\Big[ \tensorcon{\{1,2\}} \big( I_{\cfrkg} \otimes I_{\cfrkg} \big) \Big]_{\cfrkg} &= 0, \label{objects:eq-trace-e1} \\ 
\Big[ \tensorcon{\{1,2\}} \big( \mrm{ad}(E) \otimes I_{\cfrkg} \big) \Big]_{\cfrkg} &= \Kil (E). \label{objects:eq-trace-e2} 
\end{align}
\end{lemma}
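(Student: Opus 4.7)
The plan is that both identities reduce to a direct unwinding of the definitions, with no analytic content; the only work is to match the notation carefully.

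For the first identity, I would start by computing the tensor contraction explicitly using Example \ref{example:1-2-tensor-contraction}. Since $I_{\cfrkg} \otimes I_{\cfrkg} \in \cfrkg^{*\otimes 2} \otimes \cfrkg^{\otimes 2}$, the example gives
\[ \tensorcon{\{1,2\}}(I_{\cfrkg} \otimes I_{\cfrkg}) = \delta^{cd} (I_{\cfrkg} E_c) \otimes (I_{\cfrkg} E_d) = E_a \otimes E^a. \]
Since the bracket notation $[\,\cdot\,]$ on elements of $\frkg \otimes \frkg$ means applying the Lie bracket, we then have $[\tensorcon{\{1,2\}}(I_{\cfrkg} \otimes I_{\cfrkg})] = [E_a, E^a]$, and this vanishes by anti-symmetry of the Lie bracket (relabelling $a$ shows $[E_a, E^a] = [E^a, E_a] = -[E_a, E^a]$, so $[E_a, E^a] = 0$). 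This proves \eqref{objects:eq-trace-e1}.

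For the second identity, I would again apply Example \ref{example:1-2-tensor-contraction}, now with $S = \mrm{ad}(E)$ and $T = I_{\cfrkg}$, to obtain
\[ \tensorcon{\{1,2\}}(\mrm{ad}(E) \otimes I_{\cfrkg}) = \delta^{cd}(\mrm{ad}(E) E_c) \otimes (I_{\cfrkg} E_d) = [E, E_a] \otimes E^a. \]
Applying the Lie bracket yields $[[E, E_a], E^a]$, which is exactly $\Kil(E)$ by Definition \ref{def:Kil}. This proves \eqref{objects:eq-trace-e2}.

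There is no real obstacle here; the main point is simply to read off the two tensor contractions from Example \ref{example:1-2-tensor-contraction} and then recognize the resulting expressions in $\frkg \otimes \frkg$ as, respectively, a symmetric tensor (which is killed by the anti-symmetric Lie bracket) and the defining expression of $\Kil(E)$.
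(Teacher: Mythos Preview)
Your proof is correct and follows essentially the same approach as the paper: both compute the tensor contractions via Example~\ref{example:1-2-tensor-contraction} to obtain $\delta^{ab}E_a\otimes E_b$ and $\delta^{ab}(\mrm{ad}(E)E_a)\otimes E_b$, then invoke skew-symmetry of the Lie bracket for the first identity and Definition~\ref{def:Kil} for the second.
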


\begin{proof}[Proof of Lemma \ref{objects:lem-trace-identities}] By the definition of $\tensorcon{\{1,2\}}$ (or recall Example \ref{example:1-2-tensor-contraction}), it follows that 
\begin{equation*}
\tensorcon{\{1,2\}}\big( I_{\cfrkg} \otimes I_{\cfrkg} \big) = \delta^{ab}\, E_a \otimes E_b \quad \text{and}  \quad
\tensorcon{\{1,2\}}\big( \mrm{ad}(E) \otimes I_{\cfrkg} \big) 
= \delta^{ab} \,  (\mrm{ad}(E)E_a) \otimes E_b. 
\end{equation*}
Using the skew-symmetry of the Lie-bracket and the definition of $\Kil$ (Definition \ref{def:Kil}), we obtain the two identities 
\begin{align*}
\Big[ \tensorcon{\{1,2\}} \big( I_{\cfrkg} \otimes I_{\cfrkg} \big) \Big]_{\cfrkg}  &= \delta^{ab} \big[ E_a \otimes E_b \big]_{\cfrkg}  = 0, \\ 
\Big[ \tensorcon{\{1,2\}} \big( \mrm{ad}(E) \otimes I_{\cfrkg} \big) \Big]_{\cfrkg} 
&= \delta^{ab} \big[ (\mrm{ad}(E)E_a) \otimes E_b \big]_{\cfrkg} = \Kil(E). \qedhere
\end{align*}
\end{proof}

Equipped with Lemma \ref{objects:lem-trace-identities}, we now estimate the quadratic power-type nonlinearities.

\begin{proof}[Proof of Lemma \ref{objects:lem-without}]
In order to present a unified treatment of \eqref{objects:eq-without-e1} and \eqref{objects:eq-without-e2}, we primarily analyze the tensor product $\linear[N_1][r][j_1]\otimes \linear[N_2][r][j_2]$. We recall from Example \ref{prelim:example-linear} that, for all $j\in [2]$ and $M\in \dyadic$,  
\begin{align}
\linear[M][r][j](t,x) =& \,  \int_{\indexset} f^j_{t,x,M} \, dW, \label{objects:eq-without-p2} \\
\textup{where} \quad f^j_{t,x,M}(\dimind, m, s):=& \, 
\delta^j_\dimind \ind_{(-\infty,t]}(s) \rho_M(m) \e_{m}(x) e^{-(t-s) \langle m \rangle^2} I_{\cfrkg}. \label{objects:eq-without-p3}
\end{align}
Using \eqref{objects:eq-without-p2} and the product formula (Corollary \ref{cor:multiple-stochastic-integral-tensor-product}), it follows that 
\begin{align}
\linear[N_1][r][j_1]\otimes \linear[N_2][r][j_2] 
&= \int_{\indexset^2} \big( f^{j_1}_{t,x,N_1} \otimes f^{j_2}_{t,x,N_2} \big) d(W\otimes W) 
\label{objects:eq-without-p5} \\
&+  \paired{\{1,2\}} \big(  f^{j_1}_{t,x,N_1} \otimes f^{j_2}_{t,x,N_2} \big) \label{objects:eq-without-p6}. 
\end{align}
We now split the remainder of this proof into three steps.\\

\emph{Step 1: Estimate of the non-resonant part \eqref{objects:eq-without-p5}.} 
For the non-resonant part \eqref{objects:eq-without-p5}, we prove the estimate 
\begin{equation}\label{objects:eq-without-p7}
\E \bigg[ \Big\| \int_{\indexset^2} \big(  f^{j_1}_{t,x,N_1} \otimes f^{j_2}_{t,x,N_2} \big) d(W\otimes W) \Big\|_{C_t^0\Cs_x^{-2\sreg}([0,1]\times \T^2)}^p \bigg]^{1/p} \lesssim p N_{\textup{max}}^{-2\sdecay}. 
\end{equation}
Using standard reductions (Lemma \ref{lemma:standard-reduction}), it suffices to prove that 
\begin{equation}\label{objects:eq-without-p8}
\sup_{t\in [0,1]} \sup_{x\in \T^2} 
\E \Big[ \Big| P_{N_0} \int_{\indexset^2}  \big(  f^{j_1}_{t,x,N_1} \otimes f^{j_2}_{t,x,N_2} \big) d(W\otimes W) \Big|_{\cfrkg}^2 \Big] \lesssim N_0^{4\sreg} N_{\textup{max}}^{-4\sreg}. 
\end{equation}
To this end, we first use Lemma \ref{lemma:multiple-stochastic-integral-second-moment-bound-l2} and \eqref{objects:eq-without-p3}, which yield that 
\begin{align}
&\E \Big[ \Big| P_{N_0} \int_{\indexset^2}  \big(  f^{j_1}_{t,x,N_1} \otimes f^{j_2}_{t,x,N_2} \big) d(W\otimes W) \Big|_{\cfrkg}^2 \Big]  \notag \\ 
\lesssim \, &  \sum_{\substack{n_0,n_1,n_2 \in \Z^2 \colon \\ n_0 = n_1 + n_2}} 
\rho_{N_0}(n_0) \rho_{N_1}(n_1) \rho_{N_2}(n_2)  \int_{(-\infty,t]^2} e^{-2(t-s_1) \langle n_1 \rangle^2} e^{-2(t-s_2) \langle n_2 \rangle^2} \ds_1 \ds_2.
\label{objects:eq-without-p9}
\end{align}
By first calculating the $s_1$ and $s_2$-integrals and then  using Lemma \ref{prelim:lem-counting}, it follows that
\begin{align*}
\eqref{objects:eq-without-p9}
&\lesssim  \sum_{\substack{n_0,n_1,n_2 \in \Z^2 \colon \\ n_0 = n_1 + n_2}} 
\rho_{N_0}(n_0) \rho_{N_1}(n_1) \rho_{N_2}(n_2)  
\langle n_1 \rangle^{-2} \langle n_2 \rangle^{-2} 
\lesssim (N_1 N_2)^{-2} \times (N_0^{4\sreg} N_1^2 N_2^2 N_{\max}^{-4\sreg} ) \lesssim N_0^{4\sreg} N_{\textup{max}}^{-4\sreg}. 
\end{align*}
This yields \eqref{objects:eq-without-p8} and hence completes the proof of \eqref{objects:eq-without-p7}. \\ 

\emph{Step 2: Analysis of the resonant part \eqref{objects:eq-without-p6}.} 
In this step, we prove the algebraic identity
\begin{equation}\label{objects:eq-without-p12}
  \paired{\{1,2\}} \big(  f^{j_1}_{t,x,N_1} \otimes f^{j_2}_{t,x,N_2} \big)   = \delta^{j_1 j_2} \sigma_{N_1,N_2}^2 \tensorcon{\{1,2\}} \big( I_{\cfrkg} \otimes I_{\cfrkg} \big). 
\end{equation}
This identity can be obtained from the definition  of $\paired{\{1,2\}}$ and a direct computation. To be more precise, we first use the definition of $\paired{\{1,2\}}$, which yields that
\begin{align*}
&\paired{\{1,2\}} \big(  f^{j_1}_{t,x,N_1} \otimes f^{j_2}_{t,x,N_2} \big)  \\
=& \, \sum_{n_1,n_2 \in \Z^2} \int_{\R^2} \delta^{\dimind_1 \dimind_2} \delta_{n_1+n_2=0} \delta(s_1-s_2) \Big( \prod_{k=1}^2 \delta^{j_k}_{\dimind_k} \rho_{N_k}(n_k) \ind_{(-\infty,t]}(s) \e_{n_k}(x) e^{-(t-s_k) \langle n_k \rangle^2} \Big) \, \ds_1 \ds_2 \\
&\, \times \tensorcon{\{1,2\}} \big( I_{\cfrkg} \otimes I_{\cfrkg} \big). 
\end{align*}
By first calculating the sums and integrals in $(\dimind_2,n_2, s_2)$, then in $(\dimind_1,s_1)$, and finally in $n_1$, it follows that 
\begin{align*}
    & \sum_{n_1,n_2 \in \Z^2} \int_{\R^2} \delta^{\dimind_1 \dimind_2} \delta_{n_1+n_2=0} \delta(s_1-s_2) \Big( \prod_{k=1}^2 \delta^{j_k}_{\dimind_k} \rho_{N_k}(n_k) \ind_{(-\infty,t]}(s) \e_{n_k}(x) e^{-(t-s_k) \langle n_k \rangle^2} \Big) \, \ds_1 \ds_2 \\
    =&\,  \delta^{\dimind_1 j_2} \delta^{j_1}_{\dimind_1} \sum_{n_1\in \Z^2} \rho_{N_1}(n_1) \rho_{N_2}(n_1)  \int_{-\infty}^t e^{-2(t-s_1) \langle n_1 \rangle^2} \, \ds_1  \\
    =& \, \delta^{j_1 j_2} \sum_{n_1 \in \Z^2} \frac{\rho_{N_1}(n_1) \rho_{N_2}(n_1)}{2 \langle n_1 \rangle^2} 
    \, = \delta^{j_1 j_2} \sigma_{N_1,N_2}^2. 
\end{align*}
This yields the desired algebraic identity \eqref{objects:eq-without-p12}.\\

\emph{Step 3: Proof of \eqref{objects:eq-without-e1} and \eqref{objects:eq-without-e2}.} Using the product formula \eqref{objects:eq-without-p5}-\eqref{objects:eq-without-p6} and the algebraic identity \eqref{objects:eq-without-p12}, it follows that
\begin{align}
\Big[  \linear[N_1][r][j_1] , \linear[N_2][r][j_2] \Big]
 =   \Big[  \linear[N_1][r][j_1] \otimes \linear[N_2][r][j_2] \Big]_{\cfrkg} 
 &= \bigg[ \int_{\indexset^2} \big(  f^{j_1}_{t,x,N_1} \otimes f^{j_2}_{t,x,N_2} \big) d(W\otimes W) \bigg]_{{\cfrkg}}  \label{objects:eq-without-p13} \\ 
 &+ \delta^{j_1 j_2} \sigma_{N_1,N_2}^2 \Big[ \tensorcon{\{1,2\}} \big( I_{\cfrkg} \otimes I_{\cfrkg} \big) \Big]_{\cfrkg}.  \label{objects:eq-without-p14}
\end{align}
Due to our estimate \eqref{objects:eq-without-p7}, the non-resonant part \eqref{objects:eq-without-p13} yields an acceptable contribution to \eqref{objects:eq-without-e1}. Using our trace identities (Lemma \ref{objects:lem-trace-identities}), the resonant part \eqref{objects:eq-without-p14} is identically zero. \\

Similarly, using the  product formula  \eqref{objects:eq-without-p5}-\eqref{objects:eq-without-p6} and the algebraic identity \eqref{objects:eq-without-p12}, it follows that
\begin{align}
\Big[ \Big[ E, \linear[N_1][r][j_1] \Big],  \linear[N_2][r][j_2] \Big]
- \delta^{j_1 j_2} \sigma_{N_1,N_2}^2 \Kil(E) 
&= \Big[ \mrm{ad}(E) \,  \linear[N_1][r][j_1] \otimes  \linear[N_2][r][j_2] \Big]_{\cfrkg}
- \delta^{j_1 j_2} \sigma_{N_1,N_2}^2 \Kil(E) \notag \\
&= \bigg[ \int_{\indexset^2} \big( \mrm{ad}(E) f^{j_1}_{t,x,N_1} \otimes f^{j_2}_{t,x,N_2} \big) d(W\otimes W) \bigg]_{{\cfrkg}} \label{objects:eq-without-p15} \\
&+ \delta^{j_1j_2} \sigma_{N_1,N_2}^2 \bigg( \Big[ \tensorcon{\{1,2\}} \big( \mrm{ad}(E) \otimes I_{\cfrkg} \big) \Big]_{\cfrkg} - \Kil(E) \bigg). \label{objects:eq-without-p16}
\end{align}
Due to our estimate \eqref{objects:eq-without-p7}, the non-resonant part \eqref{objects:eq-without-p15} yields an acceptable contribution to \eqref{objects:eq-without-e2}. Using our trace identities (Lemma \ref{objects:lem-trace-identities}), the resonant part \eqref{objects:eq-without-p16} is identically zero. 
\end{proof}

We now present a corollary of the proof of Lemma \ref{objects:lem-without}. While this corollary will not be needed in the proof of Proposition \ref{objects:prop-enhanced}, i.e., in the bound of the enhanced data set, it will be used in Section \ref{section:gauge-covariance} in the proof of gauge-covariance. 

\begin{corollary}\label{objects:cor-quadratic-Q} Let $0<\sdecay <\epsilon\ll 1$, 
let $i,j,k,l\in [2]$, let $N\in \dyadic$, let $Q^\ell_{>N}$ be as in \eqref{prelim:eq-Q}, and let $\theta_{\leq N}$ be defined as 
\begin{equation}\label{objects:eq-theta}
\theta_{\leq N} := \frac{1}{4} \sum_{n\in \Z^2} \frac{\langle n, \nabla \rho_{>N}(n) \rangle}{\langle n \rangle^2} \rho_{\leq N}(n). 
\end{equation}
Then, it holds for all $E\in \frkg$ satisfying $\| E \|_{\frkg}\leq 1$ and all $p\geq 1$ that
\begin{equation}\label{objects:eq-quadratic-Q}
\E \bigg[ \Big\| \Big[ \linear[\leqN][r][i] \parasim \Big[ E , \partial^k Q^\ell_{>N} \linear[][r][j] \Big] \Big] 
+ \delta^{ij} \delta^{k\ell} \theta_{\leq N} \Kil (E) \Big\|_{C_t^0 \Cs_x^{-2\sreg}([0,1]\times \T^2)}^p \bigg]^{1/p} \lesssim p N^{-2\sdecay}. 
\end{equation}
\end{corollary}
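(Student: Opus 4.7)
My approach is to closely parallel the proof of Lemma \ref{objects:lem-without}, modified to accommodate the Fourier multiplier $\partial^k Q^\ell_{>N}$. Following Example \ref{prelim:example-linear}, I would represent both linear factors as single vector-valued stochastic integrals: $\linear[\leqN][r][i](t,x) = \int_{\indexset} f^i_{\leq N}\, dW$ with kernel
\begin{equation*}
f^i_{\leq N}(\dimind, n, s) = \delta^i_{\dimind} \ind(s \leq t) \rho_{\leq N}(n) \e_n(x) e^{-(t-s)\langle n\rangle^2} I_{\cfrkg},
\end{equation*}
and $\partial^k Q^\ell_{>N} \linear[][r][j](t,x) = \int_{\indexset} h^{j,k,\ell}_N\, dW$ with kernel
\begin{equation*}
h^{j,k,\ell}_N(\dimind, n, s) = \delta^j_{\dimind} \ind(s \leq t)\, n_k(\partial^\ell \rho_{>N})(n)\, \e_n(x) e^{-(t-s)\langle n\rangle^2} I_{\cfrkg},
\end{equation*}
where the extra symbol $n_k (\partial^\ell \rho_{>N})(n)$ is the Fourier multiplier of $\partial^k Q^\ell_{>N}$. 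Applying the product formula (Corollary \ref{cor:multiple-stochastic-integral-tensor-product}) to the tensor product $\linear[\leqN][r][i] \otimes \mrm{ad}(E)\,\partial^k Q^\ell_{>N}\linear[][r][j]$ decomposes it into a double stochastic integral (non-resonant) plus the deterministic paired function $\paired{\{1,2\}}(f^i_{\leq N} \otimes \mrm{ad}(E)\, h^{j,k,\ell}_N)$ (resonant); dyadic localization by the paraproduct $\parasim$ and the Lie bracket $[\cdot]_{\cfrkg}$ then yield the object inside the norm in \eqref{objects:eq-quadratic-Q}.

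\textbf{Non-resonant piece.} For the double stochastic integral, I would invoke the standard reductions (Lemma \ref{lemma:standard-reduction}) together with the second-moment bound (Lemma \ref{lemma:multiple-stochastic-integral-second-moment-bound-l2}) to reduce, at each dyadic output block $N_0$, to controlling a frequency sum of the form
\begin{equation*}
\sum_{n_1, n_2} \rho_{N_0}(n_{12})\, \rho_{\leq N}^2(n_1)\, \big|n_{2,k}(\partial^\ell \rho_{>N})(n_2)\big|^2\, \ind_{|n_1| \sim |n_2|}\, \langle n_1\rangle^{-2}\langle n_2\rangle^{-2}.
\end{equation*}
Since $\rho_{>N}$ is constant outside the shell $|n| \sim N$, one has $|n_{2,k}(\partial^\ell \rho_{>N})(n_2)|^2 \lesssim \ind_{|n_2| \sim N}$, and combined with the cutoff $\rho_{\leq N}(n_1)$ and the paraproduct restriction $|n_1| \sim |n_2|$, this forces $|n_1| \sim |n_2| \sim N$ and $N_0 \lesssim N$. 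Counting via Lemma \ref{prelim:lem-counting} then produces a bound of order $N_0^2 N^{-2}$, which is dominated by $N_0^{4\sreg} N^{-4\sdecay}$ whenever $N_0 \lesssim N$ and $\sdecay < \sreg$, yielding the advertised $N^{-2\sdecay}$-decay in $\Cs_x^{-2\sreg}$ after taking the supremum over $N_0$.

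\textbf{Resonant cancellation.} For the paired function, the pairing $z_2 = \bar z_1$ combined with $\e_n\e_{-n}=1$, $\rho_{\leq N}(-n)=\rho_{\leq N}(n)$, and the two compensating sign flips from $(-n_1)_k$ and $(\partial^\ell \rho_{>N})(-n_1) = -(\partial^\ell\rho_{>N})(n_1)$ leaves a positive integrand. The paraproduct combinatorial factor $\sum_{M_1 \sim M_2} \rho_{M_1}(n)\rho_{M_2}(n) = \big(\sum_M \rho_M(n)\big)^2 = 1$ collapses on the resonant diagonal, and the time integral produces the usual factor $(2\langle n\rangle^2)^{-1}$, so that the resonant contribution reads
\begin{equation*}
\delta^{ij} \bigg(\sum_{n \in \Z^2} \frac{\rho_{\leq N}(n)\, n_k(\partial^\ell \rho_{>N})(n)}{2\langle n\rangle^2}\bigg)\, \tensorcon{\{1,2\}}\big(I_{\cfrkg} \otimes \mrm{ad}(E)\big).
\end{equation*}
Using the radial form $\partial^\ell \rho_{>N}(n) = (n^\ell / |n|)\rho_{>N}'(|n|)$, the $\Z^2$-rotation identity $\sum_{n} n_k n_\ell F(|n|) = \tfrac{1}{2}\delta_{k\ell}\sum_n |n|^2 F(|n|)$, and the identity $|n|\rho_{>N}'(|n|) = \langle n, \nabla \rho_{>N}(n)\rangle$, the scalar factor reduces to exactly $\delta^{k\ell}\theta_{\leq N}$. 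A short Jacobi-type computation, analogous to the proof of the trace identity \eqref{objects:eq-trace-e2} but with the arguments of the tensor product interchanged, gives $[\tensorcon{\{1,2\}}(I_{\cfrkg} \otimes \mrm{ad}(E))]_{\cfrkg} = -\Kil(E)$. Putting everything together, the resonant contribution equals $-\delta^{ij}\delta^{k\ell}\theta_{\leq N}\Kil(E)$, which cancels precisely with the counterterm in \eqref{objects:eq-quadratic-Q}.

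\textbf{Main obstacle.} The subtlest step should be the algebraic bookkeeping of the resonant computation: verifying that the two sign flips under $n \mapsto -n$ combine so that the integrand is positive, that the paraproduct combinatorial factor collapses to $1$ on the resonant diagonal, and that the rotation-symmetry argument together with $|n|\rho_{>N}'(|n|) = \langle n, \nabla\rho_{>N}(n)\rangle$ produces a coefficient matching \eqref{objects:eq-theta} with the correct normalization. The final sign ($-\Kil(E)$ rather than $+\Kil(E)$) comes from the argument order in $\tensorcon{\{1,2\}}(I_{\cfrkg} \otimes \mrm{ad}(E))$ and must be tracked carefully to obtain the exact cancellation.
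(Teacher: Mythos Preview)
Your approach is essentially the paper's: product formula on the tensor $\linear[\leqN][r][i]\otimes \partial^k Q^\ell_{>N}\linear[][r][j]$, control of the non-resonant double integral via Lemma~\ref{lemma:multiple-stochastic-integral-second-moment-bound-l2} and frequency counting, and exact identification of the resonant constant with $\delta^{ij}\delta^{k\ell}\theta_{\leq N}$ followed by the trace identity to produce $-\Kil(E)$. Two points of divergence are worth noting.

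First, the paper does not keep the $\parasim$ throughout. It begins by bounding the $\paransim$-part directly via paraproduct estimates, H\"older, and Lemma~\ref{lemma:linear-object} (using that $\partial^k Q^\ell_{>N}$ is a zeroth-order multiplier supported at scale $\sim N$), thereby reducing to the full product before applying the product formula. Your alternative---arguing that on the resonant diagonal $n_1=-n_2$ the paraproduct cutoff $\sum_{M_1\sim M_2}\rho_{M_1}(n)\rho_{M_2}(n)$ collapses to $1$---is correct under the usual convention that ``$\sim$'' is wide enough to capture all overlapping dyadic blocks, but the paper's route is a cleaner way to avoid that bookkeeping.

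Second, and more substantively, your resonant computation invokes the ``radial form'' $\partial^\ell\rho_{>N}(n)=(n^\ell/|n|)\rho_{>N}'(|n|)$. This is not justified here: the paper's cutoff $\rho$ satisfies $\rho|_{[-1,1]^2}=1$ and is radially non-increasing, but it is \emph{not} assumed to be radial. The paper instead argues as follows: since $\rho$ is even, $\partial^\ell\rho_{>N}$ is odd in the $\ell$-th coordinate, so the sum $\sum_n \rho_{\leq N}(n)\,n^k(\partial^\ell\rho_{>N})(n)\langle n\rangle^{-2}$ vanishes unless $k=\ell$; then symmetry under the coordinate swap $(n_1,n_2)\mapsto(n_2,n_1)$ gives the factor $\tfrac12$ and identifies the scalar with $\theta_{\leq N}$. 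Your conclusion is correct, but the justification should be replaced by this parity-and-symmetry argument.
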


\begin{proof}
Since $\partial^k Q^\ell_{>N}$ is a zeroth-order Fourier multiplier, the argument is similar to the proof of Lemma \ref{objects:lem-without} and we therefore only sketch it. We first let $\sreg^\prime>0$ be sufficiently small depending on $\sreg$ and $\sdecay$. Using paraproduct estimates, H\"{o}lder's inequality, and Lemma \ref{lemma:linear-object}, it holds that
\begin{align*}
&\E \bigg[ \Big\| \Big[ \linear[\leqN][r][i] \paransim \Big[ E , \partial^k Q^\ell_{>N} \linear[][r][j] \Big] \Big] \Big\|_{C_t^0 \Cs_x^{-2\sreg}([0,1]\times \T^2)}^p \bigg]^{1/p} \\ 
\lesssim\,& \E \Big[ \big\| \linear[\leqN][r][i] \big\|_{C_t^0 \Cs_x^{-\sreg^\prime}([0,1]\times \T^2)}^{2p}\Big]^{1/(2p)} 
\E \Big[ \big\| \partial^k Q^\ell_{>N} \linear[][r][j] \big\|_{C_t^0 \Cs_x^{-2(\sreg-\sreg^\prime)}([0,1]\times \T^2)}^{2p}\Big]^{1/(2p)}
\lesssim p N^{-2\sdecay}. 
\end{align*}
It therefore suffices to prove \eqref{objects:eq-quadratic-Q} with the product rather than the high$\times$high-paraproduct. To this end, we first examine the tensor product  $\linear[\leqN][r][i] \otimes \partial^k Q^\ell_{>N} \linear[][r][j]$. Similar as in the proof of Lemma \ref{objects:lem-without}, the non-resonant component of the tensor product can be controlled in $C_t^0 \Cs_x^{-2\sreg}$. Furthermore, using similar computations as in the proof of Lemma \ref{objects:lem-without}, the resonant component of the tensor product can be written as 
\begin{equation}\label{objects:eq-quadratic-Q-p1}
\begin{aligned}
&\delta^{ij} \bigg( \sum_{n\in \Z^2} \int_{-\infty}^t \ds \rho_{\leq N}(n) n^k (\partial^\ell \rho_{>N})(n) e^{-2(t-s) \langle n \rangle^2} \bigg) \tensorcon{\{1,2\}} \big( I_{\cfrkg} \otimes I_{\cfrkg} \big) \\
=& \, \delta^{ij} \frac{1}{2} \bigg( \sum_{n\in \Z^2} 
\rho_{\leq N}(n) \frac{n^k (\partial^\ell \rho_{>N})(n)}{\langle n \rangle^2} \bigg) 
\tensorcon{\{1,2\}} \big( I_{\cfrkg} \otimes I_{\cfrkg} \big). 
\end{aligned}
\end{equation}
Since $\partial^\ell \rho_{>N}$ is odd in the $\ell$-th component of $n$, \eqref{objects:eq-quadratic-Q-p1} is only non-zero in the case $k=\ell$. Due to symmetry in the coordinates, it then follows that
\begin{equation}
\begin{aligned}
\eqref{objects:eq-quadratic-Q-p1} 
&= \delta^{ij} \delta^{k\ell} \frac{1}{4} 
\bigg( \sum_{n\in \Z^2} 
\rho_{\leq N}(n) 
\frac{n^1 (\partial_1 \rho_{>N})(n)+n^2 (\partial_2 \rho_{>N})(n)}
{\langle n \rangle^2} \bigg)\tensorcon{\{1,2\}} \big( I_{\cfrkg} \otimes I_{\cfrkg} \big) \\ 
&=  \delta^{ij} \delta^{k\ell}  \theta_{\leq N} \tensorcon{\{1,2\}} \big( I_{\cfrkg} \otimes I_{\cfrkg} \big). 
\end{aligned}
\end{equation}
Using Lemma \ref{objects:lem-trace-identities}, it follows that the resonant part of $\big[ \linear[\leqN][r][i] \parasim \big[ E , \partial^k Q^\ell_{>N} \linear[][r][j] \big] \big]$ is given by
\begin{equation*}
\delta^{ij} \delta^{k\ell}  \theta_{\leq N} 
\Big[ \Big( I_{\cfrkg} \otimes \mrm{ad}(E) \Big) \tensorcon{\{1,2\}} \big( I_{\cfrkg} \otimes I_{\cfrkg} \big) \Big]_{\cfrkg} \hspace{-0.5ex}
= -\delta^{ij} \delta^{k\ell}  \theta_{\leq N}  
\Big[ \tensorcon{\{1,2\}} \big( \mrm{ad}(E) \otimes I_{\cfrkg} \big) \Big]_{\cfrkg} \hspace{-0.5ex}=  -\delta^{ij} \delta^{k\ell}  \theta_{\leq N}   \Kil(E). 
\end{equation*}
Thus, we obtain that the renormalization in \eqref{objects:eq-quadratic-Q} cancels the resonant part, which completes the proof.  
\end{proof}

In Lemma \ref{objects:lem-without}, we estimated the quadratic power-type nonlinearities from \eqref{objects:eq-enhanced-1} and \eqref{objects:eq-enhanced-2}. Now, we turn to the cubic power-type nonlinearity from \eqref{objects:eq-enhanced-3}.

\begin{lemma}[Cubic power-type nonlinearity]\label{lemma:cubic-stochastic-object} Let $0<\sdecay <\epsilon\ll 1$, 
let $i_1, i_2, i_3 \in [2]$ and let $N_1, N_2, N_3\in \dyadic$. For all $p\geq 1$, it then holds that 
\[ \E \bigg[\Big\| \Big[\Big[\linear[N_1][r][i_1], \linear[N_2][r][i_2]\Big], \linear[N_3][r][i_3]\Big](t) +  \delta^{i_1 i_3} \sigma^2_{N_1 N_3}\Kil\big(\linear[\leqN][r][i_2](t)\big) - \delta^{i_2 i_3} \sigma^2_{N_2 N_3} \Kil\big(\linear[\leqN][r][i_1](t)\big) \Big\|_{C_t^0 \Cs_x^{-3\sreg}([0,1]\times \T^2)}^p \bigg]^{1/p} \lesssim p^{3/2} N_{\mrm{max}}^{-3\sdecay}. \] 
\end{lemma}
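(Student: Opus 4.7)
The plan is to extend the strategy used in Lemma \ref{objects:lem-without} from quadratic to cubic power-type objects. Introduce the trilinear map $\mrm{tb} : \cfrkg^{\otimes 3} \to \cfrkg$ defined by $\mrm{tb}(X \otimes Y \otimes Z) := [[X, Y], Z]$, so that $\big[[\linear[N_1][r][i_1], \linear[N_2][r][i_2]], \linear[N_3][r][i_3]\big] = \mrm{tb}\big(\linear[N_1][r][i_1] \otimes \linear[N_2][r][i_2] \otimes \linear[N_3][r][i_3]\big)$. Using the representation $\linear[N_k][r][i_k](t,x) = \int_\indexset f^{i_k}_{t,x,N_k} dW$ from Example \ref{prelim:example-linear} together with the product formula (Corollary \ref{cor:multiple-stochastic-integral-tensor-product}), decompose the triple tensor product into a single third-chaos integral plus three first-chaos integrals indexed by the pairings $\mc{P} \in \{\{1,2\}, \{1,3\}, \{2,3\}\}$. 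Then apply $\mrm{tb}$ to both sides.

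The next step is to identify each of the three resonant contributions. As in Step 2 of the proof of Lemma \ref{objects:lem-without}, a direct computation using the explicit form of $f^{i_k}_{t,x,N_k}$ shows that, for pairing $\mc{P}$, the paired function $\paired{\mc{P}}(f^{i_1}_{t,x,N_1} \otimes f^{i_2}_{t,x,N_2} \otimes f^{i_3}_{t,x,N_3})$ factors as a Kronecker $\delta$ in the relevant dimensional indices, times the appropriate $\sigma^2_{N_a N_b}$ constant, times $\tensorcon{\mc{P}}(I_{\cfrkg}^{\otimes 3})$ composed (in the remaining slot) with the surviving $f^{i_\ell}$. Computing $\tensorcon{\mc{P}}(I_{\cfrkg}^{\otimes 3})$ as a linear map $\cfrkg \to \cfrkg^{\otimes 3}$ and composing with $\mrm{tb}$ gives: for pairing $\{1,2\}$, the map $X \mapsto \sum_a [[E^a, E^a], X] = 0$; for pairing $\{1,3\}$, the map $X \mapsto \sum_a [[E^a, X], E^a] = -\Kil(X)$ (by antisymmetry of the bracket); and for pairing $\{2,3\}$, the map $X \mapsto \sum_a [[X, E^a], E^a] = \Kil(X)$ directly from Definition \ref{def:Kil}. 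Assembling the scalar prefactors, pairing $\{1,2\}$ contributes zero, pairing $\{1,3\}$ contributes $-\delta^{i_1 i_3}\sigma^2_{N_1 N_3}\Kil\big(\linear[N_2][r][i_2]\big)$, and pairing $\{2,3\}$ contributes $+\delta^{i_2 i_3}\sigma^2_{N_2 N_3}\Kil\big(\linear[N_1][r][i_1]\big)$. These are precisely cancelled by the two counterterms in the statement.

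It therefore remains to bound the non-resonant third-chaos term $\mrm{tb}\big(\int_{\indexset^3} f^{i_1}_{t,x,N_1} \otimes f^{i_2}_{t,x,N_2} \otimes f^{i_3}_{t,x,N_3} dW^{\otimes 3}\big)$ in $C_t^0 \Cs_x^{-3\sreg}([0,1] \times \T^2)$. By Gaussian hypercontractivity in the third Wiener chaos (the origin of the $p^{3/2}$ factor) together with the standard Kolmogorov/Littlewood--Paley reduction, this reduces to the pointwise bound
\[ \sup_{t \in [0,1],\, x \in \T^2} \E\Big[\big|P_{N_0}(\textup{triple integral})(t,x)\big|_\cfrkg^2\Big] \lesssim N_0^{6\sreg} N_{\max}^{-6\sdecay}. \]
Applying Lemma \ref{lemma:multiple-stochastic-integral-second-moment-bound-l2} and performing the time integrals over the exponential kernels, the left-hand side is controlled by
\[ \sum_{n_1, n_2, n_3 \in \Z^2} \rho_{N_0}(n_{123}) \prod_{k=1}^3 \frac{\rho_{N_k}(n_k)}{\langle n_k \rangle^2}, \]
which, since the frequency support forces $N_0 \lesssim N_{\max}$, is bounded by $N_0^{\delta} N_{\max}^{-\delta}$ for any small $\delta > 0$ via the counting estimate \eqref{eq:cubic-object-no-derivative-combinatorial-estimate}. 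Choosing $\delta = 6 \sdecay$ (which is admissible since $\sdecay < \sreg$) yields the required estimate.

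The proof is conceptually routine once the algebraic identification of the three resonant pieces is done; the main point where care is needed is in correctly tracking the three tensor contractions of $I_{\cfrkg}^{\otimes 3}$, computing their image under $\mrm{tb}$, and matching signs with the stated renormalization counterterms. I expect no further obstacle, since the analytic ingredients (product formula, second-moment bound, counting estimate, hypercontractivity) are already in place.
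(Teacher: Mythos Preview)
Your proposal is correct and follows essentially the same approach as the paper. The paper packages the algebraic identification of the three resonant contributions into a separate lemma (Lemma~\ref{lemma:cubic-product-integral}), but the content --- applying the product formula, computing the three contractions $\tensorcon{\mc{P}}(I_{\cfrkg}^{\otimes 3})$ under the iterated bracket to obtain $0$, $-\Kil$, $\Kil$ respectively, and then bounding the non-resonant third-chaos term via Lemma~\ref{lemma:multiple-stochastic-integral-second-moment-bound-l2} and the counting estimate \eqref{eq:cubic-object-no-derivative-combinatorial-estimate} --- is identical to what you outline.
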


In order to prove Lemma \ref{lemma:cubic-stochastic-object}, we first derive a representation of the cubic power-type nonlinearity, which is the subject of the next lemma. 

\begin{lemma}\label{lemma:cubic-product-integral}
Let $i_1, i_2, i_3 \in [2]$, $N_1, N_2, N_3\in \dyadic$, $t \in \R$, and $x \in \T^d$. Then, it holds that
\[\begin{split}
\Big[\Big[\linear[N_1][r][i_1], \linear[N_2][r][i_2]\Big], \linear[N_3][r][i_3]\Big](t, x) =& \int_{\indexset^3} \Big[\big[f^{i_1}_{t, x, N_1} \otimes f^{i_2}_{t, x, N_2}\big]_{\cfrkg} \otimes f^{i_3}_{t, x, N_3}\Big]_{\cfrkg} dW^{\otimes 3}  \\
& - \delta^{i_1 i_3} \sigma^2_{ N_1 N_3}\Kil\big(\linear[N_2][r][i_2](t, x)\big) 
 + \delta^{i_2 i_3} \sigma^2_{N_2 N_3} \Kil\big(\linear[N_1][r][i_1](t, x)\big).
\end{split}\]
\end{lemma}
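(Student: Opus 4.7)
The plan is to express the cubic iterated Lie bracket as the linear map $[[\cdot,\cdot],\cdot] : \cfrkg^{\otimes 3} \to \cfrkg$, $a \otimes b \otimes c \mapsto [[a,b],c]$, applied to the triple tensor product $\linear[N_1][r][i_1] \otimes \linear[N_2][r][i_2] \otimes \linear[N_3][r][i_3]$, and then invoke the product formula of Corollary \ref{cor:multiple-stochastic-integral-tensor-product} to split this tensor product into a non-resonant triple stochastic integral together with three resonant contributions indexed by the pairings $\mc{P} \in \{\{1,2\}, \{1,3\}, \{2,3\}\}$. Inserting the kernel representation $\linear[N_k][r][i_k](t,x) = \int_\indexset f^{i_k}_{t,x,N_k} dW$ from Example \ref{prelim:example-linear} and applying $[[\cdot,\cdot],\cdot]$ to the non-resonant piece produces exactly the triple stochastic integral $\int_{\indexset^3} [[f^{i_1}_{t,x,N_1} \otimes f^{i_2}_{t,x,N_2}]_\cfrkg \otimes f^{i_3}_{t,x,N_3}]_\cfrkg\, dW^{\otimes 3}$ that appears in the statement, so the whole task reduces to identifying the three resonant contributions.

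For each pairing $\mc{P}$, the special form $f^{i_k}_{t,x,N_k}(z) = \phi_k(z) I_\cfrkg$ of the kernels together with the scalar integration already carried out in Step 2 of the proof of Lemma \ref{objects:lem-without} yields
\[
\paired{\mc{P}}\big(f^{i_1}_{t,x,N_1} \otimes f^{i_2}_{t,x,N_2} \otimes f^{i_3}_{t,x,N_3}\big)(z) = \delta^{i_u i_v}\, \sigma^2_{N_u N_v}\, \phi_w(z)\, \tensorcon{\mc{P}}\big(I_\cfrkg^{\otimes 3}\big),
\]
where $\{u,v\} = \mc{P}$ and $w \in \{1,2,3\} \setminus \{u,v\}$ is the free index. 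Integrating this expression against $dW$ in the variable $z$ replaces the $w$-th slot of $\tensorcon{\mc{P}}(I_\cfrkg^{\otimes 3})$ by $\linear[N_w][r][i_w](t,x)$, so with summation over an orthonormal basis $(E_a)$ of $\cfrkg$ implicit, the three resonant contributions to $[[\linear[N_1][r][i_1], \linear[N_2][r][i_2]], \linear[N_3][r][i_3]](t,x)$ read
\begin{align*}
\mc{P} = \{1,2\}: &\quad \delta^{i_1 i_2}\sigma^2_{N_1 N_2}\, \big[[E_a, E^a], \linear[N_3][r][i_3](t,x)\big], \\
\mc{P} = \{1,3\}: &\quad \delta^{i_1 i_3}\sigma^2_{N_1 N_3}\, \big[[E_a, \linear[N_2][r][i_2](t,x)], E^a\big], \\
\mc{P} = \{2,3\}: &\quad \delta^{i_2 i_3}\sigma^2_{N_2 N_3}\, \big[[\linear[N_1][r][i_1](t,x), E_a], E^a\big].
\end{align*}

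Finally, I would evaluate each of the three pieces. The $\{1,2\}$-contribution vanishes because $[E_a, E^a] = 0$, which is exactly the first trace identity in Lemma \ref{objects:lem-trace-identities}. For the $\{2,3\}$-contribution, the first form of $\Kil$ in Definition \ref{def:Kil} gives directly $[[\linear[N_1][r][i_1], E_a], E^a] = \Kil(\linear[N_1][r][i_1])$, producing the counterterm $+\delta^{i_2 i_3}\sigma^2_{N_2 N_3}\Kil(\linear[N_1][r][i_1](t,x))$. For the $\{1,3\}$-contribution, antisymmetry of the inner Lie bracket combined with Definition \ref{def:Kil} gives $[[E_a, X], E^a] = -[[X, E_a], E^a] = -\Kil(X)$ for any $X \in \cfrkg$; applied to $X = \linear[N_2][r][i_2](t,x)$ this produces the counterterm $-\delta^{i_1 i_3}\sigma^2_{N_1 N_3}\Kil(\linear[N_2][r][i_2](t,x))$. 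Collecting the three evaluations reproduces the two Killing-map terms in the claimed identity. The entire argument is purely algebraic and requires no analytical estimates; the only mildly delicate step is the bookkeeping of which slot of the iterated bracket is occupied by the free variable in each pairing.
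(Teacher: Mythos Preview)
Your proposal is correct and follows essentially the same approach as the paper: apply the product formula (Corollary \ref{cor:multiple-stochastic-integral-tensor-product}) to the triple tensor product, push through the linear map $[[\cdot,\cdot],\cdot]$, and identify the three resonant contributions via the trace identities and Definition \ref{def:Kil}. The only cosmetic difference is that the paper re-derives the scalar integral $\delta^{i_u i_v}\sigma^2_{N_u N_v}$ explicitly rather than citing Step 2 of Lemma \ref{objects:lem-without}, and it names the intermediate maps $M_{\{u,v\}}$ and $R$ before composing them.
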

\begin{proof}
Recall that
\[ \linear[N][r][i](t, x) = \int_{\indexset} f^i_{t, x, N} dW.\]
For brevity, let 
\[ h_r(z) := \delta^{i_r}_{\dimind} \ind(s \leq t) \rho_{N_r}(n) \e_n(x) e^{-(t-s) \fnorm{n}^2}, ~~ r = 1, 2, 3, \]
so that $f_r(z) := f^{i_r}_{t, x, N_r}(z) = h_r(z) I_{\cfrkg}$. For brevity, define $g := f_1 \otimes f_2 \otimes f_3$.
By the product formula (Corollary \ref{cor:multiple-stochastic-integral-tensor-product}), we have that
\beq
\begin{aligned} \label{eq:cubic-object-no-derivative-representation-intermediate}
&\linear[N_1][r][i_1](t, x) \otimes \linear[N_2][r][i_2](t, x) \otimes \linear[N_3][r][i_3](t, x) \\ 
=\, & \int_{\indexset^3} g dW^{\otimes 3} + \int_{\indexset} \paired{\{1,2\}}(g)(z_3)  dW(z_3) + \int_{\indexset} \paired{\{1,3\}}(g)(z_2)  dW(z_2) + \int_{\indexset} \paired{\{2,3\}}(g)(z_1) dW(z_1). 
\end{aligned}
\eeq
We have that
\[ 
g(z_1, z_2, z_3) = (f_1 \otimes f_2 \otimes f_3)(z_1, z_2, z_3) = h_1(z_1) h_2(z_2) h_3(z_3) I_{\cfrkg} \otimes I_{\cfrkg} \otimes I_{\cfrkg}. \]
Writing $I_{\cfrkg} = E_a^* \otimes E^a$ and $I_{\cfrkg} \otimes I_{\cfrkg} \otimes I_{\cfrkg} = E_a^* \otimes E_b^* \otimes E_c^* \otimes E^a \otimes E^b \otimes E^c$, we also have that
\[ \tensorcon{\{1, 2\}} \big(f_1(z_1) \otimes f_2(z_2)\otimes f_3(z_3)\big)= h_1(z_1) h_2(z_2) h_3(z_3)  E_c^* \otimes E^a \otimes E_a \otimes E^c, \]
and thus
\[ \paired{\{1,2\}}(f_1 \otimes f_2 \otimes f_3)(z_3) =  \Big( \int_\indexset h_1(z_1) h_2(\bar{z}_1) d\lebI(z_1) \Big) h_3(z_3)  (E_c^* \otimes E^a \otimes E_a \otimes E^c) . \] 
Similarly, we may obtain
\[ \paired{\{1,3\}}(f_1 \otimes f_2 \otimes f_3)(z_2) =   \Big( \int_\indexset h_1(z_1) h_3(\bar{z}_1) d\lebI(z_1) \Big)  h_2(z_2) (E_b^* \otimes E^a \otimes E^b \otimes E_a) \]
\[ \paired{\{2,3\}}(f_1 \otimes f_2 \otimes f_3) (z_1) = \Big(\int_\indexset h_2(z_2) h_3(\bar{z}_2) d\lebI(z_2) \Big) h_1(z_1) (E_a^* \otimes E^a \otimes E^b \otimes E_b). \]
Let $M_{\{1, 2\}}, M_{\{1, 3\}}, M_{\{2, 3\}}$ be the elements of $L(\cfrkg, \cfrkg^{\otimes 3})$ which are canonically identified with $E_c^* \otimes E^a \otimes E_a \otimes E^c$, $E_b^* \otimes E^a \otimes E^b \otimes E_a$, and $E_a^* \otimes E^a \otimes E^b \otimes E_b$, respectively. Note that for $E \in \cfrkg$, we have that 
\begin{equation*}
M_{\{1, 2\}}(E) = E^a \otimes E_a \otimes E, \qquad M_{\{1, 3\}}(E) = E^a \otimes E \otimes E_a, \qquad \text{and} \qquad M_{\{2, 3\}}(E) = E \otimes E^b \otimes E_b.  
\end{equation*}
Next, let $R : \cfrkg^{\otimes 3} \ra \cfrkg$ be the linear map given by $E \otimes F \otimes G \mapsto [[E, F], G]$. We then have that the compositions satisfy
\begin{equation}\label{objects:eq-cubic-no-derivative-compositions}
R (M_{\{1, 2\}}(E)) = 0, ~~ R (M_{\{1, 3\}}(E)) = -\Kil(E), ~~ \text{and} ~~ R (M_{\{2, 3\}}(E)) = \Kil(E). 
\end{equation}
Indeed, the first identity in \eqref{objects:eq-cubic-no-derivative-compositions} follows directly from the skew-symmetry of the Lie bracket and the second and third identity in \eqref{objects:eq-cubic-no-derivative-compositions} follow from the trace identity \eqref{objects:eq-trace-e2} (and the skew-symmetry of the Lie bracket). 
In summary, by applying the map $R$ to both sides of \eqref{eq:cubic-object-no-derivative-representation-intermediate} and then applying the previous few observations, we have that
\[\begin{split}
\Big[\Big[\linear[N_1][r][i_1], \linear[N_2][r][i_2]\Big], \linear[N_3][r][i_3]\Big](t, x) = \int_{\indexset^3} (R \circ g) dW^{\otimes 3} - ~& \int_\indexset h_1(z_1) h_3(\bar{z}_1) d\lebI(z_1) \int_{\indexset} h_2 \Kil dW ~ \\
+~& \int_\indexset h_2(z_2) h_3(\bar{z}_2) d\lebI(z_2) \int_{\indexset} h_1 \Kil dW .
\end{split}\]
Recalling that $f_2 = h_2 I_{\cfrkg}$, we have that $h_2 \Kil = \Kil \circ f_2$, and thus 
\[ \int_{\indexset} h_2 \Kil dW = \Kil \int_{\indexset} f_2 dW = \Kil\big(\linear[N_2][r][i_2](t, x)\big), \text{ and similarly } \int_{\indexset} h_1 \Kil dW = \Kil\big(\linear[N_1][r][i_1](t, x)\big).\]
To finish, note that
\[ \int_{\indexset} h_1(z_1) h_3(\bar{z}_1) d\lebI(z_1) = \delta^{\ell_1 \ell_3}  \delta^{i_1}_{\ell_1} \delta^{i_3}_{\ell_3} \sum_{n_1 \in \Z^2} \rho_{N_1}(n_1) \rho_{N_3}(n_1) \int_{-\infty}^t ds_1 e^{-2(t-s_1) \fnorm{n_1}^2} = \delta^{i_1 i_3} \sigma^2_{N_1 N_3}, \]
and similarly
\[\int_{\indexset} h_2(z_2) h_3(\bar{z}_2) d\lebI(z_2) = \delta^{i_2 i_3} \sigma^2_{N_2 N_3}.\]
The desired result now follows upon combining the previous few displays.
\end{proof}

Equipped with Lemma \ref{lemma:cubic-product-integral}, we can now estimate the cubic power-type nonlinearity. 

\begin{proof}[Proof of Lemma \ref{lemma:cubic-stochastic-object}]
Due to Lemma \ref{lemma:cubic-product-integral}, it holds that \begin{equation}
\begin{aligned}
&\Big[\Big[\linear[N_1][r][i_1], \linear[N_2][r][i_2]\Big], \linear[N_3][r][i_3]\Big] + \delta^{i_1 i_3} \sigma^2_{N_1 N_3}\Kil\big(\linear[\leqN][r][i_2]\big) - \delta^{i_2 i_3} \sigma^2_{N_2 N_3} \Kil\big(\linear[\leqN][r][i_1]\big) \\
=& \, \int_{\indexset^3} \Big[\big[f^{i_1}_{t, x, N_1} \otimes f^{i_2}_{t, x, N_2}\big]_{\cfrkg} \otimes f^{i_3}_{t, x, N_3}\Big]_{\cfrkg} dW^{\otimes 3}. 
\end{aligned}
\end{equation}
Using standard reductions (Lemma \ref{lemma:standard-reduction}), it then suffices to prove that for $N_0 \lesssim N_{\max}$,
\begin{equation}\label{objects:eq-cubic-power-p0}
\sup_{t\in [0,1]} \sup_{x\in \T^2} 
\E \bigg[ \Big| P_{N_0} 
 \int_{\indexset^3} \Big[\big[f^{i_1}_{t, x, N_1} \otimes f^{i_2}_{t, x, N_2}\big]_{\cfrkg} \otimes f^{i_3}_{t, x, N_3}\Big]_{\cfrkg} dW^{\otimes 3} \Big|_{\cfrkg}^2 
 \bigg]^{1/2}
 \lesssim N_0^{6\sreg} N_{\max}^{-6\sreg}. 
\end{equation}
Towards this end, note that by Lemma \ref{lemma:multiple-stochastic-integral-second-moment-bound-l2}, we have that
\begin{align}
&\E \bigg[ \Big| P_{N_0} 
 \int_{\indexset^3} \Big[\big[f^{i_1}_{t, x, N_1} \otimes f^{i_2}_{t, x, N_2}\big]_{\cfrkg} \otimes f^{i_3}_{t, x, N_3}\Big]_{\cfrkg} dW^{\otimes 3} \Big|_{\cfrkg}^2\bigg] \notag \\ 
 \lesssim\, &  \int_{\indexset^3} \Big|P_{N_0} \Big[\big[f^{i_1}_{t, x, N_1} \otimes f^{i_2}_{t, x, N_2}\big]_{\cfrkg} \otimes f^{i_3}_{t, x, N_3}\Big]_{\cfrkg} \Big|_{\cfrkg^{*\otimes 3} \otimes \cfrkg}^3 d\lebI^3.
 \label{objects:eq-cubic-power-p1}
 \end{align}
For any $z_1, z_2, z_3 \in \indexset$, we have that
\begin{equation*}
\begin{split}
\Big|P_{N_0} \Big[\big[f^{i_1}_{t, x, N_1}(z_1) \otimes f^{i_2}_{t, x, N_2}(z_2)\big]_{\cfrkg} \otimes f^{i_3}_{t, x, N_3}(z_3)\Big]&\Big|_{\cfrkg^{* \otimes 3} \otimes \cfrkg}^2 \lesssim  \ind(s_1, s_2, s_3 \leq t) \rho_{N_1}(n_1)^2 \rho_{N_2}(n_2)^2 \rho_{N_3}(n_3)^2 ~\times \\
&\rho_{N_0}(n_{123})^2 e^{-2(t-s_1)\fnorm{n_1}^2} e^{-2(t-s_2) \fnorm{n_2}^2} e^{-2(t-s_3) \fnorm{n_3}^2}.
\end{split}
\end{equation*}
By inserting this into \eqref{objects:eq-cubic-power-p1}, calculating the $s_1$, $s_2$, and $s_3$-integrals, and using Lemma \ref{prelim:lem-counting} (recall that $N_0 \lesssim N_{\max}$), it follows that
\begin{equation*}
   \eqref{objects:eq-cubic-power-p1} \lesssim \sum_{n_1, n_2, n_3\in \Z^2} \frac{\rho_{N_1}(n_1) \rho_{N_2}(n_2) \rho_{N_3}(n_3)}{\fnorm{n_1}^2 \fnorm{n_2}^2 \fnorm{n_3}^2} \rho_{N_0}(n_{123}) \lesssim N_0^{6\sreg} N_{\max}^{-6\sreg}. 
\end{equation*}
This completes the proof of \eqref{objects:eq-cubic-power-p0}, and hence yields the desired estimate. 
\end{proof}

\subsection{Quadratic stochastic object}
\label{section:objects-quadratic}

In this subsection we study the quadratic stochastic object $\quadratic[][r]$, which is part of our Ansatz for the solution of \eqref{ansatz:eq-SYM-leqN} and appears as part of the enhanced data set in \eqref{objects:eq-enhanced-4}. Our main estimate is recorded in the following lemma. 

\begin{lemma}[Quadratic stochastic object with derivative]\label{lemma:quadratic-stochastic-object-with-derivative} Let $0<\sdecay <\epsilon\ll 1$, 
let $j_1, j_2, k \in [2]$ and let $N_1, N_2\in \dyadic$. 
For all $p\geq 1$, we have that
\begin{equation}\label{objects:eq-quadratic-object-e1}
 \E \bigg[ \Big\| \quadratic[N_1, N_2][d][j_1][j_2][k]\Big\|_{C_t^0\Cs_x^{1-2\sreg}([0,1]\times \T^2)}^p\bigg]^{1/p} \lesssim p N_{\mrm{max}}^{-2\sdecay}.
\end{equation}
Furthermore, for all $N_0 \in \dyadic$ and $\delta>0$, it also holds that
\begin{equation}\label{objects:eq-quadratic-object-e2}
 \E \bigg[ \Big\| \ptl_t P_{N_0} \quadratic[N_1, N_2][d][j_1][j_2][k]\Big\|_{C_t^0 \Cs_x^{-1}([0,1]\times \T^2)}^p\bigg]^{1/p} \lesssim p  N_{\mrm{max}}^{2\delta}. 
\end{equation}
\end{lemma}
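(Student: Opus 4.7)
The plan is to express $\quadratic[N_1,N_2][d][j_1][j_2][k]$ as a pure second-order Wiener chaos, reduce the $\Cs_x^{1-2\sreg}$-bound to an $L^2(\indexset^2)$-bound on an explicit kernel via Lemma \ref{lemma:multiple-stochastic-integral-second-moment-bound-l2}, and then invoke the combinatorial estimate \eqref{eq:quadratic-stochastic-object-one-derivative-combintarial-estimate}. Concretely, writing $\linear[N_1][r][j_1] = \int_{\indexset} f^{j_1}_{t,x,N_1} dW$ and $\partial_k \linear[N_2][r][j_2] = \int_{\indexset} \partial_k f^{j_2}_{t,x,N_2} dW$ (Example \ref{prelim:example-linear}), Corollary \ref{cor:multiple-stochastic-integral-tensor-product} yields the decomposition
\[
\linear[N_1][r][j_1] \otimes \partial_k \linear[N_2][r][j_2] = \int_{\indexset^2} \big( f^{j_1}_{t,x,N_1} \otimes \partial_k f^{j_2}_{t,x,N_2} \big) dW^{\otimes 2} + \paired{\{1,2\}}\big( f^{j_1}_{t,x,N_1} \otimes \partial_k f^{j_2}_{t,x,N_2} \big).
\]
The paired term is proportional to $\tensorcon{\{1,2\}}(I_\cfrkg \otimes I_\cfrkg)$, and the trace identity \eqref{objects:eq-trace-e1} shows that it is annihilated by the Lie bracket $[\,\cdot\,]_\cfrkg$. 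Consequently $[\linear[N_1][r][j_1], \partial_k \linear[N_2][r][j_2]]$ is a pure second chaos, and since $\Duhinf$ commutes with the stochastic integral, so is $\quadratic[N_1,N_2][d][j_1][j_2][k]$, with explicit kernel obtained by applying $\int_{-\infty}^t e^{-(t-u)(1-\Delta)}(\cdot)\, du$ to the integrand.

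For \eqref{objects:eq-quadratic-object-e1}, by Lemma \ref{lemma:standard-reduction} and Gaussian hypercontractivity (for second-order chaos, the $L^p(\Omega)$ norm is controlled by $p$ times the $L^2(\Omega)$ norm) it suffices to bound $\sup_{t,x} \E[|P_{N_0} \quadratic[N_1,N_2][d][j_1][j_2][k](t,x)|_\cfrkg^2]$ for each $N_0 \lesssim N_{\max}$. Lemma \ref{lemma:multiple-stochastic-integral-second-moment-bound-l2} and an explicit Gaussian computation in $s_1, s_2$ (which produces the factor $(\fnorm{n_1}^2 + \fnorm{n_2}^2)^{-1}$ from integrating the product of exponentials) give
\[
\E\big[\big|P_{N_0} \quadratic[N_1,N_2][d][j_1][j_2][k](t,x)\big|_\cfrkg^2\big] \lesssim \sum_{n_1, n_2} \rho_{N_1}(n_1) \rho_{N_2}(n_2) \rho_{N_0}(n_{12}) \frac{|n_{2,k}|^2}{\fnorm{n_1}^2 \fnorm{n_2}^2 \fnorm{n_{12}}^2 (\fnorm{n_1}^2 + \fnorm{n_2}^2)}.
\]
Since $\fnorm{n_1}^2 + \fnorm{n_2}^2 \sim \max(\fnorm{n_1}, \fnorm{n_2}, \fnorm{n_{12}})^2$, the combinatorial bound \eqref{eq:quadratic-stochastic-object-one-derivative-combintarial-estimate} controls the right-hand side by $N_0^{-2 + \delta} N_{\max}^{-\delta}$ for any small $\delta > 0$. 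Multiplying by $N_0^{2(1-2\sreg)}$ and using $N_0 \lesssim N_{\max}$ (together with the choice $\delta = 4\sreg$), the desired $N_{\max}^{-2\sdecay}$-bound follows upon summing over $N_0$ and passing from second moments to $C_t^0 \Cs_x^{1-2\sreg}$ via the reductions of Lemma \ref{lemma:standard-reduction}.

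For the time derivative estimate \eqref{objects:eq-quadratic-object-e2}, I would use the identity
\[
\partial_t \quadratic[N_1,N_2][d][j_1][j_2][k] = \big[ \linear[N_1][r][j_1], \partial_k \linear[N_2][r][j_2] \big] + (\Delta - 1) \quadratic[N_1,N_2][d][j_1][j_2][k],
\]
which follows from $(\partial_t - \Delta + 1)\Duhinf = \mathrm{Id}$. The second term is estimated by applying \eqref{objects:eq-quadratic-object-e1} with a smaller choice of regularity parameter, losing two spatial derivatives. The first term is handled by the same chaos representation, where a direct application of Lemma \ref{lemma:multiple-stochastic-integral-second-moment-bound-l2} leads to the sum $\sum \rho_{N_1}(n_1) \rho_{N_2}(n_2) \rho_{N_0}(n_{12}) |n_{2,k}|^2 / (\fnorm{n_1}^2 \fnorm{n_2}^2)$, which \eqref{eq:quadratic-stochastic-object-no-derivative-time-derivative-estimate} bounds by $N_0^{2-\delta} N_{\max}^{\delta}$; multiplying by $N_0^{-2}$ gives an $L^\infty$-bound in $\Cs_x^{-1}$ of the prescribed form. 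The main (minor) obstacle is the Gaussian time-integration in the second moment computation of the first paragraph — one must combine the two exponentials in $s_1, s_2$ with the extra exponential from Duhamel in $u$ to obtain the crucial $(\fnorm{n_1}^2 + \fnorm{n_2}^2)^{-1}$ factor, since without it \eqref{eq:quadratic-stochastic-object-one-derivative-combintarial-estimate} would not suffice to reach regularity $1-2\sreg$.
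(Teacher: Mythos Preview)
Your proposal is correct and follows essentially the same route as the paper: represent the quadratic object as a pure second-order chaos (the paper packages this as Lemma~\ref{lemma:quadratic-one-derivative-multiple-stochastic-integral-representation}, killing the resonant term by parity rather than by your trace identity), reduce to an $L^2$-kernel bound via Lemma~\ref{lemma:multiple-stochastic-integral-second-moment-bound-l2}, and invoke the same combinatorial estimate~\eqref{eq:quadratic-stochastic-object-one-derivative-combintarial-estimate}; for the time derivative, your PDE-level splitting $\partial_t\quadratic = (\Delta-1)\quadratic + [\linear,\partial_k\linear]$ is exactly the paper's kernel-level decomposition $\partial_t q = -\langle n_{12}\rangle^2 q + h_2$. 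The only cosmetic difference is that the paper bounds $\int\alpha_t^2\,ds_1\,ds_2$ via the pointwise estimate $\alpha_t \le \max(\langle n_1\rangle,\langle n_2\rangle,\langle n_{12}\rangle)^{-2}$ combined with $\int\alpha_t = \langle n_1\rangle^{-2}\langle n_2\rangle^{-2}\langle n_{12}\rangle^{-2}$ (Lemma~\ref{objects:lem-quadratic-integral-estimate}) rather than your direct Gaussian integration, but the resulting factor is equivalent to your $(\langle n_1\rangle^2+\langle n_2\rangle^2)^{-1}$.
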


Before we dive into the proof of Lemma \ref{lemma:quadratic-stochastic-object-with-derivative}, we make several preparations. For notational convenience, we first make the following definition. 

\begin{definition}\label{def:quadratic-one-derivative-integrand}
For $s_1, s_2 \in \R$, $n_1, n_2 \in \Z^2$, we define 
\[ \alpha_t(s_1, s_2, n_1, n_2) := \int_{-\infty}^t \ind(s_1, s_2 \leq s) e^{-(t - s) \fnorm{n_{12}}^2} e^{-(s - s_1) \fnorm{n_1}^2} e^{-(s - s_2) \fnorm{n_2}^2} ds. \]
For $j_1, j_2, k \in [2]$, $t \in \R$, $x \in \T^2$, and $N_1, N_2 \in \dyadic$, we also define the function $q^{j_1 j_2 k}_{t, x, N_1, N_2} : \indexset^2 \ra L(\cfrkg^{\otimes 2}, \cfrkg)$ by
\[ q^{j_1 j_2 k}_{t, x, N_1, N_2}(z_1, z_2) := \icomplex 
\delta^{j_1}_{\dimind_1} 
\delta^{j_2}_{\dimind_2} 
n_{2}^{k}
\rho_{N_1}(n_1) \rho_{N_2}(n_2) e_{n_{12}}(x) \alpha_t(s_1, s_1, n_1, n_2) [I_{\cfrkg} \otimes I_{\cfrkg}]_{\cfrkg}. \] 
\end{definition}

Equipped with Definition \ref{def:quadratic-one-derivative-integrand}, we now obtain a representation of the quadratic object. 

\begin{lemma}\label{lemma:quadratic-one-derivative-multiple-stochastic-integral-representation}
Let $j_1, j_2, k \in [2]$, $t \in \R$, $x \in \T^2$, and $N_1, N_2\in \dyadic$. Then, we have that
\[ \quadratic[N_1, N_2][d][j_1][j_2][k](t, x) = \int_{\indexset^2} q^{j_1 j_2 k}_{t, x, N_1, N_2} dW^{\otimes 2} .\]
\end{lemma}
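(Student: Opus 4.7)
The plan is to reduce the statement to an application of the product formula for vector-valued stochastic integrals (Corollary \ref{cor:multiple-stochastic-integral-tensor-product}) combined with stochastic Fubini. From Example \ref{prelim:example-linear}, we have the representations $\linear[N_1][r][j_1](s, y) = \int_{\indexset} f^{j_1}_{s, y, N_1}\, dW$ and $\ptl_k \linear[N_2][r][j_2](s, y) = \int_{\indexset} \ptl_k f^{j_2}_{s, y, N_2}\, dW$, with the kernels made explicit there. Applying the product formula to the tensor product gives
\[
\linear[N_1][r][j_1](s,y) \otimes \ptl_k \linear[N_2][r][j_2](s,y)
= \int_{\indexset^2} \bigl(f^{j_1}_{s, y, N_1} \otimes \ptl_k f^{j_2}_{s, y, N_2}\bigr)\, dW^{\otimes 2}
+ \paired{\{1,2\}}\bigl(f^{j_1}_{s, y, N_1} \otimes \ptl_k f^{j_2}_{s, y, N_2}\bigr).
\]

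The first key point is that the resonant term vanishes. Indeed, from Definition \ref{def:contracted-function} the pairing $z_1 = \bar{z}_2$ forces $\dimind_1 = \dimind_2$, $n_1 = -n_2$, and $s_1 = s_2$, so the $s_1$-integral evaluates to $(2\fnorm{n_1}^2)^{-1}$ and the scalar coefficient in front of $\tensorcon{\{1,2\}}(I_\cfrkg \otimes I_\cfrkg)$ reduces to
\[
\delta^{j_1 j_2} \sum_{n \in \Z^2} \frac{-\icomplex\, n^k\, \rho_{N_1}(n)\rho_{N_2}(n)}{2\fnorm{n}^2},
\]
which is zero by the odd symmetry $n \mapsto -n$ (note that $\rho_{N}$ is even). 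Applying the Lie bracket, which is a deterministic linear map and so commutes with the stochastic integral, therefore yields
\[
\bigl[\linear[N_1][r][j_1](s,y),\; \ptl_k \linear[N_2][r][j_2](s,y)\bigr]
= \int_{\indexset^2} \bigl[f^{j_1}_{s, y, N_1} \otimes \ptl_k f^{j_2}_{s, y, N_2}\bigr]_{\cfrkg}\, dW^{\otimes 2}.
\]

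To complete the proof, I apply $\Duhinf$ and use stochastic Fubini to move the deterministic $(s,y)$-integration inside the stochastic integral. Using the Fourier identity $\bigl(e^{(t-s)(-1+\Delta)}\e_{n_{12}}\bigr)(x) = e^{-(t-s)\fnorm{n_{12}}^2}\e_{n_{12}}(x)$ together with the explicit forms of $f^{j_1}_{s,\cdot,N_1}$ and $\ptl_k f^{j_2}_{s,\cdot,N_2}$, the inner integral becomes
\[
\int_{-\infty}^{t} \ind(s_1, s_2 \leq s)\, e^{-(t-s)\fnorm{n_{12}}^2}\, e^{-(s-s_1)\fnorm{n_1}^2}\, e^{-(s-s_2)\fnorm{n_2}^2}\, ds
= \alpha_t(s_1, s_2, n_1, n_2),
\]
by Definition \ref{def:quadratic-one-derivative-integrand}. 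Collecting the remaining Fourier and Kronecker factors shows that the resulting kernel equals $q^{j_1 j_2 k}_{t, x, N_1, N_2}(z_1, z_2)$, yielding the claimed representation.

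This proof is essentially bookkeeping, and no step presents a genuine obstacle. The only substantive observation is the symmetric cancellation of the resonant term, which is what singles out this particular derivative structure; in other words, the single derivative $\ptl_k$ acting on one of the two linear objects is precisely what kills the contraction, so that $\quadratic[N_1,N_2][d][j_1][j_2][k]$ admits a clean representation as a single element of the second Wiener chaos, with no constant correction required.
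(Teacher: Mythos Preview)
Your proof is correct and follows essentially the same approach as the paper: apply the product formula, observe that the resonant contraction vanishes by the odd symmetry $n \mapsto -n$ (the paper phrases this as ``parity''), then push $\Duhinf$ inside the stochastic integral via stochastic Fubini and identify the resulting kernel with $q^{j_1 j_2 k}_{t,x,N_1,N_2}$.
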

\begin{proof}[Proof of Lemma \ref{lemma:quadratic-one-derivative-multiple-stochastic-integral-representation}]
Recall that by definition,
\[ \quadratic[N_1, N_2][d][j_1][j_2][k](t, x) = \Duhinf\Big(\Big[\linear[N_1][r][j_1], \ptl^k \linear[N_2][r][j_2]\Big]\Big)(t, x) = \int_{-\infty}^t \Big(e^{(t-s)(-1 + \Delta)}\Big[\linear[N_1][r][j_1], \ptl^k \linear[N_2][r][j_2]\Big](s)\Big)(x) ds .\]
We first find the multiple stochastic integral representation for $\big[\, \linear[N_1][r][j_1], \ptl^k \linear[N_2][r][j_2] \big](s, x)$. By proceeding similarly as in the proof of Lemma \ref{objects:lem-without}, we may obtain
\begin{equation}\label{objects:eq-derivative-nonlinearity-representation}
\Big[\linear[N_1][r][j_1], \ptl^k \linear[N_2][r][j_2]\Big](s, x) = \int_{\indexset^2} \Big[ f^{j_1}_{s, x, N_1} \otimes \ptl^k f^{j_2}_{s, x, N_2}\Big]_{\cfrkg} dW^{\otimes 2} 
+ \Big[ \paired{\{1,2\}} \Big( f^{j_1}_{s, x, N_1} \otimes \ptl^k f^{j_2}_{s, x, N_2} \Big) \Big]_{\cfrkg}. 
\end{equation}
The resonant term \eqref{objects:eq-derivative-nonlinearity-representation} vanishes since, due to parity, 
\begin{equation*}
\sum_{\substack{n_1,n_2\in \Z^2 \colon \\ n_1 + n_2 =0}}
n_2^k \prod_{j=1}^2 \rho_{N_j}(n_j) e^{-(t-s)\langle n_j \rangle^2} =0. 
\end{equation*}
Note that
\[\begin{split}
\Big[ f^{j_1}_{s, x, N_1} \otimes \ptl^k f^{j_2}_{s, x, N_2}\Big]_{\cfrkg}(z_1, z_2) = \icomplex 
\delta^{j_1}_{\dimind_1} 
\delta^{j_2}_{\dimind_2} 
n_{2}^{k}
&\ind(s_1, s_2 \leq s) \rho_{N_1}(n_1) \rho_{N_2}(n_2) \e_{n_{12}}(x) ~\times \\
&e^{-(s-s_1) \fnorm{n_1}^2} e^{-(s-s_2)\fnorm{n_2}^2} [I_{\cfrkg} \otimes I_{\cfrkg}]_{\cfrkg}. 
\end{split}\]
From this, we have that
\[\begin{split}
e^{(t-s)(-1 + \Delta)} \Big[ f^{j_1}_{s, \cdot, N_1} \otimes \ptl^k f^{j_2}_{s, \cdot, N_2}\Big]_{\cfrkg}(z_1, z_2)  = \icomplex &\delta^{j_1}_{\dimind_1} 
\delta^{j_2}_{\dimind_2} 
n_{2}^{k} \ind(s_1, s_2 \leq s) \rho_{N_1}(n_1) \rho_{N_2}(n_2) \e_{n_{12}}(\cdot) ~\times \\
&e^{-(t-s)\fnorm{n_{12}}^2} e^{-(s-s_1) \fnorm{n_1}^2} e^{-(s-s_2)\fnorm{n_2}^2} [I_{\cfrkg} \otimes I_{\cfrkg}]_{\cfrkg}.
\end{split}\]
For brevity, define $h^{j_1 j_2 k}_{s,x, N_1, N_2}(z_1, z_2)$ by the right hand side above. We then obtain
\[\begin{split}
\int_{-\infty}^t\Big(e^{(t-s)(-1 + \Delta)}\Big[\linear[N_1][r][j_1], &\ptl^k \linear[N_2][r][j_2]\Big](s)\Big)(x) ds = \int_{-\infty}^t \int_{\indexset^2}  \Big(e^{(t-s)(-1 + \Delta)} \Big[ f^{j_1}_{s, \cdot, N_1} \otimes \ptl^k f^{j_2}_{s, \cdot, N_2}\Big]_{\cfrkg}\Big)(x) dW^{\otimes 2} ds \\
&= \int_{\indexset^2} \int_{-\infty}^t  \Big(e^{(t-s)(-1 + \Delta)} \Big[ f^{j_1}_{s, \cdot, N_1} \otimes \ptl^k f^{j_2}_{s, \cdot, N_2}\Big]_{\cfrkg}(z_1, z_2)\Big)(x) ds dW^{\otimes 2}(z_1, z_2) \\
&= \int_{\indexset^2} \int_{-\infty}^t h^{j_1 j_2 k}_{s, x, N_1, N_2}(z_1, z_2) ds dW^{\otimes 2}. 
\end{split} \] 
The desired result then follows upon noting that
\[ q^{j_1 j_2 k}_{t, x, N_1, N_2}(z_1, z_2) = \int_{-\infty}^t h^{j_1 j_2 k}_{s, x, N_1, N_2}(z_1, z_2) ds, \]
and combining the previous few observations. 
\end{proof}

In the next lemma, we state integral estimates which will be used to control the quadratic stochastic object. 

\begin{lemma}\label{objects:lem-quadratic-integral-estimate}
For all $t\in \R$ and $n_1,n_2\in \Z^2$, it holds that 
\beq\label{eq:alpha-t-squared-integral-bound} \int_{\R^2} \alpha_t(s_1, s_2, n_1, n_2)^2 ds_1 ds_2 \leq \frac{1}{\fnorm{n_1}^2 \fnorm{n_2}^2 \fnorm{n_{12}}^2} \max(\fnorm{n_1}, \fnorm{n_2}, \fnorm{n_{12}})^{-2}.  \eeq
\end{lemma}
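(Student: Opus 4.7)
The plan is to compute the integral exactly, which will actually yield a quantity comparable to the claimed upper bound. Set $a := \fnorm{n_1}^2$, $b := \fnorm{n_2}^2$, and $c := \fnorm{n_{12}}^2$. Recognizing $\alpha_t$ as a triple convolution of exponential kernels suggests squaring and exchanging the order of integration. Concretely, I would first write
\[ \alpha_t(s_1,s_2,n_1,n_2)^2 = \int_\R \int_\R \ind(s_1,s_2 \leq s, s') \, e^{-(t-s)c - (t-s')c} \, e^{-(s-s_1)a - (s'-s_1)a} \, e^{-(s-s_2)b - (s'-s_2)b} \, ds \, ds', \]
and then apply Fubini to push the $ds_1\,ds_2$ integration to the innermost level.

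The key one-dimensional computation, which does each of the $s_1$ and $s_2$ integrals, is
\[ \int_\R e^{-a(s + s' - 2s_1)} \ind\bigl(s_1 \leq \min(s,s')\bigr) \, ds_1 \;=\; \frac{1}{2a}\, e^{-a|s-s'|}, \]
and likewise with $a$ replaced by $b$. After plugging this in, the problem reduces to computing
\[ \int_{\R^2} \alpha_t^2 \, ds_1\, ds_2 \;=\; \frac{1}{4ab} \int_{\R^2} \ind(s,s' \leq t)\, e^{-c(t-s) - c(t-s')}\, e^{-(a+b)|s-s'|} \, ds\, ds'. \]

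Finally, exploiting the symmetry in $(s,s')$, I would restrict to $s \leq s' \leq t$, double, and change variables to $u := t - s' \geq 0$, $v := s' - s \geq 0$. The double integral then separates as $\int_0^\infty e^{-2cu}\,du \cdot \int_0^\infty e^{-(a+b+c)v}\,dv$, producing the clean identity
\[ \int_{\R^2} \alpha_t(s_1,s_2,n_1,n_2)^2\, ds_1\, ds_2 \;=\; \frac{1}{4\, abc\, (a+b+c)}. \]
The claimed bound now follows immediately from $a+b+c \geq \max(a,b,c) = \max(\fnorm{n_1},\fnorm{n_2},\fnorm{n_{12}})^2$.

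There is no real obstacle here: the only care required is in tracking the nested indicator functions $\ind(s_1,s_2 \leq s \leq t)$ (and their analogues after duplicating $s$) so that the resulting one-dimensional integrals have the correct endpoints. Everything else is a direct exponential calculation.
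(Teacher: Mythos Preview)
Your proof is correct and in fact yields the exact identity
\[
\int_{\R^2} \alpha_t(s_1,s_2,n_1,n_2)^2\,ds_1\,ds_2 \;=\; \frac{1}{4\,\fnorm{n_1}^2 \fnorm{n_2}^2 \fnorm{n_{12}}^2\,\bigl(\fnorm{n_1}^2+\fnorm{n_2}^2+\fnorm{n_{12}}^2\bigr)},
\]
from which the stated bound follows with room to spare. The paper takes a different and somewhat quicker route: it first observes the pointwise bound $\alpha_t \leq \max(\fnorm{n_1},\fnorm{n_2},\fnorm{n_{12}})^{-2}$ (by dropping two of the three exponential factors in the defining integral), then computes $\int_{\R^2} \alpha_t\,ds_1\,ds_2 = \fnorm{n_1}^{-2}\fnorm{n_2}^{-2}\fnorm{n_{12}}^{-2}$ by Fubini, and concludes via $\int \alpha_t^2 \leq \|\alpha_t\|_{L^\infty} \int \alpha_t$. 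Your approach trades this $L^\infty$--$L^1$ splitting for a direct Fubini computation of the squared integral; it is a little longer but has the advantage of producing the sharp constant, showing that the lemma's bound is tight up to a factor between $4$ and $12$.
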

\begin{proof}
By alternately bounding 
\[ e^{-(s-s_1)\fnorm{n_1}^2} e^{-(s-s_2)\fnorm{n_2}^2} \leq 1, ~~ e^{-(t-s)\fnorm{n_{12}}^2} e^{-(s-s_1)\fnorm{n_1}^2} \leq 1,~~ e^{-(t-s)\fnorm{n_{12}}^2} e^{-(s-s_2)\fnorm{n_2}^2}  \leq 1, \]
and then computing the $s$-integral in the definition of $\alpha_t$, it follows that 
\beq\label{eq:alpha-t-bound} \alpha_t(s_1, s_2, n_1, n_2) \leq \max(\fnorm{n_1}, \fnorm{n_2}, \fnorm{n_{12}})^{-2}. \eeq
Furthermore, it holds that
\begin{equation}\label{eq:alpha-t-double-integral}
\begin{aligned}
\int_{\R^2} \alpha_t(s_1, s_2, n_1, n_2) ds_1 ds_2 &= \int_{-\infty}^t ds e^{-(t-s) \fnorm{n_{12}}^2}  \int_{\R^2} ds_1 ds_2 \ind(s_1, s_2 \leq s) e^{-(s - s_1) \fnorm{n_1}^2} e^{-(s-s_2) \fnorm{n_2}^2} \\ 
&= \fnorm{n_1}^{-2} \fnorm{n_2}^{-2} \fnorm{n_{12}}^{-2}. 
\end{aligned}
\end{equation}
By combining \eqref{eq:alpha-t-bound} and \eqref{eq:alpha-t-double-integral}, it follows that 
\begin{align*}
 \int_{\R^2} \alpha_t(s_1, s_2, n_1, n_2)^2 ds_1 ds_2
 &\leq  \max(\fnorm{n_1}, \fnorm{n_2}, \fnorm{n_{12}})^{-2} \int_{\R^2} \alpha_t(s_1, s_2, n_1, n_2)^2 ds_1 ds_2 \\ 
 &\leq \max(\fnorm{n_1}, \fnorm{n_2}, \fnorm{n_{12}})^{-2}
 \fnorm{n_1}^{-2} \fnorm{n_2}^{-2} \fnorm{n_{12}}^{-2}, 
\end{align*}
which yields the desired bound \eqref{eq:alpha-t-squared-integral-bound}.
\end{proof}

Equipped with Lemma \ref{lemma:quadratic-one-derivative-multiple-stochastic-integral-representation} and Lemma \ref{objects:lem-quadratic-integral-estimate}, we can now estimate the quadratic stochastic object. 

\begin{proof}[Proof of Lemma \ref{lemma:quadratic-stochastic-object-with-derivative}]
By Lemma \ref{lemma:quadratic-one-derivative-multiple-stochastic-integral-representation}, we have that
\[\quadratic[N_1, N_2][d][j_1][j_2][k](t, x) = \int_{\indexset^2} q^{j_1 j_2 k}_{t, x, N_1, N_2} dW^{\otimes 2}, \]
where $q^{j_1 j_2 k}_{t, x, N_1, N_2}$ is as in Definition \ref{def:quadratic-one-derivative-integrand}. 
We first prove the $C_t^0$-estimate \eqref{objects:eq-quadratic-object-e1}. 
Due to standard reductions (Lemma \ref{lemma:standard-reduction}), it suffices to prove for all $N_0\in \dyadic$ that 
\begin{equation}\label{objects:eq-quadratic-reduced}
 \sup_{t\in [0,1]} \sup_{x\in \T^2} \E \bigg[ \bigg| \int_{\indexset^2} P_{N_0} q^{j_1 j_2 k}_{t, x, N_1, N_2} dW^{\otimes 2} \bigg|_{{\cfrkg}}^2 \bigg] \lesssim  N_{\max}^{-2}. 
\end{equation}
By Lemma \ref{lemma:multiple-stochastic-integral-second-moment-bound-l2}, we have that 
\begin{equation}\label{objects:eq-quadratic-q1}
\E \bigg[ \bigg| \int_{\indexset^2} P_{N_0} q^{j_1 j_2 k}_{t, x, N_1, N_2} dW^{\otimes 2} \bigg|_{{\cfrkg}}^2 \bigg] \lesssim \int_{\indexset^2} \big|P_{N_0} q^{j_1 j_2 k}_{t, x, N_1, N_2}\big|_{\cfrkg^{*\otimes 2} \otimes \cfrkg}^2 d\lebI^2. 
\end{equation} 
For $z_1, z_2 \in \indexset$, the integrand in \eqref{objects:eq-quadratic-q1} can be bounded by 
\begin{equation}\label{objects:eq-quadratic-q2}
    \big|P_{N_0} q^{j_1 j_2 k}_{t, x, N_1, N_2}\big|_{\cfrkg^{*\otimes 2} \otimes \cfrkg}^2 \lesssim |n_2|^2 \rho_{N_0}(n_{12}) \rho_{N_1}(n_1) \rho_{N_2}(n_2)  \alpha_t(s_1, s_2, n_1, n_2)^2. 
\end{equation}
By first using \eqref{objects:eq-quadratic-q2} and then Lemma \ref{objects:lem-quadratic-integral-estimate}, it follows that
\begin{align}
\int_{\indexset^2} \big|P_{N_0} q^{j_1 j_2 k}_{t, x, N_1, N_2}\big|_{\cfrkg^{*\otimes 2} \otimes \cfrkg}^2 d\lebI^2 
&\lesssim 
 \sum_{n_1, n_2\in \Z^2} |n_2
|^2 \rho_{N_0}(n_{12}) \rho_{N_1}(n_1) \rho_{N_2}(n_2)  \int_{\R^2}  \alpha_t(s_1, s_2, n_1, n_2)^2 ds_1 ds_2 \notag \\
&\lesssim N_{\textup{max}}^{-2}   N_0^{-2} N_1^{-2} 
\sum_{n_1,n_2\in \Z^2}\rho_{N_0}(n_{12}) \rho_{N_1}(n_1) \rho_{N_2}(n_2). \label{objects:eq-quadratic-q3} 
\end{align}
By changing variables to $(n_1,n_{12})$, it follows that
\begin{equation*}
 \eqref{objects:eq-quadratic-q3} \lesssim N_{\textup{max}}^{-2}    N_0^{-2} N_1^{-2} \times N_0^2 N_1^2 \lesssim N_{\textup{max}}^{-2}. 
\end{equation*}
This completes the proof of \eqref{objects:eq-quadratic-reduced} and hence the proof of the $C_t^0$-estimate \eqref{objects:eq-quadratic-object-e1}.
It now remains to prove the $C_t^1$-estimate \eqref{objects:eq-quadratic-object-e2}. 
Using our standard reduction (Lemma \ref{lemma:standard-reduction}), it suffices to show that 
\beq\label{eq:quadratic-one-derivative-time-derivative-intermediate} 
\sup_{t\in [0,1]} \sup_{x\in \T^2} \E\bigg[\bigg|\ptl_t P_{N_0} \quadratic[N_1, N_2][d][j_1][j_2][k](t, x)\bigg|_{\cfrkg}^2\bigg] \lesssim N_0^{2}. \eeq
Observe that
\[ \ptl_t P_{N_0}  \quadratic[N_1, N_2][d][j_1][j_2][k](t, x) = \int_{\indexset^2} \ptl_t P_{N_0} q^{j_1 j_2 k}_{t, x, N_1, N_2} dW^{\otimes 2}. \]
Noting that
\[ \ptl_t \alpha_t(s_1, s_2, n_1, n_2) = -\fnorm{n_{12}}^2 \alpha_t(s_1, s_2, n_1, n_2) + \ind(s_1, s_2 \leq t) e^{-(t - s_1) \fnorm{n_1}^2} e^{-(t-s_2)\fnorm{n_2}^2}, \]
we may obtain
\[\begin{split} 
\ptl_t P_{N_0}  &q^{j_1 j_2 k}_{t, x, N_1, N_2}(z_1, z_2) = -\fnorm{n_{12}}^2 q^{j_1 j_2 k}_{t, x, N_1, N_2}(z_1, z_2) ~+ \\
&\icomplex \ind(s_1, s_2 \leq t) \delta^{j_1}_{\dimind_1} \delta^{j_2}_{ \dimind_2} n_{2, k} \rho_{N_0}(n_{12}) \rho_{N_1}(n_1) \rho_{N_2}(n_2) e_{n_{12}}(x) e^{-(t - s_1) \fnorm{n_1}^2} e^{-(t - s_2) \fnorm{n_2}^2} [I_{\cfrkg} \otimes I_{\cfrkg}]_{\cfrkg}.
\end{split}\]
For brevity, denote the terms on the right hand side above by $h_1(z_1, z_2), h_2(z_1, z_2)$. By Lemma \ref{lemma:multiple-stochastic-integral-second-moment-bound-l2}, to show \eqref{eq:quadratic-one-derivative-time-derivative-intermediate}, it suffices to show
\[ \|h_1\|^2_{L^2}, \|h_2\|^2_{L^2} \lesssim N_0^{2}.\]
By arguing as in the proof of the $C_t^0$-estimate \eqref{objects:eq-quadratic-object-e1}, we may obtain
\[\begin{split}
\|h_1\|^2_{L^2} &\lesssim \sum_{n_1, n_2 \in \Z^2} \fnorm{n_{12}}^4 |n_2|^2 \rho_{N_0}(n_{12}) \rho_{N_1}(n_1) \rho_{N_2}(n_2)  \max(\fnorm{n_1}, \fnorm{n_2}, \fnorm{n_{12}})^{-2} \fnorm{n_1}^{-2} \fnorm{n_2}^{-2} \fnorm{n_{12}}^{-2} \\
&\lesssim N_0^4 \sum_{n_1, n_2 \in \Z^2} |n_2|^2 \rho_{N_0}(n_{12}) \rho_{N_1}(n_1) \rho_{N_2}(n_2)  \max(\fnorm{n_1}, \fnorm{n_2}, \fnorm{n_{12}})^{-2} \fnorm{n_1}^{-2} \fnorm{n_2}^{-2} \fnorm{n_{12}}^{-2} .
\end{split}\]
Then by applying inequality \eqref{eq:quadratic-stochastic-object-one-derivative-combintarial-estimate}, we further obtain
\[ \|h_1\|_{L^2}^2 \lesssim N_0^4 N_0^{-2} \lesssim N_0^2. \]
Moving on, we may bound
\[ \|h_2\|^2_{L^2} \lesssim \sum_{n_1, n_2} |n_2|^2 \rho_{N_0}(n_{12}) \rho_{N_1}(n_1) \rho_{N_2}(n_2) \frac{1}{\fnorm{n_1}^2 \fnorm{n_2}^2} \leq \sum_{n_1, n_2} \rho_{N_0}(n_{12}) \rho_{N_1}(n_1) \rho_{N_2}(n_2)\frac{1}{\fnorm{n_1}^2}.  \]
By viewing $(n_1,n_{12})$ as the free variables, the right hand side above is bounded by $N_0^{2}$, as desired.
\end{proof}

By combining the $C_t^0$ and $C_t^1$-estimates from Lemma \ref{lemma:quadratic-stochastic-object-with-derivative}, we obtain the following corollary.

\begin{corollary}\label{objects:cor-quadratic-object-time} 
Let $j_1, j_2, k \in [2]$ and let $N_1, N_2 \in \dyadic$. Then, it holds for all $p\geq 1$ that 
\begin{equation}\label{objects:eq-quadratic-object-time} 
\E\bigg[ \sup_{\substack{s,t\in [0,1]\colon\\ s\neq t}}  \bigg( |t-s|^{-\frac{1-\kappa}{2}} \bigg\|\quadratic[N_1, N_2][d][j_1][j_2][k](t) - \quadratic[N_1, N_2][d][j_1][j_2][k](s)\bigg\|_{\Cs_x^{-\kappa}} \bigg)^p\bigg]^{1/p} \lesssim p N_{\mrm{max}}^{-\eta}. 
\end{equation}
\end{corollary}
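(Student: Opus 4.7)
Set $\mc{Q}_t := \quadratic[N_1,N_2][d][j_1][j_2][k](t)$. The approach is to interpolate between the bound from \eqref{objects:eq-quadratic-object-e1}, applied with $\sdecay=\eta$, and the time-derivative bound from \eqref{objects:eq-quadratic-object-e2}, applied with $\delta$ chosen sufficiently small (for concreteness, $\delta=\eta\kappa/8$), so as to obtain a pairwise H\"older-in-time estimate in $\Cs_x^{-\kappa}$, and then to upgrade this pairwise bound to a bound on the supremum inside the expectation via Kolmogorov's continuity theorem.

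At the level of Littlewood-Paley pieces, Bernstein's inequality combined with Lemma \ref{lemma:quadratic-stochastic-object-with-derivative} gives, for every dyadic $N_0\in\dyadic$,
\[ \E\bigl[\|P_{N_0}\mc{Q}\|_{C_t^0 L_x^\infty}^p\bigr]^{1/p}\lesssim p\,N_0^{-(1-2\kappa)}\,N_{\max}^{-2\eta},\qquad \E\bigl[\|P_{N_0}\partial_t\mc{Q}\|_{C_t^0 L_x^\infty}^p\bigr]^{1/p}\lesssim p\,N_0\,N_{\max}^{\eta\kappa/4}. \]
For $s\neq t$ in $[0,1]$ and $\theta\in(0,1)$, the fundamental theorem of calculus together with $\min(a,b)\le a^{1-\theta}b^\theta$ yields the deterministic inequality
\[ \|P_{N_0}(\mc{Q}_t-\mc{Q}_s)\|_{L_x^\infty}\le 2\,|t-s|^\theta\,\|P_{N_0}\mc{Q}\|_{C_t^0 L_x^\infty}^{1-\theta}\,\|P_{N_0}\partial_t\mc{Q}\|_{C_t^0 L_x^\infty}^{\theta}. \]
Setting $\theta=(1-\kappa)/2+\kappa/10$ and applying H\"older's inequality in probability with conjugate exponents $1/(1-\theta)$ and $1/\theta$, followed by the two moment bounds above, a direct exponent computation produces
\[ \E\bigl[\bigl(N_0^{-\kappa}\,\|P_{N_0}(\mc{Q}_t-\mc{Q}_s)\|_{L_x^\infty}\bigr)^p\bigr]^{1/p}\lesssim p\,|t-s|^\theta\,N_0^{-4\kappa(1-\kappa)/5}\,N_{\max}^{-\eta}. \]
Since $\mc{Q}$ has spatial Fourier support in $\{|n|\lesssim N_{\max}\}$, only $O(\log N_{\max})$ dyadic scales $N_0$ contribute to the Littlewood-Paley characterization of $\Cs_x^{-\kappa}$, so taking an $\ell^p$-sum in $N_0$ (and absorbing the logarithm into a negligible loss on the $N_{\max}$-exponent) yields the single-pair bound $\E[\|\mc{Q}_t-\mc{Q}_s\|_{\Cs_x^{-\kappa}}^p]^{1/p}\lesssim p\,|t-s|^\theta\,N_{\max}^{-\eta}$.

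Finally, Kolmogorov's continuity theorem applied to the $\Cs_x^{-\kappa}$-valued process $t\mapsto\mc{Q}_t$ converts this to the desired sup-inside-expectation bound with H\"older exponent $(1-\kappa)/2$, provided $p>10/\kappa$ (so that the Kolmogorov loss of $1/p$ does not exceed the margin $\theta-(1-\kappa)/2=\kappa/10$); the remaining range $1\le p\le 10/\kappa$ then follows from the monotonicity $\|\cdot\|_{L^p(\mathbb{P})}\le\|\cdot\|_{L^q(\mathbb{P})}$ for $q\ge p$. The main obstacle is the bookkeeping of exponents in the interpolation step: choosing $\theta$ strictly above $(1-\kappa)/2$ is what leaves a margin for the Kolmogorov loss, and choosing $\delta$ sufficiently small in \eqref{objects:eq-quadratic-object-e2} is what preserves a decisively negative $N_{\max}$-exponent after interpolation.
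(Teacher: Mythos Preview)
Your proof is correct and follows essentially the same strategy as the paper: interpolate between the $C_t^0$ bound \eqref{objects:eq-quadratic-object-e1} and the $C_t^1$ bound \eqref{objects:eq-quadratic-object-e2} at each spatial frequency scale $N_0$, then sum. The only cosmetic difference is that the paper handles the supremum in time via a dyadic decomposition of $|t-s|$ (summing over scales $L$ with $|t-s|\sim L^{-1}$ and taking the geometric mean of the two bounds), whereas you first establish a two-point moment estimate and then invoke Kolmogorov's continuity theorem; both routes are standard and yield the same result.
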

\begin{proof}
In order to prove \eqref{objects:eq-quadratic-object-time}, we use a dyadic decomposition in the spatial frequency scale and in the time difference, i.e., we estimate 
\begin{align}
&\E\bigg[ \sup_{\substack{s,t\in [0,1]\colon\\ s\neq t}}  \bigg( |t-s|^{-\frac{1-\kappa}{2}} \bigg\|\quadratic[N_1, N_2][d][j_1][j_2][k](t) - \quadratic[N_1, N_2][d][j_1][j_2][k](s)\bigg\|_{\Cs_x^{-\kappa}} \bigg)^p\bigg]^{1/p} \notag \\
\lesssim&\, \sum_{L\in \dyadic} \sum_{\substack{N_0 \in \dyadic\colon \\ N_0 \lesssim N_{\mrm{max}}}} \E\bigg[
\sup_{\substack{s,t\in [0,1]\colon\\ |s- t| \sim L^{-1}}}  \bigg( |t-s|^{-\frac{1-\kappa}{2}} \bigg\| P_{N_0} \bigg( \quadratic[N_1, N_2][d][j_1][j_2][k](t) - \quadratic[N_1, N_2][d][j_1][j_2][k](s) \bigg)\bigg\|_{\Cs_x^{-\kappa}} \bigg)^p\bigg]^{1/p}. \label{objects:eq-quadratic-object-time-p1} 
\end{align}
By using \eqref{objects:eq-quadratic-object-e1} and \eqref{objects:eq-quadratic-object-e2} from Lemma \ref{lemma:quadratic-stochastic-object-with-derivative} with $(\epsilon,\nu,\delta)=(\kappa,\kappa-\eta,\eta)$, it follows that 
\begin{align*}
&\E\bigg[
\sup_{\substack{s,t\in [0,1]\colon\\ |s- t| \sim L^{-1}}}  \bigg( |t-s|^{-\frac{1-\kappa}{2}} \bigg\| P_{N_0} \bigg( \quadratic[N_1, N_2][d][j_1][j_2][k](t) - \quadratic[N_1, N_2][d][j_1][j_2][k](s) \bigg)\bigg\|_{\Cs_x^{-\kappa}} \bigg)^p\bigg]^{1/p} \\
\lesssim&\, p \min\Big( L^{\frac{1-\kappa}{2}} N_0^{-1+\kappa} N_{\mrm{max}}^{-2(\kappa-\eta)}, L^{-\frac{1+\kappa}{2}} N_0^{1-\kappa} N_{\mrm{max}}^{2\eta} \Big) \\ 
\lesssim&\, p \Big( L^{\frac{1-\kappa}{2}} N_0^{-1+\kappa} N_{\mrm{max}}^{-2(\kappa-\eta)} \Big)^{\frac{1}{2}}
\Big(  L^{-\frac{1+\kappa}{2}} N_0^{1-\kappa} N_{\mrm{max}}^{2\eta} \Big)^{\frac{1}{2}} 
= p L^{-\frac{\kappa}{2}}  N_{\mrm{max}}^{-\kappa+2\eta}. 
\end{align*} 
As a result, it follows that
\begin{equation*}
\eqref{objects:eq-quadratic-object-time-p1}  \lesssim  p 
\sum_{L\in \dyadic} \sum_{\substack{N_0 \in \dyadic\colon \\ N_0 \lesssim N_{\mrm{max}}}} L^{-\frac{\kappa}{2}}  N_{\mrm{max}}^{-\kappa+2\eta}
\lesssim p  N_{\mrm{max}}^{-\kappa+3\eta}. 
\end{equation*}
Since $\kappa\gg \eta$, this yields an acceptable contribution. 
\end{proof}

\subsection{Derivative nonlinearities}
\label{section:objects-derivative}

In this subsection, we control the derivative nonlinearities in the enhanced data set, i.e., the terms in \eqref{objects:eq-enhanced-5} and \eqref{objects:eq-enhanced-6}. We start with the quadratic terms from \eqref{objects:eq-enhanced-5}.

\begin{lemma}[Quadratic nonlinearity with derivatives]\label{objects:lem-with-two} Let $0<\sdecay <\sreg,\delta\ll 1$, 
let $N_1,N_2 \in \dyadic$ satisfy $N_1 \sim N_2$, let 
 $j_1,j_2,k_1,k_2 \in [2]$, and let $E\in \frkg$ satisfy $\| E \|_{\frkg}\leq 1$. For all $p\geq 1$, it then holds that 
\begin{align}
\E \bigg[ \Big\| \Big[  \Big[ E, \Duh \Big( \partial_{k_1} \linear[N_1][r][j_1] \Big) \Big] , \partial_{k_2} \linear[N_2][r][j_2] \Big] 
- \frac{1}{4} \delta^{j_1 j_2} \delta_{k_1 k_2}  \sigma_{N_1,N_2}^2 \Kil\big(E \big) \Big\|_{\Wc^{-2\sreg, \delta}([0,1]\times \T^2)}^p \bigg]^{1/p}
&\lesssim p N_{\textup{max}}^{-2\sdecay},\label{objects:eq-with-two}
\end{align}
\end{lemma}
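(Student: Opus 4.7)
The plan is to follow the structure of the proof of Lemma \ref{objects:lem-without}: represent both factors of the Lie bracket as single stochastic integrals, apply the product formula (Corollary \ref{cor:multiple-stochastic-integral-tensor-product}) to decompose into a non-resonant double stochastic integral and a resonant contraction, then treat the two pieces separately. Concretely, we may write
\begin{equation*}
\Duh\Big(\partial_{k_1}\linear[N_1][r][j_1]\Big)(t,x) = \int_{\indexset} G^{j_1,k_1}_{t,x,N_1}\, dW, \qquad \partial_{k_2}\linear[N_2][r][j_2](t,x) = \int_{\indexset} \partial_{k_2}f^{j_2}_{t,x,N_2}\, dW,
\end{equation*}
where $G^{j_1,k_1}_{t,x,N_1}(\dimind,n,u) := \icomplex\delta^{j_1}_{\dimind}\rho_{N_1}(n) n_{k_1}\e_n(x)\beta_t(u,n) I_{\cfrkg}$ with $\beta_t(u,n) := \ind(u\le t)\min(t,t-u) e^{-(t-u)\fnorm{n}^2}$. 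The appearance of $\min(t, t-u)$ rather than $(t-u)$ reflects that $\Duh$ integrates from $0$ rather than from $-\infty$. Applying the product formula to $\ad(E)G^{j_1,k_1}_{t,x,N_1} \otimes \partial_{k_2}f^{j_2}_{t,x,N_2}$ and then taking $[\cdot]_{\cfrkg}$ decomposes the bracket into a double stochastic integral plus an explicit deterministic resonance.

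For the non-resonant piece, the plan is to reduce to second-moment bounds at fixed $(t,x)$ via the standard reduction (Lemma \ref{lemma:standard-reduction}), invoke Lemma \ref{lemma:multiple-stochastic-integral-second-moment-bound-l2}, and estimate the resulting $L^2$ integral. The key observation is that $\int_{-\infty}^t \beta_t(u,n)^2\, du \lesssim \fnorm{n}^{-6}$ uniformly in $t \in [0,1]$ (this may be seen by splitting at $v = t$ in the change of variables $v = t-u$); combined with $\int_{-\infty}^t e^{-2(t-u)\fnorm{n_2}^2}\, du \lesssim \fnorm{n_2}^{-2}$ and the two derivative factors $|n_{1,k_1}|^2 |n_{2,k_2}|^2 \leq \fnorm{n_1}^2 \fnorm{n_2}^2$, this yields an integrand squared of size $\lesssim \fnorm{n_1}^{-4}$. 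Together with the counting estimate \eqref{prelim:eq-counting-e1} and the assumption $N_1 \sim N_2$, this produces $\E|P_{N_0}(\text{non-resonant})|^2_{\cfrkg} \lesssim N_0^{\delta'} N_{\max}^{-\delta'}$ for any small $\delta' > 0$, whence a $C_t^0 \Cs_x^{-2\sreg}$-bound of order $N_{\max}^{-2\sdecay}$ (for $\sdecay \ll \sreg$), which is more than enough for the $\Wc^{-2\sreg,\delta}$-norm since $t^{\delta} \leq 1$ on $[0,1]$.

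For the resonant piece, the trace identity \eqref{objects:eq-trace-e2} gives $[\tensorcon{\{1,2\}}(\ad(E) \otimes I_{\cfrkg})]_{\cfrkg} = \Kil(E)$, and a direct computation of the paired integral shows that the resonant piece equals $\delta^{j_1 j_2}\Kil(E) \sum_{n \in \Z^2} \rho_{N_1}(n)\rho_{N_2}(n)\, n_{k_1} n_{k_2}(1 - e^{-2t\fnorm{n}^2})/(4\fnorm{n}^4)$. Parity in $n$ annihilates the $k_1 \neq k_2$ case, while rotational symmetry reduces the $k_1 = k_2$ sum to $\tfrac{1}{8}\delta^{j_1 j_2}\delta_{k_1 k_2}\Kil(E) \sum_n \rho_{N_1}\rho_{N_2}|n|^2(1 - e^{-2t\fnorm{n}^2})/\fnorm{n}^4$; subtracting the counterterm $\tfrac{1}{4}\sigma_{N_1,N_2}^2\delta^{j_1 j_2}\delta_{k_1 k_2}\Kil(E) = \tfrac{1}{8}\delta^{j_1 j_2}\delta_{k_1 k_2}\Kil(E)\sum_n \rho_{N_1}\rho_{N_2}/\fnorm{n}^2$ then leaves the $x$-independent residual
\begin{equation*}
-\frac{\delta^{j_1 j_2}\delta_{k_1 k_2}\Kil(E)}{8} \sum_{n \in \Z^2} \rho_{N_1}(n)\rho_{N_2}(n)\bigg(\frac{1}{\fnorm{n}^4} + \frac{|n|^2 e^{-2t\fnorm{n}^2}}{\fnorm{n}^4}\bigg).
\end{equation*}
The main obstacle is to control this residual in the $\Wc^{-2\sreg,\delta}$-norm, and this is precisely where the assumption $N_1 \sim N_2$ becomes essential: it confines the $n$-sum to $|n| \sim N_{\max}$, so that the first summand is $\lesssim N_{\max}^{-2}$ uniformly in $t$ while the second enjoys exponential decay $\lesssim e^{-c t N_{\max}^2}$; the $t^{\delta}$-weight then yields $\sup_{t \in (0,1]} t^{\delta} e^{-c t N_{\max}^2} \lesssim N_{\max}^{-2\delta}$, which dominates $N_{\max}^{-2\sdecay}$ in the intended application where $(\sreg,\delta,\sdecay) = (\kappa,\kappa,\eta)$ and $\eta \ll \kappa$. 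Without the scale-locality $N_1 \sim N_2$, the mismatch between the natural resonance factor $|n|^2/\fnorm{n}^4$ and the counterterm factor $1/\fnorm{n}^2$ caused by the massive propagator $1-\Delta$ would produce a nonvanishing $O(1)$ constant in the limit, so this spectral localization is not a mere technical convenience but genuinely underpins the matching with $\sigma_{N_1,N_2}^2$.
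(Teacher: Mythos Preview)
Your proof is correct and follows essentially the same approach as the paper: apply the product formula to split into a non-resonant double integral (bounded via $L^2$ moments and the counting estimate \eqref{prelim:eq-counting-e1}, using $N_1 \sim N_2$) and a resonant contraction computed explicitly via parity and coordinate symmetry to match the counterterm $\tfrac14\delta^{j_1j_2}\delta_{k_1k_2}\sigma_{N_1,N_2}^2\Kil(E)$ up to a residual controlled by the $t^\delta$ weight. One minor point on your closing commentary: the hypothesis $N_1\sim N_2$ is actually needed for the non-resonant estimate (to absorb $N_1^{-4}N_1^2N_2^2$) rather than for the resonant one, since the product $\rho_{N_1}(n)\rho_{N_2}(n)$ already vanishes unless $|n|\sim N_1\sim N_2$.
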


\begin{remark} 
The reason for using the weighted space $\Wc^{-2\sreg,\sdecay}$ in Lemma \ref{objects:lem-with-two}, rather than our usual $C_t^0 \Cs_x^{-2\epsilon}$-space, lies in the non-stationary of $\Duh \big( \partial_{k_1} \linear[N_1][r][j_1] \big)$, which produces a time-dependent resonant term in  \eqref{objects:eq-with-two-p11} below.
\end{remark}

\begin{remark}\label{objects:rem-with-two}
In $d=2$, Wick-ordering yields the counterterm $\frac{1}{4}\sigma_{N_1,N_2}^2 \Kil\big(E \big)$ in \eqref{objects:eq-with-two}. In a general dimension $d\geq 2$, we expect that Wick-ordering yields the counterterm  $\frac{1}{2d} \sigma_{N_1,N_2}^2  \Kil\big(E \big)$.  
\end{remark}

The following argument is similar to the proof of Lemma \ref{objects:lem-without}. 
\begin{proof}[Proof of Lemma \ref{objects:lem-with-two}:] 
We first examine the tensor product of $\Duh \big( \partial_{k_1} \linear[N_1][r][j_1] \big) $ and $\partial_{k_2} \linear[N_2][r][j_2]$.
To this end, we recall from Example \ref{prelim:example-linear} that
\begin{align}
 \partial_{k} \linear[M][r][j]
= \int_{\indexset} f^{j}_{t,x,M,k}  \, dW, \label{objects:eq-with-two-p1}
\end{align}
where
\begin{equation}
f^{j}_{t,x,M,k}(\dimind,m,s) = \delta^j_\dimind \ind_{(-\infty,t]}(s) \rho_M(m)  \e_{m}(x)  e^{-(t-s) \langle m \rangle^2} I_{\cfrkg}. \label{objects:eq-with-two-p3} 
\end{equation}
Arguing similarly as in Example \ref{prelim:example-integrated-linear}, we also obtain that
\begin{equation}
\Duh \Big( \partial_{k} \linear[M][r][j] \Big) = \int_0^t e^{-(t-t^\prime)(1-\Delta)} \partial_k \linear[M][r][j](t^\prime) \mathrm{d}t^\prime 
= \int_{\indexset} F^{j}_{t,x,M,k}  \, dW
\end{equation}
where
\begin{equation}
F^{j}_{t,x,M,k}(\dimind,m,s) = \delta^j_\dimind \ind_{(-\infty,t]}(s) \rho_M(m) \big( \icomplex m_k \big) \e_{m}(x) \big(t-\max(s,0)\big) e^{-(t-s) \langle m \rangle^2} I_{\cfrkg} \label{objects:eq-with-two-p2}. 
\end{equation}
Using \eqref{objects:eq-with-two-p1}-\eqref{objects:eq-with-two-p2} and the product formula (Corollary \ref{cor:multiple-stochastic-integral-tensor-product}), it follows that 
\begin{align}
\Duh \Big( \partial_{k_1} \linear[N_1][r][j_1] \Big) 
\otimes \partial_{k_2} \linear[N_2][r][j_2] 
&= \int_{\indexset^2} \big( F^{j_1}_{t,x,N_1,k_1}\otimes f^{j_2}_{t,x,N_2,k_2} \big) \, d(W\otimes W) \label{objects:eq-with-two-p4}\\
&+ \paired{\{1,2\}} \big( F^{j_1}_{t,x,N_1,k_1}\otimes f^{j_2}_{t,x,N_2,k_2} \big). \label{objects:eq-with-two-p5}
\end{align}
We now split the remainder of the proof into three steps.\\

\emph{Step 1: Estimate of the non-resonant part \eqref{objects:eq-with-two-p4}.}
For the non-resonant part \eqref{objects:eq-with-two-p4}, we prove the estimate 
\begin{equation}\label{objects:eq-with-two-p6}
\E \bigg[ \Big\| \int_{\indexset^2} \big(  F^{j_1}_{t,x,N_1,k_1} \otimes f^{j_2}_{t,x,N_2,k_2} \big) d(W\otimes W) \Big\|_{C_t^0 \Cs_x^{-2\sreg}([0,1]\times \T^2)}^p \bigg]^{1/p} \lesssim p \max(N_1,N_2)^{-2\sdecay}. 
\end{equation}
Using standard reductions (Lemma \ref{lemma:standard-reduction}), it suffices to prove that 
\begin{equation}\label{objects:eq-with-two-p7}
\sup_{t\in [0,1]} \sup_{x\in \T^2} \E \bigg[ \Big| P_{N_0} \int_{\indexset^2} \big(  F^{j_1}_{t,x,N_1,k_1} \otimes f^{j_2}_{t,x,N_2,k_2} \big) d(W\otimes W) \Big|_{\cfrkg}^2 \bigg]^{1/2} \lesssim  N_0^{4\sreg} N_{\max}^{-4\sreg}. 
\end{equation}
Using Lemma \ref{lemma:multiple-stochastic-integral-second-moment-bound-l2}, \eqref{objects:eq-with-two-p2}, and \eqref{objects:eq-with-two-p3}, the left-hand side of \eqref{objects:eq-with-two-p7} can be estimated using 
\begin{equation}\label{objects:eq-with-two-p9}
\begin{aligned}
&\E \bigg[ \Big| P_{N_0} \int_{\indexset^2} \big(  F^{j_1}_{t,x,N_1,k_1} \otimes f^{j_2}_{t,x,N_2,k_2} \big) d(W\otimes W) \Big|_{\cfrkg}^2 \bigg] \bigg]\\
\lesssim& \, \sum_{\substack{n_0,n_1,n_2 \in \Z^2 \colon \\ n_0 = n_1 + n_2}} 
\rho_{N_0}(n_0) \rho_{N_1}(n_1) \rho_{N_2}(n_2)  \langle n_1 \rangle^2 \langle n_2 \rangle^2 \int_{(-\infty,t]^2} (t-s_1)^2 e^{-2(t-s_1) \langle n_1 \rangle^2} e^{-2(t-s_2) \langle n_2 \rangle^2} \ds_1 \ds_2. 
\end{aligned}
\end{equation}
In \eqref{objects:eq-with-two-p9}, we also estimated the $(t-\max(s,0))$-factor in \eqref{objects:eq-with-two-p2} by $t-s$, which is possible since $t\geq 0$. 
By calculating the $s_1$ and $s_2$-integrals, then using Lemma \ref{prelim:lem-counting}, and finally using $N_1 \sim N_2$, it follows that 
\begin{equation}\label{objects:eq-with-two-p10}
\begin{aligned}
\eqref{objects:eq-with-two-p9} 
\lesssim&\, N_1^{-4} \sum_{\substack{n_0,n_1,n_2 \in \Z^2 \colon \\ n_0 = n_1 + n_2}} 
\rho_{N_0}(n_0) \rho_{N_1}(n_1) \rho_{N_2}(n_2)  
\lesssim  N_1^{-4} \times (N_0^{4\sreg} N_1^2 N_2^2 N_{\max}^{-4\sreg} )   \lesssim N_0^{4\sreg} N_{\textup{max}}^{-4\sreg}. 
\end{aligned}
\end{equation}
By combining \eqref{objects:eq-with-two-p9} and \eqref{objects:eq-with-two-p10}, we obtain the desired estimate \eqref{objects:eq-with-two-p7}. \\ 

\emph{Step 2: Analysis of the resonant part \eqref{objects:eq-with-two-p5}.} In this step, we prove that 
\begin{equation}\label{objects:eq-with-two-p11}
\paired{\{1,2\}} \big( F^{j_1}_{t,x,N_1,k_1}\otimes f^{j_2}_{t,x,N_2,k_2} \big) 
= \delta^{j_1 j_2} \delta_{k_1k_2} \Big( \frac{1}{4} \sigma_{N_1,N_2}^2 + c_{N_1,N_2}(t) \Big) \tensorcon{\{1,2\}} \big( I_{\cfrkg} \otimes I_{\cfrkg} \big),
\end{equation}
where $c_{N_1,N_2}\colon [0,1] \rightarrow \R$ satisfies the estimate
\begin{equation}\label{objects:eq-c-estimate} 
t^\delta \Big| c_{N_1,N_2}(t) \Big| \lesssim N_{\mrm{max}}^{-2\sdecay}. 
\end{equation} 
Indeed, \eqref{objects:eq-with-two-p11} can be obtained from the definition of $\paired{\{1,2\}}$ and a direct computation. To be more precise, it follows from the definition of $\paired{\{1,2\}}$ that 
\begin{equation}\label{objects:eq-with-two-p12}
\begin{aligned}
&\int_{\indexset} \tensorcon{\{1,2\}} \big( F^{j_1}_{t,x,N_1,k_1}\otimes f^{j_2}_{t,x,N_2,k_2} \big) \lebI \\
=& \sum_{n_1, n_2 \in \Z^2} \int_{\R^2} \bigg( 
\delta^{\dimind_1 \dimind_2} \delta_{n_1+n_2=0} \delta(s_1-s_2)   \delta^{j_1}_{\dimind_1} \delta^{j_2}_{\dimind_2} 
\ind_{(-\infty,t]}(s_1) \ind_{(-\infty,t]}(s_2) 
\rho_{N_1}(n_1) \rho_{N_2}(n_2)  \\
&\times \big( \icomplex (n_1)_{k_1} \big) \big( \icomplex (n_2)_{k_2} \big)
\big(t-\max(s_1,0)\big) e^{-(t-s_1) \langle n_1 \rangle^2} 
e^{-(t-s_2) \langle n_2 \rangle^2}
\tensorcon{ \{ 1 , 2 \}}\big( I_{\cfrkg} \otimes I_{\cfrkg} \big) \bigg) \ds_1 \ds_2. 
\end{aligned}
\end{equation}
In order to simplify the notation, we set $n:=n_1$ and $s:=s_1$. By summing over $i_1,i_2\in [2]$, summing over $n_2 \in \Z^2$, and integrating over $s_2\in \R$, it follows that 
\begin{equation}\label{objects:eq-with-two-p13}
\begin{aligned}
\eqref{objects:eq-with-two-p12}
= \delta^{j_1 j_2} 
\sum_{n\in \Z^2} \int_{\R} \ind_{(-\infty,t]}(s) \rho_{N_1}(n) \rho_{N_2}(n) n_{k_1} n_{k_2} \big( t-\max(s,0)\big) e^{-2(t-s) \langle n \rangle^2} \tensorcon{ \{ 1 , 2 \}}\big( I_{\cfrkg} \otimes I_{\cfrkg} \big) \ds.
\end{aligned}
\end{equation}
By integrating over $s\in \R$, it follows that
\begin{equation}\label{objects:eq-with-two-p14}
\eqref{objects:eq-with-two-p13} = 
\delta^{j_1 j_2} \bigg( \sum_{n\in \Z^2} \rho_{N_1}(n) \rho_{N_2}(n) \frac{n_{k_1} n_{k_2}}{4\langle n \rangle^4} \big(1-e^{-2t\langle n \rangle^2}\big)\bigg) \tensorcon{ \{ 1 , 2 \}}\big( I_{\cfrkg} \otimes I_{\cfrkg} \big).
\end{equation}
We now note that, due to parity, the sum in \eqref{objects:eq-with-two-p14} vanishes if $k_1 \neq k_2$. Using symmetry in the first and second component of $n\in \Z^2$, it follows that\footnote{In a general dimension $d\geq 2$, symmetrization yields the pre-factor $1/d$ instead of $1/2$.}
\begin{align}
 &\sum_{n\in \Z^2} \rho_{N_1}(n) \rho_{N_2}(n) \frac{n_{k_1} n_{k_2}}{4\langle n \rangle^4} \big(1-e^{-2t\langle n \rangle^2}\big) \notag \\ 
=&\,  \delta_{k_1 k_2} \frac{1}{2}  \sum_{n\in \Z^2} \rho_{N_1}(n) \rho_{N_2}(n) \frac{|n|^2}{4\langle n \rangle^4} \big(1-e^{-2t\langle n \rangle^2}\big) \notag \\
=&\,  \frac{1}{4}  \delta_{k_1 k_2} \sum_{n\in \Z^2} \rho_{N_1}(n) \rho_{N_2}(n) \frac{1}{2\langle n \rangle^2} + c_{N_1,N_2}(t) \label{objects:eq-with-two-p15}, 
\end{align}
where 
\begin{equation}\label{objects:eq-c}
c_{N_1,N_2}(t):=-\, \frac{1}{8} \delta_{k_1 k_2}  \sum_{n\in \Z^2} \rho_{N_1}(n) \rho_{N_2}(n) \frac{1}{\langle n \rangle^4} 
- \frac{1}{8}  \delta_{k_1 k_2} \sum_{n\in \Z^2} \rho_{N_1}(n) \rho_{N_2}(n) \frac{1}{\langle n \rangle^2} e^{-2t\langle n \rangle^2}. 
\end{equation}
Since the first summand in \eqref{objects:eq-with-two-p15} coincides with $\frac{1}{4} \delta_{k_1 k_2} \sigma_{N_1,N_2}^2$, the algebraic identity \eqref{objects:eq-with-two-p11} is satisfied, and it remains to prove that $c_{N_1,N_2}(t)$ satisfies the desired estimate \eqref{objects:eq-c-estimate}. The first summand in \eqref{objects:eq-c} is clearly bounded by $N_{\mrm{max}}^{-2}$, which is acceptable. For the second summand in \eqref{objects:eq-c}, we use that $e^{-z}\lesssim z^{-\delta}$ for all $z\geq 0$, which yields that
\begin{equation*}
\Big| \sum_{n\in \Z^2} \rho_{N_1}(n) \rho_{N_2}(n) \frac{1}{\langle n \rangle^2} e^{-2t\langle n \rangle^2}\Big| 
\lesssim t^{-\delta}  \sum_{n\in \Z^2} \rho_{N_1}(n) \rho_{N_2}(n) \frac{1}{\langle n \rangle^{2+2\delta}}
\lesssim t^{-\delta} N_{\mrm{max}}^{-2\delta}.
\end{equation*}
Since $\delta>\nu$, this is acceptable. \\ 

\emph{Step 3: Proof of \eqref{objects:eq-with-two}.}
Using the product formula \eqref{objects:eq-with-two-p4}-\eqref{objects:eq-with-two-p5} and the algebraic identity \eqref{objects:eq-with-two-p11}, it holds that 
\begin{align}
&\Big[\Big[ E, \Duh \Big( \partial_{k_1} \linear[N_1][r][j_1] \Big) \Big]_{\cfrkg} , \partial_{k_2} \linear[N_2][r][j_2] \Big]_{\cfrkg} 
- \frac{1}{4} \delta^{j_1 j_2} \delta_{k_1 k_2} \sigma_{N_1,N_2}^2 \Kil\big(E \big) \notag \\
=& \Big[  \mrm{ad}(E) \Duh \Big( \partial_{k_1} \linear[N_1][r][j_1] \Big)  \otimes  \partial_{k_2} \linear[N_2][r][j_2] \Big]_{\cfrkg} 
- \frac{1}{4}\delta^{j_1 j_2} \delta_{k_1 k_2} \sigma_{N_1,N_2}^2 \Kil\big(E \big) \notag \\
=& \Big[ \int_{\indexset^2} \big( \mrm{ad}(E) F^{j_1}_{t,x,N_1,k_1} \otimes f^{j_2}_{t,x,N_2,k_2} \big) d(W\otimes W) \Big]_{\cfrkg} \label{objects:eq-with-two-p16} \\
+&  \frac{1}{4} \delta^{j_1j_2} \delta_{k_1 k_2} \sigma_{N_1,N_2}^2 
\bigg( \Big[ \tensorcon{ \{1,2\}} \big( \mrm{ad}(E) \otimes I_{\cfrkg} \big) \Big]_{\cfrkg} - \Kil(E) \bigg) \label{objects:eq-with-two-p17} \\
+&  \delta^{j_1 j_2} \delta_{k_1 k_2} c_{N_1,N_2}  \Big[ \tensorcon{ \{1,2\}} \big( \mrm{ad}(E) \otimes I_{\cfrkg} \big) \Big]_{\cfrkg}. \label{objects:eq-with-two-p18}
\end{align}
Due to our estimate \eqref{objects:eq-with-two-p6} from Step 1, the contribution of the non-resonant part \eqref{objects:eq-with-two-p16} is acceptable. Using the trace identity from Lemma \ref{objects:lem-trace-identities}, \eqref{objects:eq-with-two-p17} is identically zero. Finally, since $c_{N_1,N_2}$ satisfies \eqref{objects:eq-c-estimate}, the contribution of \eqref{objects:eq-with-two-p18} is also acceptable. 
\end{proof}

It now only remains to treat the cubic nonlinearity with derivatives, i.e., the terms in \eqref{objects:eq-enhanced-6}. 

\begin{lemma}[Cubic stochastic object with two derivatives]\label{lemma:cubic-stochastic-object-with-two-derivatives} Let $0<\sdecay <\epsilon\ll 1$, 
let $N_1, N_2, N_{12}, N_3 \in \dyadic$ satisfy $N_{12}\sim N_3$ and let $j_1, j_2, j_3, k_2, k_3 \in [2]$. For all $p\geq 1$, it then holds that
\[ 
\label{eq:cubic-stochastic-object-with-two-derivatives} \E\bigg[\Big\| \Big[  P_{N_{12}} \quadratic[N_1, N_2][d][j_1][j_2][k_2], \ptl_{k_3} \linear[N_3][r][j_3]\Big] - \frac{1}{4} \delta_{j_2 j_3} \delta_{k_2 k_3} \sigma^2_{N_{12}, N_2, N_3}  \Kil\big(\linear[N_1][r][j_1]\big)  \Big\|_{C_t^0\Cs_x^{-3\sreg}([0,1]\times \T^2)}^p\bigg]^{1/p} \lesssim p^{3/2} N_{\mrm{max}}^{-3\sdecay}. \]
\end{lemma}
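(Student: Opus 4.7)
The plan is to follow the two-step coordinate-invariant procedure outlined in the introduction. First I would represent $P_{N_{12}} \quadratic[N_1, N_2][d][j_1][j_2][k_2]$ as a double stochastic integral via Lemma \ref{lemma:quadratic-one-derivative-multiple-stochastic-integral-representation} (with the integrand $P_{N_{12}} q^{j_1 j_2 k_2}_{t,x,N_1,N_2}$) and $\ptl_{k_3} \linear[N_3][r][j_3]$ as a single stochastic integral via Example \ref{prelim:example-linear}. Applying the product formula (Lemma \ref{lemma:multiple-stochastic-integral-tensor-product}) to the tensor product yields a triple stochastic integral plus two single stochastic integrals corresponding to the pairings in $\mc{M}(2,1)=\{\{\{1,3\}\},\{\{2,3\}\}\}$. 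Taking the iterated Lie bracket $[\cdot\,,\,\cdot]_{\cfrkg}$ then decomposes the object of interest into a non-resonant piece and two resonant pieces, which I would analyze separately.

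For the non-resonant triple integral, I would reduce matters to a second-moment bound using Lemma \ref{lemma:standard-reduction} and Lemma \ref{lemma:multiple-stochastic-integral-second-moment-bound-l2}. After applying $P_{N_0}$, the squared integrand is controlled pointwise by
\[ |n_3|^2 \rho_{N_0}(n_{123}) \rho_{N_1}(n_1) \rho_{N_2}(n_2) \rho_{N_3}(n_3) \rho_{N_{12}}(n_{12})\, \alpha_t(s_1,s_2,n_1,n_2)^2. \]
Integrating out the temporal variables $s_1,s_2$ via Lemma \ref{objects:lem-quadratic-integral-estimate} produces exactly the combinatorial sum handled by the counting estimate \eqref{eq:cubic-two-derivative-combinatorial-estimate} (whose hypothesis $N_{12}\sim N_3$ is imposed precisely for this purpose), which yields the required bound $N_0^{6\sreg} N_{\max}^{-6\sreg}$ after choosing $\delta$ small.

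For the resonant pieces, I would compute each pairing explicitly using Definition \ref{def:contracted-function}. The pairing $\{\{1,3\}\}$ identifies the noise of $\linear[N_1][r][j_1]$ with that of $\ptl_{k_3}\linear[N_3][r][j_3]$, leaving a single integral over the derivative-noise of $\linear[N_2][r][j_2]$; the resulting scalar involves a sum of the form $\sum_{n_1} n_1^{k_3}\, \rho_{N_1}(n_1) \rho_{N_3}(n_1)\cdot (\text{symmetric kernel in } n_1)$, which vanishes by parity in the $k_3$-component of $n_1$. The pairing $\{\{2,3\}\}$ identifies the derivative-noise of $\partial_{k_2}\linear[N_2][r][j_2]$ with the derivative-noise of $\partial_{k_3}\linear[N_3][r][j_3]$ and produces a single stochastic integral in the remaining variable (the noise of $\linear[N_1][r][j_1]$). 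Using the trace identity $[\tensorcon{\{1,2\}}(\mrm{ad}(E)\otimes I_{\cfrkg})]_{\cfrkg}=\Kil(E)$ from Lemma \ref{objects:lem-trace-identities}, together with the symmetrization argument from Step 2 of the proof of Lemma \ref{objects:lem-with-two} (which extracts $\frac{1}{4}\delta_{k_2 k_3}$ from the factor $n^{k_2} n^{k_3}$), this single integral equals $\frac{1}{4}\delta_{j_2 j_3}\delta_{k_2 k_3}\sigma^2_{N_{12},N_2,N_3}\Kil(\linear[N_1][r][j_1])$, which is precisely the subtracted counterterm.

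The main obstacle is the careful computation of the $\{\{2,3\}\}$ resonant contraction: one must track the Duhamel temporal factor $\alpha_t$ together with the localization $\rho_{N_{12}}(n_{12})$ and the derivative weights $n_2^{k_2} n_3^{k_3}$ under the constraint $n_2=-n_3$ to verify that the resulting scalar is exactly $\frac{1}{4}\delta_{k_2 k_3}\sigma^2_{N_{12},N_2,N_3}$ and not merely proportional. The probabilistic estimate for the non-resonant piece, by contrast, is routine given the technology developed for Lemma \ref{lemma:cubic-stochastic-object} and the tailored counting bound \eqref{eq:cubic-two-derivative-combinatorial-estimate}.
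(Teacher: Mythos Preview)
Your overall decomposition via the product formula is correct, and Step 1 (the non-resonant triple integral) matches the paper exactly. However, both resonant pieces are handled incorrectly.

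For the $\{1,3\}$ pairing, the parity argument fails. After the contraction $n_3=-n_1$, the summand over $n_1$ contains the factors $\rho_{N_{12}}(n_1+n_2)$ (from the $P_{N_{12}}$ localization) and the Duhamel kernel $\alpha_t(s_1,s_2,n_1,n_2)$, which involves $e^{-(t-s)\langle n_1+n_2\rangle^2}$. Neither is even in $n_1$ once $n_2\neq 0$, so there is no ``symmetric kernel in $n_1$'' to multiply against $n_1^{k_3}$, and the sum does not vanish. In the paper this term is not zero but is instead \emph{estimated}: one computes the $s_1$-integral via Lemma \ref{objects:lem-derivative-integral} and then splits into the cases $N_2\ll N_1$ and $N_2\gtrsim N_1$ to show the contribution is $O(N_{\min}^2 N_{\max}^{-2})$.

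For the $\{2,3\}$ pairing, the identification with the counterterm is only \emph{approximate}, not exact. After the contraction $n_3=-n_2$, the remaining integrand in $z_1$ is a function $g(s_1,n_1)$ in which the $n_2$-sum still carries $\rho_{N_{12}}(n_1+n_2)$ and the exponent $\langle n_1+n_2\rangle^2$; these depend on $n_1$, so the resonant piece is not a scalar multiple of the $\linear[N_1][r][j_1]$-kernel $e^{-(t-s_1)\langle n_1\rangle^2}$. The paper introduces two intermediate approximants $g_1,g_2$ (replacing $\langle n_{12}\rangle^2$ by $\langle n_2\rangle^2$, then integrating by parts in $s$) and proves $|g-\widetilde g|$ is small via the three estimates \eqref{objects:eq-two-derivatives-approx-1}--\eqref{objects:eq-two-derivatives-approx-3}, again under the case split $N_2\lesssim N_1$ versus $N_2\gg N_1$. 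Your ``symmetrization argument from Step 2 of Lemma \ref{objects:lem-with-two}'' produces $\widetilde g$, but you are missing the entire $g-\widetilde g$ analysis, which is where most of the work lies.
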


\begin{remark}\label{objects:rem-Wick-d-two-derivatives}
In a general dimension $d\geq 2$, Wick-ordering yields a  prefactor $\frac{1}{2d}$ for the $\Kil (\linear[][r] \hspace{-0.2ex})$-term. 
\end{remark}

The proof of Lemma \ref{lemma:cubic-stochastic-object-with-two-derivatives} occupies the remainder of this subsection. For expository purposes, we first collect one integral identity and one integral estimate in a separate lemma. 

\begin{lemma}\label{objects:lem-derivative-integral}
Let $t, s_1, s_2 \in \R$, $n_1, n_2 \in \Z^2$. Then, we have the integral identity
\begin{equation}\label{objects:eq-derivative-integral-e1}
 \int_\R \alpha_t(s_1, s_2, n_1, n_2) \ind(s_1 \leq t) e^{-(t - s_1) \fnorm{n_1}^2} ds_1 = \frac{\ind(s_2 \leq t)}{2\fnorm{n_1}^2} \int_{s_2}^t ds e^{-(t-s) (\fnorm{n_{12}}^2 + \fnorm{n_1}^2)} e^{-(s-s_2) \fnorm{n_2}^2}.
\end{equation}
Furthermore, if $t\geq s_1$ and $|n_2| >|n_1|$, then we also have the integral estimate 
\begin{equation}\label{objects:eq-derivative-integral-e2} 
\int_{s_1}^t ds (t-s) e^{-(t - s) \fnorm{n_2}^2} e^{-(s - s_1) \fnorm{n_1}^2} \leq  \frac{1}{(\fnorm{n_2}^2 - \fnorm{n_1}^2)^2 } e^{-(t - s_1) \fnorm{n_1}^2}. 
\end{equation}
\end{lemma}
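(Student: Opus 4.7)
The lemma decomposes into two essentially independent computations. For the identity \eqref{objects:eq-derivative-integral-e1}, the plan is to substitute the definition of $\alpha_t$ (Definition \ref{def:quadratic-one-derivative-integrand}) into the left-hand side, obtaining a double integral over $s$ and $s_1$. I would then apply Fubini to interchange the order of integration. After the interchange, the indicator constraints become $s_2 \leq s \leq t$ (from the outer $\alpha_t$ definition) and $s_1 \leq s$ (since $\ind(s_1 \leq t)$ and $\ind(s_1 \leq s)$ with $s \leq t$ collapse to $\ind(s_1 \leq s)$). The inner integral in $s_1$ is an elementary Gaussian-exponential integral:
\begin{equation*}
\int_{-\infty}^s e^{-(t-s_1)\fnorm{n_1}^2} e^{-(s-s_1)\fnorm{n_1}^2}\, ds_1 = \frac{e^{-(t-s)\fnorm{n_1}^2}}{2\fnorm{n_1}^2}.
\end{equation*}
Substituting this back and collecting the two factors $e^{-(t-s)\fnorm{n_{12}}^2}$ and $e^{-(t-s)\fnorm{n_1}^2}$ into $e^{-(t-s)(\fnorm{n_{12}}^2 + \fnorm{n_1}^2)}$ gives exactly the right-hand side.

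For the estimate \eqref{objects:eq-derivative-integral-e2}, the plan is to change variables via $u = t-s$ and factor out $e^{-(t-s_1)\fnorm{n_1}^2}$, which rewrites the integral as
\begin{equation*}
e^{-(t-s_1)\fnorm{n_1}^2}\int_0^{t-s_1} u\, e^{-u(\fnorm{n_2}^2 - \fnorm{n_1}^2)}\, du.
\end{equation*}
Since the hypothesis $|n_2| > |n_1|$ gives $c := \fnorm{n_2}^2 - \fnorm{n_1}^2 > 0$, I would then simply extend the upper endpoint to $\infty$ and use the standard identity $\int_0^\infty u\, e^{-cu}\, du = c^{-2}$ to conclude.

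Both steps are routine calculus, so I do not anticipate any genuine obstacle. The only care needed is bookkeeping the indicator functions when interchanging integrals in the first part, and verifying that the positivity hypothesis $|n_2| > |n_1|$ is used in exactly the right place (namely, to ensure the $u$-integral in the second part is finite when extended to $[0,\infty)$).
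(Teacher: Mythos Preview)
Your proposal is correct and matches the paper's proof essentially line for line: for \eqref{objects:eq-derivative-integral-e1} the paper also inserts the definition of $\alpha_t$, swaps the $s$ and $s_1$ integrals, and evaluates $\int_{-\infty}^s e^{-(t+s-2s_1)\fnorm{n_1}^2}\,ds_1 = \tfrac{1}{2\fnorm{n_1}^2}e^{-(t-s)\fnorm{n_1}^2}$; for \eqref{objects:eq-derivative-integral-e2} the paper factors out $e^{-(t-s_1)\fnorm{n_1}^2}$ in exactly the same way and extends the remaining integral over $(t-s)$ to the half-line, using $|n_2|>|n_1|$ for convergence. The only cosmetic difference is that the paper extends the $s$-integral to $(-\infty,t]$ rather than changing variables to $u=t-s$ first.
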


\begin{proof} We first prove the identity \eqref{objects:eq-derivative-integral-e1}. 
From the definition of $\alpha_t$ (Definition \ref{def:quadratic-one-derivative-integrand}), we obtain that
\[\begin{split}
\int_\R & \alpha_t(s_1, s_2, n_1, n_2) \ind(s_1 \leq t) e^{-(t-s_1) \fnorm{n_1}^2} ds_1 = \\
&\int_\R ds_1 \int_\R ds \ind(s_1 \leq t) \ind(s \leq t) \ind(s_1, s_2 \leq s) e^{-(t-s_1) \fnorm{n_1}^2} e^{-(t-s) \fnorm{n_{12}}^2} e^{-(s - s_1) \fnorm{n_1}^2} e^{-(s - s_2) \fnorm{n_2}^2}.
\end{split}\]
This can be further rewritten as 
\[ \int_{s_2}^t ds e^{-(t-s) \fnorm{n_{12}}^2} e^{-(s-s_2) \fnorm{n_2}^2} \int_{-\infty}^s ds_1 e^{-(t + s -2s_1) \fnorm{n_1}^2} = \frac{\ind(s_2 \leq t)}{2\fnorm{n_1}^2} \int_{s_2}^t ds e^{-(t-s) (\fnorm{n_{12}}^2 + \fnorm{n_1}^2)} e^{-(s-s_2) \fnorm{n_2}^2}, \]
which yields the desired identity. It now remains to prove the integral estimate \eqref{objects:eq-derivative-integral-e2}. We first write 
\begin{equation*}
\int_{s_1}^t ds (t-s) e^{-(t - s) \fnorm{n_2}^2} e^{-(s - s_1) \fnorm{n_1}^2} 
= e^{-(t-s_1)\langle n_1 \rangle^2} \int_{s_1}^t \ds \, (t-s) e^{-(t-s)\big( \langle n_2 \rangle^2 - \langle n_1 \rangle^2\big)}. 
\end{equation*}
Since $|n_2|>|n_1|$, it holds that $\langle n_2 \rangle^2 > \langle n_1 \rangle^2$, and thus 
\begin{equation*}
\int_{s_1}^t \ds \, (t-s) e^{-(t-s)\big( \langle n_2 \rangle^2 - \langle n_1 \rangle^2\big)} 
\leq \int_{-\infty}^t \ds \, (t-s) e^{-(t-s)\big( \langle n_2 \rangle^2 - \langle n_1 \rangle^2\big)} = \frac{1}{(\fnorm{n_2}^2 - \fnorm{n_1}^2)^2 }, 
\end{equation*}
which yields the desired estimate. 
\end{proof}

Equipped with Lemma \ref{objects:lem-derivative-integral}, we are now ready to control the cubic nonlinearity with derivatives.

\begin{proof}[Proof of Lemma \ref{lemma:cubic-stochastic-object-with-two-derivatives}:]
Recall that (by Lemma \ref{lemma:quadratic-one-derivative-multiple-stochastic-integral-representation} and Definition \ref{def:quadratic-one-derivative-integrand})
\begin{equation*}
    \quadratic[N_1, N_2][d][j_1][j_2][k_2](t, x) = \int_{\indexset^2} q^{j_1 j_2 k_2}_{t, x, N_1, N_2} dW^{\otimes 2} \qquad \text{and} \qquad 
\ptl^{k_3} \linear[N_3][r][j_3](t, x) = \int_\indexset \ptl^{k_3} f^{j_3}_{t, x, N_3} dW,
\end{equation*} 
where
\begin{align*}
 q^{j_1 j_2 k_2}_{t, x, N_1, N_2}(z_1, z_2) &= \icomplex \delta^{j_1}_{\ell_1} \delta^{j_2}_{\ell_2} n_{2}^{k_2} \rho_{N_1}(n_1) \rho_{N_2}(n_2) \e_{n_1 + n_2}(x) \alpha_t(s_1, s_1, n_1, n_2) [I_{\cfrkg} \otimes I_{\cfrkg}]_{\cfrkg}, \\
 \alpha_t(s_1, s_2, n_1, n_2) &= \int_{-\infty}^t \ind(s_1, s_2 \leq s) e^{-(t - s) \fnorm{n_{12}}^2} e^{-(s - s_1) \fnorm{n_1}^2} e^{-(s - s_2) \fnorm{n_2}^2} ds, 
 \end{align*}
 and 
 \begin{align*}
 \ptl^{k_3} f^{j_3}_{t, x, N_3}(z_3) &= \icomplex \delta^{j_3}_{\ell_3} n_{3}^{k_3} \e_{n_3}(x) \rho_{N_3}(n_3) \ind(s_3 \leq t) e^{-(t-s_3)\fnorm{n_3}^2} I_{\cfrkg}.
\end{align*}
For brevity, define the function $\tilde{h}_{t, x} : \indexset^3 \ra \cfrkg^{*\otimes 3} \otimes \cfrkg^2$ by $\tilde{h}_{t, x}(z_1, z_2, z_3) := (P_{N_{12}} q^{j_1 j_2 k_2}_{t, x, N_1, N_2}) \otimes \ptl^{k_3} f^{j_3}_{t, x, N_3} $. Explicitly, we have that
\[ \begin{split}
\tilde{h}_{t, x}(z_1, z_2, z_3) =- \big([I_{\cfrkg} \otimes I_{\cfrkg}] \otimes I_{\cfrkg} \big)\delta^{j_1}_{\ell_1} &\delta^{j_2}_{\ell_2} \delta^{j_3}_{\ell_3} n_{2}^{k_2} n_{3}^{k_3} \rho_{N_1}(n_1) \rho_{N_2}(n_2) \rho_{N_{12}}(n_{12}) \rho_{N_3}(n_3) ~\times \\
&\e_{n_{123}}(x) \alpha_t(s_1, s_2, n_1, n_2) \ind(s_3 \leq t) e^{-(t-s_3) \fnorm{n_3}^2}.
\end{split} \]
By the product formula (Lemma \ref{lemma:multiple-stochastic-integral-tensor-product}), we have that
\[\begin{split}\label{objects:eq-cubic-derivative-p0}
\bigg(P_{N_{12}} \quadratic[N_1, N_2][d][j_1][j_2][k_2](t, x) \bigg) \otimes \ptl^{k_3} \linear[N_3][r][j_3](t, x) = \int_{\indexset^3} \tilde{h}_{t, x} dW^{\otimes 3} + \int_{\indexset} \paired{\{1,3\}} (\tilde{h}_{t, x})(z_2) dW(z_2) + \int_{\indexset} \paired{\{2,3\}}(\tilde{h}_{t, x})(z_1) dW(z_1), 
\end{split}\]
where $\paired{\{1,3\}}$ and $\paired{\{2,3\}}$ are as in Definition \ref{def:contracted-function}.  
Define $h_{t, x} : \indexset^3 \ra \cfrkg^{*\otimes 3} \otimes \cfrkg$ by $h_{t, x} := [\tilde{h}_{t, x}]_{\cfrkg}$.
Observe that $[\paired{\{1,3\}}(\tilde{h}_{t, x})]_{\cfrkg} = \paired{\{1,3\}}(h_{t, x})$ and similarly for the $\{2, 3\}$-pairing (this essentially follows by linearity). We then have that
\[ \Big[  P_{N_{12}} \quadratic[N_1, N_2][d][j_1][j_2][k_2], \ptl^{k_3} \linear[N_3][r][j_3]\Big](t, x) = \int_{\indexset^3} h_{t, x} dW^{\otimes 3} + \int_{\indexset} \paired{\{1,3\}}(h_{t,x})(z_2) dW(z_2) + \int_{\indexset} \paired{\{2,3\}}(h_{t,x})(z_1) dW(z_1). \]

\emph{Step 1: Estimate of the non-resonant part.}
In this step, we show that
\[ \E\bigg[ \bigg\| \int_{\indexset^3} h_{t, x} dW^{\otimes 3} \bigg\|_{C_t^0\Cs_x^{-3\sreg}([0,1]\times \T^2)}^p\bigg]^{1/p} \lesssim p^{3/2} N_{\max}^{-3\sdecay}.  \]
Using our standard reduction (Lemma \ref{lemma:standard-reduction}),  it suffices to show for all $N_0 \in \dyadic$ that 
\[ \sup_{t\in [0,1]} \sup_{x\in \T^2} \E \bigg[ \bigg|P_{N_0}\int_{\indexset^3} h_{t, x} dW^{\otimes 3} \bigg|_{\cfrkg}^2\bigg] \lesssim N_0^{6\sreg} N_{\max}^{-6\sreg}.\]
Using Lemma \ref{lemma:multiple-stochastic-integral-second-moment-bound-l2} and \eqref{objects:eq-cubic-derivative-p0}, we have that
\begin{align}
 \E \bigg[ \bigg|P_{N_0}\int_{\indexset^3} h_{t, x} dW^{\otimes 3} \bigg|_{\cfrkg}^2\bigg] &\lesssim \int_{\indexset^3} \big| P_{N_0} h_{t, x} \big|_{\cfrkg^{*\otimes 3} \otimes \cfrkg}^2 d\lebI^3 \notag \\ 
 &\lesssim  \sum_{n_1, n_2, n_3\in \Z^2} \bigg( |n_2|^2 |n_3|^2 \rho_{N_0}(n_{123}) \rho_{N_1}(n_1) \rho_{N_2}(n_2) \rho_{N_{12}}(n_{12}) \rho_{N_3}(n_3) 
 \label{objects:eq-cubic-derivative-p1} \\
&~~\times \int_{\R^3} \alpha_t(s_1, s_2, n_1, n_2)^2 \ind(s_3 \leq t) e^{-2(t - s_3) \fnorm{n_3}^2} ds_1 ds_2 ds_3 \bigg). \notag 
\end{align}
Applying inequality \eqref{eq:alpha-t-squared-integral-bound}, we further obtain
\[ \eqref{objects:eq-cubic-derivative-p1} \lesssim \sum_{n_1, n_2, n_3\in \Z^2} \frac{1}{\fnorm{n_1}^2 \fnorm{n_{12}}^2} \min\big(\fnorm{n_1}^{-2}, \fnorm{n_2}^{-2}, \fnorm{n_{12}}^{-2}\big) \rho_{N_0}(n_{123}) \rho_{N_1}(n_1) \rho_{N_2}(n_2) \rho_{N_{12}}(n_{12}) \rho_{N_3}(n_3) . \] 
By using inequality \eqref{eq:cubic-two-derivative-combinatorial-estimate} and $N_{12} \sim N_3$, the right hand side above is bounded by $N_0^{6\sreg} N_{\max}^{-6\sreg}$, as desired. \\

\emph{Step 2: Estimate of the $\{1, 3\}$ resonance.}
In this step, we show that the $\{1, 3\}$ resonance can be controlled (and therefore does not contribute to the renormalization). For brevity, define $F(t, x)$ by
\[ F(t, x) := \int_{\indexset} \paired{\{1,3\}}(h_{t,x})(z_2) dW(z_2). \]
We will show that
\beq\label{eq:cubic-two-derivative-1-3-resonance-desired-estimate} \E\big[ \|F(t)\|_{C_t^0 \Cs_x^{-3\sreg}([0,1]\times \T^2)}^p\big]^{1/p} \lesssim p^{1/2} N_{\max}^{-3\sdecay}. \eeq
We compute (note that $\tensorcon{\{1, 3\}}([I_{\cfrkg} \otimes I_{\cfrkg}]_{\cfrkg} \otimes I_{\cfrkg}) = -\Kil$)
\[\begin{split}
\paired{\{1,3\}}(h_{t,x})(z_2) = - \delta^{j_2}_{\ell_2} n_{2}^{k_2}  \e_{n_2}(x) \rho_{N_2}(n_2)  \sum_{n_1\in \Z^2} \bigg(  & \delta^{\ell_1 \ell_3} \delta^{j_1}_{\ell_1} \delta^{j_3}_{\ell_3}  n_{1}^{k_3} \rho_{N_1}(n_1) \rho_{N_{12}}(n_{12}) \rho_{N_3}(n_1) ~\times \\
&\int_\R ds_1 \alpha_t(s_1, s_2, n_1, n_2) \ind(s_1 \leq t) e^{-(t - s_1) \fnorm{n_1}^2}  \bigg) \Kil.
\end{split}\]
Since $n_2$ is the only Fourier mode which appears in the above, to show \eqref{eq:cubic-two-derivative-1-3-resonance-desired-estimate} it suffices to show  (after using Lemma \ref{lemma:standard-reduction}) that 
\[ \sup_{t\in [0,1]} \sup_{x\in \T^2} \int_\indexset |\paired{\{1,3\}}(h_{t,x})(z_2)|_{\cfrkg^* \otimes \cfrkg}^2 d\lebI(z_2) \lesssim N_{\max}^{-2} N_{\min}^{2}.\] 
Towards this end, first recall from Lemma \ref{objects:lem-derivative-integral} that 
\[ \int_\R \alpha_t(s_1, s_2, n_1, n_2) \ind(s_1 \leq t) e^{-(t - s_1) \fnorm{n_1}^2} ds_1 = \frac{\ind(s_2 \leq t)}{2\fnorm{n_1}^2} \int_{s_2}^t ds e^{-(t-s) (\fnorm{n_{12}}^2 + \fnorm{n_1}^2)} e^{-(s-s_2) \fnorm{n_2}^2}.\]
We now split into two cases: $N_2 \ll N_1$ and $N_2 \gtrsim N_1$. In the first case, for $n_1, n_2\in \Z^2$ satisfying $|n_1|\sim N_1$ and $|n_2|\sim N_2$, we may bound
\[\begin{split} \int_{s_2}^t ds e^{-(t-s) (\fnorm{n_{12}}^2 + \fnorm{n_1}^2)} e^{-(s-s_2) \fnorm{n_2}^2} &\leq \int_{s_2}^t ds e^{-(t-s) \fnorm{n_1}^2} e^{-(s - s_2) \fnorm{n_2}^2} \\
&= \frac{1}{\fnorm{n_1}^2 - \fnorm{n_2}^2} \Big(e^{-(t- s_2) \fnorm{n_2}^2} - e^{-(t-s_2) \fnorm{n_1}^2}\Big) \\
&\leq \frac{1}{\fnorm{n_1}^2 - \fnorm{n_2}^2} e^{-(t-s_2) \fnorm{n_2}^2} \lesssim \frac{1}{\fnorm{n_1}^2}e^{-(t-s_2) \fnorm{n_2}^2}  .
\end{split}\]
From this, we obtain
\[\begin{split} 
\big| \paired{\{1,3\}}(h_{t,x})(z_2) \big|_{\cfrkg^* \otimes \cfrkg} &\lesssim \ind(s_2 \leq t) e^{-(t - s_2) \fnorm{n_2}^2}  \sum_{n_1\in \Z^2} \frac{|n_1|}{\fnorm{n_1}^4} |n_2|  \rho_{N_1}(n_1) 
\rho_{N_2}(n_2) \\
&\leq \ind(s_2 \leq t) e^{-(t-s_2) \fnorm{n_2}^2}  N_1^{-1} N_2  \rho_{N_2}(n_2).
\end{split}\]
We thus obtain the upper bound
\begin{align*}
\int_{\indexset} 
\big|\paired{\{1,3\}}(h_{t,x})(z_2)\big|_{\cfrkg^* \otimes\cfrkg}^2 d\lebI(z_2) 
&\lesssim  N_1^{-2} N_2^{2} \sum_{n_2\in \Z^2} \rho_{N_2}(n_2) \int_{-\infty}^t \ds_2 e^{-(t-s_2) \langle n_2 \rangle^2} \\ 
&\lesssim N_1^{-2} N_2^2 \sum_{n_2 \in \Z^2} \frac{1}{\fnorm{n_2}^2}\rho_{N_2}(n_2)   \\
&\lesssim N_1^{-2} N_2^2 = N_{\max}^{-2} N_{\min}^{2}, 
\end{align*}
where the final equality follows because we are in the case $N_2 \ll N_1$. It now remains to treat the case $N_2 \gtrsim N_1$. To this end, let $c>0$ be a sufficiently small absolute constant. For all $n_1,n_2\in \Z^2$ satisfying $|n_1|\sim N_1$ and $|n_2|\sim N_2$, we then bound  
\[\begin{split}
\int_{s_2}^t ds e^{-(t-s) (\fnorm{n_{12}}^2 + \fnorm{n_1}^2)} e^{-(s-s_2) \fnorm{n_2}^2} &\leq \int_{s_2}^t ds e^{-c(t-s) \fnorm{n_2}^2} e^{-c(s - s_2) \fnorm{n_2}^2} = (t - s_2) e^{-c(t - s_2) \fnorm{n_2}^2}.
\end{split}\]
We then obtain the bound
\[\begin{split} \big| \paired{\{1,3\}}(h_{t,x})(z_2) \big|_{\cfrkg^* \otimes \cfrkg} &\lesssim \ind(s_2 \leq t) (t - s_2) e^{-c(t-s_2) \fnorm{n_2}^2} |n_2| \rho_{N_2}(n_2) \sum_{n_1 \in \Z^2} \frac{|n_1|}{\fnorm{n_1}^2} \rho_{N_1}(n_1) \\
&\leq \ind(s_2 \leq t) (t-s_2) e^{-c(t-s_2) \fnorm{n_2}^2} N_1 N_2 \rho_{N_2}(n_2).
\end{split}\]
We thus obtain
\begin{align*}
 \int_\indexset \big|\paired{\{1,3\}}(h_{t,x})(z_2)\big|_{\cfrkg^* \otimes \cfrkg}^2 d\lebI(z_2) 
 &\lesssim N_1^2 N_2^2 \sum_{n_2 \in \Z^2} \rho_{N_2}(n_2) \int_{-\infty}^t \ds_2 (t-s_2)^2 e^{-2c(t-s_2)\langle n_2 \rangle^2} \\ 
 &\lesssim N_1^2 N_2^2 \sum_{n_2\in \Z^2} \rho_{N_2}(n_2) \frac{1}{\fnorm{n_2}^6} \\ &\lesssim N_1^2 N_2^{-2}\lesssim  N_{\min}^{2} N_{\max}^{-2},  
\end{align*}
where recall $N_2 \gtrsim N_1$ for the final inequality. \\ 

\emph{Step 3: Analysis of the $\{2, 3\}$ resonance.} 
In this step, we show that the $\{2, 3\}$ resonance contributes the renormalization in \eqref{eq:cubic-stochastic-object-with-two-derivatives}.  For brevity, define $G(t, x)$ by
\beq\label{eq:cubic-two-derivative-G-t-x-def} G(t, x) := \int_\indexset \paired{\{2,3\}}(h_{t,x})(z_1) dW(z_1). \eeq
We will show that
\beq\label{eq:cubic-two-derivative-2-3-resonance-desired-estimate} \E\Big[\Big\|G(t) - \frac{1}{4} \delta^{j_2 j_3} \delta^{k_2 k_3} \sigma^2_{N_{12}, N_2, N_3} \Kil\big(\linear[N_1][r][j_1](t)\big)  \Big\|_{C_t^0\Cs_x^{-3\sreg}([0,1]\times \T^2)}^p\Big]^{1/p} \lesssim p^{1/2} N_{\max}^{-3\sdecay}. \eeq 
Note that $n_1$ is the only Fourier mode which appears in the expression for $\paired{\{2,3\}}(h_{t,x})(z_1)$. From this observation, combined with essentially the same argument as for the $\{1, 3\}$ resonance, we have that when $N_2 \lesssim N_1$, 
\[ \E\big[ \|G(t)\|_{C_t^0\Cs_x^{-3\sreg}([0,1]\times \T^2)}^p\big]^{1/p} \lesssim p^{1/2} N_{\max}^{-3\sdecay}.\]
Noting that $\sigma^2_{N_{12}, N_2, N_3} \lesssim \log N_2$ and applying Lemma \ref{lemma:linear-object} with $(\sreg,\sdecay)$ replaced by $(3\sreg,3\sdecay+(\sreg-\sdecay))$, we further have that when $N_2 \lesssim N_1 $, 
\[ \E\Big[\Big\|\sigma^2_{N_{12}, N_2, N_3}  \Kil\big(\linear[N_1][r][j_1](t)\big) \Big\|_{C_t^0 \Cs_x^{-3\sreg}([0,1]\times \T^2)}^p\Big]^{1/p} \lesssim p^{1/2} \log(N_{\max})  N_{\max}^{\nu-\epsilon} N_{\max}^{-3\nu} \lesssim p^{\frac{1}{2}} N_{\max}^{-3\nu} . \]
Thus when $N_2 \lesssim N_1$, we have that \eqref{eq:cubic-two-derivative-2-3-resonance-desired-estimate} holds. For the rest of this proof, we therefore assume that $N_2 \gg N_1$. Using our standard reductions (Lemma \ref{lemma:standard-reduction}), to show \eqref{eq:cubic-two-derivative-2-3-resonance-desired-estimate}, it suffices to show that \begin{equation}\label{objects:eq-two-derivatives-G-bound}
 \sup_{t\in [0,1]} \sup_{x\in \T^2} \E\Big[ \Big| G(t, x) - \frac{1}{4} \delta^{j_2 j_3} \delta^{k_2 k_3} \sigma^2_{N_{12}, N_2, N_3} \Kil\big(\linear[N_1][r][j_1](t, x)\big)  \Big|_{\cfrkg}^2\Big] \lesssim N_{\min}^2 N_{\max}^{-2}.    
\end{equation}  
We compute
\[\begin{split}
\paired{\{2,3\}}(h_{t,x})(z_1) = \delta^{j_1}_{\ell_1} \delta^{\ell_2 \ell_3} \delta^{j_2}_{\ell_2} \delta^{j_3}_{\ell_3}  \e_{n_1}(x) \rho_{N_1}(n_1)  &  \sum_{n_2\in \Z^2} \bigg( n_{2}^{k_2} n_{2}^{k_3} \rho_{N_2}(n_2) \rho_{N_{12}}(n_{12}) \rho_{N_3}(n_2) ~\times \\
&\int_{-\infty}^t ds_2 \alpha_t(s_1, s_2, n_1, n_2) e^{-(t-s_2) \fnorm{n_2}^2} \big) \Kil.
\end{split}\]
Note that $  \delta^{\ell_2 \ell_3} \delta^{j_2}_{\ell_2} \delta^{j_3}_{\ell_3}  = \delta^{j_2 j_3}$. Using Lemma \ref{objects:lem-derivative-integral} and the symmetry of $\alpha_t$ (see Definition \ref{def:quadratic-one-derivative-integrand}), we also have that 
\[ \int_{-\infty}^t ds_2 \alpha_t(s_1, s_2, n_1, n_2) e^{-(t-s_2)\fnorm{n_2}^2}  = \frac{\ind(s_1 \leq t)}{2\fnorm{n_2}^2} \int_{s_1}^t ds e^{-(t-s)(\fnorm{n_{12}}^2 + \fnorm{n_2}^2)} e^{-(s - s_1) \fnorm{n_1}^2}.\]
We may thus write
\beq\label{eq:h-t-x-2-3-expression} \paired{\{2,3\}}(h_{t,x})(z_1) = \frac{1}{4} \delta^{j_1}_{\ell_1} \delta^{j_2 j_3} \e_{n_1}(x)  \ind(s_1 \leq t) g(s_1, n_1) \Kil, \eeq
where
\begin{equation}\label{objects:eq-two-derivatives-g}
g(s_1, n_1) := 2 \rho_{N_1}(n_1) \sum_{n_2 \in \Z^2} n_{2}^{k_2} n_{2}^{k_3} \rho_{N_2}(n_2) \rho_{N_{12}}(n_{12}) \rho_{N_3}(n_2) \frac{1}{\fnorm{n_2}^2} \int_{s_1}^t ds e^{-(t-s)(\fnorm{n_{12}}^2 + \fnorm{n_2}^2)} e^{-(s - s_1) \fnorm{n_1}^2}.   
\end{equation}
We also recall from Example \ref{prelim:example-linear} that 
\begin{equation}\label{objects:eq-two-derivatives-Ric}
\delta^{j_2 j_3} \delta^{k_2k_3} \sigma_{N_{12},N_2,N_2}^2 \Kil \big( \linear[N_1][r][j_1](t) \big) 
=  \int_{\indexset} \big( \delta^{j_1}_{\ell_1} \delta^{j_2 j_3} \e_{n_1}(x)  \ind(s_1 \leq t) \widetilde{g}(s_1,n_1) \Kil \big) \mathrm{d}W(z_1),
\end{equation}
where 
\begin{equation}\label{objects:eq-two-derivatives-gtilde}
\widetilde{g}(s_1,n_1) = \rho_{N_1}(n_1) \delta^{k_2 k_3} \sigma_{N_{12},N_2,N_3}^2 e^{-(t-s_1)\langle n_1 \rangle^2}. 
\end{equation}
By combining \eqref{eq:cubic-two-derivative-G-t-x-def}, \eqref{eq:h-t-x-2-3-expression}, and \eqref{objects:eq-two-derivatives-Ric}, 
it then follows that 
\begin{equation*}
 \E\Big[ \Big| G(t, x) -\frac{1}{4} \delta^{j_2 j_3} \delta^{k_2 k_3} \sigma^2_{N_{12}, N_2, N_3} \Kil\big(\linear[N_1][r][j_1](t, x)\big)  \Big|_{\cfrkg}^2\Big] 
 \lesssim \sum_{n_1\in \Z^2} \int_{-\infty}^t \ds_1 \big| g(s_1,n_1) - \widetilde{g}(s_1,n_1)\big|^2. 
\end{equation*}
In order to prove \eqref{objects:eq-two-derivatives-G-bound}, it therefore suffices to prove that
\begin{equation}\label{objects:eq-two-derivatives-g-bound}
\sum_{n_1\in \Z^2} \int_{-\infty}^t \ds_1 \big| g(s_1,n_1) - \widetilde{g}(s_1,n_1)\big|^2 \lesssim N_{\textup{min}}^2 N_{\textup{max}}^{-2}. 
\end{equation}
To this end, we introduce two different approximations of $g$, which are given by 
\begin{align}
g_1(s_1, n_1) &:= 2 \rho_{N_1}(n_1) \sum_{n_2\in \Z^2} n_{2}^{k_2} n_{2}^{k_3} \rho_{N_2}(n_2) \rho_{N_{12}}(n_{12}) \rho_{N_3}(n_2) \frac{1}{\fnorm{n_2}^2} \int_{s_1}^t ds e^{-2(t-s) \fnorm{n_2}^2} e^{-(s - s_1) \fnorm{n_1}^2}, 
\label{objects:eq-two-derivatives-g1} \\ 
g_2(s_1, n_1) &:= 2 \rho_{N_1}(n_1) e^{-(t - s_1) \fnorm{n_1}^2}\sum_{n_2 \in \Z^2} n_{2}^{k_2} n_{2}^{k_3} \rho_{N_2}(n_2) \rho_{N_{12}}(n_{12}) \rho_{N_3}(n_2) \frac{1}{2\fnorm{n_2}^4} .
\label{objects:eq-two-derivatives-g2} 
\end{align}
We went from $g$ to $g_1$ by replacing $\langle n_{12} \rangle^2$ in the exponent with $\langle n_2 \rangle^2$. As will be more clear from the estimates below, we went from $g_1$ to $g_2$ via integration by parts and keeping only a boundary term. In order to prove \eqref{objects:eq-two-derivatives-g-bound}, it now suffices to prove the following three estimates: 
\begin{align}
\big| g(s_1,n_1) - g_1(s_1,n_1) \big| &\lesssim N_1 N_2^{-1} \rho_{N_1}(n_1) e^{-(t-s_1)\langle n_1 \rangle^2}, 
\label{objects:eq-two-derivatives-approx-1}\\ 
\big| g_1(s_1,n_1) - g_2(s_1,n_1) \big| &\lesssim \rho_{N_1}(n_1) 
\Big( N_1^2 N_2^{-2} e^{-(t-s_1)\langle n_1 \rangle^2} + e^{-c(t-s_1) N_2^2} \Big), 
\label{objects:eq-two-derivatives-approx-2} \\ 
\big| g_2(s_1,n_1) - \widetilde{g}(s_1,n_1) \big| &\lesssim N_1 N_2^{-1} \rho_{N_1}(n_1) e^{-(t-s_1)\langle n_1 \rangle^2}. 
\label{objects:eq-two-derivatives-approx-3}
\end{align}
\emph{Proof of \eqref{objects:eq-two-derivatives-approx-1}:} From the definitions of $g$ and $g_1$, it follows that
\begin{align*}
(g-g_1)(s_1,n_1) 
&= 2 \rho_{N_1}(n_1) \sum_{n_2\in \Z^2} \bigg( n_{2}^{k_2} n_{2}^{k_3} \rho_{N_2}(n_2) \rho_{N_{12}}(n_{12}) \rho_{N_3}(n_2) \frac{1}{\fnorm{n_2}^2} \\
&\quad  \times  \int_{s_1}^t \ds \Big( e^{-(t-s) \fnorm{n_2}^2} 
\big( e^{-(t-s)\langle n_2 \rangle^2} - e^{-(t-s)\langle n_{12} \rangle^2} \big) 
e^{-(s - s_1) \fnorm{n_1}^2} \Big) \bigg). 
\end{align*}
By first using the elementary inequality $|e^{-u}-e^{-v}|\leq |u-v|$ for all $u,v\geq 0$ and using that, since $N_2 \gg N_1$, $|\langle n_{12} \rangle^2 - \langle n_2 \rangle^2|\lesssim N_1N_2$, it follows that
\begin{equation}\label{objects:eq-two-derivatives-approx-1-p1}
\big| (g-g_1)(s_1,n_1)  \big| 
\lesssim N_1 N_2 \rho_{N_1}(n_1) \sum_{n_2 \in \Z^2} \rho_{N_2}(n_2) \int_{s_1}^t \ds (t-s) e^{-(t-s) \fnorm{n_2}^2} e^{-(s - s_1) \fnorm{n_1}^2}. 
\end{equation}
Using $N_2 \gg N_1$ and Lemma \ref{objects:lem-derivative-integral}, we obtain that
\begin{equation*}
\eqref{objects:eq-two-derivatives-approx-1-p1} \lesssim N_1 N_2^{-3} \rho_{N_1}(n_1) \Big( \sum_{n_2 \in \Z^2} \rho_{N_2}(n_2) \Big) e^{-(t-s_1)\langle n_1 \rangle^2} 
\lesssim N_1 N_2^{-1} \rho_{N_1}(n_1)e^{-(t-s_1)\langle n_1 \rangle^2}, 
\end{equation*}
which yields the desired estimate. \\

\emph{Proof of \eqref{objects:eq-two-derivatives-approx-2}:} Using integration by parts, it holds that 
\begin{align}
&\qquad (g_1-g_2)(s_1,n_1) \notag \\ 
&= 2 \rho_{N_1}(n_1) \sum_{n_2\in \Z^2} \bigg( n_{2}^{k_2} n_{2}^{k_3} \rho_{N_2}(n_2) \rho_{N_{12}}(n_{12}) \rho_{N_3}(n_2) \frac{1}{\fnorm{n_2}^2} \notag \\
&\quad \times \Big( \int_{s_1}^t \ds e^{-2(t-s) \fnorm{n_2}^2} e^{-(s - s_1) \fnorm{n_1}^2} - \frac{1}{2\langle n_2\rangle^2} e^{-(t-s_1) \langle n_1 \rangle^2} \Big) \bigg) \notag \\
&= - 2 \rho_{N_1}(n_1) \sum_{n_2\in \Z^2} \bigg( n_{2}^{k_2} n_{2}^{k_3} \rho_{N_2}(n_2) \rho_{N_{12}}(n_{12}) \rho_{N_3}(n_2) \frac{1}{2\langle n_2 \rangle^4} e^{-2(t-s_1)\langle n_2 \rangle^2} \bigg) \label{objects:eq-two-derivatives-approx-2-p1} \\ 
&+ 2  \rho_{N_1}(n_1)  \sum_{n_2\in \Z^2} \bigg( n_{2}^{k_2} n_{2}^{k_3} \rho_{N_2}(n_2) \rho_{N_{12}}(n_{12}) \rho_{N_3}(n_2) \frac{\langle n_1 \rangle^2}{\langle n_2 \rangle^4} \int_{s_1}^t \ds e^{-2(t-s)\langle n_2 \rangle^2} e^{-(s-s_1)\langle n_1 \rangle^2} \bigg). \label{objects:eq-two-derivatives-approx-2-p2} 
\end{align}
We estimate \eqref{objects:eq-two-derivatives-approx-2-p1} and \eqref{objects:eq-two-derivatives-approx-2-p2} separately. For the first term, we first estimate 
\begin{equation*}
e^{-2(t-s)\langle n_2 \rangle^2} \leq e^{-c(t-s_1)N_2^2}
\end{equation*}
and then sum in $n_2 \in \Z^2$, which yields 
\begin{equation*}
\big| \eqref{objects:eq-two-derivatives-approx-2-p1} \big| \lesssim \rho_{N_1}(n_1) e^{-c(t-s_1)N_2^2}. 
\end{equation*}
For the second term, we first use Lemma \ref{objects:lem-derivative-integral} and then sum in $n_2\in \Z^2$, which yields 
\begin{equation*}
\big| \eqref{objects:eq-two-derivatives-approx-2-p2} \big| 
\lesssim N_1^2 N_2^{-2} \rho_{N_1}(n_1) e^{-(t-s_1)\langle n_1 \rangle^2}. 
\end{equation*}

\emph{Proof of \eqref{objects:eq-two-derivatives-approx-3}:} 
Using the definition of $\sigma_{N_{12},N_2,N_3}^2$, writing $\langle n_2 \rangle^{-2} = (|n_2|^2+1) \langle n_2 \rangle^{-4}$, and using the symmetry in the first and second component of $n_2\in \Z^2$,  it follows that 
\begin{align*}
\widetilde{g}(s_1,n_1) 
&= \delta^{k_2 k_3} \rho_{N_1}(n_1) e^{-(t-s_1)\langle n_1\rangle^2} 
\sum_{n_2\in \Z^2}  \rho_{N_{12}}(n_2) \rho_{N_2}(n_2) \rho_{N_3}(n_3) \frac{1}{2\langle n_2 \rangle^2} \\ 
&=  \rho_{N_1}(n_1) e^{-(t-s_1)\langle n_1\rangle^2} 
\sum_{n_2\in \Z^2} n_2^{k_2} n_2^{k_3} \rho_{N_{12}}(n_2) \rho_{N_2}(n_2) \rho_{N_3}(n_3) \frac{1}{\langle n_2 \rangle^4} \\ 
&+ \delta^{k_2 k_3} \rho_{N_1}(n_1) e^{-(t-s_1)\langle n_1\rangle^2} 
\sum_{n_2\in \Z^2} \rho_{N_{12}}(n_2)  \rho_{N_3}(n_3) \rho_{N_2}(n_2) \frac{1}{2\langle n_2 \rangle^4}. 
\end{align*}
Using the definition of $g_2$ from \ref{objects:eq-two-derivatives-g2}, it then follows that
\begin{align}
(g_2-\widetilde{g})(s_1,n_1) 
&= \rho_{N_1}(n_1) e^{-(t-s_1)\langle n_1\rangle^2} 
\sum_{n_2\in \Z^2} n_2^{k_2} n_2^{k_3} 
\big( \rho_{N_{12}}(n_{12})-\rho_{N_{12}}(n_2) \big)
\rho_{N_2}(n_2) \rho_{N_3}(n_3) \frac{1}{\langle n_2 \rangle^4} 
\label{objects:eq-two-derivatives-approx-3-p1} \\
&- \delta^{k_2 k_3} \rho_{N_1}(n_1) e^{-(t-s_1)\langle n_1\rangle^2} 
\sum_{n_2\in \Z^2} \rho_{N_{12}}(n_2)  \rho_{N_3}(n_3) \rho_{N_2}(n_2) \frac{1}{2\langle n_2 \rangle^4} \label{objects:eq-two-derivatives-approx-3-p2}.
\end{align}
Since $N_2\gg N_1$, we have the Lipschitz estimate
\begin{equation*}
\big|  \rho_{N_{12}}(n_{12})-\rho_{N_{12}}(n_2) \big| \lesssim N_{12}^{-1} \big| n_{12}-n_2 \big| \lesssim N_2^{-1} N_1. 
\end{equation*}
Using direct estimates of the sums in \eqref{objects:eq-two-derivatives-approx-3-p1} and \eqref{objects:eq-two-derivatives-approx-3-p2}, it then follows that
\begin{equation*}
\big| \eqref{objects:eq-two-derivatives-approx-3-p1} \big| 
\lesssim N_1 N_2^{-1} \rho_{N_1}(n_1) e^{-(t-s_1)\langle n_1\rangle^2}  
\quad \text{and} \quad 
\big| \eqref{objects:eq-two-derivatives-approx-3-p2} \big| 
\lesssim N_2^{-2} \rho_{N_1}(n_1) e^{-(t-s_1)\langle n_1\rangle^2}, 
\end{equation*}
which are acceptable.
\end{proof}

\subsection{Proof of Proposition \ref{objects:prop-enhanced}}
\label{section:objects-proof}

\begin{proof}[Proof of Proposition \ref{objects:prop-enhanced}:]
As mentioned above, this follows directly by choosing $(\epsilon,\nu)=(\kappa,\eta)$ and then using the estimates of this section. To be more precise, the terms in \eqref{objects:eq-enhanced-1}-\eqref{objects:eq-enhanced-5} are estimated using the following lemmas and corollaries: 
\begin{enumerate}[label=(\alph*)]
    \item $\linear[\leqN][r]$ is estimated using Lemma \ref{lemma:linear-object}, 
    \item $\big[ \linear[\leqN][r], \linear[\leqN][r]\big]$ is estimated using Lemma \ref{objects:lem-without}, 
    \item the renormalized version of $E\in \frkg \mapsto \big[ \big[ E , \linear[\leqN][r] \big], \linear[\leqN][r] \big]$ is estimated using Lemma \ref{objects:lem-without}, 
    \item the renormalized version of $ \big[ \big[ \linear[\leqN][r] , \linear[\leqN][r] \big], \linear[\leqN][r] \big]$ is estimated using Lemma \ref{lemma:cubic-stochastic-object}, 
    \item $\quadratic[\leqN][r]$ is estimated using Lemma \ref{lemma:quadratic-stochastic-object-with-derivative} and Corollary \ref{objects:cor-quadratic-object-time}, 
    \item and the renormalized version of $E \mapsto \big[ \big[ E, \Duh (\partial \hspace{0.1ex} \linear[\leqN][r])\big]\parasim \partial \hspace{0.1ex} \linear[\leqN][r]\big]$ is estimated using Lemma \ref{objects:lem-with-two}.
\end{enumerate}
Thus, it remains to treat the terms in \eqref{objects:eq-enhanced-6}, which are given by
\begin{equation}\label{objects:eq-enhanced-p0}
2 \Big[  \quadratic[\leqN][r][j] 
 \parasim 
 \Big( \partial_j \linear[\leqN][r][i] -  \partial^i \linear[\leqN][l][j] \Big) \Big] + 2  \Big[\quadratic[\leqN][r][i]  \parasim \partial^j \linear[\leqN][l][j] \Big]
 + \sigma_{\leq N}^2 \Kil \big( \linear[\leqN][r][i] \big). 
\end{equation}
Using the definition of the combined quadratic object \eqref{ansatz:eq-quadratic-combined}, we obtain that
\begin{equation}\label{objects:eq-enhanced-p1}
\begin{aligned}
&2 \Big[  \quadratic[\leqN][r][j] 
 \parasim 
 \Big( \partial_j \linear[\leqN][r][i] -  \partial^i \, \linear[\leqN][l][j] \Big) \Big] + 2  \Big[\quadratic[\leqN][r][i]  \parasim \partial^j \linear[\leqN][l][j] \Big] \\
 =&\, 4 \Big[ \quadratic[\leqN][d][k][j][k] \parasim \partial_j \linear[\leqN][r][i] \Big] 
 - 4 \Big[ \quadratic[\leqN][d][k][j][k] \parasim \partial^i \, \linear[\leqN][l][j] \Big]
 -2  \Big[ \quadratic[\leqN][d][k][k][j] \parasim \partial_j \linear[\leqN][r][i] \Big] + 2 \Big[ \quadratic[\leqN][d][k][k][j] \parasim \partial^i \,  \linear[\leqN][l][j] \Big] \\
 +& \,4  \Big[ \quadratic[\leqN][d][k][i][k] \parasim \partial^j \,  \linear[\leqN][l][j] \Big]
 -2  \Big[ \quadratic[\leqN][d][k][k][i] \parasim \partial^j \,  \linear[\leqN][l][j] \Big]. 
\end{aligned}
\end{equation}
Using Lemma \ref{lemma:cubic-stochastic-object-with-two-derivatives}, \eqref{objects:eq-enhanced-p1} can be written as the sum of an acceptable contribution and the renormalization term
\begin{align*}
&\frac{1}{4} \sigma_{\leq N}^2 \Big( 4 \delta^{ij}\delta_{jk} - 4 \delta^j_j \delta^i_k - 2 \delta^i_k \delta^j_j + 2 \delta^j_k \delta^i_j + 4 \delta^i_j \delta^j_k - 2 \delta^{ij} \delta_{jk} \Big) \Kil \big( \linear[\leqN][r][k] \big)\\
=\, & \frac{1}{4} \sigma_{\leq N}^2 \Big( 4 - 8 -4 + 2 +4 -2 \Big) \delta^{i}_k  \Kil \big( \linear[\leqN][r][k] \big) \\ 
=\, & - \sigma_{\leq N}^2 \Kil \big( \linear[\leqN][r][i] \big). 
\end{align*}
Since this is cancelled by the renormalization in \eqref{objects:eq-enhanced-p0}, it follows that \eqref{objects:eq-enhanced-p0} is under control. 
\end{proof}
\section{Nonlinear estimates}\label{section:nonlinear-estimates}
In this section, we perform the contraction mapping argument for the para-controlled stochastic Yang-Mills heat equation (Definition \ref{ansatz:def-paracontrolled}). To simplify the notation in the following proposition, we introduce
\begin{equation}\label{eq:BR}
\BS := \Big\{ A^{(0)} \in \Cs_x^{-\kappa}(\T_x^2 \rightarrow \frkg)^2 \colon \big\| A^{(0)} \big\|_{\Cs_x^{-\kappa}} \leq S \Big\}. 
\end{equation}

\begin{proposition}[Well-posedness of the para-controlled stochastic Yang-Mills heat flow]\label{nonlinear:prop-wellposedness-para}
Let $C\geq 1$ be a sufficiently large absolute constant, let $R,S\geq 1$, and let $0<\tau\leq C^{-1} (RS)^{-C}$. Furthermore, let $A^{(0)} \in \BS$ and let  $\Xi_{\leq N}\in \Dc_R([0,\tau])$. Then, there exists a unique solution 
\begin{equation*}
(X_{\leq N},Y_{\leq N})\in (\Sc^{1-2\kappa}\times \Sc^{2-5\kappa})([0,\tau])
\end{equation*}
of the para-controlled stochastic Yang-Mills heat equation (Definition \ref{ansatz:def-paracontrolled}). Furthermore, it can be written as
\begin{equation*}
(X_{\leq N},Y_{\leq N}) = \Big( \bX_{R,S,\tau}(\Xi_{\leq N},A^{(0)}),
\bY_{R,S,\tau}(\Xi_{\leq N},A^{(0)}) \Big),
\end{equation*}
where 
\begin{equation*}
\bX_{R,S,\tau}\colon \Dc_R([0,\tau])\times \BS \rightarrow \Sc^{1-2\kappa}([0,\tau])
\qquad \text{and}\qquad 
\bY_{R,S,\tau}\colon \Dc_R([0,\tau])\times \BS \rightarrow \Sc^{2-5\kappa}([0,\tau])
\end{equation*}
are Lipschitz continuous. 
\end{proposition}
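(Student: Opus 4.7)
The plan is to solve the para-controlled system (Definition \ref{ansatz:def-paracontrolled}) by a Banach fixed-point argument in the product space $\Sc^{1-2\kappa}([0,\tau])\times \Sc^{2-5\kappa}([0,\tau])$. Define the solution map $\Phi=(\Phi_X,\Phi_Y)$ by setting $\Phi_X(X,Y):=\Duh[\text{RHS of \eqref{ansatz:eq-new-X}}]$ and $\Phi_Y(X,Y):=\Hc_t\big(A^{(0)}-\linear[\leqN][r][](0)-\quadratic[\leqN][r][](0)\big)+\Duh\big[\text{RHS of \eqref{ansatz:eq-Y-new-1}--\eqref{ansatz:eq-Y-new-9}}\big]$, in each case with $B_{\leq N}$ substituted by $\quadratic[\leqN][r][]+X+Y$. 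I will show that for a suitably chosen radius $M=M(R,S)$ and for $\tau\leq C^{-1}(RS)^{-C}$, the map $\Phi$ preserves the closed ball of radius $M$ in $\Sc^{1-2\kappa}\times\Sc^{2-5\kappa}$ and is a strict contraction, yielding existence, uniqueness, and Lipschitz dependence on $(\Xi_{\leq N},A^{(0)})$.

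The main analytic tool is Proposition \ref{prelim:prop-Duhamel-Salpha}, which provides a $\tau^{\delta}$-gain when inverting $\ptl_t-\Delta+1$, together with Lemma \ref{prelim:lem-heat-Salpha} to handle the initial data contribution (using $\|\linear[\leqN][r][](0)\|_{\Cs_x^{-\kappa}}+\|\quadratic[\leqN][r][](0)\|_{\Cs_x^{1-2\kappa}}\lesssim R$). For $\Phi_X$, the right-hand side of \eqref{ansatz:eq-new-X} is the low$\times$high paraproduct $\big[B_{\leq N,j}\parall(2\ptl^j\linear[\leqN][r][i]-\ptl^i\linear[\leqN][r][j])\big]$. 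The low$\times$high paraproduct estimate of Lemma \ref{prelim:lem-para-product} together with the bound $\|B_{\leq N}\|_{\Sc^{1-2\kappa}}\lesssim R+\|X\|_{\Sc^{1-2\kappa}}+\|Y\|_{\Sc^{2-5\kappa}}$ and $\Xi_{\leq N}\in\Dc_R$ controls this term in a weighted space of regularity $-1-\kappa$, to which Proposition \ref{prelim:prop-Duhamel-Salpha} applies and returns $\Sc^{1-2\kappa}$-control with a factor $\tau^{\delta}$.

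For $\Phi_Y$, the nine term groups split into three categories. First, \eqref{ansatz:eq-Y-new-1} is exactly the renormalized stochastic object \eqref{objects:eq-enhanced-6} and is directly bounded by $R$, while \eqref{ansatz:eq-Y-new-8} is a renormalized cubic whose decomposition by inserting $B_{\leq N}=\quadratic[\leqN][r][]+X+Y$ produces the pure stochastic pieces \eqref{objects:eq-enhanced-1}--\eqref{objects:eq-enhanced-3} together with multilinear remainders involving $X,Y$ that are controlled by direct product estimates. Second, \eqref{ansatz:eq-Y-new-5}--\eqref{ansatz:eq-Y-new-7} and \eqref{ansatz:eq-Y-new-9} are handled by routine paraproduct/product estimates from Lemma \ref{prelim:lem-para-product}; the total-derivative structure in \eqref{ansatz:eq-Y-new-5} produces a factor of one spatial derivative that the Duhamel integral absorbs, so the sum of regularities being positive ($1-2\kappa+(-\kappa)>0$) suffices. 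Third, the high$\times$high terms in \eqref{ansatz:eq-Y-new-4} use only the $\Wc^{\alpha,\nu}$-regularity of $Y$ together with the regularity $-1-\kappa$ of $\ptl\linear[\leqN][r][]$.

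The main obstacle will be the double Duhamel terms \eqref{ansatz:eq-Y-new-2}--\eqref{ansatz:eq-Y-new-3}. After expanding $B_{\leq N}=\quadratic[\leqN][r][]+X+Y$ inside the inner Duhamel, the piece containing $\quadratic[\leqN][r][]$ can be rewritten, after using the defining equation of $\quadratic[\leqN][r][]$, as the explicit resonant object \eqref{objects:eq-enhanced-5}; the renormalization prefactors $\pm\tfrac{3}{2}\sigma_{\leq N}^2$ and $\mp\tfrac{1}{2}\sigma_{\leq N}^2$ must be matched against the tensor contractions of iterated brackets via the trace identities of Lemma \ref{objects:lem-trace-identities}, and the combinatorial identity \eqref{ansatz:eq-cancellation} ensures consistency. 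For the contributions with $X$ or $Y$ in place of $B_{\leq N}$ in the inner $\Duh$, one cannot afford to bound $\Duh[(\cdot)\parall\ptl\linear]$ in $\Cs_x^{1-2\kappa}$ and then multiply by $\ptl\linear$ of regularity $-1-\kappa$, since the two $\Duh$'s do not commute with the outer high$\times$high paraproduct. This is precisely where the trilinear integral commutator estimate (Lemma \ref{prelim:lem-trilinear-integral-commutator}) enters: it allows the $\Duh$ to be commuted past the inner para-product at the cost of a resonant term $f(\cdot\parasim\ptl\linear)$ of strictly negative regularity, which requires the $\CWc$-portion of the $\Sc^{\alpha}$-norm per Remark \ref{prelim:rem-integral-commutator}. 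Once all nonlinear estimates are in place, each term is multilinear in $X,Y$ with coefficients polynomial in $R$ times a positive power of $\tau$, so choosing $\tau\leq C^{-1}(RS)^{-C}$ makes $\Phi$ a contraction. Lipschitz dependence on $(\Xi_{\leq N},A^{(0)})$ follows from applying the same multilinear estimates to differences; the fractional exponents $1/\dg_j$ in \eqref{objects:eq-metric} are designed so that differences of $j$-linear stochastic objects are controlled by the $\dg_j$-th power of $\dc_\tau(\Xi_{\leq N},\widetilde{\Xi}_{\leq N})$, which linearizes all estimates.
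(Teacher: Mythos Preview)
Your overall strategy matches the paper's proof: set up the map $\Gamma=(\Gamma^X,\Gamma^Y)$ (your $\Phi$), prove a self-map and contraction estimate on a ball of radius $\sim R+S$ using the Schauder-type estimate Proposition~\ref{prelim:prop-Duhamel-Salpha}, and then read off Lipschitz dependence. Your handling of \eqref{ansatz:eq-new-X}, \eqref{ansatz:eq-Y-new-1}, \eqref{ansatz:eq-Y-new-4}--\eqref{ansatz:eq-Y-new-9} is essentially what the paper does (Lemmas~\ref{nonlinear:lem-X}, \ref{nonlinear:lem-cubic}, \ref{nonlinear:lem-remaining}).

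There is one concrete gap in your treatment of the double-Duhamel terms \eqref{ansatz:eq-Y-new-2}--\eqref{ansatz:eq-Y-new-3}. You propose to split $B_{\leq N}=\quadratic[\leqN][r][]+X+Y$ inside the inner Duhamel and claim that the $\quadratic[\leqN][r][]$-contribution ``can be rewritten, after using the defining equation of $\quadratic[\leqN][r][]$, as the explicit resonant object \eqref{objects:eq-enhanced-5}''. This is not correct: the defining equation $(\partial_t-\Delta+1)\,\quadratic[\leqN][r][]=[\,\linear[\leqN][r][],\partial\,\linear[\leqN][r][]\,]$ does nothing to simplify $\Duh\big([\,\quadratic[\leqN][r][]\parall\partial\,\linear[\leqN][r][]\,]\big)\parasim\partial\,\linear[\leqN][r][]$, and the object \eqref{objects:eq-enhanced-5} involves $\Duh(\partial\,\linear[\leqN][r][])$, not $\Duh([\,\quadratic[\leqN][r][]\parall\partial\,\linear[\leqN][r][]\,])$. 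The paper's Lemma~\ref{nonlinear:lem-dd} instead treats $B_{\leq N}$ as a whole: first apply Lemma~\ref{prelim:lem-lh-hh} and the trilinear integral commutator (Lemma~\ref{prelim:lem-trilinear-integral-commutator}) to replace $\Duh\big([B_{\leq N}\parall\partial\,\linear[\leqN][r][]\,]\big)\parasim\partial\,\linear[\leqN][r][]$ by $B_{\leq N}^a\big(\big[[E_a,\Duh(\partial\,\linear[\leqN][r][])\,]\parasim\partial\,\linear[\leqN][r][]\big]\big)$ up to an acceptable error, and only then invoke \eqref{objects:eq-enhanced-5}. You already plan to use the trilinear commutator for the $X,Y$ pieces; the fix is simply to use it uniformly on all of $B_{\leq N}$. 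A smaller point: your invocation of \eqref{ansatz:eq-cancellation} is misplaced---that identity explains why the \emph{total} counterterm in \eqref{intro:eq-SYM} vanishes, whereas the specific prefactors $\tfrac32$ and $-\tfrac12$ in \eqref{ansatz:eq-Y-new-2}--\eqref{ansatz:eq-Y-new-3} come from the index contractions computed in \eqref{nonlinear:eq-DD-p4}--\eqref{nonlinear:eq-DD-p5}, not from trace identities.
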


The solutions $(X_{\leq N}, Y_{\leq N})$ appearing in Proposition \ref{nonlinear:prop-wellposedness-para} will be realized as fixed points of the following map.

\begin{definition}\label{def:Gamma-contraction-map}
We now define a map $\Gamma=(\Gamma^X,\Gamma^Y)$ which incorporates the integral formulation of \eqref{ansatz:eq-new-X} and \eqref{ansatz:eq-Y-new-1}-\eqref{ansatz:eq-Y-new-9}. That is, we define
\begin{equation}\label{nonlinear:eq-Gamma-X}
\Gamma^X(X_{\leq N},Y_{\leq N})^i 
:= \Duh \Big( \eqref{ansatz:eq-new-X} \Big) 
\end{equation}
and 
\begin{equation}\label{nonlinear:eq-Gamma-Y}
\Gamma^Y(X_{\leq N},Y_{\leq N})^i 
:= e^{-t(1-\Delta)} \Big( A^{(0),i} - \linear[\leqN][r][i] (0) -\quadratic[\leqN][r][i] (0) \Big) 
+ \Duh \Big( \eqref{ansatz:eq-Y-new-1}
+ \eqref{ansatz:eq-Y-new-2}
+ \hdots 
+ \eqref{ansatz:eq-Y-new-9} \Big). 
\end{equation}
\end{definition}

Before turning to the proof of Proposition \ref{nonlinear:prop-wellposedness-para}, we first need several nonlinear estimates. In the first lemma, we control the right-hand side of \eqref{ansatz:eq-new-X}, i.e., of the evolution equation for $X_{\leq N}$. 

\begin{lemma}\label{nonlinear:lem-X}
Let $T\in [0,1]$, let $R\geq 1$, and assume that the enhanced data set satisfies $\Xi_{\leq N}\in \Dc_R([0,T])$. Then, it holds for all $i\in [2]$ and $B_{\leq N}\colon [0,T] \times \T^2 \rightarrow \frkg^2$ that
\begin{equation}\label{nonlinear:eq-X}
\Big\| \Big[ B_{\leq N,j} \parall \big( 2 \partial^j \linear[\leqN][r][i] - \partial^i \linear[\leqN][r][j] \big) \Big] \Big\|_{\Wc^{-1-\kappa,\kappa+\theta}}
\lesssim R \big\| B_{\leq N} \big\|_{\Sc^{2\kappa}}. 
\end{equation}
\end{lemma}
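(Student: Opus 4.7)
The plan is to reduce the estimate to a direct application of the low$\times$high paraproduct bound from Lemma \ref{prelim:lem-para-product}(ii), combined with the regularity information packaged in the spaces $\Sc^{2\kappa}$ and $\Dc_R$. Since the paraproduct $\parall$ places $B_{\leq N,j}$ at low frequency and the derivative of $\linear[\leqN][r]$ at high frequency, and since $\partial \linear[\leqN][r]$ has spatial regularity $-1-\kappa$ while $B_{\leq N,j}$ will be controlled at spatial regularity $2\kappa$, there is no resonance issue and no renormalization needed. This lemma is purely deterministic — all randomness has already been absorbed into the a priori bound $\Xi_{\leq N} \in \Dc_R$.

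Concretely, I would first extract from $\Xi_{\leq N} \in \Dc_R([0,T])$ the bound on the first component of the enhanced data set: since $\dg_1 = 1$ and $\reg_1 = -\kappa$, one has
\[
\sup_{t \in [0,T]} \big\| \linear[\leqN][r][k](t) \big\|_{\Cs_x^{-\kappa}} \lesssim R \quad \text{for } k \in [2],
\]
hence $\|\partial_\ell \linear[\leqN][r][k](t)\|_{\Cs_x^{-1-\kappa}} \lesssim R$. Applying Lemma \ref{prelim:lem-para-product}(ii) with $\alpha = 2\kappa$, $\beta = -1-\kappa$, $\gamma = -1-\kappa$ (which satisfies $\gamma \leq \min(\alpha,0) + \beta = -1-\kappa$), I get the pointwise-in-time estimate
\[
\Big\| \Big[ B_{\leq N,j}(t) \parall \big( 2\partial^j \linear[\leqN][r][i](t) - \partial^i \linear[\leqN][r][j](t) \big) \Big] \Big\|_{\Cs_x^{-1-\kappa}} \lesssim R \, \| B_{\leq N}(t) \|_{\Cs_x^{2\kappa}}.
\]

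To convert this into the weighted norm $\Wc^{-1-\kappa,\kappa+\theta}$, I multiply both sides by $\min(t,1)^{\kappa+\theta}$ and use that, since $\theta = \kappa$, the exponent $\kappa + \theta = 2\kappa$ matches exactly $\tfrac{2\kappa}{2} + \theta$. Since $2\kappa \in \Reg$ and $2\kappa \leq 2\kappa$, the weighted Hölder norm $\|\,\cdot\,\|_{\Wc^{2\kappa, 2\kappa}}$ appears in the sum defining $\|\,\cdot\,\|_{\Sc^{2\kappa}}$, so $\min(t,1)^{2\kappa} \|B_{\leq N}(t)\|_{\Cs_x^{2\kappa}} \leq \|B_{\leq N}\|_{\Sc^{2\kappa}}$. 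Taking the supremum in $t$ yields the claimed bound.

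Since every step is a direct invocation of a prior definition or lemma, there is no real obstacle: the only thing to verify carefully is the bookkeeping between the weight exponent in $\Wc^{-1-\kappa, \kappa+\theta}$ and the weight exponent in $\Wc^{2\kappa, 2\kappa}$ built into $\Sc^{2\kappa}$, which matches exactly because $\theta = \kappa$. This matching is precisely why the regularity exponent $2\kappa$ was built into $\Reg$ and why the Ansatz places $B_{\leq N}$ in $\Sc^{2\kappa}$.
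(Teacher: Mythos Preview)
Your proposal is correct and essentially identical to the paper's proof: apply the low$\times$high paraproduct estimate (Lemma~\ref{prelim:lem-para-product}(ii)) pointwise in time, use $\Xi_{\leq N}\in\Dc_R$ to bound $\|\partial\,\linear[\leqN][r][](t)\|_{\Cs_x^{-1-\kappa}}\lesssim R$, multiply by the time weight $t^{\kappa+\theta}$, and absorb $t^{\kappa+\theta}\|B_{\leq N}(t)\|_{\Cs_x^{2\kappa}}$ into $\|B_{\leq N}\|_{\Wc^{2\kappa,\kappa+\theta}}\leq\|B_{\leq N}\|_{\Sc^{2\kappa}}$.
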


\begin{proof}
Using the low$\times$high-estimate from Lemma \ref{prelim:lem-para-product} and $\Xi_{\leq N}\in \Dc_R([0,T])$, it follows for all $t\in [0,T]$ that
\begin{align*}
t^{\kappa+\theta} 
\Big\|  \Big[ B_{\leq N,j} \parall \big( 2 \partial^j \linear[\leqN][r][i] - \partial^i \linear[\leqN][r][j] \big) \Big](t) \Big\|_{\Cs^{-1-\kappa}} 
\lesssim t^{\kappa+\theta} 
\big\| B_{\leq N}(t) \big\|_{\Cs_x^{2\kappa}}
\big\| \partial\hspace{0.1ex} \linear[\leqN][r][](t) \big\|_{\Cs_x^{-1-\kappa}}
\lesssim R \big\| B_{\leq N}\big\|_{\Sc^{2\kappa}([0,T])}. 
\end{align*}
After taking the supremum over $t\in [0,T]$, this yields \eqref{nonlinear:eq-X}. 
\end{proof}

We now turn to the different terms in the evolution equation for $Y_{\leq N}$, i.e., \eqref{ansatz:eq-Y-new-1}-\eqref{ansatz:eq-Y-new-9}. In the next lemma, we control the double Duhamel-terms from \eqref{ansatz:eq-Y-new-2} and \eqref{ansatz:eq-Y-new-3}. 

\begin{lemma}[Double Duhamel-terms]\label{nonlinear:lem-dd}
Let $T\in [0,1]$, let $R\geq 1$, and assume that the enhanced data set satisfies $\Xi_{\leq N} \in \Dc_R([0,T])$. Furthermore, let $\nu \geq 3\kappa+\theta$. Then, it holds for all $i\in [2]$ and $B_{\leq N}\colon [0,T] \times \T^2 \rightarrow \frkg$ that 
\begin{align}
&\bigg\| 
2 \bigg[ \Duh \Big( \Big[ B_{\leq N,k} \parall \big( 2 \partial^k \linear[\leqN][r][j] - \partial^j \linear[\leqN][r][k] \big) \Big] \Big) \parasim \Big( \partial_j \linear[\leqN][r][i] - \partial^i \linear[\leqN][l][j] \Big) \bigg] 
+ \frac{3}{2} \sigma_{\leq N}^2 \Kil(B_{\leq N}^i)
\bigg\|_{\Wc^{-4\kappa,\nu}} \label{nonlinear:eq-DD-1} \\
\lesssim&\, R^2  \, \big\| B_{\leq N} \big\|_{\Sc^{1-2\kappa}}, \notag \\[1ex]
&\bigg\| 
2 \bigg[ \Duh \Big( \Big[ B_{\leq N,k} \parall \big( 2 \partial^k \linear[\leqN][r][i] - \partial^i \linear[\leqN][r][k] \big) \Big] \Big) \parasim  \partial_j \linear[\leqN][r][j]  \bigg] - \frac{1}{2} \sigma_{\leq N}^2 \Kil(B_{\leq N}^i)\bigg\|_{\Wc^{-4\kappa,\nu}}  \label{nonlinear:eq-DD-2} \\
\lesssim&\, R^2 \, \big\| B_{\leq N} \big\|_{\Sc^{1-2\kappa}}. \notag 
\end{align}
\end{lemma}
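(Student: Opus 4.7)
\bigskip

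\textbf{Proof plan for Lemma \ref{nonlinear:lem-dd}.} The plan is a two-step commutator reduction which rewrites both double-Duhamel terms as contractions of $B_{\leq N}$ against the renormalized stochastic object in \eqref{objects:eq-enhanced-5} of the enhanced data set, plus acceptable errors. Throughout, all Lie brackets are expanded in the basis $(E_a)$ of $\frkg$, so that the scalar-valued trilinear commutator lemmas apply componentwise and the Lie structure is carried by the constant tensor $[[E_a,E_b],E_c]$.

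\emph{Step 1: Commute $\Duh$ past the outer high-high paraproduct.} Schematically each term has the form $2\bigl[\Duh\bigl([B_{\leq N,k}\parall\,\partial\hspace{0.1ex}\linear[\leqN][r]]\bigr)\parasim \partial\hspace{0.1ex}\linear[\leqN][r]\bigr]$. Applying Lemma \ref{prelim:lem-trilinear-integral-commutator} (componentwise in the basis, with $f=B_{\leq N,k}$, $g$ and $h$ equal to components of $\partial\hspace{0.1ex}\linear[\leqN][r]$), we replace
\[
\Duh\bigl(B_{\leq N,k}\parall \partial\hspace{0.1ex}\linear[\leqN][r]\bigr)\parasim \partial\hspace{0.1ex}\linear[\leqN][r]
\;=\;\bigl(B_{\leq N,k}\parall \Duh(\partial\hspace{0.1ex}\linear[\leqN][r])\bigr)\parasim \partial\hspace{0.1ex}\linear[\leqN][r] \;+\; \mrm{Err}_1,
\]
with $\|\mrm{Err}_1\|_{\Wc^{-4\kappa,\theta+2\kappa}}\lesssim \|B_{\leq N}\|_{\Sc^{1-2\kappa}}\, \|\partial\hspace{0.1ex}\linear[\leqN][r]\|_{\Wc^{-1-\kappa,0}}^{2}\lesssim R^{2}\,\|B_{\leq N}\|_{\Sc^{1-2\kappa}}$.

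\emph{Step 2: Commute the outer paraproduct past the low-high paraproduct.} Next we apply Lemma \ref{prelim:lem-lh-hh} (again componentwise, with $\alpha=1-2\kappa$, $\beta=2-2\kappa$ for the integrated linear, $\gamma=-1-\kappa$ for the derivative linear), which yields
\[
\bigl(B_{\leq N,k}\parall \Duh(\partial\hspace{0.1ex}\linear[\leqN][r])\bigr)\parasim \partial\hspace{0.1ex}\linear[\leqN][r] \;=\; B_{\leq N,k}\bigl(\Duh(\partial\hspace{0.1ex}\linear[\leqN][r])\parasim\partial\hspace{0.1ex}\linear[\leqN][r]\bigr) \;+\; \mrm{Err}_2,
\]
with $\|\mrm{Err}_2\|_{\Wc^{-4\kappa,\theta+2\kappa}}\lesssim R^2\|B_{\leq N}\|_{\Sc^{1-2\kappa}}$. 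Reassembling the Lie brackets, the resulting pointwise product pairs $B_{\leq N,k}^{\,a}E_{a}$ against $\bigl[\bigl[E_{a},\Duh(\partial_{k_{1}}\linear[\leqN][r][j_{1}])\bigr]\parasim \partial_{k_{2}}\linear[\leqN][r][j_{2}]\bigr]$, which, by definition of the enhanced data object $\eqref{objects:eq-enhanced-5}$, equals $\Phi^{j_{1}j_{2}}_{k_{1}k_{2}}(B_{\leq N,k}) + \tfrac{1}{4}\delta^{j_{1}j_{2}}\delta_{k_{1}k_{2}}\sigma_{\leq N}^{2}\Kil(B_{\leq N,k})$, where $\Phi^{j_{1}j_{2}}_{k_{1}k_{2}}(\cdot)$ is bounded by $R$ in $\Wc^{-2\kappa,\kappa}(\End\frkg)$. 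The product $B_{\leq N,k}\cdot\Phi^{j_{1}j_{2}}_{k_{1}k_{2}}(\cdot)$ is then controlled in $\Wc^{-4\kappa,\nu}$ by the product estimate (Lemma \ref{prelim:lem-para-product}(i)), since $(1-2\kappa)+(-2\kappa)=1-4\kappa>0$ and the weights add up to $(\theta+\tfrac{1-2\kappa}{2})+\kappa \le \nu$ under the hypothesis $\nu\ge 3\kappa+\theta$.

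\emph{Step 3: Matching the counterterms.} It only remains to check that the explicit counterterms in \eqref{nonlinear:eq-DD-1}–\eqref{nonlinear:eq-DD-2} exactly cancel the renormalizations $\tfrac{1}{4}\delta^{j_{1}j_{2}}\delta_{k_{1}k_{2}}\sigma_{\leq N}^{2}\Kil(B_{\leq N,k})$ produced in Step 2 after summing over the four (resp.\ two) combinations generated by expanding $(2\partial^{k}\linear[\leqN][r][j]-\partial^{j}\linear[\leqN][r][k])$ and $(\partial_{j}\linear[\leqN][r][i]-\partial^{i}\linear[\leqN][r][j])$. For \eqref{nonlinear:eq-DD-1}, this bookkeeping gives (in $d=2$, using $\delta^{j}{}_{j}=d=2$)
\[
4\!\cdot\!\tfrac{1}{4}\,\delta^{ji}\delta_{kj}\,-\,4\!\cdot\!\tfrac{1}{4}\,\delta^{jj}\delta_{ki}\,-\,2\!\cdot\!\tfrac{1}{4}\,\delta^{ki}\delta_{jj}\,+\,2\!\cdot\!\tfrac{1}{4}\,\delta^{kj}\delta_{ji}\;=\;\bigl(1-2-1+\tfrac{1}{2}\bigr)\delta^{i}_{\,k}\;=\;-\tfrac{3}{2}\delta^{i}_{\,k},
\]
summing to $-\tfrac{3}{2}\sigma_{\leq N}^{2}\Kil(B_{\leq N}^{i})$; for \eqref{nonlinear:eq-DD-2} the analogous computation yields $+\tfrac{1}{2}\sigma_{\leq N}^{2}\Kil(B_{\leq N}^{i})$. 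Both are cancelled by the counterterms in the statement.

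\emph{Main obstacle.} The technical core is Step 2, where one must transport the scalar-valued commutator estimate (Lemma \ref{prelim:lem-lh-hh}) through the iterated Lie bracket structure in a way that aligns with the basis-dependent reformulation of the enhanced data object in \eqref{objects:eq-enhanced-5}, and then track the combinatorics of Step 3 precisely enough to match the coefficients $\tfrac{3}{2}$ and $\tfrac{1}{2}$. This combinatorial cancellation, which is the shadow of the identity \eqref{ansatz:eq-cancellation}, is dimension-specific and the only place where $d=2$ is truly used.
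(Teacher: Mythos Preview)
Your approach is essentially the paper's: commute $\Duh$ and then the low-high paraproduct out of the high-high resonance (Lemmas \ref{prelim:lem-trilinear-integral-commutator} and \ref{prelim:lem-lh-hh}, applied componentwise), reduce to the enhanced-data object \eqref{objects:eq-enhanced-5}, and then do the Kronecker-delta bookkeeping. Your Step~3 combinatorics match the paper's identities \eqref{nonlinear:eq-DD-p4}--\eqref{nonlinear:eq-DD-p5} exactly.

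Two small technical fixes are needed. First, in your Step~2 the choice $\beta=2-2\kappa$ for $\Duh(\partial\hspace{0.1ex}\linear[\leqN][r])$ is wrong: its spatial regularity is $\sim 1-\kappa$, and in any case Lemma~\ref{prelim:lem-lh-hh} requires $\beta+\gamma<0$, which fails for $\beta=2-2\kappa$, $\gamma=-1-\kappa$. Taking $\beta=1-\kappa$ works. Second, your weight accounting in the closing product estimate is off: you claim $(\theta+\tfrac{1-2\kappa}{2})+\kappa\le\nu$ under $\nu\ge 3\kappa+\theta$, but the left side equals $\theta+\tfrac12$, which is larger. The paper fixes this (see \eqref{nonlinear:eq-DD-p3}) by placing $B_{\leq N}$ in $\Wc^{3\kappa,\nu-\kappa}$ rather than at top regularity $1-2\kappa$; then the total weight is $(\nu-\kappa)+\kappa=\nu$, and the condition $\nu-\kappa\ge \tfrac{3\kappa}{2}+\theta$ follows from $\nu\ge 3\kappa+\theta$. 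With these corrections your argument coincides with the paper's proof.
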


\begin{remark}\label{nonlinear:rem-wick-para}
In a general dimension $d \geq 2$, we expect that Wick-ordering yields the counterterms
\begin{equation}
\frac{3(d-1)}{d} \sigma_{\leq N}^2 \Kil(B_{\leq N}^i) 
\qquad \text{and} \qquad -\frac{1}{d} \sigma_{\leq N}^2 \Kil(B_{\leq N}^i)
\end{equation}
in \eqref{nonlinear:eq-DD-1} and \eqref{nonlinear:eq-DD-2}, respectively. The more general counterterms can be derived from Remark \ref{objects:rem-with-two} and a modification of the combinatorial identities \eqref{nonlinear:eq-DD-p4} and \eqref{nonlinear:eq-DD-p5}. 
\end{remark}
\begin{proof}
For expository purposes, we separate the proof into two steps.\\

\emph{Step 1:} In the first step, we prove for all $j_1,j_2,j_3 \in [2]$ and all $k_2,k_3\in [2]$ that 
\begin{equation}\label{nonlinear:eq-DD-p1}
\begin{aligned}
\bigg\| \bigg[ \Duh \Big( \Big[ B_{\leq N}^{j_1} \parall \partial^{k_2} \linear[\leqN][r][j_2]  \Big] \Big) \parasim  \partial^{k_3} \linear[\leqN][r][j_3]  \bigg]
- \frac{1}{4} \delta^{k_2k_3} \delta^{j_2j_3} \sigma_{\leq N}^2
\Kil\big( B_{\leq N}^{j_1} \big) \bigg\|_{\Wc^{-4\kappa,\nu}} 
\lesssim R^2 \big\| B_{\leq N}\big\|_{\Sc^{1-2\kappa}}. 
\end{aligned}
\end{equation}
We first write\footnote{It is slightly inconvenient that we momentarily work in coordinates of $\frkg$. Our reason for switching to coordinates is that the para-product in \eqref{nonlinear:eq-DD-q1} acts on the second and third argument. Since the inner Lie bracket acts on the first and second argument, this makes it hard to combine the para-product and Lie bracket notation.}
\begin{align*}
\bigg[ \Duh \Big( \Big[ B_{\leq N}^{j_1} \parall \partial^{k_2} \linear[\leqN][r][j_2]  \Big] \Big) \parasim  \partial^{k_3} \linear[\leqN][r][j_3]  \bigg]
= \Big( \Duh \big( B^{j_1,a_1}_{\leq N} \parall \partial^{k_2} \linear[\leqN][r][j_2,a_2] \big) \parasim \partial^{k_3} \linear[\leqN][r][j_3,a_3] \Big)
\Big[ \big[ E_{a_1}, E_{a_2} \big], E_{a_3} \Big]. 
\end{align*}
Using Lemma \ref{prelim:lem-lh-hh} and  Lemma \ref{prelim:lem-trilinear-integral-commutator}, it follows that
\begin{equation}\label{nonlinear:eq-DD-p2}
\begin{aligned}
&\Big\| \Big( \Duh \big( B^{j_1,a_1}_{\leq N} \parall \partial^{k_2} \linear[\leqN][r][j_2,a_2] \big) \parasim \partial^{k_3} \linear[\leqN][r][j_3,a_3] \Big) 
- B^{j_1,a_1}_{\leq N} 
\Big( \Duh \big( \partial^{k_2} \linear[\leqN][r][j_2,a_2] \big) 
\parasim \partial^{k_3} \linear[\leqN][r][j_3,a_3] \Big) 
\Big\|_{\Wc^{-4\kappa,\nu}} \\
\lesssim&\, \big\| B_{\leq N} \big\|_{\Sc^{1-2\kappa}}
\big\| \partial\hspace{0.1ex} \linear[\leqN][r] \big\|_{\Wc^{-1-\kappa,0}}^2 
\lesssim R^2 \big\| B_{\leq N} \big\|_{\Sc^{1-2\kappa}}. 
\end{aligned}
\end{equation}
We can therefore replace the first summand in \eqref{nonlinear:eq-DD-p1} with 
\begin{equation}\label{nonlinear:eq-DD-q1}
B^{j_1,a_1}_{\leq N} 
\Big( \Duh \big( \partial^{k_2} \linear[\leqN][r][j_2,a_2] \big) 
\parasim \partial^{k_3} \linear[\leqN][r][j_3,a_3] \Big)  \Big[ \big[ E_{a_1}, E_{a_2} \big], E_{a_3} \Big] 
= B_{\leq N}^{j_1,a_1} \Big[ \big[ E_{a_1}, 
\Duh \big( \partial^{k_2} \linear[\leqN][r][j_2] \big) \big] 
\parasim \partial^{k_3} \linear[\leqN][r][j_3] \Big]. 
\end{equation}
Using our product estimate (Lemma \ref{prelim:lem-para-product}) and our assumption $\Xi_{\leq N} \in \Dc_R([0,T])$, we obtain that
\begin{equation}\label{nonlinear:eq-DD-p3}
\begin{aligned}
&\bigg\|  B_{\leq N}^{j_1,a_1} \Big[ \big[ E_{a_1}, 
\Duh \big( \partial^{k_2} \linear[\leqN][r][j_2] \big) \big] 
\parasim \partial^{k_3} \linear[\leqN][r][j_3] \Big] 
- \frac{1}{4}  B_{\leq N}^{j_1,a_1} \delta^{k_2k_3} \delta^{j_2j_3} \sigma_{\leq N}^2 \Kil(E_{a_1}) 
\bigg\|_{\Wc^{-4\kappa,\nu}} \\
\lesssim&\, 
\big\| B_{\leq N} \big\|_{\Wc^{3\kappa,\nu-\kappa}} 
\max_{a_1\in [\dim\frkg]} 
\Big\|  \Big[ \big[ E_{a_1}, 
\Duh \big( \partial^{k_2} \linear[\leqN][r][j_2] \big) \big] 
\parasim \partial^{k_3} \linear[\leqN][r][j_3] \Big] 
- \frac{1}{4}  \delta^{k_2k_3} \delta^{j_2j_3} \sigma_{\leq N}^2 \Kil(E_{a_1})\Big\|_{\Wc^{-2\kappa,\kappa}} \\
\lesssim&\, R^2 \big\| B_{\leq N} \big\|_{\Wc^{3\kappa,\nu-\kappa}} . 
\end{aligned}
\end{equation}
Since $\nu -\kappa\geq 3\kappa/2+\theta$, the right-hand side of \eqref{nonlinear:eq-DD-p3} is acceptable. Since 
\begin{equation}
B_{\leq N}^{j_1,a_1} \delta^{k_2k_3} \delta^{j_2j_3} \sigma_{\leq N}^2 \Kil(E_{a_1}) =  \delta^{k_2k_3} \delta^{j_2j_3} \sigma_{\leq N}^2  \Kil(B_{\leq N}^{j_1}), 
\end{equation}
this completes the proof of \eqref{nonlinear:eq-DD-p1}. \\

\emph{Step 2:} In the second step, we complete the proof of \eqref{nonlinear:eq-DD-1} and \eqref{nonlinear:eq-DD-2}. After using our estimate \eqref{nonlinear:eq-DD-p1} from the first step, it only remains to prove the two combinatorial identities 
\begin{align}
-\frac{1}{4} 2 \big( 2 \delta^k_j \delta^{ji} - 2 \delta^{ki} \delta^{j}_j 
- \delta^j_j \delta^{ki} + \delta^{ji} \delta^{k}_j \big) &= 
\frac{3}{2} \delta^{ki}, \label{nonlinear:eq-DD-p4} \\ 
-\frac{1}{4} 2 \big( 2 \delta^{kj} \delta^{i}_j - \delta^{ij} \delta^{k}_j \big) =  -\frac{1}{2} \delta^{ki}. \label{nonlinear:eq-DD-p5}
\end{align}
Using that $\delta^j_j = 2$, the first identity \eqref{nonlinear:eq-DD-p4} follows from
\begin{equation}
-\frac{1}{4}2\big( 2 \delta^k_j \delta^{ji} - 2 \delta^{ki} \delta^{j}_j 
- \delta^j_j \delta^{ki} + \delta^{ji} \delta^{k}_j \big)
= -\frac{1}{4}2 \big( 2 -4 - 2 +1 \big) \delta^{ki} 
= \frac{6}{4}  \delta^{ki}. 
\end{equation}
Similarly, the second identity \eqref{nonlinear:eq-DD-p5} follows from 
\begin{equation}
-\frac{1}{4} 2 \big( 2 \delta^{kj} \delta^{i}_j - \delta^{ij} \delta^{k}_j \big) 
= -\frac{1}{4} 2 (2-1) \delta^{ki}  = -\frac{1}{2} \delta^{ki}. 
\end{equation}
\end{proof}

In the next lemma, we control the cubic terms from \eqref{ansatz:eq-Y-new-8}. 

\begin{lemma}[Cubic terms]\label{nonlinear:lem-cubic}
Let $N\in \dyadic$, $T\in [0,1]$, $i\in [2]$, and $\nu \geq 3 (\kappa+\theta)$.   Furthermore, let $R\geq 1$ and assume that $\Xi_{\leq N}\in \Dc_R([0,T])$. 
Then, we have for all $B_{\leq N}\colon [0,T] \times \T_x^2 \rightarrow \frkg^2$ that
\begin{equation}\label{nonlinear:eq-cubic}
\begin{aligned}
&\Big\| \Big[ \Big[ \Big(\linear[\leqN][r][i] + B_{\leq N}^i\Big), 
 \Big(\linear[\leqN][r][j] + B_{\leq N}^j\Big) \Big],
 \Big(\linear[\leqN][l][j] + B_{\leq N,j}\Big) \Big] 
 -  \sigma_{\leq N}^2 \Kil \big( \linear[\leqN][r][i] + B_{\leq N}^i \big) \Big\|_{\Wc^{-4\kappa,\nu}([0,T])} \\
 \lesssim& \, \Big( R + \big\| B_{\leq N} \big\|_{\Sc^{3\kappa}([0,T])} \Big)^3. 
 \end{aligned}
\end{equation}
\end{lemma}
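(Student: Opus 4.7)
My plan is to expand the triple iterated Lie bracket by multilinearity into $2^3 = 8$ terms, classified by how many of the three inner factors are $\linear[\leqN][r][]$ rather than $B_{\leq N}$. The terms with three or two copies of $\linear[\leqN][r][]$ carry logarithmic divergences and must be renormalized using the enhanced data set entries \eqref{objects:eq-enhanced-2} and \eqref{objects:eq-enhanced-3}, while the terms with one or zero copies of $\linear[\leqN][r][]$ are controlled directly by the product estimate of Lemma \ref{prelim:lem-para-product}. The goal is to show that the $\sigma_{\leq N}^2 \Kil(\cdot)$-counterterms arising from the enhanced data set combine to exactly $\sigma_{\leq N}^2 \Kil(\linear[\leqN][r][i] + B_{\leq N}^i)$, matching the counterterm subtracted in the statement.

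For the pure cubic $\sum_j \big[\big[\linear[\leqN][r][i], \linear[\leqN][r][j]\big], \linear[\leqN][r][j]\big]$, I would apply \eqref{objects:eq-enhanced-3} with $(i_1, i_2, i_3) = (i, j, j)$ and sum over $j \in [2]$: using $\sum_j \delta_{ij} = 1$ and $\sum_j \delta_{jj} = 2$, the extracted counterterm is precisely $+\sigma_{\leq N}^2 \Kil(\linear[\leqN][r][i])$. For the three 2-$\linear$ terms, skew-symmetry of the Lie bracket recasts each as an $\End(\frkg)$-valued operator of the form covered by \eqref{objects:eq-enhanced-2} applied to a component of $B_{\leq N}$. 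Specifically, $\sum_j \big[\big[B_{\leq N}^i, \linear[\leqN][r][j]\big], \linear[\leqN][r][j]\big]$ and $\sum_j \big[\big[\linear[\leqN][r][i], B_{\leq N}^j\big], \linear[\leqN][l][j]\big] = -\sum_j \big[\big[B_{\leq N}^j, \linear[\leqN][r][i]\big], \linear[\leqN][r][j]\big]$ produce counterterms $+2\sigma_{\leq N}^2 \Kil(B_{\leq N}^i)$ and $-\sigma_{\leq N}^2 \Kil(B_{\leq N}^i)$, respectively, while $\sum_j \big[\big[\linear[\leqN][r][i], \linear[\leqN][r][j]\big], B_{\leq N, j}\big]$ requires none, since the quadratic object from \eqref{objects:eq-enhanced-1} is already in $\Cs_x^{-2\kappa}$ without renormalization. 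The net 2-$\linear$ counterterm is $+\sigma_{\leq N}^2 \Kil(B_{\leq N}^i)$, and together with the 3-$\linear$ contribution it yields $+\sigma_{\leq N}^2 \Kil(\linear[\leqN][r][i] + B_{\leq N}^i)$, which is exactly canceled by the subtraction in the statement.

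The remaining 1-$\linear$ and 0-$\linear$ terms are controlled by the product estimate of Lemma \ref{prelim:lem-para-product}. For a term with two factors of $B_{\leq N}$ and one of $\linear[\leqN][r][]$, I first bound the $B_{\leq N}$-product in $\Cs_x^{3\kappa}$ with time weight $3\kappa + 2\theta$ using two copies of $\|B_{\leq N}\|_{\Wc^{3\kappa, 3\kappa/2 + \theta}} \leq \|B_{\leq N}\|_{\Sc^{3\kappa}}$, then multiply against $\linear[\leqN][r][] \in C_t^0 \Cs_x^{-\kappa}$ to land in $\Wc^{-\kappa, 3\kappa + 2\theta}$. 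For the pure $B_{\leq N}^3$ term I use two factors in $\Wc^{3\kappa, 3\kappa/2 + \theta}$ and one factor from the $C_t^0 \Cs_x^{-\kappa}$-component of $\Sc^{3\kappa}$, again landing in $\Wc^{-\kappa, 3\kappa + 2\theta}$. Since $\nu \geq 3(\kappa + \theta) \geq 3\kappa + 2\theta$ and $\Cs_x^{-\kappa}, \Cs_x^{-2\kappa} \hookrightarrow \Cs_x^{-4\kappa}$, every contribution embeds into $\Wc^{-4\kappa, \nu}$ and summing across the four groups produces an upper bound of the form $R^3 + R^2 \|B_{\leq N}\|_{\Sc^{3\kappa}} + R \|B_{\leq N}\|_{\Sc^{3\kappa}}^2 + \|B_{\leq N}\|_{\Sc^{3\kappa}}^3$, which is dominated by $(R + \|B_{\leq N}\|_{\Sc^{3\kappa}})^3$.

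The primary obstacle is the combinatorial bookkeeping of the counterterms: one must carefully track the interplay of the $\delta$-tensors from the enhanced data set with the summation over the repeated index $j \in [2]$, and repeatedly exploit the skew-symmetry of the Lie bracket to recast terms like $\big[\big[\linear[\leqN][r][i], B_{\leq N}^j\big], \linear[\leqN][r][j]\big]$ in the form covered by \eqref{objects:eq-enhanced-2}. This dimension-dependent cancellation is precisely the one highlighted in Remark \ref{ansatz:rem-cancellation}, which holds specifically in $d = 2$ and underlies the fact that the original equation \eqref{ansatz:eq-SYM-leqN-revised} does not itself require renormalization.
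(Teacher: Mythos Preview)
Your proposal is correct and follows essentially the same approach as the paper: expand by multilinearity into the four groups according to the number of $\linear[\leqN][r][]$-factors, use the enhanced data set entries \eqref{objects:eq-enhanced-1}--\eqref{objects:eq-enhanced-3} for the degree-$3$ and degree-$2$ groups (verifying the combinatorial identities $\delta^j_j=2$, $\delta^i_j$-contractions so that the total counterterm is exactly $\sigma_{\leq N}^2\Kil(\linear[\leqN][r][i]+B_{\leq N}^i)$), and close the degree-$1$ and degree-$0$ groups by the product estimate. One minor expository point: the dimension-dependence you invoke at the end is more precisely that of Remark~\ref{nonlinear:rem-Wick-cubic} (the prefactor $d-1$ internal to this lemma), rather than Remark~\ref{ansatz:rem-cancellation}, which concerns the cancellation among \emph{all four} counterterms in the $Y$-equation; the two are related but distinct.
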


\begin{remark} \label{nonlinear:rem-Wick-cubic}
In dimension $d\geq 2$, Wick-ordering yields the counterterm $- (d-1) \sigma_{\leq N}^2 \Kil \big( \linear[\leqN][r][i] + B_{\leq N}^i \big)$, i.e., the general form of the pre-factor is $-(d-1)$. This follows from slight modifications of \eqref{nonlinear:eq-counterterm-1} and \eqref{nonlinear:eq-counterterm-2} below.  
\end{remark}

\begin{proof} 
We first decompose 
\begin{align}
&\Big[ \Big[ \Big(\linear[\leqN][r][i] + B_{\leq N}^i\Big), 
 \Big(\linear[\leqN][r][j] + B_{\leq N}^j\Big) \Big],
 \Big(\linear[\leqN][l][j] + B_{\leq N,j}\Big) \Big]  \notag \\
 =& \Big[ \Big[ 
\linear[\leqN][r][i] , 
\linear[\leqN][r][j] \Big],
\linear[\leqN][l][j]  \Big] \label{nonlinear:eq-cubic-deg3} \\
+& \Big[ \Big[ 
B_{\leq N}^i, 
\linear[\leqN][r][j] \Big],
\linear[\leqN][l][j]  \Big] 
+\Big[ \Big[ 
\linear[\leqN][r][i] , 
B_{\leq N}^j \Big],
\linear[\leqN][l][j]  \Big] 
+\Big[ \Big[ 
\linear[\leqN][r][i] , 
\linear[\leqN][r][j] \Big],
B_{\leq N,j}  \Big] 
\label{nonlinear:eq-cubic-deg2} \\
+&\Big[ \Big[ 
\linear[\leqN][r][i] , 
B_{\leq N}^j \Big],
B_{\leq N,j}  \Big] 
+\Big[ \Big[ 
B_{\leq N}^i  , 
\linear[\leqN][r][j] \Big],
B_{\leq N,j}  \Big] 
+\Big[ \Big[ 
B_{\leq N}^i  , 
B_{\leq N}^j \Big],
\linear[\leqN][l][j]  \Big] 
\label{nonlinear:eq-cubic-deg1} \\
+&\Big[ \Big[ 
B_{\leq N}^i  , 
B_{\leq N}^j \Big],
B_{\leq N,j}  \Big].  \label{nonlinear:eq-cubic-deg0}
\end{align}
We now separately estimate the contributions of \eqref{nonlinear:eq-cubic-deg3}-\eqref{nonlinear:eq-cubic-deg0}. \\

\emph{Contribution of \eqref{nonlinear:eq-cubic-deg3}:} We first write 
\begin{equation*}
\Big[ \Big[ 
\linear[\leqN][r][i] , 
\linear[\leqN][r][j] \Big],
\linear[\leqN][l][j]  \Big] = \delta_{jk} 
\Big[ \Big[ 
\linear[\leqN][r][i] , 
\linear[\leqN][r][j] \Big],
\linear[\leqN][r][k]  \Big]. 
\end{equation*}
Using  $\Xi_{\leq N} \in \Dc_R([0,T])$,  it follows for all $t\in [0,T]$ that
\begin{equation*}
\Big\| \Big[ \Big[ 
\linear[\leqN][r][i] , 
\linear[\leqN][r][j] \Big],
\linear[\leqN][r][k]  \Big](t) 
- \sigma_{\leq N}^2 \delta^{jk} \Kil\big( \linear[\leqN][r][i](t) \big)
+  \sigma_{\leq N}^2 \delta^{ik} \Kil\big( \linear[\leqN][r][j](t) \big)
\Big\|_{\Cs^{-4\kappa}}
\lesssim R^3. 
\end{equation*}
Since 
\begin{align}\label{nonlinear:eq-counterterm-1}
 -\sigma_{\leq N}^2 \Big( 
 \delta_{jk}  \delta^{jk} \Kil\big( \linear[\leqN][r][i] \big)
  - \delta_{jk}   \delta^{ik} \Kil\big( \linear[\leqN][r][j] \big) \Big)
= -\sigma_{\leq N}^2 (2-1) \Kil\big( \linear[\leqN][r][i] \big) 
= - \sigma_{\leq N}^2 \Kil\big( \linear[\leqN][r][i] \big), 
\end{align}
it follows that 
\begin{equation}
\Big\| \Big[ \Big[ 
\linear[\leqN][r][i] , 
\linear[\leqN][r][j] \Big],
\linear[\leqN][l][j]  \Big](t) - \sigma_{\leq N}^2 \Kil\big( \linear[\leqN][r][i](t) \big) \Big\|_{\Cs^{-4\kappa}} \lesssim R^3.
\end{equation}
Since $\nu \geq 0$, this is acceptable. \\ 

\emph{Contribution of \eqref{nonlinear:eq-cubic-deg2}:} Using our bound on the enhanced data set and the product estimate (Lemma \ref{prelim:lem-para-product}), it follows for all $t\in [0,T]$ that 
\begin{align*}
\Big\|  \Big[ \Big[ 
B_{\leq N}^i, 
\linear[\leqN][r][j] \Big],
\linear[\leqN][l][j]  \Big] (t)
- \sigma_{\leq N}^2 \delta^{j}_{j} \Kil\big( B_{\leq N}^i(t) \big) 
\Big\|_{\Cs^{-4\kappa}} &\lesssim R^2 \big\| B_{\leq N}(t) \big\|_{\Cs^{3\kappa}}, \\
\Big\| \Big[ \Big[ 
\linear[\leqN][r][i] , 
B_{\leq N}^j \Big],
\linear[\leqN][l][j]  \Big] (t)
+ \sigma_{\leq N}^2 \delta^{i}_{j}  \Kil\big( B_{\leq N}^j(t) \big) 
\Big\|_{\Cs^{-4\kappa}} &\lesssim R^2 \big\| B_{\leq N}(t) \big\|_{\Cs^{3\kappa}}, \\
\Big\| \Big[ \Big[ 
\linear[\leqN][r][i] , 
\linear[\leqN][r][j] \Big],
B_{\leq N,j}  \Big] (t)
\Big\|_{\Cs^{-4\kappa}} &\lesssim  R^2 \big\| B_{\leq N}(t) \big\|_{\Cs^{3\kappa}}. 
\end{align*}
Since 
\begin{equation}\label{nonlinear:eq-counterterm-2}
-\sigma_{\leq N}^2 \big( \delta^{j}_{j}  \Kil\big( B_{\leq N}^i \big) - \delta^{i}_{j}  \Kil\big( B_{\leq N}^j \big)  \big) = - \sigma_{\leq N}^2 (2-1)  \Kil\big( B_{\leq N}^i \big) = -\sigma_{\leq N}^2 \Kil\big( B_{\leq N}^i \big),
\end{equation}
it follows that
\begin{equation*}
\big\| \eqref{nonlinear:eq-cubic-deg2}(t) - \sigma_{\leq N}^2 \Kil\big( B_{\leq N}^i(t) \big) \big\|_{\Cs^{-4\kappa}} \lesssim R^2  \big\| B_{\leq N}(t) \big\|_{\Cs^{3\kappa}} 
\lesssim  t^{-\frac{3\kappa}{2}-\theta} R^2 \big\| B_{\leq N} \big\|_{\Sc^{3\kappa}([0,T])}. 
\end{equation*}
Since $\nu \geq \frac{3}{2} \kappa +\theta$, this contribution is acceptable. \\

\emph{Contributions of \eqref{nonlinear:eq-cubic-deg1} and \eqref{nonlinear:eq-cubic-deg0}:} 
Using our bound on the enhanced data set and the product estimate (Lemma \ref{prelim:lem-para-product}), it follows that
\begin{align*}
\big\| \eqref{nonlinear:eq-cubic-deg1}(t) \big\|_{\Cs^{-4\kappa}}  + 
\big\| \eqref{nonlinear:eq-cubic-deg0}(t) \big\|_{\Cs^{-4\kappa}}  
&\lesssim \big\| \linear[\leqN][r][](t) \big\|_{\Cs^{-\kappa}} 
\big\| B_{\leq N}(t) \big\|_{\Cs^{2\kappa}}^2 
+ \big\| B_{\leq N}(t) \big\|_{\Cs^{2\kappa}}^3  \\
&\lesssim  \big( t^{-\frac{2\kappa}{2}-\theta} \big)^3  \Big( R+\big\| B_{\leq N} \big\|_{\Sc^{3\kappa}([0,T])} \Big)^3. 
\end{align*}
Since $\nu \geq 3(\kappa+\theta)$, this contribution is acceptable. 
\end{proof}

In the following lemma, we control the contributions of all remaining terms in the evolution equation for $Y_{\leq N}$, i.e.,  \eqref{ansatz:eq-Y-new-4}, \eqref{ansatz:eq-Y-new-5}, \eqref{ansatz:eq-Y-new-6}, \eqref{ansatz:eq-Y-new-7}, and \eqref{ansatz:eq-Y-new-9}.

\begin{lemma}\label{nonlinear:lem-remaining}
Let $T\in [0,1]$, let $i,j,k\in [2]$, and 
\begin{equation}\label{nonlinear:eq-remaining-nu}
\nu \geq \frac{1}{2} + 2 \kappa + 2 \theta. 
\end{equation}
Furthermore, let $N\in \dyadic$, let $R\geq 1$,  and assume that $\Xi_{\leq N} \in \Dc_R([0,T])$. For all $B_{\leq N}\colon [0,T] \times \T^2_x \rightarrow \frkg^2$ and $Y_{\leq N}\colon [0,T] \times \T^2_x \rightarrow \frkg^2$, we then have the following estimates:
\begin{align}
  \Big\| \Big[  Y_{\leq N}^i  \parasim  \partial^j \, \linear[\leqN][r][k]  \Big] \Big\|_{\Wc^{-4\kappa,\nu}([0,T])} 
 &\lesssim R \big\| Y_{\leq N} \big\|_{\Sc^{1+2\kappa}([0,T])}, 
 \label{nonlinear:eq-remaining-e1} \\ 
  \Big\| \partial^i \Big[ B^j_{\leq N} \parasim   \linear[\leqN][r][k] \Big]  \Big\|_{\Wc^{-4\kappa,\nu}([0,T])} 
 &\lesssim  R \big\| B_{\leq N} \big\|_{\Sc^{1-2\kappa}([0,T])}, \label{nonlinear:eq-remaining-e2} \\ 
 \Big\| \Big[ B^j_{\leq N} , 2 \partial_j B^i_{\leq N} - \partial^i B_{\leq N,j} \Big] \Big\|_{\Wc^{-4\kappa,\nu}([0,T])} 
 &\lesssim  \big\| B_{\leq N} \big\|_{\Sc^{1-2\kappa}([0,T])}^2, \label{nonlinear:eq-remaining-e3} \\ 
 \Big\| \Big[ B_{\leq N}^i\paragg \partial^j \linear[\leqN][r][k]  \Big] \Big\|_{\Wc^{-4\kappa,\nu}([0,T])}  
 + \Big\| \Big[ \linear[\leqN][r][i] \paransim  \, \partial^j B_{\leq N}^k \Big]  \Big\|_{\Wc^{-4\kappa,\nu}([0,T])}   
 &\lesssim R \big\| B_{\leq N} \big\|_{\Sc^{1-2\kappa}([0,T])},  \label{nonlinear:eq-remaining-e4}\\
 \Big\| \, \linear[\leqN][r][i] + B_{\leq N}^i\Big\|_{\Wc^{-4\kappa,\nu}([0,T])}  
 &\lesssim R + \big\| B_{\leq N} \big\|_{\Sc^{1-2\kappa}([0,T])}. \label{nonlinear:eq-remaining-e5}
\end{align}
\end{lemma}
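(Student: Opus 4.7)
The plan is to treat each of \eqref{nonlinear:eq-remaining-e1}--\eqref{nonlinear:eq-remaining-e5} as a direct application of the paraproduct estimates in Lemma \ref{prelim:lem-para-product}, combined with the definition of $\Sc^\alpha$ (Definition \ref{prelim:def-solution-space}) to convert $\Sc$-norms into pointwise-in-time H\"older bounds with the appropriate $t$-weight, and the assumption $\Xi_{\leq N}\in \Dc_R([0,T])$ to control $\|\linear[\leqN][r][](t)\|_{\Cs^{-\kappa}}$ and $\|\partial\,\linear[\leqN][r][](t)\|_{\Cs^{-1-\kappa}}$ by $R$ uniformly in $t$. The bookkeeping in each case is to pick regularity exponents so that the output lands in $\Cs^{-4\kappa}$ while the total weight in $t$ does not exceed $\nu$; throughout, the hypothesis $\nu\geq \tfrac12+2\kappa+2\theta$ is the controlling constraint.

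For \eqref{nonlinear:eq-remaining-e1}, I would apply the high$\times$high estimate with $Y_{\leq N}^i(t)\in \Cs^{1+2\kappa}$ and $\partial^j\linear[\leqN][r][k](t)\in \Cs^{-1-\kappa}$; the regularity sum is $\kappa>0$, which yields $\Cs^{\kappa}\hookrightarrow \Cs^{-4\kappa}$, and the weight is $t^{-(1+2\kappa)/2-\theta}=t^{-1/2-\kappa-\theta}\leq t^{-\nu}$. For \eqref{nonlinear:eq-remaining-e2}, I would first apply the high$\times$high estimate at regularities $(1-2\kappa,-\kappa)$ to obtain $[B_{\leq N}^j\parasim \linear[\leqN][r][k]]\in \Cs^{1-3\kappa}$, then lose one spatial derivative to land in $\Cs^{-3\kappa}\hookrightarrow \Cs^{-4\kappa}$; the weight is $t^{-(1-2\kappa)/2-\theta}\leq t^{-\nu}$. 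Estimate \eqref{nonlinear:eq-remaining-e3} is the only one where the naive choice $B,\partial B\in \Cs^{1-2\kappa}\times\Cs^{-2\kappa}$ produces a weight too large; to fix this I would instead use $B\in \Cs^{3\kappa}$ (with weight $t^{-3\kappa/2-\theta}$) and $\partial B\in \Cs^{-2\kappa}$ (with weight $t^{-(1-2\kappa)/2-\theta}$), both controlled by $\|B_{\leq N}\|_{\Sc^{1-2\kappa}}$ via the elements $2\kappa,3\kappa,1-2\kappa\in\Reg$; the product estimate then gives $\Cs^{-2\kappa}\hookrightarrow \Cs^{-4\kappa}$ with total weight $t^{-1/2+\kappa/2-2\theta}\leq t^{-\nu}$. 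For \eqref{nonlinear:eq-remaining-e4}, the high$\times$low estimate with $(\alpha,\beta)=(1-2\kappa,-1-\kappa)$ and, for the $\paransim$ term, a combination of the high$\times$low and low$\times$high estimates with $(-\kappa,-2\kappa)$, both yield $\Cs^{-3\kappa}\hookrightarrow \Cs^{-4\kappa}$ with weight $t^{-(1-2\kappa)/2-\theta}\leq t^{-\nu}$. Finally, \eqref{nonlinear:eq-remaining-e5} is the trivial embedding $\Cs^{-\kappa},\Cs^{1-2\kappa}\hookrightarrow \Cs^{-4\kappa}$.

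The one place that requires any care is \eqref{nonlinear:eq-remaining-e3}, because the natural pairing $B\cdot\partial B$ with both factors in the top regularity $\Cs^{1-2\kappa}$ produces the weight $t^{-(1-2\kappa)-2\theta}$, which is sharper than $\nu$ allows. The interpolated choice described above (using the lower regularity $3\kappa\in \Reg$ for one factor) resolves this and is the only genuine obstacle in the proof. Apart from this, every step is a direct citation of Lemma \ref{prelim:lem-para-product} together with the $R$-bound on the enhanced data set, so after fixing the exponents there is no remaining difficulty.
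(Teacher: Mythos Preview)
Your proposal is correct and follows essentially the same approach as the paper: each of \eqref{nonlinear:eq-remaining-e1}--\eqref{nonlinear:eq-remaining-e5} is handled by a direct para-product estimate from Lemma \ref{prelim:lem-para-product} together with the $\Sc^\alpha$-weights, and you correctly single out \eqref{nonlinear:eq-remaining-e3} as the one case where the naive top-regularity pairing fails and one must instead place $B_{\leq N}$ in $\Cs^{3\kappa}$ with weight $t^{-3\kappa/2-\theta}$. (There is a harmless sign slip in your weight for \eqref{nonlinear:eq-remaining-e3}: the total exponent is $-1/2-\kappa/2-2\theta$, not $-1/2+\kappa/2-2\theta$, but this is still absorbed by the hypothesis $\nu\geq \tfrac12+2\kappa+2\theta$.)
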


\begin{proof}
We prove \eqref{nonlinear:eq-remaining-e1}-\eqref{nonlinear:eq-remaining-e5} separately.  In the following arguments, we often use para-product estimates (as in Lemma \ref{prelim:lem-para-product}) and $\Xi_{\leq N} \in \Dc_R([0,T])$ without explicit reference. \\

\emph{Proof of \eqref{nonlinear:eq-remaining-e1}:} For any $t\in [0,T]$, it holds that 
\begin{align*}
\Big\| \Big[  Y_{\leq N}^i  \parasim  \partial^j \, \linear[\leqN][r][k]  \Big](t) \Big\|_{\Cs^{-4\kappa}}
\lesssim \, \big\|  Y_{\leq N}^i(t) \big\|_{\Cs^{1+2\kappa}} 
\big\|  \partial^j \, \linear[\leqN][r][k] (t) \big\|_{\Cs^{-1-\kappa}} 
\lesssim  t^{-1/2-\kappa-\theta} R  \big\| Y_{\leq N} \big\|_{\Sc^{1+2\kappa}}.
\end{align*}
Due to our assumption \eqref{nonlinear:eq-remaining-nu}, this yields an acceptable contribution. \\

\emph{Proof of \eqref{nonlinear:eq-remaining-e2}:} For any $t\in [0,T]$, it holds that 
\begin{align*}
 &\Big\| \partial^i \Big[ B^j_{\leq N} \parasim   \linear[\leqN][r][k] \Big](t)  \Big\|_{\Cs^{-4\kappa}} 
 \lesssim  \Big\|  \Big[ B^j_{\leq N} \parasim   \linear[\leqN][r][k] \Big](t)  \Big\|_{\Cs^{1-4\kappa}}
 \lesssim  \big\| B^j_{\leq N}(t) \big\|_{\Cs^{1-2\kappa}} \Big\|   \, \linear[\leqN][r][k](t)  \Big\|_{\Cs^{-\kappa}} \\ 
 \lesssim& \,  t^{-1/2+\kappa-\theta} R \big\| B_{\leq N} \big\|_{\Sc^{1-2\kappa}}. 
\end{align*}
Due to our assumption \eqref{nonlinear:eq-remaining-nu}, this yields an acceptable contribution. \\

\emph{Proof of \eqref{nonlinear:eq-remaining-e3}:}
For any $t\in [0,T]$, we have that
\begin{align*}
 \Big\| \Big[ B^j_{\leq N} , 2 \partial_j B^i_{\leq N} - \partial^i B_{\leq N,j} \Big](t) \Big\|_{\Cs_x^{-4\kappa}} 
 \lesssim \big\| B_{\leq N}(t) \big\|_{\Cs_x^{3\kappa}} 
 \big\| \partial B_{\leq N}(t) \big\|_{\Cs_x^{-2\kappa}} 
 \lesssim t^{-\frac{3\kappa}{2}-\theta} t^{-\frac{1-2\kappa}{2}-\theta} \big\| B_{\leq N} \big\|_{\Sc^{1-2\kappa}([0,T])}^2. 
\end{align*}
Due to our assumption \eqref{nonlinear:eq-remaining-nu}, this yields an acceptable contribution.  \\ 

\emph{Proof of \eqref{nonlinear:eq-remaining-e4}:} 
We treat the two summands on the left-hand side of \eqref{nonlinear:eq-remaining-e4} separately. For any $t\in [0,T]$, the first summand can be estimated by 
\begin{align*}
\Big\| \Big[ B_{\leq N}^i\paragg \partial^j \linear[\leqN][r][k]  \Big](t) \Big\|_{\Cs^{-4\kappa}}
\lesssim \big\| B_{\leq N}^i(t) \big\|_{\Cs^{1-2\kappa}} \Big\| \,  \partial^j \linear[\leqN][r][k](t) \Big\|_{\Cs^{-1-\kappa}} 
\lesssim t^{-1/2+\kappa-\theta}  R \big\| B_{\leq N} \big\|_{\Sc^{1-2\kappa}}.  
\end{align*}
For the second summand, we estimate
\begin{align*}
\Big\| \Big[ \linear[\leqN][r][i] \paransim  \, \partial^j B_{\leq N}^k \Big](t) \Big\|_{\Cs^{-4\kappa}}
\lesssim \Big\| \, \linear[\leqN][r][i](t) \Big\|_{\Cs^{-\kappa}} \Big\| \partial^j B_{\leq N}^k(t) \Big\|_{\Cs^{-2\kappa}} 
\lesssim t^{-1/2+\kappa-\theta} R \big\|B_{\leq N}^k \big\|_{\Sc^{1-2\kappa}}.  
\end{align*}
Due to our assumption \eqref{nonlinear:eq-remaining-nu}, this yields an acceptable contribution. \\ 

\emph{Proof of \eqref{nonlinear:eq-remaining-e5}:}
For any $t\in [0,T]$, we have that 
\begin{align*}
 \Big\| \, \linear[\leqN][r][i](t) + B_{\leq N}^i(t) \Big\|_{\Cs_x^{-4\kappa}}  
 \lesssim \Big\| \, \linear[\leqN][r][i](t) \Big\|_{\Cs_x^{-2\kappa}}  + \Big\| B_{\leq N}^i(t)\Big\|_{\Cs_x^{-2\kappa}}
 \lesssim R + \big\| B_{\leq N} \big\|_{\Sc^{1-2\kappa}([0,T])}. 
\end{align*}
Since $\nu \geq 0$, this is acceptable. 
\end{proof}

Equipped with Lemma \ref{nonlinear:lem-X}, Lemma \ref{nonlinear:lem-dd}, Lemma \ref{nonlinear:lem-cubic}, and Lemma \ref{nonlinear:lem-remaining}, we now turn to the proof of the main proposition. 

\begin{proof}[Proof of Proposition \ref{nonlinear:prop-wellposedness-para}:]
The argument relies on the contraction-mapping principle and the estimates from Lemma \ref{nonlinear:lem-X}, Lemma \ref{nonlinear:lem-dd}, Lemma \ref{nonlinear:lem-cubic}, Lemma \ref{nonlinear:lem-remaining}. In the following argument, all norms will be implicitly restricted to the time-interval $[0,\tau]$. For expository purposes, we separate the proof into three steps. \\

\emph{Step 1: Setup.}
Let $\widetilde{C}\geq 1$ remain to be chosen. We then define
\begin{equation*}
\Xc_{R,S} := \Big\{ X \in \Sc^{1-2\kappa}\colon \big\| X \big\|_{\Sc^{1-2\kappa}} \leq \widetilde{C} (R+S)\Big\} 
\qquad \text{and} \qquad 
\Yc_{R,S} := \Big\{ Y \in \Sc^{2-5\kappa}\colon \big\| Y \big\|_{\Sc^{2-5\kappa}} \leq \widetilde{C} (R+S) \Big\}.
\end{equation*}
For any $(X_{\leq N},Y_{\leq N})\in \Xc_{R,S} \times \Yc_{R,S}$, we define
\begin{equation}
B_{\leq N} = \quadratic[\leqN][r] 
+ X_{\leq N} + Y_{\leq N}.
\end{equation}
 Due to $\Xi_{\leq N}\in \Dc_R$ and $(X_{\leq N},Y_{\leq N})\in \Xc_{R,S} \times \Yc_{R,S}$, it holds that 
\begin{equation}\label{nonlinear:eq-contract-p0}
\big\| B_{\leq N} \big\|_{\Sc^{1-2\kappa}}
\leq 
\Big\| \quadratic[\leqN][r]\Big\|_{\Sc^{1-2\kappa}}
+ \big\| X_{\leq N} \big\|_{\Sc^{1-2\kappa}}
+ \big\| Y_{\leq N} \big\|_{\Sc^{2-5\kappa}}
\lesssim \widetilde{C} (R+S). 
\end{equation}

In the following, recall the map $\Gamma = (\Gamma^X, \Gamma^Y)$ from Definition \ref{def:Gamma-contraction-map}.

\emph{Step 2: $\Gamma$ is a self-map.} 
In this step, we show that $\Gamma$ is a self-map, i.e., we prove that $\Gamma$ maps  $\Xc_{R,S} \times \Yc_{R,S}$  back into  $\Xc_{R,S} \times \Yc_{R,S}$. To this end, we define
\begin{equation}
\nu:= \frac{1}{2} + 2\kappa + 2\theta 
\qquad \text{and} \qquad
\delta:= \frac{1}{2} - 4 \kappa - 2 \theta
\end{equation}
and let $(X_{\leq N},Y_{\leq N})\in \Xc_{R,S}\times \Yc_{R,S}$ be arbitrary. 
In order to control $\Gamma^X(X_{\leq N},Y_{\leq N})$, we use Proposition \ref{prelim:prop-Duhamel-Salpha}, which yields that
\begin{equation}\label{nonlinear:eq-contract-p5}
\Big\| \, \Gamma^X(X_{\leq N},Y_{\leq N}) \Big\|_{\Sc^{1-2\kappa}}
=  \Big\| \Duh \Big( \eqref{ansatz:eq-new-X} \Big)\Big\|_{\Sc^{1-2\kappa}}
\lesssim \tau^\delta \big\| \eqref{ansatz:eq-new-X} \big\|_{\Wc^{-1-\kappa,\kappa+\theta}}. 
\end{equation}
Using Lemma \ref{nonlinear:lem-X} and \eqref{nonlinear:eq-contract-p0}, it further holds that
\begin{equation}\label{nonlinear:eq-contract-p6}
 \tau^\delta \big\| \eqref{ansatz:eq-new-X} \big\|_{\Wc^{-1-\kappa,\kappa+\theta}} \lesssim \tau^\delta (R+S). 
\end{equation}
In order to control $\Gamma^Y(X_{\leq N},Y_{\leq N})$, we use 
Lemma \ref{prelim:lem-heat-Salpha} and Proposition \ref{prelim:prop-Duhamel-Salpha}, which yield that 
\begin{align}
& \Big\| \,\Gamma^Y(X_{\leq N},Y_{\leq N}) \,  \Big\|_{\Sc^{2-5\kappa}} \notag \\
\leq\, & \Big\|  e^{-t(1-\Delta)} \Big( A^{(0)} - \linear[\leqN][r][] (0)  -\quadratic[\leqN][r][] (0)  \Big) 
\Big\|_{\Sc^{2-5\kappa}} 
+ \Big\|  \Duh \Big( \eqref{ansatz:eq-Y-new-1}
+ \eqref{ansatz:eq-Y-new-2}
+ \hdots 
+ \eqref{ansatz:eq-Y-new-9} \Big) \Big\|_{\Sc^{2-5\kappa}} \notag \\
\lesssim\, & \Big\| A^{(0)} - \linear[\leqN][r][](0)  -\quadratic[\leqN][r][] (0)   \Big\|_{\Cs_x^{-\kappa}} 
+ \tau^\delta \, \Big\| \eqref{ansatz:eq-Y-new-1}
+ \eqref{ansatz:eq-Y-new-2}
+ \hdots 
+ \eqref{ansatz:eq-Y-new-9} \Big\|_{\Wc^{-4\kappa,\nu}}. 
\label{nonlinear:eq-contract-p1}
\end{align}
Using the assumptions $\| A^{(0)} \|_{\Cs_x^{-\kappa}}\leq S$ and $\Xi_{\leq N}\in \Dc_R$, it follows that
\begin{equation}\label{nonlinear:eq-contract-p2}
\Big\| A^{(0)} - \linear[\leqN][r][](0)  -\quadratic[\leqN][r][] (0)   \Big\|_{\Cs_x^{-\kappa}} \lesssim R+S. 
\end{equation}
Using $\Xi_{\leq N}\in \Dc_R$, Lemma \ref{nonlinear:lem-dd}, Lemma \ref{nonlinear:lem-cubic}, and Lemma \ref{nonlinear:lem-remaining}, it holds that 
\begin{equation}\label{nonlinear:eq-contract-p3}
\Big\| \eqref{ansatz:eq-Y-new-1}
+ \eqref{ansatz:eq-Y-new-2}
+ \hdots 
+ \eqref{ansatz:eq-Y-new-9} \Big\|_{\Wc^{-4\kappa,\nu}}
\lesssim (R+ \big\| X_{\leq N} \big\|_{S^{1-2\kappa}} 
+ \big\| Y_{\leq N} \big\|_{S^{2-5\kappa}} )^3 
\lesssim \widetilde{C}^3 (R+S)^3. 
\end{equation}
By choosing $\widetilde{C}=\widetilde{C}(\kappa,\theta)\geq 1$ as sufficiently large, it follows from \eqref{nonlinear:eq-contract-p5}-\eqref{nonlinear:eq-contract-p3} that
\begin{equation}\label{nonlinear:eq-contract-p4}
\big\| \, \Gamma(X_{\leq N},Y_{\leq N}) \,  \big\|_{\Sc^{1-2\kappa}\times \Sc^{2-5\kappa}}   
\leq 
\frac{\widetilde{C}}{2} (R+S) + \tau^\delta \widetilde{C}^4 (R+S)^3. 
\end{equation}
By then choosing $C=C(\widetilde{C},\kappa,\theta)\geq 1$ as sufficiently large, it then follows from $\tau\leq C^{-1} (R+S)^{-C}$ that
\begin{equation}\label{nonlinear:eq-contract-p7}
\big\| \, \Gamma(X_{\leq N},Y_{\leq N}) \,  \big\|_{\Sc^{1-2\kappa}
\times \Sc^{2-5\kappa}} 
\leq \widetilde{C} (R+S). 
\end{equation}
As a result, 
\begin{equation*}
    \big( \Gamma^X(X_{\leq N},Y_{\leq N}), \Gamma^Y(X_{\leq N},Y_{\leq N})\big) \in \Xc_{R,S} \times \Yc_{R,S},
\end{equation*}
i.e., $\Gamma$ is a self-map on $\Xc_{R,S}\times \Yc_{R,S}$. \\

\emph{Step 3: Conclusion.} 
In Step 2, we proved the self-mapping property of $\Gamma$, i.e, \eqref{nonlinear:eq-contract-p7}. Using similar estimates, i.e., using minor modifications of Lemma \ref{nonlinear:lem-X}, Lemma \ref{nonlinear:lem-dd}, Lemma \ref{nonlinear:lem-cubic}, and Lemma \ref{nonlinear:lem-remaining}, we obtain for all $(X_{\leq N}, Y_{\leq N}),(\widetilde{X}_{\leq N}, \widetilde{Y}_{\leq N}) \in \Xc_{R,S} \times \Yc_{R,S}$ that 
\begin{equation}\label{nonlinear:eq-contract-p8}
\Big\| \Gamma(X_{\leq N}, Y_{\leq N}) - \Gamma(\widetilde{X}_{\leq N}, \widetilde{Y}_{\leq N}) \Big\|_{\Sc^{1-2\kappa} \times \Sc^{2-5\kappa}} 
\leq \frac{1}{2} \, 
\Big\| (X_{\leq N}, Y_{\leq N}) - (\widetilde{X}_{\leq N}, \widetilde{Y}_{\leq N}) \Big\|_{\Sc^{1-2\kappa} \times \Sc^{2-5\kappa}}. 
\end{equation}
Due to \eqref{nonlinear:eq-contract-p8}, $\Gamma$ is a contraction on $\Xc_{R,S} \times \Yc_{R,S}$. As a result, there exists a unique fixed point of $\Gamma$ on $\Xc_{R,S} \times \Yc_{R,S}$, which is the desired solution of the para-controlled system. \\ 

As is clear from \eqref{nonlinear:eq-Gamma-X} and \eqref{nonlinear:eq-Gamma-Y}, the definition of $\Gamma$ depends on the initial data $A^{(0)}\in \BS$ and the enhanced data set $\Xi_{\leq N}\in \Dc_R$. In order to emphasize this dependence, we now write 
\begin{equation}
\Gamma(X_{\leq N}, Y_{\leq N}) 
= \Gamma(X_{\leq N}, Y_{\leq N}; A^{(0)}, \Xi_{\leq N}). 
\end{equation}
By again using similar estimates, i.e., using minor modifications of Lemma \ref{nonlinear:lem-X}, Lemma \ref{nonlinear:lem-dd}, Lemma \ref{nonlinear:lem-cubic}, and Lemma \ref{nonlinear:lem-remaining}, we obtain for all $(X_{\leq N}, Y_{\leq N}),(\widetilde{X}_{\leq N}, \widetilde{Y}_{\leq N}) \in \Xc_{R,S} \times \Yc_{R,S}$, $A^{(0)},  \widetilde{A}^{(0)} \in \BS$, and $\Xi_{\leq N}, \widetilde{\Xi}_{\leq N}\in \Dc_R$ that
\begin{equation}\label{nonlinear:eq-contract-p9}
\begin{aligned}
&\Big\| \Gamma(X_{\leq N}, Y_{\leq N};A^{(0)}, \Xi_{\leq N})) - \Gamma(\widetilde{X}_{\leq N}, \widetilde{Y}_{\leq N};\widetilde{A}^{(0)},\widetilde{\Xi}_{\leq N}) \Big\|_{\Sc^{1-2\kappa} \times \Sc^{2-5\kappa}} \\ 
\leq\,&  \frac{1}{2} \, 
\Big\| (X_{\leq N}, Y_{\leq N}) - (\widetilde{X}_{\leq N}, \widetilde{Y}_{\leq N}) \Big\|_{\Sc^{1-2\kappa} \times \Sc^{2-5\kappa}}
+ \widetilde{C}(R,S) \Big( \big\| A^{(0)} - \widetilde{A}^{(0)} \big\|_{\Cs_x^{-\kappa}} + \dc \big( \Xi_{\leq N}, \widetilde{\Xi}_{\leq N}) \Big),
\end{aligned}
\end{equation}
where $\widetilde{C}(R,S) \geq 1$ is a constant depending on $R$ and $S$. From \eqref{nonlinear:eq-contract-p9}, we then directly obtain the desired Lipschitz-dependence on the initial data $A^{(0)}$ and enhanced data set $\Xi_{\leq N}$. 
\end{proof}

\section{Gauge-covariance}\label{section:gauge-covariance}

This section is devoted to the proof of our second main theorem, Theorem \ref{thm:gauge-covariance}. Recall the definition of space-time Besov spaces $\Cs_{tx}^\alpha$ from Section \ref{section:function-spaces}. These spaces will feature heavily in the technical arguments of this section. We begin with some formal discussion of gauge covariance. 
We note that much of the following discussion may already be found in \cite[Section 2]{CCHS22}. Nevertheless, we still include it here because it serves as important motivation for how the proof of Proposition \ref{prop:main-gauge-covariance} will proceed, and also there are some key points where our argument differs from that of \cite{CCHS22} -- see Remark \ref{remark:comparison-cchs-gauge-covariance}. \\

Suppose we have a solution $A$ to the SYM equation
\[ \ptl_t A = - D_A^* F_A - D_A D_A^* A + \xi. \]
Now let $g\colon [0,\infty) \times \T^2 \rightarrow G$ be a time-dependent gauge transformation. By a direct computation (recall the covariance properties \eqref{eq:covariance-formulas}), we then obtain that 
\begin{equation}\label{gauged:eq-gtgauged-A}
\ptl_t (A^g) = -D^*_{A^g} F_{A^g} - D_{A^g} D^*_{A^g} (\mrm{Ad}_g A) + \mrm{Ad}_g \xi - D_{A^g}((\ptl_t g) g^{-1}).
\end{equation}
In order to obtain gauge-covariance (Theorem \ref{thm:gauge-covariance}), we want that \eqref{gauged:eq-gtgauged-A} coincides with the stochastic Yang-Mills heat equation driven by the adjointed noise $\mrm{Ad}_g \xi$. To achieve this, we need to 
choose the gauge-transformation $g$ as a solution of 
\begin{equation}\label{gauged:eq-g-evolution}
(\ptl_t g)g^{-1} = - D_{A^g}^* ((dg) g^{-1}),
\end{equation}
which then implies the desired identity 
\[ \ptl_t (A^g) = -D^*_{A^g} F_{A^g} - D_{A^g} D_{A^g}^* A^g + \mrm{Ad}_g \xi.\]
We thus see that formally, solutions to the SYM equation are gauge covariant in the sense of Theorem \ref{thm:gauge-covariance}. \\

However, there is a major caveat ignored by the preceding discussion, which is that solutions to the SYM equation are defined by first mollifying the noise $\xi$ and then taking limits. The problem here is that mollification does not commute with $\Ad_g$, and therefore $P_{\leq N} (\Ad_g \xi)$ and  $\Ad_g (P_{\leq N} \xi)$ are different.  
This means that $\bA\big(A_0, P_{\leq N} (\Ad_g \xi)\big)$ and $\bA\big(A_0, \Ad_g(P_{\leq N} \xi)\big)$  are not necessarily equal. On the other hand, since mollification by $P_{\leq N}$ looks more and more like the identity as $N$ gets large, we should expect that in the $N \toinf$ limit, $P_{\leq N}$ and $\mrm{Ad}_g$ {\it do commute}. It would perhaps not be too hard to show some qualitative form of commutation (e.g. $P_{\leq N} (\Ad_g \xi) - \Ad_g (P_{\leq N} \xi) \ra 0$), however for our purposes we will need a more detailed quantitative version of this approximate commutation at finite values of $N$ (Proposition \ref{prop:linear-objects-close-gauge-covariance}). This is the most technical part of the paper, as we will need to understand the precise nature of the difference $\Ad_g^{-1} P_{\leq N} (\Ad_g \xi) - P_{\leq N} \xi$ and its regularity. It will turn out that this difference may be decomposed into a rougher part which later causes additional resonances (but fortunately these resonances cancel out in the end -- see Section \ref{section:gauged-transformed-enhanced-data-set}), and a smoother part which may be later handled deterministically.
\\

We now works towards stating the main result of Section \ref{section:gauge-covariance}, from which Theorem \ref{thm:gauge-covariance} will follow.
Let $A = \bA(A_0, \xi)$ be the solution to the SYM equation driven by $\xi$ with initial data $A_0$. Let $g_0 \in \Cs_x^{1-\kappa}(\T^2 \ra G)$ (recall from Remark \ref{remark:intro-cchs-comparision-gauge-covariance} that $\Cs_x^{1-\kappa}(\T^2 \ra G)$ is the natural space of gauge transformations associated to the space $\Cs_x^{-\kappa}(\T^2 \ra \frkg^2)$ of connections),
and let $g$ be the solution to
\beq\label{eq:gauge-transformation} (\ptl_t g) g^{-1} = - D^*_{A^g} ((dg) g^{-1}) = \ptl^j ((\ptl_j g)g^{-1}) + [\mrm{Ad}_g A^j, (\ptl_j g) g^{-1}], ~~ g(0) = g_0. \eeq
Note that the above is a nonlinear parabolic equation, and by classical local existence theory (and since $A \in C_t^0 \Cs_x^{-\kappa}$), we may obtain local solutions $g \in C_t^0 \Cs_x^{1-\kappa} \cap \Wc^{3/2-\kappa, 1/4}$.  (We note that this regularity is dictated by the fact that the initial data $g_0 \in \Cs_x^{1-\kappa}(\T^2 \ra G)$; if instead $g_0$ is smooth (say) then we could obtain $g \in C_t^0 \Cs_x^{2-}$, where now the regularity is dictated by the fact that $A \in C_t^0 \Cs_x^{-\kappa}$.)
Our first result, which is part of Theorem \ref{thm:gauge-covariance}, is that $\Ad_g \xi$ is well-defined as a space-time distribution.

\begin{lemma}\label{lemma:U-xi-defined}
Almost surely, the following holds for any $g_0 \in \Cs^{1-\kappa}(\T^2, G)$. Let $g$ be the solution to \eqref{eq:gauge-transformation} with initial data $g_0$, and let $\maxtime$ be the maximum simultaneous existence time of $A$ and $g$. Then for any $T \in [0, \maxtime) \cap [0, 1]$, we have that
\[ \Ad_g \xi := \lim_{N \toinf} \Ad_g P_{\leq N} \xi \text{ exists in $\Cs_{tx}^{(-1/2-100\kappa, -1-100\kappa)}((0, T))$}. \]
\end{lemma}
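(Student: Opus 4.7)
The plan is to show that $(\Ad_g P_{\leq N} \xi)_{N \in \dyadic}$ is Cauchy in $\Cs_{tx}^{(-1/2-100\kappa, -1-100\kappa)}((0, T))$, which immediately yields the existence of the limit. For dyadic $M < N$, set $\eta_{MN} := (P_{\leq N} - P_{\leq M})\xi$; the goal then reduces to establishing quantitative smallness of $\|\Ad_g \, \eta_{MN}\|_{\Cs_{tx}^{(-1/2-100\kappa, -1-100\kappa)}((0,T))}$, say $\lesssim M^{-\kappa_0}$ for some $\kappa_0 > 0$. Two ingredients will be used throughout. First, since $T < \maxtime$, standard local parabolic theory applied to \eqref{eq:gauge-transformation} gives $g \in C_t^0 \Cs_x^{1-\kappa}([0, T] \times \T^2) \cap \Wc^{3/2-\kappa, 1/4}$; in particular $\Ad_g \in L^\infty_{tx}([0, T] \times \T^2)$, and a further appeal to parabolic Schauder estimates upgrades this to space-time Besov regularity $\Ad_g \in \Cs_{tx}^{(\sigma_t, 1-\kappa)}$ for a suitable positive $\sigma_t$ dictated by parabolic scaling. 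Second, since $\xi \in \Cs_{tx}^{(-1/2-\kappa/10, -1-\kappa/10)}$ almost surely and $\eta_{MN}$ has spatial Fourier support in $\{|n| > M\}$, we obtain the free gain $\|\eta_{MN}\|_{\Cs_{tx}^{(-1/2-\kappa, -1-\kappa)}} \lesssim M^{-9\kappa/10}$.

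I would then decompose the product via a spatial paraproduct,
\[
\Ad_g \eta_{MN} = \Ad_g \parall \eta_{MN} + \Ad_g \paragg \eta_{MN} + \Ad_g \parasim \eta_{MN},
\]
and handle the three pieces separately. The low-high paraproduct is controlled by applying the space-time paraproduct estimate (the space-time analogue of Lemma \ref{prelim:lem-para-product} discussed after Definition \ref{def:space-time-besov-space-finite-interval}) with $\Ad_g \in L^\infty_{tx}$ in the low slot and the free gain on $\eta_{MN}$ in the high slot, yielding $\lesssim M^{-\kappa_0}$ in the target space. The high-low paraproduct is handled similarly using the additional space-time regularity $\Cs_{tx}^{(\sigma_t, 1-\kappa)}$ of $\Ad_g$, which is strictly better than what the target space demands.

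The main obstacle is the resonant paraproduct $\Ad_g \parasim \eta_{MN}$, for which no direct paraproduct estimate is available: the sum of spatial regularities of $\Ad_g$ and $\xi$ is $(1-\kappa) + (-1-\kappa) = -2\kappa < 0$, so the high-high bound in Lemma \ref{prelim:lem-para-product} does not apply. To control it, I would exploit the adaptedness of $\Ad_g$ to the filtration generated by $\xi$, which is inherited from the construction of $A = \bA(A_0, \xi)$ as a limit of adapted functionals $\bA(A_0, P_{\leq N} \xi)$ and from the fact that $g$ solves the parabolic equation \eqref{eq:gauge-transformation} driven by $A$. With adaptedness in hand, each space-time Littlewood-Paley block $P_{L, K}(\Ad_g \parasim \eta_{MN})(t, x)$ can be represented as an It\^{o}-type stochastic integral against the underlying Brownian motion with integrand involving $\Ad_g$ and a high-high Littlewood-Paley kernel. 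Applying the Burkholder-Davis-Gundy inequality and Gaussian hypercontractivity yields $L^p(\Omega)$ moment bounds in terms of the $L^2$-norm of this kernel, with an extra decay factor from the restriction $K > M$. A standard Kolmogorov-type chaining argument across the countable grid of dyadic scales $(L, K)$ then converts these moment bounds into the almost sure estimate $\|\Ad_g \parasim \eta_{MN}\|_{\Cs_{tx}^{(-1/2-100\kappa, -1-100\kappa)}((0, T))} \lesssim M^{-\kappa_0}$; the margin of $100\kappa$ versus the natural white-noise exponent $\kappa$ is precisely what provides the room needed to absorb the logarithmic and Sobolev-type losses in the chaining step. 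The BDG/adaptedness argument for the resonant piece is the main technical difficulty; combining it with the two routine paraproduct estimates above gives the Cauchy property, and hence the existence of the limit $\Ad_g \xi$ in the claimed space.
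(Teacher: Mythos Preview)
Your treatment of the low-high and high-low paraproducts is broadly on the right track, though the paper needs more care with time regularity than you indicate (singular weights near $t=0$, and a separate splitting into $\parall_t$ versus $\parasim_t$ handled via Lemma~\ref{lemma:paraproduct-Wc-space-time-besov-space}). The serious gap is in the resonant piece.

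Your BDG/adaptedness argument for $\Ad_g \parasim \eta_{MN}$ does not work as stated. The appeal to Gaussian hypercontractivity is unjustified: $\Ad_g$ is a genuinely nonlinear functional of the noise (it depends on $g$, which solves a nonlinear parabolic equation driven by $A$, itself a nonlinear functional of $\xi$), so it lives in no fixed Wiener chaos. Trying BDG alone, the quadratic variation of your It\^{o} integral involves $|P_K \Ad_g|^2$. To extract decay in $K$ you must use $|P_K \Ad_g| \lesssim K^{-(1-\kappa)} \|\Ad_g\|_{C_t^0\Cs_x^{1-\kappa}}$, but that norm is random with no a priori moment control (the statement is required to hold for \emph{every} $g_0$, and $g$ may blow up), so the Kolmogorov chaining cannot be executed. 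The pointwise bound $|P_K \Ad_g|\lesssim 1$ from unitarity of $\Ad_g$ gives no decay in $K$ and is useless for the high-high sum.

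The paper resolves this not by adaptedness but by a structural decomposition $U=\Ad_g=U^r+U^s$: the smoother part $U^s$ lies in $\Wc^{2+1/4-\kappa,5/8}$ (Lemma~\ref{lemma:U-r-U-s-regularity}) so that $U^s\parasim\xi$ is controlled deterministically (Lemma~\ref{lemma:Us-hi-hi-xi}), while the rough part $U^r=-\Duh\big(\ad(\partial_j U\parall\linear[][r][j])\paragg U\big)$ is built explicitly from the linear object. A commutator argument (Lemma~\ref{lemma:U-r-hi-hi-noise-estimate}) then reduces $U^r\parasim\xi$ to the \emph{pre-established} stochastic object $\big(P_{M'}P_M\Duh(\linear)\big)\parasim\xi$ of Lemma~\ref{lemma:Duhamel-linear-times-noise-besov-space-bound}, whose moments are computable because $\Duh(\linear)$ is in the first chaos. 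All remaining steps are pathwise. This is precisely the missing idea: isolate the rough part of $\Ad_g$ as an explicit Gaussian expression so that the only probabilistic input is a fixed second-chaos estimate, rather than taking moments of nonlinear quantities like $\|\Ad_g\|_{\Cs_x^{1-\kappa}}$.
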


Given $A, g$, let $\gauged{A} = \gauged{A}_{\leq N}$ be the solution to
\beq\label{eq:gauged-A} \ptl_t \gauged{A} = - D^*_{\gauged{A}} F_{\gauged{A}} - D_{\gauged{A}} D^*_{\gauged{A}} \gauged{A} + D_{\gauged{A}}([A^j - \gauged{A}^j, g^{-1} \ptl_j g]) + \mrm{Ad}_{g}^{-1} P_{\leq N}(\mrm{Ad}_{g} \xi) , ~~ \gauged{A}(0) = A_0. \eeq

\begin{remark}
It may not be evident that $\Ad_g^{-1}$ has enough time regularity to multiply $P_{\leq N} (\Ad_g \xi)$, but this will become clear in Section \ref{section:glinear}, where we show that in addition to $\Ad_g \in C_t^0 \Cs_x^{1-\kappa} \cap \Wc^{3/2-\kappa, 1/4}$, we will have that $\ptl_t \Ad_g \in \Wc^{-1/2 - \kappa, 1/4}$, and this will be enough by our multiplication result Lemma \ref{lemma:paraproduct-Wc-space-time-besov-space}.
\end{remark}

(We omit the subscript ``$\leq N$" from $\gauged{A}$ in the above for notational brevity.)
Note that this equation is a modified version of the SYM equation, where the extra term involves $A$, which we have already solved for. The motivation for this equation is so that the gauge transformation of $\gauged{A}$ by $g$ satisfies the (mollified) SYM equation driven by $\mrm{Ad}_g \xi$, as the next lemma shows. The proof is deferred\footnote{If $\xi$ were smooth then the proof would be a computation similar to the one at the beginning of this section.} to Section \ref{section:contraction-mapping-gauge-covariance}.

\begin{lemma}\label{lemma:gtgauged-equation}
Let $\maxtime$ be the maximal simultaneous existence time of $A, g$. We have that $\bA(A_0^{g_0}, P_{\leq N} (\Ad_g \xi))$ exists on $[0, \maxtime) \cap [0, 1]$ and moreover is equal to $(\gauged{A}_{\leq N})^g$ on this interval. I.e., $(\gauged{A}_{\leq N})^g$ is the mild solution to
\[ \ptl_t ((\gauged{A}_{\leq N})^g)= -D^*_{(\gauged{A}_{\leq N})^g} F_{(\gauged{A}_{\leq N})^g} - D_{(\gauged{A}_{\leq N})^g} D^*_{(\gauged{A}_{\leq N})^g} \big((\gauged{A}_{\leq N})^g\big) + P_{\leq N} (\mrm{Ad}_g \xi), ~~ (\gauged{A}_{\leq N})^g(0) = \gauged{A}_0^{g_0}.\]
\end{lemma}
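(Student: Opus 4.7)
The plan is to verify that $(\gauged{A}_{\leq N})^g$ is a mild solution of the spatially mollified stochastic Yang-Mills heat equation driven by $P_{\leq N}(\Ad_g \xi)$ with initial data $A_0^{g_0}$, and then appeal to uniqueness in the solution map $\bA(\cdot,\cdot)$ for spatially smooth driving terms. Since $P_{\leq N}$ is a spatial Fourier truncation and $\Ad_g \xi \in \Cs_{tx}^{(-1/2-100\kappa,-1-100\kappa)}((0,T))$ by Lemma \ref{lemma:U-xi-defined}, the driving term $P_{\leq N}(\Ad_g \xi)$ is spatially smooth with temporal regularity strictly greater than $-1$, so the solution map applies. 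Standard Schauder estimates applied to \eqref{eq:gauged-A} yield $\gauged{A} \in C_t^0 \Cs_x^{-\kappa}$ on $[0, \maxtime) \cap [0, 1]$, and since $g \in C_t^0 \Cs_x^{1-\kappa}$, also $(\gauged{A})^g \in C_t^0 \Cs_x^{-\kappa}$ on the same interval.

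The heart of the proof is verifying the identity
\[ \ptl_t((\gauged{A})^g) = -D^*_{(\gauged{A})^g} F_{(\gauged{A})^g} - D_{(\gauged{A})^g} D^*_{(\gauged{A})^g} (\gauged{A})^g + P_{\leq N}(\Ad_g \xi). \]
I would first derive a general product-rule identity: for any 1-form $B$ and time-dependent gauge transformation $h$, differentiation of $(B^h)_i = \Ad_h B_i - (\ptl_i h) h^{-1}$ yields $\ptl_t(B^h) = \Ad_h (\ptl_t B) - D_{B^h}((\ptl_t h) h^{-1})$. Specializing to $B = \gauged{A}$, $h = g$, substituting \eqref{eq:gauged-A}, and invoking the covariance identities \eqref{eq:covariance-formulas} transforms the $\Ad_g$-images of $D^*_{\gauged{A}} F_{\gauged{A}}$ and $D_{\gauged{A}} D^*_{\gauged{A}} \gauged{A}$ into the corresponding expressions in $(\gauged{A})^g$, while converting the noise $\Ad_g \cdot \Ad_g^{-1} P_{\leq N}(\Ad_g \xi) = P_{\leq N}(\Ad_g \xi)$. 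Decomposing $\Ad_g \gauged{A} = (\gauged{A})^g + (dg) g^{-1}$ produces an extra contribution $-D_{(\gauged{A})^g} D^*_{(\gauged{A})^g}((dg) g^{-1})$; using \eqref{eq:gauge-transformation} in the form $-D_{(\gauged{A})^g}((\ptl_t g) g^{-1}) = D_{(\gauged{A})^g} D^*_{A^g}((dg) g^{-1})$, these terms combine into $D_{(\gauged{A})^g}\big((D^*_{A^g} - D^*_{(\gauged{A})^g})((dg)g^{-1})\big)$. The crucial cancellation is that the transformed extra term from \eqref{eq:gauged-A} equals
\[\Ad_g\big(D_{\gauged{A}}[A^j - \gauged{A}^j, g^{-1} \ptl_j g]\big) = D_{(\gauged{A})^g}\big([\Ad_g(A^j - \gauged{A}^j), (\ptl_j g) g^{-1}]\big),\]
and since $\Ad_g(A^j - \gauged{A}^j) = (A^g)^j - ((\gauged{A})^g)^j$, this expression is precisely $-D_{(\gauged{A})^g}\big((D^*_{A^g} - D^*_{(\gauged{A})^g})((dg)g^{-1})\big)$, cancelling the leftover and leaving exactly the SYM nonlinearity plus $P_{\leq N}(\Ad_g \xi)$.

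To justify these manipulations despite the low temporal regularity of the driving term, I would approximate $\xi$ by a space-time mollification $\xi^{(\varepsilon)}$ and consider the associated smooth solutions $A^{(\varepsilon)}$, $g^{(\varepsilon)}$, $\gauged{A}^{(\varepsilon)}$, for which the above identity holds classically. Continuity of the solution maps for the SYM equation, the gauge equation \eqref{eq:gauge-transformation}, and the modified equation \eqref{eq:gauged-A} on the relevant function spaces, together with Lemma \ref{lemma:U-xi-defined}, allows passage to the limit $\varepsilon \to 0$ in the mild formulation; once the mild identity is established, uniqueness of solutions to the SYM equation with a spatially smooth driving term plus the matching initial condition $(\gauged{A})^g(0) = A_0^{g(0)} = A_0^{g_0}$ gives the desired identification. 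The main technical obstacle is verifying the continuity statements in the approximation argument, particularly making sense of $\Ad_g^{-1} P_{\leq N}(\Ad_g \xi)$ as a well-defined distribution uniformly in $\varepsilon$; this relies on the auxiliary time regularity $\ptl_t \Ad_g \in \Wc^{-1/2-\kappa, 1/4}$ of the gauge transformation (noted in the remark preceding the lemma) and a space-time multiplication estimate of the type of Lemma \ref{lemma:paraproduct-Wc-space-time-besov-space}.
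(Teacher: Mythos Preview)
Your algebraic computation is correct and is essentially identical to the paper's: the same covariance identities and the same cancellation
\[
D^*_{A^g}((dg)g^{-1}) - D^*_{(\gauged{A})^g}((dg)g^{-1}) = \Ad_g\big[A_j - \gauged{A}_j,\, g^{-1}\partial_j g\big]
\]
are used to eliminate the extra term coming from the DeTurck contribution and the modification in \eqref{eq:gauged-A}.

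Where you differ from the paper is in the approximation argument. You propose to mollify $\xi$ itself and re-derive smooth approximations $A^{(\varepsilon)}$, $g^{(\varepsilon)}$, $\gauged{A}^{(\varepsilon)}$, then pass to the limit using continuity of all three solution maps. The paper instead keeps $A$ and $g$ fixed throughout and mollifies only the driving term in the $\gauged{A}$ equation, replacing $\Ad_g\xi$ by $(\Ad_g\xi)_{\leq M}:=P_{\leq M,\leq M}(\Ad_g\xi)$. This is cleaner: since $g$ does not change, one only needs (i) continuity of the solution $\gauged{A}^{\leq M}$ in the driving term, and (ii) continuity of the SYM solution map $\bA(A_0^{g_0},\cdot)$ in a spatially smooth driving term, both of which follow from the Schauder estimate Lemma~\ref{lemma:space-time-Besov-space-Schauder} and standard parabolic local theory. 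Your route requires in addition the stability of $g$ under perturbation of $A$ (hence of $\xi$) and the joint continuity of $\Ad_g^{-1}P_{\leq N}(\Ad_g\xi)$ in $(g,\xi)$, which is certainly doable but noticeably heavier. Neither approach has a gap, but the paper's choice of what to mollify localizes the approximation to the one equation where it is actually needed.
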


We now state the main result of Section \ref{section:gauge-covariance}. Our main gauge-covariance theorem, Theorem \ref{thm:gauge-covariance}, will easily follow from this proposition.

\begin{proposition}\label{prop:main-gauge-covariance}
Almost surely, the following holds for all $g_0 \in \Cs^{1-\kappa}(\T^2, G)$. Let $g$ be the solution to \eqref{eq:gauge-transformation} with initial data $g_0$, and let $\maxtime$ be the maximal simultaneous existence time of $A$ and $g$. Let $\gauged{A}_{\leq N}$ be defined as in \eqref{eq:gauged-A}. Then for any $T \in [0, \maxtime) \cap [0, 1]$, we have that
\[\begin{split}
\lim_{N \toinf} \|A_{\leq N} - \gauged{A}_{\leq N}\|_{C_t^0 \Cs_x^{-\kappa}([0, T] \times \T^2)} &= 0.
\end{split}\]
\end{proposition}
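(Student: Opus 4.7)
The plan is to reduce the convergence $A_{\leq N} - \gauged{A}_{\leq N} \to 0$ to a comparison of two para-controlled expansions, by leveraging the Lipschitz continuity of the solution map from Proposition \ref{nonlinear:prop-wellposedness-para}. Since $A_{\leq N} \to A$ by Theorem \ref{intro:thm-lwp}, the task reduces to showing that $\gauged{A}_{\leq N} \to A$ in $C_t^0 \Cs_x^{-\kappa}$ on $[0, T]$. First I would introduce the \emph{gauged linear object}
\[ \gauged{\linear}_{\leq N} := \Duhinf\bigl(\Ad_g^{-1} P_{\leq N}(\Ad_g \xi)\bigr) \]
and, using it as the basic linear building block, construct a gauged enhanced data set $\gauged{\Xi}_{\leq N}$ in exact analogy with Definition \ref{objects:def-enhanced} (with the same renormalization constants $\sigma_{\leq N}^2$). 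The equation \eqref{eq:gauged-A} for $\gauged{A}_{\leq N}$ can then be rewritten as a para-controlled system in the style of Definition \ref{ansatz:def-paracontrolled} built from $\gauged{\Xi}_{\leq N}$, with the additional correction term $D_{\gauged{A}_{\leq N}}([A^j - \gauged{A}_{\leq N}^j, g^{-1}\partial_j g])$ absorbed into the nonlinear remainder: this is legitimate because $g^{-1}\partial_j g \in C_t^0 \Cs_x^{-\kappa}$ and because the correction vanishes when $A = \gauged{A}_{\leq N}$, so it contributes only a perturbative Lipschitz term.

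The main technical step is to prove the analogue of Proposition \ref{objects:prop-enhanced} for $\gauged{\Xi}_{\leq N}$: uniform bounds $\gauged{\Xi}_{\leq N} \in \Dc_R([0,T])$ and convergence $\dc_T(\gauged{\Xi}_{\leq N}, \Xi_{\leq N}) \to 0$. For this I would rely on the approximate commutation of $\Ad_g$ with $P_{\leq N}$ quantified in Proposition \ref{prop:linear-objects-close-gauge-covariance}, together with Lemma \ref{lemma:P-N-commutator-space}, which produces the heuristic decomposition
\[ \Ad_g^{-1} P_{\leq N}(\Ad_g \xi) - P_{\leq N} \xi \;\approx\; \Ad_g^{-1}(\partial_j \Ad_g) \parall Q^j_{>N} \xi \;+\; (\text{smoother remainder}), \]
where the smoother remainder is already convergent (with room to spare) thanks to the decay of $Q^j_{>N}$ recorded in Lemma \ref{prelim:lem-Q}. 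When this decomposition is propagated through each of the multilinear stochastic objects in Definition \ref{objects:def-enhanced}, the dominant $Q^j_{>N}$-piece generates new resonances which, by the key computation in Corollary \ref{objects:cor-quadratic-Q}, involve the constant $\theta_{\leq N}$ from \eqref{objects:eq-theta} and vanish in the limit $N \to \infty$. The freedom in the choice of $(\sreg, \sdecay)$ in Section \ref{section:objects} is precisely what allows one to reuse those estimates here with $\linear[\leqN][r]$ replaced by $\gauged{\linear}_{\leq N}$.

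Once uniform bounds and convergence of $\gauged{\Xi}_{\leq N}$ are in hand, the Lipschitz dependence of $(\bX_{R,S,\tau}, \bY_{R,S,\tau})$ on $(\Xi_{\leq N}, A_0)$ from Proposition \ref{nonlinear:prop-wellposedness-para} yields $\gauged{A}_{\leq N} \to A$ on some random local time interval $[0, \tau]$, and iterating the local argument a bounded number of times extends this up to any $T < \maxtime$. The main obstacle is verifying the cancellation of the extra resonances produced by the $Q^j_{>N}$-corrections across all seven families of objects in the gauged enhanced data set; this is an algebraic task similar in spirit to the cancellation observed in Remark \ref{ansatz:rem-cancellation}, and is exactly what the careful tensor-valued setup of Sections \ref{section:objects-power}--\ref{section:objects-derivative} was designed to accommodate. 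A minor bookkeeping point is that the exceptional null set must be chosen independently of $g_0$; since $\Xi_{\leq N}$ and the pointwise bounds in Proposition \ref{prop:linear-objects-close-gauge-covariance} can be stated uniformly over $g_0 \in \Cs_x^{1-\kappa}$ by a density argument, this causes no real difficulty.
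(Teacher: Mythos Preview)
Your overall strategy matches the paper's: build the gauged enhanced data set $\Xi^g_{\leq N}$, prove $\dc_T(\Xi_{\leq N},\Xi^g_{\leq N})\to 0$ (this is Proposition \ref{gauged:prop-enhanced}), treat the extra term $D_{\gauged{A}}([A^j-\gauged{A}^j,g^{-1}\partial_j g])$ as a perturbation of the contraction map (the paper's $\mbf{P}^Y$ and Lemma \ref{lemma:gamma-and-bar-gamma-close}), and iterate. One minor slip: defining the gauged linear object via $\Duhinf$ is not meaningful, since $\Ad_g$ lives only on $[0,\maxtime)$; the paper instead uses $\Duh$ together with a specific initial datum containing a $Q^j_{>N}$-correction at time zero (see \eqref{gauged:eq-glinear}), chosen precisely so that the decomposition in Proposition \ref{prop:linear-objects-close-gauge-covariance} goes through.

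The substantive gap is your claim that the $\theta_{\leq N}$-resonances ``vanish in the limit $N\to\infty$.'' They do not: $\theta_{\leq N}$ is uniformly of order one (see the remark following Lemma \ref{gauged:lem-quadratic}), and the individual differences $\gquadratic[\leqN][d][i][j][k]-\quadratic[\leqN][d][i][j][k]$ fail to converge to zero. What saves the argument is an \emph{exact} algebraic cancellation that occurs only at the level of the combined object $\gquadratic[\leqN][r][i]=2\gquadratic[\leqN][d][j][i][j]-\gquadratic[\leqN][d][j][j][i]$: the resonant contributions $-4\delta^i_j\,\Duh\Kil h^j+2\delta^j_j\,\Duh\Kil h^i$ sum to zero precisely because $\delta^j_j=2$ in two dimensions. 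You are right that this is in the spirit of Remark \ref{ansatz:rem-cancellation}, but the mechanism is an index-contraction identity, not decay; without recognizing this, the difference estimate for the fourth (and hence also the sixth) family in the enhanced data set would simply fail.
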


\begin{notation}
In the rest of Section \ref{section:gauge-covariance}, let $\maxtime$ be the maximum simultaneous existence time of $A$, $g$.
\end{notation}

\begin{remark}
Since we are only concerned with the local theory in this paper, we restrict the time $T$ to be at most 1 in Lemmas \ref{lemma:U-xi-defined}, \ref{lemma:gtgauged-equation} and Proposition \ref{prop:main-gauge-covariance}. This is mainly for convenience; the restriction may be removed with additional arguments. 
\end{remark}

\begin{remark}[Comparison with \cite{CCHS22}]\label{remark:comparison-cchs-gauge-covariance}
Our approach towards gauge covariance is no doubt inspired by the approach of \cite{CCHS22}. However, there is one big difference in our strategy: whereas \cite{CCHS22} analyzes a system involving the connection and the gauge transformation, we avoid having to consider such as system, and thus are able to rely more heavily on the existing local existence theory that we established in previous sections.

As mentioned in Remark \ref{remark:intro-cchs-comparision-gauge-covariance}, the difference between the two strategies lies in comparing $P_{\leq N}(\mrm{Ad}_g \xi)$ with $\mrm{Ad}_g (P_{\leq N} \xi)$ (which very roughly is what \cite{CCHS22} does), or comparing $\mrm{Ad}_g^{-1} P_{\leq N} (\mrm{Ad}_g \xi)$ with $P_{\leq N} \xi$ (which is what we do). We find it more convenient to make this latter comparison, because (as just mentioned) by doing so we do not need to solve a system involving the connection and the gauge transformation. Instead, we proceed in the following multi-step process:
\begin{enumerate}
    \item First obtain $A$ (by Theorem \ref{intro:thm-lwp}, our local existence theorem),
    \item then use this $A$ to obtain $g$ (by equation \eqref{eq:gauge-transformation})
    \item then use $A, g$ to define the equation \eqref{eq:gauged-A} for $\gauged{A}$.
\end{enumerate}
By construction, $\gauged{A}$ is driven by the modified noise $\mrm{Ad}_g^{-1} P_{\leq N}(\mrm{Ad}_g \xi)$, and by Lemma \ref{lemma:gtgauged-equation}, the gauge transformation $\gauged{A}^g$ exactly solves the SYM equation with driving noise $P_{\leq N}(\mrm{Ad}_g \xi)$. Thus, provided that $\gauged{A} = A$ (in the $N \toinf$ limit, which is the content of Proposition \ref{prop:main-gauge-covariance}), we obtain that $(\bA(A_0, \xi)^g) = A^g = \gauged{A}^g = \bA(A_0, \Ad_g \xi)$, which is exactly the content of Theorem \ref{thm:gauge-covariance}.
\end{remark}

Next, we show how Theorem \ref{thm:gauge-covariance} follows from Proposition \ref{prop:main-gauge-covariance}.

\begin{proof}[Proof of Theorem \ref{thm:gauge-covariance}]
Fix $T \in (0, \maxtime) \cap [0, 1]$. The following discussion is implicitly on $[0, T]$.
First, the fact that $\Ad_g \xi = \lim_{N \toinf} \Ad_g P_{\leq N} \xi$ exists follows by Lemma \ref{lemma:U-xi-defined}. Next, note by Proposition \ref{prop:main-gauge-covariance} and Lemma \ref{lemma:gtgauged-equation}, we have that
\[ A^g = \lim_{N \toinf} A_{\leq N}^g = \lim_{N \toinf} \gauged{A}_{\leq N}^g = \lim_{N \toinf} \bA(A_0^{g_0}, P_{\leq N}(\Ad_g \xi)).   \]
This shows that (1) $\bA(A_0^{g_0}, \Ad_g \xi)$ exists and (2) it is equal to $A^g = (\bA(A_0, \xi))^g$. 

Finally, to lower-bound $\maxtime$, first observe that by Proposition \ref{nonlinear:prop-wellposedness-para}, there is some time $\tau_S > 0$ such that if $\|A_0\|_{\Cs_x^{-\kappa}} \leq S$, then $A$ exists on $[0, \tau_S]$, and moreover $\|A\|_{C_t^0 \Cs_x^{-\kappa}([0, \tau_S])} \leq f(R, S)$ for some function $f$, where $R$ (the bound on the enhanced dataset) is a.s. finite. Now from standard parabolic local existence theory, $g$ (which is the solution to \eqref{eq:gauge-transformation}) exists on some interval $[0, \tau_0] \sse [0, \tau_S]$, where $\tau_{0}$ may be lower-bounded in terms of $\|g_0\|_{\Cs_x^{1-\kappa}}$ and $\|A\|_{C_t^0 \Cs_x^{-\kappa}([0, \tau_S])} \leq f(R, S)$. This completes the proof of the theorem.
\end{proof}

We now give an outline of the rest of Section \ref{section:gauge-covariance}, which is devoted to the proof of Proposition \ref{prop:main-gauge-covariance}.
In Section \ref{section:space-time-besov-space}, we collect many of the technical results about space-time Besov spaces that will be needed in Section \ref{section:glinear}.
In Section \ref{section:stochastic-estimate-gauge-covariance}, we give estimates on the new stochastic objects which appear. 
In Section \ref{section:glinear}, we take the first step towards showing that $\gauged{A}$ is close to $A$, by showing that the linear object obtained from the modified noise $\mrm{Ad}_g^{-1} P_{\leq N} (\mrm{Ad}_g \xi)$ is close to our original linear object $\linear[\leqN][r][]$ (which is obtained from $\xi$). Using this result, we then further show in Section \ref{section:gauged-transformed-enhanced-data-set} that the enhanced dataset obtained from the modified linear object is close to the original enhanced dataset $\Xi$. Once we know this, then Proposition \ref{prop:main-gauge-covariance} will follow by a simple contraction mapping argument, which we give in Section \ref{section:contraction-mapping-gauge-covariance}. 

\begin{remark}
Besides Section \ref{section:stochastic-estimate-gauge-covariance} and an application of Corollary \ref{objects:cor-quadratic-Q} in Section \ref{section:gauged-transformed-enhanced-data-set}, the arguments in Section \ref{section:gauge-covariance} are entirely deterministic.
\end{remark}

\subsection{Space-time Besov space estimates}\label{section:space-time-besov-space}

In this section, we collect the various technical results about space-time Besov spaces $\mc{B}_{p, q}^\alpha$ (which recall were defined in Section \ref{section:function-spaces}) and related function spaces that will be needed in Section \ref{section:glinear}. Recall also the spaces $\Wc^{\alpha, \nu}$ from Definition \ref{prelim:def-function-space}. We first make the following general definition, which is needed because at many points later on, when we want to multiply two distributions $fg$, we will need to decompose into paraproducts such as $fg = f \paransim_t g + f \parasim_t g$, and then handle each part separately.

\begin{definition}
Given two spaces $(X, \|\cdot\|_X)$, $(Y, \|\cdot\|_Y)$, with $X, Y \sse \mc{D}'(\R \times \T^2)$, we define 
\[ \|f\|_{X + Y} := \inf\{\|g\|_X + \|h\|_Y : f = g + h\}. \]
We also define
\[ \|f\|_{X \cap Y} := \|f\|_X + \|f\|_Y.\]
\end{definition}

We also find it essential to work with the following space-time Besov spaces with singular time weights.

\begin{definition}[Space-time Besov space with singular time weights]
Let $\alpha \in \R^2$, $\theta > 0$. Define the space $\Cs_{tx}^{\alpha, \theta}$ to be the set of $f \in \mc{D}'(\R \times \T^2)$ such that
\[ \|f\|_{\Cs_{tx}^{\alpha, \theta}} :=  \sup_{t > 0} t^\theta \big\| f ~|_{(t, \infty)} \big\|_{\Cs_{tx}^\alpha((t, \infty))} < \infty. \]
\end{definition}

\begin{example}[Paraproduct estimate with singular time weights]
Paraproduct estimates for space-time Besov spaces with singular time weights follow from the definition of these spaces combined with paraproduct estimates for space-time Besov spaces. We give some examples of such estimates here. Let $\alpha = (\alpha_t, \alpha_x) \in \R^2$, $\beta = (\beta_t, \beta_x) \in \R^2$, $\gamma = (\gamma_t, \gamma_x) \in \R^2$, $\theta > 0$. 
\begin{enumerate}[label=(\roman*)]
\item If $\alpha_x + \beta_x > 0$, $\gamma_t \leq \min(\alpha_t, 0) + \beta_t$, and $\gamma_x \leq \alpha_x + \beta_x$, then 
\[ \|f \parall_t \parasim_x g\|_{\Cs_{tx}^{\gamma, \theta}} \lesssim \|f\|_{\Cs_{tx}^\alpha} \|g\|_{\Cs_{tx}^{\beta, \theta}}.\]
\item If $\gamma_t \leq \min(\alpha_t, 0) + \beta_t$ and $\gamma_x \leq \min(\alpha_x, 0) + \beta_x$, then
\[ \|f \parall_t \paragg_x g\|_{\Cs_{tx}^{\gamma, \theta}} \lesssim  \|f\|_{\Cs_{tx}^\alpha} \|g\|_{\Cs_{tx}^{\beta, \theta}}.\]
\end{enumerate}
\end{example}

The proof of the following lemma is omitted, as it is entirely analogous to the proof of the analogous continuous embedding result for the usual Besov spaces.

\begin{lemma}[Continuous embedding]\label{lemma:space-time-besov-space-continuous-embedding}
Let $\alpha \in \R^2$, $1 \leq p_1 \leq p_2 \leq \infty$, $1 \leq q_1 \leq q_2 \leq \infty$. Let $\beta = \alpha - \big(\frac{1}{p_1} - \frac{1}{p_2}\big)(1, 2)$. We have that
\[ \|f\|_{\mc{B}^{\beta}_{p_2, q_2}} \lesssim \|f\|_{\mc{B}^{\alpha}_{p_1, q_1}}. \]
In other words, $\mc{B}^{\alpha}_{p_1, q_1}$ embeds continuously into $\mc{B}^{\beta}_{p_2, q_2}$.
\end{lemma}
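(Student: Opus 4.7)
The plan is to reduce the claim to the standard $\ell^{q_1}(\dyadic^2) \hookrightarrow \ell^{q_2}(\dyadic^2)$ embedding together with a Bernstein-type inequality adapted to the space-time Littlewood-Paley decomposition $(P_{L, N})$. The starting point is the observation that $\widehat{P_{L, N} f}$ is supported in $\{|u| \lesssim L\} \times \{|n| \lesssim N\}$, so $P_{L, N} f$ should obey the scaling-dictated bound
$$\| P_{L, N} f \|_{L^{p_2}(\R \times \T^2)} \lesssim L^{1/p_1 - 1/p_2}\, N^{2(1/p_1 - 1/p_2)} \| P_{L, N} f \|_{L^{p_1}(\R \times \T^2)}.$$
Since the multiplier $\rho_L(u)\rho_N(n)$ factors as a tensor product of a temporal and a spatial symbol, I would prove this by first applying a purely temporal Bernstein inequality on $\R$ (which produces the factor $L^{1/p_1 - 1/p_2}$) and then a purely spatial Bernstein inequality on $\T^2$ (which produces the factor $N^{2(1/p_1 - 1/p_2)}$). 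Both are standard consequences of Young's convolution inequality applied to a slightly fattened version of $P_{L, N}$, whose convolution kernel has $L^r$-norms that scale in the expected way in $L$ and $N$.

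Once this Bernstein estimate is in hand, the remainder of the argument is purely formal. Multiplying both sides by $L^{\beta_t} N^{\beta_x}$ and inserting the definition $\beta = \alpha - (1/p_1 - 1/p_2)(1, 2)$, I would obtain
$$L^{\beta_t} N^{\beta_x} \| P_{L, N} f \|_{L^{p_2}} \lesssim L^{\alpha_t} N^{\alpha_x} \| P_{L, N} f \|_{L^{p_1}}.$$
Taking the $\ell^{q_2}$-norm over $(L, N) \in \dyadic^2$ on both sides and then applying the monotonicity of $\ell^q$-norms (which uses $q_1 \leq q_2$) on the right-hand side, I would conclude
$$\| f \|_{\mc{B}^{\beta}_{p_2, q_2}} \lesssim \Bigl(\sum_{L, N} \bigl(L^{\alpha_t} N^{\alpha_x} \| P_{L, N} f \|_{L^{p_1}}\bigr)^{q_1}\Bigr)^{1/q_1} = \| f \|_{\mc{B}^{\alpha}_{p_1, q_1}},$$
with the usual interpretation when $q_1 = \infty$ or $q_2 = \infty$.

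There is essentially no serious obstacle here; the argument is a routine transcription of the scalar Besov embedding to the tensorized space-time setting. The only bookkeeping to track is that the one-dimensional temporal frequency domain contributes a single power of $L^{1/p_1 - 1/p_2}$ while the two-dimensional spatial frequency domain contributes two powers of $N^{1/p_1 - 1/p_2}$, which exactly accounts for the shift vector $(1, 2)$ in the definition of $\beta$. The fact that the underlying spatial domain is $\T^2$ rather than $\R^2$ introduces no difficulty, since the frequency-support constraint is now on $n \in \Z^2$ and the Young inequality for convolution on the torus applies just as on Euclidean space.
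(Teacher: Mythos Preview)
Your proposal is correct and is exactly the standard argument the paper has in mind: the paper omits the proof entirely, stating only that it is ``entirely analogous to the proof of the analogous continuous embedding result for the usual Besov spaces,'' which is precisely the Bernstein-plus-$\ell^q$-monotonicity argument you outline.
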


The following result is the space-time analogue of Lemma \ref{prelim:lem-Q}. The proof is omitted as it is essentially the same as the proof of the earlier result.

\begin{lemma}\label{lemma:Q-gains-one-spatial-derivative-space-time-besov-space}
Let $\alpha \in \R^2$, $N$ be a dyadic scale, and $\ell \in [2]$. We have that
\[ \|Q^\ell_{> N} f\|_{\Cs_{tx}^{\alpha + (0, 1)}} \lesssim \|f\|_{\Cs_{tx}^{\alpha}}. \]
\end{lemma}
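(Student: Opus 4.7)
The plan is to unpack the $\Cs_{tx}^{\alpha+(0,1)}$ norm via the space-time Littlewood-Paley decomposition and reduce to a uniform Fourier-multiplier bound for $Q^\ell_{>N}$ on $L^\infty$, mirroring the proof of the spatial version (Lemma \ref{prelim:lem-Q}). The time variable plays no essential role, since $Q^\ell_{>N}$ is a purely spatial Fourier multiplier and hence commutes with every $P_{L, N'}$.

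Concretely, I would begin with
\[\|Q^\ell_{>N} f\|_{\Cs_{tx}^{\alpha + (0,1)}} = \sup_{L, N' \in \dyadic} L^{\alpha_t} (N')^{\alpha_x + 1} \|P_{L, N'} Q^\ell_{>N} f\|_{L^\infty(\R \times \T^2)}\]
and commute $P_{L, N'} Q^\ell_{>N} = Q^\ell_{>N} P_{L, N'}$. Since the symbol $\partial^j \rho_{>N}(n) = -\frac{1}{N}(\partial^j \rho)(n/N)$ is supported in the spatial annulus $|n|_\infty \sim N$, while $\rho_{N'}$ is supported in $|n|_\infty \sim N'$ (for $N' \geq 2$, with the case $N' = 1$ only compatible with $N = 1$), a Fourier-support argument shows that $P_{L, N'} Q^\ell_{>N} f$ vanishes unless $N' \sim N$. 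It thus suffices to control the right-hand side in this regime.

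The key multiplier estimate is $\|Q^\ell_{>N} h\|_{L^\infty(\R \times \T^2)} \lesssim \frac{1}{N} \|h\|_{L^\infty(\R \times \T^2)}$. Since $Q^\ell_{>N}$ acts only in space, it suffices to establish this bound on $L^\infty(\T^2)$ slicewise in the time variable. The spatial bound follows by viewing $Q^\ell_{>N}$ as a Fourier multiplier on $\R^2$ with symbol $-\frac{1}{iN}(\partial^j \rho)(\xi/N)$, computing that its convolution kernel has $L^1(\R^2)$ norm $\sim 1/N$ by scaling (since $\partial^j \rho$ is Schwartz), transferring this bound to $\T^2$ via Poisson summation, and invoking Young's inequality.

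Combining the support restriction $N' \sim N$ with the multiplier bound yields
\[L^{\alpha_t}(N')^{\alpha_x + 1} \|P_{L, N'} Q^\ell_{>N} f\|_{L^\infty} \lesssim L^{\alpha_t}(N')^{\alpha_x + 1} \cdot \tfrac{1}{N} \|P_{L, N'} f\|_{L^\infty} \lesssim L^{\alpha_t}(N')^{\alpha_x} \|P_{L, N'} f\|_{L^\infty} \leq \|f\|_{\Cs_{tx}^\alpha},\]
and taking the supremum over $L, N'$ completes the proof. There is no significant obstacle; this is a routine adaptation of the spatial argument, with the time variable carried along passively, which is no doubt why the authors indicate the proof may be omitted.
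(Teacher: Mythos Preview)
Your proof is correct and follows exactly the approach the paper intends: the paper omits the proof, stating it is essentially the same as that of Lemma~\ref{prelim:lem-Q}, and your argument is precisely the natural space-time lift of that proof, using the frequency support of $Q^\ell_{>N}$ to restrict to $N'\sim N$ and the $L^\infty$ multiplier bound with gain $N^{-1}$ from Young's inequality.
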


The proofs of the following four results may be found in Appendix \ref{appendix:space-time-besov-space}. 

\begin{lemma}[Schauder estimate for space-time Besov spaces]\label{lemma:space-time-Besov-space-Schauder} 
Let $\alpha = (\alpha_t, \alpha_x), \beta_x \in \R$ satisfy the following conditions:
\[ \alpha_t \in (-1, 0), ~~ \alpha_x \leq \beta_x < 2 + \alpha_x + 2\alpha_t. \]
We have that
\[ \|\Duh(f)\|_{C_t^0 \Cs_x^{\beta_x}([0, \infty))} \lesssim \|f\|_{\Cs^\alpha_{tx}}. \]
\end{lemma}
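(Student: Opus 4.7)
The plan is to decompose $f$ via a joint space-time Littlewood-Paley decomposition $f = \sum_{L, N} P_{L, N} f$ and estimate each piece separately. Since the heat semigroup commutes with the spatial projector $P_N$, and $\|u\|_{\Cs_x^{\beta_x}} \sim \sup_N N^{\beta_x} \|P_N u\|_{L^\infty_x}$, it suffices to establish, for each dyadic pair $(L, N)$, the pointwise-in-$t$ bound
\[
\|\Duh(P_{L, N} f)(t)\|_{L^\infty_x} \lesssim \min\bigl((1+N^2)^{-1}, L^{-1}\bigr) \|P_{L, N} f\|_{L^\infty_{tx}}
\]
uniformly in $t \geq 0$, then sum over $L$ and take the supremum in $N$.

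The parabolic bound $\lesssim (1+N^2)^{-1}$ is immediate from spatial frequency localization: $e^{-u(1-\Delta)}$ acts by $\lesssim e^{-cu(1+N^2)}$ on $L^\infty_x$-functions with spatial Fourier support of size $N$, and $\int_0^t e^{-c(t-s)(1+N^2)}\, ds \lesssim (1+N^2)^{-1}$. The oscillatory bound $\lesssim L^{-1}$ is the main technical step. For dyadic $L \geq 2$, the temporal Fourier support of $P_{L, N} f$ lies in $\{|\tau| \sim L\}$, bounded away from $0$, so we may define the temporal antiderivative $H_L := (i\ptl_s)^{-1} P_{L, N} f$ as the Fourier multiplier with symbol $\rho_L(\tau)/(i\tau)$. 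Then $H_L$ is spatially frequency-localized at scale $N$, satisfies $\ptl_s H_L = P_{L, N} f$, and by Young's inequality obeys $\|H_L\|_{L^\infty_{tx}} \lesssim L^{-1} \|P_{L, N} f\|_{L^\infty_{tx}}$. Integrating by parts in $s$ in the Duhamel integral gives
\[
\Duh(P_{L, N} f)(t) = H_L(t) - e^{-t(1-\Delta)} H_L(0) - \int_0^t (1-\Delta)\, e^{-(t-s)(1-\Delta)} H_L(s)\, ds,
\]
where each right-hand term is controlled by $\|H_L\|_{L^\infty_{tx}} \lesssim L^{-1} \|P_{L, N} f\|_{L^\infty_{tx}}$: the boundary terms by $L^\infty_x$-contractivity of the semigroup, and the integral by $\int_0^t (1+N^2) e^{-c(t-s)(1+N^2)}\, ds \lesssim 1$, again using the spatial frequency localization of $H_L$. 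The case $L = 1$ is subsumed by the parabolic bound.

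Combining this two-regime estimate with $\|P_{L, N} f\|_{L^\infty_{tx}} \lesssim L^{-\alpha_t} N^{-\alpha_x} \|f\|_{\Cs_{tx}^\alpha}$ and summing over dyadic $L$ by splitting at $L \sim N^2$, the parabolic regime contributes $\sum_{L \lesssim N^2} N^{-2} L^{-\alpha_t} \lesssim N^{-2-2\alpha_t}$ (using $-\alpha_t > 0$, so the sum is dominated by the largest term $L \sim N^2$), and the oscillatory regime contributes $\sum_{L \gtrsim N^2} L^{-1-\alpha_t} \lesssim N^{-2-2\alpha_t}$ (using $\alpha_t > -1$, so the sum converges and is dominated by the smallest term). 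This yields
\[
N^{\beta_x} \|P_N \Duh(f)(t)\|_{L^\infty_x} \lesssim N^{\beta_x - \alpha_x - 2 - 2\alpha_t} \|f\|_{\Cs_{tx}^\alpha},
\]
which is bounded uniformly in $N$ and $t \geq 0$ by the hypothesis $\beta_x < 2 + \alpha_x + 2\alpha_t$. The continuity in $t$ needed to upgrade to $C_t^0$ follows from a standard density argument, approximating $f$ by smooth functions. The main obstacle is the oscillatory integration-by-parts estimate, where one must construct $H_L$ carefully so that both its $L^\infty$ bound with the $L^{-1}$ gain and its spatial frequency localization at scale $N$ are preserved; this is precisely where the hypothesis $\alpha_t > -1$ enters, via convergence of the oscillatory sum.
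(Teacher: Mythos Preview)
Your proof is correct and follows essentially the same approach as the paper's. Your temporal antiderivative $H_L$, defined via the Fourier multiplier $\rho_L(\tau)/(i\tau)$, is precisely the paper's $g := \eta_L * (P_N f)$ (where $\hat{\eta}_L(u) = \rho_L(u)/(iu)$), and your integration-by-parts identity $\Duh(\ptl_s H_L) = H_L(t) - \Hc_t H_L(0) - (1-\Delta)\Duh(H_L)$ matches the paper's formula; the subsequent parabolic/oscillatory split at $L \sim N^2$ and the summation are likewise identical.
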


\begin{lemma}[Schauder estimate for space-time Besov spaces with singular time weights]\label{lemma:schauder-space-time-besov-space-singular-time-weights}
Let $\alpha = (\alpha_t, \alpha_x) \in \R^2$, $\beta = (\beta_t, \beta_x) \in \R^2$, $\gamma \in \R$, $\theta \geq \theta' > 0$ satisfy:
\[ \alpha_t, \beta_t \in (-1, 0), ~~ \alpha_x \leq \gamma < 2 + \alpha_x + 2\alpha_t, ~~ \beta_x \leq \gamma + 2\theta' < 2 + \beta_x + 2\beta_t. \]
We have that
\[ \|\Duh(f)\|_{\Wc^{\gamma+2\theta', \theta}([0, \infty))} \lesssim \|f\|_{\Cs_{tx}^\alpha \cap \Cs_{tx}^{\beta, \theta}}. \]
\end{lemma}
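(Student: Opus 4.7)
The plan is to carry out a time-splitting argument that decomposes the Duhamel integral based on whether the dummy variable $s$ is close to $0$ (where the singular-weighted norm $\Cs_{tx}^{\beta,\theta}$ blows up and is therefore unhelpful) or bounded away from $0$ (where the singular norm pays off by granting additional regularity). Fixing $t_0 > 0$ at which $\Duh(f)$ is evaluated, I would write
\[ \Duh(f)(t_0) = e^{(t_0/2)(\Delta - 1)} \Duh(f)(t_0/2) + \int_{t_0/2}^{t_0} e^{(t_0 - s)(\Delta - 1)} f(s) ds =: I_1 + I_2, \]
controlling $I_1$ with the non-singular $\Cs_{tx}^\alpha$-norm and $I_2$ with the singular $\Cs_{tx}^{\beta,\theta}$-norm.

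For $I_1$, Lemma \ref{lemma:space-time-Besov-space-Schauder} applied with target regularity $\gamma$ (admissible by the first chain of hypotheses $\alpha_x \leq \gamma < 2 + \alpha_x + 2\alpha_t$) gives $\|\Duh(f)(t_0/2)\|_{\Cs_x^\gamma} \lesssim \|f\|_{\Cs_{tx}^\alpha}$. Together with the standard semigroup smoothing estimate $\|e^{t(\Delta - 1)} h\|_{\Cs_x^{\gamma + 2\theta'}} \lesssim e^{-t/2} t^{-\theta'} \|h\|_{\Cs_x^{\gamma}}$, this yields
\[ \min(t_0, 1)^\theta \|I_1\|_{\Cs_x^{\gamma + 2\theta'}} \lesssim \min(t_0, 1)^\theta e^{-t_0/4} t_0^{-\theta'} \|f\|_{\Cs_{tx}^\alpha} \lesssim \|f\|_{\Cs_{tx}^\alpha}, \]
where the final bound uses $\theta \geq \theta'$ for $t_0 \leq 1$ and the exponential decay for $t_0 \geq 1$.

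For $I_2$, I would time-shift by $t_0/2$, setting $\tilde{f}(s) := f(s + t_0/2)$ so that $I_2 = \Duh(\tilde{f})(t_0/2)$. Using the inf-over-extensions definition of $\Cs_{tx}^\beta((t_0/2, \infty))$ (Definition \ref{def:space-time-besov-space-finite-interval}), I would choose an extension of $f|_{(t_0/2, \infty)}$ to an element of $\Cs_{tx}^\beta(\R \times \T^2)$ whose norm is within a constant factor of $\|f|_{(t_0/2, \infty)}\|_{\Cs_{tx}^\beta((t_0/2, \infty))}$. Because $\Duh$ depends only on the values of its integrand on the interval of integration, this substitution does not alter $I_2$. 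Applying Lemma \ref{lemma:space-time-Besov-space-Schauder} with target regularity $\gamma + 2\theta'$ (admissible by the second chain of hypotheses) then gives
\[ \|I_2\|_{\Cs_x^{\gamma + 2\theta'}} \lesssim \|f|_{(t_0/2, \infty)}\|_{\Cs_{tx}^\beta((t_0/2, \infty))} \lesssim (t_0/2)^{-\theta} \|f\|_{\Cs_{tx}^{\beta, \theta}}, \]
so that $\min(t_0, 1)^\theta \|I_2\|_{\Cs_x^{\gamma + 2\theta'}} \lesssim \|f\|_{\Cs_{tx}^{\beta, \theta}}$ uniformly in $t_0$, after which combining both pieces and taking the supremum over $t_0 > 0$ yields the claim.

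The main subtlety I anticipate is the clean handling of the restriction of $f$ to $(t_0/2, \infty)$ in the $I_2$ estimate. Multiplying by a sharp indicator $\mathbf{1}_{(t_0/2, \infty)}$ would destroy Besov regularity, but the inf-over-extensions definition of restricted space-time Besov spaces side-steps this obstruction because the Duhamel integral only sees values of the integrand on the interval of integration, so the choice of extension is invisible in the final bound.
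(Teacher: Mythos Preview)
Your proposal is correct and matches the paper's proof essentially line-for-line: the paper writes $\Duh(f)(t) = \Hc_{t-s}(\Duh(f)(s)) + \Duh(f(\cdot+s))(t-s)$, takes $s=t/2$, bounds the first piece via Lemma~\ref{lemma:space-time-Besov-space-Schauder} at regularity $\gamma$ followed by semigroup smoothing, and bounds the second piece by time-shifting and applying Lemma~\ref{lemma:space-time-Besov-space-Schauder} at regularity $\gamma+2\theta'$ using the singular-weight norm. Your treatment is in fact slightly more careful than the paper's in handling the regime $t_0\geq 1$ (via the exponential decay of $\Hc_t$) and in making the extension step for $I_2$ explicit.
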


\begin{lemma}[Integral commutator estimate]\label{lemma:space-time-integral-commutator-estimate}
Let $T \geq 0$. Let $\alpha < 1$, $\nu \in [0, 1)$, $\beta = (\beta_t, \beta_x) \in \R^2$, $\beta_t \in (-1, 0)$. Let $f \in \Wc^{\alpha, \nu}([0, T])$, $\ptl_t f \in \Wc^{\alpha - 2, \nu}([0, T])$, $g \in \Cs_{tx}^{\beta}$. Let $\alpha', \nu'$ be such that
\[ \alpha + \beta_x + 2\beta_t \leq \alpha' < \alpha + \beta_x + 2\beta_t + 2, ~~ 1 + \nu' - \nu - \frac{\alpha' - \alpha - \beta_x - 2\beta_t}{2} > 0. \]
We have that
\[ \|\Duh(f \parall g) - f \parall \Duh(g)\|_{\Wc^{\alpha', \nu'}([0, T])} \lesssim \big(\|f\|_{\Wc^{\alpha - 2\nu, 0}([0, T])} + \|f\|_{\Wc^{\alpha, \nu}([0, T])} + \|\ptl_t f\|_{\Wc^{\alpha -2 , \nu}([0, T])}\big)\|g\|_{\Cs_{tx}^\beta}.  \]
\end{lemma}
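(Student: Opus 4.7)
The proof plan is to decompose the commutator into a time-difference piece and a spatial commutator piece, and estimate each using the space-time Schauder estimate (Lemma \ref{lemma:schauder-space-time-besov-space-singular-time-weights}) together with paraproduct estimates. Specifically, I would write
\[
\Duh(f \parall g)(t) - f(t) \parall \Duh(g)(t) = \mrm{I}(t) + \mrm{II}(t),
\]
where
\[
\mrm{I}(t) := \int_0^t e^{-(t-s)(1-\Delta)}\big((f(s) - f(t)) \parall g(s)\big) ds
\]
captures the time-difference of $f$, and
\[
\mrm{II}(t) := \int_0^t \Big( e^{-(t-s)(1-\Delta)}\big(f(t) \parall g(s)\big) - f(t) \parall e^{-(t-s)(1-\Delta)} g(s) \Big) ds
\]
is a frozen-time spatial commutator between the heat propagator and the low-high paraproduct.

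For the term $\mrm{I}(t)$, the fundamental theorem of calculus gives $f(s) - f(t) = -\int_s^t \ptl_r f(r) dr$. Using $\ptl_t f \in \Wc^{\alpha-2, \nu}$ together with the hypothesis $f \in \Wc^{\alpha, \nu} \cap \Wc^{\alpha - 2\nu, 0}$, interpolation yields a H\"older-type bound of the form $\|f(s) - f(t)\|_{\Cs_x^{\alpha - 2\theta}} \lesssim (t-s)^\theta s^{-\nu}$ for $\theta \in [0,1]$ (with the $\Wc^{\alpha - 2\nu, 0}$ norm absorbing the $s \to 0$ behavior). The low-high paraproduct estimate (Lemma \ref{prelim:lem-para-product}) then bounds $(f(s)-f(t)) \parall g(s)$ in $\Cs_x^{\alpha - 2\theta + \beta_x}$, while recasting the temporal regularity of $g \in \Cs_{tx}^\beta$ as an effective spatial regularity contribution of $2\beta_t$ via Lemma \ref{lemma:space-time-besov-space-continuous-embedding} gives $g(s) \in \Cs_x^{\beta_x + 2\beta_t}$. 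Applying the Schauder estimate for the heat propagator (Lemma \ref{lemma:schauder-space-time-besov-space-singular-time-weights}) then controls $\mrm{I}(t)$ in $\Wc^{\alpha', \nu'}$, with the temporal integrability at $s = t$ ensured by the $(t-s)^\theta$ gain, and the condition $1 + \nu' - \nu - (\alpha' - \alpha - \beta_x - 2\beta_t)/2 > 0$ guaranteeing convergence of the resulting $s$-integral $\int_0^t s^{-\nu} (t-s)^{-\delta} ds$ with $\delta = (\alpha' - \alpha - \beta_x - 2\beta_t)/2$.

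For the term $\mrm{II}(t)$, I would perform a spatial Littlewood-Paley decomposition $f \parall g = \sum_{M \ll N} P_M f \cdot P_N g$, so that each summand involves the commutator $[e^{-(t-s)(1-\Delta)}, P_M f(t) \cdot] P_N g(s)$. Standard estimates (in the spirit of Lemma \ref{lemma:P-N-lo-hi-commutator}, extended to the heat propagator) show that this commutator gains one spatial derivative (effectively a factor of $M/N$) relative to the trivial product bound, and the smoothing $e^{-(t-s)(1-\Delta)}$ at spatial frequency $N$ provides a factor like $e^{-c(t-s)N^2}$. Summing over $M \ll N$ and integrating in $s$ using again Lemma \ref{lemma:schauder-space-time-besov-space-singular-time-weights} (now without any temporal difference, so the time weights come purely from $f$ evaluated at $t$) yields the bound on $\mrm{II}$ in $\Wc^{\alpha', \nu'}$ under the same parameter constraints.

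The main obstacle I anticipate is the careful bookkeeping of time weights, the Schauder smoothing exponents, and the various regularity indices to verify that they combine exactly into the hypothesis $1 + \nu' - \nu - (\alpha' - \alpha - \beta_x - 2\beta_t)/2 > 0$. In particular, the second component $2\beta_t$ of the space-time regularity of $g$ must be correctly converted to an effective spatial gain via parabolic scaling, which is where the explicit space-time Besov space setup from Section \ref{section:space-time-besov-space} is essential. The upper bound $\alpha' < \alpha + \beta_x + 2\beta_t + 2$ ensures that the heat propagator can indeed provide the requisite smoothing, while the lower bound $\alpha' \geq \alpha + \beta_x + 2\beta_t$ is needed so that the paraproduct (at each time) already gives the target spatial regularity before any smoothing from the Duhamel integral.
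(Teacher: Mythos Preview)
Your decomposition $\mrm{I} + \mrm{II}$ is the natural starting point and indeed underlies the paper's purely spatial commutator lemma (Lemma~\ref{prelim:lem-general-integral-commutator}). However, there is a genuine gap in how you handle the temporal regularity of $g$. You write that ``recasting the temporal regularity of $g \in \Cs_{tx}^\beta$ as an effective spatial regularity contribution of $2\beta_t$ via Lemma~\ref{lemma:space-time-besov-space-continuous-embedding} gives $g(s) \in \Cs_x^{\beta_x + 2\beta_t}$.'' This is not what that lemma says (it is an embedding between $\mc{B}^\alpha_{p,q}$ spaces with different integrability indices), and the conclusion is false: since $\beta_t < 0$, an element $g \in \Cs_{tx}^\beta$ is a genuine distribution in time and $g(s)$ is not defined for fixed $s$. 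As a concrete example, space-time white noise lies in $\Cs_{tx}^{(-1/2-,-1-)}$ but has no pointwise-in-time evaluation. Your entire treatment of both $\mrm{I}$ and $\mrm{II}$ relies on manipulating $g(s)$ at fixed times, so it breaks down at this point.

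The paper's proof avoids this by first decomposing $g = \sum_{L,N} P_{L,N} g$ in \emph{space-time} Littlewood--Paley pieces, so each piece is smooth in $t$. For $L \leq N^2$ one can genuinely bound $\|P_{L,N} g\|_{L_t^\infty \Cs_x^{\beta_x+2\beta_t-\eta}} \lesssim N^{-\eta}\|g\|_{\Cs_{tx}^\beta}$ and then invoke the spatial commutator Lemma~\ref{prelim:lem-general-integral-commutator} (which does use your $\mrm{I}+\mrm{II}$ split). For $L > N^2$ a different idea is needed: the paper introduces a time-antiderivative $h = \eta_L * P_N g$ with $\partial_t h = P_{L,N} g$, writes $\Duh(\partial_t h) = \Delta\Duh(h) + h(t) - \Hc_t h(0)$, and estimates the resulting pieces directly. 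This high-temporal-frequency regime is precisely where the heuristic ``convert $\beta_t$ to $2\beta_t$ in space'' fails and must be replaced by a PDE identity; your proposal does not anticipate this step.
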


\begin{lemma}\label{lemma:f-Wc-ptl-t-f-Wc}
Let $\alpha \in \R, \theta \in [0, 1)$. Let $f \in \Wc^{\alpha, \theta}(\R)$. For dyadic scales $L, N \in \dyadic$, $t \in \R$, we have that
\[ \|P_{L, N} f(t)\|_{L_x^\infty} \lesssim \min\big(L^\theta, \min(|t|, 1)^{-\theta}\big) N^{-\alpha} \|f\|_{\Wc^{\alpha, \theta}(\R)}. \]
If additionally $\ptl_t f \in \Wc^{\alpha - 2, \theta}(\R)$, then
\[ \|P_{L, N} f(t)\|_{L_x^\infty} \lesssim \min\big(L^\theta, \min(|t|, 1)^{-\theta}\big) N^{-\alpha} \min(1, L^{-1} N^2) \big(\|f\|_{\Wc^{\alpha, \theta}(\R)} + \|\ptl_t f\|_{\Wc^{\alpha-2, \theta}(\R)}\big). \]
\end{lemma}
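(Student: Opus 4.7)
The plan is to establish the first inequality via a direct convolution argument and then derive the second inequality from it by inverting the heat operator on the frequency support of $P_{L,N}$.

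For the first inequality, we factor $P_{L,N}$ as $K_L *_t P_N$, where $K_L$ denotes the one-dimensional temporal Littlewood-Paley kernel with symbol $\rho_L$, satisfying $|K_L(\tau)| \lesssim L(1+L|\tau|)^{-k}$ for any $k \in \mathbb{N}$. Combining the spatial Bernstein bound $\|P_N f(s)\|_{L^\infty_x} \lesssim N^{-\alpha} \|f(s)\|_{\Cs_x^\alpha}$ with the definition of $\Wc^{\alpha,\theta}(\R)$, the first inequality reduces to the purely one-dimensional convolution estimate
\[ \int_\R |K_L(t-s)| \min(|s|,1)^{-\theta} ds \lesssim \min\big(L^\theta, \min(|t|,1)^{-\theta}\big). \]
The $L^\theta$ bound follows from rescaling: substituting $u = Ls$ in the $|s| \leq 1$ portion yields $L^\theta \int (1+|Lt-u|)^{-k}|u|^{-\theta} du \lesssim L^\theta$, where integrability near $u = 0$ uses the hypothesis $\theta < 1$, while the $|s| > 1$ portion is bounded by $\|K_L\|_{L^1} \lesssim 1 \leq L^\theta$. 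The $\min(|t|,1)^{-\theta}$ bound, which improves on $L^\theta$ precisely when $|t| > 1/L$, is obtained by splitting into $|s| \leq |t|/2$, where the rapid decay of $K_L(t-s)$ dominates, and $|s| > |t|/2$, where $\min(|s|,1)^{-\theta} \lesssim \min(|t|,1)^{-\theta}$ may be pulled out of the integral.

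For the second inequality, set $g := (1-\Delta+\ptl_t)f$; the hypotheses give $\|g\|_{\Wc^{\alpha-2,\theta}(\R)} \lesssim \|f\|_{\Wc^{\alpha,\theta}(\R)} + \|\ptl_t f\|_{\Wc^{\alpha-2,\theta}(\R)}$. Pick smooth bumps $\tilde\rho_L, \tilde\rho_N$ equal to $1$ on the supports of $\rho_L, \rho_N$, and let $M_{L,N}$ be the Fourier multiplier with symbol $\tilde\rho_L(u)\tilde\rho_N(n)/(iu+1+|n|^2)$. Since $M_{L,N}\circ(1-\Delta+\ptl_t)$ has symbol $\tilde\rho_L(u)\tilde\rho_N(n)$ which acts as the identity on the range of $P_{L,N}$, we obtain the operator identity $P_{L,N} f = M_{L,N} P_{L,N} g$. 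On the relevant frequency support one has the pointwise lower bound $|iu+1+|n|^2| = \sqrt{u^2+(1+|n|^2)^2} \gtrsim L + N^2$, and standard integration by parts in the Fourier integral defining the kernel $K_{M_{L,N}}$ yields
\[ |K_{M_{L,N}}(\tau,y)| \lesssim (L+N^2)^{-1} L(1+L|\tau|)^{-k} N^2 (1+N|y|)^{-k}, \]
which integrates in $y$ to $\|K_{M_{L,N}}(\tau,\cdot)\|_{L^1_y} \lesssim (L+N^2)^{-1} \phi_L(\tau)$ with $\phi_L(\tau) := L(1+L|\tau|)^{-k}$ satisfying $\|\phi_L\|_{L^1} \lesssim 1$.

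Combining the ingredients, the operator identity gives $\|P_{L,N} f(t)\|_{L^\infty_x} \lesssim \int \|K_{M_{L,N}}(t-s,\cdot)\|_{L^1_y} \|P_{L,N} g(s)\|_{L^\infty_x} ds$, and the first inequality applied to $g \in \Wc^{\alpha-2,\theta}$ controls $\|P_{L,N} g(s)\|_{L^\infty_x}$ by $\min(L^\theta, \min(|s|,1)^{-\theta}) N^{2-\alpha}$ times the norm on the right-hand side. The $\phi_L$-convolution preserves $\min(L^\theta, \min(|\cdot|,1)^{-\theta})$ up to constants, since this function is slowly varying at temporal scale $1/L$ (by essentially the same case analysis as in the first step), and one checks that $(L+N^2)^{-1} N^2 \sim \min(1, L^{-1}N^2)$, which completes the second inequality. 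The only delicate technical point is the kernel bound on $M_{L,N}$: while in spirit it is a standard consequence of the Leibniz rule, one must verify that derivatives of $\tilde\rho_L(u)\tilde\rho_N(n)/(iu+1+|n|^2)$ gain the expected powers of $L^{-1}$ or $N^{-1}$, which in turn follows from the lower bound $|iu+1+|n|^2| \gtrsim L+N^2$ together with the derivative bounds on $\tilde\rho_L, \tilde\rho_N$.
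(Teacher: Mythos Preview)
Your proof of the first inequality coincides with the paper's: both factor $P_{L,N}$ as a temporal convolution composed with $P_N^x$, apply the spatial bound $\|P_N f(s)\|_{L_x^\infty}\lesssim N^{-\alpha}\min(|s|,1)^{-\theta}\|f\|_{\Wc^{\alpha,\theta}}$, and reduce to the one-dimensional estimate $\int|\widecheck{\rho}_L(t-s)|\min(|s|,1)^{-\theta}\,ds\lesssim\min(L^\theta,\min(|t|,1)^{-\theta})$, which the paper isolates as a separate lemma and proves by the same case split you sketch.

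For the second inequality your route is genuinely different. The paper argues more elementarily: writing $\widecheck{\rho}_L=\eta_L'$ with $\eta_L(t)=\int_{-\infty}^t\widecheck{\rho}_L$, integration by parts in time gives $P_{L,N}f(t)=-\int\eta_L(t-s)\,P_N^x\partial_s f(s)\,ds$, and then one simply repeats the convolution estimate with $\eta_L$ in place of $\widecheck{\rho}_L$; since $\|\eta_L\|_{L^1}\lesssim L^{-1}$ this yields the extra factor $L^{-1}$, while $\partial_t f\in\Wc^{\alpha-2,\theta}$ supplies $N^{-(\alpha-2)}=N^2\cdot N^{-\alpha}$. The $\min(1,L^{-1}N^2)$ then comes from taking the better of this and the first inequality. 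Your parametrix approach, setting $g=(1-\Delta+\partial_t)f$ and inverting on the frequency support via the multiplier $\tilde\rho_L\tilde\rho_N/(iu+1+|n|^2)$, produces the factor $(L+N^2)^{-1}N^2\sim\min(1,L^{-1}N^2)$ in one stroke. Both arguments are correct; the paper's is more direct and avoids the space-time multiplier kernel bound you flag as delicate, while yours is more conceptual and explains why the parabolic combination $L+N^2$ is what governs the gain.
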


Lemma \ref{lemma:f-Wc-ptl-t-f-Wc} has the following consequences.

\begin{corollary}\label{cor:Wc-implies-besov}
Let $\alpha \in \R, \theta \in [0, 1)$. We have that
\[ \begin{split}
\|f\|_{\Cs_{tx}^{(-\theta, \alpha)} \cap \Cs_{tx}^{(0, \alpha), \theta}} &\lesssim \|f\|_{\Wc^{\alpha, \theta}(\R)} \\
\|f\|_{\Cs_{tx}^{(-\theta, \alpha)} \cap \Cs_{tx}^{(1-\theta, \alpha-2)}} &\lesssim \|f\|_{\Wc^{\alpha, \theta}(\R)} + \|\ptl_t f\|_{\Wc^{\alpha -2, \theta}(\R)},  \\
\|f\|_{\Cs_{tx}^{(0, \alpha), \theta} \cap \Cs_{tx}^{(1, \alpha - 2), \theta}} &\lesssim \|f\|_{\Wc^{\alpha, \theta}(\R)} + \|\ptl_t f\|_{\Wc^{\alpha -2 , \theta}(\R)}.
\end{split}\]
\end{corollary}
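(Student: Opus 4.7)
All three bounds follow from Lemma \ref{lemma:f-Wc-ptl-t-f-Wc} by expanding the Besov-type norms as suprema of $\|P_{L,N} f(t)\|_{L^\infty_x}$ over $L,N,t$, and substituting the pointwise frequency estimate. The first inequality of Lemma \ref{lemma:f-Wc-ptl-t-f-Wc} handles bound (i); the second (which controls $\ptl_t f$ and carries the extra factor $\min(1, L^{-1}N^2)$) handles bounds (ii) and (iii). The choice of which term in the $\min(L^\theta, \min(|t|,1)^{-\theta})$ to use depends on whether the target Besov norm is unweighted or carries a singular time weight: for the unweighted norms one selects $L^\theta$ (paying for temporal roughness via frequency), and for the weighted norms one selects $\min(|t|,1)^{-\theta}$ (whose singularity is absorbed by the outer weight).

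Concretely, for the unweighted pieces $\Cs_{tx}^{(-\theta,\alpha)}$ in (i)-(ii) and $\Cs_{tx}^{(1-\theta, \alpha-2)}$ in (ii), the $L^\theta$ option yields a factor $L^\theta N^{-\alpha}$ that is exactly cancelled by the prefactor $L^{-\theta} N^\alpha$ in the first case; in the second case a short two-case analysis ($L\leq N^2$ vs.\ $L>N^2$) shows $L^\theta N^{-\alpha} \min(1, L^{-1}N^2) \leq L^{\theta-1} N^{-\alpha+2}$, which is exactly compensated by $L^{1-\theta} N^{\alpha-2}$. For the time-weighted pieces $\Cs_{tx}^{(0,\alpha),\theta}$ and $\Cs_{tx}^{(1,\alpha-2),\theta}$ appearing in (i)-(iii), computing the Besov norm on $(t,\infty)$ only sees evaluation times $s>t$, where $\min(|s|,1)^{-\theta} \leq \min(t,1)^{-\theta}$, so the outer weight $t^\theta$ cancels the singular factor; the exponent counting in the $L,N$ variables is then identical to the unweighted case.

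The one step that requires a genuine calculation rather than bookkeeping is handling the infimum-over-extensions in the definition of $\Cs_{tx}^\alpha((t,\infty))$ (Definition \ref{def:space-time-besov-space-finite-interval}). The plan is to construct an explicit extension by multiplying $f$ by a smooth temporal cutoff $\chi_t$ supported in $(t/2,\infty)$ with $\chi_t\equiv 1$ on $(t,\infty)$, which gives the bound
\[ \|f|_{(t,\infty)}\|_{\Cs_{tx}^\alpha((t,\infty))}\leq \|\chi_t f\|_{\Cs_{tx}^\alpha}. \]
To estimate $\|P_{L,N}(\chi_t f)(s)\|_{L^\infty_x}$ one uses the rapid decay of the time-kernel of $P_L$ to localize to contributions coming from times $s'\geq t/2$, on which Lemma \ref{lemma:f-Wc-ptl-t-f-Wc} supplies the uniform singular factor $\min(t/2,1)^{-\theta} \sim \min(t,1)^{-\theta}$. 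This localization, while technically the main obstacle in a rigorous write-up, is standard for Littlewood-Paley decompositions, and once it is dispatched the three estimates all reduce to the exponent counts already outlined.
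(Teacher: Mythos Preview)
Your overall strategy---reduce everything to Lemma \ref{lemma:f-Wc-ptl-t-f-Wc} and distinguish whether to keep the $L^\theta$ or the $\min(|t|,1)^{-\theta}$ factor---is exactly the paper's approach, and your treatment of the unweighted norms $\Cs_{tx}^{(-\theta,\alpha)}$ and $\Cs_{tx}^{(1-\theta,\alpha-2)}$ is correct.

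The difference, and the gap, is in how you handle the weighted norms $\Cs_{tx}^{(0,\alpha),\theta}$ and $\Cs_{tx}^{(1,\alpha-2),\theta}$. The paper does \emph{not} use a smooth cutoff. Instead, given $t_0>0$, it takes the constant-continuation extension $h(s):=f(\max(s,t_0))$, observes that $\|h\|_{\Wc^{\alpha,0}}\lesssim t_0^{-\theta}\|f\|_{\Wc^{\alpha,\theta}}$ and (crucially) $\|\partial_s h\|_{\Wc^{\alpha-2,0}}\lesssim t_0^{-\theta}\|\partial_t f\|_{\Wc^{\alpha-2,\theta}}$, and then simply applies the already-proven \emph{unweighted} estimates (with $\theta=0$) to $h$. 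No kernel-decay argument is needed.

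Your cutoff extension $\chi_{t_0} f$ works for the $\Cs_{tx}^{(0,\alpha),\theta}$ piece but runs into a genuine problem for $\Cs_{tx}^{(1,\alpha-2),\theta}$: when you invoke the second part of Lemma \ref{lemma:f-Wc-ptl-t-f-Wc} you need control of $\partial_s(\chi_{t_0} f)$ in $\Wc^{\alpha-2,0}$, and the term $\chi_{t_0}' f$ is of size $t_0^{-1}\cdot t_0^{-\theta}$ in $\Cs_x^{\alpha-2}$, not $t_0^{-\theta}$. Tracking this through, the contribution of $\chi_{t_0}' f$ to $t_0^{\theta}\,L\,N^{\alpha-2}\|P_{L,N}(\chi_{t_0} f)\|_{L^\infty_{tx}}$ in the regime $L>N^2$ is $\gtrsim t_0^{-1}$ (take $N=1$), which blows up as $t_0\to 0$. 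The ``rapid decay of the time-kernel'' does not rescue this: the problem is not leakage outside $s'\geq t_0/2$, but the size of $\chi_{t_0}'$ itself. The paper's extension avoids this entirely because $\partial_s h=\mathbf{1}_{\{s>t_0\}}\partial_s f$ has no ``derivative-of-cutoff'' term.
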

\begin{proof}
By Lemma \ref{lemma:f-Wc-ptl-t-f-Wc}, we have that
\[ \|P_{L, N} f\|_{L_{tx}^\infty} \lesssim L^\theta N^{-\alpha} \|f\|_{\Wc^{\alpha, \theta}}. \]
This shows that $\|f\|_{\Cs_{tx}^{(-\theta, \alpha)}} \lesssim \|f\|_{\Wc^{\alpha, \theta}}$. Next, given $t_0 > 0$, define the extension $h(t) := f(\max(t, t_0))$ of $f|_{(t_0, \infty)}$ to $\R$. Observe that 
\beq\label{eq:cor-Wc-implies-besov-intermediate} \|h\|_{\Wc^{\alpha, 0}} \lesssim t_0^{-\theta} \|f\|_{\Wc^{\alpha, \theta}}.  \eeq
From this and our previously proved estimate, we obtain 
\[\|h\|_{\Cs_{tx}^\alpha} \lesssim t_0^{-\theta} \|f\|_{\Wc^{\alpha, \theta}}. \]
This implies that $\|f\|_{\Cs_{tx}^{(0, \alpha), \theta}} \lesssim \|f\|_{\Wc^{\alpha, \theta}}$, which completes the proof of the first estimate.

For the second estimate, similarly note that by Lemma \ref{lemma:f-Wc-ptl-t-f-Wc}, we have that
\[ \|P_{L, N} f\|_{L_{tx}^\infty} \lesssim \min\big(L^\theta N^{-\alpha}, L^{-(1-\theta)} N^{-(\alpha - 2)}\big) \big(\|f\|_{\Wc^{\alpha, \theta}} + \|\ptl_t f\|_{\Wc^{\alpha -2, \theta}}\big).   \]
For the last estimate, let $t_0 > 0$, and let $h$ be defined as before. Observe that 
\[  \|\ptl_t h\|_{\Wc^{\alpha-2, 0}} \lesssim t_0^{-\theta} \|f\|_{\Wc^{\alpha-2, \theta}}. \]
By \eqref{eq:cor-Wc-implies-besov-intermediate}, the above, and the second estimate, we then obtain
\[ \|h\|_{\Cs_{tx}^{(0, \alpha)} \cap \Cs_{tx}^{(1, \alpha-2)}} \lesssim t_0^{-\theta} \big(\|f\|_{\Wc^{\alpha, \theta}} + \|\ptl_t f\|_{\Wc^{\alpha -2, \theta}}\big). \]
The last estimate now follows.
\end{proof}

The following lemma allows us to multiply distributions $f, g$ in the case where the regularities of $f$ and $g$ sum to a positive number only after introducing singular weights for $f$.

\begin{lemma}\label{lemma:paraproduct-Wc-space-time-besov-space}
Let $\alpha > 0$, $\theta \in [0, 1)$, $\beta = (\beta_t, \beta_x) \in \R^2$, $\gamma \in \R$ satisfy
\[ \beta_t \in (-1, 0), ~~ 0 < \gamma < \alpha + \beta_x + 2\beta_t. \]
Then
\[\begin{split}
\|f \parasim_t \parall_x g\|_{\Wc^{\min(\gamma, \beta_x), \theta}(\R)} , ~~ \|f\paragg_t \parall_x g\|_{\Wc^{\min(\gamma, \beta_x), \theta}(\R)} &\lesssim \big(\|f\|_{\Wc^{\alpha, \theta}(\R)} + \|\ptl_t f\|_{\Wc^{\alpha-2, \theta}(\R)}\big) \|g\|_{\Cs_{tx}^\beta} \\
\|f \parasim_{tx} g\|_{\Wc^{\gamma, \theta}(\R)}, ~~ \|f \paragg_t \parasim_x g\|_{\Wc^{\gamma, \theta}(\R)} &\lesssim \big(\|f\|_{\Wc^{\alpha, \theta}(\R)} + \|\ptl_t f\|_{\Wc^{\alpha-2, \theta}(\R)}\big) \|g\|_{\Cs_{tx}^\beta}, \\
\|f \parasim_{t} \paragg_x g\|_{\Wc^{\min(\gamma, \gamma-\beta_x), \theta}(\R)}, ~~ \|f \paragg_{tx} g\|_{\Wc^{\min(\gamma, \gamma-\beta_x), \theta}(\R)} &\lesssim \big(\|f\|_{\Wc^{\alpha, \theta}(\R)} + \|\ptl_t f\|_{\Wc^{\alpha-2, \theta}(\R)}\big) \|g\|_{\Cs_{tx}^\beta} \\
\|f \parasim_t g\|_{\Wc^{\min(\gamma, \gamma - \beta_x, \beta_x), \theta}(\R)}, ~~\|f \paragg_t g\|_{\Wc^{\min(\gamma, \gamma - \beta_x, \beta_x), \theta}(\R)} &\lesssim  \big(\|f\|_{\Wc^{\alpha, \theta}(\R)} + \|\ptl_t f\|_{\Wc^{\alpha-2, \theta}(\R)}\big) \|g\|_{\Cs_{tx}^\beta}.
\end{split}\]
\end{lemma}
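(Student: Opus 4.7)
The plan is to reduce the claimed $\Wc^{\gamma', \theta}(\R)$ estimates to standard paraproduct estimates on space-time Besov spaces with singular time weights. The key input is Corollary \ref{cor:Wc-implies-besov}, which provides
\[ \|f\|_{\Cs_{tx}^{(0, \alpha), \theta} \cap \Cs_{tx}^{(1, \alpha-2), \theta}} \lesssim \|f\|_{\Wc^{\alpha, \theta}(\R)} + \|\ptl_t f\|_{\Wc^{\alpha-2, \theta}(\R)}. \]
From this, a direct Littlewood-Paley interpolation based on the elementary bound $\min(N^{-\alpha}, L^{-1} N^{-(\alpha-2)}) \lesssim L^{-s} N^{-(\alpha - 2s)}$ (valid for every $s \in [0, 1]$) yields $\|f\|_{\Cs_{tx}^{(s, \alpha - 2s), \theta}} \lesssim \|f\|_{\Wc^{\alpha, \theta}(\R)} + \|\ptl_t f\|_{\Wc^{\alpha-2, \theta}(\R)}$ for every such $s$.

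For each of the paraproducts in the statement, I would then select the interpolation parameter $s$ in an appropriate subinterval of $[0, 1]$ and apply the corresponding space-time paraproduct estimate (which carries over from the scalar-Besov case because the symbol of $P_{L, N}$ factors as $\rho_L(u) \rho_N(n)$). The defining constraint on the subinterval is $s > -\beta_t$, so that the time-regularity sum $s + \beta_t$ is strictly positive -- this is required for every high-high-in-time resonance, and is compatible with $s \in [0, 1]$ thanks to the hypothesis $\beta_t > -1$. Running the paraproduct estimate on each interval $(t_0, \infty)$, the resulting norm carries a prefactor of $t_0^{-\theta}$ coming from the restriction of $f$ to $(t_0, \infty)$. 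Finally, the output is passed to a $\Wc^{\gamma', \theta}$ bound via the embedding $\Cs_{tx}^{(\eta, \gamma')}((t_0, \infty)) \hookrightarrow L_t^\infty((t_0, \infty); \Cs_x^{\gamma'})$ for any $\eta > 0$, which follows by summing Littlewood-Paley blocks in the temporal index $L$, since $\sum_L L^{-\eta} < \infty$.

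The main technical obstacle is the case analysis required to match the output spatial regularities appearing in the various $\min$'s. The spatial paraproducts $\parall_x$, $\parasim_x$, and $\paragg_x$ produce outputs $\min(\alpha - 2s, 0) + \beta_x$, $\alpha - 2s + \beta_x$, and $\alpha - 2s + \min(\beta_x, 0)$ respectively; depending on the signs of $\alpha - 2s$ and $\beta_x$ these simplify differently, which is the source of the $\min$-expressions $\min(\gamma, \beta_x)$, $\min(\gamma, \gamma - \beta_x)$, and $\min(\gamma, \gamma - \beta_x, \beta_x)$ in the statement. In each of the nine sub-cases, one verifies that a suitable $s$ in the interval $\bigl(-\beta_t, \min(1, \tfrac{\alpha + \beta_x - \gamma}{2})\bigr)$ can be chosen so that the corresponding output spatial regularity is at least the target; the hypothesis $\gamma < \alpha + \beta_x + 2\beta_t$ is exactly what ensures this interval is non-empty, since $-\beta_t < \tfrac{\alpha + \beta_x - \gamma}{2}$ rearranges precisely to $\gamma < \alpha + \beta_x + 2\beta_t$. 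Once $s$ is selected appropriately in each sub-case, the proof is a direct application of the general scheme above.
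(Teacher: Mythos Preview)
Your approach is correct and rests on the same key ingredient as the paper (the dyadic-block bound of Lemma~\ref{lemma:f-Wc-ptl-t-f-Wc}, packaged through Corollary~\ref{cor:Wc-implies-besov}). The organization differs, however: rather than interpolating $f$ into $\Cs_{tx}^{(s,\alpha-2s),\theta}$, applying space-time paraproduct estimates, and then embedding back via $\Cs_{tx}^{(\eta,\gamma')}\hookrightarrow L_t^\infty\Cs_x^{\gamma'}$, the paper works directly at the block level and proves a single master inequality (for $L\gtrsim L'$ and $\varepsilon:=\alpha+\beta_x+2\beta_t-\gamma>0$)
\[
\|(P_{L,N}f)(t)\,(P_{L',N'}g)(t)\|_{L_x^\infty}\lesssim L^{-\varepsilon/4}N^{-\varepsilon/2}\min(t,1)^{-\theta}N^{-(\gamma-\beta_x)}(N')^{-\beta_x}\bigl(\|f\|_{\Wc^{\alpha,\theta}}+\|\partial_tf\|_{\Wc^{\alpha-2,\theta}}\bigr)\|g\|_{\Cs_{tx}^\beta},
\]
from which all the stated paraproduct bounds follow by summing in $N,N'$ under the appropriate constraint. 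Your interpolation parameter $s$ corresponds exactly to the exponent chosen when the paper bounds $\min(1,L^{-1}N^2)\le(L^{-1}N^2)^{-\beta_t+\varepsilon/4}$, fixed once rather than case by case. The paper's route buys brevity and eliminates the nine-way case analysis; yours makes the origin of the various $\min$-expressions in the statement more transparent.
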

\begin{remark}
It may be helpful to keep in mind that one of the main uses cases of this lemma is when $f \in \Wc^{2+, 1/2+}$, $\ptl_t f \in \Wc^{0+, 1/2+}$, and $g \in \Cs_{tx}^{(-1/2-, -1-)}$. The lemma then gives $f \parasim_{tx} g \in \Wc^{0+, 1/2+}$ and similarly for the other combinations of high$\times$high and high$\times$low space-time paraproducts.
\end{remark}
\begin{proof}
Let $L, L' \in \dyadic$ be such that $L \gtrsim L'$. Let $N, N' \in \dyadic$. Let $\varep := \alpha + \beta_x + 2\beta_t - \gamma > 0$. Without loss of generality, take $t > 0$. Due to our assumptions on the various parameters, all the stated claims are consequences of the following estimate:
\[ \|(P_{L, N} f)(t) (P_{L', N'} g)(t)\|_{L_x^\infty} \lesssim L^{-\varep/4} N^{-\varep/2} \min(t, 1)^{-\theta} N^{-(\gamma - \beta_x)} (N')^{-\beta_x} \big(\|f\|_{\Wc^{\alpha, \theta}} + \|\ptl_t f\|_{\Wc^{\alpha-2, \theta}}\big) \|g\|_{\Cs_{tx}^\beta}.  \]
Towards this end, first note that
\[ \|(P_{L, N} f)(t) (P_{L', N'} g)(t)\|_{L_x^\infty}  \lesssim L^{-\beta_t} (N')^{-\beta_x} \|(P_{L, N} f)(t)\|_{L_x^\infty} \|g\|_{\Cs_{tx}^{\beta}}. \]
Next, by Lemma \ref{lemma:f-Wc-ptl-t-f-Wc}, we have that
\[ \|(P_{L, N} f)(t)\|_{L_x^\infty} \lesssim \min(t, 1)^{-\theta} \min(1, L^{-1} N^2) N^{-\alpha} \big(\|f\|_{\Wc^{\alpha, \theta}} + \|\ptl_t f\|_{\Wc^{\alpha-2, \theta}}\big).\]
Combining the previous two estimates, we obtain the further upper bound (recall $\beta_t < 0$)
\[ \min(t, 1)^{-\theta} (L^{-1} N^2)^{\varep/4} L^{-\beta_t} (N')^{-\beta_x}  (L^{-1} N^2)^{-\beta_t} N^{-\alpha} \big(\|f\|_{\Wc^{\alpha, \theta}} + \|\ptl_t f\|_{\Wc^{\alpha-2, \theta}}\big) \|g\|_{\Cs_{tx}^{\beta}}.\]
To finish, recall that $\varep = \alpha + \beta_x + 2\beta_t - \gamma$, so that
\[ (L^{-1} N^2)^{\varep/4} L^{-\beta_t}  (L^{-1} N^2)^{-\beta_t} N^{-\alpha} = L^{-\varep/4} N^{-(\alpha + 2\beta_t - \varep/2)}  = L^{-\varep/4} N^{-\varep/2} N^{-(\gamma-\beta_x)} . \qedhere\]
\end{proof}

The following commutator lemma is needed in Section \ref{section:lo-hi}.

\begin{lemma}\label{lemma:P-N-commutator-space-time}
Let $\alpha = (\alpha_t, \alpha_x) \in \R^2$, $\alpha' = (\alpha_t', \alpha_x') \in \R^2$, $\gamma = (\gamma_t, \gamma_x) \in \R^2$, $\gamma' = (\gamma_t', \gamma_x') \in \R^2$, $\beta = (\beta_t, \beta_x) \in (-\infty, 0)^2$, $\theta, \theta' > 0$ be such that 
\[ \alpha_x, \alpha_x', \gamma_x, \gamma_x' < 2,~~ \alpha_t \geq 0, ~~\gamma_t \geq 0, ~~\alpha_t' + \beta_t > 0, ~~\gamma_t' + \beta_t > 0. \]
Let $f \in \Cs_{tx}^\alpha \cap \Cs_{tx}^{\alpha'} \cap \Cs_{tx}^{\gamma, \theta} \cap \Cs_{tx}^{\gamma', \theta'}$, $g \in \Cs_{tx}^\beta$. For $N \in \dyadic$, $\delta \in [0, 2)$, we have that
\[ \big\|P_{\leq N} (f \parall g) - f \parall P_{\leq N} g - \ptl_j f \parall Q^j_{> N} g\big\|_{X + Y} \lesssim N^{-\delta} \|f\|_{\Cs_{tx}^\alpha \cap \Cs_{tx}^{\alpha'} \cap \Cs_{tx}^{\gamma, \theta} \cap \Cs_{tx}^{\gamma', \theta'}} \|g\|_{\Cs_{tx}^\beta},\]
where $X = \Cs_{tx}^{\beta + (0, \alpha_x-\delta)} \cap \Cs_{tx}^{\beta + (0, \gamma_x - \delta), \theta}$ and $Y = \Cs_{tx}^{\alpha' + \beta - (0, \delta)} \cap \Cs_{tx}^{\gamma' + \beta - (0, \delta), \theta'}$.
\end{lemma}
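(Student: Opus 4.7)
The plan is to adapt the proof of the spatial commutator bound Lemma \ref{lemma:P-N-commutator-space}, carefully tracking time regularity so as to land in the decomposed norm $X + Y$. Since $P_{\leq N}$ and $Q^j_{>N}$ act purely on spatial frequencies, the underlying algebraic mechanism of the commutator identity is spatial; the extra work is purely bookkeeping of time frequencies.

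First, decompose $f$ and $g$ simultaneously in space-time Littlewood-Paley blocks, $f = \sum_{L_1, M_1} P_{L_1, M_1} f$ and $g = \sum_{L_2, M_2} P_{L_2, M_2} g$, with the spatial paraproduct imposing $M_1 \ll M_2$. For each quadruple of scales, the Fourier symbol of the contribution to
\[
F := P_{\leq N}(f \parall g) - f \parall P_{\leq N} g - \ptl_j f \parall Q^j_{>N} g
\]
reduces to a purely spatial multiplier of the form $\rho_{\leq N}(m_1 + m_2) - \rho_{\leq N}(m_2) + m_1^j \partial_j \rho_{> N}(m_2)$. Outside the regime $M_2 \sim N$, all three terms cancel identically (either because $P_{\leq N}$ acts as identity or as zero, or because $\partial_j \rho_{>N}$ vanishes on the support of $\rho_{M_2}$). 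In the regime $M_2 \sim N$, the second-order Taylor expansion of $\rho_{\leq N}$ about $m_2$ produces a symbol of size $|m_1|^2 \|\nabla^2 \rho_{\leq N}\|_\infty \lesssim M_1^2 N^{-2}$, leading (exactly as in the proof of Lemma \ref{lemma:P-N-commutator-space}) to
\[
\|F_{L_1, M_1, L_2, M_2}\|_{L^\infty_{tx}} \lesssim M_1^2 N^{-2}\, \ind_{M_2 \sim N}\, \|P_{L_1, M_1} f\|_{L^\infty_{tx}}\, \|P_{L_2, M_2} g\|_{L^\infty_{tx}}.
\]

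Next, split $F = F^{(A)} + F^{(B)}$ according to whether $L_1 \lesssim L_2$ (Case A, placed in $X$) or $L_1 \gg L_2$ (Case B, placed in $Y$). In Case A, the output time frequency is $\sim L_2$, and the aim is to estimate using only the $\alpha$-regularity of $f$: collapse $\sum_{L_1 \lesssim L_2} P_{L_1, M_1} f = P_{\leq L_2, M_1} f$ and use $\alpha_t \geq 0$ to obtain a bound $\|P_{\leq L_2, M_1} f\|_{L^\infty_{tx}} \lesssim M_1^{-\alpha_x} \|f\|_{\Cs_{tx}^\alpha}$ uniformly in $L_2$. Then sum over $M_1 \ll N$ using $\alpha_x < 2$: $\sum_{M_1 \ll N} M_1^{2-\alpha_x} \lesssim N^{2-\alpha_x}$, which combined with the $N^{-2}$ factor produces the overall gain $N^{-\alpha_x}$, equivalent to $N^{-\delta}$ after the spatial regularity loss. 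This yields exactly the $\Cs_{tx}^{\beta + (0, \alpha_x - \delta)}$ bound. In Case B, the output time frequency is $\sim L_1$, and we use $\alpha'$-regularity of $f$: the estimate $\|P_{L_1, M_1} f\|_{L^\infty_{tx}} \lesssim L_1^{-\alpha_t'} M_1^{-\alpha_x'} \|f\|_{\Cs_{tx}^{\alpha'}}$ combined with $\sum_{L_2 \ll L_1} L_2^{-\beta_t} \lesssim L_1^{-\beta_t}$ (valid because $\beta_t < 0$) produces the output time regularity $\alpha_t' + \beta_t > 0$, which is exactly what is needed to place $F^{(B)}$ in $\Cs_{tx}^{\alpha' + \beta - (0, \delta)}$. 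The singular-time-weight components of $X$ and $Y$ (namely $\Cs_{tx}^{\beta + (0, \gamma_x - \delta), \theta}$ and $\Cs_{tx}^{\gamma' + \beta - (0, \delta), \theta'}$) are handled by the same argument applied to $f \big|_{(t_0, \infty)}$ (extended trivially to $\R$), using that $\|f|_{(t_0, \infty)}\|_{\Cs_{tx}^{\gamma}} \lesssim t_0^{-\theta} \|f\|_{\Cs_{tx}^{\gamma, \theta}}$ for $t_0 > 0$, with the $t_0^{-\theta}$ loss absorbed by the singular-weight norm on the left.

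The main technical obstacle will be keeping the bookkeeping straight across all four of the $f$-regularities simultaneously, and in particular verifying the uniform-in-$L_2$ bound on $P_{\leq L_2, M_1} f$ without strictness on $\alpha_t$ at the boundary case $\alpha_t = 0$ (which is handled by the uniform $L^\infty_t \to L^\infty_t$ boundedness of the time projection $P_{\leq L_2}$ applied to the spatially projected $P_{M_1} f$). No genuinely new ideas beyond the Taylor expansion already employed in Lemma \ref{lemma:P-N-commutator-space} and the spatial/temporal paraproduct framework of Lemma \ref{lemma:paraproduct-Wc-space-time-besov-space} are required.
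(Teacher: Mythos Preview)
Your proposal is correct and follows essentially the same strategy as the paper: split according to whether the time frequency of $f$ is below or above that of $g$, place the former piece into $X$ using $\alpha_t \geq 0$ (resp.\ $\gamma_t \geq 0$) and the latter into $Y$ using $\alpha_t' + \beta_t > 0$ (resp.\ $\gamma_t' + \beta_t > 0$), and handle the singular-weight components by restricting $f$ to $(t_0,\infty)$ and extending. The only cosmetic difference is that the paper invokes Lemma~\ref{lemma:P-N-commutator-space} as a black box at each fixed time frequency, whereas you inline its Taylor-expansion argument directly.
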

\begin{remark}
It may be helpful to keep in mind that we will apply this lemma with $f \in L_t^\infty \Cs_x^{1 -\kappa} \cap \Cs_{tx}^{(1/2+, 0-)} \cap \Wc^{1+, 0+} \cap \Cs_{tx}^{(1/2+, 0+), 0+}$, $g \in \Cs_{tx}^{(-1/2-, -1-)}$. In this case (taking $\delta$ sufficiently small) $X = \Cs_{tx}^{(-1/2-, 0-)} \cap \Cs_{tx}^{(-1/2-, 0+), 0+}$, $Y = \Cs_{tx}^{(0+, -1-)} \cap \Cs_{tx}^{(0+, -1+), 0+}$. The point of having the additional singular time weights, instead of just using $\Cs_{tx}^{(-1/2-, 0-)}$ and $\Cs_{tx}^{(0+, -1-)}$, is so that upon applying the Schauder estimate (Lemma \ref{lemma:schauder-space-time-besov-space-singular-time-weights}), we will obtain a bound in $C_t^0 \Cs_x^{1-} \cap \Wc^{1+, 0+}$, as opposed to a bound just on $C_t^0 \Cs_x^{1-}$. The extra control on $\Wc^{1+, 0+}$ is crucial for the arguments in Section \ref{section:gauged-transformed-enhanced-data-set} -- see Remark \ref{remark:why-singular-weights-necessary} for more detail on why.
\end{remark}
\begin{proof}
For brevity, define the bilinear operator
\[ \mc{B}(f, g) := P_{\leq N}(f \parall g) - f \parall P_{\leq N} g - \ptl_j f \parall Q^j_{> N} g. \]
Define
\[ F_{\ll} := \sum_{L} \mc{B}(P_{\ll L}^t f, P_{L}^t g), ~~ F_{\gtrsim} :=  \sum_{L \gtrsim L'} \mc{B}(P_L^t f, P_{L'}^t g) = \sum_{L} \mc{B}(P_L^t f, P_{\lesssim L}^t g). \]
It suffices to show that
\[\begin{split}
\|F_{\ll}\|_{\Cs_{tx}^{\beta + (0, \alpha_x-\delta)} \cap \Cs_{tx}^{\beta + (0, \gamma_x - \delta), \theta}} &\lesssim N^{-\delta} \big(\|f\|_{\Cs_{tx}^\alpha} + \|f\|_{\Cs_{tx}^{\gamma, \theta}}\big) \|g\|_{\Cs_{tx}^\beta}, \\
\|F_{\gtrsim}\|_{\Cs_{tx}^{\alpha' + \beta - (0, \delta)} \cap \Cs_{tx}^{\gamma' + \beta - (0, \delta), \theta'}} &\lesssim N^{-\delta} \big(\|f\|_{\Cs_{tx}^{\alpha'}} + \|f\|_{\Cs_{tx}^{\gamma', \theta'}}\big) \|g\|_{\Cs_{tx}^\beta} .  
\end{split}\]
Towards this end, by Lemma \ref{lemma:P-N-commutator-space}, we have for fixed $L$ that
\[ \|\mc{B}(P^t_{\ll L} f, P^t_L g)\|_{L_t^\infty \Cs_x^{\alpha_x + \beta_x - \delta}} \lesssim N^{-\delta} \|P^t_{\ll L}  f\|_{L_t^\infty \Cs_x^{\alpha_x}} \|P^t_L g\|_{L_t^\infty \Cs_x^{\beta_x}} \lesssim N^{-\delta} L^{-\beta_t} \|f\|_{\Cs_{tx}^{\alpha}} \|g\|_{\Cs_{tx}^\beta},   \]
where we used that $\alpha_t \geq 0$. Since $\mc{B}(P^t_{\ll L} f, P_L^t g)$ is localized to temporal frequency $L$, this shows that
\[ \|F_{\ll}\|_{\Cs_{tx}^{\beta + (0, \alpha_x - \delta)}} \lesssim N^{-\delta} \|f\|_{\Cs_{tx}^{\alpha}} \|g\|_{\Cs_{tx}^\beta}.\]
For the other bound on $F_{\ll}$, note that by definition of $\Cs_{tx}^{\gamma, \theta}$, for any $t_0 > 0$ there exists an extension $h$ of $f |_{(t_0, \infty)}$ to $\R \times \T^2$ such that $\|h\|_{\Cs_{tx}^{\gamma}} \lesssim t_0^{-\theta} \|f\|_{\Cs_{tx}^{\gamma, \theta}}$. Note that for each $L$, $\mc{B}(P^t_{\ll L} h, P_L^t g)$ is an extension of $\mc{B}(P_{\ll L}^t f, P_L^t g) |_{(t_0, \infty)}$ to $\R \times \T^2$. Moreover, by the previous argument, we have that
\[ \|\mc{B}(P_{\ll L}^t h, P_L^t g)\|_{\Cs_{tx}^{\beta + (0, \gamma_x - \delta)}} \lesssim N^{-\delta} \|h\|_{\Cs_{tx}^{\gamma}} \|g\|_{\Cs_{tx}^\beta} \lesssim t_0^{-\theta} N^{-\delta} \|f\|_{\Cs_{tx}^{\gamma, \theta}} \|g\|_{\Cs_{tx}^\beta}.\]
This shows that
\[ \|F_{\ll}\|_{\Cs_{tx}^{\beta + (0, \gamma_x - \delta), \theta}} \lesssim  N^{-\delta} \|f\|_{\Cs_{tx}^{\gamma, \theta}} \|g\|_{\Cs_{tx}^\beta}.\]

Next, we bound $F_{\gtrsim}$. By Lemma \ref{lemma:P-N-commutator-space}, we have for fixed $L$ that
\[ \|\mc{B}(P_{L}^t f, P_{\lesssim L}^t g)\|_{L_t^\infty \Cs_x^{\alpha_x' + \beta_x - \delta}} \lesssim N^{-\delta} \|P_L^t f\|_{L_t^\infty \Cs_x^{\alpha_x'}} \|P_{\lesssim L}^t g\|_{L_t^\infty \Cs_x^{\beta_x}} \lesssim N^{-\delta} L^{-(\alpha_t' + \beta_t)} \|f\|_{\Cs_x^{\alpha'}} \|g\|_{\Cs_tx^{\beta}}, \]
where we used that $\beta_t \leq 0$ in the second inequality. Observe that $\mc{B}(P_L^t f, P_{\lesssim L}^t g)$ is supported on temporal frequencies $\lesssim L$, and thus since $\alpha_t' + \beta_t > 0$ by assumption, we obtain that
\[ \|F_{\gtrsim} \|_{\Cs_{tx}^{\alpha' + \beta - (0, \delta)}} \lesssim N^{-\delta} \|f\|_{\Cs_x^{\alpha'}} \|g\|_{\Cs_{tx}^{\beta}}.\]
The bound on $F_{\gtrsim}$ involving singular time weights may be argued similarly to before.
\end{proof}

\begin{remark}
We have stated the various technical results in terms of $\Cs_{tx}^\alpha$ spaces. These results also hold for  $\Cs_{tx}^\alpha((0, T))$ spaces. This follows directly from the definition (Definition \ref{def:space-time-besov-space-finite-interval}) of these latter spaces.
\end{remark}

\subsection{Stochastic estimate}\label{section:stochastic-estimate-gauge-covariance}

In this section, we prove the additional stochastic estimates which are needed for Proposition \ref{prop:main-gauge-covariance}.

\begin{lemma}\label{lemma:white-noise-space-time-besov-space-bound}
For all $p \geq 1$, we have that
\begin{equation}\label{eq:white-noise-space-time-besov-space-bound}
\E\Big[ \|\xi \|_{\Cs_{tx}^{(-\frac{1}{2}-\frac{\kappa}{2},-1-\kappa)}((0, 1))}^p\Big]^{1/p} \lesssim p^{1/2}.
\end{equation}
\end{lemma}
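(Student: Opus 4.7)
The plan is to combine a cutoff reduction, Besov embedding, and Gaussian hypercontractivity, very much in the spirit of the standard reductions already used in Section~\ref{section:objects} (cf.\ Lemma~\ref{lemma:linear-object}).

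First, since the $\Cs_{tx}^\alpha((0,1))$-norm is defined via infima over global extensions (Definition~\ref{def:space-time-besov-space-finite-interval}), I would fix a smooth compactly supported $\chi\colon\R\to[0,1]$ with $\chi\equiv 1$ on $[0,1]$ and reduce to proving
\[
\E\Big[\|\chi\xi\|_{\Cs_{tx}^{(-\frac12-\frac\kappa2,\,-1-\kappa)}(\R\times\T^2)}^{p}\Big]^{1/p}\lesssim p^{1/2},
\]
since $\chi\xi$ is an admissible extension of $\xi|_{(0,1)\times\T^2}$.

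Second, I would apply the Besov embedding of Lemma~\ref{lemma:space-time-besov-space-continuous-embedding} to trade the $\mc{B}_{\infty,\infty}^{\alpha}$-norm for an $\mc{B}_{2q,2q}^{\alpha+\frac{1}{2q}(1,2)}$-norm, for an integer parameter $q\ge 1$ to be chosen at the end in terms of $p$. Bounding the latter is now a matter of controlling $\E[\|P_{L,N}(\chi\xi)\|_{L^{2q}_{tx}}^{2q}]$ uniformly in the dyadic scales $L,N$ and summing.

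The main (but standard) computation is the pointwise second-moment bound. Using the representation of $\xi$ as a $\cfrkg^d$-valued white noise on $\indexset$ introduced in Section~\ref{section:objects}, one writes
\[
P_{L,N}(\chi\xi)(t,x) \;=\; \int_{\indexset} k_{L,N,\chi,t,x}\,dW,
\]
where the kernel $k_{L,N,\chi,t,x}$ factorises as a product of temporal and spatial pieces with symbols $\rho_L$ and $\rho_N$ (plus a multiplication by $\chi$ in time). A direct calculation of $\|k_{L,N,\chi,t,x}\|_{L^2(\indexset)}^2$ gives
\[
\E\bigl[|P_{L,N}(\chi\xi)(t,x)|_{\frkg^d}^2\bigr]\;\lesssim\;L\,N^2
\]
uniformly in $(t,x)\in\R\times\T^2$. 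Since $P_{L,N}(\chi\xi)(t,x)$ is Gaussian (with values in $\frkg^d$), Gaussian hypercontractivity upgrades this to
\[
\E\bigl[|P_{L,N}(\chi\xi)(t,x)|_{\frkg^d}^{2q}\bigr]^{1/(2q)}\;\lesssim\;q^{1/2}\,(L\,N^2)^{1/2}.
\]
Because $\chi$ has bounded support, integrating in $(t,x)$ costs only an absolute constant.

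Finally I would sum: Minkowski in the defining $\ell^{2q}$-sum yields
\[
\E\Big[\|\chi\xi\|_{\mc{B}_{2q,2q}^{\alpha+\frac{1}{2q}(1,2)}}^{2q}\Big]^{1/(2q)}
\;\lesssim\; q^{1/2}\Bigg(\sum_{L,N\in\dyadic} L^{2q\alpha_t+1+q}\,N^{2q\alpha_x+2+2q}\Bigg)^{\!1/(2q)}.
\]
With $\alpha_t=-\tfrac12-\tfrac\kappa2$ and $\alpha_x=-1-\kappa$, the $L$-exponent is $1-q\kappa$ and the $N$-exponent is $2-2q\kappa$, so both geometric series are uniformly bounded as soon as $q\gtrsim \kappa^{-1}$. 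Choosing $q=\lceil p/2\rceil$ for $p\gtrsim \kappa^{-1}$ yields the claimed bound $\lesssim p^{1/2}$; the remaining range $1\le p\lesssim \kappa^{-1}$ follows from H\"older's inequality applied to any such fixed $q$. The only mild subtlety is ensuring the Besov-embedding constants grow only polynomially in $q$, which is standard.
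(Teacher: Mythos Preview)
Your overall strategy---cutoff extension, Besov embedding into an $\mc B_{q,q}$-space, pointwise second-moment computation, Gaussian hypercontractivity---is exactly the paper's.  There is one minor organisational difference: the paper fixes $q=4\kappa^{-1}$ once and for all and, for $p\geq q$, uses Minkowski's integral inequality to pull the $L^p_\omega$-norm inside the $\ell^q_{L,N}L^q_{tx}$-norm; you instead let $q\sim p$ vary and compute $\E\bigl[\|\cdot\|_{\mc B_{2q,2q}}^{2q}\bigr]$ directly by Fubini.  Both work, and your observation that the embedding constant is uniform in $q$ is correct.

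There is, however, a genuine gap in the time-integrability step.  From your uniform pointwise bound
\[
\E\bigl[|P_{L,N}(\chi\xi)(t,x)|_{\frkg^2}^{2}\bigr]\;\lesssim\;L\,N^{2}
\]
you assert that ``because $\chi$ has bounded support, integrating in $(t,x)$ costs only an absolute constant.''  But the temporal convolution in $P_{L,N}$ smears the support of $\chi\xi$, so $P_{L,N}(\chi\xi)$ is \emph{not} compactly supported in $t$; integrating a uniform bound over $t\in\R$ diverges.  What is actually needed---and what the paper keeps track of---is the pointwise decay in $t$: writing $g(t):=\int_{\R}\widecheck{\rho}_L(t-s)^2\chi(s)^2\,ds$, the Schwartz decay of $\widecheck{\rho}_L$ together with the compact support of $\chi$ gives
\[
\E\bigl[|P_{L,N}(\chi\xi)(t,x)|_{\frkg^2}^{2}\bigr]\;\lesssim\;N^{2}\,g(t)\;\lesssim\;L\,N^{2}\,\langle t\rangle^{-4}.
\]
After hypercontractivity this becomes $(Cq)^{q}(LN^{2})^{q}\langle t\rangle^{-4q}$, which is integrable in $t$ uniformly in $q\geq1$, and your summation argument then goes through unchanged.  (Equivalently, one may bypass the pointwise decay via the interpolation $\int g^{q}\leq\|g\|_{\infty}^{q-1}\|g\|_{L^{1}}\lesssim L^{q}$, using $\|g\|_{\infty}\lesssim L$ and $\|g\|_{L^{1}}=\|\chi\|_{L^{2}}^{2}\|\widecheck{\rho}_L\|_{L^{2}}^{2}\lesssim L$; but some such argument is needed.)
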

\begin{proof}
It suffices to show the following estimate for the projection of $\xi$ to the time interval $(0, 1)$:
\[ \E\Big[ \|\xi \ind_{(0, 1)} \|_{\Cs_{tx}^{(-\frac{1}{2}-\frac{\kappa}{2},-1-\kappa)}}^p\Big]^{1/p} \lesssim p^{1/2} .\]
For brevity, let $\eta = \xi \ind_{(0, 1)}$. Let $L,N\in \dyadic$ and let $\chi\colon \R \times \T^2 \rightarrow [0,1]$ be a smooth cutoff function satisfying $\chi(t)=1$ for all $t\in [0,1]$. Using the decay properties of $\widecheck{\rho}_L, \widecheck{\rho}_N$, and $\chi$, we obtain for all $t\in \R$ and $x\in \T^2$ that 
\begin{equation}\label{eq:white-noise-space-time-besov-space-bound-p1}
\E\Big[ \big|P_{L, N} \big(\chi \eta\big) (t, x)\big|^2_{\frkg}\Big]
= \int_0^1 ds \int_{\T^2} dy \widecheck{\rho}_L(t-s)^2 \chi(s)^2 \widecheck{\rho}_N(x-y)^2  \lesssim L N^2 \langle t \rangle^{-4}. 
\end{equation}
We now let $q=q_\kappa:= 4\kappa^{-1}$, i.e., we let $q$ be sufficiently large depending on $\kappa$. 
For brevity, we now also let $\alpha := (-\frac{1}{2}-\frac{\kappa}{2}, 1-\kappa)$ 
and let $\alpha_q:= (-\frac{1}{2}-\frac{\kappa}{2}+\frac{1}{q}, 1-\kappa+\frac{2}{q})$. Using the embedding properties of Besov spaces (Lemma \ref{lemma:space-time-besov-space-continuous-embedding}), it follows that
\begin{equation*}
\|\eta \|_{\Cs_{tx}^{\alpha}} \lesssim \|\eta \|_{\mc{B}_{q, q}^{\alpha_q}}.
\end{equation*}
For all $p\geq q$, it then follows from Minkowski's integral inequality that
\begin{align}
\E\Big[ \|\eta \|_{\mc{B}_{q, q}^{\alpha_q}}^p\Big]^{1/p} 
&= \E \Big[ \big\| L^{-\frac{1}{2}-\frac{\kappa}{2}+\frac{1}{q}} N^{-1-\kappa+\frac{2}{q}}
    \big\| P_L^t P_N^x (\chi \eta) \big\|_{L_t^q L_x^q} \big\|_{\ell_L^q \ell_N^q}^p \Big]^{\frac{1}{p}} \notag \\ 
&\leq \Big\| L^{-\frac{1}{2}-\frac{\kappa}{2}+\frac{1}{q}} N^{-1-\kappa+\frac{2}{q}}
    \Big\| \E \big[ \big| P_L^t P_N^x (\chi \eta) (t,x) \big|^p \big]^{\frac{1}{p}} \Big\|_{L_t^q L_x^q} \Big\|_{\ell_L^q \ell_N^q}.
\label{eq:white-noise-space-time-besov-space-bound-p2}
\end{align}
Using Gaussian hypercontractivity and \eqref{eq:white-noise-space-time-besov-space-bound-p1}, we obtain that
\begin{equation*}
 \E \big[ \big| P_L^t P_N^x (\chi \eta) (t,x) \big|^p \big]^{\frac{1}{p}}
 \lesssim \sqrt{p} \,  \E \big[ \big| P_L^t P_N^x (\chi \eta) (t,x) \big|^2 \big]^{\frac{1}{2}}
 \lesssim \sqrt{p} N L^{\frac{1}{2}} \langle t \rangle^{-2}. 
\end{equation*}
Inserting this into \eqref{eq:white-noise-space-time-besov-space-bound-p2}, we obtain that 
\begin{equation*}
\eqref{eq:white-noise-space-time-besov-space-bound-p2} 
\lesssim \sqrt{p}\, \Big\| L^{-\frac{\kappa}{2}+\frac{1}{q}} N^{-\kappa+\frac{2}{q}} 
\big\| \langle t \rangle^{-2} \big\|_{L_t^q} \Big\|_{\ell_L^2 \ell_N^2} 
\lesssim \sqrt{p},
\end{equation*}
where we used that $-\frac{\kappa}{2}+\frac{1}{q}<0$. In total, this completes the proof of \eqref{eq:white-noise-space-time-besov-space-bound} for all $p\geq q$. The case $1\leq p <q$ can be deduced from the case $p=q$ and H\"{o}lder's inequality (w.r.t. the probability measure), and thus we obtain the desired estimate \eqref{eq:white-noise-space-time-besov-space-bound} for all $1\leq p <\infty$. 
\end{proof}

Later on, we will need to assume that $\big(P_{N_1}P_{N_2} \Duh\big(\linear\big)\big) \parasim \xi$ may be defined as an element of $\Cs_{tx}^{(-1/2-\kappa, 1-\kappa)}$ (see Lemma \ref{lemma:U-r-hi-hi-noise-estimate}), and thus we will need the following lemma.

\begin{lemma}\label{lemma:Duhamel-linear-times-noise-besov-space-bound}
Let $N_1, N_2, N_3, N_4 \in \dyadic$ be dyadic scales with $N_1 \sim N_2 \sim N_3 \sim N_4$. For all $p \geq 1$, we have that
\[ \E\Big[\Big\|P_{N_1}^x P_{N_2}^x P_{N_3}^x\Duh\big(\linear[][r][]\big) \otimes P_{N_4}^x \xi \Big\|^p_{\Cs_{tx}^{(-1/2-\kappa, 1-\kappa)}((0, 1))} \Big]^{1/p} \lesssim p N_1^{-\kappa/4}.\]
\end{lemma}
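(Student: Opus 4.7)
The plan is to realize the product $\Duh(\linear[][r][]) \otimes \xi$ as a pure second-Wiener-chaos element (an Itô double stochastic integral with no zeroth-chaos resonance), compute its pointwise second moment via the $L^2$-norm of its kernel, and then upgrade to the Besov norm by Gaussian hypercontractivity and Besov embedding, in close analogy with the proof of Lemma \ref{lemma:white-noise-space-time-besov-space-bound}.

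\textbf{Step 1: Representation as a double stochastic integral.} Starting from the representation of $\Duh(\linear[][r][i])$ as a single stochastic integral (Example \ref{prelim:example-integrated-linear}) and the formal expansion $\xi^j(t,x) = \sum_n \e_n(x) dW^j(t,n)/dt$, I would write out
\[ P_L^t P_M^x\Big[\big(\textstyle\prod_{r=1}^3 P_{N_r}^x\big) \Duh(\linear[][r][i]) \otimes P_{N_4}^x \xi^j\Big](t,x) = \int_{\indexset^2} f^{ij}_{L,M,t,x} \, dW^{\otimes 2} \]
as a bona fide double stochastic integral with explicit kernel
\[ f^{ij}_{L,M,t,x}(z_1, z_2) = \delta^i_{\ell_1}\delta^j_{\ell_2} \rho_{N_1}(n_1)\rho_{N_2}(n_1)\rho_{N_3}(n_1) \rho_{N_4}(n_2) \rho_M(n_1+n_2) \e_{n_1+n_2}(x) \widecheck{\rho}_L(t-u_2) \ind(u_1 \leq u_2) (u_2-u_1) e^{-(u_2-u_1)\langle n_1\rangle^2} [I_\cfrkg \otimes I_\cfrkg].\]
The crucial claim is that there is no zeroth-chaos resonant contribution. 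This is most cleanly seen by regularizing $\xi$ in time, applying the product formula of Lemma \ref{lemma:multiple-stochastic-integral-tensor-product}, and taking the limit: the candidate resonance $\paired{\{1,2\}}$ carries the factor $(u_2-u_1)$ coming from the Duhamel integral, evaluated on the diagonal $u_1 = u_2$, and therefore vanishes identically.

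\textbf{Step 2: Pointwise second-moment bound.} By Lemma \ref{lemma:multiple-stochastic-integral-second-moment-bound-l2}, the second moment is controlled by $\|f^{ij}_{L,M,t,x}\|_{L^2}^2$. A direct computation using the elementary integral $\int_0^\infty v^2 e^{-2v\langle n\rangle^2} dv \lesssim \langle n\rangle^{-6}$, the bound $\int |\widecheck{\rho}_L(s)|^2 ds \lesssim L$, and the counting estimate
\[ \sum_{n_1, n_2 \in \Z^2} \rho_{N_1}(n_1)\rho_{N_4}(n_2)\rho_M(n_1+n_2) \lesssim N_1^2 M^2 \]
(where the restriction $M \lesssim N_1$ is automatic since all four scales are comparable) yields
\[ \sup_{t, x} \E\Big[\big|P_L^t P_M^x\big[\textstyle(\prod_{r=1}^3 P_{N_r}^x) \Duh(\linear[][r]) \otimes P_{N_4}^x \xi\big](t,x)\big|_\cfrkg^2\Big] \lesssim L N_1^{-4} M^2.\]

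\textbf{Step 3: Upgrade to the Besov norm.} Following the strategy used in the proof of Lemma \ref{lemma:white-noise-space-time-besov-space-bound}, I would fix a large exponent $q = q(\kappa)$ and use the continuous embedding $\mc{B}^{\alpha + (1/q, 2/q)}_{q,q} \hookrightarrow \Cs^{\alpha}_{tx}$ from Lemma \ref{lemma:space-time-besov-space-continuous-embedding}, with $\alpha = (-1/2-\kappa, 1-\kappa)$. Combining Minkowski's inequality (valid for $p \geq q$) with Gaussian hypercontractivity on the second Wiener chaos (which gives $\E[|X|^p]^{1/p} \lesssim p\,\E[|X|^2]^{1/2}$) and the bound from Step 2, the problem reduces to bounding
\[ p\,N_1^{-2} \big\|L^{1/q-\kappa} M^{2-\kappa+2/q}\big\|_{\ell^q_{L\geq 1,\,1 \leq M \lesssim N_1}}.\]
Taking $q > 1/\kappa$ makes the $L$-sum finite, while the $M$-sum is dominated by its largest term $N_1^{2-\kappa+2/q}$, so the total is $\lesssim p\,N_1^{-\kappa+2/q}$. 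Choosing $q$ large enough that $2/q \leq 3\kappa/4$ yields the desired bound $\lesssim p\,N_1^{-\kappa/4}$ for $p \geq q$, and the general $p \geq 1$ case follows by Hölder's inequality.

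\textbf{Main obstacle.} The most delicate step is the no-resonance representation in Step 1: because $\xi$ is distributional in time, the product formula of Lemma \ref{lemma:multiple-stochastic-integral-tensor-product} applies only after a regularization, and one has to verify carefully that the candidate zeroth-chaos contribution really does vanish as the regularization is removed. Once this is established, Steps 2 and 3 are standard computations of the same type already performed throughout Section \ref{section:objects}.
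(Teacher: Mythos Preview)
Your proposal is correct and follows the same overall architecture as the paper: establish a pointwise second-moment bound for the space-time Littlewood--Paley pieces (your Step~2 matches the paper's Lemma~\ref{lemma:duhamel-linear-times-noise-l2-bound}, giving $\lesssim L M^2 N_1^{-4}$ in the relevant regime), then upgrade via hypercontractivity and Besov embedding exactly as in the proof of Lemma~\ref{lemma:white-noise-space-time-besov-space-bound}.

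The one point of genuine difference is how you justify the absence of a zeroth-chaos resonance in Step~1. You argue via the product formula (Lemma~\ref{lemma:multiple-stochastic-integral-tensor-product}) after time-mollifying $\xi$, observing that the contraction carries the Duhamel factor $(u_2-u_1)$ evaluated on the diagonal and hence vanishes in the limit. The paper instead uses It\^o/martingale calculus directly: it writes $P_{L,M}\big(\Duh(\linear)\otimes\xi\ind_{(0,1)}\big)$ as a sum over Fourier modes of martingale endpoints $M_{n_1,n_2}(1)$, where $dM_{n_1,n_2}(s)=\widecheck{\rho}_L(t-s)\,Z_{n_1}(s)\otimes d\hat\eta(s,n_2)$ with $Z_{n_1}(s)$ adapted. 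The zero-mean property then comes for free from the It\^o integral, with no regularization needed (this is precisely the content of Remark~\ref{remark:no-extra-resonace}). Your route is perfectly valid, but it asks you to control the regularized resonance $\int_0^\infty v\,e^{-v\langle n\rangle^2}\phi_\epsilon(v)\,dv=O(\epsilon)$ uniformly over the (finite) sum in $n$, whereas the paper's adaptedness argument sidesteps this entirely. Two minor discrepancies to watch if you write this out in full: the Duhamel factor is $(u_2-\max(u_1,0))$ rather than $(u_2-u_1)$ (since the paper uses $\Duh$, not $\Duhinf$), and you should cut off $\xi$ to $(0,1)$ to ensure $L^q_t$-integrability in Step~3, as the paper does.
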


Before we prove this lemma, we first prove the following intermediate result.

\begin{lemma}\label{lemma:duhamel-linear-times-noise-l2-bound}
Let $N_1, N_2, N_3, N_4, L, M$ be dyadic scales. We have that for all $t \in \R$, $x \in \T^2$,
\[ \E\Big[ \Big|P_{L, M}\Big( P_{N_1}^x P_{N_2}^x P_{N_3}^x \Duh\big(\linear[][r][]\big) \otimes P_{N_4}^x (\xi \ind_{(0, 1)})\Big)(t, x)\Big|^2_{\cfrkg^{\otimes 2}}\Big] \lesssim \frac{K_1^2 K_2^2}{N_1^6} \int_0^1 ds \widecheck{\rho}_L(t-s)^2, \]
where $K_1 = \min(M, N_i, i \in [4])$, $K_2 = \min_{(2)}(M, N_i, i \in [4])$ (here, $\min_{(2)}$ denotes the second smallest number).
\end{lemma}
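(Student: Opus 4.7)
My plan is to reduce the claim to a direct computation of the second moment of a stochastic integral representation, using the multiple stochastic integral framework developed in Section~4. The basic idea is that $u := P_{N_1}^x P_{N_2}^x P_{N_3}^x \Duh\big(\linear[][r][]\big)$ is an adapted first-chaos random field, while $v := P_{N_4}^x(\xi \ind_{(0, 1)})$ is a spatially-smooth, temporally-distributional white noise. Because $u$ is adapted, the tensor product $u(s, y) \otimes v(s, y)$, when tested against the smoothing kernel of $P_{L, M}$, can be written as an iterated It\^o integral valued in $\cfrkg \otimes \cfrkg$, and will lie entirely in the second Wiener chaos provided the associated diagonal contractions vanish.

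Concretely, using Example~\ref{prelim:example-integrated-linear} I will expand
\begin{equation*}
u^i(s, y) = \sum_{n \in \Z^2} \rho_{N_1}(n)\rho_{N_2}(n)\rho_{N_3}(n)\, \e_n(y) \int_{-\infty}^s (s - s') e^{-(s-s')\langle n\rangle^2} dW^i_n(s'),
\end{equation*}
and $v^j(s, y) = \sum_m \rho_{N_4}(m) \e_m(y) \ind(s \in (0, 1)) \dot W^j_m(s)$. Applying $P_{L, M}$ to $u \otimes v$, grouping the spatial Fourier modes via $\int \e_n(y)\e_m(y)\widecheck{\rho}_M(x-y) dy = \rho_M(n+m) \e_{n+m}(x)$, and recognizing the result as an It\^o integral in the temporal variable against $dW^j_m(s)$ with the adapted integrand obtained from $u$, I will obtain the representation
\begin{equation*}
P_{L, M}(u \otimes v)(t, x) = \sum_{n, m \in \Z^2} \rho_{N_1}(n)\rho_{N_2}(n)\rho_{N_3}(n) \rho_{N_4}(m) \rho_M(n+m) \e_{n+m}(x)\, Z_{n, m}(t),
\end{equation*}
where each $Z_{n, m}(t) \in \cfrkg \otimes \cfrkg$ is the iterated stochastic integral
\begin{equation*}
Z_{n, m}(t) = \int_0^1 \widecheck{\rho}_L(t-s) \bigg[\int_{-\infty}^s (s - s') e^{-(s-s')\langle n\rangle^2} dW^i_n(s')\bigg] dW^j_m(s)\, E_i \otimes E_j .
\end{equation*}
By the It\^o isometry and the independence relations $\E[dW^{i_1}_{n_1}(t_1)\overline{dW^{i_2}_{n_2}(t_2)}] = \delta^{i_1 i_2}\delta_{n_1 n_2}\delta(t_1 - t_2) dt_1 dt_2$, the second moment $\E|Z_{n,m}(t)|^2$ reduces to the deterministic integral
\begin{equation*}
\int_0^1 \widecheck{\rho}_L(t-s)^2 \bigg(\int_{-\infty}^s (s-s')^2 e^{-2(s-s')\langle n\rangle^2} ds'\bigg) ds \lesssim \langle n\rangle^{-6} \int_0^1 \widecheck{\rho}_L(t-s)^2 ds,
\end{equation*}
and a similar pairing argument will show that distinct pairs $(n, m)$ produce mutually orthogonal $Z_{n, m}$'s.

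The one point that requires some care — and the main technical obstacle of the proof — is the ``crossed'' pairing contribution to the second moment, in which the inner $dW^i_n(s')$ is paired with the outer $dW^j_m(s)$ of a different copy. Formally this would force $s = s'$, but such a coincidence is additionally killed by the vanishing of the Duhamel kernel $(s - s')e^{-(s-s')\langle n\rangle^2}$ on the diagonal. Equivalently, the diagonal ``resonance'' that would in principle produce a first-chaos correction to $u \otimes v$ is absent precisely because $\Duh(\linear)$ carries a vanishing prefactor at coincident times; this is why there is no renormalization constant in the statement of the lemma. Granting this, the proof concludes with the elementary count
\begin{equation*}
\#\{(n, m) \in \Z^2 \times \Z^2 : |n| \sim N_1,\ |m| \sim N_4,\ |n+m| \lesssim M\} \lesssim N_1^2 \min(M, N_1)^2,
\end{equation*}
which, under the assumption $N_1 \sim N_2 \sim N_3 \sim N_4$, coincides with $K_1^2 K_2^2$; combining with the $\langle n\rangle^{-6} \sim N_1^{-6}$ factor from each mode then yields the claimed estimate.
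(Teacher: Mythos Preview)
Your approach is correct and matches the paper's: both represent the object as an iterated It\^o integral in the second chaos (the paper phrases this via a martingale $dM_{n_1,n_2}(s)=\widecheck{\rho}_L(t-s)Z_{n_1}(s)\otimes \hat\eta(s,n_2)$), apply the It\^o isometry to obtain orthogonality of the $Z_{n,m}$ together with the $\langle n\rangle^{-6}$ decay, and finish with the frequency count under $N_1\sim N_2\sim N_3\sim N_4$. Two minor remarks: the Duhamel kernel in your formula for $u^i$ should be $(s-\max(s',0))$ rather than $(s-s')$ (since $\Duh$ integrates from $0$), which only improves the bound; and your attribution of the vanishing crossed pairing to the kernel vanishing on the diagonal, while valid, is more than needed --- once the outer integral is It\^o with adapted integrand, the isometry gives orthogonality directly and no crossed term ever arises (this is exactly the point of Remark~\ref{remark:no-extra-resonace}).
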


\begin{remark}\label{remark:no-extra-resonace}
Here, it is crucial that we only frequency localize in time for the combined object $\Duh\big(\linear\big) \otimes \xi$, as opposed to individually localizing $\Duh\big(\linear\big)$ and $\xi$ (i.e. taking $P_{L_1, N_1} \Duh\big(\linear\big)$ and $P_{L_2, N_2} \xi$). This is because the latter approach would result in a resonance which we would then later need to account for. Roughly, this is due to the fact that $P_{L_1, N_1} \Duh\big(\linear\big) \otimes P_{L_2, N_2} \xi$ is like the product of It\^{o} integrals (which in general may have nonzero mean), whereas $P_{L, M} \big(\Duh\big(\linear\big) \otimes \xi\big)$ is like a single It\^{o} integral (which always has zero mean). This is one of the differences between our work and \cite{CCHS22}, in which an extra resonance does occur when arguing gauge-covariance, essentially because $\xi$ is mollified in both space and time. This extra resonance is responsible for the additional counterterm proportional to $(dg) g^{-1}$ in the gauge-covariance arguments of \cite{CCHS22}.
\end{remark}

\begin{proof}
Arguing similarly as in Example \ref{prelim:example-integrated-linear}, we have that
\[ \Duh\big(\linear[N_1][r][]\big)(t, x) = \ind(t \geq 0) \sum_n \rho_{N_1}(n) \e_n(x) \int_{-\infty}^t (t- \max(s, 0)) e^{-(t-s) \fnorm{n}^2} \hat{\xi}(s, n) ds.\]
Let 
\[ Z_n(t) := \ind(t \geq 0) \int_{-\infty}^{\min(t, 1)} (t-\max(s, 0)) e^{-(t-s) \fnorm{n}^2} \hat{\xi}(s, n) ds, ~~ X_n(t) := \int_{-\infty}^{\min(t, 1)} e^{-(t-s) \fnorm{n}^2} \hat{\xi}(s, n) ds. \]
We may compute by It\^{o} isometry that
\[ \E\Big[|Z_n(t)|_{\cfrkg}^2\Big] = \dim(\frkg)^2 \int_{-\infty}^{\min(t, 1)} (t-\max(s, 0))^2 e^{-2(t-s)\fnorm{n}^2} ds \lesssim \fnorm{n}^{-6}. \]
For brevity, let $\eta = \xi \ind_{(0, 1)}$, $P_{N_1 \cdots N_3}^x = P_{N_1}^x P_{N_2}^x P_{N_3}^3$, and $\rho_{N_1 \cdots N_3}(n) = \rho_{N_1}(n) \rho_{N_2}(n) \rho_{N_3}(n)$. Now, observe that
\[ P_{L, M}\Big( P_{N_1 \cdots N_3}^x \Duh\big(\linear[][r][]\big) \otimes P_{N_4}^x \eta\Big)(t, x) = \sum_{n_1, n_2} \rho_M(n_{12}) \rho_{N_1 \cdots N_3}(n_1) \rho_{N_4}(n_2) \e_{n_{12}}(x) \int_0^1 ds \widecheck{\rho}_L(t-s) Z_{n_1}(s) \otimes \hat{\eta}(s, n_2) .\]
In the following, we now view $t$ as fixed and $s$ as the new time variable.
For $n_1, n_2 \in \Z^2$, define the process $M_{n_1, n_2} : [0, 1] \ra \frkg \otimes \frkg$ via the SDE
\[ dM_{n_1, n_2}(s) = \widecheck{\rho}_L(t-s) Z_{n_1}(s) \otimes \hat{\eta}(s, n_2), ~~ M_{n_1, n_2}(0) := 0.\]
Observe that $M_{n_1, n_2}$ is a martingale, and that
\[  M_{n_1, n_2}(1) = \int_\R ds \widecheck{\rho}_L(t-s) Z_{n_1}(s) \otimes \hat{\eta}(s, n_2). \]
Given additionally $m_1, m_2 \in \Z^2$, we may compute
\[\begin{split}
d\langle M_{n_1, n_2}, M_{m_1, m_2} \rangle_{\cfrkg^{\otimes 2}} &=  \langle dM_{n_1, n_2}, M_{m_1, m_2}\rangle_{\cfrkg^{\otimes 2}} + \langle M_{n_1, n_2}, dM_{m_1, m_2}\rangle_{\cfrkg^{\otimes 2}}  + \langle dM_{n_1, n_2}, dM_{m_1, m_2} \rangle_{\cfrkg^{\otimes 2}} \\
&= \langle dM_{n_1, n_2}, M_{m_1, m_2}\rangle_{\cfrkg^{\otimes 2}} + \langle M_{n_1, n_2}, dM_{m_1, m_2}\rangle_{\cfrkg^{\otimes 2}}  + \delta_{n_2 m_2} \widecheck{\rho}_L(t-s)^2 \langle Z_{n_1}, Z_{m_1} \rangle_{\cfrkg} dt. 
\end{split}\]
Noting that the first two terms above are derivatives of martingales, we obtain
\[\begin{split}
\E\Big[\langle M_{n_1, n_2}(1), M_{m_1, m_2}(1) \rangle_{\cfrkg^{\otimes ^2}} \Big] &= \delta_{n_2 m_2} \int_0^1 ds  \widecheck{\rho}_L(t-s)^2 \E\big[\langle Z_{n_1}(s), Z_{m_1}(s) \rangle_{\cfrkg}\big] ds \\
&\lesssim  \delta_{n_1 m_1}  \delta_{n_2 m_2} \fnorm{n_1}^{-6}\int_{0}^1 ds \widecheck{\rho}_L(t-s)^2. \end{split}\]
We may thus obtain
\[\begin{split}
\E\Big[&\Big|P_{L, M}\Big( P_{N_1 \cdots N_3}^x \Duh\big(\linear[][r][]\big) \otimes P_{N_4}^x \xi\Big)(t, x)\Big|_{\cfrkg^{\otimes 2}}^2 \Big] \lesssim \\
&\sum_{n_1, n_2} \rho_M(n_{12})^2 \rho_{N_1 \cdots N_3}(n_1)^2 \rho_{N_4}(n_2)^2 \fnorm{n_1}^{-6} \int_0^1 ds \widecheck{\rho}_L(t-s)^2
\lesssim \frac{K_1^2 K_2^2}{N_1^6} \int_0^1 ds \widecheck{\rho}_L(t-s)^2. \qedhere
\end{split}\]
\end{proof}

\begin{proof}[Proof of Lemma \ref{lemma:Duhamel-linear-times-noise-besov-space-bound}]
Given Lemma \ref{lemma:duhamel-linear-times-noise-l2-bound}, the proof of Lemma \ref{lemma:Duhamel-linear-times-noise-besov-space-bound} becomes very similar to the proof of Lemma \ref{lemma:white-noise-space-time-besov-space-bound}. The details are omitted.
\end{proof}

\subsection{Modified linear object}\label{section:glinear}

The first step towards showing Proposition \ref{prop:main-gauge-covariance} is to understand the difference between the original linear object $\linear$ and the modified linear object arising from the equation \eqref{eq:gauged-A} for $\gauged{A}$. In the end, we will show that this difference is going to $0$ with $N$. Recall from Section \ref{section:parameters} the parameter $\kfactors = 100\kappa$. Recall also $g$ which is the solution of \eqref{eq:gauge-transformation}.

\begin{definition}\label{def:U-and-h} 
Define $U : [0, \maxtime) \ra \mrm{End}(\frkg)$ by $U := \mrm{Ad}_g$. Let $h := g^{-1} dg$, so that $h_i = g^{-1} \ptl_i g$, $i \in [2]$. 

Let $S \geq 0$. Define $\tau_S$ to be the supremum over $T \in [0, \maxtime)$ such that
\[\begin{split}
\|U\|_{C_t^0 \Cs_x^{1-\kappa}([0, T] \times \T^2)},~~ \|U\|_{\Wc^{3/2-\kappa, 1/2}([0, T])}, ~~ \|\ptl_t U\|_{\Wc^{-1/2-\kappa, 1/2}([0, T])} &\leq S, \\
\|h\|_{C_t^0 \Cs_x^{-\kappa}([0, T])}, ~~ \|h\|_{\Wc^{10\kappa, 10\kappa}([0, T])}, ~~  \|B \|_{\mc{S}^{1-2\kappa}([0, T])} &\leq S. 
\end{split} \]
\end{definition}

\begin{remark}
As we will see in the following, we may work entirely in terms of $U = \Ad_g$ and $h = g^{-1} dg$, rather than $g$.
\end{remark}

\begin{definition}
Define the modified linear object $\glinear$ by
\beq\label{gauged:eq-glinear}  \glinear[\leqN][r][i](t)
:= \Ht \Big(\linear[\leqN][r][i](0) + \Big[ h_j(0) \parall Q^j_{> N} \linear[][r][i](0) \Big] \Big) + \int_{0}^{t} \ds \, \Hts \Big( U^{-1} P_{\leq N} \big( U \xi^i \big)\Big), ~~ t \in [0, \maxtime), i \in [2]. \eeq
\end{definition}

The main result of Section \ref{section:glinear} is the following.

\begin{proposition}\label{prop:linear-objects-close-gauge-covariance}
We have that $\lim_{S \toinf} \tau_S = \maxtime$. Moreover,  suppose there exists $1 \leq R < \infty$ such that
\[ \big\|\linear\big\|_{C_t^0 \Cs_x^{-\kappa}([0, 1] \times \T^2)}, ~~ \|\xi\|_{\Cs_{tx}^{(-1/2-\kappa, -1-\kappa)}((0, 1))}, ~~ \sum_{N_1, N_2} \Big\|\big(P_{N_1} P_{N_2} \Duh\big(\linear\big)\big) \parasim \xi \Big\|_{\Cs_{tx}^{(-1/2-\kappa, 1-\kappa)}((0, 1))}^{1/2} \leq R. \]
Then on $[0, \maxtime) \cap [0, 1]$, we have that
\begin{equation}\label{gauged:eq-glinear-decomposition}
\glinear[\leqN][r][i] = \linear[\leqN][r][i] + \Phi^i_{\leq N} = \linear[\leqN][r][i] + \Big[h_j \parall Q^j_{> N} \linear[][r][i]\Big] + \Psi^i_{\leq N}, ~~ i \in [2],    
\end{equation}
where $\Phi^i, \Psi^i$ satisfy the following bounds. For any $S \geq 1$ and $T \in [0, \tau_S \wedge 1]$, we have that
\[\begin{split}\big\|\Phi^{i}_{\leq N} \big\|_{C_t^0 \Cs_x^{1-\rho}([0, T] \times \T^2)} &\lesssim N^{-\kappa} R^2 S^4 , ~~
\big\| \Psi^i_{\leq N} \big\|_{C_t^0 \Cs_x^{1-\rho}([0, T] \times \T^2) \cap \Wc^{1+5\kappa, \rho}([0, T])} \lesssim N^{-\kappa} R^2 S^4. 
\end{split}\]  
\end{proposition}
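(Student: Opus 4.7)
\textbf{Plan for Proposition \ref{prop:linear-objects-close-gauge-covariance}.}
The limit $\tau_S \to \maxtime$ will follow from a standard bootstrap: $U=\Ad_g$ and $h=g^{-1}dg$ inherit their regularity from $g$ solving the nonlinear parabolic equation \eqref{eq:gauge-transformation}, and on $[0,\maxtime)$ every norm appearing in Definition \ref{def:U-and-h} is finite, so the sets $\{t: \text{all norms on }[0,t]\le S\}$ exhaust $[0,\maxtime)$ as $S\to\infty$. The control on $B \in \mc{S}^{1-2\kappa}$ is inherited from Theorem~\ref{intro:thm-lwp}. So I will focus on the decomposition \eqref{gauged:eq-glinear-decomposition}.

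Subtracting $\linear[\leqN][r][i](t)=\Ht\linear[\leqN][r][i](0)+\int_0^t \Hts P_{\leq N}\xi^i(s)ds$ from \eqref{gauged:eq-glinear}, the difference $\Phi^i_{\le N}:=\glinear[\leqN][r][i]-\linear[\leqN][r][i]$ satisfies
\begin{equation*}
\Phi^i_{\le N}(t)=\Ht\bigl[h_j(0)\parall Q^j_{>N}\linear[][r][i](0)\bigr]+\int_0^t \Hts\,\mc R^i(s)\,ds,
\end{equation*}
with $\mc R^i:=U^{-1}P_{\le N}(U\xi^i)-P_{\le N}\xi^i$. Using $U\xi^i=U\parall\xi^i+U\parasim\xi^i+U\paragg\xi^i$, the only piece where the commutation $[P_{\le N},U\cdot]$ produces a term that is as rough as $\xi$ is the low$\times$high paraproduct. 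By Lemma~\ref{lemma:P-N-commutator-space-time} applied with $f=U\in C_t^0\Cs_x^{1-\kappa}\cap\Wc^{3/2-\kappa,1/2}$, $\partial_t f\in\Wc^{-1/2-\kappa,1/2}$ (so also $f\in\Cs_{tx}^\alpha$ and $\Cs_{tx}^{\gamma,\theta}$ for appropriate exponents via Corollary~\ref{cor:Wc-implies-besov}) and $g=\xi^i\in\Cs_{tx}^{(-1/2-\kappa,-1-\kappa)}$, one obtains
\begin{equation*}
P_{\le N}(U\parall\xi^i)=U\parall P_{\le N}\xi^i+\partial_j U\parall Q^j_{>N}\xi^i+\mc E_{\le N}^{(1),i},\qquad \mc E_{\le N}^{(1),i}\in N^{-\kappa}\bigl(\Cs_{tx}^{(-1/2-\kappa,0-)}\cap\Cs_{tx}^{(0+,-1-),\rho'}\bigr).
\end{equation*}
The remaining pieces $P_{\le N}(U\parasim\xi^i)$ and $P_{\le N}(U\paragg\xi^i)$ are already smoother than $\linear$ by Lemma~\ref{lemma:paraproduct-Wc-space-time-besov-space} (with $f=U$, $g=\xi$), gaining at least $1+$ spatial derivatives at the cost of a singular time weight.

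Next I multiply by $U^{-1}$. The leading piece gives $U^{-1}(U\parall P_{\le N}\xi^i)=P_{\le N}\xi^i-U^{-1}(U\parasim P_{\le N}\xi^i+U\paragg P_{\le N}\xi^i)$, so the $P_{\le N}\xi^i$ cancels in $\mc R^i$. For the subleading piece, using $U^{-1}\partial_j U=\ad(h_j)$ (a direct computation from $h=g^{-1}dg$, $U=\Ad_g$), the key identity is
\begin{equation*}
U^{-1}\bigl(\partial_j U\parall Q^j_{>N}\xi^i\bigr)=\bigl[h_j,Q^j_{>N}\xi^i\bigr]+\mc E_{\le N}^{(2),i},
\end{equation*}
where the error $\mc E_{\le N}^{(2),i}$ combines a reordering-of-paraproducts correction (controlled by a commutator estimate analogous to Lemma~\ref{prelim:lem-lh-hh}, adapted to space-time Besov spaces) with contributions where $U^{-1}$ interacts via $\parasim$ or $\paragg$ paraproducts and is therefore smoother by Lemma~\ref{lemma:paraproduct-Wc-space-time-besov-space}. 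Decomposing $[h_j,Q^j_{>N}\xi^i]=[h_j\parall Q^j_{>N}\xi^i]+[h_j\parasim Q^j_{>N}\xi^i]+[h_j\paragg Q^j_{>N}\xi^i]$, only the low$\times$high piece remains near the threshold regularity; the other two gain from Lemma~\ref{prelim:lem-Q} (which converts one power of $N^{-1}$ from $Q^j_{>N}$ into a spatial derivative) to be of order $N^{-\kappa}$ smoother in the sense required.

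Finally, I convert the $[0,t]$ Duhamel integral $\int_0^t\Hts[h_j\parall Q^j_{>N}\xi^i](s)ds$ into the stationary object $[h_j\parall Q^j_{>N}\linear[][r][i]]$. Using the stationary identity $Q^j_{>N}\linear[][r][i](t)=\Ht Q^j_{>N}\linear[][r][i](0)+\int_0^t \Hts Q^j_{>N}\xi^i(s)ds$ together with the space-time integral commutator Lemma~\ref{lemma:space-time-integral-commutator-estimate} (applied with $f=h_j$, $g=Q^j_{>N}\xi^i$, whose regularity budget is $\alpha+\beta_x+2\beta_t=10\kappa+0-2-2\kappa>-2$), the integral equals $[h_j\parall Q^j_{>N}\linear[][r][i]]$ up to (i) an initial-data correction that is exactly matched by the first term $\Ht[h_j(0)\parall Q^j_{>N}\linear[][r][i](0)]$ in $\Phi^i_{\le N}(t)$, and (ii) a commutator remainder of the size claimed. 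Collecting all errors, applying the Schauder estimate of Lemma~\ref{lemma:schauder-space-time-besov-space-singular-time-weights} for the bound in $C_t^0\Cs_x^{1-\rho}\cap\Wc^{1+5\kappa,\rho}$, and tracking the $N^{-\kappa}$ gains yields the two decompositions and the stated bounds.

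The main obstacle will be the careful bookkeeping in the step that converts $\int_0^t\Hts[h_j\parall Q^j_{>N}\xi^i]\,ds$ into $[h_j\parall Q^j_{>N}\linear]$: the integral commutator Lemma~\ref{lemma:space-time-integral-commutator-estimate} requires simultaneous $\Wc^{\alpha,\nu}$ control of $h$ together with control on $\partial_t h$, and the exponents must be chosen carefully so that both the commutator gain and the gain from $Q^j_{>N}$ combine to give the sharp $N^{-\kappa}$ rate in the norms $C_t^0\Cs_x^{1-\rho}$ and $\Wc^{1+5\kappa,\rho}$ claimed for $\Psi^i_{\le N}$. The singular-weight endpoint, needed later in Section~\ref{section:gauged-transformed-enhanced-data-set}, is the reason for adopting the $\Cs_{tx}^{\alpha,\theta}$ scale throughout rather than plain $\Cs_{tx}^\alpha$.
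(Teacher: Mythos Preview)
Your overall architecture is right (split $U^{-1}P_{\le N}(U\xi)-P_{\le N}\xi$ by paraproduct type, identify $[h_j\parall Q^j_{>N}\linear]$ from the low$\times$high piece via a commutator expansion, absorb the rest into $\Psi$), but there is a genuine gap in your treatment of the high$\times$high part.

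You assert that $U\parasim\xi$ and $P_{\le N}(U\parasim\xi)$ are ``already smoother than $\linear$ by Lemma~\ref{lemma:paraproduct-Wc-space-time-besov-space} (with $f=U$, $g=\xi$)''. That lemma requires $0<\gamma<\alpha+\beta_x+2\beta_t$; with $U\in\Wc^{3/2-\kappa,1/2}$ and $\xi\in\Cs_{tx}^{(-1/2-\kappa,-1-\kappa)}$ this reads $\alpha+\beta_x+2\beta_t=(3/2-\kappa)+(-1-\kappa)+2(-1/2-\kappa)=-1/2-4\kappa<0$, so the hypothesis fails. Equivalently, in pure space $U$ has regularity $1-\kappa$ and $\xi$ has regularity $-1-\kappa$, and the sum $-2\kappa$ is negative: the resonant product $U\parasim\xi$ is genuinely ill-posed at the deterministic level. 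This is precisely why the proposition carries the extra stochastic hypothesis
\[
\sum_{N_1,N_2}\Big\|\big(P_{N_1}P_{N_2}\Duh(\linear)\big)\parasim\xi\Big\|_{\Cs_{tx}^{(-1/2-\kappa,1-\kappa)}}^{1/2}\le R,
\]
which you never invoke.

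The paper closes this gap by decomposing $U=U^r+U^s$ (Section~\ref{section:hi-hi}). The smoother part $U^s$ is obtained by peeling off the worst nonlinear term in the evolution of $U$ (Lemma~\ref{lemma:U-equation}), namely $\ad(\partial_j U\parall\linear[][r][j])\paragg U$; one then shows $U^s\in\Wc^{2+1/4-\kappa,5/8}$ (Lemma~\ref{lemma:U-r-U-s-regularity}), which \emph{is} enough regularity for Lemma~\ref{lemma:paraproduct-Wc-space-time-besov-space} to control $U^s\parasim\xi$ deterministically (Lemma~\ref{lemma:Us-hi-hi-xi}). The rougher part $U^r$ is further replaced by $\bar U^r=\ad(\partial_j U\parall\Duh(\linear[][r][j]))\paragg U$ (Lemma~\ref{lemma:U-r-bar-U-r-difference}), and then $\bar U^r\parasim\xi$ is reduced, via paraproduct commutator manipulations, to the assumed stochastic object $(P_{M'}P_M\Duh(\linear))\parasim\xi$ (Lemma~\ref{lemma:U-r-hi-hi-noise-estimate}). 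Without this splitting and the stochastic input, the high$\times$high piece cannot be bounded, and your argument does not go through.
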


\begin{remark}\label{remark:why-singular-weights-necessary}
In \eqref{gauged:eq-glinear-decomposition}, we provide two different representation the difference between $\glinear[\leqN][r]$ and $\linear[\leqN][r]$: We either write it as $\Phi_{\leq N}$, which has spatial regularity $1-\rho$, or write it as $\big[h_j \parall Q^j_{> N} \linear\big]+ \Psi_{\leq N}$, where $\Psi_{\leq N}$ has time-weighted regularity $1+5\kappa$. The more detailed representation will be necessary in our analysis of $\gquadratic[\leqN][r][][][]$ (see Lemma \ref{gauged:lem-quadratic}), whose definitions contains terms of the schematic form $\big[ \linear[][r], \partial \big( \glinear[][r]-\linear[][r]\big) \big]$. 
\end{remark}

The proof of Proposition \ref{prop:linear-objects-close-gauge-covariance} will be given in Section \ref{section:proof-linear-objects-close-gauge-covariance}, after we have developed all the necessary estimates in Sections \ref{section:lo-hi}-\ref{section:hi-lo}.
Before we move on, we recall a few facts about $U$, which has been defined as $U=\Ad_g$ and therefore takes values in $\End(\frkg)$. Since $g$ takes values in the compact Lie group $G$, it holds that $\| U(t) \|_{L_x^\infty}\lesssim 1$ for all $t\in [0,\maxtime)$. Furthermore, since the inner product on $\frkg$ is $\ad$-invariant, it follows that $U$ is unitary, i.e., $U^{-1}=U^\ast$. In order to derive the evolution equation for $U$ from the evolution equation \eqref{gauged:eq-g-evolution} for $g$, we first need the following lemma. In this lemma, $L_{g_0} : G \ra G$ denotes the left multiplication by $g_0\in G$.

\begin{lemma}\label{lemma:Ad-derivative-properties} 
We have the following three properties: 
\begin{enumerate}
    \item[(i)] For all $g_0 \in G$, the derivative $d \mrm{Ad}_{g_0} : T_{g_0} G \ra \End(\frkg)$ is given by $d \mrm{Ad}_{g_0} = \mrm{Ad}_{g_0} \circ \mrm{ad} \circ L_{g_0}^{-1}$.
    \item[(ii)] For all $i\in [2]$, it holds that $\ptl_i U = U \mrm{ad}(h_i)$.
    \item[(iii)] Let $g_0 \in G$, let $U_0 = \mrm{Ad}_{g_0} \in \mrm{End}(\frkg)$, and let $X \in \frkg$. Then, we have that $U_0 \mrm{ad}(X) = \mrm{ad}(U_0 X) U_0$. 
\end{enumerate}
\end{lemma}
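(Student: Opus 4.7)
\medskip

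\noindent\textbf{Proof plan.} All three statements are classical facts about the adjoint representation, so the plan is mostly to record computations rather than to invent anything.

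For part (i), the plan is to compute the derivative using a convenient curve. Given $v \in T_{g_0} G$, set $Y := L_{g_0}^{-1}(v) = g_0^{-1} v \in \frkg$ and consider the curve $\gamma(t) := g_0 \exp(tY)$, which satisfies $\gamma(0) = g_0$ and $\gamma'(0) = v$. Since $\mrm{Ad}_g(X) = gXg^{-1}$ for $X \in \frkg$, differentiation at $t=0$ gives
\[
 d\mrm{Ad}_{g_0}(v)(X) = \frac{d}{dt}\Big|_{t=0}\!\Big(g_0 \exp(tY)\,X\,\exp(-tY)\,g_0^{-1}\Big) = g_0 [Y, X] g_0^{-1} = \mrm{Ad}_{g_0}\big(\mrm{ad}(Y)(X)\big),
\]
which is exactly $\mrm{Ad}_{g_0} \circ \mrm{ad} \circ L_{g_0}^{-1}(v)$ applied to $X$. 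This yields (i).

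Part (ii) is immediate from (i) by the chain rule. Since $U = \mrm{Ad}_g$, one has
\[
\ptl_i U = d\mrm{Ad}_{g} (\ptl_i g) = \mrm{Ad}_g \circ \mrm{ad}\big(L_{g}^{-1}(\ptl_i g)\big) = U \, \mrm{ad}(g^{-1} \ptl_i g) = U\,\mrm{ad}(h_i),
\]
using the definition $h_i = g^{-1}\ptl_i g$.

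For part (iii), the identity is just the statement that $\mrm{Ad}_{g_0}$ is a Lie algebra automorphism. Concretely, for any $X, Y \in \frkg$,
\[
\mrm{ad}(U_0 X) U_0 (Y) = [U_0 X, U_0 Y] = [g_0 X g_0^{-1}, g_0 Y g_0^{-1}] = g_0 [X, Y] g_0^{-1} = U_0 \mrm{ad}(X)(Y),
\]
which, since $Y$ was arbitrary, gives $U_0 \mrm{ad}(X) = \mrm{ad}(U_0 X) U_0$ as operators on $\frkg$.

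Since these are standard identities, no step is expected to present a real obstacle; the only thing to be careful about is the bookkeeping between the left translation $L_{g_0}$ and the choice of curve $\gamma(t) = g_0 \exp(tY)$, which is what makes the formula in (i) take its clean form.
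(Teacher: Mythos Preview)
Your proof is correct and follows essentially the same approach as the paper: both use the curve $g_0 e^{tY}$ for (i), the chain rule for (ii), and a direct verification for (iii). The only cosmetic difference is that for (iii) you invoke the automorphism identity $[U_0X,U_0Y]=U_0[X,Y]$ directly, whereas the paper rederives it via $\mrm{Ad}_{g_0 e^{tX}} = \mrm{Ad}_{e^{t\mrm{Ad}_{g_0}X}}\mrm{Ad}_{g_0}$ and differentiation; both are standard and equivalent.
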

\begin{proof}
For the first claim, let $X \in \frkg$. We have that
\[ d \mrm{Ad}_{g_0} (g_0 X) = \frac{d}{dt} \mrm{Ad}_{g_0 e^{tX} } \bigg|_{t = 0} = \mrm{Ad}_{g_0} \frac{d}{dt} \mrm{Ad}_{e^{tX}} \bigg|_{t = 0} = \mrm{Ad}_{g_0} \mrm{ad}(X) = \big(\mrm{Ad}_{g_0} \circ \mrm{ad} \circ L_{g_0}^{-1}\big)(g_0X) . \]
Next, for the second claim, since $U = \mrm{Ad}_g$, by the chain rule we have that 
\[ \ptl_i U = (d \mrm{Ad}_g) \ptl_i g = (\mrm{Ad}_g \circ \mrm{ad} \circ L_g^{-1}) \ptl_i g = U \mrm{ad}(g^{-1} \ptl_i g) .\]
Finally, for the last claim, note
\[ U_0 \mrm{ad}(X) = \mrm{Ad}_{g_0} \frac{d}{dt} \mrm{Ad}_{e^{tX}} \bigg|_{t = 0} = \frac{d}{dt} \mrm{Ad}_{g_0 e^{tX}} \bigg|_{t = 0} = \frac{d}{dt} \mrm{Ad}_{\mrm{Ad}_{g_0} e^{tX}} \bigg|_{ t= 0} \mrm{Ad}_{g_0} = \mrm{ad}(U_0X) U_0. \qedhere\]
\end{proof}

Equipped with Lemma \ref{lemma:Ad-derivative-properties}, we can now derive the evolution equation for $U$. 

\begin{lemma}\label{lemma:U-equation} If $g$ satisfies the evolution equation \eqref{gauged:eq-g-evolution} and $U=\Ad_g$, then $U$ satisfies 
\[ \ptl_t U = (-1 + \Delta) U - (\ptl_j U) U^{-1} \ptl_j U - \mrm{ad}(\ptl_j U A^j) U + U. \]
\end{lemma}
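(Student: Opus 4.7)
The plan is to derive the stated evolution equation for $U$ by expressing $\ptl_t U$ in terms of $(\ptl_t g) g^{-1}$, substituting in the equation \eqref{gauged:eq-g-evolution} for $g$, and then manipulating the resulting terms via Lemma \ref{lemma:Ad-derivative-properties} together with the Jacobi identity. Throughout, let $Y_j := (\ptl_j g) g^{-1}$, so that $(\ptl_t g) g^{-1} = \ptl^j Y_j + [UA^j, Y_j]$ by \eqref{gauged:eq-g-evolution}.

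First, I would establish the basic differentiation formulas. Combining Lemma \ref{lemma:Ad-derivative-properties}(i) with part (iii) (taking $X = g^{-1} \ptl_\mu g$ and $U_0 = U$ for $\mu \in \{t,1,2\}$) yields
\[
\ptl_\mu U \;=\; d\Ad_g(\ptl_\mu g) \;=\; U \, \ad(g^{-1} \ptl_\mu g) \;=\; \ad((\ptl_\mu g) g^{-1}) \, U,
\]
and in particular $\ptl_j U = \ad(Y_j) U$, which may be rewritten as the operator identity $\ad(Y_j) = (\ptl_j U) U^{-1}$.

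Next, I would substitute \eqref{gauged:eq-g-evolution} into the formula for $\ptl_t U$ to obtain
\[
\ptl_t U \;=\; \ad(\ptl^j Y_j) \, U \;+\; \ad([UA^j, Y_j]) \, U.
\]
For the first summand, differentiating $\ptl_j U = \ad(Y_j) U$ and using $\ad(Y_j) = (\ptl_j U) U^{-1}$ gives
\[
\ad(\ptl^j Y_j) \, U \;=\; \ptl^j(\ad(Y_j) U) - \ad(Y_j)\,\ptl^j U \;=\; \Delta U - (\ptl_j U) U^{-1} \, \ptl^j U.
\]
For the second summand, the Jacobi identity yields $\ad([UA^j, Y_j]) = [\ad(UA^j), \ad(Y_j)]$, so applying to $U$ and using $\ad(Y_j) U = \ptl_j U$ together with $\ad(UA^j)\, U = U \ad(A^j)$ (Lemma \ref{lemma:Ad-derivative-properties}(iii)) produces
\[
\ad([UA^j, Y_j]) \, U \;=\; \ad(UA^j)\, \ptl_j U \;-\; (\ptl_j U)\, \ad(A^j).
\]
The key algebraic step is then to recognize that this last expression equals $-\ad(\ptl_j U \, A^j) U$; indeed, since $\ptl_j U \, A^j = \ad(Y_j)(UA^j) = [Y_j, UA^j]$, the Jacobi identity (applied in the same manner as before) gives
\[
\ad(\ptl_j U \, A^j) \, U \;=\; (\ptl_j U) \ad(A^j) - \ad(UA^j)\, \ptl_j U.
\]
The main obstacle is keeping track of the correct order of compositions and signs when manipulating the bracket $[UA^j, Y_j]$, since $\ad$ produces endomorphisms that must be correctly composed with $U$, and the identity from Lemma \ref{lemma:Ad-derivative-properties}(iii) is used to move $U$ past $\ad$.

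Finally, combining the two summands and rewriting $\Delta U = (-1+\Delta) U + U$ yields exactly the stated formula
\[
\ptl_t U \;=\; (-1+\Delta) U \;-\; (\ptl_j U) U^{-1}\, \ptl_j U \;-\; \ad(\ptl_j U \, A^j) \, U \;+\; U,
\]
completing the proof (using $\ptl^j = \ptl_j$ on the flat torus).
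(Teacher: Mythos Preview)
Your proof is correct and follows essentially the same strategy as the paper: compute $\partial_t U$ via the chain rule, substitute the evolution equation for $g$, identify the Laplacian of $U$ from differentiating $\partial_j U$, and rewrite the remaining bracket term as $-\ad(\partial_j U\,A^j)U$. The only cosmetic difference is that you work with the left logarithmic derivative $Y_j=(\partial_j g)g^{-1}$ and the representation $\partial_\mu U=\ad(Y_\mu)U$, which leads you to invoke the Jacobi identity explicitly, whereas the paper uses $h_j=g^{-1}\partial_j g$ and $\partial_\mu U=U\ad(h_\mu)$, handling the bracket term directly via Lemma~\ref{lemma:Ad-derivative-properties}(iii) without spelling out Jacobi.
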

\begin{proof}
By the chain rule, we have that
\[ \ptl_t U = d \mrm{Ad}_g \ptl_t g = U \mrm{ad}(g^{-1} \ptl_t g). \]
Using that
\[\begin{split}
g^{-1} \ptl_t g &= -\mrm{Ad}_g^{-1} ((\ptl_t g) g^{-1})= - d^*_A (g^{-1} dg) = \ptl_j h_j + [A_j, h_j],
\end{split}\]
we further obtain
\[ \ptl_t U = U \mrm{ad}(\ptl_j h_j + [A_j, h_j]). \]
Note also that
\[ \ptl_j U = U \mrm{ad}(h_j), \text{ and so } \Delta U = \ptl_{jj} U = U \mrm{ad}(h_j)^2 + U \mrm{ad}(\ptl_j h_j). \]
We thus obtain
\[ \ptl_t U = \Delta U - U \mrm{ad}(h_j)^2 + U \mrm{ad}([A_j, h_j]). \]
To finish, use that $\mrm{ad}(h_j) = U^{-1} \ptl_j U$ and $U \mrm{ad}([A^j, h_j]) = - U \mrm{ad}(U^{-1} \ptl_j U A^j) = - \mrm{ad}(\ptl_j U A^j) U$.
\end{proof}

From standard parabolic local existence theory, we obtain the following regularity estimates for $U$. The proof is omitted.

\begin{lemma}
For any $T \in [0, \maxtime)$ and $\delta \in [0, 1)$, we have that $U \in C_t^0 \Cs_x^{1-\kappa}([0, T] \times \T^2) \cap \Wc^{1-\kappa + \delta, \delta/2}([0, T])$.
\end{lemma}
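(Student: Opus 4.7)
The plan is to apply a Duhamel-based fixed-point argument to the equation for $U$ derived in Lemma \ref{lemma:U-equation}. The key preliminary step is to substitute the geometric identity $\partial_j U = U\,\mrm{ad}(h_j)$ from Lemma \ref{lemma:Ad-derivative-properties}(ii), which recasts the quadratic term $(\partial_j U)\,U^{-1}\,\partial_j U$ as $U\,\mrm{ad}(h_j)^2$ and hence eliminates both factors of $\partial U$. The equation becomes
\[ (\ptl_t + 1 - \Delta)\,U = - U\,\mrm{ad}(h_j)^2 - \mrm{ad}\bigl(U\,\mrm{ad}(h_j)\,A^j\bigr)\,U + U, \qquad U(0) = \mrm{Ad}_{g_0} \in \Cs_x^{1-\kappa}, \]
where the membership of the initial data follows from the smoothness of $\mrm{Ad}:G\to \End(\frkg)$ and the hypothesis $g_0\in\Cs_x^{1-\kappa}(\T^2\to G)$.

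Writing $U = \Hc_t U(0) + \int_0^t \Hts N(U, h, A)(s)\,\mathrm{d}s$, the linear part $\Hc_t U(0)$ lies in $\Wc^{1-\kappa+\delta,\delta/2}([0,T])$ by the standard parabolic smoothing estimate (Lemma \ref{prelim:lem-heat-Salpha}) applied to data in $\Cs_x^{1-\kappa}$. For the nonlinear contribution $N$, the a priori bounds of Definition \ref{def:U-and-h} (valid up to $\tau_S$) give $\|h\|_{\Wc^{10\kappa,10\kappa}}\leq S$ and $\|B\|_{\Sc^{1-2\kappa}}\leq S$, so that $A=\linear+B$ is controlled in $C_t^0\Cs_x^{-\kappa}$; moreover $\|U\|_{L^\infty_{tx}}\lesssim 1$ because $U$ takes values in the compact group $\mrm{Ad}(G)$. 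A sequence of paraproduct estimates (Lemma \ref{prelim:lem-para-product}) — noting that the sum of Hölder exponents is positive in each relevant product, $20\kappa>0$ for $\mrm{ad}(h)^2$ and $9\kappa>0$ for $\mrm{ad}(h)\,A$ — yields $N\in\Wc^{-\kappa,20\kappa}([0,T])$.

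The weighted Schauder estimate of Proposition \ref{prelim:prop-Duhamel-Salpha} then places $\int_0^t\Hts N(s)\,\mathrm{d}s$ in $\Sc^{2-5\kappa}([0,T])$, which in particular controls the time-weighted norms $\Wc^{1-\kappa+\delta,\delta/2}([0,T])$ for $\delta\in[0,1-O(\kappa)]$. The remaining regime $\delta\in[1-O(\kappa),1)$ is obtained by a straightforward bootstrap: the improved regularity already established for $U$ feeds back (through $h=g^{-1}\,dg$, itself governed by a parabolic equation in which $A$ now only enters linearly) into the nonlinearity, sharpening the available regularity exponents until the full range $\delta\in[0,1)$ is covered. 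Uniqueness and continuous dependence on the data, which are the content of ``standard parabolic local existence theory,'' follow by applying the same paraproduct and Schauder estimates to the difference of two solutions in a contraction-mapping argument in $\Wc^{1-\kappa+\delta,\delta/2}$ with $\delta$ small but positive.

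The main obstacle is the apparent quadratic derivative term $(\partial U)(\partial U)$, whose product is ill-defined for $U\in\Cs_x^{1-\kappa}$ since $\partial U\in\Cs_x^{-\kappa}$ has negative regularity and the sum of regularities is negative. The identity $\partial U = U\,\mrm{ad}(h)$ resolves this by trading one derivative of $U$ for the $L^\infty$-bounded factor $U$ and a factor of $h$ whose positive Hölder regularity $10\kappa$ (with a singular but integrable time weight) is part of the a priori bootstrap hypothesis; after this substitution every product entering the nonlinearity has a strictly positive sum of regularities, and the analysis closes under the standard parabolic framework developed in Section \ref{section:nonlinear-estimates}.
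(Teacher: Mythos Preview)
The paper omits this proof entirely, deferring to ``standard parabolic local existence theory,'' so your sketch is filling in what the authors left implicit. The overall strategy---Duhamel formulation, paraproduct estimates on the nonlinearity, weighted Schauder---is correct and is essentially what that standard theory amounts to.

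Two points deserve comment. First, your invocation of the $\tau_S$ bounds from Definition~\ref{def:U-and-h} is circular: the definition of $\tau_S$ includes the bound $\|U\|_{\Wc^{3/2-\kappa,1/2}}\leq S$, which is precisely the case $\delta=1/2$ of what you are trying to prove. The fix is immediate: derive the needed regularity of $h$ directly from the already-established regularity $g\in C_t^0\Cs_x^{1-\kappa}\cap\Wc^{3/2-\kappa,1/4}$ (stated just before equation~\eqref{eq:gauge-transformation}), since $h=g^{-1}dg$ then inherits $h\in C_t^0\Cs_x^{-\kappa}\cap\Wc^{1/2-\kappa,1/4}$, and in particular $h\in\Wc^{10\kappa,10\kappa}$ by interpolation. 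Second, the bootstrap you describe for $\delta$ near $1$ is unnecessary: applying Lemma~\ref{prelim:lem-Schauder} directly with $(\alpha_2,\nu_2)=(-\kappa,20\kappa)$ and $(\alpha_1,\nu_1)=(1-\kappa+\delta,\delta/2)$ yields the gain $1+\nu_1-\nu_2-(\alpha_1-\alpha_2)/2 = 1/2-20\kappa>0$ uniformly for all $\delta<1$, so the full range is covered in a single pass.

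As a minor note, your identification of $(\partial U)(\partial U)$ as the ``main obstacle'' is overstated: standard time-weighted theory handles this term directly, since for $t>0$ one has $\partial U\in\Wc^{\delta-\kappa,\delta/2}$ and the product is well-defined once $\delta>\kappa$. Your rewriting via the identity $\partial_j U = U\,\mrm{ad}(h_j)$ is a valid alternative (indeed it is how the equation first appears in the proof of Lemma~\ref{lemma:U-equation}), but it is not essential to close the argument.
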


With the preliminary results out of the way, we give a high level overview of the proof of Proposition \ref{prop:linear-objects-close-gauge-covariance} (along the way, Lemma \ref{lemma:U-xi-defined} will follow). We write $U^{-1} P_{\leq N}(U \xi) - P_{\leq N} \xi = U^{-1}(P_{\leq N}(U \xi) - U P_{\leq N} \xi)$, and then split
\beq\label{eq:P-N-U-xi-split}\begin{split}
P_{\leq N}(U \xi)& - U P_{\leq N} \xi = \\
&\big(P_{\leq N}(U \parall \xi) - U \parall P_{\leq N} \xi\big) + \big(P_{\leq N} (U \parasim \xi) - U \parasim P_{\leq N} \xi\big) + \big(P_{\leq N}(U \paragg \xi) - U \paragg P_{\leq N} \xi\big). 
\end{split}\eeq
We will handle each of the above terms on the right hand side separately. The first and third terms may be treated deterministically. For the first term,  we will apply a Taylor expansion and then commutator estimates to see that this term essentially contributes $\linear[][r][i] + (U^{-1} \ptl_j U) \parall Q^j_{> N} \linear[][r][i]$ plus a smoother remainder which is absorbed into the $\Psi$ term of Proposition \ref{prop:linear-objects-close-gauge-covariance}. Note by Lemma \ref{lemma:Ad-derivative-properties} that $U^{-1} \ptl_j U = \ad(g^{-1} \ptl_j g) = \ad(h_j)$, and so $(U^{-1} \ptl_j U) \parall Q^j_{> N} \linear = \big[ h_j \parall Q^j_{> N} \linear\big]$ is precisely one of the terms which appears in Proposition \ref{prop:linear-objects-close-gauge-covariance}.
By standard paraproduct and Schauder estimates, we will see that the third term only contributes a smoother remainder which is absorbed into $\Psi$. \\

We will also show that the high$\times$high product $U \parasim \xi$ only contributes to $\Psi$. However, in order to do so, we will need to split $U = U^r + U^s$, where $U^r$ is the rougher part of $U$ and $U^s$ is the smoother part of $U$. To control $U^r \parasim \xi$, we need to assume that the stochastic object $\Duh\big(\linear\big) \parasim \xi$ may be defined (and this is the reason for the additional stochastic estimates in Section \ref{section:stochastic-estimate-gauge-covariance}), whereas $U^s$ is smooth enough so that $U^s \parasim \xi$ may just be bounded deterministically. To determine how to define $U^r$, note that in the equation for $U$ (given by Lemma \ref{lemma:U-equation}), the nonlinear term $\ad(\ptl_j U A^j) U$ is the main barrier against higher regularity of $U$, due to the fact that $A$ only has spatial regularity $-\kappa$. More precisely, since $A = \linear + B$ where $B$ is has spatial regularity $1-2\kappa$ at positive times, we see that the main barrier is $\ad\big(\ptl_j U \parall \linear[][r][j]\big) \paragg U$. We thus isolate out this term to define $U^r$.

\begin{definition}
We define $U^r, U^s : [0, \maxtime) \ra \mrm{End}(\frkg)$ 
by 
\[ U^r := - \Duh\Big(\mrm{ad}\big(\ptl_j U \parall \linear[][r][j]\big) \paragg U \Big), ~~ U^s := U - U^r. \]
Explicitly, we have that
\beq\label{eq:Us}\begin{split}
U^s = -&\Duh \Big( \mrm{ad}\big(\ptl_j U \paragtrsim \linear[][r][j]\big) \paragg U \Big) - \Duh \Big( \mrm{ad}\big(\ptl_j U \linear[][r][j]\big) \paralesssim U \Big) ~- \\
& \Duh\Big(\mrm{ad}(\ptl_j U B^j) U\Big) - \Duh\Big((\ptl_j U) U^{-1} \ptl_j U\Big) + \Duh(U) + \Ht U_0.
\end{split} \eeq
Define also $U^{\mrm{lin}} := \Ht U_0$.
\end{definition}

The next lemma gives estimates for $U^r$ and $U^s$.

\begin{lemma}\label{lemma:U-r-U-s-regularity}
For any $T \in [0, \maxtime)$, $\alpha \in (0, 1/4)$, we have that
\begin{align*}
\|U^r\|_{C_t^0 \Cs_x^{2-4\kappa}([0, T] \times \T^2) \cap C_t^1 \Cs_x^{-4\kappa}([0, T] \times \T^2)} &\lesssim \|U\|_{C_t^0 \Cs_x^{1-\kappa}} \big\| \linear\big\|_{C_t^0 \Cs_x^{-\kappa}([0, T] \times \T^2)}, \\
\|U^s - U^{\mrm{lin}}\|_{\Wc^{2+2\alpha, 2\alpha + 4\kappa}([0, T])} &\lesssim   \|U\|_{C_t^0 \Cs_x^{1-\kappa}([0, T] \times \T^2)} \|U\|_{\Wc^{1 + 2\alpha + 3\kappa, \alpha + 2\kappa}([0, T])} \\ &~~~~~\times \big(\big\| \linear\big\|_{C_t^0 \Cs_x^{-\kappa}([0, T] \times \T^2)} + \|B\|_{\Sc^{1-2\kappa}([0, T])}\big), \\
\|\ptl_t (U^s - U^{\mrm{lin}})\|_{\Wc^{2\alpha, 2\alpha + 4\kappa}([0, T])} &\lesssim \|U\|_{C_t^0 \Cs_x^{1-\kappa}([0, T] \times \T^2)} \|U\|_{\Wc^{1 + 2\alpha + 3\kappa, \alpha + 2\kappa}([0, T])} \\
&~~~~~\times\big(\big\| \linear\big\|_{C_t^0 \Cs_x^{-\kappa}([0, T] \times \T^2)} + \|B\|_{\Sc^{1-2\kappa}([0, T])}\big). \end{align*}
For any $\beta \geq 0$, we have that
\[\begin{split}
\|U^{\mrm{lin}}\|_{\Wc^{1+ \beta, (\beta + \kappa)/2}([0, T])} &\lesssim \|U_0\|_{\Cs_x^{1-\kappa}}, \\
\|\ptl_t U^{\mrm{lin}}\|_{\Wc^{-1 + \beta, (\beta + \kappa)/2}([0, T])} &\lesssim \|U_0\|_{\Cs_x^{1-\kappa}}.
\end{split}\]
\end{lemma}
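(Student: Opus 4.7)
The plan is to bound $U^r$, $U^s - U^{\mrm{lin}}$, and $U^{\mrm{lin}}$ separately, combining paraproduct estimates (Lemma \ref{prelim:lem-para-product}) with Schauder bounds for $\Duh$ (Proposition \ref{prelim:prop-Duhamel-Salpha} and its singular-time-weight analogue, Lemma \ref{lemma:schauder-space-time-besov-space-singular-time-weights}).

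For $U^r = -\Duh\big( \mrm{ad}(\ptl_j U \parall \linear[][r][j]) \paragg U\big)$, the low-high estimate first gives $\ptl_j U \parall \linear[][r][j] \in \Cs_x^{-2\kappa}$ with norm controlled by $\|U\|_{\Cs_x^{1-\kappa}} \|\linear[][r][]\|_{\Cs_x^{-\kappa}}$, and the outer high-low paraproduct with $U$ preserves this regularity (using that $\|U\|_{L_x^\infty} \lesssim 1$ since $G$ is compact). A standard Schauder estimate then yields $U^r \in C_t^0 \Cs_x^{2-4\kappa}$. The $C_t^1 \Cs_x^{-4\kappa}$ bound is obtained directly from the PDE $\ptl_t U^r = -\mrm{ad}(\ptl_j U \parall \linear[][r][j]) \paragg U + (1-\Delta) U^r$ by bounding each summand in $\Cs_x^{-4\kappa}$.

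For $U^s - U^{\mrm{lin}}$, I would bound each of the Duhamel terms in \eqref{eq:Us} individually. The decisive point is that the only low-high-inner, high-low-outer resonance has been extracted into $U^r$, so every remaining paraproduct combination either (i) involves $\ptl_j U$ and $\linear[][r][]$ in a configuration where the sum of spatial regularities is strictly positive, using the stronger regularity $\|\ptl_j U\|_{\Wc^{2\alpha + 3\kappa, \alpha + 2\kappa}}$ extracted from the $\Wc^{1+2\alpha+3\kappa, \alpha + 2\kappa}$ hypothesis on $U$, (ii) replaces $\linear[][r][]$ by $B \in \Sc^{1-2\kappa}$ or by $U^{-1} \ptl_j U$, or (iii) is the zeroth-order term $U \in \Cs_x^{1-\kappa}$. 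Each integrand can thus be controlled in an appropriate time-weighted spatial H\"older norm, and the singular-weight Schauder estimate (Lemma \ref{lemma:schauder-space-time-besov-space-singular-time-weights}) yields the desired $\Wc^{2+2\alpha, 2\alpha + 4\kappa}$ bound. The time-derivative estimate then follows by writing $\ptl_t(U^s - U^{\mrm{lin}}) = F + (1-\Delta)(U^s - U^{\mrm{lin}})$, where $F$ is the integrand just estimated and the second term is controlled via the bound on $U^s - U^{\mrm{lin}}$ already established.

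The $U^{\mrm{lin}} = \Ht U_0$ estimates are routine heat-kernel smoothing bounds applied to $U_0 \in \Cs_x^{1-\kappa}$; the time-derivative bound loses two spatial derivatives via $\ptl_t U^{\mrm{lin}} = (-1+\Delta) U^{\mrm{lin}}$. The main obstacle is the bookkeeping of time weights for $U^s - U^{\mrm{lin}}$: different terms carry different combinations of $\parall, \parasim, \paragg$ and different factors with different $\Wc^{\cdot,\cdot}$ behavior, so one must verify term-by-term that the parabolic scaling between spatial regularity and time weight is respected (cf.~the condition \eqref{prelim:eq-Duhamel-Salpha-a} in its singular-weight analogue). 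The most delicate terms are the $\parasim$ ones, since they require $\ptl_j U$ to have strictly positive spatial regularity at positive times, which is precisely the role of the $\Wc^{1+2\alpha+3\kappa, \alpha + 2\kappa}$ norm in the hypotheses.
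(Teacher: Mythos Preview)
Your proposal is correct and follows essentially the same approach as the paper: paraproduct estimates plus time-weighted Schauder bounds for each piece, with the $C_t^1$ and $\ptl_t$ estimates obtained directly from the evolution equation. One small correction: the relevant Schauder estimate here is the standard time-weighted one (Lemma \ref{prelim:lem-Schauder}), not Lemma \ref{lemma:schauder-space-time-besov-space-singular-time-weights}, which is formulated for the space-time Besov spaces $\Cs_{tx}^\alpha$ rather than the $\Wc^{\alpha,\nu}$ spaces appearing in the statement; the paper in fact just writes out the integral $\int_0^t (t-s)^{-\gamma} s^{-\nu}\,ds$ explicitly for each term.
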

\begin{proof}
\emph{Bound on $U^r$.} To bound $U^r$, apply Schauder and paraproduct estimates to obtain (recall that $\|U(t)\|_{L_x^\infty} \lesssim 1$ since $U(t) = \Ad_{g(t)}$ is an isometry of $\frkg$)
\begin{equation}\label{gauged:eq-Ur-estimate-1} 
\|U^r\|_{C_t^0 \Cs_x^{2-4\kappa}} \lesssim \big\|\mrm{ad}\big(\ptl_j U \parall \linear[][r][j]\big) \paragg U\big\|_{C_t^0 \Cs_x^{-3\kappa}} \lesssim \|\ptl_j U\|_{C_t^0 \Cs_x^{-2\kappa}} \big\|\linear\big\|_{C_t^0 \Cs_x^{-\kappa}} \|U\|_{L_{tx}^\infty} \lesssim \|U\|_{C_t^0 \Cs_x^{1-\kappa}}\big\|\linear\big\|_{C_t^0 \Cs_x^{-\kappa}} . 
\end{equation}
The bound for $\|U^r\|_{C_t^1 \Cs_x^{-4\kappa}}$ follows directly from the evolution equation
\[ \ptl_t U^r = \Delta U^r - \mrm{ad}(\ptl_j U \parall \linear[][r][j]) \paragg U, \]
the estimate $\|\Delta U^r\|_{C_t^0 \Cs_x^{-4\kappa}} \lesssim \|U^r\|_{C_t^0 \Cs_x^{2-4\kappa}}$, and \eqref{gauged:eq-Ur-estimate-1}. 

\emph{Bound on $U^s - U^{\mrm{lin}}$.} By applying Schauder and paraproduct estimates, we bound each of the terms in the formula \eqref{eq:Us} for $U^s$. The first term may be bounded
\[\begin{split}
\Big\|\Duh \Big( \mrm{ad}\big(\ptl_j U \paragtrsim \linear[][r][j]\big) \paragg U \Big) (t)\Big\|_{\Cs_x^{2+2\alpha}} &\lesssim \int_0^t (t-s)^{-(1-\kappa)} \Big\|\big(\mrm{ad}\big(\ptl_j U \paragtrsim \linear[][r][j]\big) \paragg U\big)(s) \Big\|_{\Cs_x^{2\alpha + 2\kappa}} ds \\
&\lesssim \Big( \int_0^t (t-s)^{-(1-\kappa)} s^{-(\alpha + 2\kappa)} ds \Big) \|\ptl_j U\|_{\Wc^{2\alpha + 3\kappa, \alpha + 2\kappa}} \|\linear\|_{C_t^0 \Cs_x^{-\kappa}} \|U\|_{L_{tx}^\infty} \\
&\lesssim t^{-(\alpha + \kappa)} \|U\|_{\Wc^{1 + 2\alpha + 3\kappa, \alpha + 2\kappa}} \|\linear\|_{C_t^0 \Cs_x^{-\kappa}}.
\end{split}\]
Next, the second term in \eqref{eq:Us} may be bounded
\[\begin{split}
\Big\|\Duh \Big( \mrm{ad}\big(\ptl_j U \linear[][r][j]\big) \paralesssim U \Big)(t)\Big\|_{\Cs_x^{2+2\alpha}} 
&\lesssim \int_0^t (t-s)^{-(1/2 + \alpha + \kappa)} \big\| \mrm{ad}\big(\ptl_j U \linear[][r][j]\big) \paralesssim U (s)\big\|_{\Cs_x^{1-2\kappa}} ds \\ &\lesssim \Big( \int_0^t (t-s)^{-(1/2 + \alpha + \kappa)} s^{-2\kappa} ds \Big) \|\ptl_j U\|_{\Wc^{2\kappa, 2\kappa}} \|\linear\|_{C_t^0 \Cs_x^{-\kappa}} \|U\|_{C_t^0 \Cs_x^{1-\kappa}} \\
&\lesssim t^{1/2 -\alpha - 3\kappa} \|U\|_{C_t^0 \Cs_x^{1-\kappa}} \|U\|_{\Wc^{1+2\kappa, 2\kappa}} \|\linear\|_{C_t^0 \Cs_x^{-\kappa}}.
\end{split}\]
The third term in \eqref{eq:Us} may be bounded 
\[\begin{split}
&\hspace{2ex}\Big\|\Duh\Big(\mrm{ad}(\ptl_j U B^j) U\Big)(t)\Big\|_{ \Cs_x^{2+2\alpha}} \\
&\lesssim \int_0^t (t-s)^{-(1-\kappa)} \big\|\big(\mrm{ad}(\ptl_j U B^j) U\big)(s)\big\|_{\Cs_x^{2\alpha + 2\kappa}} ds \\
&\lesssim \Big( \int_0^t (t-s)^{-(1-\kappa)} s^{-(2\alpha + 3\kappa)} ds \Big) \|\ptl_j U\|_{\Wc^{2\alpha + 2\kappa, (2\alpha + 3\kappa)/2}} \|B\|_{\Wc^{2\alpha +2\kappa, (2\alpha + 3\kappa)/2}} \|U\|_{C_t^0 \Cs_x^{1-\kappa}} \\
&\lesssim t^{-2(\alpha + \kappa)} \|U\|_{C_t^0 \Cs_x^{1-\kappa}} \|U\|_{\Wc^{1+2\alpha+2\kappa, (2\alpha + 3\kappa)/2}} \|B\|_{\Sc^{1-2\kappa}} .
\end{split}\]
The fourth term in \eqref{eq:Us} may be bounded similarly to the third term. The fifth and final term in \eqref{eq:Us} may be bounded
\[ \|\Duh(U)\|_{C_t^0 \Cs_x^{2+\alpha}} \lesssim \|U\|_{C_t^0 \Cs_x^{1-\kappa}}. \]
\emph{Bound on $\ptl_t (U^s - U^{\mrm{lin}})$.} To bound $\ptl_t (U^s - U^{\mrm{lin}})$, we may use the fact that this term solves an explicit equation, and argue similarly to the bound for $U^s - U^{\mrm{lin}}$. Thus, this part is omitted.

\emph{Bounds on $U^{\mrm{lin}}$.} These bounds just follow by standard heat flow estimates (Lemma \ref{prelim:lem-heat-flow}).
\end{proof}

Lemma \ref{lemma:U-r-U-s-regularity} has the following consequence.

\begin{corollary}\label{cor:U-Us-regularity}
We have that $U \in \Wc^{3/2-\kappa, 1/2}([0, T])$, $\ptl_t U \in \Wc^{-1/2 -\kappa, 1/2}([0, T])$. We have that $U^s \in \Wc^{2 + 1/4 -\kappa, 5/8}([0, T])$, $\ptl_t U^s \in \Wc^{1/4-\kappa, 5/8}([0, T])$.
\end{corollary}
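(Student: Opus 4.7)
The plan is to combine the estimates from Lemma \ref{lemma:U-r-U-s-regularity} via the decompositions
\[ U = U^r + (U^s - U^{\mrm{lin}}) + U^{\mrm{lin}}, \qquad U^s = (U^s - U^{\mrm{lin}}) + U^{\mrm{lin}}, \]
together with the elementary monotone embedding $\Wc^{\alpha_1, \nu_1} \hookrightarrow \Wc^{\alpha_2, \nu_2}$, which holds whenever $\alpha_1 \geq \alpha_2$ and $\nu_1 \leq \nu_2$. The whole argument is then a matter of selecting the free parameters $\alpha \in (0, 1/4)$ and $\beta \geq 0$ appearing in Lemma \ref{lemma:U-r-U-s-regularity} so that each summand lands in (or better than) the advertised target space.

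For the first pair of assertions on $U$ and $\ptl_t U$, the $U^r$-piece is automatic: it already lies in $C_t^0 \Cs_x^{2-4\kappa} \cap C_t^1 \Cs_x^{-4\kappa}$, which embeds into both $\Wc^{3/2-\kappa, 1/2}([0,T])$ and $\Wc^{-1/2-\kappa, 1/2}([0,T])$ with no need for any time weight. For $(U^s - U^{\mrm{lin}})$ I will fix a small value of $\alpha$ — the choice $\alpha = 1/8$ is convenient — which, via Lemma \ref{lemma:U-r-U-s-regularity}, delivers control in $\Wc^{9/4, 1/4+4\kappa}([0,T])$ and (for the time derivative) in $\Wc^{1/4, 1/4+4\kappa}([0,T])$. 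Both embed into the targets since $9/4 \geq 3/2 - \kappa$, $1/4 \geq -1/2 - \kappa$, and $1/4 + 4\kappa \leq 1/2$ for $\kappa$ small. For $U^{\mrm{lin}}$ I will invoke the linear heat estimate with $\beta = 1/2 - \kappa$, which places $U^{\mrm{lin}}$ and $\ptl_t U^{\mrm{lin}}$ exactly at the target spatial regularities, with time weight $(1/2)/2 = 1/4 - \kappa/2 \leq 1/2$.

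For the refined bounds on $U^s$ and $\ptl_t U^s$, I again take $\alpha = 1/8$ for the $(U^s - U^{\mrm{lin}})$-piece and now choose $\beta = 5/4 - \kappa$ for $U^{\mrm{lin}}$. This latter choice produces $U^{\mrm{lin}} \in \Wc^{9/4 - \kappa, 5/8}([0, T])$ and $\ptl_t U^{\mrm{lin}} \in \Wc^{1/4 - \kappa, 5/8}([0, T])$, matching the targets $\Wc^{2+1/4-\kappa, 5/8}$ and $\Wc^{1/4-\kappa, 5/8}$ on the nose. The $(U^s - U^{\mrm{lin}})$-contribution sits in strictly better spaces (spatial regularity $9/4 > 9/4 - \kappa$ and time weight $1/4 + 4\kappa < 5/8$ for $\kappa$ small), so the sum lies in the claimed space.

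There is no genuine obstacle here; the proof is pure parameter bookkeeping within Lemma \ref{lemma:U-r-U-s-regularity}. The only implicit input is that all the constants on the right-hand sides of the lemma's bounds — namely $\|U\|_{C_t^0 \Cs_x^{1-\kappa}([0,T])}$, $\|U\|_{\Wc^{1+2\alpha+3\kappa, \alpha+2\kappa}([0, T])}$, $\|\linear\|_{C_t^0 \Cs_x^{-\kappa}([0, T])}$ and $\|B\|_{\Sc^{1-2\kappa}([0, T])}$ — are finite, but each is either part of the standing setup for $T < \maxtime$ or follows from the regularity $U \in C_t^0 \Cs_x^{1-\kappa} \cap \Wc^{1-\kappa+\delta, \delta/2}$ recorded in the unnumbered lemma preceding Lemma \ref{lemma:U-r-U-s-regularity}, applied with $\delta < 1$ sufficiently close to $1$.
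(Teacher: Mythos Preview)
Your approach is exactly the intended one: the paper gives no proof for this corollary, leaving it implicit that it follows by bookkeeping from Lemma \ref{lemma:U-r-U-s-regularity} via the decomposition $U = U^r + (U^s - U^{\mrm{lin}}) + U^{\mrm{lin}}$, and your parameter choices $\alpha = 1/8$ and $\beta = 1/2-\kappa$, $\beta = 5/4-\kappa$ are correct.

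One small slip in your final remark: the choice ``$\delta < 1$ sufficiently close to $1$'' does \emph{not} yield $\|U\|_{\Wc^{1+2\alpha+3\kappa,\,\alpha+2\kappa}} < \infty$, because with $\delta$ near $1$ the time weight $\delta/2 \approx 1/2$ is larger than $\alpha + 2\kappa = 1/8 + 2\kappa$, and the embedding $\Wc^{\alpha_1,\nu_1} \hookrightarrow \Wc^{\alpha_2,\nu_2}$ requires $\nu_1 \leq \nu_2$. The correct choice is $\delta = 2\alpha + 4\kappa = 1/4 + 4\kappa$, which lands $U$ exactly in $\Wc^{5/4+3\kappa,\,1/8+2\kappa}$. (Also, with $\beta = 1/2-\kappa$ the time weight for $U^{\mrm{lin}}$ is $(\beta+\kappa)/2 = 1/4$, not $1/4-\kappa/2$; this does not affect your conclusion.)
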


\begin{notation}
Next, we fix a number of conventions that will hold in the remainder of Section \ref{section:glinear}:
\begin{enumerate}
    \item[(i)] Let $\xireg := (-1/2-\kappa, -1-\kappa)$.
    \item[(ii)] Fix a time $T \in [0, \maxtime) \cap [0, 1]$.
    \item[(iii)] For norms like $C_t^0 \Cs_x^\alpha([0, T] \times \T^2)$, we will include $[0, T] \times \T^2$ in all result statements, however in proofs, we will just write $C_t^0 \Cs_x^\alpha$ for brevity. Similarly, we will write $\Cs_{tx}^{\xireg}((0, 1))$ or more generally $\Cs_{tx}^{\alpha}((0, T))$ in all result statements, but in the proofs we will typically just write $\Cs_{tx}^{\xireg}$ and $\Cs_{tx}^{\alpha}$.
\end{enumerate}
\end{notation}

The fact that $U^{-1} = U^*$ implies that $\|U^{-1}\| = \|U\|$ for any norm that we will use. We will use this latter fact without explicit mention in what follows. Additionally, throughout the remainder of Section \ref{section:glinear}, we will need to assume that $U, U^r, U^s$ are defined on all of $\R$, in order to apply the space-time paraproduct estimates. We extend $U, U^r, U^s$ in the following way. We extend $V = U, U^r$ by:
\[ V(t) := \begin{cases} V(T) & t > T \\ V(0) & t < 0 \end{cases}, \]
and we extend $U^s$ by:
\[ U^s(t) := \begin{cases} U^s(-t) & t \in [-T, 0) \\ U^s(T) & |t| > T \end{cases}. \]

\begin{remark} Due to our above choice of the extensions, the earlier identity $U = U^r + U^s$ breaks down outside of $[0,T]$, but this does not cause any problems in our arguments. 
\end{remark}

These choices of extension ensure that
\beq\label{eq:extension-bound} \|V\|_{\Wc^{\eta, \theta}(\R)} \lesssim \|V\|_{\Wc^{\eta, \theta}([0, T])}, ~~  V = U, U^r, U^s. \eeq
On the other hand, we introduced a discontinuity in $\ptl_t U$, $\ptl_t U^s$, but this does not matter for what we will need. For $t \notin [0, T]$, we have that $\ptl_t U(t) = 0$. For $t \in (-T, 0)$, we have that $\ptl_t U^s(t) = -\ptl_t U^s(-t)$, and for $|t| > T$, we have that $\ptl_t U^s(t) = 0$. We thus have that 
\beq\label{eq:extension-derivative-bound} \|\ptl_t U \|_{\Wc^{\eta, \theta}(\R)} \lesssim \|\ptl_t U\|_{\Wc^{\eta, \theta}([0, T])}, ~~ \|\ptl_t U^s\|_{\Wc^{\eta, \theta}(\R)} \lesssim \|\ptl_t U^s\|_{\Wc^{\eta, \theta}([0, T])}. \eeq
In particular, Corollary \ref{cor:U-Us-regularity}, it follows that 
\[ U \in \Wc^{3/2-\kappa, 1/2}(\R), ~~ \ptl_t U \in \Wc^{-1/2-\kappa, 1/2}(\R), ~~ U^s \in \Wc^{2+1/4-\kappa, 5/8}(\R), ~~ \ptl_t U \in \Wc^{1/4-\kappa, 5/8}(\R).\]

\begin{definition}
Define the norms $\|\cdot\|_{\Unorm}, \|\cdot\|_{\Usnorm}$ by
\[\begin{split}
\|V\|_{\Unorm} &:= \|V\|_{L_t^\infty \Cs_x^{1-\kappa}(\R \times \T^2)} + \|V\|_{\Wc^{3/2-\kappa, 1/2}(\R)} + \|\ptl_t V\|_{\Wc^{-1/2-\kappa, 1/2}(\R)},   \\
\|V\|_{\Usnorm} &:= \|V\|_{L_t^\infty \Cs_x^{1-\kappa}(\R \times \T^2)} +  \|V\|_{\Wc^{2+1/4-\kappa, 5/8}(\R)} + \|\ptl_t V\|_{\Wc^{1/4-\kappa, 5/8}(\R)}. \end{split}\]
\end{definition}

\begin{remark}
By Corollary \ref{cor:Wc-implies-besov} and interpolation, we have that
\[ \|V\|_{\Wc^{1+10\kappa, 10\kappa}(\R)}, ~~ \|V\|_{\Cs_{tx}^{(1/2+2\kappa, -5\kappa)}}, ~~ \|V\|_{\Cs_{tx}^{(1/2+2\kappa, 20\kappa), 30\kappa}}, ~~ \|V\|_{\Cs_{tx}^{(0, 20\kappa), 20\kappa}} \lesssim \|V\|_{\Unorm}, \]
\[ \|V\|_{\Wc^{2+10\kappa, 10\kappa}(\R)}, ~~  \|V\|_{\Cs_{tx}^{(-50\kappa, 1+50\kappa)}}, \|V\|_{\Cs_{tx}^{(0, 1+50\kappa), 50\kappa}} \lesssim \|V\|_{\Usnorm}.\]
(To be clear, we are not optimizing in the precise factors of $\kappa$ in most terms of the above.) We will use these and related facts in what follows without explicit reference.
\end{remark}

\begin{remark}
The proof of Proposition \ref{prop:linear-objects-close-gauge-covariance} ultimately rests on an understanding of the regularity of the product $U \xi$ on $(0, T)$. It is thus somewhat ironic that we need to extend $U, U^r, U^s$ to all of $\R$ (which is required in order to Littlewood-Paley decompose), since the question of regularity is fundamentally a local issue. This extension to $\R$ can perhaps be avoided if we use other tools for understanding the product of distributions. On the other hand, Littlewood-Paley theory already underlies many of the technical arguments of this article, and so we prefer to continue using it here. We note that in the end, the extension only appears in intermediate steps, since our main estimates (Lemmas \ref{lemma:lo-hi}, \ref{lemma:Us-hi-hi-xi}, \ref{lemma:hi-hi-rougher-part}, \ref{lemma:hi-lo}) are stated on the interval $[0, T]$, so that the quantities which appear in these main estimates will not depend on the particular choices of extension of $U, U^r, U^s$. 
\end{remark}

We state and prove the following Schauder estimate which we will frequently use in the coming sections.

\begin{lemma}\label{lemma:schauder-U-V}
Let $\alpha, \varep_1, \varep_2, \varep_3, \varep_4, \varep_5 \geq 0$. Let
$\theta_1 := (2\varep_1 + \varep_2 + 2\kappa)/2$, $\theta_2 := (2\varep_3 + \varep_4 + 2\kappa)/2$.  Suppose that
\[ 1 + 2\varep_3 + \varep_4 + \kappa < \frac{3}{2}, ~~ 1-\kappa - \varep_2 > 0, ~~ 1 + 2\varep_1 + \varep_2 + \kappa < \frac{3}{2} - \kappa, ~~ \theta_1 < \frac{1}{2} + \varep_1,\]
\beq\label{eq:varep-conditions} \varep_4 - 2\varep_3 \geq 10\kappa, ~~ \varep_5 + \theta_2 \leq \alpha, ~~ \frac{2\varep_1 + \varep_2 + 7\kappa}{2} \leq \theta_2 + \varep_5.\eeq
Then for $V \in \Cs_{tx}^{(-1/2-\varep_1, -\varep_2)}((0, 1)) \cap \Cs_{tx}^{(-1/2-\varep_3, \varep_4), \varep_5}$((0, 1)), we have that
\[ \|\Duh(U^{-1} V)\|_{C_t^0 \Cs_x^{1-\alpha}([0, T] \times \T^2) \cap \Wc^{1+5\kappa, \alpha}([0, T])} \lesssim \|U\|_{\Unorm} \|V\|_{\Cs_{tx}^{(-1/2-\varep_1, -\varep_2)}((0, 1)) \cap \Cs_{tx}^{(-1/2-\varep_3, \varep_4), \varep_5}((0, 1))}. \]
Let $\delta_1, \delta_2 \geq 0$ be such that 
\[ \delta_1 \leq 1-\kappa, ~~ \delta_2 < \frac{1}{2}, ~~ \alpha < 1 - 2\delta_2 - \delta_1, ~~ \frac{\delta_1}{2} + \delta_2 + \frac{5\kappa}{2} < \alpha.\]
Then for $W \in \Wc^{\delta_1, 1/2 + \delta_2}([0, T])$, we have that
\[ \|\Duh(U^{-1} W)\|_{C_t^0 \Cs_x^{1-\alpha}([0, T] \times \T^2) \cap \Wc^{1+5\kappa, \alpha}([0, T])} \lesssim \|U\|_{\Unorm} \|W\|_{\Wc^{\delta_1, 1/2+\delta_2}([0, T])}.\]
\end{lemma}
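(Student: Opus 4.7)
The plan is to handle both parts by first bounding the product $U^{-1} V$ (respectively $U^{-1} W$) in an appropriate intermediate function space, and then applying a Schauder-type estimate to conclude the desired bound on the Duhamel integral. Since $U$ takes values in unitary maps on $\cfrkg$, we have $U^{-1} = U^*$, and consequently $\|U^{-1}\|$ equals $\|U\|$ in every norm that appears below; we shall use this without further comment.

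For Part~1, we decompose $U^{-1} V$ into nine space-time paraproducts. The six pieces for which $U^{-1}$ has time frequency comparable to or strictly greater than that of $V$, namely $U^{-1} \parasim_t \parall_x V$, $U^{-1} \paragg_t \parall_x V$, $U^{-1} \parasim_{tx} V$, $U^{-1} \paragg_t \parasim_x V$, $U^{-1} \parasim_t \paragg_x V$, and $U^{-1} \paragg_{tx} V$, are handled directly by Lemma~\ref{lemma:paraproduct-Wc-space-time-besov-space}, applied with $\alpha = 3/2-\kappa$, $\theta = 1/2$ for the $U^{-1}$ factor (which uses Corollary~\ref{cor:U-Us-regularity} to supply both $U$ and $\partial_t U$ in the correct $\Wc$-scales). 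For the three remaining pieces $U^{-1} \parall_t \parall_x V$, $U^{-1} \parall_t \parasim_x V$, $U^{-1} \parall_t \paragg_x V$, where $U^{-1}$ is at low time frequency, a direct Littlewood--Paley computation suffices: at each dyadic time frequency $L$ at which $V$ is localized, $P^t_{\ll L} U^{-1}$ is bounded in $L_t^\infty \Cs_x^{1-\kappa}$ by $\|U\|_{\Unorm}$, while $P^t_L V$ contributes the time decay $L^{-\beta_t}$, so that spatial paraproduct estimates (valid because $1-\kappa$ sums with the spatial regularity of $V$ to a positive number) yield the desired control. Combining the six and three contributions, and converting the $\Wc^{\gamma,\theta}$ outputs of Lemma~\ref{lemma:paraproduct-Wc-space-time-besov-space} into space-time Besov norms via Corollary~\ref{cor:Wc-implies-besov}, produces a bound of the schematic form $U^{-1} V \in \Cs_{tx}^{\sigma_1} \cap \Cs_{tx}^{\sigma_2, \varep_5}$ for suitable multi-indices $\sigma_1, \sigma_2$ determined by $\varep_1,\ldots,\varep_5$. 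Applying Lemma~\ref{lemma:space-time-Besov-space-Schauder} to the first component then yields the $C_t^0 \Cs_x^{1-\alpha}$ bound, and applying Lemma~\ref{lemma:schauder-space-time-besov-space-singular-time-weights} (Schauder with singular time weights) to both components yields the $\Wc^{1+5\kappa, \alpha}$ bound.

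For Part~2, the argument is more elementary because $W$ already lives in a $\Wc$-scale. Since $U^{-1}(t) \in \Cs_x^{1-\kappa}$ uniformly in $t$ with norm controlled by $\|U\|_{\Unorm}$, and since $\delta_1 + (1-\kappa) > 0$ by the assumption $\delta_1 \leq 1-\kappa$, the spatial product estimate from Lemma~\ref{prelim:lem-para-product} applied pointwise in $t$ gives $U^{-1} W \in \Wc^{\delta_1, 1/2+\delta_2}$ with norm bounded by $\|U\|_{\Unorm} \|W\|_{\Wc^{\delta_1, 1/2+\delta_2}}$. A direct Schauder computation for the Duhamel integral, using that the heat kernel gains $2(1 - (1/2+\delta_2)) = 1-2\delta_2$ spatial derivatives over the time interval, then produces the $C_t^0 \Cs_x^{1-\alpha}$ bound from the condition $\alpha < 1 - 2\delta_2 - \delta_1$, and a version of the same computation with a singular weight at the endpoint yields the $\Wc^{1+5\kappa, \alpha}$ bound from the condition $\alpha > \delta_1/2 + \delta_2 + 5\kappa/2$.

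The main technical obstacle lies not in the conceptual structure of the argument but in the bookkeeping: one must verify that the listed conditions on $\alpha$, the $\varep_i$, and the $\theta_j$ imply all the index conditions required by the invoked paraproduct and Schauder estimates. The three paraproduct pieces of the form $U^{-1} \parall_t V$ in Part~1, which are not directly covered by Lemma~\ref{lemma:paraproduct-Wc-space-time-besov-space} and must be estimated by a separate Littlewood--Paley argument as sketched above, are the part of the proof requiring the most care.
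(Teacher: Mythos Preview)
Your overall plan---bound $U^{-1}V$ in intermediate spaces, then apply Schauder---matches the paper's, and your treatment of Part~2 is essentially correct (though the condition $\alpha<1-2\delta_2-\delta_1$ is what ensures $1-\alpha\geq\delta_1$, i.e.\ $\alpha_1\geq\alpha_2$ in Lemma~\ref{prelim:lem-Schauder}; the actual integrability condition for the $C_t^0\Cs_x^{1-\alpha}$ bound is $\alpha>2\delta_2-\delta_1$, which is implied by the other hypothesis when the parameters are of the intended size).

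There is, however, a genuine gap in Part~1 concerning the weighted target $\Wc^{1+5\kappa,\alpha}$. You apply Lemma~\ref{lemma:paraproduct-Wc-space-time-besov-space} with $\theta=1/2$ for the $U^{-1}$ factor; the output then lives in $\Wc^{\gamma',1/2}$, which via Corollary~\ref{cor:Wc-implies-besov} gives weighted Besov control only with weight $1/2$. Feeding this into Lemma~\ref{lemma:schauder-space-time-besov-space-singular-time-weights} produces $\Duh(U^{-1}V)\in\Wc^{1+5\kappa,1/2}$, which is \emph{weaker} than the required $\Wc^{1+5\kappa,\alpha}$ because $\alpha\ll 1/2$ (recall $\alpha\sim\kfactors$ in applications and smaller weights give stronger norms). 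Your asserted conclusion $U^{-1}V\in\Cs_{tx}^{\sigma_2,\varep_5}$ does not follow from the tools you invoke: Lemma~\ref{lemma:paraproduct-Wc-space-time-besov-space} takes unweighted $g\in\Cs_{tx}^\beta$, so the weight $\varep_5$ from $V$ never enters.

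The paper resolves this differently. For the weighted target it avoids any time-paraproduct decomposition: by interpolation from the assumption $1+2\varep_3+\varep_4+\kappa<3/2$, one has $U\in\Cs_{tx}^{(1/2+\varep_3+\kappa/2,\,\varep_4),\theta_2}$ with the \emph{small} weight $\theta_2$. Since both the time and space regularities of $U$ and $V$ then sum to positive numbers, a direct product estimate gives $U^{-1}V\in\Cs_{tx}^{(-1/2-\varep_3,\varep_4),\varep_5+\theta_2}$, with combined weight $\varep_5+\theta_2\leq\alpha$ as needed. For the unweighted target the paper uses only a two-way split $U^{-1}V=U^{-1}\paransim_t V+U^{-1}\parasim_t V$: the first piece uses $U\in L_t^\infty\Cs_x^{1-\kappa}$ directly (this covers both $\parall_t$ and $\paragg_t$), and for the second piece Lemma~\ref{lemma:paraproduct-Wc-space-time-besov-space} is applied with the interpolated small-weight norm $U\in\Wc^{1+2\varep_1+\varep_2+\kappa,\theta_1}$ rather than with $\theta=1/2$. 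Your nine-way split is therefore more laborious than necessary, and your specific parameter choice $\theta=1/2$ cannot close the argument.
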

\begin{remark}
For our use cases, one should think of the $\varep$ and $\delta$ parameters as different sizes of ``$0+$", and $\alpha$ as a very large size of ``$0+$" which dominates all the rest. In this case, the conditions on $\delta_1, \delta_2$ are always going to be satisfied, and the only conditions on the $\varep_j$ that one needs to check are the first and third ones in \eqref{eq:varep-conditions}. Moreover, note that the third condition in \eqref{eq:varep-conditions} may be ensured by increasing $\varep_5$, so really the only condition to check is $\varep_4 - 2\varep_3 \geq 10\kappa$.
\end{remark}
\begin{proof}
By the assumption that $1 + 2\varep_3 + \varep_4 + \kappa < 3/2-\kappa$ combined with interpolation, we have that
\[ \|U\|_{\Cs_{tx}^{(1/2+\varep_3 + \kappa/2, \varep_4), \theta_2}} \lesssim \|U\|_{\Unorm}. \]
Thus by a product estimate, we obtain
\[ \|U^{-1} V\|_{\Cs_{tx}^{(-1/2-\varep_3, \varep_4), \varep_5 + \theta_2}} \lesssim \|U\|_{\Unorm} \|V\|_{\Cs_{tx}^{(-1/2-\varep_3, \varep_4), \varep_5}}. \]
Next, by a paraproduct estimate and the assumption $1-\kappa - \varep_2 > 0$, we have that
\[ \|U^{-1} \paransim_t V\|_{\Cs_{tx}^{(-1/2-\varep_1, -\varep_2)}} \lesssim \|U\|_{L_t^0 \Cs_x^{1-\kappa}} \|V\|_{\Cs_{tx}^{(-1/2-\varep_1, -\varep_2)}} \]
Next, recall that $\theta_1 = (2\varep_1 + \varep_2 + 2\kappa)/2$. By the assumption that $1 + 2\varep_1 + \varep_2 + \kappa < 3/2 - \kappa$ and interpolation, we have that
\[ \|U\|_{\Wc^{1+2\varep_1 + \varep_2 + \kappa, \theta_1}} + \|\ptl_t U\|_{\Wc^{-1 + 2\varep_2 + \varep_2 + \kappa, \theta_1}} \lesssim \|U\|_{\Unorm}.  \]
Thus by the assumption that $\theta_1  < 1/2 + \varep_1$ combined with Corollary \ref{cor:Wc-implies-besov} and Lemma \ref{lemma:paraproduct-Wc-space-time-besov-space}, we have that
\[ \|U^{-1} \parasim_t V\|_{\Cs_{tx}^{(-1/2-\varep_1, -\varep_2)}} \leq \|U^{-1} \parasim_t V\|_{\Cs_{tx}^{(-\theta_1, -\varep_2)}} \lesssim \|U^{-1} \parasim_t V \|_{\Wc^{-\varep_1, \theta_1}} \lesssim \|U\|_{\Unorm}\|V\|_{\Cs_{tx}^{(-1/2-\varep_1, -\varep_2)}}. \]
Combining the previous few estimates, we obtain
\[ \|U^{-1} V\|_{\Cs_{tx}^{(-1/2-\varep_1, -\varep_2)} \cap \Cs_{tx}^{(-1/2-\varep_3, \varep_4), \varep_5 + \theta_2}} \lesssim \|U\|_{\Unorm} \|V\|_{\Cs_{tx}^{(-1/2-\varep_1, -\varep_2)} \cap \Cs_{tx}^{(-1/2-\varep_3, \varep_4), \varep_5}}. \]
Now by Schauder (i.e. Lemma \ref{lemma:space-time-Besov-space-Schauder} applied with $\beta_x = 1 - 2\varep_1 - \varep_2 -\kappa$ and Lemma \ref{lemma:schauder-space-time-besov-space-singular-time-weights} applied with $\gamma = 1 - 2\varep_1 - \varep_2 - 2\kappa$, $\theta = \varep_5 + \theta_2$, $\theta' = (2\varep_1 + \varep_2 + 7\kappa)/2$) and the assumptions $2\varep_1 + \varep_2 + \kappa < \alpha$, $\theta' \leq \theta$, $\varep_4 - 2\varep_3 \geq 10\kappa$, $\varep_5 + \theta_2 \leq \alpha$, we have that
\[\begin{split}
\|\Duh(U^{-1} V)\|_{C_t^0 \Cs_x^{1-\alpha} \cap \Wc^{1+5\kappa, \alpha}}  &\lesssim \|U^{-1} V\|_{\Cs_{tx}^{(-1/2-\varep_1, -\varep_2)} + \Cs_{tx}^{(-1/2-\varep_3, \varep_4), \varep_5 + \theta_1}}\\
&\lesssim \|U\|_{\Unorm} \|V\|_{\Cs_{tx}^{(-1/2-\varep_1, -\varep_2)} \cap \Cs_{tx}^{(-1/2-\varep_3, \varep_4), \varep_5}}.
\end{split}\]
This shows the first estimate.
The second estimate follows by standard product and Schauder estimates.
\end{proof}

\subsubsection{Low$\times$high estimate}\label{section:lo-hi}

We first look at the low$\times$high product in the decomposition \eqref{eq:P-N-U-xi-split}. The main result for the low$\times$high product is the following.

\begin{lemma}[Low$\times$high estimate]\label{lemma:lo-hi}
We have that
\[\begin{split}
\Big\|\Duh\Big(U^{-1}\big( P_{\leq N}(U \parall \xi) - U \parall P_{\leq N}\xi\big)\Big) - (U^{-1} \ptl_j U) \parall Q^j_{> N}\big(\linear(t) - &\Ht \linear(0)\big)\Big\|_{C_t^0 \Cs_x^{1-\rho}([0, T] \times \T^2) \cap \Wc^{1+5\kappa, \rho}([0, T])} \lesssim \\
&N^{-\kappa} \|U\|_{\Unorm}^2 \|\xi\|_{\Cs_{tx}^{\xireg}((0, 1))}. 
\end{split}\]
\end{lemma}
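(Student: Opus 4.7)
The plan is to peel off the inner commutator first, to then absorb the resulting error into the smoother remainder, and finally to identify the leading paraproduct term with $(U^{-1}\partial_j U)\parall Q^j_{>N}(\linear(t)-\Ht\linear(0))$ modulo an integral commutator. To begin, apply Lemma \ref{lemma:P-N-commutator-space-time} with $f=U$ and $g=\xi$, using $U\in \Unorm$ and $\xi\in \Cs_{tx}^{\xireg}$. After interpolating the $\Unorm$-bounds into the four space-time H\"older spaces required by Lemma \ref{lemma:P-N-commutator-space-time} (so that $\alpha_t,\gamma_t\ge 0$ and $\alpha_t'+\beta_t,\gamma_t'+\beta_t>0$), this produces the decomposition
\begin{equation*}
P_{\leq N}(U\parall \xi) - U\parall P_{\leq N}\xi = \partial_j U \parall Q^j_{>N}\xi + \mc{E}_N,
\end{equation*}
with $\mc{E}_N$ controlled in a sum of spaces $X+Y$ of the form $(\Cs_{tx}^{(-1/2-,\,0-)}\cap \Cs_{tx}^{(-1/2-,\,0+),0+})+(\Cs_{tx}^{(0+,-1-)}\cap \Cs_{tx}^{(0+,-1+),0+})$, gaining the factor $N^{-\kappa}$ (this is why we need control of both $\ptl_t U$ and $U$ inside $\Unorm$).

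Next, I multiply through by $U^{-1}$ and apply $\Duh$. For the error piece $\Duh(U^{-1}\mc{E}_N)$, the regularities of the $X+Y$-spaces fall exactly into the hypotheses of the Schauder estimate in Lemma \ref{lemma:schauder-U-V} (the key constraint $\varepsilon_4-2\varepsilon_3\ge 10\kappa$ can be arranged by tuning the $\varepsilon_i$), yielding the bound $N^{-\kappa}\|U\|_{\Unorm}^2\|\xi\|_{\Cs_{tx}^{\xireg}}$ in $C_t^0 \Cs_x^{1-\rho}\cap \Wc^{1+5\kappa,\rho}$. For the main piece, I split
\begin{equation*}
U^{-1}(\partial_j U\parall Q^j_{>N}\xi) = (U^{-1}\partial_j U)\parall Q^j_{>N}\xi + \mc{R}_N,
\end{equation*}
where $\mc{R}_N$ is a Bony-type commutator: after writing $\partial_j U\parall Q^j_{>N}\xi$ as a sum of LP pieces $P_{\ll M}\partial_j U\cdot P_M Q^j_{>N}\xi$ with $M\gtrsim N$, one has $\mc{R}_N = \sum_M [P_{\ll M},U^{-1}](\partial_j U)\cdot P_M Q^j_{>N}\xi$; the standard commutator bound for multiplication by a $\Cs_x^{1-\kappa}$-function against $P_{\ll M}$ gains one derivative modulo $\kappa$, and since $M\gtrsim N$ this yields an extra $N^{-\kappa}$. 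Another application of Lemma \ref{lemma:schauder-U-V} then absorbs $\Duh(\mc{R}_N)$ into the target norm.

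It remains to handle $\Duh\big((U^{-1}\partial_j U)\parall Q^j_{>N}\xi\big)$. Since by Lemma \ref{lemma:Ad-derivative-properties}(ii) we have $U^{-1}\partial_j U = \ad(h_j)$, so that $U^{-1}\partial_j U\in \Wc^{10\kappa,10\kappa}$ with $\ptl_t(U^{-1}\partial_j U)$ inheriting the regularity of $\ptl_t U$ through $\Unorm$, the hypotheses of the integral commutator estimate in Lemma \ref{lemma:space-time-integral-commutator-estimate} are met with $f=U^{-1}\partial_j U$ and $g=Q^j_{>N}\xi$. This produces
\begin{equation*}
\Duh\big((U^{-1}\partial_j U)\parall Q^j_{>N}\xi\big)=(U^{-1}\partial_j U)\parall \Duh(Q^j_{>N}\xi)+\mc{F}_N,
\end{equation*}
with $\mc{F}_N$ already bounded in $\Wc^{1+5\kappa,\rho}([0,T])$ by the previous display by an acceptable amount. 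Finally, since $Q^j_{>N}$ is a spatial Fourier multiplier that commutes with $\Ht$, we have $\Duh(Q^j_{>N}\xi)(t)=Q^j_{>N}\int_0^t\Hts\xi(s)\,ds=Q^j_{>N}(\linear(t)-\Ht\linear(0))$, which is the precise subtraction appearing in the statement.

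\textbf{Main obstacle.} The genuinely delicate step is the Bony-type decomposition $U^{-1}(\partial_j U\parall Q^j_{>N}\xi) = (U^{-1}\partial_j U)\parall Q^j_{>N}\xi+\mc{R}_N$ with an $N^{-\kappa}$ gain on $\mc{R}_N$. The rest of the proof is bookkeeping: track which singular-time-weighted space-time Besov norms of $U$ (and $\partial_t U$) are needed to invoke Lemmas \ref{lemma:P-N-commutator-space-time}, \ref{lemma:schauder-U-V}, and \ref{lemma:space-time-integral-commutator-estimate}, and verify that the parameter constraints in each are consistent with the target norm $C_t^0\Cs_x^{1-\rho}\cap \Wc^{1+5\kappa,\rho}$.
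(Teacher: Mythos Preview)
Your overall strategy---peel off $\partial_j U\parall Q^j_{>N}\xi$ via Lemma~\ref{lemma:P-N-commutator-space-time}, absorb the remainder with Lemma~\ref{lemma:schauder-U-V}, and then reduce to an integral commutator---matches the paper's. The divergence comes in how you pass from $U^{-1}(\partial_j U\parall Q^j_{>N}\xi)$ to $(U^{-1}\partial_j U)\parall Q^j_{>N}\xi$. You attempt a single spatial commutator \emph{before} Duhamel and then apply Lemma~\ref{lemma:space-time-integral-commutator-estimate} once with $f=U^{-1}\partial_jU$. The paper instead splits $U^{-1}(\cdot)=U^{-1}\parall(\cdot)+U^{-1}\paragtrsim(\cdot)$, handles the $\paragtrsim$ piece directly (Lemma~\ref{lemma:U-hi-lo-ptl-j-U-lo-hi-Q-xi}), and for the $\parall$ piece applies Lemma~\ref{lemma:space-time-integral-commutator-estimate} \emph{twice}---once for $U^{-1}$, once for $\partial_jU$---and only then performs the trilinear commutator $U^{-1}\parall(\partial_jU\parall\,\cdot\,)-(U^{-1}\partial_jU)\parall\,\cdot\,$ at the level of $Q^j_{>N}\linear$, which has spatial regularity $1-2\kappa$ rather than $-\kappa$.

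There is a genuine gap in your $\mc{R}_N$ step. The expression $[P_{\ll M},U^{-1}](\partial_jU)$ is a commutator acting on $\partial_jU\in\Cs^{-\kappa}$, a distribution, not an $L^\infty$ function; the ``standard commutator bound'' you invoke (of Kato--Ponce type) requires the argument to be a function. Unwinding your $\mc{R}_N$ into paraproducts shows it equals $U^{-1}\paragtrsim(\partial_jU\parall Q^j_{>N}\xi)$ plus a genuine trilinear para-commutator $U^{-1}\parall(\partial_jU\parall Q^j_{>N}\xi)-(U^{-1}\partial_jU)\parall Q^j_{>N}\xi$; the first piece is exactly Lemma~\ref{lemma:U-hi-lo-ptl-j-U-lo-hi-Q-xi}, but for the second you need the input to the outer $\parall$ to have sufficiently high regularity, and $Q^j_{>N}\xi$ (spatial regularity $-\kappa$) is not enough for the target $\Wc^{1+5\kappa,\rho}$ bound. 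This is precisely why the paper postpones this commutator until after Duhamel, where the high-frequency input becomes $Q^j_{>N}\linear\in\Cs_x^{1-2\kappa}$. Your single-integral-commutator step also needs $\partial_t(U^{-1}\partial_jU)\in\Wc^{\alpha-2,\nu}$, which you wave away; this would have to be checked from the equation for $g$ (or $U$), and is not immediate from $\Unorm$ alone.
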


The proof of this result is given later, after we have proven several intermediate results.

\begin{lemma}\label{lemma:U-noise-lo-hi-estimate}
We have that
\[ \big\|P_{\leq N}(U \parall \xi) - U \parall P_{\leq N} \xi - \ptl_j U \parall Q^j_{> N} \xi \big\|_{X + Y} \lesssim N^{-\kappa} \|U\|_{\Unorm} \|\xi\|_{\Cs_{tx}^{\xireg}((0, 1))},  \]
where $X = \Cs_{tx}^{(-1/2-\kappa, -20\kappa)}((0, 1)) \cap \Cs_{tx}^{(-1/2-\kappa, 15\kappa), 20\kappa}((0, 1))$ and $Y = \Cs_{tx}^{(\kappa, -1- 20\kappa)}((0, 1)) \cap \Cs_{tx}^{(\kappa, -1+15\kappa), 20\kappa}((0, 1))$.
\end{lemma}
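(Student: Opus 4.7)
The plan is to reduce the statement to the space-time commutator lemma already established as Lemma \ref{lemma:P-N-commutator-space-time}, applied with $f=U$ and $g$ equal to a near-optimal extension of $\xi$ to $\R\times\T^2$. First, fix an extension $\tilde\xi\in\Cs_{tx}^{\xireg}$ of $\xi|_{(0,1)\times\T^2}$ with $\|\tilde\xi\|_{\Cs_{tx}^{\xireg}}\leq 2\|\xi\|_{\Cs_{tx}^{\xireg}((0,1))}$, and recall that $U$ has already been extended to all of $\R\times\T^2$ with $\|U\|_{\Unorm}$ controlled. Since the norm on $\Cs_{tx}^{\alpha}((0,1))$ is an infimum over extensions, any estimate we obtain globally will descend to $(0,1)$ with the right constant.

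I apply Lemma \ref{lemma:P-N-commutator-space-time} with $\beta=\xireg=(-\tfrac12-\kappa,-1-\kappa)$, $\delta=\kappa$, and parameter choices
\[
\alpha=(0,1-18\kappa),\qquad \alpha'=(\tfrac12+2\kappa,-18\kappa),\qquad \gamma=(0,1+17\kappa),\qquad \gamma'=(\tfrac12+2\kappa,17\kappa),
\]
with $\theta=\theta'=20\kappa$. A direct arithmetic check shows that the target spaces produced by the lemma match the ones in the statement:
\[
\beta+(0,\alpha_x-\delta)=(-\tfrac12-\kappa,-20\kappa),\qquad \beta+(0,\gamma_x-\delta)=(-\tfrac12-\kappa,15\kappa),
\]
\[
\alpha'+\beta-(0,\delta)=(\kappa,-1-20\kappa),\qquad \gamma'+\beta-(0,\delta)=(\kappa,-1+15\kappa).
\]
The remaining hypotheses of Lemma \ref{lemma:P-N-commutator-space-time} are immediate: the spatial exponents $1-18\kappa$, $-18\kappa$, $1+17\kappa$, $17\kappa$ are all strictly less than $2$, $\alpha_t=\gamma_t=0$, and $\alpha_t'+\beta_t=\gamma_t'+\beta_t=\kappa>0$.

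The only content-bearing step is the verification of the four embeddings $\|U\|_{\Cs_{tx}^\alpha}$, $\|U\|_{\Cs_{tx}^{\alpha'}}$, $\|U\|_{\Cs_{tx}^{\gamma,\theta}}$, $\|U\|_{\Cs_{tx}^{\gamma',\theta'}}\lesssim \|U\|_{\Unorm}$. The first two are immediate from the examples listed right after the definition of the $\Unorm$ norm (together with the trivial monotonicity of Besov scales), while $\|U\|_{\Cs_{tx}^{(0,1+17\kappa),20\kappa}}\lesssim\|U\|_{\Unorm}$ follows by interpolating $\|U\|_{L_t^\infty\Cs_x^{1-\kappa}}$ with $\|U\|_{\Wc^{3/2-\kappa,1/2}}$ to obtain $\|U\|_{\Wc^{1+17\kappa,18\kappa}}\lesssim\|U\|_{\Unorm}$, then invoking Corollary \ref{cor:Wc-implies-besov} and the fact that increasing the time weight gives a weaker norm on the finite interval $(0,1)$. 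The embedding into $\Cs_{tx}^{(1/2+2\kappa,17\kappa),20\kappa}$ is the only one requiring a small extra argument: the third estimate of Corollary \ref{cor:Wc-implies-besov} applied with $\alpha=3/2-\kappa$, $\theta=1/2$ gives
\[
\|U\|_{\Cs_{tx}^{(0,3/2-\kappa),1/2}\cap\Cs_{tx}^{(1,-1/2-\kappa),1/2}}\lesssim \|U\|_{\Wc^{3/2-\kappa,1/2}}+\|\ptl_t U\|_{\Wc^{-1/2-\kappa,1/2}}\lesssim\|U\|_{\Unorm},
\]
and interpolating the two factors to lift the temporal regularity up to $\tfrac12+2\kappa$ produces a bound in $\Cs_{tx}^{(1/2+2\kappa,5/4-2\kappa),1/2}$, which embeds into $\Cs_{tx}^{(1/2+2\kappa,17\kappa),20\kappa}$ on $(0,1)$ by monotonicity in both the spatial exponent and the weight. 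Combining all four embeddings with Lemma \ref{lemma:P-N-commutator-space-time} yields the claimed estimate.

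The main obstacle, if there is one, is this last singular-weight embedding; everything else is bookkeeping, namely matching the parameters so that the lemma's output exponents coincide with the $X$ and $Y$ spaces in the statement, and checking the (mild) compatibility conditions of Lemma \ref{lemma:P-N-commutator-space-time}.
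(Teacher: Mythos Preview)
Your proposal is correct and follows exactly the same approach as the paper: both proofs consist of a direct application of Lemma~\ref{lemma:P-N-commutator-space-time} with appropriately chosen parameters. The paper's proof is a one-liner citing parameters $\alpha=(0,1-\kappa)$, $\alpha'=(1/2+2\kappa,-5\kappa)$, $\gamma=(0,1+20\kappa)$, $\gamma'=(1/2+2\kappa,20\kappa)$, $\theta=\theta'=20\kappa$; your choice $\alpha=(0,1-18\kappa)$, $\alpha'=(1/2+2\kappa,-18\kappa)$, $\gamma=(0,1+17\kappa)$, $\gamma'=(1/2+2\kappa,17\kappa)$ with $\delta=\kappa$ is actually more internally consistent with the target spaces $X$ and $Y$ (the paper's parameters do not simultaneously hit all four output exponents with a single $\delta$).

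One small correction: in your verification of the last embedding, the interpolation of $\Cs_{tx}^{(0,3/2-\kappa),1/2}$ with $\Cs_{tx}^{(1,-1/2-\kappa),1/2}$ at temporal exponent $1/2+2\kappa$ yields spatial exponent $1/2-5\kappa$, not $5/4-2\kappa$; and the subsequent step of reducing the weight from $1/2$ to $20\kappa$ is not quite ``monotonicity'' (smaller weight is a stronger norm on $(0,1)$), so one should instead interpolate once more against the unweighted bound $\|U\|_{\Cs_{tx}^{(1/2+2\kappa,-5\kappa)}}$. This is the same kind of numerical bookkeeping the paper sweeps under the remark after the definition of $\|\cdot\|_{\Unorm}$, and does not affect the correctness of the overall argument.
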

\begin{proof}
The first estimate follows by Lemma \ref{lemma:P-N-commutator-space-time} with $\alpha = (0, 1-\kappa)$, $\alpha' = (1/2 + 2\kappa, -5\kappa)$, $\gamma = (0, 1+20\kappa)$, $\gamma' = (1/2 + 2\kappa, 20\kappa)$, $\theta = \theta' = 20\kappa$, $\beta = (-1/2-\kappa, -1-\kappa)$. 
\end{proof}

We can now obtain the following result.

\begin{lemma}\label{lemma:lo-hi-intermediate}
We have that
\[\begin{split}
\Big\|\Duh\Big(U^{-1}\Big( P_{\leq N}(U \parall \xi) - U \parall P_{\leq N}\xi - \ptl_j U \parall Q^j_{> N}\xi\Big)\Big)&\Big\|_{C_t^0 \Cs_x^{1-\kfactors}([0, T] \times \T^2) \cap \Wc^{1+5\kappa, \kfactors}([0, T])} \lesssim \\
&N^{-\kappa}  \|U\|_{\Unorm}^2 \|\xi\|_{\Cs_{tx}^{\xireg}((0, 1))}. 
\end{split}\]
\end{lemma}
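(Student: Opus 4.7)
The plan is to combine the commutator bound in Lemma \ref{lemma:U-noise-lo-hi-estimate} with the Schauder estimate of Lemma \ref{lemma:schauder-U-V} applied to the remainder after the principal low$\times$high term has been subtracted. Abbreviate the argument of $\Duh$ as
\[ V := P_{\leq N}(U \parall \xi) - U \parall P_{\leq N}\xi - \ptl_j U \parall Q^j_{> N} \xi. \]
By Lemma \ref{lemma:U-noise-lo-hi-estimate} we may write $V = V_X + V_Y$ with
\[ \|V_X\|_X + \|V_Y\|_Y \lesssim N^{-\kappa}\|U\|_{\Unorm}\|\xi\|_{\Cs_{tx}^{\xireg}((0,1))}, \]
where $X = \Cs_{tx}^{(-1/2-\kappa,-20\kappa)}((0,1)) \cap \Cs_{tx}^{(-1/2-\kappa,15\kappa),20\kappa}((0,1))$ and $Y$ is the corresponding space with temporal regularity $\kappa$. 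I will treat $\Duh(U^{-1}V_X)$ and $\Duh(U^{-1}V_Y)$ separately and sum the two bounds.

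For the $V_X$ piece, the bound follows directly from Lemma \ref{lemma:schauder-U-V} with parameters
\[ \varep_1 = 0,\quad \varep_2 = 20\kappa,\quad \varep_3 = 0,\quad \varep_4 = 15\kappa,\quad \varep_5 = 20\kappa,\quad \alpha = \kfactors = 100\kappa. \]
The crucial condition $\varep_4 - 2\varep_3 = 15\kappa \geq 10\kappa$ holds, while the remaining parameter constraints (the Schauder range conditions $1 + 2\varep_3 + \varep_4 + \kappa < 3/2$, $1 + 2\varep_1 + \varep_2 + \kappa < 3/2 - \kappa$, the positivity $1 - \kappa - \varep_2 > 0$, and the weight compatibilities $\theta_1 < 1/2 + \varep_1$, $\varep_5 + \theta_2 \leq \alpha$, $(2\varep_1 + \varep_2 + 7\kappa)/2 \leq \theta_2 + \varep_5$) are all comfortably satisfied by direct verification. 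This produces the desired estimate for the $V_X$-contribution with constant $\|U\|_{\Unorm}\cdot \|V_X\|_X$.

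For the $V_Y$ piece the temporal regularity is positive, so it remains only to manage the product $U^{-1}V_Y$, whose spatial regularities sum negatively on the non-weighted side. The key observation is that $V_Y$ inherits a spatial low$\times$high paraproduct structure from the commutator $U \parall \xi$ (and its corrections $U \parall P_{\leq N}\xi$, $\ptl_j U \parall Q^j_{> N}\xi$): explicitly, from the decomposition in the proof of Lemma \ref{lemma:P-N-commutator-space-time}, $V_Y = \sum_{L} \mc{B}(P_L^t U, P_{\lesssim L}^t \xi)$, and each $\mc{B}$-term is itself a spatial low$\times$high paraproduct. Thus multiplication by $U^{-1}$ can be expanded into spatial paraproducts, and the only non-vanishing contribution is a spatial low$\times$high pairing of $U^{-1}$ with $V_Y$, whose spatial regularity sum is $\min(1-\kappa,0) + (-1-20\kappa) = -1-20\kappa$. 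The remaining high$\times$high and high$\times$low spatial pairings between $U^{-1}$ and $V_Y$ are controlled via Lemma \ref{lemma:paraproduct-Wc-space-time-besov-space} by pairing temporal paraproducts against the time regularity $U \in \Wc^{3/2-\kappa,1/2}$, $\partial_t U \in \Wc^{-1/2-\kappa,1/2}$ (Corollary \ref{cor:U-Us-regularity}), exchanging temporal regularity for spatial regularity exactly as in the proof of Lemma \ref{lemma:schauder-U-V}. Schauder (Lemma \ref{lemma:space-time-Besov-space-Schauder} for the non-weighted part of $V_Y$ and Lemma \ref{lemma:schauder-space-time-besov-space-singular-time-weights} for the weighted part) then yields the claimed $C_t^0\Cs_x^{1-\kfactors}\cap \Wc^{1+5\kappa,\kfactors}$ bound.

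The main obstacle is the $V_Y$ piece: one must verify by hand that the genuinely problematic spatial high$\times$high and high$\times$low interactions between $U^{-1}$ and $V_Y$ can be absorbed either by the spatial paraproduct structure of $V_Y$ or by trading temporal for spatial regularity through Lemma \ref{lemma:paraproduct-Wc-space-time-besov-space}; aside from the parameter bookkeeping in the Schauder step, this is the only non-mechanical part of the argument.
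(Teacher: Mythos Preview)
Your overall strategy matches the paper's one-line proof (combine Lemma~\ref{lemma:U-noise-lo-hi-estimate} with the Schauder machinery of Lemma~\ref{lemma:schauder-U-V}), and your treatment of the $V_X$ piece is correct up to a harmless parameter slip: the temporal regularity of $X$ is $-1/2-\kappa$, so $\varep_1=\kappa$, not $0$.

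The $V_Y$ argument, however, contains a misstatement and a gap. First, the claim that ``the only non-vanishing contribution is a spatial low$\times$high pairing of $U^{-1}$ with $V_Y$'' is false: the spatial low$\times$high structure of $V_Y$ only tells you that $V_Y$ lives at the spatial frequency of the noise, so $U^{-1}\parasim_x V_Y$ and $U^{-1}\paragg_x V_Y$ are genuinely present (you do go on to treat them, so this seems to be loose wording rather than a logical error). Second, and more substantively, Lemma~\ref{lemma:paraproduct-Wc-space-time-besov-space} only handles the $\parasim_t$ and $\paragg_t$ temporal pairings and requires $\beta_t\in(-1,0)$, whereas $V_Y$ has $\beta_t=\kappa>0$; the embedding $\Cs_{tx}^{(\kappa,\cdot)}\hookrightarrow\Cs_{tx}^{(-\kappa',\cdot)}$ fixes the sign issue, but the case $U^{-1}\parall_t\parasim_x V_Y$ is not covered by that lemma at all. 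For this remaining piece you need the weighted control $U^{-1}\in\Cs_{tx}^{(0,1+O(\kappa)),O(\kappa)}$ (from Corollary~\ref{cor:Wc-implies-besov} applied to the interpolated $\Wc$-norm on $U$), so that the spatial regularities $1+O(\kappa)$ and $-1-20\kappa$ sum positively; this is the same mechanism used in the product step of the proof of Lemma~\ref{lemma:schauder-U-V}, but it is not an application of Lemma~\ref{lemma:paraproduct-Wc-space-time-besov-space}. Once you add this, the argument closes.
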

\begin{proof}
This follows by combining a Schauder estimate (Lemma \ref{lemma:schauder-U-V}) with Lemma \ref{lemma:U-noise-lo-hi-estimate}.
\end{proof}

\begin{lemma}\label{lemma:U-hi-lo-ptl-j-U-lo-hi-Q-xi}
We have that 
\[ \Big\| \Duh\big( U^{-1} \paragtrsim (\ptl_j U \parall Q^j_{> N} \xi) \big) \Big\|_{C_t^0 \Cs_x^{1-\kfactors}([0, T] \times \T^2) \cap \Wc^{1+5\kappa, \kfactors}([0, T])} \lesssim N^{-\kappa} \|U\|_{\Unorm}^2 \|\xi\|_{\Cs_{tx}^{\xireg}((0, 1))}.\]
\end{lemma}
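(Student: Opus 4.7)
Plan. The aim is to bound $W := U^{-1}\paragtrsim(\partial_j U \parall Q^j_{>N}\xi)$ in a suitable pair of space-time Besov spaces, and then invoke the Schauder estimate with singular time weights (Lemma \ref{lemma:schauder-space-time-besov-space-singular-time-weights}). The factor $N^{-\kappa}$ will be extracted from $Q^j_{>N}$ by trading a little spatial regularity for $N$-decay. Applying Lemma \ref{prelim:lem-Q} spatially (combined with the trivial $N^{-1}$ bound on the symbol of $Q^j_{>N}$) and interpolating, one obtains
\[
\big\|Q^j_{>N}\xi\big\|_{\Cs_{tx}^{(-1/2-\kappa,\,-2\kappa)}((0,1))} \lesssim N^{-\kappa}\,\|\xi\|_{\Cs_{tx}^{\xireg}((0,1))}.
\]
Since $\partial_j U\in L_t^\infty\Cs_x^{-\kappa}$ by virtue of $\|U\|_{\Unorm}$, a spatial low$\times$high paraproduct estimate then yields, with $V' := \partial_j U\parall Q^j_{>N}\xi$,
\[
\|V'\|_{\Cs_{tx}^{(-1/2-\kappa,\,-3\kappa)}((0,1))} \lesssim N^{-\kappa}\,\|U\|_{\Unorm}\|\xi\|_{\Cs_{tx}^{\xireg}((0,1))}.
\]

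Next, we control the outer paraproduct $W = U^{-1}\paragtrsim V'$. Since the sum of spatial regularities $(1-\kappa)+(-3\kappa)=1-4\kappa>0$, both the spatial high$\times$high and high$\times$low paraproduct estimates apply with $U^{-1}\in L_t^\infty\Cs_x^{1-\kappa}$, producing
\[
\|W\|_{\Cs_{tx}^{(-1/2-\kappa,\,1-4\kappa)}((0,1))} \lesssim N^{-\kappa}\,\|U\|_{\Unorm}^2\|\xi\|_{\Cs_{tx}^{\xireg}((0,1))}.
\]
To reach a spatial regularity above $1+5\kappa$, which is needed for the $\Wc^{1+5\kappa,\kfactors}$ half of the target, we exploit the weighted regularity $U\in \Wc^{3/2-\kappa,\,1/2}$ from $\|U\|_{\Unorm}$. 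Interpolation gives $U\in \Wc^{1-\kappa+\delta,\,\delta/2}$ for $\delta\in[0,1/2]$, and the same outer paraproduct estimate then yields a bound on $W$ in the weighted space $\Cs_{tx}^{(-1/2-\kappa,\,1-4\kappa+\delta),\,\delta/2}$. Taking, for instance, $\delta=10\kappa$ gives spatial regularity $1+6\kappa$ at singular weight $5\kappa$, comfortably below $\kfactors=100\kappa$.

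Finally, the Schauder estimate with singular time weights (Lemma \ref{lemma:schauder-space-time-besov-space-singular-time-weights}) converts the two Besov bounds above into the desired control of $\Duh(W)$ in $C_t^0\Cs_x^{1-\kfactors}([0,T]\times\T^2)\cap\Wc^{1+5\kappa,\kfactors}([0,T])$, with the factor $N^{-\kappa}\|U\|_{\Unorm}^2\|\xi\|_{\Cs_{tx}^{\xireg}((0,1))}$ preserved throughout. The main technical obstacle is the careful parameter bookkeeping: one must verify that each paraproduct has a strictly positive sum of spatial regularities, that the temporal indices meet the constraints of the space-time paraproduct and Schauder estimates, and that the singular weight $\delta/2$ introduced via interpolation of $U$ is compatible with the target weight $\kfactors$. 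Crucially, no probabilistic resonance analysis is needed here, because $V'$ (though spatially rough) is paired against the spatially smooth $U^{-1}$, placing $W$ outside the range where renormalization would be required.
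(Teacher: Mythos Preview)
There is a genuine gap in the handling of temporal paraproducts. In the space-time framework of Section \ref{section:space-time-besov-space}, the spatial paraproduct $\parall=\parall_x$ implicitly includes all three temporal paraproducts $\parall_t,\parasim_t,\paragg_t$. Your claim that $V'=\partial_jU\parall Q^j_{>N}\xi\in\Cs_{tx}^{(-1/2-\kappa,-3\kappa)}$ follows from $\partial_jU\in L_t^\infty\Cs_x^{-\kappa}$ alone fails on the $\parasim_t$ piece: the high$\times$high temporal paraproduct estimate requires the sum of temporal regularities to be strictly positive, but here $0+(-\tfrac12-\kappa)<0$, and the sum over temporal scales $L'\sim L''\gtrsim L$ diverges. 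The same obstruction recurs for the outer paraproduct $U^{-1}\paragtrsim V'$ in step 3 and for your weighted bound in step 4 (the interpolated $U\in\Wc^{1+10\kappa,5\kappa}$ only yields, via Corollary \ref{cor:Wc-implies-besov}, temporal index $-5\kappa$, which is still negative).

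The paper resolves this by splitting each product according to temporal frequency and exploiting the additional temporal regularity hidden in $\|U\|_{\Unorm}$, specifically $U\in\Cs_{tx}^{(1/2+2\kappa,-5\kappa)}$ (see the remark after the definition of $\Unorm$). This puts the $\parasim_t$ piece of $V'$ into $\Cs_{tx}^{(\kappa,-1-20\kappa)}$ rather than $\Cs_{tx}^{(-1/2-\kappa,-20\kappa)}$: positive temporal regularity at the cost of very negative spatial regularity. The outer product must then handle these two components of $V'$ separately, and its own $\paragtrsim_t$ piece is treated via Lemma \ref{lemma:paraproduct-Wc-space-time-besov-space}, which crucially uses both $U\in\Wc^{\alpha,\theta}$ and $\partial_tU\in\Wc^{\alpha-2,\theta}$. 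The result lands in a sum of three target spaces $\Cs_{tx}^{(-1/2-\kappa,1-40\kappa)}+\Wc^{\kappa,40\kappa}+\Cs_{tx}^{(0,-40\kappa)}$, each of which is handled by a separate Schauder estimate. Your outline collapses this temporal decomposition into a single paraproduct estimate, which is not justified.
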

\begin{proof}
We have that
\[ \ptl_j U \parall Q^j_{> N} \xi = \ptl_j U \paransim_t \parall_x Q^j_{> N} \xi + \ptl_j U \parasim_t \parall_x Q^j_{> N} \xi. \]
By a paraproduct estimate and Lemma \ref{lemma:Q-gains-one-spatial-derivative-space-time-besov-space}, we have that
\[\begin{split}
\| \ptl_j U \paransim_t \parall_x Q^j_{> N} \xi \|_{\Cs_{tx}^{(-1/2-\kappa, -20\kappa)}} &\lesssim \|\ptl_j U\|_{L_t^\infty \Cs_x^{-\kappa}} \|Q^j_{> N} \xi\|_{\Cs_{tx}^{(-1/2-\kappa, -2 \kappa)}} \\
&\lesssim N^{-\kappa} \|U\|_{L_t^\infty \Cs_x^{1-\kappa}}  \|Q^j_{> N} \xi\|_{\Cs_{tx}^{(-1/2-\kappa, -\kappa)}} \\
&\lesssim N^{-\kappa} \|U\|_{L_t^\infty \Cs_x^{1-\kappa}}  \|\xi\|_{\Cs_{tx}^{\xireg}}. 
\end{split}\]
We also have that
\[\begin{split}
\| \ptl_j U \parasim_t \parall_x Q^j_{> N} \xi\|_{\Cs_{tx}^{(\kappa, -1-20\kappa)}} &\lesssim \|\ptl_j U\|_{\Cs_{tx}^{(1/2+2\kappa, -1-5\kappa)}} \|Q^j_{> N} \xi\|_{\Cs_{tx}^{(-1/2-\kappa, -2\kappa)}}  \\
&\lesssim N^{-\kappa}  \|U\|_{\Cs_{tx}^{(1/2+2\kappa, -5\kappa)}} \|\xi\|_{\Cs_{tx}^{\xireg}}.
\end{split}\]
Next, we may split
\[ U^{-1} \paragtrsim (\ptl_j U \parall Q^j_{> N} \xi) = U^{-1} \parall_t \paragtrsim_x (\ptl_j U \parall Q^j_{> N} \xi) + U^{-1} \paragtrsim_t \paragtrsim_x (\ptl_j U \parall Q^j_{> N} \xi). \]
By applying a paraproduct estimate to estimate the first term and Lemma \ref{lemma:paraproduct-Wc-space-time-besov-space} to estimate the second term, we obtain
\[
\begin{split}
\| U^{-1} \paragtrsim (\ptl_j &U \parall Q^j_{> N} \xi)\|_{\Cs_{tx}^{(-1/2-\kappa, 1 -40\kappa)} + \Wc^{\kappa, 40\kappa} +  \Cs_{tx}^{(0, -40\kappa)}} \lesssim \\
& \big(\|U\|_{L_t^0 \Cs_x^{1-\kappa}} + \|U\|_{\Wc^{1+40\kappa, 40\kappa}} + \|\ptl_t U\|_{\Wc^{-1+40\kappa, 40\kappa}} \big) \|\ptl_j U \parall Q^j_{> N} \xi\|_{\Cs_{tx}^{(-1/2-\kappa, -20\kappa)} + \Cs_{tx}^{(\kappa, -1-20\kappa)}} . 
\end{split}\]
The desired result now follows by combining the previous few estimates with various Schauder estimates. (note we can in fact control the $C_t^0 \Cs_x^{2-}$ norm, but this is not needed).
\end{proof}

The final intermediate result we will need before proving Lemma \ref{lemma:lo-hi} is the following low$\times$high commutator estimate.

\begin{lemma}\label{lemma:duhamel-lo-hi-commutator}
We have that
\[\begin{split}
\Big\| \Duh\big(U^{-1} \parall (\ptl_j U \parall Q^j_{> N} \xi)\big) - (U^{-1} \ptl_j U) &\parall Q^j_{> N} \big(\linear(t) - \Ht \linear(0)\big)\Big\|_{C_t^0 \Cs_x^{1-\kfactors}([0, T] \times \T^2) \cap \Wc^{1+5\kappa, \kfactors}([0, T])} \lesssim \\
&N^{-\kappa}  \|U\|_{\Unorm}^2 \big(\big\|\linear\big\|_{C_t^0 \Cs_x^{-\kappa}([0, 1] \times \T^2)} + \|\xi\|_{\Cs_{tx}^{\xireg}((0, 1))}\big).
\end{split}\]
\end{lemma}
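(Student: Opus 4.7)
The plan is to reduce the claim to two successive commutation operations: (i) a paraproduct reassociation that replaces $U^{-1} \parall (\ptl_j U \parall Q^j_{>N}\xi)$ by $(U^{-1}\ptl_j U) \parall Q^j_{>N}\xi$ modulo smoother remainders, and (ii) an application of Lemma \ref{lemma:space-time-integral-commutator-estimate} to commute $\Duh$ past the outer paraproduct. Once these two steps are carried out, the main term becomes $(U^{-1}\ptl_j U) \parall \Duh(Q^j_{>N}\xi)$; since $Q^j_{>N}$ and $\Duh$ are both Fourier multipliers they commute, and the identity $\linear(t) = \Ht \linear(0) + \Duh(\xi)(t)$ gives $\Duh(Q^j_{>N}\xi)(t) = Q^j_{>N}(\linear(t) - \Ht \linear(0))$, which is precisely the term being subtracted.

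For step (i), I would expand both sides via the Littlewood--Paley decomposition. The difference $U^{-1} \parall (\ptl_j U \parall Q^j_{>N}\xi) - (U^{-1}\ptl_j U) \parall Q^j_{>N}\xi$ reduces to terms in which $U^{-1}$ and $\ptl_j U$ sit at comparable or high relative frequencies, i.e., to a sum of expressions of schematic form $(U^{-1} \paragtrsim \ptl_j U) \parall_{\ast} Q^j_{>N}\xi$, with $\parall_{\ast}$ some spatial paraproduct. The low factor $U^{-1} \paragtrsim \ptl_j U$ enjoys the full product regularity $(1-\kappa) + (-\kappa) = 1 - 2\kappa$ by Lemma \ref{prelim:lem-para-product}, which comfortably supports multiplication against $Q^j_{>N}\xi$. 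Standard paraproduct estimates followed by a Schauder estimate (Lemma \ref{lemma:schauder-space-time-besov-space-singular-time-weights}) then give the required bound in $C_t^0 \Cs_x^{1-\kfactors} \cap \Wc^{1+5\kappa, \kfactors}$, with the factor $N^{-\kappa}$ produced by Lemma \ref{prelim:lem-Q} at the cost of losing $\kappa$ of spatial regularity on $\xi$.

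For step (ii), I would invoke Lemma \ref{lemma:space-time-integral-commutator-estimate} with $f = U^{-1}\ptl_j U$ and $g = Q^j_{>N}\xi$. The hypotheses require time regularity of $f$ in $\Wc^{\alpha, \nu}$ and of $\ptl_t f$ in $\Wc^{\alpha-2, \nu}$. These bounds follow by interpolating the $\Unorm$ control on $U$: from $U \in L_t^\infty \Cs_x^{1-\kappa} \cap \Wc^{3/2-\kappa, 1/2}$ and $\ptl_t U \in \Wc^{-1/2-\kappa, 1/2}$, interpolation (combined with boundedness of $U^{-1}$ and a product estimate) yields $U^{-1} \ptl_j U \in \Wc^{10\kappa, 10\kappa}$ and $\ptl_t (U^{-1}\ptl_j U) \in \Wc^{10\kappa-2, 10\kappa}$, each with norm $\lesssim \|U\|_{\Unorm}^2$. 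With these parameters, the integrability condition in Lemma \ref{lemma:space-time-integral-commutator-estimate} is satisfied for the target $\nu' = \kfactors$ and $\alpha' = 1+5\kappa$, and the $N^{-\kappa}$ factor is again obtained via Lemma \ref{prelim:lem-Q}.

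The main technical obstacle will be the careful bookkeeping of parameters in step (ii): we need to simultaneously control the output in $C_t^0 \Cs_x^{1-\kfactors}$ and in $\Wc^{1+5\kappa, \kfactors}$, which forces the time weight $\nu$ of $f = U^{-1}\ptl_j U$ to be taken small (on the order of $\kappa$) rather than at the natural value $1/2$ coming directly from $U \in \Wc^{3/2-\kappa, 1/2}$. The required interpolation is available precisely because the $\Unorm$ bound includes the zero-time-weight endpoint $L_t^\infty \Cs_x^{1-\kappa}$; without this endpoint the time-regularity hypothesis of Lemma \ref{lemma:space-time-integral-commutator-estimate} would fail for the target exponents.
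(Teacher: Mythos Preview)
Your order of operations differs from the paper's, and this creates a genuine obstruction in step (ii). To invoke Lemma~\ref{lemma:space-time-integral-commutator-estimate} with $f=U^{-1}\ptl_j U$ you need $\ptl_t(U^{-1}\ptl_j U)\in\Wc^{10\kappa-2,10\kappa}$. Expanding by the product rule produces the term $(\ptl_t U^{-1})(\ptl_j U)=-U^{-1}(\ptl_t U)U^{-1}(\ptl_j U)$, and the product $(\ptl_t U)(\ptl_j U)$ is not controllable from the $\Unorm$ data: at best $\ptl_t U\in\Wc^{-1/2-\kappa,1/2}$ and $\ptl_j U\in\Wc^{1/2-\kappa,1/2}$, whose spatial regularities sum to $-2\kappa<0$, so the high$\times$high part of the product is undefined. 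No interpolation within the available endpoints fixes this, since you cannot raise the spatial regularity of both factors past the threshold simultaneously. Your claim that ``interpolation \ldots yields $\ptl_t(U^{-1}\ptl_j U)\in\Wc^{10\kappa-2,10\kappa}$'' therefore fails.

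The paper avoids this by reversing your order: it applies the integral commutator \emph{twice}, first with $f=U^{-1}$ (for which $\ptl_t U^{-1}=-U^{-1}(\ptl_t U)U^{-1}$ is a product of factors with regularities $1-\kappa$, $-1/2-\kappa$, $1-\kappa$ summing to $>0$) and then with $f=\ptl_j U$ (for which $\ptl_t\ptl_j U=\ptl_j(\ptl_t U)$ is just a derivative, no product issue). Only after both commutations does the paper perform the reassociation $U^{-1}\parall(\ptl_j U\parall\,\cdot\,)\approx(U^{-1}\ptl_j U)\parall\,\cdot$, now applied to the smoothed object $Q^j_{>N}(\linear-\Ht\linear(0))\in C_t^0\Cs_x^{1-2\kappa}$, where it becomes a routine spatial commutator estimate. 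Your step (i) is essentially this last step moved to the front; the difficulty is that performing it first forces you to apply the integral commutator to the composite $U^{-1}\ptl_j U$, which lacks the required time-derivative control.
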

\begin{proof}
For brevity, let $F := \ptl_j U \parall Q^j_{> N} \xi$. From the proof of Lemma \ref{lemma:U-hi-lo-ptl-j-U-lo-hi-Q-xi}, we have that
\[ \|F\|_{\Cs_{tx}^{(-1/2-\kappa, -20\kappa)} + \Cs_{tx}^{(\kappa, -1-20\kappa)}} \lesssim N^{-\kappa} \big(\|U\|_{L_t^\infty \Cs_x^{1-\kappa}} + \|U\|_{\Cs_{tx}^{(1/2+2\kappa, -5\kappa)}}\big) \|\xi\|_{\Cs_{tx}^{\xireg}}. \]
Now, by several applications of Lemma \ref{lemma:space-time-integral-commutator-estimate} and the previous estimate, we have that
\[\begin{split}
\big\| \Duh(U^{-1} \parall F) &- U^{-1} \parall \Duh(F)\|_{C_t^0 \Cs_x^{2-40\kappa}} \lesssim \\
&\big(\|U\|_{\Wc^{1-\kappa, 0}} + \|U\|_{\Wc^{10\kappa, 10\kappa}} + \|\ptl_t U \|_{\Wc^{-1+10\kappa, 10\kappa}}\big) \|F\|_{\Cs_{tx}^{(-1/2-\kappa, -20\kappa)} + \Cs_{tx}^{(\kappa, -1-20\kappa)}} \lesssim \\
& N^{-\kappa}\big(\|U\|_{\Wc^{1-\kappa, 0}} + \|U\|_{\Wc^{10\kappa, 10\kappa}} + \|\ptl_t U \|_{\Wc^{-1+10\kappa, 10\kappa}}\big) \big(\|U\|_{L_t^\infty \Cs_x^{1-\kappa}} + \|U\|_{\Cs_{tx}^{(1/2+2\kappa, -5\kappa)}}\big) \|\xi\|_{\Cs_{tx}^{\xireg}}.
\end{split}\]
Next, by two applications of Lemma \ref{lemma:space-time-integral-commutator-estimate} (one which does not give a singular weight and one which does), we have that
\[\begin{split}
\big\|\Duh(\ptl_j U \parall &Q^j_{> N} \xi) - \ptl_j U \parall \Duh(Q^j_{> N} \xi) \big\|_{C_t^0 \Cs_x^{1-40\kappa} \cap \Wc^{1+5\kappa, 40\kappa}} \lesssim \\
&\big(\|\ptl_j U\|_{\Wc^{-\kappa, 0}} + \|\ptl_j U\|_{\Wc^{20\kappa, 20\kappa}} + \|\ptl_t \ptl_j U \|_{\Wc^{-2 + 20\kappa, 20\kappa}}\big) \|Q^j_{> N} \xi \|_{\Cs_{tx}^{(-1/2-\kappa, -2\kappa)}}. 
\end{split}\]
By Lemma \ref{lemma:Q-gains-one-spatial-derivative-space-time-besov-space}, we may further bound the right hand side above by
\[ N^{-\kappa} \big(\|U\|_{\Wc^{1-\kappa, 0}} + \|U\|_{\Wc^{20\kappa, 20\kappa}} + \|\ptl_t U \|_{\Wc^{-1+20\kappa, 20\kappa}}\big) \|\xi\|_{\Cs_{tx}^{\xireg}}.  \]
We thus have
\[\begin{split}
\big\| U^{-1} \parall \Duh(\ptl_j &U \parall Q^j_{> N} \xi) - U^{-1} \parall \big( \ptl_j U \parall \Duh(Q^j_{> N} \xi)\big) \big\|_{C_t^0 \Cs_x^{1-40\kappa} \cap \Wc^{1+5\kappa, 40\kappa}} \lesssim \\
&N^{-\kappa} \|U\|_{L_t^\infty \Cs_x^{1-\kappa}} \big(\|U\|_{\Wc^{1-\kappa, 0}} + \|U\|_{\Wc^{20\kappa, 20\kappa}} + \|\ptl_t U \|_{\Wc^{-1+20\kappa, 20\kappa}}\big) \|\xi\|_{\Cs_{tx}^{\xireg}}. 
\end{split}\]
Now, observe that $\Duh(Q^j_{> N} \xi) = Q^j_{> N} \Duh(\xi) = Q^j_{> N}\big(\linear(t) - \Ht \linear(0)\big)$. By a standard commutator estimate and Lemma \ref{prelim:lem-Q}, we have that
\[\begin{split} \Big\|&U^{-1} \parall \big(\ptl_j U \parall Q^j_{> N}\big(\linear(t) - \Ht \linear(0)\big)\big) - (U^{-1} \ptl_j U) \parall Q^j_{> N} \big(\linear(t) - \Ht \linear(0)\big)\Big\|_{C_t^0 \Cs_x^{1-\kfactors} \cap \Wc^{1+5\kappa, \kfactors}} \lesssim \\
& \|U\|_{L_t^\infty \Cs_x^{1-\kappa}} \big( \|U\|_{L_t^\infty \Cs_x^{1-\kappa}} + \|U\|_{\Wc^{1+20\kappa, 20\kappa}} \big) \|Q^j_{> N} \linear\|_{C_t^0 \Cs_x^{1-2\kappa}} \lesssim  N^{-\kappa} \|U\|_{\Unorm}^2 \|\linear\|_{C_t^0 \Cs_x^{-\kappa}}. \qedhere
\end{split}\]
The desired result now follows upon combining the previous few estimates.
\end{proof}

\begin{proof}[Proof of Lemma \ref{lemma:lo-hi}]
This now follows as a direct consequence of Lemmas \ref{lemma:lo-hi-intermediate}, \ref{lemma:U-hi-lo-ptl-j-U-lo-hi-Q-xi}, and \ref{lemma:duhamel-lo-hi-commutator}.
\end{proof}

\subsubsection{High$\times$high estimates}\label{section:hi-hi}

We next move on to the high$\times$high estimates. We first prove a general estimate which will apply to the term $U^s \parasim \xi$ (as well as other terms at the same regularity as $U^s$).

\begin{lemma}\label{lemma:Us-hi-hi-xi}
We have that
\[ \|V \parasim \xi\|_{\Cs_{tx}^{(-1/2-20\kappa, 5\kappa)}((0, 1)) \cap \Cs_{tx}^{(-1/2-\kappa, 15\kappa), 20\kappa}((0, 1)) + \Wc^{5\kappa, 1/2 + 20\kappa}([0, 1])} \lesssim \|V\|_{\Usnorm} \|\xi\|_{\Cs_{tx}^{\xireg}((0, 1))}.\]
Additionally, given a dyadic scale $N \in \dyadic$, we have that
\[ \|V \parasim P_{> N} \xi\|_{\Cs_{tx}^{(-1/2-20\kappa, 5\kappa)}((0, 1)) \cap \Cs_{tx}^{(-1/2-\kappa, 14\kappa), 20\kappa}((0, 1)) + \Wc^{4\kappa, 1/2 + 20\kappa}([0, 1])} \lesssim N^{-\kappa} \|V\|_{\Usnorm} \|\xi\|_{\Cs_{tx}^{\xireg}((0, 1))}. \]
\end{lemma}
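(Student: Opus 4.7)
The plan is to decompose $V \parasim \xi$ using space-time paraproducts in the temporal variable:
\[
V \parasim \xi = V \parall_t \parasim_x \xi + V \parasim_{tx} \xi + V \paragg_t \parasim_x \xi.
\]
The first summand has its temporal frequency inherited from $\xi$, and I will place it in $X$; the remaining two summands have their temporal frequency inherited from $V$, and I will place them in $Y$. The second claim, involving $V \parasim P_{>N} \xi$, will follow from the same scheme by trading $\kappa$ spatial regularity of $\xi$ for an $N^{-\kappa}$ decay factor, at the cost of reducing the targeted spatial regularities by $\kappa$.

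For the first summand $V \parall_t \parasim_x \xi$, standard space-time paraproduct estimates apply, with different interpolated norms on $V$ used for the two target components of $X$. For the non-singular-weighted component $\Cs_{tx}^{(-1/2-20\kappa, 5\kappa)}((0,1))$, interpolation between $\|V\|_{L_t^\infty \Cs_x^{1-\kappa}}$ and $\|V\|_{\Wc^{2+1/4-\kappa, 5/8}}$ followed by Corollary \ref{cor:Wc-implies-besov} gives $\|V\|_{\Cs_{tx}^{(-19\kappa, 1+37\kappa)}} \lesssim \|V\|_\Usnorm$, and the low$\times$high estimate then produces a bound in $\Cs_{tx}^{(-1/2 - 20\kappa, 36\kappa)}$, which embeds into the target. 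For the singular-weighted component $\Cs_{tx}^{(-1/2-\kappa, 15\kappa), 20\kappa}((0,1))$, the corresponding bound $\|V\|_{\Cs_{tx}^{(0, 1+20\kappa), 20\kappa}} \lesssim \|V\|_\Usnorm$ (again from interpolation) combined with the singular-weighted paraproduct estimate yields the required bound.

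For the second and third summands $V \parasim_{tx} \xi + V \paragg_t \parasim_x \xi$, Lemma \ref{lemma:paraproduct-Wc-space-time-besov-space} is the natural tool. With $\beta = \xireg$ so that $\beta_x + 2\beta_t = -2-3\kappa$, choosing $\alpha = 2+39\kappa$ satisfies $\gamma = 5\kappa < \alpha + \beta_x + 2\beta_t$, and interpolation gives $\|V\|_{\Wc^{2+39\kappa, 1/2+20\kappa}} \lesssim \|V\|_\Usnorm$. The main technical obstacle is obtaining the partner estimate $\|\ptl_t V\|_{\Wc^{39\kappa, 1/2+20\kappa}} \lesssim \|V\|_\Usnorm$ with the sharper singular weight $1/2 + 20\kappa$, rather than the $5/8$ weight directly available from $\Usnorm$. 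To overcome this, I would combine the $\Wc^{1/4-\kappa, 5/8}$ control on $\ptl_t V$ with the Bernstein-type identity $P_L^t V = L^{-1} P_L^t \ptl_t V$ at high temporal frequencies and the $L_t^\infty \Cs_x^{1-\kappa}$ control on $V$ at low temporal frequencies, yielding a pointwise-in-$(L,N,t)$ estimate on $\|P_{L,N} V(t)\|_{L_x^\infty}$ sharper than what either bound provides alone; carrying this through the proof of Lemma \ref{lemma:paraproduct-Wc-space-time-besov-space} (with the $\min(L^\theta, \min(|t|,1)^{-\theta})$ factor from Lemma \ref{lemma:f-Wc-ptl-t-f-Wc} split into different regimes of $L$ vs.\ $N^2$) then produces the $\Wc^{5\kappa, 1/2+20\kappa}$ bound. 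The summability of the resulting Littlewood--Paley sums over $(L_0, K)$ is delicate but works out because the crossover scale $L_0^* \sim N^2$ between the two bounds on $V$ is precisely at the parabolic threshold.
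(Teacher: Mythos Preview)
Your decomposition and overall approach coincide with the paper's: split $V \parasim_x \xi$ by temporal frequency, place the $\parall_t$ piece in the $\Cs_{tx}$-type spaces via space-time paraproduct estimates, and handle the $\parasim_t$ and $\paragg_t$ pieces together via Lemma~\ref{lemma:paraproduct-Wc-space-time-besov-space}. The paper's proof simply asserts that $V \in \Wc^{2+10\kappa, 1/2+10\kappa}$ and $\ptl_t V \in \Wc^{10\kappa, 1/2+10\kappa}$ and applies that lemma as a black box.

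Your identification of an obstacle in the $\ptl_t V$ bound is justified: the $\Usnorm$ norm controls $\ptl_t V$ only in $\Wc^{1/4-\kappa, 5/8}$, and there is no second endpoint on $\ptl_t V$ available to interpolate the weight down from $5/8$ to $1/2+10\kappa$ (lowering spatial regularity alone does not improve the singular weight). So the paper's one-line assertion does not follow from the abstract norm as stated. Your proposed remedy---opening up the proof of Lemma~\ref{lemma:paraproduct-Wc-space-time-besov-space} and feeding in the pointwise bound from Lemma~\ref{lemma:f-Wc-ptl-t-f-Wc} with different $(\alpha,\theta)$ in the regimes $L\lesssim N^2$ (where the $\min(1,L^{-1}N^2)$ factor is harmless, so the interpolated $V$-bound suffices) and $L\gtrsim N^2$ (where the $\ptl_t V \in \Wc^{1/4-\kappa,5/8}$ bound supplies the $L^{-1}N^2$ factor, and the surplus spatial regularity $1/4-\kappa$ absorbs the excess weight via $L\gtrsim N^2$)---is a valid and more honest route for abstract $V$.

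A shorter alternative, closer in spirit to the paper, is to observe that in every application of this lemma (to $V=U^s$ and to $V=U^r-\bar U^r$), Lemma~\ref{lemma:U-r-U-s-regularity} and Lemma~\ref{lemma:U-r-bar-U-r-difference} actually give $\ptl_t V \in \Wc^{10\kappa,\theta}$ with a small weight $\theta \ll 1/2$ directly, so the paper's asserted bound \emph{is} available for those specific $V$'s even though it does not follow from $\|V\|_{\Usnorm}$ alone.
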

\begin{proof}
We decompose
\[ V \parasim \xi = V \parall_t \parasim_x \xi + V \paragtrsim_t \parasim_x \xi.  \]
For the first term, we apply paraproduct estimates to obtain
\[ \|V \parall_t \parasim_x \xi \|_{\Cs_{tx}^{(-1/2-20\kappa, 5\kappa)} \cap \Cs_{tx}^{(-1/2-\kappa, 15\kappa), 20\kappa}} \lesssim \|V\|_{\Cs_{tx}^{(-10\kappa, 1+10\kappa)} \cap \Cs_{tx}^{(0, 1+20\kappa), 20\kappa}} \|\xi\|_{\Cs_{tx}^{\xireg}}.\]
For the second term, we combine Lemma \ref{lemma:paraproduct-Wc-space-time-besov-space} with the fact that $V \in \Wc^{2+10\kappa, 1/2 + 10\kappa}$ and $\ptl_t V \in \Wc^{10\kappa, 1/2 + 10\kappa}$ to obtain
\[ \|V \paragtrsim_t \parasim_x \xi\|_{\Wc^{5\kappa, 1/2 + 20\kappa}} \lesssim \big( \|U^s\|_{\Wc^{2+10\kappa, 1/2 + 10\kappa}} + \|\ptl_t V\|_{\Wc^{10\kappa, 1/2 + 10\kappa}} \big) \|\xi\|_{\Cs_{tx}^{\xireg}}. \]
This shows the first estimate. The second estimate follows similarly, except we give up a single $\kappa$ in spatial regularity to gain the $N^{-\kappa}$ term.
\end{proof}

From Lemma \ref{lemma:Us-hi-hi-xi}, we may obtain the following result. 

\begin{lemma}[High$\times$high estimates with smoother part]\label{lemma:hi-hi-smooth}
We have that
\[\begin{split}
\Big\| \Duh\Big(U^{-1} P_{> N} (V \parasim \xi)\Big)\Big\|_{C_t^0 \Cs_x^{1-\kfactors}([0, T] \times \T^2) \cap \Wc^{1+5\kappa, \kfactors}([0, T])} &\lesssim N^{-\kappa} \|U\|_{\Unorm} \|V\|_{\Usnorm} \|\xi\|_{\Cs_{tx}^{\xireg}((0, 1))}, \\
\Big\| \Duh\Big(U^{-1} (V \parasim P_{> N} \xi)\Big)\Big\|_{C_t^0 \Cs_x^{1-\kfactors}([0, T] \times \T^2) \cap \Wc^{1+5\kappa, \kfactors}([0, T])} &\lesssim N^{-\kappa} \|U\|_{\Unorm} \|V\|_{\Usnorm} \|\xi\|_{\Cs_{tx}^{\xireg}((0, 1))}.
\end{split}\]
\end{lemma}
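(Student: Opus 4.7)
The plan is to combine Lemma \ref{lemma:Us-hi-hi-xi}, which controls the high-high paraproduct $V \parasim \xi$ in a suitable Besov-plus-$\Wc$ space, with the Schauder estimate Lemma \ref{lemma:schauder-U-V} applied to $U^{-1}$ multiplied by that product. Both estimates of the target lemma will be proved by essentially the same chain of reasoning.

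For the second bound, I would apply the second half of Lemma \ref{lemma:Us-hi-hi-xi} directly to write
\[ \|V \parasim P_{>N} \xi\|_{X} \lesssim N^{-\kappa} \|V\|_{\Usnorm} \|\xi\|_{\Cs_{tx}^{\xireg}((0,1))}, \]
with $X = \Cs_{tx}^{(-1/2-20\kappa,\, 5\kappa)}((0,1)) \cap \Cs_{tx}^{(-1/2-\kappa,\, 14\kappa),\, 20\kappa}((0,1)) + \Wc^{4\kappa,\, 1/2+20\kappa}([0,1])$. I would then feed this into Lemma \ref{lemma:schauder-U-V} with $\alpha = \kfactors = 100\kappa$. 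For the Besov component, the parameter choice $\varep_1 = 20\kappa$, $\varep_2 = 0$, $\varep_3 = \kappa$, $\varep_4 = 14\kappa$, $\varep_5 = 20\kappa$ satisfies all the conditions listed in that lemma; the crucial condition $\varep_4 - 2\varep_3 \geq 10\kappa$ becomes $12\kappa \geq 10\kappa$, and the remaining inequalities hold for $\kappa$ sufficiently small. For the $\Wc$ component, I would invoke the second half of Lemma \ref{lemma:schauder-U-V} with $\delta_1 = 4\kappa$, $\delta_2 = 20\kappa$, which again fits all the hypotheses with room to spare. The conclusion is the desired bound in $C_t^0 \Cs_x^{1-\kfactors} \cap \Wc^{1+5\kappa,\, \kfactors}$.

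For the first bound, I would begin with the first half of Lemma \ref{lemma:Us-hi-hi-xi} to place $V \parasim \xi$ in the slightly better space $\Cs_{tx}^{(-1/2-20\kappa,\, 5\kappa)}((0,1)) \cap \Cs_{tx}^{(-1/2-\kappa,\, 15\kappa),\, 20\kappa}((0,1)) + \Wc^{5\kappa,\, 1/2+20\kappa}([0,1])$, without any $N^{-\kappa}$ factor. The key observation is that applying $P_{>N}$ to any of these spaces gains a factor of $N^{-\kappa}$ at the cost of one $\kappa$ unit of spatial regularity (this is a direct Littlewood--Paley computation, with a parallel statement for the singular-weight and $\Wc$ spaces). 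Consequently $P_{>N}(V \parasim \xi)$ lies in exactly the space $X$ appearing above, with the $N^{-\kappa}$ factor, and the Schauder argument proceeds as before.

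I do not anticipate any real obstacle here: the result is a mechanical assembly of the two previously established estimates, and the numerology encoded in the $\varep_j$ and $\delta_j$ conditions of Lemma \ref{lemma:schauder-U-V} was designed precisely to accommodate outputs like those furnished by Lemma \ref{lemma:Us-hi-hi-xi}. The only mild care required is to verify that the $\kappa$ loss from $P_{>N}$ does not push the resulting Besov spatial regularity below the threshold $\varep_4 - 2\varep_3 \geq 10\kappa$, which, as recorded above, it does not.
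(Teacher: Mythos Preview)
Your proposal is correct and follows essentially the same approach as the paper: combine Lemma \ref{lemma:Us-hi-hi-xi} (using $P_{>N}$ to trade one $\kappa$ of spatial regularity for the factor $N^{-\kappa}$) with the Schauder estimate Lemma \ref{lemma:schauder-U-V}. The paper's proof is in fact terser than yours, simply recording the intermediate bound on $P_{>N}(V\parasim\xi)$ and then invoking Lemma \ref{lemma:schauder-U-V}, whereas you additionally verify the parameter conditions explicitly.
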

\begin{proof}
By Lemma \ref{lemma:Us-hi-hi-xi}, we have that
\[ \|P_{> N} (V \parasim \xi)\|_{\Cs_{tx}^{(-1/2-20\kappa, 4\kappa)} \cap \Cs_{tx}^{(-1/2-\kappa, 14\kappa), 20\kappa} + \Wc^{4\kappa, 1/2+20\kappa}} \lesssim N^{-\kappa} \|V\|_{\Usnorm}.  \]
The first estimate now follows by Lemma \ref{lemma:schauder-U-V}. The second estimate follows similarly.
\end{proof}

The last estimate we need is the high$\times$high estimate with $U^r$. Let 
\[ \bar{U}^r := \mrm{ad}\big(\ptl_j U \parall \Duh\big(\linear[][r][j]\big)\big) \paragg U. \]
By several applications of the Duhamel low$\times$high commutator estimates Lemma \ref{prelim:lem-general-integral-commutator} and Corollary \ref{cor:general-integral-commutator}, we obtain the following result. The proof is omitted.

\begin{lemma}\label{lemma:U-r-bar-U-r-difference}
We have that
\[ \|U^r - \bar{U}^r\|_{\Wc^{2-20\kappa, 0}([0, T]) \cap \Wc^{2+2\kappa, 20\kappa}([0, T])} \lesssim \|U\|_{\Unorm} \big\|\linear\big\|_{C_t^0 \Cs_x^{-\kappa}([0, T] \times \T^2)},\]
\[ \|\ptl_t U^r - \ptl_t \bar{U}^r \|_{\Wc^{-20\kappa, 0}([0, T]) \cap \Wc^{2\kappa, 20\kappa}([0, T])} \lesssim \|U\|_{\Unorm} \big\|\linear\big\|_{C_t^0 \Cs_x^{-\kappa}([0, T] \times \T^2)}. \]
\end{lemma}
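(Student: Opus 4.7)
The plan is to express $U^r$ and $\bar U^r$ so that their (sign-adjusted) difference equals a sum of Duhamel/paraproduct commutators, each of which is smoother by roughly one spatial derivative. This is what promotes the naive $C^0_t\Cs_x^{2-4\kappa}$ regularity of the individual objects (Lemma \ref{lemma:U-r-U-s-regularity}) to the $\Wc^{2+2\kappa,\,20\kappa}$ regularity claimed for the difference. Since $\mrm{ad}\colon\frkg\to\mrm{End}(\frkg)$ is a pointwise linear map it commutes with $\Duh$, so, writing $G_j:=\ptl_jU\parall \linear[][r][j]$, one may (up to sign) algebraically rearrange
\begin{equation*}
U^r - \bar U^r \;=\; \bigl[\mrm{ad}(\Duh(G_j))\paragg U - \Duh\bigl(\mrm{ad}(G_j)\paragg U\bigr)\bigr] + \mrm{ad}\bigl(\Duh(\ptl_jU\parall\linear[][r][j]) - \ptl_jU\parall\Duh(\linear[][r][j])\bigr)\paragg U.
\end{equation*}
The first bracket is a commutator of $\Duh$ with the outer high$\times$low paraproduct $(\,\cdot\,)\paragg U$; the second is precisely a Duhamel/low$\times$high commutator in the style of Lemma \ref{prelim:lem-trilinear-integral-commutator}.

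First I would dispatch the inner commutator via Corollary \ref{cor:general-integral-commutator}: feeding in $\ptl_jU\in L^\infty_t\Cs_x^{-\kappa}\cap\Wc^{1/2-\kappa,1/2}$ (controlled by $\|U\|_{\Unorm}$) and $\linear[][r][j]\in C^0_t\Cs_x^{-\kappa}$ yields the bound $\Duh(\ptl_jU\parall\linear[][r][j]) - \ptl_jU\parall\Duh(\linear[][r][j])\in\Wc^{2+2\kappa,\,20\kappa}$ with the desired right-hand side, and pairing with $U$ through $\paragg$ costs nothing in spatial regularity (only $\|U\|_{L^\infty_t\Cs_x^{1-\kappa}}$). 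Next I would handle the outer commutator via Lemma \ref{prelim:lem-general-integral-commutator}: commuting $\Duh$ past $(\,\cdot\,)\paragg U$ yields a comparable gain, crucially using $\|\ptl_tU\|_{\Wc^{-1/2-\kappa,1/2}}$ from $\Unorm$ to absorb the cost of moving the time derivative across the paraproduct. Summing the two contributions produces the $\Wc^{2+2\kappa,\,20\kappa}$ bound on $U^r-\bar U^r$. The $\Wc^{2-20\kappa,0}$ bound is strictly weaker than the individual bounds on $U^r$ (Lemma \ref{lemma:U-r-U-s-regularity}) and on $\bar U^r$ (a direct paraproduct estimate), so it follows immediately by the triangle inequality.

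For the bound on $\ptl_t(U^r-\bar U^r)$, I would differentiate the decomposition in time, substituting $\ptl_tU^r = -\mrm{ad}(G_j)\paragg U + (\Delta-1)U^r$ (from the definition of $U^r$ as a Duhamel integral) and the product-rule expression for $\ptl_t\bar U^r$. Modulo algebra this reduces to the same two commutator identities, now with one fewer spatial derivative of gain, which is exactly consistent with the $\Wc^{2\kappa,\,20\kappa}$ target for the time derivative; and the $\Wc^{-20\kappa,0}$ bound again follows directly from the individual bounds.

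The main obstacle is bookkeeping: each commutator comes with a window of admissible regularity losses, temporal gains, and permissible singular time weights, and one must verify that every output lands in exactly the advertised space. In particular, the control $\|\ptl_tU\|_{\Wc^{-1/2-\kappa,1/2}}$ built into $\Unorm$ is essential to close the outer commutator, and it is this control that is ultimately responsible for the appearance of the $20\kappa$ singular time weight on the right-hand side.
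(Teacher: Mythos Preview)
Your approach is essentially the one the paper indicates: it omits the proof entirely, saying only that the result follows ``by several applications of the Duhamel low$\times$high commutator estimates Lemma~\ref{prelim:lem-general-integral-commutator} and Corollary~\ref{cor:general-integral-commutator}.'' Your decomposition into an outer commutator (moving $\Duh$ past $(\,\cdot\,)\paragg U$) and an inner commutator (moving $\Duh$ past $\ptl_jU\parall(\,\cdot\,)$) is exactly the structure those two lemmas are designed to handle, and your remark that the unweighted $\Wc^{2-20\kappa,0}$ bound follows directly from the individual estimates is correct.

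Two small remarks. First, for the non-time-differentiated bound on the inner commutator you want Lemma~\ref{prelim:lem-general-integral-commutator} rather than Corollary~\ref{cor:general-integral-commutator}; the corollary is what gives the $\partial_t$ version. Second, the outer commutator is formally high$\times$low rather than low$\times$high, but as you implicitly use, $f\paragg g$ and $g\parall f$ carry the same frequency structure, so Lemma~\ref{prelim:lem-general-integral-commutator} applies with $U$ in the role of the low-frequency factor $f$; the time-H\"older input $\|f\|_{\CWc}$ that the lemma requires is recovered from $\|U\|_{\Unorm}$ via the control on $\partial_tU$, exactly as you say.
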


Recall that we assumed that $U$ has been extended to all of $\R$. We also extend $\bar{U}^r$ to $\R$ as follows. First, let $V_j$ be the extension of $\ptl_j U$ to $\R$ defined by $V_j(t) := (\ptl_j U)(T)$ for $t > T$ and $V_j(0) := (\ptl_j U)(0)$ for $t < 0$. Extend $\Duh(\linear)$ to $\R$ by setting $\Duh(\linear)(t) := \Duh(\linear)(1)$ for $t > 1$ and $\Duh(\linear)(t) := \Duh(\linear)(0) = 0$ for $t < 0$. Then, define
\[ \bar{U}^r(t) := \ad\big(V_j(t) \parall \Duh\big(\linear[][r][j]\big)(t)\big) \paragg U(t), ~~ t \in \R.\]
(Observe that since $\Duh(\linear)(0) = 0$, we in fact have that $\bar{U}^r(t) = 0$ for $t < 0$.)
Note that with this extension, we have that
\[ \|\Duh\big(\linear\big)\|_{C_t^0 \Cs_x^{2-2\kappa}(\R \times \T^2)} = \|\Duh(\linear)\|_{C_t^0 \Cs_x^{2-2\kappa}([0, 1] \times \T^2)}. \]
The reason for working with $\bar{U}^r$ instead of $U^r$ is that we may prove the following estimate for $\bar{U}^r \parasim \xi$.

\begin{lemma}\label{lemma:U-r-hi-hi-noise-estimate}
We have that
\[\begin{split}
\|\bar{U}^r \parasim \xi&\|_{\Cs_{tx}^{(-1/2-\kappa,-20\kappa)}((0, 1)) \cap \Cs_{tx}^{(-1/2-\kappa, 15\kappa), 20\kappa}((0, 1))+ \Wc^{\kappa, 1/2+20\kappa}([0, T])} \lesssim \\
&\|U\|_{\Unorm}^2 \sum_{M, M'} \Big\|\big(P_{M'} P_M \Duh\big(\linear\big)\big) \parasim \xi\Big\|_{\Cs_{tx}^{(-1/2-\kappa, 1-\kappa)}((0, 1))} + \|U\|_{\Unorm}^2 \big\|\linear\big\|_{C_t^0 \Cs_x^{-\kappa}} \|\xi\|_{\Cs_{tx}^{\xireg}((0, 1))}. 
\end{split}\]
Similarly, given a dyadic scale $N$, we have that
\[\begin{split}
\|\bar{U}^r \parasim P_{> N} \xi&\|_{\Cs_{tx}^{(-1/2-\kappa,-20\kappa)}((0, 1)) \cap \Cs_{tx}^{(-1/2-\kappa, 15\kappa), 20\kappa}((0, 1))+ \Wc^{\kappa, 1/2+20\kappa}([0, T])} \lesssim \\
& N^{-\kappa}\|U\|_{\Unorm}^2 \sum_{M, M'} \Big\|\big(P_{M'} P_M \Duh\big(\linear\big)\big) \parasim \xi\Big\|_{\Cs_{tx}^{(-1/2-\kappa, 1-\kappa)}((0, 1))} + N^{-\kappa}\|U\|_{\Unorm}^2 \big\|\linear\big\|_{C_t^0 \Cs_x^{-\kappa}} \|\xi\|_{\Cs_{tx}^{\xireg}((0, 1))} . 
\end{split}\]
\end{lemma}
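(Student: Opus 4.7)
The plan is to isolate the stochastic resonance $\Duh\big(\linear[][r][j]\big) \parasim \xi$ inside $\bar{U}^r \parasim \xi$ via two successive paraproduct commutator reductions, then combine the assumed control on the resonance with deterministic bounds on $U$. The first estimate will be the main case; the second will follow from the same argument after replacing $\xi$ with $P_{>N}\xi$, at each step giving up a small amount of spatial regularity on $P_{>N}\xi$ in exchange for a factor $N^{-\kappa}$.

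Writing $F_j := \mrm{ad}\big(\ptl_j U \parall \Duh(\linear[][r][j])\big)$, we have $\bar{U}^r = F_j \paragg U$. Since the high spatial frequency in $F_j \paragg U$ sits on $F_j$, and within $F_j$ it sits on $\Duh(\linear[][r][j])$, one expects the heuristic
\[ \bar{U}^r \parasim \xi \approx U \cdot \mrm{ad}(\ptl_j U) \cdot \big(\Duh(\linear[][r][j]) \parasim \xi\big) \]
to hold modulo smoother remainders that may be handled deterministically.

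The first reduction will apply the trilinear commutator estimate (a space-time analogue of Lemma \ref{prelim:lem-lh-hh}) to $(F_j \paragg U) \parasim \xi = (U \parall F_j) \parasim \xi$, yielding $U \cdot (F_j \parasim \xi)$ plus a remainder that gains regularity and lies in the target space with a bound proportional to $\|U\|_{\Unorm} \|F_j\|_{\cdot} \|\xi\|_{\Cs_{tx}^{\xireg}((0,1))}$. The second reduction will apply the same commutator estimate to $F_j \parasim \xi$, yielding $\mrm{ad}(\ptl_j U) \cdot \big(\Duh(\linear[][r][j]) \parasim \xi\big)$ plus a similar controllable remainder. A dyadic decomposition $\Duh(\linear[][r][j]) = \sum_{M,M'} P_{M'} P_M \Duh(\linear[][r][j])$, together with the assumed bound on $(P_{M'} P_M \Duh(\linear)) \parasim \xi$ in $\Cs_{tx}^{(-1/2-\kappa, 1-\kappa)}$ and the direct paraproduct bound $\|F_j\|_{L_t^\infty \Cs_x^{-2\kappa}} \lesssim \|U\|_{\Unorm} \|\linear\|_{C_t^0 \Cs_x^{-\kappa}}$, will then yield the two terms on the right-hand side of the desired inequality.

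The main obstacle will be tracking the space-time bookkeeping across the three target norms $\Cs_{tx}^{(-1/2-\kappa,-20\kappa)}$, $\Cs_{tx}^{(-1/2-\kappa, 15\kappa), 20\kappa}$, and $\Wc^{\kappa, 1/2+20\kappa}$ simultaneously. In particular, controlling the singular-time-weighted and $\Wc$-valued pieces requires exploiting the full $\Unorm$ regularity of $U$, namely the interplay between $U \in \Wc^{3/2-\kappa, 1/2}$ and $\ptl_t U \in \Wc^{-1/2-\kappa, 1/2}$, together with the space-time paraproduct and embedding results of Lemma \ref{lemma:paraproduct-Wc-space-time-besov-space} and Corollary \ref{cor:Wc-implies-besov}. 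The argument will parallel the bookkeeping of the proof of Lemma \ref{lemma:lo-hi} in Section \ref{section:lo-hi}.
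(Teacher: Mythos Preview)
Your proposal is correct and follows essentially the same two-step commutator reduction as the paper. The only difference is one of packaging: rather than invoking a space-time analogue of Lemma~\ref{prelim:lem-lh-hh} (which the paper has not stated), the paper carries out the reduction by hand at the dyadic level. For the inner step it writes, with $f=\ptl_j U$, $g=\Duh(\linear)$, $h=\xi$,
\[
(P_M(f\parall g))\parasim h=\sum_{N\sim N'} P_N P_M(f\parall g)\,P_{N'}h,
\]
splits $P_N P_M(f\parall g)=f\parall P_N P_M g+R_{N,M}(f,g)$, handles the main term via the stochastic object $(P_{M'}P_M g)\parasim h$, and bounds $R_{N,M}$ using the $P_N$-commutator estimate Lemma~\ref{lemma:P-N-lo-hi-commutator} together with Lemma~\ref{lemma:paraproduct-Wc-space-time-besov-space} for the temporal high$\times$high piece; the outer step (peeling off $U$) is then the same argument with $f=U$. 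This explicit decomposition is what naturally produces the double dyadic sum $\sum_{M,M'}$ in the statement. Your packaged trilinear-commutator route would in the end require the same dyadic bookkeeping to land in the intersection-plus-sum target space, so there is no substantive gap; the paper simply does it without first isolating a standalone space-time commutator lemma.
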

\begin{proof}
We first show that
\beq\label{eq:first-commutator-estimate}\begin{split} \sum_M \Big\| \ad\Big(P_M&\big(\ptl_j U \parall \Duh(\linear)\big)\Big) \parasim \xi \Big\|_{\Cs_{tx}^{(-1/2-\kappa, -20\kappa)} \cap \Cs_{tx}^{(-1/2-\kappa, 20\kappa), 20\kappa} + \Wc^{\kappa, 1/2+ 20\kappa}} \lesssim \\
&\|U\|_{\Unorm} \sum_{M, M'} \Big\|\big(P_{M'} P_M \Duh\big(\linear\big)\big) \parasim \xi\Big\|_{\Cs_{tx}^{(-1/2-\kappa, 1-\kappa)}} + \|U\|_{\Unorm} \big\|\linear\big\|_{C_t^0 \Cs_x^{-\kappa}} \|\xi\|_{\Cs_{tx}^{\xireg}}. 
\end{split}\eeq
For brevity, let $f = \ptl_j U$, $g = \Duh(\linear)$, $h = \xi$. Then schematically, we want to bound $(P_M(f \parall g)) \parasim h$ (we can ignore the $\ad$ by expanding $f, g, h$ into bases and working with scalar-valued functions). First, we may write
\[\begin{split}
(P_M(f \parall g)) \parasim h &= \sum_{N \sim N'} P_N P_M (f \parall g) P_{N'} h \\
&= \sum_{N \sim N'} (f \parall P_{N} P_M g) P_{N'} h + R_{N, M} (f, g) P_{N'} h \\
&= \sum_{N \sim N'} \sum_{M'} (P_{\ll M'} f) (P_{M'} P_N P_M g) (P_{N'} h) + \sum_{N \sim N'} R_{N, M}(f, g) P_{N'} h \\
&=: I_1(M) + I_2(M),
\end{split}\]
where $R_{N, M} (f, g) := P_N P_M ( f \parall g) - f \parall (P_N P_M g)$. We have that
\[ I_1(M) = \sum_{M'} P_{\ll M'} f \sum_{N \sim N'} (P_{M'} P_N P_M g) P_{N'} h = \sum_{M'} (P_{\ll M'} f) ((P_{M'} P_M g) \parasim h). \]
Using that $f \in L_t^\infty \Cs_x^{-\kappa} \cap \Wc^{20\kappa, 20\kappa} \cap \Wc^{1+20\kappa, 1/2+20\kappa}$, $\ptl_t f \in \Wc^{-1+20\kappa, 1/2+20\kappa}$, and $(P_M P_{M'} g) \parasim h \in \Cs_{tx}^{(-1/2-\kappa, 1-\kappa)}$, we obtain (applying Corollary \ref{cor:Wc-implies-besov} to obtain $\|f\|_{\Cs_{tx}^{(0, 20\kappa), 20\kappa}} \lesssim \|f\|_{\Wc^{20\kappa, 20\kappa}}$ for the first estimate and applying Lemma \ref{lemma:paraproduct-Wc-space-time-besov-space} for the second estimate)
\[ \|(P_{\ll M'} f) \paransim_t ((P_{M'} P_M g) \parasim h)\|_{\Cs_{tx}^{(-1/2-\kappa, -\kappa)} \cap \Cs_{tx}^{(-1/2-\kappa, 20\kappa), 20\kappa}} \lesssim  \|f\|_{L_t^\infty \Cs_x^{-\kappa} \cap \Wc^{20\kappa, 20\kappa}} \|(P_{M'} P_M g) \parasim h\|_{\Cs_{tx}^{(-1/2-\kappa, 1-\kappa)}},  \]
\[  \|(P_{\ll M'} f) \parasim_t ((P_{M'} P_M g) \parasim h)\|_{\Wc^{\kappa, 1/2 + 20\kappa}} \lesssim \big(\|f\|_{\Wc^{1+20\kappa, 1/2+20\kappa}} + \|\ptl_t f\|_{\Wc^{1+20\kappa, 1/2+20\kappa}}\big) \|(P_{M'} P_M g) \parasim h\|_{\Cs_{tx}^{(-1/2-\kappa, 1-\kappa)}}. \]
Combining, we obtain
\[ \sum_M \|I_1(M)\|_{\Cs_{tx}^{(-1/2-\kappa, -20\kappa)} \cap \Cs_{tx}^{(-1/2-\kappa, 20\kappa), 20\kappa} + \Wc^{\kappa, 1/2+ 20\kappa}} \lesssim \|U\|_{\Unorm} \sum_{M, M'} \|(P_{M'} P_M g) \parasim h\|_{\Cs_{tx}^{(-1/2-\kappa, 1-\kappa)}}.\]
To bound the second term, fix $M \sim N \sim N'$. Note that
\[ R_{N, M}(f, g) = P_N P_M (f \parall g) - f \parall (P_N P_M g) = P_N \big(P_M(f \parall g) - f \parall (P_M g) \big) + P_N(f \parall (P_M g)) - f \parall (P_N P_M g). \]
Since $f \in L_t^\infty \Cs_x^{-\kappa} \cap \Wc^{20\kappa, 20\kappa}$, $\ptl_t f \in \Wc^{-2+20\kappa, 20\kappa}$, $g \in L_t^\infty \Cs_x^{2-2\kappa}$, $\ptl_t g \in L_t^\infty \Cs_x^{-2\kappa}$, we may obtain by Lemma \ref{lemma:P-N-lo-hi-commutator}
\[ \|R_{N, M} (f, g)\|_{L_t^\infty \Cs_x^{2-5\kappa} \cap \Wc^{2+10\kappa, 20\kappa}} \lesssim M^{-\kappa} \|f\|_{\Unorm} \|g\|_{L_t^\infty \Cs_x^{2-2\kappa}}, \]
\[ \|\ptl_t R_{N, M} (f, g)\|_{\Wc^{10\kappa, 20\kappa}} \lesssim M^{-\kappa} \|f\|_{\Unorm} \big(\|g\|_{L_t^\infty \Cs_x^{2-2\kappa}} + \|\ptl_t g\|_{L_t^\infty \Cs_x^{-2\kappa}}\big). \]
By a paraproduct estimate, we have that (note that $R_{N, M}(f, g)$ is localized to spatial frequency $M$)
\[\begin{split}
\|R_{N, M}(f, g) \paransim_t P_{N'} h\|_{\Cs_{tx}^{(-1/2-\kappa, 1/4)}} &\lesssim \|R_{N, M}(f, g)\|_{L_t^\infty \Cs_x^{1/4}} \|P_{N'} h\|_{\Cs_{tx}^{(-1/2-\kappa, 1/4)}} \\
&\lesssim  M^{-(2-5\kappa - 1/4)}  (N')^{1/4 + 1 + \kappa}  \|R_{N, M}(f, g)\|_{L_t^\infty \Cs_x^{2-5\kappa}} \|P_{N'} h\|_{\Cs_{tx}^{(-1/2-\kappa, -1-\kappa)}} \\
&\lesssim M^{-\kappa}  \|R_{N, M}(f, g)\|_{L_t^\infty \Cs_x^{2-5\kappa}} \|P_{N'} h\|_{\Cs_{tx}^{(-1/2-\kappa, -1-\kappa)}} .
\end{split}\]
Next, for the high$\times$high time paraproduct, by Lemma \ref{lemma:paraproduct-Wc-space-time-besov-space} we have that
\[\begin{split}
\|R_{N, M}(f, g) \parasim_t P_{N'} h\|_{\Wc^{\kappa, 20\kappa}} &\lesssim \big(\|R_{N, M}\|_{\Wc^{1+6\kappa, 20\kappa}} + \|\ptl_t R_{N, M}\|_{\Wc^{-1+ 6\kappa, 20\kappa}} \big) \|P_{N'} h\|_{\Cs_{tx}^{(-1/2-\kappa, 0)}} \\
&\lesssim M^{-(1+4\kappa)} (N')^{1+\kappa}  \big(\|R_{N, M}\|_{\Wc^{2+10\kappa, 20\kappa}} + \|\ptl_t R_{N, M}\|_{\Wc^{10\kappa, 20\kappa}} \big) \|P_{N'} h\|_{\Cs_{tx}^{(-1/2-\kappa, -1-\kappa)}} \\
&\lesssim M^{-\kappa} \big(\|R_{N, M}\|_{\Wc^{2+10\kappa, 20\kappa}} + \|\ptl_t R_{N, M}\|_{\Wc^{10\kappa, 20\kappa}} \big) \|P_{N'} h\|_{\Cs_{tx}^{(-1/2-\kappa, -1-\kappa)}} .
\end{split}\]
Combining the previous estimates, we obtain (note that $\ptl_t \Duh(\linear) = \Delta \Duh(\linear) + \linear$)
\[ \sum_M \|I_2(M)\|_{\Cs_{tx}^{(-1/2-\kappa, 1/4)} + \Wc^{\kappa, 20\kappa}} \lesssim \|U\|_{\Unorm} \big(\|\Duh(\linear)\|_{L_t^\infty \Cs_x^{2-2\kappa}} + \|\linear\|_{L_t^\infty \Cs_x^{-2\kappa}}\big) \|\xi\|_{\Cs_{tx}^{\xireg}}. \]
By Schauder, we have that $\|\Duh(\linear)\|_{L_t^\infty \Cs_x^{2-2\kappa}} \lesssim \|\linear\|_{C_t^0 \Cs_x^{-\kappa}}$. This shows the estimate \eqref{eq:first-commutator-estimate}. Now, to control 
\[ \bar{U}^r \parasim \xi = \Big(\ad \big(\ptl_j U \parall \Duh(\linear)\big) \paragg U \Big) \parasim \xi, \]
we can argue as before, where we now set $f = U$, $g = \ad \big(\ptl_j U \parall \Duh(\linear)\big)$, and $h = \xi$, and use our estimate \eqref{eq:first-commutator-estimate} to control $(P_M g) \parasim h$. The steps are omitted (note that $f$ has more regularity than before, which makes the estimates easier).
\end{proof}

Lemma \ref{lemma:U-r-hi-hi-noise-estimate} implies the following result. The proof is omitted as it uses Schauder and paraproduct estimates in a manner very similar to previous proofs.

\begin{lemma}[High$\times$high estimate]\label{lemma:hi-hi-rougher-part}
We have that
\[\begin{split}
\Big\|\Duh\Big(U^{-1}\big( \bar{U}^r \parasim P_{> N} \xi\big)\Big) \Big\|_{C_t^0 \Cs_x^{1-\rho}([0, T] \times \T^2) \cap \Wc^{1+5\kappa, \rho}([0, T])} \lesssim ~&N^{-\kappa} \|U\|_{\Unorm}^3 \|\Duh\big(\linear\big) {\parasim} \xi\|_{\Cs_{tx}^{(-1/2-\kappa,1-\kappa)}((0, 1))} ~+ \\
&N^{-\kappa} \|U\|_{\Unorm}^3 \big\|\linear\big\|_{C_t^0 \Cs_x^{-\kappa}([0, 1] \times \T^2)} \|\xi\|_{\Cs_{tx}^{\xireg}((0, 1))} ,
\end{split}\]
\[
\begin{split}
\Big\|\Duh\Big(U^{-1} P_{> N}\big(\bar{U}^r \parasim \xi\big)\Big) \Big\|_{C_t^0 \Cs_x^{1-\rho}([0, T] \times \T^2) \cap \Wc^{1+5\kappa, \rho}([0, T])} \lesssim ~& N^{-\kappa} \|U\|_{\Unorm}^3 \|\Duh\big(\linear\big) {\parasim} \xi\|_{\Cs_{tx}^{(-1/2-\kappa,1-\kappa)}((0, 1))} ~+ \\
& N^{-\kappa} \|U\|_{\Unorm}^3 \big\|\linear\big\|_{C_t^0 \Cs_x^{-\kappa}([0, 1] \times \T^2)} \|\xi\|_{\Cs_{tx}^{\xireg}((0, 1))}. 
\end{split}\]
\end{lemma}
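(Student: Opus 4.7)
My plan is to deduce both estimates by combining the control on the high$\times$high product established in Lemma~\ref{lemma:U-r-hi-hi-noise-estimate} with the Schauder-multiplication estimate for $\Duh(U^{-1} V)$ provided by Lemma~\ref{lemma:schauder-U-V}. For the first estimate, I take $V := \bar{U}^r \parasim P_{>N}\xi$, for which the second bound in Lemma~\ref{lemma:U-r-hi-hi-noise-estimate} gives
\[
\|V\|_{\mc{X}} \lesssim N^{-\kappa}\,\|U\|_{\Unorm}^{2}\Big(\tfrac{1}{\|U\|_{\Unorm}}\cdot\text{RHS of Lemma \ref{lemma:hi-hi-rougher-part}}\Big),
\]
where $\mc{X}$ is the sum space
\[
\Cs_{tx}^{(-1/2-\kappa,-20\kappa)}((0,1)) \cap \Cs_{tx}^{(-1/2-\kappa,15\kappa),20\kappa}((0,1)) + \Wc^{\kappa,1/2+20\kappa}([0,T]).
\]
I then invoke Lemma~\ref{lemma:schauder-U-V} with parameters $\varep_{1}=\kappa$, $\varep_{2}=20\kappa$, $\varep_{3}=\kappa$, $\varep_{4}=15\kappa$, $\varep_{5}=20\kappa$, $\alpha = \rho$ for the two Besov-type summands, and with $\delta_{1}=\kappa$, $\delta_{2}=20\kappa$ for the $\Wc$-summand. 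Each condition in the hypothesis of Lemma~\ref{lemma:schauder-U-V} can be verified by direct substitution for $\kappa$ sufficiently small; for instance the critical requirement $\varep_{4}-2\varep_{3}\geq 10\kappa$ reads $13\kappa\geq 10\kappa$, and $\delta_{1}/2+\delta_{2}+5\kappa/2 < \alpha$ reads $23\kappa < \rho$. Multiplying by the additional factor $\|U\|_{\Unorm}$ produced by the Schauder-multiplication estimate promotes $\|U\|_{\Unorm}^{2}$ to $\|U\|_{\Unorm}^{3}$, matching the stated right-hand side.

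For the second estimate, I instead begin from the first bound in Lemma~\ref{lemma:U-r-hi-hi-noise-estimate}, which controls $\bar{U}^{r}\parasim\xi$ in $\mc{X}$ without the $N^{-\kappa}$ gain, and then apply the spatial projection $P_{>N}$ to recover the gain at the cost of $\kappa$ units of spatial regularity. More precisely, $P_{>N}$ maps $\Cs_{tx}^{(\alpha_{t},\alpha_{x})}$ into $N^{-\kappa}\,\Cs_{tx}^{(\alpha_{t},\alpha_{x}-\kappa)}$ (and analogously for the singular-time-weighted Besov space and for $\Wc^{\kappa,1/2+20\kappa}$), yielding
\[
\|P_{>N}(\bar{U}^{r}\parasim \xi)\|_{\mc{X}'} \lesssim N^{-\kappa}\,\|U\|_{\Unorm}^{2}\Big(\tfrac{1}{\|U\|_{\Unorm}}\cdot\text{RHS of Lemma \ref{lemma:hi-hi-rougher-part}}\Big),
\]
where $\mc{X}'$ is the variant of $\mc{X}$ with $\varep_{2}$ shifted to $21\kappa$, $\varep_{4}$ to $14\kappa$, and $\delta_{1}$ to $0$. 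These shifted values still satisfy every hypothesis of Lemma~\ref{lemma:schauder-U-V} (in particular $\varep_{4}-2\varep_{3} = 12\kappa \geq 10\kappa$), so the same Schauder-multiplication step applies and concludes the proof.

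The only real technical point to watch — and the reason this estimate is isolated as a separate lemma rather than folded into the preceding sections — is that the output of Lemma~\ref{lemma:U-r-hi-hi-noise-estimate} lives in a sum of three rather different function spaces (two singular-time-weighted Besov spaces together with a weighted H\"older-in-time Besov-in-space space), and one must verify case by case that each summand is accommodated by an appropriate instance of Lemma~\ref{lemma:schauder-U-V}. Beyond this parameter bookkeeping no conceptual obstacle is anticipated, since all of the genuinely delicate multilinear paraproduct and commutator analysis underlying the control of $\bar{U}^{r}\parasim\xi$ has already been carried out in the proof of Lemma~\ref{lemma:U-r-hi-hi-noise-estimate}.
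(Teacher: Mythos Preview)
Your proposal is correct and follows exactly the approach the paper indicates: the paper itself omits the proof, stating only that it ``uses Schauder and paraproduct estimates in a manner very similar to previous proofs'' after invoking Lemma~\ref{lemma:U-r-hi-hi-noise-estimate}. Your explicit parameter choices for Lemma~\ref{lemma:schauder-U-V} and the verification of its hypotheses fill in precisely the bookkeeping the authors left implicit.
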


\subsubsection{High$\times$low estimates}\label{section:hi-lo}

We state the following high$\times$low estimate, whose proof is omitted as it consists of applications of Schauder and paraproduct estimates similar to what we've already seen.

\begin{lemma}[High$\times$low estimates]\label{lemma:hi-lo}
We have that
\[\begin{split}
\Big\| \Duh\Big(U^{-1} P_{> N} (U \paragg \xi\big)\Big) \Big\|_{C_t^0 \Cs_x^{1-\rho}([0, T] \times \T^2) \cap \Wc^{1+5\kappa, \rho}([0, T])} &\lesssim N^{-\kappa} \|U\|_{\Unorm}^2 \|\xi\|_{\Cs_{tx}^{\xireg}((0, 1))}   , \\
\Big\| \Duh\Big(U^{-1} (U \paragg P_{> N}\xi\big)\Big) \Big\|_{C_t^0 \Cs_x^{1-\rho}([0, T] \times \T^2) \cap \Wc^{1+5\kappa, \rho}([0, T])} &\lesssim N^{-\kappa} \|U\|_{\Unorm}^2 \|\xi\|_{\Cs_{tx}^{\xireg}((0, 1))}.  \end{split}\]
\end{lemma}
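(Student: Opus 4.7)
\textbf{Proof proposal for Lemma \ref{lemma:hi-lo}.} The plan is to reduce both estimates to the Schauder estimate Lemma \ref{lemma:schauder-U-V}, after bounding the inputs in suitable space-time Besov and $\Wc$ spaces with an extracted factor of $N^{-\kappa}$. The overall strategy parallels the proofs of Lemmas \ref{lemma:hi-hi-smooth} and \ref{lemma:hi-hi-rougher-part}, but is simpler because the spatial structure is high$\times$low rather than high$\times$high, so no positivity of summed regularities is needed and no stochastic input beyond $\xi$ enters.

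First, I would perform the temporal decomposition
\begin{equation*}
U \paragg \xi \;=\; U \parall_t \paragg_x \xi \,+\, U \parasim_t \paragg_x \xi \,+\, U \paragg_t \paragg_x \xi.
\end{equation*}
For the low$\times$low-in-time piece $U \parall_t \paragg_x \xi$, the standard space-time paraproduct estimate with $U \in L_t^\infty \Cs_x^{1-\kappa}$ and $\xi \in \Cs_{tx}^{\xireg}$ yields a bound in $\Cs_{tx}^{(-1/2-\kappa,\,-2\kappa)}$. For each of the two high-in-time pieces, I would apply Lemma \ref{lemma:paraproduct-Wc-space-time-besov-space} with $\alpha = 3/2-\kappa$, $\theta = 1/2$, using the information $U \in \Wc^{3/2-\kappa,1/2}$ and $\partial_t U \in \Wc^{-1/2-\kappa,1/2}$ from the definition of $\|\cdot\|_{\Unorm}$. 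Since $\beta = (-1/2-\kappa,-1-\kappa)$ satisfies $\alpha + \beta_x + 2\beta_t = -1/2 - 4\kappa$, taking $\gamma$ just above this value yields a bound for each high-time piece in $\Wc^{\gamma,\,1/2}$.

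Second, the factor $N^{-\kappa}$ is extracted differently in the two estimates. For the first, since $U \paragg_x \xi$ is spatially localized to frequencies comparable to those of $U$, the outer $P_{>N}$ forces $U$ to have spatial frequency $\gtrsim N$; in every bound above, I can replace $U$ by $P_{\gtrsim N} U$ and pay a factor of $N^{-\kappa}$ by sacrificing $\kappa$ of spatial regularity on $U$ (e.g.\ estimating $\|P_{\gtrsim N} U\|_{\Cs_x^{1-2\kappa}} \lesssim N^{-\kappa}\|U\|_{\Cs_x^{1-\kappa}}$). For the second estimate, simply use $\|P_{>N}\xi\|_{\Cs_{tx}^{(-1/2-\kappa,\,-1)}} \lesssim N^{-\kappa}\|\xi\|_{\Cs_{tx}^{\xireg}}$ throughout.

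Third, I would apply Lemma \ref{lemma:schauder-U-V} to $U^{-1}$ times each of the three pieces. The $\parall_t$ piece fits the first branch of that lemma with $\varep_1 = \kappa$, $\varep_2 = 3\kappa$, and padded choices of $(\varep_3,\varep_4,\varep_5)$ so that the second intersection is trivially controlled; the condition $\varep_4 - 2\varep_3 \geq 10\kappa$ then poses no obstruction. The two high-time pieces fit the second branch of Lemma \ref{lemma:schauder-U-V} with $\delta_2 = 0$ and $\delta_1$ slightly above $-1/2 - 4\kappa$, and the required inequalities $\delta_1 \leq 1-\kappa$, $\delta_2 < 1/2$, $\alpha = \rho < 1 - 2\delta_2 - \delta_1$, and $\delta_1/2 + \delta_2 + 5\kappa/2 < \alpha$ are all readily verified once $\rho$ is taken to be a fixed multiple of $\kappa$ in the range $(5\kappa, 1)$. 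The main obstacle is purely bookkeeping: tracking the spatial and temporal regularities through the three paraproduct pieces and verifying the many parameter constraints of Lemma \ref{lemma:schauder-U-V}; once this is organized, each piece produces a contribution bounded by $N^{-\kappa}\|U\|_{\Unorm}^{2}\|\xi\|_{\Cs_{tx}^{\xireg}}$, which combine to give the stated estimates.
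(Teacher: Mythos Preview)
Your overall plan—temporal decomposition, paraproduct estimates, then Lemma~\ref{lemma:schauder-U-V}—is the right shape, and the $\paransim_t \paragg_x$ pieces land in $\Cs_{tx}^{(-1/2-\kappa,\,-2\kappa)}$ (plus a singular-weight companion) exactly as you indicate, after which Lemma~\ref{lemma:schauder-U-V} applies. The gap is in your treatment of the $\parasim_t \paragg_x$ piece. Lemma~\ref{lemma:paraproduct-Wc-space-time-besov-space} requires $0 < \gamma < \alpha + \beta_x + 2\beta_t$; with your choice $\alpha = 3/2-\kappa$, $\beta = (-1/2-\kappa,\,-1-\kappa)$ one has $\alpha + \beta_x + 2\beta_t = -1/2-4\kappa < 0$, so no admissible $\gamma$ exists. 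Even if one waives the sign condition on $\gamma$, the resulting bound $U \parasim_t \paragg_x \xi \in \Wc^{-1/2-O(\kappa),\,1/2}$ is too weak: after multiplying by $U^{-1}$ (which is harmless) the Schauder estimate from $\Wc^{-1/2-O(\kappa),\,1/2}$ to $\Wc^{1-\rho,\,0}$ has
\[
\delta \;=\; 1 + 0 - \tfrac12 - \tfrac12\big((1-\rho)-(-\tfrac12-O(\kappa))\big) \;=\; -\tfrac14 + \tfrac{\rho}{2} - O(\kappa) \;<\; 0
\]
for $\rho = 100\kappa$, so Lemma~\ref{prelim:lem-Schauder} does not apply. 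Your appeal to the second branch of Lemma~\ref{lemma:schauder-U-V} with $\delta_1 \approx -1/2-4\kappa$ is likewise outside its hypotheses ($\delta_1 \geq 0$), and for the same Schauder-exponent reason cannot be salvaged by merely dropping that constraint.

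The paper omits its own proof, pointing only to the preceding high$\times$high estimates as a template. The difficulty here is the same one that forced the decomposition $U = \bar U^r + (U^r - \bar U^r) + U^s$ in Section~\ref{section:hi-hi}: the combined space-time regularity of $U$ in $\Unorm$ (roughly $3/2-$ at time-weight $1/2$) is just short of what is needed to close the $\parasim_t$ paraproduct against $\xi$ directly. One route that works is to carry out that same decomposition for the $\parasim_t \paragg_x$ piece: the $U^s$ and $U^r - \bar U^r$ parts have $\alpha > 2$ (cf.\ Lemmas~\ref{lemma:Us-hi-hi-xi}, \ref{lemma:U-r-bar-U-r-difference}) so Lemma~\ref{lemma:paraproduct-Wc-space-time-besov-space} applies with positive $\gamma$, while the $\bar U^r$ part is handled via its explicit Duhamel structure as in Lemma~\ref{lemma:U-r-hi-hi-noise-estimate}. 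In short, the high$\times$low estimate is not, at the level of the $\parasim_t$ paraproduct, a simpler special case; it inherits the same regularity shortfall as the high$\times$high case and requires comparable machinery.
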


\subsubsection{Proof of Lemma \ref{lemma:U-xi-defined} and Proposition \ref{prop:linear-objects-close-gauge-covariance}}\label{section:proof-linear-objects-close-gauge-covariance}

By combining the estimates of Section \ref{section:lo-hi}, \ref{section:hi-hi} and \ref{section:hi-lo}, we can now prove Lemma \ref{lemma:U-xi-defined} and Proposition \ref{prop:linear-objects-close-gauge-covariance}.

\begin{proof}[Proof of Lemma \ref{lemma:U-xi-defined}]
First note that by Lemmas \ref{lemma:white-noise-space-time-besov-space-bound} and \ref{lemma:Duhamel-linear-times-noise-besov-space-bound}, we have that a.s., 
\[\|\xi\|_{\Cs_{tx}^{\xireg}((0, 1))}, ~~ \sum_{M, M'} \Big\|\big(P_{M'} P_M \Duh\big(\linear\big)\big) \parasim \xi\Big\|_{\Cs_{tx}^{(-1/2-\kappa, 1-\kappa)}((0, 1))} < \infty.\]
We restrict to this a.s. event in what follows.
We split
\[ UP_{\leq N} \xi = U \paransim P_{\leq N} \xi + U \parasim P_{\leq N} \xi .\]
We further split
\[ U \paransim P_{\leq N} \xi = U \paransim_{tx} P_{\leq N} \xi + U \parasim_t \paransim_x P_{\leq N} \xi. \]
By a paraproduct estimate, we have that 
\[ \|U \paransim_{tx} P_{> N} \xi\|_{\Cs_{tx}^{\xireg - (0, \kappa)}}  \lesssim \|U\|_{L_t^\infty \Cs_x^{1-\kappa}((0, 1))} \|P_{> N} \xi\|_{\Cs_{tx}^{\xireg - (0, \kappa)}((0, 1))} \ra 0. \]
By Corollary \ref{cor:Wc-implies-besov} and Lemma \ref{lemma:paraproduct-Wc-space-time-besov-space}, we have that
\[ \|U \parasim_t \paransim_x P_{> N} \xi\|_{\Cs_{tx}^{(-10\kappa, -1-10\kappa)}((0, 1))} \lesssim  \|U \parasim_t \paransim_x P_{> N} \xi\|_{\Wc^{-1-10\kappa, 10\kappa}([0, 1])} \lesssim \|U\|_{\Unorm} \|P_{> N} \xi\|_{\Cs_{tx}^{\xireg - (0, \kappa)}((0, 1))} \ra 0. \]
For the high$\times$high paraproduct in space, we further split
\[ U \parasim P_{\leq N} \xi = U^r \parasim P_{\leq N} \xi + U^s \parasim P_{\leq N} \xi = \bar{U}^r \parasim P_{\leq N} \xi + (U^r - \bar{U}^r) \parasim P_{\leq N} \xi + U^s \parasim P_{\leq N} \xi.\]
By Corollary \ref{cor:Wc-implies-besov} and Lemma \ref{lemma:U-r-hi-hi-noise-estimate}, we have that 
\[\begin{split}
\|&\bar{U}^r \parasim P_{> N} \xi\|_{\Cs_{tx}^{(-1/2-20\kappa, -20\kappa)}((0, 1))} \lesssim \|\bar{U}^r \parasim P_{> N} \xi\|_{\Cs_{tx}^{(-1/2-\kappa, -20\kappa)}((0, 1)) + \Wc^{\kappa, 1/2+20\kappa}([0, 1])} \\
&\lesssim N^{-\kappa}\|U\|_{\Unorm}^2 \sum_{M, M'} \Big\|\big(P_{M'} P_M \Duh\big(\linear\big)\big) \parasim \xi\Big\|_{\Cs_{tx}^{(-1/2-\kappa, 1-\kappa)}((0, 1))} + N^{-\kappa}\|U\|_{\Unorm}^2 \big\|\linear\big\|_{C_t^0 \Cs_x^{-\kappa}} \|\xi\|_{\Cs_{tx}^{\xireg}((0, 1))}  \ra 0.  
\end{split}\]
By Corollary \ref{cor:Wc-implies-besov} and Lemma \ref{lemma:Us-hi-hi-xi}, we have that
\[\begin{split}
\|U^s \parasim P_{> N} \xi\|_{\Cs_{tx}^{(-1/2-20\kappa, 4\kappa)}((0, 1))} &\lesssim \|U^s \parasim P_{> N} \xi\|_{\Cs_{tx}^{(-1/2-20\kappa, 5\kappa)}((0, 1)) + \Wc^{4\kappa, 1/2+20\kappa}([0, 1])} \\
&\lesssim N^{-\kappa} \|U^s\|_{\Usnorm} \|\xi\|_{\Cs_{tx}^{\xireg}((0, 1))} \ra 0. 
\end{split}\]
Finally, the term with $U^r - \bar{U}^r$ may be similarly controlled (where we apply Lemma \ref{lemma:U-r-bar-U-r-difference} to obtain sufficient regularity for $U^r - \bar{U}^r$).
\end{proof}

\begin{proof}[Proof of Proposition \ref{prop:linear-objects-close-gauge-covariance}]
First of all, the fact that $\lim_{S \toinf} \tau_S = \maxtime$ follows from Lemma \ref{lemma:U-r-U-s-regularity} and our local existence theory for the stochastic Yang-Mills equation, which imply that  $\|B\|_{\Sc^{1-2\kappa}([0, T])}$ as well as the various norms on $U$ and $h$ which appear in the definition of $\tau_S$ cannot blow up before $\maxtime$.

Next, let $R, S, \geq 1$ $T \in [0, \tau_S)$ be as in the statement of Proposition \ref{prop:linear-objects-close-gauge-covariance}. Observe that by the definition of $\tau_S$ and the bounds on extensions \eqref{eq:extension-bound}, \eqref{eq:extension-derivative-bound}, we have that $\|U\|_{\Unorm} \leq S$. By the same bounds on extensions and Lemma \ref{lemma:U-r-U-s-regularity}, we also have the following estimate:
\[ \|U^s\|_{\Usnorm} \lesssim \|U\|_{\Unorm}^2 \big(\|\linear\|_{C_t^0 \Cs_x^{-\kappa}} + \|B\|_{\Sc^{1-2\kappa}}\big) + \|U\|_{\Unorm} \lesssim S^2(R + S). \]
These estimates will be used without reference in what follows. We may write for $t \in [0, T]$
\[\begin{split}
\glinear[\leq N][r][] - \linear[\leq N][r][] &= \Duh\Big(U^{-1}\big(P_{\leq N}(U \xi) - U P_{\leq N} \xi\big)\Big) + \Ht\big(\glinear[\leqN][r][](0) - \linear[\leqN][r][](0)\big) \\
&= D_{\ll} + D_{\sim} + D_{\gg} + \Ht \big((U^{-1} \ptl_j U)(0) \parall Q^j_{> N} \linear[][r][](0)\big),
\end{split}\]
where
\[ D_{\ll} := \Duh\Big(U^{-1}\big(P_{\leq N}(U \parall \xi) - U \parall P_{\leq N} \xi\big)\Big), \]
and $D_{\sim}, D_{\gg}$ are analogously defined. Recall from Lemma \ref{lemma:Ad-derivative-properties} that $U^{-1} \ptl_j U = \ad(h_j)$, and thus
\[ (U^{-1} \ptl_j U) \parall Q^j_{> N} \linear = \Big[h_j \parall Q^j_{> N} \linear\Big]. \]
We thus define
\[ \Psi_{\leq N} := \Big(D_{\ll} - (U^{-1} \ptl_j U) \parall Q^j_{> N} \linear + \Ht \big((U^{-1} \ptl_j U)(0) \parall Q^j_{> N} \linear(0)\big)\Big) + D_{\sim} + D_{\gg}. \]
To estimate $\Psi$, we estimate each of the terms $D_{\ll}, D_\sim, D_\gg$ separately.

By Lemma \ref{lemma:lo-hi}, we have that
\[\begin{split}
\Big\| D_{\ll} - (U^{-1} \ptl_j U) \parall Q^j_{> N} \linear + \Ht \big((U^{-1} \ptl_j U)(0) &\parall Q^j_{> N} \linear[][][](0)\big) \Big\|_{C_t^0 \Cs_x^{1-\rho} \cap \Wc^{1+5\kappa, \rho}} \lesssim N^{-\kappa} R S^2 + \|F\|_{C_t^0 \Cs_x^{1-\rho} \cap \Wc^{1+5\kappa, \rho}}, 
\end{split}\]
where
\[ F := \Ht \big((U^{-1} \ptl_j U)(0) \parall Q^j_{> N} \linear[][r][](0)\big)  - (U^{-1} \ptl_j U)(t) \parall Q^j_{> N}\Ht \linear[][r][](0).\]
For brevity, let $f := U^{-1} \ptl_j U$. By writing
\[\begin{split}
F = ~&\Big(\Ht \big(f(0) \parall Q^j_{> N} \linear[][r][](0)\big) - f(0) \parall Q^j_{> N} \Ht \linear[][r][](0)\Big) + \Big(\big(f(0)- f(t)\big) \parall Q^j_{> N} \Ht \linear[][r][](0)\Big),
\end{split}\]
we may separately bound the two terms by applying paraproduct estimates (along with Lemma \ref{prelim:lem-heat-commutator} for the first term) to obtain
\[ \|F\|_{C_t^0 \Cs_x^{1-\rho} \cap \Wc^{1 + 5\kappa, \rho}} \lesssim \|f\|_{C_t^0 \Cs_x^{-\kappa} \cap \Wc^{20\kappa, 20\kappa}} \|Q^j_{> N} \linear[][r][i](0)\|_{\Cs_x^{1-2\kappa}} + \|f\|_{C_t^0 \Cs_x^{-\kappa}} \|\Ht Q^j_{> N} \linear[][r][i](0)\|_{C_t^0 \Cs_x^{1-2\kappa} \cap \Wc^{1+10\kappa, 10\kappa}} .\]
Since $\|f\|_{C_t^0 \Cs_x^{-\kappa} \cap \Wc^{20\kappa, 20\kappa}} \lesssim \|U\|_{\Unorm}^2$, $\|Q^j_{> N} \linear\|_{\Cs_x^{1-2\kappa}} \lesssim N^{-\kappa} \|\linear\|_{C_t^0 \Cs_x^{-\kappa}}$ (by Lemma \ref{prelim:lem-Q}), and 
\[ \|\Ht Q^j_{> N} \linear(0)\|_{C_t^0 \Cs_x^{1-2\kappa} \cap \Wc^{1+10\kappa, 10\kappa}} \lesssim \|Q^j_{> N} \linear(0)\|_{\Cs_x^{1-2\kappa}}, \]
we further obtain
\[ \|F\|_{C_t^0 \Cs_x^{1-\rho} \cap \Wc^{1 + 5\kappa, \rho}} \lesssim N^{-\kappa} R S^2, \]
which is acceptable. 

Next, we bound $D_{\sim}$. By writing $P_{\leq N} = 1 - P_{> N}$, we have that
\[ D_{\sim} = \Duh\Big(U^{-1}\big(U \parasim P_{> N} \xi - P_{> N} (U \parasim \xi)\big)\Big). \]
By splitting $U = \bar{U}^r + (U^r - \bar{U}^r) + U^s$ and then applying Lemmas \ref{lemma:hi-hi-smooth}, \ref{lemma:U-r-bar-U-r-difference}, and \ref{lemma:hi-hi-rougher-part}, we obtain an acceptable bound on $D_{\sim}$. Finally, to bound $D_{\gg}$, we again use $P_{\leq N} = 1 - P_{> N}$ to write
\[ D_{\gg} = \Duh\Big(U^{-1}\big(U \paragg P_{> N} \xi - P_{> N} (U \paragg \xi)\big)\Big).\]
An acceptable bound for $D_{\gg}$ now follows directly by Lemma \ref{lemma:hi-lo}.

Combining the bounds involving $D_{\ll}, D_{\sim}, D_\gg$, we obtain 
\[ \|\Psi_{\leq N} \|_{C_t^0 \Cs_x^{1-\rho} \cap \Wc^{1+5\kappa, \rho}} \lesssim N^{-\kappa} R^2 S^4. \]
The bound on $\Phi_{\leq N} = (U^{-1} \ptl_j U) \parall Q^j_{>N} \linear + \Psi_{\leq N}$ follows from this bound, as well as the estimate
\[ \big\| (U^{-1} \ptl_j U) \parall Q^j_{> N} \linear \big\|_{C_t^0 \Cs_x^{1-\rho}} \lesssim \|U^{-1} \ptl_j U\|_{L_t^\infty \Cs_x^{-\kappa}} \|Q^j_{> N} \linear\|_{C_t^0 \Cs_x^{1-2\kappa}} \lesssim N^{-\kappa} R S^2. \qedhere \]
\end{proof}

\subsection{Modified enhanced data set}\label{section:gauged-transformed-enhanced-data-set}

We first introduce the modified enhanced data set. To this end, we recall that the gauged-transformed linear stochastic object $\glinear[][r][]$ was introduced in \eqref{gauged:eq-glinear} and studied in Proposition \ref{prop:linear-objects-close-gauge-covariance}. In particular, we previously obtained the decomposition
\begin{equation*}
\glinear[\leqN][r][i] = \linear[\leqN][r][i] + \Phi^i_{\leq N} = \linear[\leqN][r][i] + \Big[h_j \parall Q^j_{> N} \linear[][r][i]\Big] + \Psi^i_{\leq N},
\end{equation*}
where
\begin{align}
\| \Phi^{i}_{\leq N} \|_{C_t^0 \Cs_x^{1-\rho}} &\lesssim N^{-\kappa} R^2 S^4, \label{gauged:eq-Phi} \\ 
\| \Psi^i_{\leq N} \|_{C_t^0 \Cs_x^{1-\rho} \cap \Wc^{1+5\kappa, \rho}} &\lesssim N^{-\kappa} R^2 S^4. \label{gauged:eq-Psi} 
\end{align} 
Furthermore, similar as in \eqref{ansatz:eq-quadratic} and \eqref{ansatz:eq-quadratic-combined}, we introduce the modified quadratic stochastic object as the solution of 
\begin{equation}\label{gauged:eq-gquadratic}
(\partial_t - \Delta +1 ) \gquadratic[\leqN][d][i][j][k] = \Big[ \glinear[\leqN][r][i], \partial^k \glinear[\leqN][r][j] \Big], ~~ \gquadratic[\leqN][d][i][j][k](0) = \quadratic[\leqN][d][i][j][k](0),
\end{equation}
and define the combined object
\begin{equation}
 \gquadratic[\leqN][r][i] = 2  \gquadratic[\leqN][d][j][i][j] - \gquadratic[\leqN][d][j][j][i],
\end{equation}
where we sum over repeated indices on the right-hand side. 

\begin{definition}[Modified enhanced data set] 
For any $N\in \dyadic$, we define the modified enhanced data set as 
\begin{align*}
\Xi_{\leq N}^g := \Bigg(& 
\Big( \glinear[\leqN][r][i] \Big)_{i\in [2]}, 
\quad 
\Big( \Big[ \glinear[\leqN][r][i], \glinear[\leqN][r][j] \Big] \Big)_{i,j \in [2]}, 
\\
&\Big( E \in \frkg \mapsto \Big[ \Big[ E, \glinear[\leqN][r][i] \Big], 
\glinear[\leqN][r][j] \Big] - \delta^{ij} \sigma_{\leq N}^2 \Kil\big( E \big) \Big)_{i,j \in [2]}, 
\\ 
&\Big( \Big[ \Big[ \glinear[\leqN][r][i], \glinear[\leqN][r][j] \Big], \glinear[\leqN][r][k] \Big] +  \delta_{ik} \sigma_{\leq N}^2 \Kil \big( \glinear[\leqN][r][j] \big)  -  \delta_{jk} \sigma_{\leq N}^2 \Kil \big( \glinear[\leqN][r][i] \big) \Big)_{i,j,k\in [2]}, \\
&\Big( \gquadratic[\leqN][r][i] \Big)_{i \in [2]}, 
\\ 
& \Big( E \in \frkg \mapsto \Big[ \Big[ E , \Duh \big( \partial_{k_1} \glinear[\leqN][r][j_1] \big) \Big] \parasim \partial_{k_2} \glinear[\leqN][r][j_2] \Big] - \frac{1}{4} \delta^{j_1 j_2} \delta_{k_1 k_2} \sigma_{\leq N}^2 \Kil \big( E \big) \Big)_{j_1,j_2,k_1,k_2 \in [2]}, \\ 
&\Big( 2 \Big[  \gquadratic[\leqN][r][j] 
 \parasim 
 \Big( \partial_j \glinear[\leqN][r][i] -  \partial^i \glinear[\leqN][l][j] \Big) \Big] + 2  \Big[\gquadratic[\leqN][r][i]  \parasim \partial^j \glinear[\leqN][l][j] \Big]
 + \sigma_{\leq N}^2 \Kil \big( \glinear[\leqN][r][i] \big) \Big)_{i \in [2]}  \Bigg). 
\end{align*}
\end{definition}

The main result of this subsection, which is contained in the next proposition, shows that the enhanced data set $\Xi_{\leq N}$ and the modified enhanced data set $\Xi^g_{\leq N}$ converge to the same limit as $N\rightarrow \infty$.

\begin{proposition}\label{gauged:prop-enhanced}
Let $0<c\ll 1$ be a sufficiently small absolute constant and let $R \geq 1$. Then, there exists an event $E_R^g$ satisfying
\begin{equation}\label{gauged:eq-high-probability}
\mathbb{P} \big( E_R^g \big) \geq 1 - c^{-1} \exp \Big( - c R^2\Big) 
\end{equation}
and such that, on this event, the estimate
\begin{equation}\label{gauged:eq-enhanced-main-estimate}
\dc_T \Big( \Xi_{\leq N}, \Xi_{\leq N}^g \Big) \lesssim N^{-\eta} R^8 S^{16}
\end{equation} 
is satisfied for all $N\in \dyadic$, all $S\geq 1$, and all $0<T\leq \tau_S$.
\end{proposition}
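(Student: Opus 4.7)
The plan is to define $E_R^g$ as the intersection of the event of Proposition \ref{objects:prop-enhanced} with the high-probability events from Lemma \ref{lemma:white-noise-space-time-besov-space-bound} and Lemma \ref{lemma:Duhamel-linear-times-noise-besov-space-bound} (supplying the hypotheses of Proposition \ref{prop:linear-objects-close-gauge-covariance}), together with the events produced by applying Corollary \ref{objects:cor-quadratic-Q} for every index tuple $i,j,k,\ell$. A union bound over these sub-events gives the probability estimate \eqref{gauged:eq-high-probability}, and on $E_R^g$ Proposition \ref{prop:linear-objects-close-gauge-covariance} furnishes the decomposition $\glinear[\leq N][r][i] = \linear[\leq N][r][i] + \Phi^i_{\leq N} = \linear[\leq N][r][i] + [h_j \parall Q^j_{>N} \linear[][r][i]] + \Psi^i_{\leq N}$ with the bounds \eqref{gauged:eq-Phi}--\eqref{gauged:eq-Psi}. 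Since $T\le \tau_S$ controls all norms of $U$ and $h$ appearing in Definition \ref{def:U-and-h}, these bounds are applicable throughout.

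For the algebraic components of $\Xi^g_{\leq N}$ -- the quadratic and cubic polynomials in $\glinear$ with their Wick counterterms -- I would expand $\glinear = \linear + \Phi$ multilinearly. The purely polynomial pieces in $\linear$ (with their renormalizations) coincide with the entries of $\Xi_{\leq N}$, so the differences consist only of mixed terms containing at least one factor of $\Phi_{\leq N}$. Because $\Phi_{\leq N}\in C^0_t\Cs_x^{1-\rho}$ is spatially smooth, the paraproduct estimates of Lemma \ref{prelim:lem-para-product} control these mixed terms directly, inheriting the $N^{-\kappa}$ factor from \eqref{gauged:eq-Phi}. The discrepancy in the Wick counterterms --- namely $\sigma^2_{\leq N}\Kil(\Phi_{\leq N})$ type expressions --- loses at most a $\log N$, which is absorbed using $\eta\ll\kappa$.

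The main obstacle is the quadratic stochastic object $\gquadratic[\leqN][r][]$ and the derivative-resonance objects in \eqref{objects:eq-enhanced-5}--\eqref{objects:eq-enhanced-6}, since these involve $\partial \glinear$ in high$\times$high interactions. Applying Duhamel to \eqref{gauged:eq-gquadratic} and expanding $\glinear = \linear + \Phi$, the difference $\gquadratic-\quadratic$ reduces (modulo Duhamel-boundary contributions from the mismatched starting time, which are controlled by Lemma \ref{prelim:lem-heat-Salpha}) to bilinear Duhamel integrals of terms with at least one factor of $\Phi$. The delicate resonant contribution is $[\linear \parasim \partial \Phi]$. Using the second decomposition of $\Phi$, this splits as $[\linear \parasim \partial[h_j\parall Q^j_{>N}\linear]]$ plus a remainder involving $\partial\Psi$; the remainder is controlled deterministically by para-product and commutator estimates thanks to the $\Wc^{1+5\kappa,\rho}$ bound in \eqref{gauged:eq-Psi}. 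The resonant piece is handled using Corollary \ref{objects:cor-quadratic-Q}: after moving $h_j$ and the derivative outside the paraproduct (paying commutators that are estimated by Lemma \ref{prelim:lem-lh-hh}), one is left with an expression of the form $[h_\ell, [\linear \parasim \partial^k Q^j_{>N} \linear]]$, which by Corollary \ref{objects:cor-quadratic-Q} equals $-\delta^{ij}\delta^{k\ell}\theta_{\leq N}\Kil(h_\ell)$ up to an $N^{-\eta}$ error. Since $\theta_{\leq N}$ is uniformly bounded in $N$, this contribution is an acceptable deterministic term.

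The remaining derivative objects in \eqref{objects:eq-enhanced-5}--\eqref{objects:eq-enhanced-6} are treated analogously: one expands $\glinear=\linear+\Phi$, identifies the resonant $Q^{j}_{>N}$ contributions arising from the rough part of $\Phi$ via Corollary \ref{objects:cor-quadratic-Q}, and checks that the resulting $\theta_{\leq N}$-renormalizations cancel against the $\sigma^2_{\leq N}\Kil$ counterterms built into $\Xi^g_{\leq N}$ via the same combinatorial identity used in Subsection \ref{section:objects-proof}. Collecting all the component estimates and tracking the powers of $R$ (from the stochastic objects and Proposition \ref{prop:linear-objects-close-gauge-covariance}) and $S$ (from the $\Unorm$-bounds on $U$, which appear up to fourth power in \eqref{gauged:eq-Phi}--\eqref{gauged:eq-Psi}) gives a bound $\lesssim N^{-\eta}R^8 S^{16}$ for each component in its respective norm, which yields \eqref{gauged:eq-enhanced-main-estimate} by definition of the metric $\dc_T$.
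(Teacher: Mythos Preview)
Your overall architecture is right—define $E_R^g$ as an intersection of high-probability events, expand $\glinear=\linear+\Phi$ multilinearly, and use Corollary~\ref{objects:cor-quadratic-Q} to identify the resonant contribution from $[\linear\parasim\partial(h_\ell\parall Q^\ell_{>N}\linear)]$. But you misidentify the cancellation that disposes of the resulting $\theta_{\leq N}\Kil(h)$ terms, and this is a genuine gap.

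The issue is that $\theta_{\leq N}$ does \emph{not} decay in $N$, so your claim that ``this contribution is an acceptable deterministic term'' is false if read as an $N^{-\eta}$ bound. More importantly, these $\theta_{\leq N}\Kil(h)$ terms have nothing to do with the $\sigma_{\leq N}^2\Kil$ counterterms: the latter are identical in $\Xi_{\leq N}$ and $\Xi_{\leq N}^g$ (up to replacing $\linear$ by $\glinear$, which costs only $\sigma_{\leq N}^2\|\Phi\|\lesssim N^{-\kappa}\log N$), so they cannot absorb a new non-decaying term. The paper instead shows (Lemma~\ref{gauged:lem-quadratic}) that for each index triple,
\[
\gquadratic[\leqN][d][i][j][k]-\quadratic[\leqN][d][i][j][k]\;\approx\;-2\delta^{ij}\theta_{\leq N}\,\Duh\Kil h^k,
\]
and the cancellation occurs purely combinatorially when one forms the \emph{combined} object $\gquadratic[\leqN][r][i]=2\gquadratic[\leqN][d][j][i][j]-\gquadratic[\leqN][d][j][j][i]$: summing over $j\in[2]$ gives $-4\delta^i_j\Duh\Kil h^j+2\delta^j_j\Duh\Kil h^i=(-4+4)\Duh\Kil h^i=0$. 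This is a dimension-two miracle analogous to (but distinct from) the one in Remark~\ref{ansatz:rem-cancellation}. Once $\gquadratic[\leqN][r][]-\quadratic[\leqN][r][]$ is shown to be small in $C_t^0\Cs_x^1$, the derivative objects \eqref{objects:eq-enhanced-5}--\eqref{objects:eq-enhanced-6} follow by direct paraproduct estimates without invoking any further $\theta_{\leq N}$-cancellation; no matching against $\sigma_{\leq N}^2$ terms is needed or possible there.

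A minor point: you should also include in $E_R^g$ the event controlling $\|\linear\|_{C_t^0\Cs_x^{-\varep}}$ with $\varep=\kappa/10$ (first line of \eqref{eq:gauged-lem-quadratic-extra-data-bound}), which is needed to close the high$\times$high estimate $[(\gquadratic-\quadratic)\parasim\partial\linear]$.
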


\begin{remark}
We note that, in contrast to \eqref{objects:eq-enhanced-main-estimate}, \eqref{gauged:eq-enhanced-main-estimate} no longer scales linearly in $R$ (and also depends on $S$). The reason is that $\glinear[][r][]$ depends on $g$, which is a nonlinear object. 
\end{remark}

The proof of Proposition \ref{gauged:prop-enhanced} is rather delicate. In particular, it will require the detailed expansion of $\glinear[\leqN][r][]$ from Proposition \ref{prop:linear-objects-close-gauge-covariance}. 
For expository purposes, we first analyze the modified quadratic stochastic object from \eqref{gauged:eq-gquadratic}. This will be an important ingredient in the proof of Proposition \ref{gauged:prop-enhanced}.  

\begin{lemma}[Difference estimate for the quadratic stochastic object]
\label{gauged:lem-quadratic}
Let $R,S\geq 1$ and let $0<T\leq \tau_{S}$. For all $N\in \dyadic$, we assume that $\Xi_{\leq N}\in \Dc_R([0,1])$, where $\Dc_R([0,1])$ is as in Definition \ref{objects:def-data-space}. Let $\varep:= \frac{\kappa}{10}> 0$ and assume that
\begin{equation}\label{eq:gauged-lem-quadratic-extra-data-bound}
\begin{aligned} 
\big\|\linear\big\|_{C_t^0 \Cs_x^{-\varep}([0, 1] \times \T^2)} &\leq R, \\ 
\sup_{N \in \dyadic} N^\kappa \max_{\substack{E\in \frkg\colon \\ \| E \|_\frkg \leq 1 }} 
\Big\|  \Big[ \linear[\leqN][r][i] \parasim \Big[ E, \partial^k Q^\ell_{>N} \linear[][r][j] \Big] \Big] +\delta^{ij} \delta^{\ell k} \theta_{\leq N} \Kil E 
\Big\|_{C_t^0 \Cs_x^{-5\kappa}(([0, 1] \times \T^2)}^{1/2} &\leq R. 
\end{aligned} 
\end{equation}
Let $i,j,k\in [2]$,  let $h$ be as in  Definition \ref{def:U-and-h}, and let $\theta_{\leq N}$ be as in \eqref{objects:eq-theta}. Then, it holds for all $N\in \dyadic$ that  
\begin{align}
\bigg\| \gquadratic[\leqN][d][i][j][k] - \Big( \quadratic[\leqN][d][i][j][k] - 2 \delta^{ij} \theta_{\leq N} \Duh \Kil  h^k  \Big) 
\bigg\|_{C_t^0 \Cs_x^{1}([0,T]\times \T^2)} &\lesssim N^{-(\kappa-2\varep)} R^4 S^{8}, \label{gauged:eq-quadratic} \\ 
\bigg\| \gquadratic[\leqN][d][i][j][k] - \Big( \quadratic[\leqN][d][i][j][k] - 2 \delta^{ij} \theta_{\leq N} \Duh \Kil  h^k  \Big) 
\bigg\|_{\Sc^{1-2\kappa}([0,T])} &\lesssim N^{-(\kappa-2\varep)} R^4 S^{8}. \label{gauged:eq-quadratic-time}
\end{align}
\end{lemma}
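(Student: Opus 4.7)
Since $\gquadratic[\leqN][d][i][j][k](0) = \quadratic[\leqN][d][i][j][k](0)$ by \eqref{gauged:eq-gquadratic}, the difference $D := \gquadratic[\leqN][d][i][j][k] - \quadratic[\leqN][d][i][j][k]$ is the Duhamel integral (from zero initial data) of
\[
\big[ \glinear[\leqN][r][i], \partial^k \glinear[\leqN][r][j] \big] - \big[ \linear[\leqN][r][i], \partial^k \linear[\leqN][r][j] \big] = \big[ \Phi^i, \partial^k \linear[\leqN][r][j] \big] + \big[ \linear[\leqN][r][i], \partial^k \Phi^j \big] + \big[ \Phi^i, \partial^k \Phi^j \big],
\]
where $\Phi^i := \glinear[\leqN][r][i] - \linear[\leqN][r][i]$. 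The purely quadratic term $\big[ \Phi^i, \partial^k \Phi^j \big]$ is controlled by a direct product estimate, using $\| \Phi \|_{C_t^0 \Cs_x^{1-\rho}} \lesssim N^{-\kappa} R^2 S^4$ from Proposition \ref{prop:linear-objects-close-gauge-covariance} and the fact that the sum of regularities $(1-\rho)+(-\rho)$ is positive. For the two mixed terms the sharper decomposition $\Phi^i = \big[ h_\ell \parall Q^\ell_{> N} \linear[][r][i] \big] + \Psi^i$ from \eqref{gauged:eq-glinear-decomposition} is substituted; the $\Psi$-contributions, with $\Psi \in C_t^0 \Cs_x^{1-\rho} \cap \Wc^{1+5\kappa,\rho}$, are then handled by standard paraproduct and Schauder estimates, the necessary regularity slack being provided by the first bound in \eqref{eq:gauged-lem-quadratic-extra-data-bound}.

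\medskip

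The heart of the analysis is the resonance of the remaining main contributions
\[
\big[ \linear[\leqN][r][i], \big[ h_\ell \parall \partial^k Q^\ell_{> N} \linear[][r][j] \big] \big] \qquad \text{and} \qquad \big[ \big[ h_\ell \parall Q^\ell_{> N} \linear[][r][i] \big], \partial^k \linear[\leqN][r][j] \big],
\]
whose low$\parall$high and high$\paragg$low paraproduct components are off-resonant and harmless. For the first, a commutator argument in the spirit of Lemma \ref{prelim:lem-lh-hh} (adapted to our regularities as discussed below) permits the extraction of $h_\ell$ from the high$\parasim$high paraproduct, reducing matters to the renormalized stochastic object controlled by the second bound in \eqref{eq:gauged-lem-quadratic-extra-data-bound}; combined with the Lie algebra identity $\sum_a [E_a, [E_b, E^a]] = -\Kil(E_b)$, this contributes $-\delta^{ij} \theta_{\leq N} \Kil(h_k)$ after summing over $\ell$ via $\delta^{k\ell}$. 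For the second main contribution, a parallel commutator argument reduces the analysis to the resonance of $Q^\ell_{> N} \linear[][r][i] \parasim \partial^k \linear[\leqN][r][j]$, which, unlike the object in Corollary \ref{objects:cor-quadratic-Q}, has the Fourier multiplier and derivative on opposite factors. A direct Fourier computation shows that, upon pairing $n_2 = -n_1$, the odd parities of $\partial^\ell \rho_{> N}$ and of $n \mapsto n^k$ produce an extra minus sign, yielding the opposite resonance $-\delta^{ij}\delta^{k\ell}\theta_{\leq N}$; together with the companion identity $\sum_c \big[ [E_b, E_c], E^c \big] = +\Kil(E_b)$, this yields a second contribution of $-\delta^{ij} \theta_{\leq N} \Kil(h_k)$. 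Summing produces the expected counterterm $-2\delta^{ij} \theta_{\leq N} \Kil(h^k)$, whose Duhamel integral matches the correction appearing in \eqref{gauged:eq-quadratic} and \eqref{gauged:eq-quadratic-time}.

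\medskip

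The principal technical obstacle is the commutator step needed to extract $h_\ell$ from the paraproduct. A direct application of Lemma \ref{prelim:lem-lh-hh} fails: with $\alpha = -\kappa$ (for $h$), $\beta = 1-\kappa$ (for $Q^\ell_{> N}\linear[][r][]$), and $\gamma = -1-\kappa$ (for $\partial^k \linear[\leqN][r][]$), the regularity sum equals $-3\kappa < 0$, violating the hypothesis of the lemma. This is overcome by exploiting the $N^{-\kappa+\varep}$ gain in the $Q^\ell_{> N}$-factor furnished by Lemma \ref{prelim:lem-Q}, combined with the refined regularity $\| \linear \|_{C_t^0 \Cs_x^{-\varep}} \leq R$ from \eqref{eq:gauged-lem-quadratic-extra-data-bound}, which together provide enough room to close the commutator estimate directly at the level of Fourier sums. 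The resulting $N^{2\varep}$ loss is precisely the gap between $N^{-\kappa}$ and the stated rate $N^{-(\kappa-2\varep)}$. Once the driving term is bounded in a suitable $\Cs_x^{-2\kappa}$-type norm, Proposition \ref{prelim:prop-Duhamel-Salpha} converts this into the claimed $C_t^0 \Cs_x^1$ and $\Sc^{1-2\kappa}$ estimates.
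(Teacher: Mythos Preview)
Your overall architecture matches the paper: reduce to the forcing, decompose via $\Phi = [h_\ell \parall Q^\ell_{>N}\linear] + \Psi$, treat the $\Psi$ and $\Phi\cdot\Phi$ pieces by product estimates, and extract $h_\ell$ from the two resonant contributions to recover the $-2\delta^{ij}\theta_{\leq N}\Kil h^k$ counterterm. Your resonance bookkeeping (including the sign flip for the second contribution and the Lie-algebra identities) is correct.

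The gap is in the commutator step. Your proposed fix---using only $h\in \Cs_x^{-\kappa}$ together with the frequency localization of $Q^\ell_{>N}$ and the refined bound $\linear\in\Cs_x^{-\varep}$---does not close. Concretely, once you write the commutator as $-(P_{\gtrsim N}h_\ell^a)\cdot\big[\linear^i\parasim[E_a,\partial^k Q^\ell_{>N}\linear^j]\big]$, you must multiply $P_{\gtrsim N}h_\ell^a\in\Cs_x^{-\kappa}$ by an object that, even after renormalization, lives only in $\Cs_x^{-5\kappa}$ (this is exactly what the second assumption in \eqref{eq:gauged-lem-quadratic-extra-data-bound} gives). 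The regularity sum is $-6\kappa<0$ and no amount of $N$-decay from $Q^\ell_{>N}$ rescues this: the problematic part is the low-frequency output coming from $P_{\sim N}h_\ell$ hitting the high$\times$high product, which carries a factor $N^{\kappa+2\varep}$ rather than a gain. The same obstruction hits the ``extracted'' term $h_\ell^a\cdot(\text{renormalized object})$ itself, which your plan needs to make sense of but cannot with $h\in\Cs_x^{-\kappa}$ alone.

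The paper's resolution is to use the \emph{time-weighted} regularity of $h$: by Definition~\ref{def:U-and-h} one has $\|h\|_{\Wc^{10\kappa,10\kappa}}\leq S$ on $[0,\tau_S]$, so at positive times $h$ has spatial regularity $10\kappa>0$. With $\alpha=10\kappa$ for $h$, $\beta=-5\kappa$ for $\partial^k Q^\ell_{>N}\linear$, and $\gamma=-\kappa$ for $\linear$, Lemma~\ref{prelim:lem-lh-hh} applies directly (sum $4\kappa>0$), and the product $h_\ell^a\cdot(\text{renorm.})$ is likewise well-defined in $\Wc^{-10\kappa,10\kappa}$. You never invoke this weighted bound, and it is precisely the missing ingredient. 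A secondary inaccuracy: the $\Psi\parall\partial^k\linear$ piece cannot be placed in a ``$\Cs_x^{-2\kappa}$-type'' space but only in $C_t^0\Cs_x^{-1+\varep}$; this is harmless for the Schauder step, but worth correcting.
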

\begin{remark}
While $\theta_{\leq N}$ is uniformly bounded in $N$, it does not decay as $N$ tends to infinity. In particular, it follows that the difference \begin{equation*}
\gquadratic[\leqN][d][i][j][k]-\quadratic[\leqN][d][i][j][k]  
\end{equation*}
does not converge to zero as $N$ tends to infinity. As we will see in the proof of Proposition \ref{gauged:prop-enhanced}, the $\Duh \Kil  h$-terms cancel once we consider the difference between the combined objects $\gquadratic[\leqN][r][j]$ and $\quadratic[\leqN][r][j]$. 
\end{remark}

\begin{proof}[Proof of Lemma \ref{gauged:lem-quadratic}:] Throughout the proof, all space-time norms are implicitly restricted to $[0,T]\times \T^2$. Recall that by definition of $\tau_S$ (Definition \ref{def:U-and-h}), we have that
\beq\label{eq:gauged-lem-quadratic-h-estimate} \|h\|_{C_t^0 \Cs_x^{-\kappa}}, ~~ \|h\|_{\Wc^{10\kappa, 10\kappa}} \leq S. \eeq
Using the definitions of $\gquadratic[\leqN][d][i][j][k]$ and $\quadratic[\leqN][d][i][j][k]$, it holds that
\begin{equation}\label{gauged:eq-quadratic-p1}
\begin{aligned}
&\gquadratic[\leqN][d][i][j][k] - \Big( \quadratic[\leqN][d][i][j][k] - 2 \delta^{ij} \theta_{\leq N} \Duh \Kil  h^k  \Big) \\ 
= \, &\Duh \bigg( 
\Big[ \glinear[\leqN][r][i], \partial^k \glinear[\leqN][r][j] \Big] - 
\Big( \Big[ \linear[\leqN][r][i], \partial^k \linear[\leqN][r][j] \Big] 
- 2 \delta^{ij} \theta_{\leq N} \Kil h^k \Big) \bigg). 
\end{aligned}
\end{equation}
Due to our estimates for the Duhamel integral (Proposition \ref{prelim:prop-Duhamel-Salpha} and Lemma \ref{prelim:lem-Schauder}), the proof of \eqref{gauged:eq-quadratic} and \eqref{gauged:eq-quadratic-time} can be reduced to 
\begin{equation}\label{gauged:eq-quadratic-p2}
\bigg\| \Big[ \glinear[\leqN][r][i], \partial^k \glinear[\leqN][r][j] \Big] - 
\Big( \Big[ \linear[\leqN][r][i], \partial^k \linear[\leqN][r][j] \Big] 
- 2 \delta^{ij} \theta_{\leq N} \Kil h^k \Big) \bigg\|_{C_t^0 \Cs_x^{-1+\varep} + \Wc^{-10\kappa, \rho}} \lesssim N^{-(\kappa - 2\varep)} R^4 S^{8}. 
\end{equation}
To this end, we first use the two decompositions from \eqref{gauged:eq-glinear-decomposition}, from which it follows that 
\begin{align}
& \Big[ \glinear[\leqN][r][i], \partial^k \glinear[\leqN][r][j] \Big] 
- \Big[ \linear[\leqN][r][i], \partial^k \linear[\leqN][r][j] \Big]
\notag \\
=\, & \Big[ \linear[\leqN][r][i], \partial^k 
\Big[ h_{\ell} \parall Q^\ell_{>N} \linear[][r][j] \Big] \Big] 
+ \Big[ \Big[ h_{\ell} \parall Q^{\ell}_{>N} \linear[][r][i] \Big] , \partial^k \linear[\leqN][r][j] \Big] 
\label{gauged:eq-quadratic-p3} \\ 
+\,& \Big[ \linear[\leqN][r][i], \partial^k \Psi^j_{\leq N} \Big]
+ \Big[ \Psi^i_{\leq N}, \partial^k \linear[\leqN][r][j] \Big] 
+ \Big[ \Phi^i_{\leq N}, \partial^k \Phi^j_{\leq N} \Big]. 
\label{gauged:eq-quadratic-p4}
\end{align}
Thus, the desired estimate \eqref{gauged:eq-quadratic-p2} can be further reduced to the three estimates
\begin{align}
\Big\| 
\Big[ \linear[\leqN][r][i], \partial^k 
\Big[ h_{\ell} \parall Q^\ell_{>N} \linear[][r][j] \Big] \Big]
+ \delta^{ij} \theta_{\leq N} \Kil h^k
\Big\|_{\Wc^{-10\kappa, 10\kappa}} 
&\lesssim N^{-\kappa} R^2 S
\label{gauged:eq-quadratic-p5}, \\ 
\Big\| 
 \Big[ \Big[ h_{\ell} \parall Q^{\ell}_{>N} \linear[][r][i] \Big] , \partial^k \linear[\leqN][r][j] \Big] 
+ \delta^{ij} \theta_{\leq N} \Kil h^k
\Big\|_{\Wc^{-10\kappa, 10\kappa}} 
&\lesssim N^{-\kappa} R^2 S
\label{gauged:eq-quadratic-p6}, \\
\Big\| 
\Big[ \linear[\leqN][r][i], \partial^k \Psi^j_{\leq N} \Big]
+ \Big[ \Phi^i_{\leq N}, \partial^k \Phi^j_{\leq N} \Big]
\Big\|_{\Wc^{-10\kappa, \rho}} 
&\lesssim N^{-\kappa} R^4 S^{8}
\label{gauged:eq-quadratic-p7} \\
\Big\|\Big[ \Psi^i_{\leq N}, \partial^k \linear[\leqN][r][j] \Big] \Big\|_{C_t^0 \Cs_x^{-1+\varep} + \Wc^{-10\kappa, \rho}} &\lesssim N^{-(\kappa - 2\varep)} R^3 S^4.\label{gauged:eq-quadratic-p20}
\end{align}
We prove \eqref{gauged:eq-quadratic-p5} and \eqref{gauged:eq-quadratic-p20} in full detail, but omit the arguments for both \eqref{gauged:eq-quadratic-p6} and \eqref{gauged:eq-quadratic-p7}. The reason is that the proof of \eqref{gauged:eq-quadratic-p6} is similar to the proof of \eqref{gauged:eq-quadratic-p5} and that \eqref{gauged:eq-quadratic-p7} follows directly from para-product estimates (Lemma \ref{prelim:lem-para-product}), \eqref{gauged:eq-Phi}, and \eqref{gauged:eq-Psi}. In order to prove \eqref{gauged:eq-quadratic-p5}, we further decompose 
\begin{align}
&\Big[ \linear[\leqN][r][i], \partial^k 
\Big[ h_{,\ell} \parall Q^\ell_{>N} \linear[][r][j] \Big] \Big]
+ \delta^{ij} \theta_{\leq N} \Kil h^k 
\notag \\
=\, & \Big[ \linear[\leqN][r][i], \Big[ \partial^k h_{\ell} \parall Q^\ell_{>N} \linear[][r][j] \Big] \Big] 
\label{gauged:eq-quadratic-p8} \\ 
+\, &  \Big[ \linear[\leqN][r][i] \paransim \Big[ h_{\ell} \parall \partial^k Q^\ell_{>N} \linear[][r][j] \Big] \Big] 
\label{gauged:eq-quadratic-p9} \\ 
+\, &  \Big[ \linear[\leqN][r][i] \parasim \Big[ h_{\ell} \parall \partial^k Q^\ell_{>N} \linear[][r][j] \Big] \Big]  
- h^a_{\ell} \Big[ \linear[\leqN][r][i] \parasim \Big[ E_a, \partial^k Q^\ell_{>N} \linear[][r][j] \Big] \Big] 
\label{gauged:eq-quadratic-p10} \\
+\, & h^a_{\ell} \bigg( \Big[ \linear[\leqN][r][i] \parasim \Big[ E_a, \partial^k Q^\ell_{>N} \linear[][r][j] \Big] \Big] +\delta^{ij} \delta^{\ell k} \theta_{\leq N} \Kil E_a \bigg). 
\label{gauged:eq-quadratic-p11} 
\end{align}
We now estimate \eqref{gauged:eq-quadratic-p8}-\eqref{gauged:eq-quadratic-p11} separately. Using Lemma \ref{prelim:lem-para-product}, Lemma \ref{prelim:lem-Q}, and \eqref{eq:gauged-lem-quadratic-h-estimate}, it holds that 
\begin{align*}
\Big\| \eqref{gauged:eq-quadratic-p8} \Big\|_{\Wc^{-10\kappa, 10\kappa}} 
&\lesssim \big\| \, \linear[\leqN][r] \big\|_{C_t^0 \Cs_x^{-\kappa}} 
\big\| \partial h \big\|_{\Wc^{-1+10\kappa, 10\kappa}} 
\big\| Q_{>N} \linear[][r][] \big\|_{C_t^0 \Cs_x^{1-2\kappa}} \\
&\lesssim N^{-\kappa}
\big\| h \big\|_{\Wc^{10\kappa, 10\kappa}} 
\big\| \, \linear[][r] \big\|_{C_t^0 \Cs_x^{-\kappa}}^2 \\ 
&\lesssim N^{-\kappa} R^2 S. 
\end{align*}
Using Lemma \ref{prelim:lem-para-product}, Lemma \ref{prelim:lem-Q}, and \eqref{eq:gauged-lem-quadratic-h-estimate}, it follows that 
\begin{align*}
\Big\| \eqref{gauged:eq-quadratic-p9} \Big\|_{C_t^0 \Cs_x^{-10\kappa}} 
&\lesssim 
\big\| \, \linear[\leqN][r][i] \big\|_{C_t^0 \Cs_x^{-\kappa}} 
\Big\| \Big[ h_{\ell} \parall \partial^k Q^\ell_{>N} \linear[][r][j] \Big] \Big\|_{C_t^0 \Cs_x^{-8\kappa}} \\ 
&\lesssim \big\| \, \linear[\leqN][r][] \big\|_{C_t^0 \Cs_x^{-\kappa}}
\big\| h_{\ell} \big\|_{C_t^0 \Cs_x^{-\kappa}} 
\big\| \partial^k Q^\ell_{>N} \linear[][r][j] \big\|_{C_t^0 \Cs_x^{-5\kappa}} \\ 
&\lesssim N^{-\kappa} 
\big\| h \big\|_{C_t^0 \Cs_x^{-\kappa}} \big\| \, \linear[][r][] \big\|_{C_t^0 \Cs_x^{-\kappa}}^2 
\lesssim N^{-\kappa} R^2 S. 
\end{align*}
Using \cite[Lemma 2.4]{GIP15}, Lemma \ref{prelim:lem-Q}, and \eqref{eq:gauged-lem-quadratic-h-estimate}, it follows that
\begin{align*}
\Big\| \eqref{gauged:eq-quadratic-p10} \Big\|_{\Wc^{-10\kappa, 10\kappa}}
&\lesssim \big\| \, \linear[\leqN][r][i] \big\|_{C_t^0 \Cs_x^{-\kappa}} 
\big\| h_{\ell} \big\|_{\Wc^{10\kappa, 10\kappa}} 
\big\| \partial^k Q^{\ell}_{>N} \linear[][r][j] \big\|_{C_t^0 \Cs_x^{-5\kappa}}  \\
&\lesssim N^{-\kappa} R^2 S. 
\end{align*}
Finally, using a product estimate (Lemma \ref{prelim:lem-para-product}) and \eqref{eq:gauged-lem-quadratic-extra-data-bound}, it holds that 
\begin{align*}
\Big\|  \eqref{gauged:eq-quadratic-p11} \Big\|_{\Wc^{-10\kappa, 10\kappa}}
&\lesssim \big\| h \big\|_{\Wc^{10\kappa, 10\kappa}} 
\max_{\substack{E\in \frkg\colon \\ \| E \|_\frkg \leq 1 }} 
\Big\|  \Big[ \linear[\leqN][r][i] \parasim \Big[ E, \partial^k Q^\ell_{>N} \linear[][r][j] \Big] \Big] +\delta^{ij} \delta^{\ell k} \theta_{\leq N} \Kil E 
\Big\|_{C_t^0 \Cs_x^{-5\kappa}} \\ 
&\lesssim N^{-\kappa} R^2 S. 
\end{align*}
Thus, the $\Wc^{-10\kappa, 10\kappa}$ norm of all four terms \eqref{gauged:eq-quadratic-p8}, \eqref{gauged:eq-quadratic-p9}, \eqref{gauged:eq-quadratic-p10}, and \eqref{gauged:eq-quadratic-p11} are under control, which completes the proof of \eqref{gauged:eq-quadratic-p5}.

Next, we show \eqref{gauged:eq-quadratic-p20}. We split
\[ \Big[ \Psi^i_{\leq N}, \partial^k \linear[\leqN][r][j] \Big] = \Big[ \Psi^i_{\leq N} \parall  \partial^k \linear[\leqN][r][j] \Big] + \Big[ \Psi^i_{\leq N} \paragtrsim \partial^k \linear[\leqN][r][j] \Big]. \]
By a paraproduct estimate, \eqref{gauged:eq-Psi}, and \eqref{eq:gauged-lem-quadratic-extra-data-bound}, we have that
\[ \Big\|\Big[ \Psi^i_{\leq N} \parall  \partial^k \linear[\leqN][r][j] \Big]\Big\|_{C_t^0 \Cs_x^{-1+\varep}} \lesssim \|\Psi_{\leq N}\|_{C_t^0 \Cs_x^{1-\rho}} \big\|\partial \linear[\leqN][r][] \big\|_{C_t^0 \Cs_x^{-1+\varep}} \lesssim N^{2\varep} \|\Psi_{\leq N}\|_{C_t^0 \Cs_x^{1-\rho}} \big\|\ptl\linear\big\|_{C_t^0 \Cs_x^{-1-\varep}} \lesssim N^{-(\kappa - 2\varep)} R^3 S^4.  \]
By a paraproduct estimate and \eqref{gauged:eq-Psi}, we have that
\[ \Big\|\Big[ \Psi^i_{\leq N} \paragtrsim \partial^k \linear[\leqN][r][j] \Big]\Big\|_{\Wc^{\kappa, \rho}} \lesssim \|\Psi_{\leq N}\|_{\Wc^{1+5\kappa, \rho}} \big\|\ptl \linear\big\|_{C_t^0 \Cs_x^{-1-\kappa}} \lesssim N^{-\kappa} R^3 S^4.  \]
This completes the proof of \eqref{gauged:eq-quadratic-p20} and thus the proof of the lemma.
\end{proof}

\begin{proof}[Proof of Proposition \ref{gauged:prop-enhanced}:]
Let $\varep:= \frac{\kappa}{10}$ be as in Lemma \ref{gauged:lem-quadratic}. 
Using Proposition \ref{objects:prop-enhanced}, Lemma \ref{lemma:linear-object}, and Corollary \ref{objects:cor-quadratic-Q}, we can choose an event $E_R^g$ which satisfies \eqref{gauged:eq-high-probability} and such that, on this event, 
$\Xi_{\leq N} \in \Dc_R([0,1])$ for all $N\in \dyadic$ and \eqref{eq:gauged-lem-quadratic-extra-data-bound} is satisfied. In particular, this implies that, on this event, all assumptions in Lemma \ref{gauged:lem-quadratic} are satisfied. It remains to show that, on this event, the differences of the stochastic objects in  \eqref{objects:eq-enhanced-1}-\eqref{objects:eq-enhanced-6} are controlled. Since the argument for \eqref{objects:eq-enhanced-2} and  \eqref{objects:eq-enhanced-5} is similar to the argument for \eqref{objects:eq-enhanced-1}, and the argument for \eqref{objects:eq-enhanced-3} is similar to the argument for \eqref{objects:eq-enhanced-6}, we only control the difference for \eqref{objects:eq-enhanced-1}, \eqref{objects:eq-enhanced-4}, and \eqref{objects:eq-enhanced-6}. \\ 

\emph{Difference estimate for \eqref{objects:eq-enhanced-1}:} The difference between the linear stochastic objects $\glinear[\leqN][r][]$ and $\linear[\leqN][r][]$ in \eqref{objects:eq-enhanced-1} can be controlled directly using \eqref{gauged:eq-glinear-decomposition} and \eqref{gauged:eq-Phi}, which yield 
\begin{equation*}
\Big\| \glinear[\leqN][r][]-  \linear[\leqN][r][] \Big\|_{C_t^0 \Cs_x^{-\kappa}}
\lesssim \Big\| \Phi_{\leq N} \Big\|_{C_t^0 \Cs_x^{-\kappa}} 
\lesssim N^{-\kappa} R^2 S^4. 
\end{equation*}
The differences between the quadratic nonlinearities in \eqref{objects:eq-enhanced-1} can be controlled using the product estimate (Lemma \ref{prelim:lem-para-product}), \eqref{gauged:eq-glinear-decomposition}, and \eqref{gauged:eq-Phi}, which yield that
\begin{align*}
\Big\| \Big[ \glinear[\leqN][r][i], \glinear[\leqN][r][j] \Big] 
- \Big[ \linear[\leqN][r][i], \linear[\leqN][r][j] \Big] \Big\|_{C_t^0 \Cs_x^{-\kappa}} 
\lesssim\, & \Big\| \Big[ \Phi^i_{\leq N}, \linear[\leqN][r][j] \Big] 
+ \Big[ \linear[\leqN][r][i], \Phi^j_{\leq N} \Big]
+ \Big[ \Phi^i_{\leq N}, \Phi^j_{\leq N} \Big] \Big\|_{C_t^0 \Cs_x^{-\kappa}} \\ 
\lesssim\, & \Big\| \Phi_{\leq N} \Big\|_{C_t^0 \Cs_x^{2\kappa}} 
\Big\| \linear[\leqN][r][] \Big\|_{C_t^0 \Cs_x^{-\kappa}} 
+ \Big\| \Phi_{\leq N} \Big\|_{C_t^0 \Cs_x^{2\kappa}}^2 \\ 
\lesssim\, & N^{-\kappa} R^3 S^4 + \Big( N^{-\kappa} R^2 S^4 \Big)^2  
\lesssim  N^{-\kappa} R^4 S^{8}. 
\end{align*}

\emph{Difference estimate for \eqref{objects:eq-enhanced-4}:} 
In order to utilize Lemma \ref{gauged:lem-quadratic}, we write
\begin{align}
\gquadratic[\leqN][r][j] - \quadratic[\leqN][r][j] 
&= 2 \bigg( \gquadratic[\leqN][d][j][i][j] - \quadratic[\leqN][d][j][i][j] \bigg) 
- \bigg( \gquadratic[\leqN][d][j][j][i] - \quadratic[\leqN][d][j][j][i] \bigg) \notag \\ 
&= 2 \bigg( \gquadratic[\leqN][d][j][i][j] - \Big( 
\quadratic[\leqN][d][j][i][j] 
- 2 \delta^i_j \theta_{\leq N} \Duh  \Kil h_{\leq N}^j  \Big) \bigg) 
\label{gauged:eq-enhanced-p1} \\
&- \bigg( \gquadratic[\leqN][d][j][j][i] - \Big(  \quadratic[\leqN][d][j][j][i] - 2 \delta^j_j \theta_{\leq N} \Duh \Kil  h_{\leq N}^i \bigg)
\label{gauged:eq-enhanced-p2} \\ 
&- 4 \delta^i_j \theta_{\leq N} \Duh \Kil h^j_{\leq N} + 2 \delta^j_j \theta_{\leq N} \Duh \Kil h^i_{\leq N}. \label{gauged:eq-enhanced-p3} 
\end{align}
Using Lemma \ref{gauged:lem-quadratic}, it holds that
\begin{equation*}
\Big\| \eqref{gauged:eq-enhanced-p1} \Big\|_{C_t^0 \Cs_x^{1-2\kappa} \,  \scalebox{0.8}{$\medcap$} \, S^{1-2\kappa}} 
+ \Big\| \eqref{gauged:eq-enhanced-p2} \Big\|_{C_t^0 \Cs_x^{1-2\kappa} \,  \scalebox{0.8}{$\medcap$} \, S^{1-2\kappa}} 
\lesssim N^{-(\kappa - 2\varep)} R^4 S^{8},
\end{equation*}
which is acceptable. By computing the implicit sum over $j\in [2]$, it holds that
\begin{equation*}
\eqref{gauged:eq-enhanced-p3} = - 4 \theta_{\leq N} \Duh \Kil h^i_{\leq N} + 4 \theta_{\leq N} \Duh \Kil h^i_{\leq N} = 0, 
\end{equation*}
and hence \eqref{gauged:eq-enhanced-p3} vanishes. 

\emph{Difference estimate for \eqref{objects:eq-enhanced-6}:} 
First, by Proposition \ref{prop:linear-objects-close-gauge-covariance} we have that
\[ \sigma^2_{\leq N} \big\|\Kil\big(\glinear[\leqN][r][] - \linear[\leqN][r][]\big)\big\|_{C_t^0 \Cs_x^{-\kappa}} \lesssim \sigma^2_{\leq N} \big\|\Phi_{\leq N}\|_{C_t^0 \Cs_x^{-\kappa}} \lesssim \sigma^2_{\leq N} N^{-\kappa} R^2 S^4 \lesssim N^{-(k-4\varep)} R^2 S^4, \]
where the final inequality follows since $\sigma^2_{\leq N} \lesssim \log N$. Next, let $i, j, k \in [2]$. We write
\begin{align}
&\Big[\gquadratic[\leqN][r][i] \parasim \ptl^j \glinear[\leqN][r][k]\Big] - \Big[\quadratic[\leqN][r][i] \parasim \ptl^j \linear[\leqN][r][k]\Big] \\
&=  \Big[\gquadratic[\leqN][r][i] \parasim \ptl^j \Phi^k_{\leq N}\Big] \label{gauged:eq-enhanced-p4}\\
&+ \Big[\gquadratic[\leqN][r][i]  -   \quadratic[\leqN][r][i] \parasim \ptl^j \linear[\leqN][r][k]\Big] \label{gauged:eq-enhanced-p5} 
\end{align}
By a paraproduct estimate, the difference estimate for \eqref{objects:eq-enhanced-4}, and \eqref{gauged:eq-Phi}, we have that
\[ \|\eqref{gauged:eq-enhanced-p4}\|_{C_t^0 \Cs_x^{-\kappa}} \lesssim \Big\|\gquadratic[\leqN][r][i]\Big\|_{C_t^0 \Cs_x^{1-2\kappa}} \|\ptl^j \Phi^k\|_{C_t^0 \Cs_x^{-\rho}} \lesssim N^{-\kappa} \big(R^2 + N^{-(\kappa -2 \varep)}R^4 S^8 \big) R^2 S^4.  \]
By the difference estimate for \eqref{objects:eq-enhanced-4}, a paraproduct estimate, and \eqref{eq:gauged-lem-quadratic-extra-data-bound}, we have that
\[\begin{split}
\|\eqref{gauged:eq-enhanced-p5}\|_{C_t^0 \Cs_x^{\varep}} &\lesssim \Big\|\gquadratic[\leqN][r][i]  -   \quadratic[\leqN][r][i]\Big\|_{C_t^0 \Cs_x^1} \big\|\ptl \linear[\leqN]\big\|_{C_t^0 \Cs_x^{-1+\varep}} \\
&\lesssim N^{2\varep} \Big\|\gquadratic[\leqN][r][i]  -   \quadratic[\leqN][r][i]\Big\|_{C_t^0 \Cs_x^1} \big\|\ptl \linear\big\|_{C_t^0 \Cs_x^{-1-\varep}} \lesssim N^{-(\kappa - 4 \varep)} R^5 S^{8}.
\end{split}\]
The desired result now follows.
\end{proof}

\subsection{Contraction mapping argument and the proof of Proposition \ref{prop:main-gauge-covariance}}\label{section:contraction-mapping-gauge-covariance}

In this section, we carry out the contraction mapping argument that gives the proof of Proposition \ref{prop:main-gauge-covariance}.

\begin{notation}
For brevity, let $\mc{X} := \Sc^{1-2\kappa}$, $\mc{Y} := \Sc^{2-5\kappa}$. We will sometimes write $(\mc{X} \times \mc{Y})([0, T])$ instead of $\mc{X}([0, T]) \times \mc{Y}([0, T])$. We will write $\|(X, Y)\|_{\mc{X} \times \mc{Y}} := \|X\|_{\mc{X}} + \|Y\|_{\mc{Y}}$.
\end{notation}

Recall the map $\Gamma = (\Gamma^X, \Gamma^Y)$ from Definition \ref{def:Gamma-contraction-map}, which was shown to be a (1/2)-contraction in the proof of Proposition \ref{nonlinear:prop-wellposedness-para}, and whose fixed points were the solutions to the paracontrolled SYM equation. Recall that for any $T \geq 0$, $\Gamma : (\mc{X} \times \mc{Y})([0, T]) \ra (\mc{X} \times \mc{Y})([0, T])$. Recall also that there is implicit dependence on $A^{(0)}$ and $\Xi$, i.e. $\Gamma(X, Y) = \Gamma(X, Y; A^{(0)}, \Xi)$. From Proposition \ref{nonlinear:prop-wellposedness-para} (and a standard bootstrapping argument), we have that on $[0, \maxtime)$,
\[ A = \linear + \quadratic + X + Y, \]
where for any $T \in [0, \maxtime)$, we have that $\Gamma(X, Y) = (X, Y)$ on $[0, T]$. \\

In the following , recall (from Definition \ref{def:U-and-h}) that $h = g^{-1} dg$. Recall equation \eqref{eq:gauged-A}, which is the equation which defines $\gauged{A}$. As usual, in order to solve this equation, we cast it as a fixed point problem, where now the contraction map is $\Gamma$ plus a perturbation (arising from the fact that equation \eqref{eq:gauged-A} is a modified version of the SYM equation) which is defined as follows.

\begin{definition}
Let $T \in [0, \maxtime)$. Define the map 
\[ \mbf{P}^Y : \mc{X}([0, T]) \times \mc{Y}([0, T]) \times C_t^0 \Cs_x^{1-2\kappa}([0, T] \times \T^2) \ra C_t^0 \Cs_x^{2-2\kappa}([0, T]) \]
by
\[ \mbf{P}^Y(\bar{X}, \bar{Y}, D) = \mbf{P}^Y(\bar{X}, \bar{Y}, D; X, Y, h) := \Duh(d [D_j, h^j]) + \Duh(d[X_j - \bar{X}_j, h^j]) + \Duh(d[Y_j - \bar{Y}_j, h^j]).\]
\end{definition}

\begin{remark}
The fact that $\mbf{P}^Y$ maps into $C_t^0 \Cs_x^{2-2\kappa}$ follows by Schauder.
\end{remark}

Having defined the perturbation $\mbf{P}^Y$, we may now define the contraction map corresponding to the equation for $\gauged{A}$.

\begin{definition}
Let $T \in [0, \maxtime)$. Let $D \in C_t^0 \Cs_x^{1-2\kappa}([0, T] \times \T^2)$. Define 
\[ \bar{\Gamma} : (\mc{X} \times \mc{Y})([0, T]) \ra (\mc{X} \times \mc{Y})([0, T])\]
by
\[ \bar{\Gamma}(\bar{X}, \bar{Y})  :=  \big(\Gamma^X(\bar{X}, \bar{Y}), \bar{\Gamma}^Y(\bar{X}, \bar{Y}) \big), \]
where $\Gamma^X$ is as before and
\[ \bar{\Gamma}^Y(\bar{X}, \bar{Y}) = \bar{\Gamma}^Y(\bar{X}, \bar{Y}; A^{(0)}, \Xi, X, Y, D, h) := \Gamma^Y(\bar{X}, \bar{Y}; A^{(0)}, \Xi) + \mbf{P}^Y(\bar{X}, \bar{Y}, D; X, Y, h). \]
Note that $\bar{\Gamma}$ has implicit dependence on $T, A^{(0)}, \Xi, X, Y, D, h$.
\end{definition}

By the same proof as for Proposition \ref{nonlinear:prop-wellposedness-para}, we have that $\gauged{A}$ may be expressed as
\[ \gauged{A} = \glinear + \gquadratic + \gauged{X} + \gauged{Y}, \]
where $(\gauged{X}, \gauged{Y})$ is a fixed point $(\gauged{X}, \gauged{Y}) = \bar{\Gamma}(\gauged{X}, \gauged{Y})$, or more precisely:
\[ (\gauged{X}, \gauged{Y}) = \bar{\Gamma}\Big(\gauged{X}, \gauged{Y}; A^{(0)}, \Xi^g, X, Y, \linear - \glinear + \quadratic - \gquadratic, h \Big), \]
at least up to a small enough time depending (inverse-polynomially) on the sizes of the various implicit parameters in the definition of $\bar{\Gamma}$. Then a standard bootstrapping argument shows that the above holds up to the simultaneous maximal existence time of $\gauged{A}, A$, and $g$. \\

The point now is that by Proposition \ref{gauged:prop-enhanced}, we have that $d(\Xi, \Xi^g) \ra 0$ as $N \toinf$, which in particular also implies that $\linear - \glinear + \quadratic - \gquadratic$ will be going to zero as $N \toinf$. Thus as $N \toinf$, the maps $\Gamma, \bar{\Gamma}$ essentially become the same map, and this is the reason why $A = \gauged{A}$ in the $N \toinf$ limit. In order to deduce that $A = \gauged{A}$ from the preceeding facts, we need the following stability estimate. The proof is very similar to the proof that $\Gamma$ is a contraction map in the proof of Proposition \ref{nonlinear:prop-wellposedness-para}, and thus it is omitted. First, recall the spaces $\Dc_R([0, T])$ \eqref{eq:Dc-R} and $\mc{B}_{\leq S}^{-\kappa}$ \eqref{eq:BR}.

\begin{lemma}\label{lemma:gamma-and-bar-gamma-close}
Let $C = C(\kappa, \theta)$, be a sufficiently large constant which depends only on\footnote{Recall that $\kappa, \theta$ enter into the definition of $\Sc$ (Definition \ref{prelim:def-solution-space})} $\kappa, \theta$. Let $T_0 \in [0, \maxtime)$. For $R \geq 1$, there exists $T \leq C^{-1} R^{-C}$ such that the following holds. Suppose $R \geq 1$ is such that
\[\begin{split}
&\Xi, \Xi^g \in \mc{D}_R([0, T_0]), ~~ \|(X, Y)\|_{(\mc{X} \times \mc{Y})([0, T_0])} \leq  R , ~~ A^{(0)}, \bar{A}^{(0)} \in \mc{B}_{\leq R}^{-\kappa}, ~~\|D\|_{C_t^0 \Cs_x^{1-2\kappa}([0, T_0])} \leq R.
\end{split}\]
Then, on $[0, T]$, we have that
\[\begin{split}
\|\Gamma(X_0, Y_0) - \bar{\Gamma}(\bar{X}_0, \bar{Y}_0)\|_{(\mc{X} \times \mc{Y})([0, T])} \leq \frac{1}{2} \|&(X_0, Y_0) - (\bar{X}_0, \bar{Y}_0)\|_{(\mc{X} \times \mc{Y})([0, T])} ~+ \\
&\tilde{C}(R) \big( \|A^{(0)} - \bar{A}^{(0)}\|_{\Cs_x^{-\kappa}} + d_T(\Xi, \bar{\Xi}) + \|D\|_{C_t^0 \Cs_x^{1-2\kappa}([0, T] \times \T^2)}\big),
\end{split}\]
where $\tilde{C}(R) = \tilde{C}(R, \kappa, \theta)$ is a constant depending only on $R, \kappa, \theta$. To be clear, here 
\[ \bar{\Gamma}(\bar{X}_0, \bar{Y}_0) = \bar{\Gamma}(\bar{X}_0, \bar{Y}_0; \bar{A}^{(0)}, \Xi^g, X, Y, D, h)\]
\end{lemma}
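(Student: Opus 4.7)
The plan is to decompose
\[\Gamma(X_0, Y_0) - \bar{\Gamma}(\bar{X}_0, \bar{Y}_0) = \Delta_1 - \big(0, \mbf{P}^Y(\bar{X}_0, \bar{Y}_0, D; X, Y, h)\big),\]
where $\Delta_1 := \Gamma(X_0, Y_0; A^{(0)}, \Xi) - \Gamma(\bar{X}_0, \bar{Y}_0; \bar{A}^{(0)}, \Xi^g)$. The first piece involves only the original map $\Gamma$ with differing parameters and can be handled by arguments already developed in the proof of Proposition~\ref{nonlinear:prop-wellposedness-para}, while the second piece is the genuinely new perturbation introduced in the definition of $\bar{\Gamma}$.

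For $\Delta_1$, I would reuse the Lipschitz arguments underlying the proof of Proposition~\ref{nonlinear:prop-wellposedness-para} -- in particular the estimate \eqref{nonlinear:eq-contract-p9} -- which combine the contractivity of $\Gamma$ in $(X_0, Y_0)$ with its Lipschitz dependence on $(A^{(0)}, \Xi)$. Provided $T \leq C^{-1} R^{-C}$ for $C$ sufficiently large depending only on $\kappa, \theta$, this yields
\[\|\Delta_1\|_{(\mc{X} \times \mc{Y})([0, T])} \leq \tfrac{1}{4}\|(X_0, Y_0) - (\bar{X}_0, \bar{Y}_0)\|_{(\mc{X} \times \mc{Y})([0, T])} + \tilde{C}(R)\Big(\|A^{(0)} - \bar{A}^{(0)}\|_{\Cs_x^{-\kappa}} + d_T(\Xi, \Xi^g)\Big),\]
where the contraction factor of $\tfrac{1}{4}$ (rather than $\tfrac{1}{2}$) is chosen so as to leave room for the perturbation's contribution below.

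For the perturbation $\mbf{P}^Y(\bar{X}_0, \bar{Y}_0, D; X, Y, h)$, I would estimate each of its three $\Duh$-summands separately in $\mc{Y} = \Sc^{2-5\kappa}([0, T])$. Combining the Schauder-type estimate (Proposition~\ref{prelim:prop-Duhamel-Salpha}) with product estimates (Lemma~\ref{prelim:lem-para-product}), and using the bounds on $h = g^{-1} dg$ encoded in Definition~\ref{def:U-and-h} (which are polynomial in $R$ on the interval $[0, T_0] \subset [0, \maxtime)$), one obtains the schematic bound
\[\big\|\Duh(d[f_j, h^j])\big\|_{\mc{Y}([0, T])} \lesssim T^\delta \tilde{C}(R)\, \|f\|_{\Sc^{1-2\kappa}([0, T]) \,\cap\, C_t^0 \Cs_x^{1-2\kappa}([0, T])}\]
for some $\delta > 0$ and any $f$ of the relevant regularity. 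Applying this with $f = D$ gives the $\|D\|_{C_t^0 \Cs_x^{1-2\kappa}}$ term on the right-hand side of the lemma, while applying it with $f = X - \bar{X}_0$ and $f = Y - \bar{Y}_0$ produces contributions which, in the intended application of the lemma (namely the proof of Proposition~\ref{prop:main-gauge-covariance}, where one takes $X = X_0$ and $Y = Y_0$, so that $X - \bar{X}_0 = X_0 - \bar{X}_0$ and similarly for $Y$), can be absorbed into $\tfrac{1}{4}\|(X_0, Y_0) - (\bar{X}_0, \bar{Y}_0)\|_{\mc{X} \times \mc{Y}}$ once $T$ is chosen small enough that $T^\delta \tilde{C}(R) \leq 1/4$. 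Summing the bounds for $\Delta_1$ and $\mbf{P}^Y$ yields the desired stability estimate with total contraction coefficient $\tfrac{1}{2}$.

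The main technical obstacle is the careful tracking of the singular time weights when applying the Schauder estimate to the Duhamel integrals $\Duh(d[X - \bar{X}_0, h])$ and $\Duh(d[Y - \bar{Y}_0, h])$, since the two arguments of the Lie bracket carry incompatible weights coming from the $\Sc^{1-2\kappa}$ and $\Wc^{10\kappa, 10\kappa}$ norms. The redeeming feature is that the sum of spatial regularities $(1-2\kappa) + (-\kappa)$ is strictly positive, so the product is classically defined without requiring any paraproduct decomposition, and the resulting time weight is mild enough to satisfy the $\nu < 1$ compatibility constraint of Proposition~\ref{prelim:prop-Duhamel-Salpha}. Beyond this, the argument is a routine adaptation of estimates already given in Section~\ref{section:nonlinear-estimates}.
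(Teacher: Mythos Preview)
Your approach is correct and matches what the paper intends: the paper itself omits the proof entirely, stating only that it is ``very similar to the proof that $\Gamma$ is a contraction map in the proof of Proposition~\ref{nonlinear:prop-wellposedness-para}.'' Your decomposition into the $\Gamma$-difference $\Delta_1$ (handled by \eqref{nonlinear:eq-contract-p9}) plus the new perturbation $\mbf{P}^Y$ (handled by direct Schauder and product estimates) is exactly the natural way to unpack this, and your identification of the implicit hypothesis $X=X_0$, $Y=Y_0$ needed to absorb the $\Duh(d[X-\bar X_0,h])$ terms is on point.

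One small caution: the bounds on $h$ recorded in Definition~\ref{def:U-and-h}, namely $h\in C_t^0\Cs_x^{-\kappa}\cap\Wc^{10\kappa,10\kappa}$, are by themselves too weak to place $\Duh(d[f_j,h^j])$ in $\Sc^{2-5\kappa}$, since the product $[f,h]$ then has spatial regularity at most $10\kappa$ and $d[f,h]$ only $-1+10\kappa$, which after Duhamel falls short of $2-5\kappa$. To close the estimate you need the stronger regularity $h\in\Wc^{1-O(\kappa),\,1/2-O(\kappa)}$, which is available from parabolic smoothing of $g$ (since $g\in C_t^0\Cs_x^{1-\kappa}\cap\Wc^{3/2-\kappa,1/4}$ and in fact $g\in\Wc^{2-O(\kappa),1/2-O(\kappa)}$ at positive times, limited only by the $\Cs_x^{-\kappa}$ regularity of $A$). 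The paper's remark that $\mbf{P}^Y$ maps into $C_t^0\Cs_x^{2-2\kappa}$ implicitly relies on this stronger bound as well; it is not a gap in your strategy so much as an additional input that both you and the paper leave tacit.
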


We may now prove Proposition \ref{prop:main-gauge-covariance}.

\begin{proof}[Proof of Proposition \ref{prop:main-gauge-covariance}]
Let $C$ be as in Lemma \ref{lemma:gamma-and-bar-gamma-close}. Fix $T_0 \in [0, \maxtime) \cap [0, 1]$. Let $R \geq 1$ be such that
\[\begin{split}
\sup_{N \in 2^{\N_0}} d_{T_0}(\Xi_{\leq N}, 0) \leq R, ~~ &\sup_{N \in 2^{\N_0}} d_{T_0}(\Xi_{\leq N}, \Xi^g_{\leq N}) \leq R ~~ \sup_{N \in 2^{\N_0}} \|(X_{\leq N}, Y_{\leq N})\|_{(\mc{X} \times \mc{Y})([0, T_0])} \leq  R, ~~ A^{(0)} \in \mc{B}_{\leq R}^{-\kappa}.
\end{split}\]
By Proposition \ref{gauged:prop-enhanced}, we have that $\lim_{N \toinf} d_{T_0}(\Xi_{\leq N}, \Xi^g_{\leq N}) \stackrel{a.s.}{=} 0$. Thus for large enough $N$, we have that $\Xi_{\leq N}^g \in \mc{D}_{2R}([0, T_0])$. Let $T \leq \min(T_0,  C^{-1}(2R)^{-C})$. By Lemma \ref{lemma:gamma-and-bar-gamma-close}, we have that
\[ \|\Gamma(X_{\leq N}, Y_{\leq N}) - \bar{\Gamma}(\gauged{X}_{\leq N}, \gauged{Y}_{\leq N})\|_{(\mc{X} \times \mc{Y})([0, T])} \leq \frac{1}{2} \|(X_{\leq N}, Y_{\leq N}) - (\gauged{X}_{\leq N}, \gauged{Y}_{\leq N})\|_{(\mc{X} \times \mc{Y})([0, T])} +  \tilde{C}(R) d_T(\Xi, \Xi^g).\]
We previously observed that
\[ A_{\leq N} = \linear[\leqN][r][] + \quadratic[\leqN][d][][] + X_{\leq N} + Y_{\leq N}, ~~ \gauged{A}_{\leq N} = \glinear[\leqN][r][] + \gquadratic[\leqN][d][][] + \gauged{X}_{\leq N} + \gauged{Y}_{\leq N}, \]
where
\[ (X_{\leq N}, Y_{\leq N}) = \Gamma(X_{\leq N}, Y_{\leq N}), ~~  (\gauged{X}_{\leq N}, \gauged{Y}_{\leq N}) = \bar{\Gamma} (\gauged{X}_{\leq N}, \gauged{Y}_{\leq N} ).\]
Combining, we thus obtain
\[ \frac{1}{2} \|(X_{\leq N}, Y_{\leq N}) - (\gauged{X}_{\leq N}, \gauged{Y}_{\leq N})\|_{(\mc{X} \times \mc{Y})([0, T])} \leq \tilde{C}(R) d_T(\Xi_{\leq N}, \Xi^g_{\leq N}), \]
and thus
\[\begin{split}
\|A_{\leq N} - \gauged{A}_{\leq N}\|_{C_t^0 \Cs_x^{-\kappa}([0, T] \times \T^2)} &\leq 2d_T(\Xi_{\leq N}, \Xi_{\leq N}^g) + 2 \tilde{C}(R) d_T(\Xi_{\leq N}, \Xi_{\leq N}^g) \stackrel{a.s.}{\ra} 0 \text{ as $N \toinf$}.
\end{split}\]
We can now bootstrap up to time $T_0$ by iterating.
\end{proof}

We close this section off by providing a proof of Lemma \ref{lemma:gtgauged-equation}.

\begin{proof}[Proof of Lemma \ref{lemma:gtgauged-equation}]
For brevity, write $\gauged{A} = \gauged{A}_{\leq N}$, and let $\gtgauged{A} = \gauged{A}^g = (\gauged{A}_{\leq N})^g$. Suppose first that $\Ad_g \xi$ is smooth. Then similar to the discussion at the beginning of Section \ref{section:gauge-covariance}, we have that
\[\begin{split}
\ptl_t \gtgauged{A} &= -D^*_{\gtgauged{A}} F_{\gtgauged{A}} - D_{\gtgauged{A}} D^*_{\gtgauged{A}} (\mrm{Ad}_g \gauged{A}) + D_{\gtgauged{A}}(\mrm{Ad}_g[A^j - \gauged{A}^j, g^{-1} \ptl_j g]) + D_{\gtgauged{A}} D^*_{A^g}((dg) g^{-1}) + P_{\leq N}(\Ad_g \xi) \\
&= -D^*_{\gtgauged{A}} F_{\gtgauged{A}} - D_{\gtgauged{A}} D^*_{\gtgauged{A}} \gtgauged{A} - D_{\gtgauged{A}} D_{\gtgauged{A}}^* ((dg) g^{-1}) +  D_{\gtgauged{A}}(\mrm{Ad}_g[A^j - \gauged{A}^j, g^{-1} \ptl_j g]) + D_{\gtgauged{A}} D^*_{A^g}((dg) g^{-1}) +  P_{\leq N}(\Ad_g \xi),
\end{split}\]
where in the second identity, we used that $D_{\gtgauged{A}} D^*_{\gtgauged{A}} (\mrm{Ad}_g \gauged{A}) = D_{\gtgauged{A}} D^*_{\gtgauged{A}}(\gtgauged{A} + (dg) g^{-1})$. To finish, it suffices to show that
\[ - D_{\gtgauged{A}}^*((dg) g^{-1}) + \mrm{Ad}_g[A^j - \gauged{A}^j, g^{-1} \ptl_j g] + D^*_{A^g}((dg) g^{-1}) = 0. \]
We have that
\[\begin{split}
D_{\gtgauged{A}}^*((dg) g^{-1}) - D^*_{A^g}((dg) g^{-1}) &= d^* ((dg) g^{-1}) - [\gtgauged{A}_j, (\ptl_j g) g^{-1}] - \big(d^* ((dg) g^{-1}) - [A^g_j, (\ptl_j g) g^{-1}] \big) \\
&= [A^g_j - \gtgauged{A}_j, (\ptl_j g) g^{-1}] = [\mrm{Ad}_g(A_j - \gauged{A}_j), (\ptl_j g) g^{-1}] \\
&= \mrm{Ad}_g[A_j - \gauged{A}_j, g^{-1} \ptl_j g],
\end{split}\]
as desired. This shows that if $\Ad_g \xi$ is smooth, then $\gtgauged{A}$ is a strong solution, and thus also a mild solution, to the claimed equation. For the result in full generality, we introduce a dyadic scale $M \in \dyadic$, and first replace $\Ad_g \xi$ by $(\Ad_g \xi)_{\leq M} := P_{\leq M, \leq M} (\Ad_g \xi)$. Let $\gauged{A}^{\leq M}$ be the solution to the equation \eqref{eq:gauged-A} for $\gauged{A}$, except with $\Ad_g \xi$ replaced by $(\Ad_g \xi)_{\leq M}$. By applying the result in the smooth case, we obtain that 
\[ (\gauged{A}^{\leq M})^g = \bA\big(A_0^{g_0}, P_{\leq N} ((\Ad_g \xi)_{\leq M})\big). \]
Recalling by Lemma \ref{lemma:U-xi-defined} that $\Ad_g \xi \in \Cs_{tx}^{(-1/2-100\kappa, -1-100\kappa)}$, we have that as $M \toinf$,
\[ (\Ad_g \xi)_{\leq M} \ra \Ad_g \xi \text{ in $\Cs_{tx}^{(-1/2-101\kappa, -1-101\kappa)}$}. \]
Using the fact that $\Duh : \Cs_{tx}^{(-1/2-101\kappa,-1-101\kappa)} \ra C_t^0 \Cs_x^{-500\kappa}$ is continuous (by Schauder, i.e. Lemma \ref{lemma:space-time-Besov-space-Schauder}), we then obtain that 
\[\lim_{M \toinf} \gauged{A}^{\leq M} = \lim_{M \toinf} \bA\big(A_0, P_{\leq N}((\Ad_g \xi)_{\leq M})\big) = \bA(A_0, P_{\leq N} (\Ad_g \xi)) = \gauged{A}, \]
\[ \lim_{M \toinf} \bA\big(A_0^{g_0}, P_{\leq N} ((\Ad_g \xi)_{\leq M})\big) = \bA(A_0^{g_0}, P_{\leq N} (\Ad_g \xi)). \]
By combining the previous few identities we obtain
\[ \gtgauged{A} = \gauged{A}^g = \lim_{M \toinf} (\gauged{A}^{\leq M})^g = \lim_{M \toinf} \bA\big(A_0^{g_0}, P_{\leq N} ((\Ad_g \xi)_{\leq M})\big) = \bA(A_0^{g_0}, P_{\leq N} (\Ad_g \xi)), \]
as desired.
\end{proof}

\begin{appendix}

\section{A  standard reduction}\label{section:standard-reduction}

\begin{lemma}[Standard reduction]\label{lemma:standard-reduction}
Let $F\colon [0,1]\times \T^2 \rightarrow \C$ be a continuous stochastic process. In addition, let $m\geq 1$, let $\epsilon>0$, let
$\alpha \in (0,\frac{1}{2})$, and let $C\geq 1$. Then, it holds for all $M\geq 1$ that
\begin{equation}\label{eq:standard-reduction-1}
\begin{aligned}
&\hspace{2ex}\sup_{p\geq 1} p^{-\frac{m}{2}} \E \Big[ \| F(t,x) \|_{L_t^\infty L_x^\infty([0,1]\times \T^2)}^p \Big]^{\frac{1}{p}} \\
&\lesssim_{\alpha,\epsilon,m,C}  M^\epsilon  \sup_{(t,x)\in [0,1] \times \T^2} \sup_{p\geq 1} p^{-\frac{m}{2}} \E \Big[ |F(t,x)|^p \Big]^{\frac{1}{p}} \\ 
&+ M^{-C}  
\sup_{\substack{(t,x),(s,y)\in [0,1] \times \T^2\colon \\ (t,x)\neq (s,y) }}
\sup_{p\geq 1} p^{-\frac{m}{2}} \E \bigg[ \bigg( \frac{\big| F(t,x)-F(s,y)\big|}{|t-s|^{\frac{\alpha}{2}}+|x-y|^{\alpha}} \bigg)^p \bigg]^{\frac{1}{p}}. 
\end{aligned}
\end{equation}
If, for all $(t,x)\in [0,1]\times \T^2$,  $F(t,x)$ is a multiple-stochastic integral of degree $m$, then it holds that
\begin{equation}\label{eq:standard-reduction-2} 
\begin{aligned}
\sup_{p\geq 1} p^{-\frac{m}{2}} \E \Big[ |F(t,x)|^p \Big]^{\frac{1}{p}} &\lesssim_m \E \Big[ |F(t,x)|^2 \Big]^{\frac{1}{2}}, \\ 
\sup_{p\geq 1} p^{-\frac{m}{2}} \E \bigg[ \bigg( \frac{\big| F(t,x)-F(s,y)\big|}{|t-s|^{\frac{\alpha}{2}}+|x-y|^{\alpha}} \bigg)^p \bigg]^{\frac{1}{p}} 
&\lesssim_m  \E \bigg[ \bigg( \frac{\big| F(t,x)-F(s,y)\big|}{|t-s|^{\frac{\alpha}{2}}+|x-y|^{\alpha}} \bigg)^2 \bigg]^{\frac{1}{2}}. 
\end{aligned}
\end{equation}
\end{lemma}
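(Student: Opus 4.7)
The plan is to treat the two claims separately. Equation \eqref{eq:standard-reduction-2} is an immediate consequence of Nelson's hypercontractive inequality, while \eqref{eq:standard-reduction-1} is a deterministic discretization argument paired with a Kolmogorov-type control of the H\"older seminorm.

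For \eqref{eq:standard-reduction-2}: at each fixed $(t,x) \in [0,1] \times \T^2$, the scalar $F(t,x)$ lies in the $m$-th (complex) Wiener chaos. Nelson's hypercontractive inequality gives $\E[|F(t,x)|^p]^{1/p} \leq (p-1)^{m/2} \E[|F(t,x)|^2]^{1/2}$ for $p \geq 2$, while Jensen handles $p \in [1,2]$. Since $p-1 \leq p$, multiplying by $p^{-m/2}$ and taking the supremum yields the first bound. The second bound follows identically: for each fixed $(t,x),(s,y)$, the difference $F(t,x) - F(s,y)$ (and hence the difference quotient) is again an element of the $m$-th chaos, so the same hypercontractive inequality applies.

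For \eqref{eq:standard-reduction-1}, I would set $K := C/\alpha$, $\delta := M^{-K}$, and introduce a lattice $\Lambda_M \subset [0,1] \times \T^2$ of mesh $\delta$, so that $|\Lambda_M| \lesssim M^{3K}$. Using continuity of $F$ together with a slightly reduced H\"older exponent $\alpha' \in (0,\alpha)$, one obtains the deterministic bound
\[
\|F\|_{L_t^\infty L_x^\infty} \leq \max_{z \in \Lambda_M} |F(z)| + C_{\alpha'}\, \delta^{\alpha'} [F]_{C^{\alpha'/2, \alpha'}},
\]
where $[F]_{C^{\alpha'/2,\alpha'}}$ denotes the parabolic H\"older seminorm. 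Taking $L^p$ moments and performing a union bound over $\Lambda_M$, the first term contributes at most $|\Lambda_M|^{1/p} \sup_{(t,x)} \E[|F(t,x)|^p]^{1/p} \leq M^{3K/p} \cdot (\text{pointwise }L^p\text{ sup})$. Choosing $p_0 := \lceil 3K/\epsilon \rceil$ and restricting to $p \geq p_0$ gives $M^{3K/p} \leq M^\epsilon$, matching the target factor in front of the pointwise term. The small-$p$ regime $1 \leq p < p_0$ is absorbed by monotonicity of $L^p$ norms combined with the trivial bound $p^{-m/2} \leq p_0^{m/2} \cdot p_0^{-m/2}$, at the cost of an implicit constant depending on $\alpha, \epsilon, m, C$.

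The main obstacle is to control $\E[[F]_{C^{\alpha'/2, \alpha'}}^p]^{1/p}$ by the \emph{pointwise} supremum $\sup_{z \neq z'} \E[(d(z,z')^{-\alpha}|F(z) - F(z')|)^p]^{1/p}$ appearing on the right-hand side of \eqref{eq:standard-reduction-1}, where $d$ denotes the parabolic metric. For this I would invoke the Garsia-Rodemich-Rumsey lemma on the $3$-dimensional parabolic space: for sufficiently large $p$, setting $\alpha' := \alpha - 3/p$, one has
\[
\E\big[[F]_{C^{\alpha'/2, \alpha'}}^p\big] \lesssim \iint \frac{\E[|F(z) - F(z')|^p]}{d(z,z')^{3 + \alpha p}}\, dz\, dz' \lesssim \sup_{z \neq z'} \E\Big[\big(d(z,z')^{-\alpha}|F(z) - F(z')|\big)^p\Big].
\]
Combining this with $\delta^{\alpha'} = M^{-K(\alpha - 3/p)}$ and enlarging $p_0$ so that $K(\alpha - 3/p_0) \geq C$, the second term is controlled by $M^{-C}$ times the desired supremum. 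The regime $1 \leq p < p_0$ is once again absorbed via the same monotonicity argument, completing the proof of \eqref{eq:standard-reduction-1}.
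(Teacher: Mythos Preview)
Your proof is correct in structure and reaches the same conclusion, but the route differs from the paper's in one substantive point and contains one minor miscounting.

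\textbf{Comparison with the paper.} Both arguments discretize $[0,1]\times\T^2$ on a fine lattice, bound the sup by a lattice maximum plus a H\"older remainder, and control the latter via a Kolmogorov/GRR-type criterion. The difference lies in how the lattice maximum is handled. The paper invokes a maximal inequality for random variables with $p^{m/2}$-growing $L^p$-norms (essentially sub-Gaussian tails for chaos), which yields a factor $(\log\#\Lambda_h)^{m/2}$ \emph{uniformly in $p$}; choosing the mesh $h\sim M^{-2C/\alpha}$ then gives $M^\epsilon$ and $M^{-C}$ directly. You instead use the crude union bound $\E[\max_z|F(z)|^p]^{1/p}\le(\#\Lambda_M)^{1/p}\max_z\E[|F(z)|^p]^{1/p}$, which produces a polynomial factor $(\#\Lambda_M)^{1/p}$ that is only small for $p\ge p_0(\alpha,\epsilon,C)$, and then reduce the small-$p$ regime to $p=p_0$ by monotonicity. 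Both are valid; the paper's route is cleaner in that it avoids the two-regime split, while yours is more elementary in that it does not require the tailored maximal lemma.

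\textbf{A minor correction.} Your dimension count is off by one: with the parabolic metric $d((t,x),(s,y))=|t-s|^{1/2}+|x-y|$, the space $[0,1]\times\T^2$ has parabolic dimension $4$, not $3$. Concretely, a parabolically scaled lattice (spacing $\delta^2$ in time, $\delta$ in space) has $\#\Lambda_M\sim\delta^{-4}$, and the GRR exponent should be $\alpha'=\alpha-4/p$. If you keep a Euclidean lattice of mesh $\delta$ in all variables, then $\#\Lambda_M\sim\delta^{-3}$ but the parabolic diameter of each cell is $\sim\delta^{1/2}$, so the remainder is $(\delta^{1/2})^{\alpha'}[F]$ rather than $\delta^{\alpha'}[F]$. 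Either way the argument goes through after adjusting the choice of $K$ and $p_0$, so this does not affect correctness.
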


\begin{remark}
In all applications of Lemma \ref{eq:standard-reduction-1}, $F$ will be localized to a (spatial) frequency-scale $N\in \dyadic$. We then always choose $M\sim N$, which means that the pre-factor in the second summand of \eqref{eq:standard-reduction-1} gains an arbitrary power of $N$. Due to this gain, the second summand in \eqref{eq:standard-reduction-1} can always be estimated trivially, and we therefore often simply omit its estimate. For example, in the context of the linear object (Lemma \ref{lemma:linear-object}), we have that
\[ \linear[N][r][i](t, x) = \int f_{t, x, N}^i dW, \text{ where } f_{t, x, N}^i(\ell, n, u) = \delta^i_\ell \ind(u \leq t) \rho_N(n) \e_n(x) e^{-(t-u)\fnorm{n}^2}  I_{\cfrkg}, \]
and we may estimate (by Lemma \ref{lemma:multiple-stochastic-integral-second-moment-bound-l2})
\[\begin{split}
\E\Big[\big| \linear[N][r][i](t, x) - \linear[N][r][i](s, y)\big|_\frkg^2 \Big] \lesssim \|f^i_{t, x, N} - f^i_{s, y, N}\|_{L^2}^2 &= \int_{\indexset} (f^i_{t, x, N}(\ell, n, u) - f^i_{s, y, N}(\ell, n, u))^2 d\lebI \\
&\lesssim \sum_{n \in \Z^2} \rho_N(n) (|t-s| + |x-y|^2) \langle n \rangle^8\\
&\lesssim  N^{10} (|t - s| + |x-y|^2),
\end{split}\]
where we were very loose in obtaining the $N^{10}$ bound.
\end{remark}

\begin{proof}[Proof of Lemma \ref{lemma:standard-reduction}]
Let $0<h\leq 1$ remain to be chosen and let $\Lambda_h$ be a lattice discretization of $[0,1]\times \T^2$, whose lattice spacing in the time variable is of size $h^2$ and in the spatial variable is of size $h$. As a result, it holds that $\# \Lambda_h \sim h^{-4}$. For each $z\in [0,1]\times \T^2$, let $B_h(z)$ be the ball of radius $h$ with respect to the parabolic distance around $z$. Since $\{B_h(z)\colon z \in \Lambda_h\}$ covers $[0,1]\times \T^2$, it follows from estimates for maxima of random variables (see e.g. \cite{V18} or \cite[Lemma 2.4]{B20}) that 
\begin{align*}
\sup_{p\geq 1} p^{-\frac{m}{2}} \E \Big[ \| F(t,x) \|_{L_t^\infty L_x^\infty([0,1]\times \T^2)}^p \Big]^{\frac{1}{p}} 
\lesssim&\, \sup_{p\geq 1} p^{-\frac{m}{2}} \E \Big[ \max_{z\in \Lambda_h} \sup_{(t,x)\in B_h(z)} |F(t,x)|^p \Big]^{\frac{1}{p}} \\ 
\lesssim&\, \log(\#\Lambda_h)^{\frac{m}{2}} 
\sup_{p\geq 1} p^{-\frac{m}{2}} \E \Big[ \sup_{(t,x)\in B_h(z)} |F(t,x)|^p \Big]^{\frac{1}{p}}. 
\end{align*}
Since $\#\Lambda_h\sim h^{-4}$, it holds that $\log(\#\Lambda_h)^{\frac{m}{2}} \sim |\log(h)|^{\frac{m}{2}}$. Using Kolmogorov's continuity theorem (see e.g. \cite{S93}), it follows for all $p\geq p_0(\alpha)$, where $p_0(\alpha)$ is sufficiently large depending on $\alpha$, that
\begin{equation}
\begin{aligned}
 &\hspace{3ex} \E \Big[ \sup_{(t,x)\in B_h(z)} |F(t,x)|^p \Big]^{\frac{1}{p}} \\ 
 &\lesssim_\alpha \sup_{(t,x)\in [0,1]\times \T^2} \E \Big[ |F(t,x)|^p \Big]^{\frac{1}{p}} 
 + h^{\frac{\alpha}{2}} \sup_{\substack{(t,x),(s,y)\in [0,1] \times \T^2\colon \\ (t,x)\neq (s,y) }}
 \E \bigg[ \bigg( \frac{\big| F(t,x)-F(s,y)\big|}{|t-s|^{\frac{\alpha}{2}}+|x-y|^{\alpha}} \bigg)^p \bigg]^{\frac{1}{p}}. 
\end{aligned}
\end{equation}
By treating the regime $1\leq p \leq p_0(\alpha)$ using H\"{o}lder's inequality and combining the previous estimates, it follows that
\begin{equation} 
\begin{aligned}
&\hspace{2ex}\sup_{p\geq 1} p^{-\frac{m}{2}} \E \Big[ \| F(t,x) \|_{L_t^\infty L_x^\infty([0,1]\times \T^2)}^p \Big]^{\frac{1}{p}} \\
&\lesssim_{\alpha,m}   |\log(h)|^{\frac{m}{2}}   \sup_{(t,x)\in [0,1] \times \T^2} \sup_{p\geq 1} p^{-\frac{m}{2}} \E \Big[ |F(t,x)|^p \Big]^{\frac{1}{p}} \\ 
&+  |\log(h)|^{\frac{m}{2}} h^{\frac{\alpha}{2}}  
\sup_{\substack{(t,x),(s,y)\in [0,1] \times \T^2\colon \\ (t,x)\neq (s,y) }}
\sup_{p\geq 1} p^{-\frac{m}{2}} \E \bigg[ \bigg( \frac{\big| F(t,x)-F(s,y)\big|}{|t-s|^{\frac{\alpha}{2}}+|x-y|^{\alpha}} \bigg)^p \bigg]^{\frac{1}{p}}. 
\end{aligned}
\end{equation}
By choosing $h\sim M^{-\frac{2C}{\alpha}}$, this implies the desired estimate \eqref{eq:standard-reduction-1}. The second estimate \eqref{eq:standard-reduction-2} is a direct consequence of Gaussian hypercontractivity. 
\end{proof}

\section{Proofs of para-product and Schauder-type estimates}\label{section:proofs-para}

\begin{lemma}[\protect{Heat flow estimate \cite[Lemma 2.5]{CC18}}]\label{prelim:lem-heat-flow}
Let $\alpha,\beta\in \R$ satisfy $\alpha \geq \beta$, let $t>0$, and let $f\colon \T^2 \rightarrow \C$. Then, it holds that
\begin{equation*}
\big\| \Hc_t f \big\|_{\Cs^\alpha} \lesssim t^{-\frac{\alpha-\beta}{2}} \big\| f \big\|_{\Cs^\beta}. 
\end{equation*}
Furthermore, if also $\alpha < \beta+1$, then 
\begin{equation*}
\big\| \big( \Hc_t -1 \big) f \big\|_{\Cs^\beta} \lesssim t^{\frac{\alpha-\beta}{2}} \big\| f \big\|_{\Cs^\alpha}. 
\end{equation*}
\end{lemma}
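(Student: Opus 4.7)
The plan is to reduce both estimates to frequency-localized bounds via the Littlewood-Paley projectors $P_N$. Since $\Hc_t$ is the Fourier multiplier $e^{-t\langle n\rangle^2}$ and $P_N$ is also a Fourier multiplier, the two commute, so it suffices to control $P_N \Hc_t f$ for each dyadic $N$ and take the supremum in $N$ against the appropriate weight.

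For the first estimate, the key ingredient is the kernel bound
\[ \big\| P_N \Hc_t f \big\|_{L_x^\infty} \lesssim e^{-c t N^2} \big\| P_N f \big\|_{L_x^\infty} \qquad (N \in \dyadic). \]
This follows by writing the symbol as $\rho_N(n)\, e^{-t\langle n\rangle^2} = e^{-ctN^2} \cdot m_{t,N}(n)$, where the remaining multiplier $m_{t,N}$ is supported in $|n|\sim N$ and (after rescaling $n \mapsto Nn$) has a Schwartz inverse Fourier transform with uniformly bounded $L^1$ norm. Combining this with the definition $\|P_N f\|_{L^\infty_x} \lesssim N^{-\beta} \|f\|_{\Cs^\beta}$ yields
\[ N^\alpha \big\| P_N \Hc_t f \big\|_{L_x^\infty} \;\lesssim\; N^{\alpha-\beta} e^{-c t N^2} \big\| f \big\|_{\Cs^\beta} \;\lesssim\; t^{-\frac{\alpha-\beta}{2}} \big\| f \big\|_{\Cs^\beta}, \]
where the final step uses $\sup_{x\geq 0} x^{(\alpha-\beta)/2} e^{-cx} < \infty$ with $x = tN^2$. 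Taking the supremum over $N$ gives the first claim.

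For the second estimate, the multiplier of $\Hc_t - 1$ is $e^{-t\langle n\rangle^2} - 1$, and the elementary bound $|e^{-x} - 1| \leq \min(1,x) \leq x^\theta$ for $x\geq 0$ and $\theta \in [0,1]$ gives, with $\theta = (\alpha-\beta)/2 \in [0, 1/2)$ (which is legitimate precisely because of the hypothesis $\alpha < \beta + 1$),
\[ \big\| P_N (\Hc_t - 1) f \big\|_{L_x^\infty} \;\lesssim\; (t N^2)^{\frac{\alpha-\beta}{2}} \big\| P_N f \big\|_{L_x^\infty} \;\lesssim\; t^{\frac{\alpha-\beta}{2}} N^{-\beta} \big\| f \big\|_{\Cs^\alpha}. \]
Here the decomposition $e^{-t\langle n\rangle^2} - 1 = (t\langle n\rangle^2)^\theta \cdot \widetilde{m}_{t,N}(n)$ with $\widetilde{m}_{t,N}$ having uniformly bounded convolution kernel on frequencies $|n|\sim N$ is justified exactly as before. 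Multiplying by $N^\beta$ and taking the supremum over $N$ yields the desired bound.

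There is no genuine obstacle: the whole argument is a standard combination of Littlewood-Paley localization, Bernstein-type kernel estimates (uniform in $N$ after rescaling), and two elementary numerical inequalities for $x^\theta e^{-cx}$ and $|e^{-x}-1|$. The only point requiring care is verifying that the rescaled multipliers $m_{t,N}$ and $\widetilde{m}_{t,N}$ really do have uniformly bounded $L^1$ Fourier inverses, which is a routine application of the smoothness of $\rho_N$ and $n \mapsto e^{-t\langle n\rangle^2}/e^{-tN^2}$ on the annulus $|n|\sim N$.
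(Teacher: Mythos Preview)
The paper does not provide its own proof of this lemma; it is quoted directly from \cite[Lemma 2.5]{CC18}. Your Littlewood--Paley argument is the standard one and is correct: the frequency-localized bounds $\|P_N \Hc_t\|_{L^\infty\to L^\infty} \lesssim e^{-ctN^2}$ and $\|P_N(\Hc_t-1)\|_{L^\infty\to L^\infty} \lesssim (tN^2)^\theta$ for $\theta\in[0,1)$ both follow from uniform $L^1$-bounds on the rescaled kernels, exactly as you outline.
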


\begin{lemma}[\protect{Heat commutator estimate \cite[Lemma 2.5]{CC18}}]\label{prelim:lem-heat-commutator}
Let $\alpha,\beta,\gamma\in \R$ satisfy $\alpha \geq \beta+\gamma$ and $\beta<1$. Then, it holds that
\begin{equation*}
\big\| \Ht \big( f \parall g \big) - f \parall \Ht g \big\|_{\Cs^\alpha} \lesssim t^{- \frac{\alpha-\beta-\gamma}{2}} \big\| f \big\|_{\Cs^\beta} \big\| g \big\|_{\Cs^\gamma}.
\end{equation*}
\end{lemma}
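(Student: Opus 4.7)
The plan is to proceed by Littlewood--Paley decomposition, exploiting that on each dyadic block the commutator is genuinely a commutator of Fourier multipliers. First I would write the low--high paraproduct dyadically: since $f \parall g = \sum_{M \ll N} P_M f \cdot P_N g$ and $\Hc_t = e^{-t(1-\Delta)}$ is a Fourier multiplier, we have
\[
\Hc_t(f \parall g) - f \parall \Hc_t g = \sum_{M \ll N} \bigl( \Hc_t(P_M f \cdot P_N g) - P_M f \cdot \Hc_t P_N g \bigr).
\]
Because $M \ll N$, both $P_M f \cdot P_N g$ and $P_M f \cdot \Hc_t P_N g$ are spatially frequency-localized in an annulus of size $\sim N$; the same therefore holds for each summand, so taking the $\Cs^\alpha$-norm reduces to bounding $N^\alpha$ times the $L^\infty$-norm of the $(M,N)$-piece, uniformly after summing over $M \ll N$.

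Next, for fixed $M \ll N$ and $u = P_M f$, $v = P_N g$, I would establish the pointwise commutator bound
\[
\bigl\| \Hc_t(uv) - u\, \Hc_t v \bigr\|_{L^\infty} \lesssim t\, M N\, e^{-c t N^2}\, \|u\|_{L^\infty} \|v\|_{L^\infty}
\]
for some absolute constant $c>0$. This can be proved on the Fourier side: the $k$-th Fourier coefficient of the commutator equals $\sum_n (e^{-t\langle k\rangle^2} - e^{-t\langle k-n\rangle^2}) \hat u(n) \hat v(k-n)$, and combining the elementary bound $|e^{-ta} - e^{-tb}| \leq t|a-b|\max(e^{-ta},e^{-tb})$ with $|\langle k\rangle^2 - \langle k-n\rangle^2| \lesssim |n|(|k|+|n|) \lesssim MN$ and $|k| \sim N$ (both forced by the frequency supports of $u,v$) gives the claim after an $\ell^1$ summation in $n$ using the frequency localization.

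Finally, inserting $\|P_M f\|_{L^\infty} \lesssim M^{-\beta}\|f\|_{\Cs^\beta}$ and $\|P_N g\|_{L^\infty} \lesssim N^{-\gamma}\|g\|_{\Cs^\gamma}$, I would sum over $M \ll N$; the condition $\beta < 1$ is used here to sum $\sum_{M\ll N} M^{1-\beta} \lesssim N^{1-\beta}$. Using that each $(M,N)$-summand is frequency-localized at $\sim N$, the $\Cs^\alpha$-norm becomes
\[
\|\Hc_t(f \parall g) - f \parall \Hc_t g\|_{\Cs^\alpha} \lesssim \Bigl(\sup_{N \in \dyadic} t\, N^{\alpha + 2 - \beta - \gamma}\, e^{-ctN^2}\Bigr) \|f\|_{\Cs^\beta}\|g\|_{\Cs^\gamma},
\]
and since $\alpha - \beta - \gamma \geq 0$, the substitution $s = tN^2$ shows the supremum equals $t^{-(\alpha-\beta-\gamma)/2}$ up to a constant, which is the desired bound.

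The main obstacle is the pointwise commutator estimate of the second step; once it is in place, the first and third steps are routine. Converting the Fourier-side bound $|e^{-t\langle k\rangle^2} - e^{-t\langle k-n\rangle^2}| \lesssim t MN e^{-ctN^2}$ into an $L^\infty$-bound requires some care, either via an $\ell^1$-summation argument using the frequency localizations of $\hat u$ and $\hat v$, or alternatively by writing the commutator as $\int K_t(x-y)(P_M f(y) - P_M f(x)) P_N g(y)\,dy$, Taylor-expanding $P_M f$ to first order to extract the factor $M$, and integrating by parts once against the oscillation of $P_N g$ to extract the exponential decay $e^{-ctN^2}$. Either route is standard but technical; the rest of the proof is bookkeeping.
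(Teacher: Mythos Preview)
The paper does not prove this lemma; it is imported verbatim from [CC18, Lemma~2.5], so there is no in-paper argument to compare against. Your strategy---dyadic decomposition of the low$\times$high paraproduct, a per-block commutator bound, and summation using $\beta<1$ followed by optimization over $N$ via the heat decay---is the standard one, and the Step~3 arithmetic is correct.

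The only soft spot is Step~2. The symbol bound $|e^{-t\langle k\rangle^2}-e^{-t\langle k-n\rangle^2}|\lesssim tMN\,e^{-ctN^2}$ is fine, but neither of your two conversions to an $L^\infty$ estimate quite works as written: $\ell^1$-summation of Fourier coefficients loses powers of $M,N$ (since $\|\hat u\|_{\ell^1}$ is not controlled by $\|u\|_{L^\infty}$), and in the kernel picture ``integrating by parts against the oscillation of $P_Ng$'' is not how the factor $e^{-ctN^2}$ arises. A clean fix is the Duhamel identity for the commutator,
\[
\Hc_t(uv)-u\,\Hc_t v \;=\; \int_0^t \Hc_{t-s}\bigl(2\nabla u\cdot\nabla\Hc_s v+(\Delta u)\,\Hc_s v\bigr)\,ds,
\]
whose integrand is frequency-localized at scale $\sim N$; Bernstein on $u=P_Mf$ together with heat decay on both $\Hc_s v$ and $\Hc_{t-s}(\,\cdot\,)$ give the pointwise-in-$s$ bound $MN\,e^{-ctN^2}\|u\|_{L^\infty}\|v\|_{L^\infty}$, and integrating in $s$ yields exactly your $tMN\,e^{-ctN^2}$. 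With this substitution your argument is complete.
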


\begin{lemma}[Schauder estimate]\label{prelim:lem-Schauder}
Let $\alpha_1,\alpha_2 \in \R$, $\zeta \in [0,1)$, and $\nu_1,\nu_2 \in [0,1)$. 
\begin{enumerate}[label=(\roman*)]
\item If the parameter conditions
\begin{equation}\label{prelim:eq-Schauder-a1}
\alpha_2 \leq \alpha_1 < \alpha_2 +2 \qquad \text{and} \qquad \delta:= 1 + \nu_1 -\nu_2 - \frac{\alpha_1-\alpha_2}{2} >0 
\end{equation}
are satisfied, then it holds that
\begin{equation}\label{prelim:eq-Schauder-e1}
\big\| \Duh \big[ f \big] \big\|_{\Wc^{\alpha_1,\nu_1}([0,T])} \lesssim T^\delta  \big\| f \big\|_{\Wc^{\alpha_2,\nu_2}([0,T])}. 
\end{equation}
\item If the parameter conditions
\begin{equation}\label{prelim:eq-Schauder-a2}
\alpha_2 \leq \alpha_1 + 2 \zeta < \alpha_2 +2  \qquad \text{and} \qquad \delta:= 1 + \nu_1 -\nu_2 - \frac{\alpha_1+2\zeta-\alpha_2}{2}>0 
\end{equation}
are satisfied, then it holds that
\begin{equation}\label{prelim:eq-Schauder-e2}
\big\| \Duh \big[ f \big] \big\|_{\CWc^{\zeta,\alpha_1,\nu_1}([0,T])} \lesssim T^\delta  \big\| f \big\|_{\Wc^{\alpha_2,\nu_2}([0,T])}. 
\end{equation}
\end{enumerate}
\end{lemma}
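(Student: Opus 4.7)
\medskip

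\noindent\textbf{Proof proposal for Lemma \ref{prelim:lem-Schauder}.}

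The plan is to reduce both parts to the scalar heat-kernel estimates recorded in Lemma \ref{prelim:lem-heat-flow}, together with a Beta-function calculation. Throughout, I will use that $T\le 1$ implicitly (otherwise the statement is standard on any bounded interval after minor modifications of the time-weight). For part (i), I would fix $t\in(0,T]$ and apply the first estimate of Lemma \ref{prelim:lem-heat-flow} with regularities $(\alpha_1,\alpha_2)$ inside the Duhamel integral, giving
\begin{equation*}
\big\|\Duh[f](t)\big\|_{\Cs^{\alpha_1}}
\le \int_0^t (t-s)^{-\frac{\alpha_1-\alpha_2}{2}}\big\|f(s)\big\|_{\Cs^{\alpha_2}}\,\mathrm{d}s
\le \|f\|_{\Wc^{\alpha_2,\nu_2}}\int_0^t (t-s)^{-\frac{\alpha_1-\alpha_2}{2}}s^{-\nu_2}\,\mathrm{d}s.
\end{equation*}
Under the assumption $\alpha_1<\alpha_2+2$ and $\nu_2<1$, the Beta integral equals a constant times $t^{1-(\alpha_1-\alpha_2)/2-\nu_2}$. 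Multiplying by $\min(t,1)^{\nu_1}$ and taking the supremum over $t\in(0,T]$ produces the factor $T^{\delta}$ with $\delta=1+\nu_1-\nu_2-(\alpha_1-\alpha_2)/2$, which is positive by hypothesis \eqref{prelim:eq-Schauder-a1}. This yields \eqref{prelim:eq-Schauder-e1}.

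For part (ii), I would fix $0<s<t\le T$ and use the semigroup decomposition
\begin{equation*}
\Duh[f](t)-\Duh[f](s)=\int_s^t\Hc_{t-u}f(u)\,\mathrm{d}u+\big(\Hc_{t-s}-1\big)\Duh[f](s).
\end{equation*}
For the first summand, the key trick is to factor the heat-kernel exponent as
\begin{equation*}
(t-u)^{-\frac{\alpha_1-\alpha_2}{2}}=(t-u)^{\zeta}\cdot(t-u)^{-\frac{(\alpha_1+2\zeta)-\alpha_2}{2}},
\end{equation*}
bound $(t-u)^\zeta\le|t-s|^\zeta$ on $u\in[s,t]$, and then run the same Beta-function argument as in (i) with $\alpha_1$ replaced by $\alpha_1+2\zeta$; the condition $\alpha_1+2\zeta<\alpha_2+2$ in \eqref{prelim:eq-Schauder-a2} ensures integrability. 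For the second summand I would use the second estimate of Lemma \ref{prelim:lem-heat-flow}, which gives
\begin{equation*}
\big\|(\Hc_{t-s}-1)\Duh[f](s)\big\|_{\Cs^{\alpha_1}}\lesssim |t-s|^{\zeta}\big\|\Duh[f](s)\big\|_{\Cs^{\alpha_1+2\zeta}},
\end{equation*}
and then apply part (i) with $\alpha_1$ replaced by $\alpha_1+2\zeta$ (evaluated pointwise at time $s$) to control the right-hand side by a constant times $s^{-\nu_1-\zeta}\|f\|_{\Wc^{\alpha_2,\nu_2}}$ up to an appropriate power of $s$.

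Combining the two pieces, dividing by $|t-s|^\zeta$, and multiplying by $\min(t,s)^{\nu_1}$ converts the $s^{-\nu_1}$ factors into a harmless $O(1)$ prefactor. The remaining power of $t-s$ or of $s$ is collected into $T^{\delta}$ with $\delta=1+\nu_1-\nu_2-(\alpha_1+2\zeta-\alpha_2)/2$, which is positive by the second condition of \eqref{prelim:eq-Schauder-a2}. Taking the supremum over $s,t\in(0,T]$ yields \eqref{prelim:eq-Schauder-e2}. I expect no serious obstacle: the entire argument is bookkeeping of exponents, and the only slightly subtle point is balancing the singular time-weights $s^{-\nu_2}$ coming from the norm on $f$ against the weight $\min(t,s)^{\nu_1}$ in the target norm, which is handled by the hypothesis $\delta>0$ in each part.
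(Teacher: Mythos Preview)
Your proposal is correct and follows essentially the same route as the paper: part (i) is exactly the heat-flow estimate plus Beta-integral computation, and part (ii) uses the identical semigroup decomposition $\Duh[f](t)-\Duh[f](s)=\int_s^t\Hc_{t-u}f(u)\,du+(\Hc_{t-s}-1)\Duh[f](s)$, treating each piece with Lemma~\ref{prelim:lem-heat-flow} just as you describe. The only cosmetic difference is that the paper redoes the integral for the second piece inline rather than invoking part (i) with $\alpha_1\mapsto\alpha_1+2\zeta$, but the computation is the same.
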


\begin{proof}
We first prove \eqref{prelim:eq-Schauder-e1}. Using Lemma \ref{prelim:lem-heat-flow}, we obtain for all $t>0$ that
\begin{align*}
\Big\| t^{\nu_1} \int_0^t \ds \Hts f(s) \Big\|_{\Cs^{\alpha_1}} 
\leq t^{\nu_1} \int_0^t \ds \big\| \Hts f(s) \big\|_{\Cs^{\alpha_1}} 
\lesssim t^{\nu_1} \Big( \int_0^t \ds (t-s)^{-\frac{\alpha_1-\alpha_2}{2}} s^{-\nu_2} \Big) \big\| f \big\|_{\Wc^{\alpha_2,\nu_2}}.  
\end{align*}
Using the change of variables $s:=tu$, the pre-factor can be rewritten as 
\begin{equation}\label{prelim:eq-Schauder-p1}
t^{\nu_1} \Big( \int_0^t \ds (t-s)^{-\frac{\alpha_1-\alpha_2}{2}} s^{-\nu_2} \Big)  = t^{1+\nu_1 - \nu_2 - \frac{\alpha_1-\alpha_2}{2}} 
\int_0^1 \du (1-u)^{-\frac{\alpha_1-\alpha_2}{2}} u^{-\nu_2}. 
\end{equation}
Due to our assumptions from \eqref{prelim:eq-Schauder-a1} and our assumption $\nu_2<1$, \eqref{prelim:eq-Schauder-p1} yields an acceptable contribution. \\

We now turn the proof of \eqref{prelim:eq-Schauder-e2}, which is similar to the proof of \eqref{prelim:eq-Schauder-e1}. For any $t_2 \geq t_1 >0$, it holds that 
\begin{align}
\Duh \big[ f \big](t_2) - \Duh \big[ f \big] (t_1) 
&= \int_0^{t_2} \ds  \, \Hc_{t_2-s} f(s) -  \int_0^{t_1} \ds \, \Hc_{t_1-s} f(s)    \notag \\
&= \int_{t_1}^{t_2} \ds \, \Hc_{t_2-s} f(s) \label{prelim:eq-Schauder-p2}  \\ 
&+ \int_0^{t_1} \ds \, \big( \Hc_{t_2-t_1} - 1 \big) \Hc_{t_1-s} f(s) \label{prelim:eq-Schauder-p3}. 
\end{align}
We first estimate \eqref{prelim:eq-Schauder-p2}. Using Lemma \ref{prelim:lem-heat-flow}, we obtain that
\begin{align*}
t_1^{\nu_1} \big\| \eqref{prelim:eq-Schauder-p2} \big\|_{\Cs^{\alpha_1}} 
\lesssim \, t_1^{\nu_1} \Big( \int_{t_1}^{t_2}\ds \, (t_2-s)^{-\frac{\alpha_2-\alpha_1}{2}} s^{-\nu_2} \Big) \big\| f \big\|_{\Wc^{\alpha_2,\nu_2}}.
\end{align*}
Using $t_1 \leq t_2$, the integral can be estimated by 
\begin{align*}
 t_1^{\nu_1} \Big( \int_{t_1}^{t_2}\ds \, (t_2-s)^{-\frac{\alpha_2-\alpha_1}{2}} s^{-\nu_2} \Big)
&\lesssim  t_1^{\nu_1} (t_2-t_1)^\zeta \Big( \int_0^{t_2} \ds \, (t_2-s)^{-\frac{\alpha_2-\alpha_1}{2}-\zeta} s^{-\nu} \Big) \\
&\lesssim t_1^{\nu_1} t_2^{1-\nu_2-\frac{\alpha_1-\alpha_2}{2}-\zeta} (t_2-t_1)^{\zeta} \Big( \int_0^1 \du \, (1-u)^{-\frac{\alpha_2-\alpha_1}{2}-\zeta} u^{-\nu} \Big) \\
&\lesssim t_2^{1+\nu_1-\nu_2-\frac{\alpha_1-\alpha_2}{2}-\zeta} (t_2-t_1)^{\zeta}. 
\end{align*}
Due to our assumption \eqref{prelim:eq-Schauder-a2}, this yields an acceptable contribution. It now remains to estimate \eqref{prelim:eq-Schauder-p3}. Using Lemma \ref{prelim:lem-heat-commutator} and Lemma \ref{prelim:lem-heat-flow}, we obtain that
\begin{align*}
t_1^{\nu_1} \Big\| \int_0^{t_1} \ds \, \big( \Hc_{t_2-t_1}-1 \big) \Hc_{t_1-s} f(s) \Big\|_{\Cs_x^{\alpha_1}} 
&\lesssim t_1^{\nu_1} \int_0^{t_1} \ds \, \big\|  \big( \Hc_{t_2-t_1}-1 \big) \Hc_{t_1-s} f(s)  \big\|_{\Cs_x^{\alpha_1}} \\
&\lesssim t_1^{\nu_1} (t_2-t_1)^\zeta \int_0^{t_1} \ds  \big\| \Hc_{t_1-s} (f) \big\|_{\Cs_x^{\alpha_1+\zeta}} \\
&\lesssim t_1^{\nu_1} (t_2-t_1)^\zeta \Big( \int_0^{t_1} \ds \, (t_1-s)^{-\frac{\alpha_1-\alpha_2}{2}-\zeta} s^{-\nu_2} \Big) \big\| f \big\|_{\Wc^{\alpha_2,\nu_2}}. 
\end{align*}
Using a similar argument as in \eqref{prelim:eq-Schauder-p1}, this yields an acceptable contribution.
\end{proof}

Equipped with Lemma \ref{prelim:lem-Schauder}, we can now prove Proposition \ref{prelim:prop-Duhamel-Salpha}.

\begin{proof}[Proof of Proposition \ref{prelim:prop-Duhamel-Salpha}]
We first estimate the $\Wc^{\alpha^\prime,\frac{\alpha^\prime}{2}+\theta}$-terms in the $\Sc^\alpha$-norm. For any $\alpha^\prime \in \Reg \medcap [0,\alpha]$, it follows our parameter conditions \eqref{prelim:eq-Duhamel-Salpha-a}, $0\leq \alpha^\prime \leq \alpha$, $\beta\leq -\kappa$, and $\theta \geq 0$ that
\begin{equation}\label{prelim:eq-Duhamel-Salpha-p1}
\beta \leq \alpha^\prime < \beta + 2 \qquad \text{and} \qquad
\delta \leq 1 + \frac{\alpha^\prime}{2} + \theta - \frac{\alpha^\prime-\beta}{2} -\nu. 
\end{equation}
As a result, Lemma  \ref{prelim:lem-Schauder} implies that
\begin{equation*}
\big\| \Duh \big[ f \big] \big\|_{\Wc^{\alpha^\prime,\frac{\alpha^\prime}{2}+\theta}}
\lesssim T^\delta \big\| f \big\|_{\Wc^{\beta,\nu}}. 
\end{equation*}
The estimate for the $\CWc^{\frac{\alpha^\prime+\kappa}{2},-\kappa,\frac{\alpha^\prime}{2}+\theta}$-terms is similar and exhibits the same numerology. Finally, due to the identity 
\begin{equation*}
\big\| \Duh \big[ f \big] \big\|_{C_t^0 \Cs_x^{-\kappa}} =  \big\| \Duh \big[ f \big] \big\|_{\Wc^{-\kappa,0}}, 
\end{equation*}the estimate of the $C_t^0 \Cs_x^{-\kappa}$-norm follows from the same argument as long as
\begin{equation}\label{prelim:eq-Duhamel-Salpha-p2}
\beta \leq - \kappa < \beta+2 \qquad \text{and} \qquad 
\delta \leq 1 + \frac{\beta}{2} -\nu.
\end{equation}
Due to \eqref{prelim:eq-Duhamel-Salpha-a} and $\alpha \geq 0$, both conditions in \eqref{prelim:eq-Duhamel-Salpha-p2} are satisfied.
\end{proof}

The following lemma is a variant of \cite[Proposition 2.7]{CC18}.

\begin{lemma}[General integral commutator estimate]\label{prelim:lem-general-integral-commutator}
Let $\alpha,\alpha_1,\alpha_2,\alpha_1^\prime,\alpha_2^\prime \in \R$, $\zeta \in [0,1)$, and $\nu,\nu_1,\nu_2,\nu_1^\prime,\nu_2^\prime \in [0,1)$. Assume that
\begin{alignat}{3}
&\alpha_1 + \alpha_2 \leq \alpha < \alpha_1 + \alpha_2 +2 ,  \qquad 
&&\alpha_1^\prime + \alpha_2^\prime \leq \alpha < \alpha_1^\prime + \alpha_2^\prime +2\zeta +2, 
\label{prelim:eq-general-integral-commutator-a1} \\
&\alpha_1 <1, \qquad &&\alpha_1^\prime <0, \label{prelim:eq-general-integral-commutator-a2} \\
&\nu_1 + \nu_2 <1, \qquad && \nu_1^\prime + \nu_2^\prime <1. \label{prelim:eq-general-integral-commutator-a3} 
\end{alignat}
Furthermore, assume that
\begin{equation}\label{prelim:eq-general-integral-commutator-a4}
\delta := \min \Big( 1 + \nu - \nu_1 - \nu_2 - \frac{\alpha-\alpha_1-\alpha_2}{2}, \, 
1+ \nu - \nu_1^\prime - \nu_2^\prime + \zeta - \frac{\alpha-\alpha_1^\prime-\alpha_2^\prime}{2} \Big) >0. 
\end{equation}
Then, it holds that 
\begin{equation}\label{prelim:eq-general-integral-commutator}
\big\| \Duh \big[ f \parall g \big] - f \parall \Duh \big[ g \big] \big\|_{\Wc^{\alpha,\nu}} 
\lesssim T^\delta \Big( \big\| f \big\|_{\Wc^{\alpha_1,\nu_1}} \big\| g\big\|_{\Wc^{\alpha_2,\nu_2}} + 
\big\| f \big\|_{\CWc^{\zeta,\alpha_1^\prime,\nu_1^\prime}} 
\big\| g \big\|_{\Wc^{\alpha_2^\prime,\nu_2^\prime}} \Big). 
\end{equation}
\end{lemma}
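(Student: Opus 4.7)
The plan is to decompose the commutator into two pieces, each controlled by a different one of the two terms on the right-hand side of \eqref{prelim:eq-general-integral-commutator}. Concretely, I would write
\begin{align*}
\Duh\big[ f \parall g \big](t) - f(t) \parall \Duh\big[ g \big](t)
&= \int_0^t \Hts \Big( \big(f(s) - f(t)\big) \parall g(s)\Big) \, \mathrm{d}s \\
&\quad + \int_0^t \Big( \Hts \big(f(t) \parall g(s)\big) - f(t) \parall \Hts g(s) \Big) \, \mathrm{d}s \\
&=: \mathrm{I}(t) + \mathrm{II}(t).
\end{align*}

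For $\mathrm{I}(t)$, I would combine three ingredients. First, the $\CWc^{\zeta,\alpha_1^\prime,\nu_1^\prime}$-norm bounds $\|f(s)-f(t)\|_{\Cs^{\alpha_1^\prime}}$ by $\min(s,t)^{-\nu_1^\prime}|t-s|^\zeta\|f\|_{\CWc^{\zeta,\alpha_1^\prime,\nu_1^\prime}}$. Second, since $\alpha_1^\prime<0$ by \eqref{prelim:eq-general-integral-commutator-a2}, the low$\times$high paraproduct estimate from Lemma \ref{prelim:lem-para-product} yields a bound for the paraproduct in $\Cs^{\alpha_1^\prime+\alpha_2^\prime}$. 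Third, since $\alpha\geq\alpha_1^\prime+\alpha_2^\prime$ by \eqref{prelim:eq-general-integral-commutator-a1}, the heat flow estimate (Lemma \ref{prelim:lem-heat-flow}) converts $\Cs^{\alpha_1^\prime+\alpha_2^\prime}$ into $\Cs^\alpha$ at the cost of $(t-s)^{-(\alpha-\alpha_1^\prime-\alpha_2^\prime)/2}$. Chaining these produces
\[\big\|\Hts\big((f(s)-f(t)) \parall g(s)\big)\big\|_{\Cs^\alpha} \lesssim (t-s)^{\zeta-(\alpha-\alpha_1^\prime-\alpha_2^\prime)/2}s^{-\nu_1^\prime-\nu_2^\prime}\|f\|_{\CWc^{\zeta,\alpha_1^\prime,\nu_1^\prime}}\|g\|_{\Wc^{\alpha_2^\prime,\nu_2^\prime}}.\]
For $\mathrm{II}(t)$, the heat commutator estimate (Lemma \ref{prelim:lem-heat-commutator}), which applies since $\alpha_1<1$ and $\alpha\geq\alpha_1+\alpha_2$ by \eqref{prelim:eq-general-integral-commutator-a1}--\eqref{prelim:eq-general-integral-commutator-a2}, combined with $\|f(t)\|_{\Cs^{\alpha_1}}\leq t^{-\nu_1}\|f\|_{\Wc^{\alpha_1,\nu_1}}$ and $\|g(s)\|_{\Cs^{\alpha_2}}\leq s^{-\nu_2}\|g\|_{\Wc^{\alpha_2,\nu_2}}$, gives
\[\big\|\Hts(f(t)\parall g(s)) - f(t)\parall\Hts g(s)\big\|_{\Cs^\alpha} \lesssim (t-s)^{-(\alpha-\alpha_1-\alpha_2)/2} t^{-\nu_1} s^{-\nu_2}\|f\|_{\Wc^{\alpha_1,\nu_1}}\|g\|_{\Wc^{\alpha_2,\nu_2}}.\]

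After multiplying by the $t^\nu$ weight in the $\Wc^{\alpha,\nu}$-norm and integrating over $s\in(0,t)$ with the substitution $s=tu$ (similar to \eqref{prelim:eq-Schauder-p1}), each term factors as a pure power of $t$ times a Beta integral. The resulting powers of $t$ are $t^{1+\nu+\zeta-\nu_1^\prime-\nu_2^\prime-(\alpha-\alpha_1^\prime-\alpha_2^\prime)/2}$ for $\mathrm{I}$ and $t^{1+\nu-\nu_1-\nu_2-(\alpha-\alpha_1-\alpha_2)/2}$ for $\mathrm{II}$, both of which are bounded by $T^\delta$ thanks to the definition of $\delta$ in \eqref{prelim:eq-general-integral-commutator-a4}. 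For $\mathrm{II}$, to get the $\nu_1+\nu_2$ appearing together I would use the crude monotonicity $t^{-\nu_1}\leq s^{-\nu_1}$ for $s\leq t$, $\nu_1\geq 0$. The two Beta integrals converge precisely because of the upper bounds in \eqref{prelim:eq-general-integral-commutator-a1} (which make the $(1-u)$-exponent $>-1$) and \eqref{prelim:eq-general-integral-commutator-a3} (which make the $u$-exponent $>-1$).

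The main obstacle is not any deep analysis but rather the careful bookkeeping of the time weights: one must choose in each term whether to distribute the outer $t^{-\nu_1}$ (or $\min(t,s)^{-\nu_1^\prime}$) as-is or bound it by $s^{-\nu_1}$, in such a way that both the integrability at $s=0$ and at $s=t$ go through simultaneously, and then verify that the surviving power of $t$ matches $T^\delta$. Once this is arranged, the rest of the proof is routine.
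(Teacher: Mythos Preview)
Your proposal is correct and follows essentially the same two-term decomposition as the paper's proof: one piece handled by the heat commutator estimate (Lemma \ref{prelim:lem-heat-commutator}) and one by the time-H\"older regularity of $f$ encoded in the $\CWc$-norm, followed by the same Beta-integral computation. The only cosmetic difference is that the paper writes the commutator term with $f(s)$ rather than $f(t)$ (and correspondingly places the increment $f(s)-f(t)$ outside $\Hts$ instead of inside), which avoids your extra step $t^{-\nu_1}\leq s^{-\nu_1}$ but is otherwise equivalent.
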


\begin{proof}[Proof of Lemma \ref{prelim:lem-general-integral-commutator}]
We first decompose 
\begin{align}
\Duh \big[ f \parall g \big] - f \parall \Duh \big[ g \big] 
=& \, \int_0^t  \Hts \big( f(s) \parall g(s) \big) \ds 
- f(t) \parall \int_0^t \Hts g(s) \ds \notag \\
=& \, \int_0^t \Big( \Hts \big( f(s) \parall g(s) \big) - f(s) \parall \Hts g(s) \Big) \ds 
\label{prelim:eq-general-integral-commutator-p1} \\ 
+& \, \int_0^t \big( f(s) - f(t) \big) \parall \Hts g(s) \ds. 
\label{prelim:eq-general-integral-commutator-p2} 
\end{align}
We now estimate \eqref{prelim:eq-general-integral-commutator-p1} and \eqref{prelim:eq-general-integral-commutator-p2} separately. \\

\emph{Estimate of \eqref{prelim:eq-general-integral-commutator-p1}:} 
Using $\alpha_1<1$ and  Lemma \ref{prelim:lem-heat-commutator}, we obtain for all $t>0$ that
\begin{align*}
t^\nu \Big\| \eqref{prelim:eq-general-integral-commutator-p1} \Big\|_{\Cs_x^\alpha}
&\leq t^\nu 
\int_0^t \Big\| \Hts \big( f(s) \parall g(s) \big) - f(s) \parall \Hts g(s)  \Big\|_{\Cs_x^\alpha} \\
&\lesssim  t^{\nu} \int_0^t (t-s)^{-\frac{\alpha-\alpha_1-\alpha_2}{2}}
\big\| f(s) \big\|_{\Cs_x^{\alpha_1}} \big\| g(s) \big\|_{\Cs_x^{\alpha_2}} \\
&\lesssim  t^{\nu} \bigg( \int_0^t (t-s)^{-\frac{\alpha-\alpha_1-\alpha_2}{2}} s^{-\nu_1 - \nu_2}
\bigg)  \big\| f \big\|_{\Wc^{\alpha_1,\nu_1}} \big\| g\big\|_{\Wc^{\alpha_2,\nu_2}}. 
\end{align*}
Using our assumption \eqref{prelim:eq-general-integral-commutator-a1} and \eqref{prelim:eq-general-integral-commutator-a3}, the change of coordinates $s:=tu$, and $t\leq T$, it follows that
\begin{equation*}
 t^{\nu} \bigg( \int_0^t (t-s)^{-\frac{\alpha-\alpha_1-\alpha_2}{2}} s^{-\nu_1 - \nu_2}
\bigg) 
\lesssim  t^{1+\nu-\nu_1 - \nu_2 - \frac{\alpha-\alpha_1-\alpha_2}{2}} 
\bigg( \int_0^1 (1-u)^{-\frac{\alpha-\alpha_1-\alpha_2}{2}} u^{-\nu_1 - \nu_2} \, \mathrm{d}u 
\bigg) \lesssim  T^\delta. 
\end{equation*}
As a result, the contribution of \eqref{prelim:eq-general-integral-commutator-p1} can be controlled by the first summand in \eqref{prelim:eq-general-integral-commutator}. \\

\emph{Estimate of \eqref{prelim:eq-general-integral-commutator-p2}:} 
Using Lemma \ref{prelim:lem-para-product}, $\alpha_1^\prime <0$, and $\alpha \geq \alpha_1^\prime+\alpha_2^\prime$, it follows that
\begin{align*}
t^\nu \Big\| \eqref{prelim:eq-general-integral-commutator-p2} \Big\|_{\Cs^\alpha}
&\leq t^\nu \int_0^t \Big\| \big( f(s) - f(t) \big) \parall \Hts g(s) \Big\|_{\Cs_x^\alpha} \\
&\lesssim t^\nu \int_0^t \big\| f(s) - f(t) \big\|_{\Cs_x^{\alpha_1^\prime}} 
\big\| \Hts g(s) \big\|_{\Cs^{\alpha-\alpha_1^\prime}_x} \\
&\lesssim t^\nu \bigg( \int_0^t (t-s)^{\zeta-\frac{\alpha-\alpha_1^\prime-\alpha_2^\prime}{2}} s^{-\nu_1^\prime -\nu_2^\prime} \bigg) \big\| f \big\|_{\CWc^{\zeta,\alpha_1^\prime,\nu_1^\prime}} \big\| g\big\|_{\Wc^{\alpha_2^\prime,\nu_2^\prime}}. 
\end{align*}
Using our assumption \eqref{prelim:eq-general-integral-commutator-a1} and \eqref{prelim:eq-general-integral-commutator-a3}, the change of coordinates $s:=tu$, and $t\leq T$, it follows that
\begin{equation*}
    t^\nu \bigg( \int_0^t (t-s)^{\zeta-\frac{\alpha-\alpha_1^\prime-\alpha_2^\prime}{2}} s^{-\nu_1^\prime -\nu_2^\prime} \bigg)  
    \lesssim t^{1+\nu-\nu_1^\prime-\nu_2^\prime - \frac{\alpha-\alpha_1^\prime-\alpha_2^\prime}{2}+\zeta} 
    \bigg( \int_0^1 
    (1-u)^{\zeta-\frac{\alpha-\alpha_1^\prime-\alpha_2^\prime}{2}} u^{-\nu_1^\prime -\nu_2^\prime} \, \mathrm{d}u \bigg) \lesssim T^\delta. 
\end{equation*}
As a result, the contribution of \eqref{prelim:eq-general-integral-commutator-p2} can be controlled by the second summand in \eqref{prelim:eq-general-integral-commutator}.
\end{proof}

\begin{corollary}\label{cor:general-integral-commutator}
Let all parameters be as in Lemma \ref{prelim:lem-general-integral-commutator}. Then, it holds that
\[
\big\| \ptl_t \Duh \big[ f \parall g \big] - \ptl_t \big(f \parall \Duh \big[ g \big]\big) \big\|_{\Wc^{\alpha-2,\nu}} 
\lesssim T^\delta \Big( \big(\big\| f \big\|_{\Wc^{\alpha_1,\nu_1}} + \|\ptl_t f\|_{\Wc^{\alpha_1 - 2, \nu_1}} \big)\big\| g\big\|_{\Wc^{\alpha_2,\nu_2}} + 
\big\| f \big\|_{\CWc^{\zeta,\alpha_1^\prime,\nu_1^\prime}} 
\big\| g \big\|_{\Wc^{\alpha_2^\prime,\nu_2^\prime}} \Big). 
\]
\end{corollary}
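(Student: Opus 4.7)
The plan is to reduce Corollary \ref{cor:general-integral-commutator} to Lemma \ref{prelim:lem-general-integral-commutator} via an algebraic identity that expresses the time derivative of the integral commutator in terms of the integral commutator itself plus explicit lower-order paraproduct terms.

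First, I would establish the identity. Using that $\ptl_t \Duh[h] = h + (-1+\Delta)\Duh[h]$, the Leibniz rule for $\ptl_t$ on paraproducts, and the Leibniz-type commutator
\[ (-1+\Delta)(f \parall h) = (\Delta f) \parall h + 2(\ptl_j f) \parall \ptl^j h + f \parall (-1+\Delta) h, \]
which follows since the Littlewood-Paley projectors defining $\parall$ commute with $\Delta$, a direct computation yields
\[ \ptl_t \Duh[f \parall g] - \ptl_t\big(f \parall \Duh[g]\big) = (-1+\Delta)\big(\Duh[f \parall g] - f \parall \Duh[g]\big) + (\Delta f) \parall \Duh[g] + 2(\ptl_j f) \parall \ptl^j \Duh[g] - (\ptl_t f) \parall \Duh[g]. \]

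Next, I would estimate the four resulting terms in $\Wc^{\alpha-2,\nu}$. The first term is handled by noting that $-1+\Delta$ maps $\Wc^{\alpha,\nu}$ boundedly into $\Wc^{\alpha-2,\nu}$ and directly invoking Lemma \ref{prelim:lem-general-integral-commutator}; this alone produces both contributions on the right-hand side of the corollary. For the three remaining paraproduct terms, the low$\times$high estimate of Lemma \ref{prelim:lem-para-product}(ii) applies, since $\alpha_1 < 1$ implies $\alpha_1 - 2, \alpha_1 - 1 < 0$. Combined with the Schauder estimate (Lemma \ref{prelim:lem-Schauder}) to control $\Duh[g]$ in $\Cs^{\alpha-\alpha_1}$ with time weight $\nu - \nu_1$, this gives the desired bound with time exponent equal to $1 + \nu - \nu_1 - \nu_2 - (\alpha - \alpha_1 - \alpha_2)/2$, which is precisely the first minimum in \eqref{prelim:eq-general-integral-commutator-a4} and hence $\geq \delta$. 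The term involving $\ptl_t f$ is treated identically, except $\alpha_1$ is replaced by $\alpha_1 - 2$, which is why the norm $\|\ptl_t f\|_{\Wc^{\alpha_1-2,\nu_1}}$ appears on the right-hand side.

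I do not expect a substantial obstacle: all parameter conditions required for the paraproduct and Schauder steps follow from those imposed in Lemma \ref{prelim:lem-general-integral-commutator}, and the only verification is a routine exponent count. The mildest subtlety is justifying the algebraic identity at the level of distributions, but this is straightforward given that $\ptl_t$ and $\Delta$ commute with the Littlewood-Paley projectors defining $\parall$.
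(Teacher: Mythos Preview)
Your proposal is correct and follows essentially the same approach as the paper: both derive the same algebraic identity (the paper writes it as $\Delta(\Duh[f\parall g]-f\parall\Duh[g]) + (\Delta f - \ptl_t f)\parall\Duh[g] + (\ptl_j f)\parall\ptl^j\Duh[g]$, omitting the $-1$ and the factor of $2$ on the cross term, but these are harmless for the estimates), then bound the first term via Lemma~\ref{prelim:lem-general-integral-commutator} and the remaining paraproduct terms via Lemma~\ref{prelim:lem-para-product}(ii) combined with Schauder.
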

\begin{proof}
Observe that
\[\begin{split}
\ptl_t \Duh \big[ f \parall g \big] &= \Delta  \Duh \big[ f \parall g \big] + f \parall g, \\
\ptl_t \big(f \parall \Duh \big[ g \big]\big) &= (\ptl_t f) \parall \Duh \big[ g \big] + f \parall \big(\Delta \Duh \big[ g \big] + g\big) .
\end{split}\]
It thus follows that
\[\begin{split}
\ptl_t \Duh \big[ f \parall g \big] - \ptl_t \big(f \parall \Duh \big[ g \big]\big) = \Delta \big(\Duh \big[ f \parall g \big] - f \parall \Duh\big[g\big]\big) +~ &(\Delta f - \ptl_t f) \parall \Duh\big[g\big] ~+ \\
&(\ptl_j f) \parall \ptl^j \Duh\big[g\big].
\end{split}\]
The first term on the right hand side above may be bounded in $\Wc^{\alpha-2, \nu}$ by Lemma \ref{prelim:lem-general-integral-commutator}. The second and third terms may be bounded by applying paraproduct estimates and Schauder (Lemma \ref{prelim:lem-Schauder}).
\end{proof}

Equipped with Lemma \ref{prelim:lem-general-integral-commutator}, we can now prove Lemma \ref{prelim:lem-trilinear-integral-commutator}.

\begin{proof}[Proof of Lemma \ref{prelim:lem-trilinear-integral-commutator}:]
We first prove the bilinear estimate
\begin{equation}\label{prelim:eq-integral-commutator}
\Big\| \Duh \big( f \parall g \big) - f \parall \Duh \big( g \big) \Big\|_{\Wc^{1+2\kappa,\nu}([0,T])} 
\lesssim \big\| f \big\|_{\Sc^{1-2\kappa}([0,T])} \big\| g \big\|_{\Wc^{-1-\kappa,0}([0,T])}. 
\end{equation}
In order to prove \eqref{prelim:eq-integral-commutator}, we want to use Lemma \ref{prelim:lem-general-integral-commutator}. We let $\nu$ be as in the statement of Lemma \ref{prelim:lem-trilinear-integral-commutator} and choose the remaining parameters as 
\begin{alignat}{5}
\alpha &:= 1+2\kappa,                        \qquad &\zeta   &:= \frac{\alpha_1+\kappa}{2}, \\ 
\alpha_1&:= 1-2\kappa,                       \qquad &\alpha_2&:=-1-\kappa,   
\qquad &
\alpha_1^\prime&:= - \kappa,                 \qquad &\alpha_2^\prime&:= -1-\kappa, \\
\nu_1 &:= \frac{\alpha_1}{2} + \theta,          \qquad &\nu_2 &:= 0,   
\qquad &
\nu_1^\prime &:= \frac{\alpha_1}{2} + \theta,    \qquad &\nu_2^\prime &:= 0.
\end{alignat}
It is easy to check that the parameters above satisfy the conditions \eqref{prelim:eq-general-integral-commutator-a1}, \eqref{prelim:eq-general-integral-commutator-a2}, and \eqref{prelim:eq-general-integral-commutator-a3}. Furthermore, a direct calculation reveals that
\begin{align*}
 &\min \Big( 1 + \nu - \nu_1 - \nu_2 - \frac{\alpha-\alpha_1-\alpha_2}{2}, \, 
1+ \nu - \nu_1^\prime - \nu_2^\prime + \zeta - \frac{\alpha-\alpha_1^\prime-\alpha_2^\prime}{2} \Big) \\
=& \nu - \theta - \frac{3}{2} \kappa.
\end{align*}
Thus, the condition \eqref{prelim:eq-general-integral-commutator-a4} is satisfied due to our assumption $\nu \geq \theta + 2\kappa$. Thus, all the assumptions of Lemma \ref{prelim:lem-general-integral-commutator} are satisfied, and the bilinear estimate \eqref{prelim:eq-integral-commutator} now follows from \eqref{prelim:eq-general-integral-commutator}. \\ 

The desired trilinear \eqref{prelim:rem-trilinear-integral-commutator} now follows directly from the 
para-product estimates (Lemma \ref{prelim:lem-para-product}) and the bilinear integral commutator estimate \eqref{prelim:eq-integral-commutator}. Indeed, it holds that
\begin{align*}
&\Big\| \Big( \Duh \big( f \parall g \big) - f \parall \Duh \big( g\big) \Big) \parasim h \Big\|_{\Wc^{-4\kappa,\nu}([0,T])} \\
\lesssim&\,  \Big\|  \Duh \big( f \parall g \big) - f \parall \Duh \big( g\big) \Big\|_{\Wc^{1+2\kappa,\nu}([0,T])} \big\| \,  h \big\|_{\Wc^{-1-\kappa,0}([0,T])} \\ 
 \lesssim& \, \big\| f \big\|_{\Sc^{1-2\kappa}([0,T])} \big\| g \big\|_{\Wc^{-1-\kappa,0}([0,T])} \big\| h\big\|_{\Wc^{-1-\kappa,0}([0,T])}. \qedhere
\end{align*}
\end{proof}

\section{Space-time function spaces}\label{appendix:space-time-besov-space}

Let $L$ be a dyadic scale such that $L > 1$. Define $\eta_L$ by
\[ \eta_L(t) := \int_{-\infty}^t \widecheck{\rho}_L(s) ds. \]
Note since $L > 1$, we have that $\widecheck{\rho}_L$ is mean zero, and thus $\eta_L$ is Schwartz. We also have that $\eta_L'(u) = \widecheck{\rho}_L$ and  $\hat{\eta}_L(u) = \frac{1}{iu} \rho_L$. The latter statement implies that convolution with $\eta_L$ acts as a Fourier multiplier supported on an annulus of scale $L$. Finally, note that $\|\eta_L\|_{L^1} \lesssim L^{-1}$. 

\begin{proof}[Proof of Lemma \ref{lemma:space-time-Besov-space-Schauder}]
It suffices to show that for all dyadic scales $L, N$, we have that
\[ \|\Duh(P_{L, N} f)\|_{C_t^0 \Cs_x^{\beta_x}([0, \infty))} \lesssim \max(L, N^2)^{-(2 + \alpha_x + 2\alpha_t - \beta_x)/2} \|f\|_{\Cs_{tx}^\alpha}, \]
Fix $t \in [0, \infty)$. First, suppose that $L \leq N^2$. We have that
\[ \|\Duh(P_{L, N} f)(t)\|_{\Cs_x^{\beta_x}} \lesssim N^{\beta_x} \|\Duh(P_{L, N} f)(t) \|_{L_x^\infty} \lesssim N^{\beta_x} \int_0^t \|\Hts (P_{L, N} f(s))\|_{L_x^\infty} ds. \]
Since $P_{L, N} f$ is localized to spatial frequency $N$, we have that $\|\Hts (P_{L, N} f(s))\|_{L_x^\infty} \lesssim e^{-c(t-s)N^2} \|P_{L, N} f(s)\|_{L_x^\infty}$ (note the fact that this is also true for $N = 1$ follows because $\Ht = e^{-t(1-\Delta)}$ and not $e^{t\Delta}$). We thus further obtain (using that $L \leq N^2$ and $\alpha_t < 0$)
\[\begin{split}
\|\Duh(P_{L, N} f)(t)\|_{\Cs_x^{\beta_x}} \lesssim N^{\beta_x} \int_0^t e^{-c(t-s)N^2} ds \|P_{L, N} f\|_{L_{tx}^\infty} &\lesssim N^{-2} N^{\beta_x} L^{-\alpha_t} N^{-\alpha_x} \|f\|_{\Cs_{tx}^\alpha} \\
&\lesssim N^{-(2 + \alpha_x +2\alpha_t - \beta_x)} \|f\|_{\Cs_{tx}^\alpha}.
\end{split}\]
Next, suppose that $L > N^2$. Now define $g := \eta_L * (P_N f)$, so that $\ptl_t g = \eta_L' * (P_N g) = P_{L, N} f$. Using the various properties of $\eta_L$ we listed at the beginning of this section, we may show that 
\[ \|g\|_{L^\infty_{tx}} \lesssim L^{-(1+\alpha_t)} N^{-\alpha_x} \|f\|_{\Cs_{tx}^\alpha},\]
which implies that (using that $L > N^2$ and $\beta_x - \alpha_x \geq 0$)
\[ \|g\|_{L_t^\infty \Cs_x^{\beta_x}([0, \infty))} \lesssim N^{\beta_x} \|g\|_{L_{tx}^\infty} \lesssim  L^{-(2 + \alpha_x + 2\alpha_t - \beta_x)/2} \|f\|_{\Cs_{tx}^\alpha}.   \]
Now, since $\ptl_t g = P_{L, N} f$, we have that
\[ \Duh(P_{L, N} f) = \Duh(\ptl_t g) = \Delta \Duh(g) + g(t) - \Ht g(0). \]
The latter two terms are controlled by our previous estimate on $\|g\|_{L_t^\infty \Cs_x^{\beta_x}}$. For the first term, by arguing similarly in the case $L \leq N^2$, we may obtain
\[ \| \Delta \Duh(g) (t) \|_{\Cs_x^{\beta_x}} \lesssim N^{2 + \beta_x} \|\Duh(g)(t)\|_{L_x^\infty} \lesssim N^{2 + \beta_x - 2} \|g\|_{L_{tx}^\infty}. \]
By our previous estimate on $\|g\|_{L_{tx}^\infty}$, the right hand side above is bounded by $L^{-(2 + \alpha_x + 2\alpha_t - \beta_x)/2} \|f\|_{\Cs_{tx}^\alpha}$, as desired.
\end{proof}

\begin{proof}[Proof of Lemma \ref{lemma:space-time-integral-commutator-estimate}]
Let $\eta > 0$ be such that
\[ \alpha' < 2 + \alpha + \beta_x + 2\beta_t - \eta, ~~ 1 + \beta_t - \frac{\eta}{2} > 0, ~~ 1 + \nu' - \nu - \frac{\alpha' - (\alpha + \beta_x + 2\beta_t - \eta)}{2} > 0.\]
It suffices to show that for all dyadic scales $L, N$, we have that 
\[ \|\Duh(f \parall P_{L, N} g) - f \parall \Duh(P_{L, N} g)\|_{\Wc^{\alpha', \nu'}} \lesssim \max(L, N^2)^{-\eta/2}   \big(\|f\|_{\Wc^{\alpha, \nu}} + \|\ptl_t f\|_{\Wc^{\alpha - 2, \nu}}\big) \|g\|_{\Cs_{tx}^\beta}, \]
First, suppose that $L \leq N^2$. We then have that (note $\beta_t < 0$)
\[ \|P_{L, N} g \|_{L_t^\infty \Cs_x^{\beta_x + 2\beta_t - \eta}} \lesssim N^{\beta_x + 2\beta_t -\eta} \|P_{L, N} g\|_{L_{tx}^\infty} \lesssim L^{-\beta_t} N^{-\beta_x + \beta_x + 2\beta_t - \eta} \|g\|_{\Cs_{tx}^\beta} \lesssim N^{-\eta} \|g\|_{\Cs_{tx}^{\beta}}, \]
Applying Lemma \ref{prelim:lem-general-integral-commutator} and the above estimate, we obtain
\[\begin{split}
\|\Duh(f \parall P_{L, N} g) - f \parall \Duh(P_{L, N} g)\|_{\Wc^{\alpha', \nu'}} & \lesssim \big(\|f\|_{\Wc^{\alpha, \nu}} + \|\ptl_t f\|_{\Wc^{\alpha - 2, \nu}}\big) \|P_{L, N} g\|_{L_t^\infty \Cs_x^{\beta_x + 2\beta_t - \eta}} \\
& \lesssim N^{-\eta} \big(\|f\|_{\Wc^{\alpha, \nu}} + \|\ptl_t f\|_{\Wc^{\alpha - 2, \nu}}\big) \|g\|_{\Cs_{tx}^\beta}. 
\end{split}\]
Next, suppose that $L > N^2$. Define $h := \eta_L * (P_N g)$, so that $\ptl_t h = \eta_L' * (P_N g) = P_{L, N} g$. Using the properties of $\eta_L$ listed at the beginning of this section, we may show that
\[ \|h\|_{L_{tx}^\infty} \lesssim L^{-(1+\beta_t)} N^{-\beta_x} \|g\|_{\Cs_{tx}^\beta}, \]
which implies (recall that $1 + \beta_t - \eta/2 > 0$)
\beq\label{eq:h-estimate-L-geq-N2} \|h\|_{L_t^\infty \Cs_x^{\beta_x + 2\beta_t + 2-\eta}} \lesssim N^{\beta_x + 2\beta_t + 2 - \eta} \|h\|_{L_{tx}^\infty} \lesssim L^{-(1+\beta_t)} N^{2(\beta_t + 1 - \eta/2)} \|g\|_{\Cs_{tx}^{\beta}} \lesssim L^{-\eta/2} \|g\|_{\Cs_{tx}^\beta}.  \eeq
Now, since $\ptl_t h = P_{L, N} g$, we may write
\[\begin{split}
\Duh(f \parall P_{L, N} g) &= \Duh(\ptl_t (f \parall h)) - \Duh(\ptl_t f \parall h) \\
&= \Delta \Duh(f \parall h)  + f(t) \parall h(t) -  e^{t\Delta} (f(0) \parall h(0)) - \Duh(\ptl_t f \parall h). 
\end{split}\]
Similarly, 
\[ f\parall \Duh(P_{L, N} g) = f \parall \Delta \Duh(h) + f(t) \parall h(t) - f(t) \parall e^{t \Delta} h(0) .\]
Note we have that
\[ f \parall \Delta \Duh(h) = \Delta (f \parall \Duh(h)) - (\Delta f) \parall \Duh(h) - (\nabla f) \parall \nabla \Duh(h).\]
Combining, we thus obtain
\[\begin{split}
\Duh(f \parall& P_{L, N} g) - f\parall \Duh(P_{L, N} g) = \Delta \big(\Duh(f \parall h) - f \parall \Duh(h) \big) ~- \\
&(\Delta f) \parall \Duh(h) - (\nabla f) \parall \nabla \Duh(h) - \Duh(\ptl_t f \parall h) - \big( e^{t\Delta} (f(0) \parall h(0)) - f(t) \parall e^{t \Delta} h(0)\big) .
\end{split}\]
By Lemma \ref{prelim:lem-general-integral-commutator} and the estimate \eqref{eq:h-estimate-L-geq-N2}, we have that
\[\begin{split}
\|\Delta (\Duh(f \parall h) - f \parall \Duh(h))\|_{\Wc^{\alpha', \nu'}} &\lesssim \|\Duh(f \parall h) - f \parall \Duh(h)\|_{\Wc^{\alpha' + 2, \nu'}} \\
&\lesssim L^{-\eta/2} \big(\|f\|_{\Wc^{\alpha, \nu}} + \|\ptl_t f\|_{\Wc^{\alpha - 2, \nu}}\big) \|g\|_{\Cs_{tx}^\beta} . 
\end{split}\]
The terms $(\Delta f) \parall \Duh(h)$, $(\nabla f) \parall \nabla \Duh(h)$, and $\Duh(\ptl_t f \parall h)$ may be bounded by paraproduct and Schauder estimates  combined with \eqref{eq:h-estimate-L-geq-N2}. For the final term, observe that
\[ e^{t\Delta} (f(0) \parall h(0)) - f(t) \parall (e^{t \Delta} h(0)) = e^{t \Delta} (f(0) \parall h(0)) - f(0) \parall e^{t \Delta} h(0) + \big((f(0) - f(t)\big) \parall e^{t \Delta} h(0).\]
By Lemma \ref{prelim:lem-heat-commutator} and \eqref{eq:h-estimate-L-geq-N2}, we have that
\[ \|e^{t \Delta} (f(0) \parall h(0)) - f(0) \parall e^{t \Delta} h(0)\|_{\Wc^{\alpha', \nu'}} \lesssim \|f\|_{\Wc^{\alpha -2\nu, 0}} \|h\|_{L_t^\infty \Cs_x^{\beta_x + 2\beta_t + 2 -\eta}} \lesssim L^{-\eta/2} \|f\|_{\Wc^{\alpha -2\nu, 0}} \|g\|_{\Cs_{tx}^\beta}.  \]
Finally, we have that (using that $\alpha < 1$ so that $\alpha - 2 < 0$)
\[\begin{split}
t^{\nu'} \|\big(f(0) - f(t)\big) \parall e^{t \Delta} h(0)\|_{\Cs_x^{\alpha'}} &\lesssim t^{\nu'} \|f(0) - f(t)\|_{\Cs_x^{\alpha - 2}} \|e^{t\Delta} h(0)\|_{\Cs_x^{\alpha' + 2-\alpha}}  \\
&\lesssim t^{\nu'} \bigg(\int_0^t \|\ptl_t f(s)\|_{\Cs_x^{\alpha -2}} ds\bigg) t^{-\frac{1}{2}(\alpha' + 2 - \alpha - (\beta_x + 2\beta_t + 2 - \eta))} \|h(0)\|_{\Cs_x^{\beta_x + 2\beta_t + 2 - \eta}} \\
&\lesssim t^{1 + \nu - \nu' - \frac{1}{2}(\alpha' - (\alpha + \beta_x + 2\beta_t - \eta))} L^{-\eta/2} \|\ptl_t f\|_{\Wc^{\alpha - 2, \nu}} \|g\|_{\Cs_{tx}^{\beta}}
\end{split}. \]
By our assumption on $\eta$, the exponent of $t$ above is positive.
Combining all the estimates, we obtain the desired inequality in the case $L \geq N^2$.
\end{proof}

\begin{proof}[Proof of Lemma \ref{lemma:schauder-space-time-besov-space-singular-time-weights}]
Let $t > 0$. For $s \in (0, t)$, we have that
\[ \Duh(f)(t) = \Hts (\Duh(f)(s)) + \Duh(f(\cdot + s))(t-s). \]
Using that $f \in \Cs_{tx}^\alpha$, by a Schauder estimate (Lemma \ref{lemma:space-time-Besov-space-Schauder}) we have that (using that $\theta' \leq \theta$)
\[ \big\|\Hts \big(\Duh(f)(s)\big)\big\|_{\Cs_x^{\gamma + 2\theta'}} \lesssim (t-s)^{-\theta'} \|\Duh(f)(s)\|_{\Cs_x^\gamma} \lesssim (t-s)^{-\theta} \|f\|_{\Cs_{tx}^\alpha}.  \]
Using that $f \in \Cs_{tx}^{\beta, \theta}$, we have that $\|f(\cdot + s)\|_{\Cs_{tx}^\beta((0, \infty))} = \|f\|_{\Cs_{tx}^\beta((s, \infty))} \lesssim s^{-\theta} \|f\|_{\Cs_{tx}^{\beta, \theta}}$, and thus by a Schauder estimate (Lemma \ref{lemma:space-time-Besov-space-Schauder}), we have that
\[ \|\Duh(f(\cdot + s))\|_{C_t^0 \Cs_x^{\gamma + 2\theta'}} \lesssim \|f(\cdot + s)\|_{\Cs_{tx}^\beta((0, \infty))} \lesssim s^{-\theta} \|f\|_{\Cs_{tx}^{\beta, \theta}}.\]
Taking $s = t/2$ and combining the previous estimates, we obtain
\[ \|\Duh(f)(t)\|_{\Cs_x^{\gamma + 2\theta}} \lesssim t^{-\theta} \|f\|_{\Cs_{tx}^\alpha \cap \Cs_{tx}^{\beta, \theta}}. \]
The desired result now follows.
\end{proof}

Next, before we prove Lemma \ref{lemma:f-Wc-ptl-t-f-Wc}, we need the following preliminary estimate.

\begin{lemma}\label{lemma:singular-time-weight-rho-estimate}
Let $\theta \in [0, 1)$. For $t \in \R$, we have that
\[\begin{split}
\int ds |\widecheck{\rho}_L(t-s)| \min(|s|, 1)^{-\theta} &\lesssim_\theta \min\big(L^\theta, \min(|t|, 1)^{-\theta}\big), \\
\int ds |\eta_L(t-s)| \min(|s|, 1)^{-\theta} &\lesssim_\theta L^{-1} \min\big(L^\theta, \min(|t|, 1)^{-\theta}\big).
\end{split}\]
\end{lemma}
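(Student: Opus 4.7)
The plan is to reduce everything to pointwise decay estimates for the kernels $\widecheck{\rho}_L$ and $\eta_L$. Since $\rho_L$ is a smooth bump on an annulus of scale $L$, standard Fourier theory gives $|\widecheck{\rho}_L(u)| \lesssim L(1+L|u|)^{-N}$ for every $N \geq 1$. For $\eta_L$, I would write $\widecheck{\rho}_L(u) = L\psi(Lu)$ for a mean-zero Schwartz function $\psi$, so that $\eta_L(t) = \Psi(Lt)$ with $\Psi(v) = \int_{-\infty}^v \psi$ itself Schwartz, and consequently $|\eta_L(u)| \lesssim (1+L|u|)^{-N} \lesssim L^{-1}\cdot L(1+L|u|)^{-N}$. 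This pointwise comparison immediately reduces the second estimate to the first, so I will focus on proving the $\widecheck{\rho}_L$ bound.

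For the first estimate, I would split the integral according to $|s| \leq 1$ versus $|s| > 1$. On $|s| > 1$ the weight $\min(|s|,1)^{-\theta}$ equals $1$, so the contribution is $\lesssim \|\widecheck{\rho}_L\|_{L^1} \lesssim 1$, which is harmless since both $L^\theta \geq 1$ and $\min(|t|,1)^{-\theta}\geq 1$. The heart of the matter is therefore
$$\int_{|s|\leq 1} |\widecheck{\rho}_L(t-s)|\,|s|^{-\theta}\,ds.$$

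To handle this I would separate two regimes. When $|t| \leq 2/L$, I perform the rescaling $u = Ls$ to get
$$\int_{|s|\leq 1} |\widecheck{\rho}_L(t-s)|\,|s|^{-\theta}\,ds \lesssim L^{\theta}\int_{|u|\leq L} (1+|Lt-u|)^{-N} |u|^{-\theta}\,du,$$
and then note that $Lt$ is bounded, so the $u$-integral converges (using $\theta < 1$ for integrability near zero and the rapid decay of $(1+|u|)^{-N}$ at infinity), giving the desired $L^\theta$ bound. When instead $|t| > 2/L$, I split further into $|s| < |t|/2$ and $|s| \geq |t|/2$. In the first range one has $|t-s| \geq |t|/2$, so $|\widecheck{\rho}_L(t-s)|\lesssim L(L|t|)^{-N}$, which after multiplying by $|s|^{-\theta}$ and integrating is negligible for $N$ large. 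In the second range one bounds $|s|^{-\theta} \leq 2^\theta|t|^{-\theta}$ and pulls it outside, leaving $\|\widecheck{\rho}_L\|_{L^1}\cdot |t|^{-\theta}\lesssim |t|^{-\theta}$. Combining with the obvious bound $\lesssim 1$ in the regime $|t|>1$ yields $\lesssim \min(|t|,1)^{-\theta}$, and hence the min over the two cases.

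There is really no deep obstacle here: the estimate is a standard computation about convolving a locally integrable singular weight against a Littlewood-Paley kernel, and everything follows from Schwartz decay and $L^1$-normalization once the correct case split is set up. The only bookkeeping point to watch is to make sure the threshold $2/L$ between the two regimes correctly matches the crossover between $L^\theta$ and $|t|^{-\theta}$ in the claimed upper bound.
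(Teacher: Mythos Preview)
Your proposal is correct and follows essentially the same approach as the paper: split off $|s|>1$ using $\|\widecheck{\rho}_L\|_{L^1}\lesssim 1$, then on $|s|\leq 1$ distinguish $|t|\lesssim L^{-1}$ versus $|t|\gtrsim L^{-1}$, handling the latter via the further split $|s|<|t|/2$ and $|s|\geq |t|/2$. The only cosmetic differences are that you use a rescaling in the small-$t$ regime (the paper instead splits $|s|\leq L^{-1}$ versus $|s|>L^{-1}$ directly) and that you explicitly reduce the $\eta_L$ estimate to the $\widecheck{\rho}_L$ one via the pointwise Schwartz bound, whereas the paper simply declares the second estimate ``very similar''.
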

\begin{proof}
We only prove the first estimate, as the proof of the second estimate is very similar. Assume without loss of generality that $t \geq 0$. We may split
\[ \int ds |\widecheck{\rho}_L(t-s)| \min(|s|, 1)^{-\theta} \lesssim \int_{-1}^1 ds |\widecheck{\rho}_L(t-s)| |s|^{-\theta}  + \int_{|s| > 1} ds |\widecheck{\rho}_L(t-s)| . \]
The second term on the right hand side is bounded by $\|\widecheck{\rho}_L\|_{L^1} \lesssim 1$, which is acceptable. Thus we focus on the first term, which for brevity we denote by $I$. Suppose first that $t \leq L^{-1}$. In this case, we may bound (using that $\|\widecheck{\rho}_L\|_\infty \lesssim L$ in the second inequality)
\[\begin{split}
I &\leq \int_{|s| \leq L^{-1}} |\widecheck{\rho}_L(t-s)| |s|^{-\theta} ds + L^\theta \int_{L^{-1} < |s| \leq 1} |\widecheck{\rho}_L(t-s)| ds  \lesssim L \int_{|s|\leq L^{-1}} |s|^{-\theta} ds + L^\theta \lesssim L^\theta.
\end{split}\]
Next, suppose that $t > L^{-1}$. Suppose also that $t \leq 1$. In this case, we split
\[ I \leq \int_{|s| < t/2} ds |\widecheck{\rho}_L(t-s)| |s|^{-\theta} + t^{-\theta} \int_{|s| > t/2} ds |\widecheck{\rho}_L(t-s)|, \]
Now for $|s| < t/2$, we may bound (since $t > L^{-1}$) $|\widecheck{\rho}_L(t-s)| \leq \sup_{u \geq t/2} |\widecheck{\rho}_L(u)| \lesssim t^{-1}$.
We thus obtain
\[ I \lesssim t^{-1} \int_{|s| < t/2} ds |s|^{-\theta} + t^{-\theta} \lesssim t^{-\theta}. \]
The case $t > 1$ may be handled similarly, by splitting into $|s| < 1/2$ and $|s| > 1/2$.
\end{proof}

\begin{proof}[Proof of Lemma \ref{lemma:f-Wc-ptl-t-f-Wc}]
Without loss of generality, let $t > 0$. We have that
\[ \|P_{L, N} f(t)\|_{L_x^\infty} \leq \int |\widecheck{\rho}_L(t-s)| \cdot \|P_N^x f(s)\|_{L_x^\infty} ds \lesssim N^{-\alpha} \|f\|_{\Wc^{\alpha, \theta}} \int |\widecheck{\rho}_L(t-s)| \min(|s|, 1)^{-\theta} ds.  \]
By Lemma \ref{lemma:singular-time-weight-rho-estimate}, we further obtain
\[ \|P_{L, N} f(t)\|_{L_x^\infty} \lesssim \min\big(L^\theta, \min(t, 1)^{-\theta}\big) N^{-\alpha} \|f\|_{\Wc^{\alpha, \theta}}.\]
To show the second estimate, it remains to show that
\[ \|P_{L, N} f(t)\|_{L_x^\infty} \lesssim L^{-1} \min\big(L^\theta, \min(t, 1)^{-\theta}\big) N^{-(\alpha-2)} \|\ptl_t f\|_{\Wc^{\alpha-2, \theta}} .\]
Using that $\eta_L' = \widecheck{\rho}_L$, we may write
\[ P_{L, N} f(t) = \int \widecheck{\rho}_L(t-s) P_N^x f(s) ds = -\int \eta_L(t-s) P_N^x \ptl_s f(s) ds. \]
We may then bound
\[ \|P_{L, N} f(t)\|_{L_x^\infty} \leq \int |\eta_L(t-s)| \|P_N^x \ptl_s f(s)\|_{L_x^\infty} ds \lesssim N^{-(\alpha-2)} \|\ptl_t f\|_{\Wc^{\alpha-2, \theta}} \int |\eta_L(t-s)| \min(|s|, 1)^{-\theta} ds . \]
Applying Lemma \ref{lemma:singular-time-weight-rho-estimate}, the desired estimate follows.
\end{proof}

\section{Proofs of Lemma \ref{lemma:P-N-lo-hi-commutator}, Lemma \ref{lemma:P-N-commutator-space}, and Lemma \ref{prelim:lem-Q}}\label{section:proofs-commutator}

Let $y \in \T^2$. Let $\Theta_y$ be the shift map on functions or distributions $f : \T^2 \ra \R$ which is defined by $(\Theta_y f)(x) := f(x - y)$, $x \in \T^2$.

\begin{proof}[Proof of Lemma \ref{lemma:P-N-lo-hi-commutator}]
We have that
\[ P_N(f \parall g) - f \parall P_N g = \sum_{M \sim N} P_N (P_{\ll M} f P_M g) - P_{\ll M} f P_N P_M g.  \]
Since the term $P_N (P_{\ll M} f P_M g) - P_{\ll M} f P_N P_M g$ is localized to frequency $M$, it suffices to show that for $M \sim N$, we have that
\[ \|P_N (P_{\ll M} f P_M g) - P_{\ll M} f P_N P_M g\|_{L_x^\infty} \lesssim N^{-\delta} M^{-(\alpha + \beta - \delta)} \|f\|_{\Cs_x^\alpha} \|g\|_{\Cs_x^\beta}.\]
Towards this end, we may write
\[ P_N (P_{\ll M} f P_M g) - P_{\ll M} f P_N P_M g = \int \widecheck{\rho}_N(y) \big((\Theta_y P_{\ll M} f) (\Theta_y P_M g) - P_{\ll M} f (\Theta_y P_N P_M g)\big) dy. \]
We may thus bound
\[\begin{split}
\|P_N (P_{\ll M} f P_M g) - P_{\ll M} f P_N P_M g\|_{L_x^\infty} &\leq \int |\widecheck{\rho}_N(y)| \cdot \|\Theta_y P_{\ll M} f - P_{\ll M} f\|_{L_x^\infty} \cdot \|\Theta_y P_N P_M g\|_{L_x^\infty} dy  \\
&\leq M^{-\beta} \|g\|_{\Cs_x^\beta} \int |\widecheck{\rho}_N(y)| \cdot \|\Theta_y P_{\ll M} f - P_{\ll M} f\|_{L_x^\infty} dy \\
&\lesssim M^{-\beta} \|P_{\ll M} f\|_{\Cs_x^1} \|g\|_{\Cs_x^\beta} \int |\widecheck{\rho}_N(y)| \cdot |y| dy \\
&\lesssim N^{-1} M^{(1-\alpha)} M^{-\beta} \|P_{\ll M} f\|_{\Cs_x^\alpha} \|g\|_{\Cs_x^{\beta}} \\
&\lesssim N^{-\delta} M^{-(\alpha + \beta - \delta)} \|f\|_{\Cs_x^\alpha} \|g\|_{\Cs_x^\beta}, 
\end{split}\]
where we used that $\alpha < 1$ in the second to last inequality. The desired result now follows.
\end{proof}

Before we prove Lemma \ref{lemma:P-N-commutator-space}, we first prove the following preliminary result.

\begin{lemma}\label{lemma:P-N-smooth-commutator}
Let $N\in \dyadic$ and let $\alpha \in (0,1)$. For all smooth $f,g\colon \T^2 \rightarrow \C$, it then follows that 
\[ \| P_{\leq N}(fg) - f P_{\leq N} g - \ptl_j f Q^j_{> N} g\|_{L_x^\infty} \lesssim N^{-(1+\alpha)} \|f\|_{\Cs_x^{1+\alpha}} \|g\|_{L_x^\infty}. \]
\end{lemma}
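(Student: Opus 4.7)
The plan is to represent $P_{\leq N}$ by convolution against its kernel and then Taylor expand $f$. Writing $(P_{\leq N}\phi)(x) = \int_{\T^2} \widecheck{\rho}_{\leq N}(z)\phi(x-z)\,dz$, where $\widecheck{\rho}_{\leq N}$ is the inverse Fourier transform of $\rho_{\leq N}$ on the torus, I would first rewrite the commutator as
\begin{equation*}
P_{\leq N}(fg)(x) - f(x)\,P_{\leq N} g(x) = \int_{\T^2} \widecheck{\rho}_{\leq N}(z)\,\bigl(f(x-z) - f(x)\bigr)\, g(x-z)\,dz.
\end{equation*}

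Using the Hölder regularity $f \in \Cs^{1+\alpha}$ with $\alpha \in (0, 1)$, I would next Taylor expand $f(x-z) - f(x) = -z^j \ptl_j f(x) + R(x, z)$, where the remainder satisfies $|R(x, z)| \lesssim |z|^{1+\alpha}\|f\|_{\Cs^{1+\alpha}}$. The linear contribution produces, after a short Fourier-side computation using $\ptl^j \rho_{>N} = -\ptl^j \rho_{\leq N}$ and the definition of $Q^j_{>N}$,
\begin{equation*}
-\ptl_j f(x) \int_{\T^2} z^j \widecheck{\rho}_{\leq N}(z)\, g(x-z)\,dz = \pm\, \ptl_j f(x)\,(Q^j_{>N} g)(x),
\end{equation*}
which matches (up to a sign to be pinned down) the term $\ptl_j f\,Q^j_{>N} g$ being subtracted in the statement, giving the expected cancellation. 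The remainder integral is then bounded by H\"older's inequality as $\|f\|_{\Cs^{1+\alpha}} \|g\|_{L_x^\infty} \int_{\T^2} |z|^{1+\alpha}\,|\widecheck{\rho}_{\leq N}(z)|\,dz$.

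The only computation requiring care is the moment estimate
\begin{equation*}
\int_{\T^2} |z|^{1+\alpha}\,|\widecheck{\rho}_{\leq N}(z)|\,dz \;\lesssim\; N^{-(1+\alpha)},
\end{equation*}
which follows from the scaling relation $\widecheck{\rho}_{\leq N}(z) \approx N^2\, \widecheck{\rho}(Nz)$ (ignoring the exponentially small periodic corrections permitted by the Schwartz decay of $\widecheck{\rho}$), changing variables to $w = Nz$, and using $\alpha \in (0, 1)$ to ensure that $\int_{\R^2} |w|^{1+\alpha}\,|\widecheck{\rho}(w)|\,dw < \infty$. I do not anticipate any substantive obstacle: the whole argument reduces to a Taylor expansion combined with a standard moment bound for a Schwartz convolution kernel, modulo a careful sign check in the linear term.
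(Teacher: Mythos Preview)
Your proposal is correct and follows essentially the same approach as the paper: both write the commutator as a convolution integral, Taylor expand $f(x-z)-f(x)$ to first order, identify the linear term with $\partial_j f\, Q^j_{>N} g$, and bound the remainder via the $\Cs^{1+\alpha}$-H\"older estimate for $\nabla f$ together with the moment bound $\int |z|^{1+\alpha}|\widecheck{\rho}_{\leq N}(z)|\,dz \lesssim N^{-(1+\alpha)}$. The sign you flag is indeed just a bookkeeping matter (using $\partial^j\rho_{>N}=-\partial^j\rho_{\leq N}$), and the paper treats it with the same informality.
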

\begin{proof}
We write
\[ P_{\leq N}(fg) - f P_{\leq N} g - \ptl_j f Q^j_{> N} g = \int \widecheck{\rho}_{\leq N}(y) (\Theta_y f - f - \nabla f \cdot y) \Theta_y g dy. \]
We bound 
\[\begin{split}
\|P_{\leq N}(fg) - f P_{\leq N} g - \ptl_j f Q^j_{> N} g \|_{L_x^\infty} &\lesssim \int |\widecheck{\rho}_{\leq N}(y)| \cdot \|(\Theta_y f - f - \nabla f \cdot y) \Theta_y g\|_{L_x^\infty} dy \\
&\lesssim \int |\widecheck{\rho}_{\leq N}(y)| \cdot \|\Theta_y f - f - \nabla f \cdot y\|_{L_x^\infty} \|\Theta_y g\|_{L_x^\infty} dy \\
&\lesssim \|g\|_{L_x^\infty} \int |\widecheck{\rho}_{\leq N}(y)| \cdot \|\Theta_y f - f - \nabla f \cdot y \|_{L_x^\infty} dy.
\end{split}\]
For fixed $y$, we may write (by the mean value theorem)
\[ (\Theta_y f - f)(x) = (\nabla f)(x + O(|y|)) \cdot y, \]
and so 
\[\begin{split}
\|\Theta_y f - f - \nabla f \cdot y \|_{L_x^\infty} &= \|\big((\nabla f)(\cdot + O(|y|)) - \nabla f\big) \cdot y \|_{L_x^\infty} \leq |y|\cdot \|(\nabla f)(\cdot + O(|y|)) - \nabla f\|_{L_x^\infty} \\
&\lesssim |y|^{1+\alpha} \|\nabla f\|_{\Cs_x^\alpha} \lesssim |y|^{1+\alpha} \|f\|_{\Cs_x^{1+\alpha}}. 
\end{split} \]
Finally, note that (since $\widecheck{\rho}_{\leq N}(y) = N^2 \widecheck{\rho}(N y)$)
\[ \int |\widecheck{\rho}_{\leq N}(y)| |y|^{1+\alpha} dy \lesssim N^{-(1+\alpha)}.\]
The desired result now follows upon combining the previous few estimates.
\end{proof}

\begin{proof}[Proof of Lemma \ref{lemma:P-N-commutator-space}]
For $M \in \dyadic$, let 
\[ F_M   :=  P_{\leq N} (P_{\ll M} f P_M g) - P_{\ll M} f (P_{\leq N} P_M g) - (\ptl_j P_{\ll M} f) Q^j_{> N} (P_{\leq N} P_M g).\]
Observe that $F_M = 0$ for $M \gg N$ and $F_M$ is localized to frequency $M$. Thus, it suffices to show that for all $M \lesssim N$,
\[ \|F_M \|_{L_x^\infty} \lesssim N^{-\delta} M^{-(\alpha+ \beta - \delta)} \|f\|_{\Cs_x^{\alpha}} \|g\|_{\Cs_x^\beta}. \]
Towards this end, by Lemma \ref{lemma:P-N-smooth-commutator}, we have that (since $\alpha, \delta < 2$, we have that $\max(\alpha, \delta, 3/2) < 2$)
\[\begin{split}
\|F_M\|_{L_x^\infty} &\lesssim N^{-\max(\alpha,\delta, 3/2)} \|P_{\ll M} f\|_{\Cs_x^{\max(\alpha,3/2)}} \|P_M g\|_{L_x^\infty} \\
&\lesssim N^{-\max(\alpha, \delta, 3/2)} M^{-\beta} \|P_{\ll M} f\|_{\Cs_x^{\max(\alpha, \delta, 3/2)}} \|g\|_{\Cs_x^\beta} \\
&\lesssim N^{-\max(\alpha, \delta, 3/2)} M^{-\beta} N^{(\max(\alpha, \delta, 3/2) - \alpha)} \|P_{\ll M} f\|_{\Cs_x^\alpha} \|g\|_{\Cs_x^\beta} \\
&\lesssim N^{-\delta} M^{-(\alpha + \beta - \delta)} \|f\|_{\Cs_x^\alpha} \|g\|_{\Cs_x^\beta}. \qedhere
\end{split}\]
\end{proof}

\begin{proof}[Proof of Lemma \ref{prelim:lem-Q}:] 
Due to the interpolation estimate
\begin{equation*}
\big\| Q^\ell_{>N} f \big\|_{\Cs_x^\beta} \lesssim 
\big\| Q^\ell_{>N} f \big\|_{\Cs_x^\alpha}^{1-(\beta-\alpha)}
\big\| Q^\ell_{>N} f \big\|_{\Cs_x^{\alpha+1}}^{\beta-\alpha},
\end{equation*}
it suffices to prove \eqref{prelim:eq-Q-estimate} for $\beta=\alpha$ and $\beta=\alpha+1$. That is, it suffices to prove that 
\begin{equation}\label{appendix:eq-Q-p1}
\big\| Q^\ell_{>N} f \big\|_{\Cs_x^\alpha} \lesssim N^{-1} \big\| f \big\|_{\Cs_x^\alpha}
\qquad \text{and} \qquad 
\big\| Q^\ell_{>N} f \big\|_{\Cs_x^{\alpha+1}} \lesssim  \big\| f \big\|_{\Cs_x^\alpha}.
\end{equation}
Using that 
\begin{equation*}
\big\| Q^\ell_{>N} f \big\|_{\Cs_x^{\alpha+1}} \sim \big\| Q^\ell_{>N} f \big\|_{\Cs_x^{\alpha}} + \big\| \nabla Q^\ell_{>N} f \big\|_{\Cs_x^{\alpha}},
\end{equation*}
the estimates in \eqref{appendix:eq-Q-p1} can be reduced to the estimates 
\begin{equation}\label{appendix:eq-Q-p2}
\big\| Q^\ell_{>N} f \big\|_{\Cs_x^\alpha} \lesssim N^{-1} \big\| f \big\|_{\Cs_x^\alpha}
\qquad \text{and} \qquad 
\big\| \partial^k Q^\ell_{>N} f \big\|_{\Cs_x^{\alpha}} \lesssim  \big\| f \big\|_{\Cs_x^\alpha},
\end{equation}
where $k\in [2]$. Using the definition of the  $\Cs_x^\alpha$-spaces (Definition \ref{prelim:def-Hoelder}) and that $Q^\ell_{>N}$ is a Fourier multiplier, which necessarily commutes with the Littlewood-Paley operators, we have that 
\begin{align*}
&\,\big\| Q^\ell_{>N} f \big\|_{\Cs_x^\alpha}
= \sup_{M\in \dyadic} M^\alpha \big\| P_M Q^\ell_{>N} f \big\|_{L_x^\infty}
= \sup_{M\in \dyadic} M^\alpha \big\|  Q^\ell_{>N} P_M f \big\|_{L_x^\infty} \\
\leq&\,  \big\| Q^\ell_{>N} \big\|_{L_x^\infty \rightarrow L_x^\infty}
\sup_{M\in \dyadic} M^\alpha \big\| P_M f \big\|_{L_x^\infty}
=  \big\| Q^\ell_{>N} \big\|_{L_x^\infty \rightarrow L_x^\infty} \big\| f \big\|_{\Cs_x^\alpha},
\end{align*}
where $ \| Q^\ell_{>N} \|_{L_x^\infty \rightarrow L_x^\infty} $ denotes the operator-norm of $Q^\ell_{>N}$ on $L^\infty_x$. Similarly, by using that $\partial^k Q^\ell_{>N}$ is a Fourier multiplier, we also obtain that 
\begin{align*}
\big\| \partial^k Q^\ell_{>N} f \big\|_{\Cs_x^\alpha}
\leq \big\| \partial^k Q^\ell_{>N} \big\|_{L_x^\infty \rightarrow L_x^\infty} \big\| f \big\|_{\Cs_x^\alpha}.
\end{align*}
In order to prove \eqref{appendix:eq-Q-p2}, it therefore suffices to estimate the operator-norms of $Q^\ell_{>N}$ and $\partial^k Q^\ell_{>N}$ on $L_x^\infty$, i.e., it suffices to prove the estimates 
\begin{equation}\label{appendix:eq-Q-bound}
N \big\| Q^\ell_{>N} f \big\|_{L_x^\infty} \lesssim  \big\| f \big\|_{L_x^\infty} 
\qquad \text{and} \qquad 
\big\| \partial^k Q^\ell_{>N} f \big\|_{L_x^\infty} \lesssim \big\| f \big\|_{L_x^\infty}.
\end{equation}
Since $\rho_{>N}=1-\rho_{\leq N}$, it follows that the symbol of $ Q^\ell_{>N}$ is given by  
\begin{equation}\label{appendix:eq-Q-symbol}
- \frac{1}{\icomplex } \frac{1}{N} (\partial^\ell \rho)\Big( \frac{\xi}{N}\Big). 
\end{equation}
In particular, the symbol of $Q^{\ell}_{>N}$ is supported on frequencies $\sim N$ and it, therefore, suffices to prove the first estimate in \eqref{appendix:eq-Q-bound}. As a result of \eqref{appendix:eq-Q-symbol}, it holds that 
\begin{equation*}
N Q^\ell_{>N} f = - \frac{1}{\icomplex} \int_{\R^2} \dy \, N^2 \widecheck{(\partial^\ell \rho)}(Ny) f(x-y). 
\end{equation*}
The desired estimates \eqref{appendix:eq-Q-bound} then follow directly from Young's convolution inequality. 
\end{proof}

\end{appendix}

\bibliography{SYM_Library}
\bibliographystyle{myalpha}

\end{document}